\newtheorem{lemma}{Lemma}
\newtheorem{theorem}{Theorem}
\newtheorem{corollary}{Corollary}
\newtheorem{definition}{Definition}
\newtheorem{example}{Example}
\newtheorem{proposition}{Proposition}
\newtheorem{claim}{Claim}
\DeclareMathOperator{\esssup}{ess\,sup}
\title{Intermittent Kalman Filtering:\\Eigenvalue Cycles and Nonuniform Sampling}
\author{Se Yong Park (separk@eecs.berkeley.edu), Anant Sahai (sahai@eecs.berkeley.edu)}
\begin{document}
\maketitle
\abstract
We consider Kalman filtering problems when the observations are intermittently erased or lost.
It was known that the estimates are mean-square unstable when the erasure probability is larger than a certain critical value, and stable otherwise. But the characterization of the critical erasure probability has been open for years. We introduce a new concept of \textit{eigenvalue cycles} which captures periodicity of systems, and characterize the critical erasure probability based on this.  It is also proved that eigenvalue cycles can be easily broken if the original physical system is considered to be continuous-time --- randomly-dithered nonuniform sampling of the observations makes the critical erasure probability almost surely $\frac{1}{|\lambda_{max}|^2}$.

\section{Introduction}
Unlike classical control systems where the controller and the plant are closely located or connected by dedicated wired links, in post-modern systems the controllers and plants can be located far apart and thus control has to happen over communication channels.
In other words, there is an observer which can only observe the plant but cannot control it. There is a separate actuator which can only control the plant but cannot observe it. The observer and actuator are connected by a communication channel. Therefore, to control the plant the observer has to send information about its observation to the actuator through the communication channel. Understanding the tradeoff between control performance and communication reliability or finding the optimal controller structures become the fundamental questions to build such post-modern control systems.

Not only practically, but also philosophically, control-over-communication-channel problems are important. When we are controlling systems, there is a corresponding life cycle of information. In other words, the uncertainty or new information is generated and disturbs the plant. This information is propagated to the controller as the controller observes the plant. Finally, when the controller controls the system by removing the uncertainty, the information is dissipated. It is conceptually very important to understand and quantify these information flows which naturally occur as we control systems. In control-over-communication-channel systems, all the information for control has to flow through the communication channel. Therefore, by relating the communication channels with the control performance, we can measure how much information has to flow to achieve a certain control performance.

Theoretical study of control-over-communication-channel problems was pioneered by Baillieul~\cite{Baillieul_feedback,Baillieul_feedback2} and Tatikonda {\em et al.}~\cite{Tatikonda_Control}. They restricted the communication channels to noiseless rate-limited channels, and asked what the minimum rate of the channel is to stabilize the plant. They found that the rate of the channel has to be at least the sum of the logarithms of the unstable eigenvalues, and indeed it is sufficient. This fact is known as the data-rate theorem. Later, Nair~\cite{nair2003exponential} relaxed the bounded disturbance assumption that they had to Gaussian disturbances, and proved that the same data-rate theorem holds.

However, an important question was whether we can reduce noisy communication channels to noiseless channels with the same Shannon capacity, i.e. whether the classical notion of Shannon capacity is still appropriate when the channel is used for control. In \cite{Sahai_Anytime}, Sahai {\em et al.} found the answer for this question is no. Intuitively, since the system keep evolving in time, not only the rate but also the delay of communication is important. Since Shannon capacity ignores the delay issue, it is insufficient to understand information flows for control. Thus, they proposed a new notion of \textit{anytime capacity} which captures the delay of communication. The stabilizability condition for noisy communication channels with feedback\footnote{By introducing a feedback, they reduced the problem to the one with nested information structure~\cite{Witsenhausen_Separation} which is known to be much easier to solve in decentralized control theory.} was characterized by anytime capacity.

Since then, researchers have accumulated lots of literature~\cite{hespanha2007survey, Schenato_Foundations, nair2007feedback, martins2008feedback, gupta2007optimal, yuksel2006minimum, yuksel2011control} which consider various generalized and related problems. However, still most of the problems are wide open, and \textit{intermittent Kalman filtering} problem which we will study in this paper had been one of them.
In \cite{Sinopoli_Kalman}, Sinopoli {\em et al.} considered `control over real erasure channels' which can be thought as a special case of \cite{Sahai_Anytime}, but with a structural constraint on controller design.

Figure~\ref{fig:system} shows the system diagram for control-over-real-erasure-channels. The observer makes the observation about the plant, and then uncodedly transmits its observation through the real erasure channel. The real erasure channel drops the transmitted signal with a certain probability but otherwise noiselessly transmits the signal. Finally, based on the received signals from the channel, the controller generates its control inputs to stabilize the system.

The situation that this problem is modeling is that of control over a so-called \textit{packet drop channel}. A memoryless observer samples the output of an unstable continuous-time system, quantizes this sample to a sufficient number of bits, binds the resulting bits into a single packet, and transmits the packet to the controller through a communication system. Due to network congestion or wireless fading, the transmitted packet may be lost\footnote{Such losses need not come from network effects --- they could also occur because of sensor occlusion or otherwise at the sampling time itself. That is why the issue of intermittent observations needs to be studied on its own.} with a certain probability and this packet erasure process is further simplified to be i.i.d. The problem is designed to focus attention on the delay/reliability effect of losing packets and so the number of bits per packet (capacity) is unconstrained. The main problem is finding what is the maximum tolerable erasure probability keeping the system stable.

The \textit{linearity} and \textit{memorylessness} of the observer is at the heart of what Sinopoli {\em et al.} are trying to model. Otherwise, the earlier results of \cite{Sahai_Thesis} immediately reveal that the critical erasure probability for the stabilizability only depends on the magnitude of the largest eigenvalue of the plant. However, to achieve the minimal erasure probability shown in \cite{Sahai_Thesis}, the observer and controller design has to be quite complicated and not realistic in practice. Therefore, it is practically and theoretically important to understand how much the control performance degradates with linear observer and controller constraints.

In this paper, we will see that the degradation of stabilizability due to linear constraints fundamentally comes only from the periodicity of the system.
Nonuniform sampling is proposed as a simple way to force the system to behave aperiodically.
Therefore, by using linear controllers in a junction with nonuniform sampling, we can expect a significant performance gain and indeed recover the optimal stabilizability condition over all possible controller designs.


Furthermore, by the estimation-control separation principle~\cite{KumarVaraiya}, the closed-loop control system can be reduced to an equivalent open-loop estimation problem~\cite{Schenato_Foundations}. Figure~\ref{fig:system2} shows the resulting open-loop estimation system so-called \textit{intermittent Kalman filtering}~\cite{Sinopoli_Kalman}. As before, the sensor uncodedly transmits its observation to the real erasure channel. Then, the estimator tries to estimate the state based on its received signals. We refer to  \cite{Schenato_Foundations} for a literature review and practical applications of the problem.

This paper is organized as follows: First, we formally state the problem in Section~\ref{sec:statement}. Then, we introduce some definitions in Section~\ref{sec:def}. In Section~\ref{sec:connecting}, we consider the intermittent observability as a connection of the stability and the observability. From this, we distinguish our approach to the previous approaches. In Section~\ref{sec:intui}, we introduce the intuition for the characterization of the intermittent observability using representative examples. In Section~\ref{sec:interob}, we formally define the eigenvalue cycle and characterize the intermittent observability. In Section~\ref{sec:nonuniform}, we discuss that nonuniform sampling can break the eigenvalue cycle and significantly improve the performance of the intermittent Kalman filtering. Finally, Section~\ref{sec:proof} gives the proof of the main results.

\begin{figure*}[t]
\begin{center}
\includegraphics[width=3in]{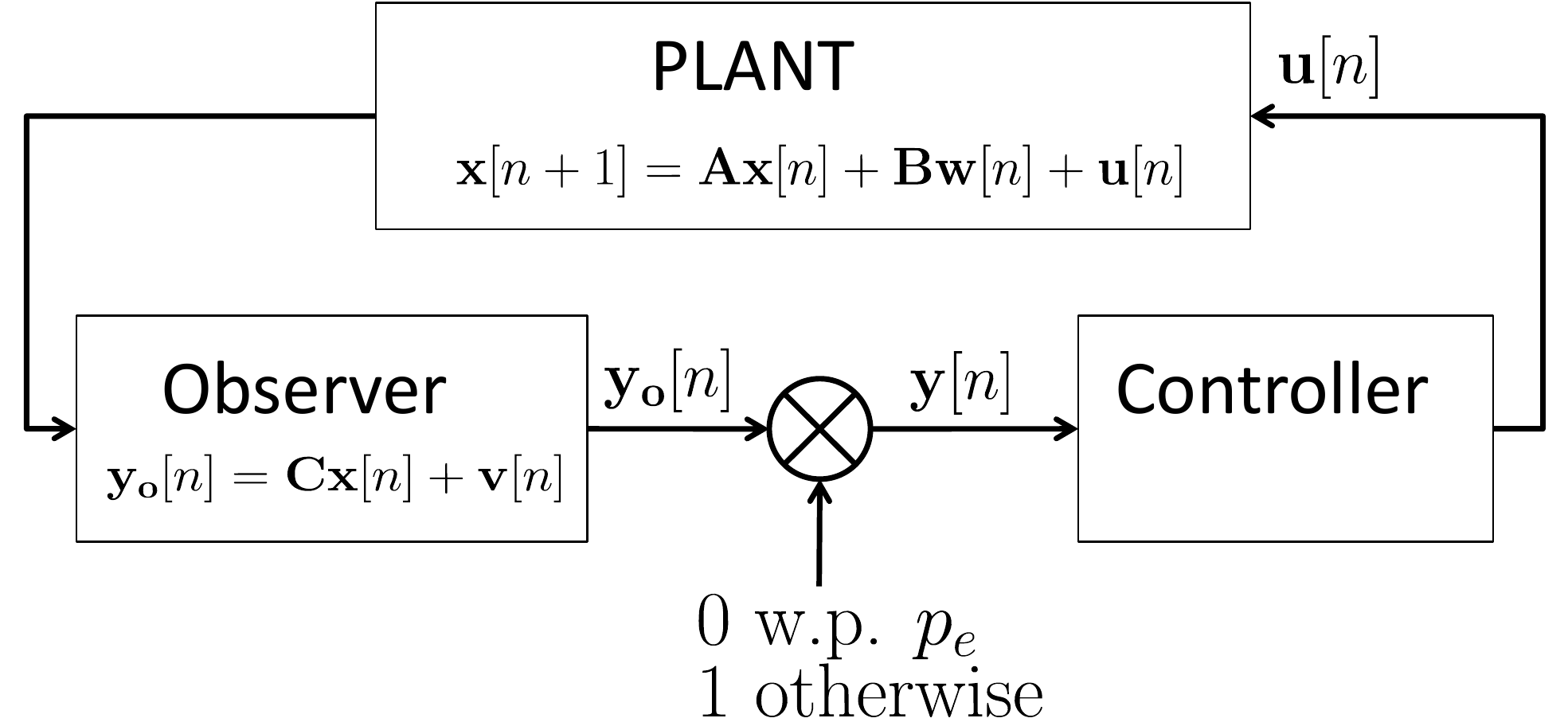}
\caption{Closed-loop system for `control over real erasure channels'. Here, the observer just bypasses its observation to the channel without any coding.}
\label{fig:system}
\end{center}
\end{figure*}

\begin{figure*}[t]
\begin{center}
\includegraphics[width=3.2in]{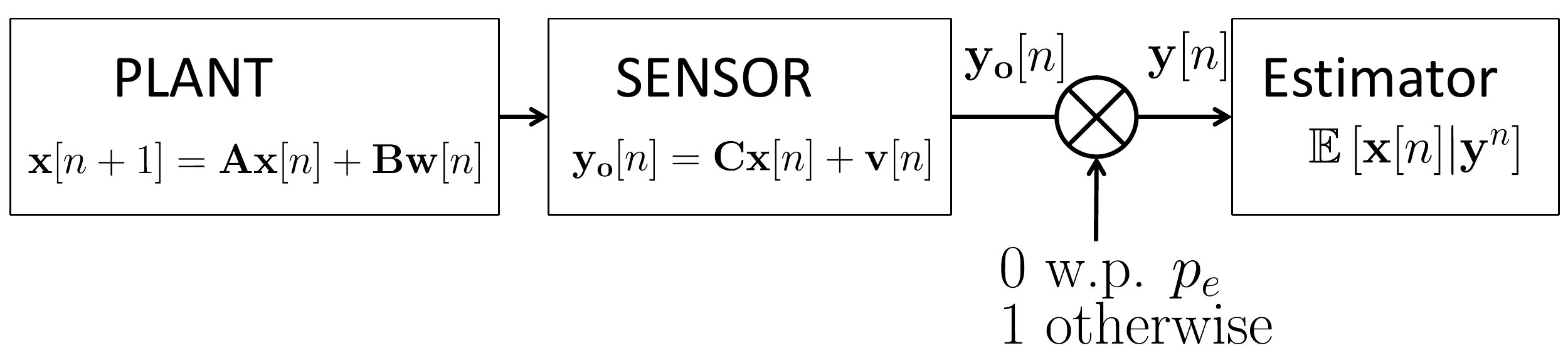}
\caption{System diagram for `intermittent Kalman filtering'. This open-loop estimation system is equivalent to the closed-loop control system of Figure~\ref{fig:system}. Like Figure~\ref{fig:system}, the sensor bypasses its observation to the channel without any coding.}
\label{fig:system2}
\end{center}
\end{figure*}

\section{Problem Statement}
\label{sec:statement}
Formally, the intermittent Kalman filtering problem is formulated as follows in discrete time:
\begin{align}
&\mathbf{x}[n+1]=\mathbf{A}\mathbf{x}[n]+\mathbf{B}\mathbf{w}[n]\label{eqn:dis:system} \\
&\mathbf{y}[n]=\beta[n]\left(\mathbf{C}\mathbf{x}[n]+\mathbf{v}[n]\right)\label{eqn:dis:system2}.
\end{align}

Here $n$ is the non-negative integer-valued time index and the system
variables can take on complex values --- i.e. $\mathbf{x}[n]
\in \mathbb{C}^m, \mathbf{w}[n] \in \mathbb{C}^g, \mathbf{y}[n] \in
\mathbb{C}^l, \mathbf{v}[n] \in \mathbb{C}^l$. $\mathbf{A} \in \mathbb{C}^{m \times m}$, $
\mathbf{B} \in \mathbb{C}^{m \times g}$ and $\mathbf{C} \in \mathbb{C}^{l \times m}$. The underlying randomness comes from the initial
state $\mathbf{x}[0]$, the persistent driving disturbances
$\mathbf{w}[n]$, the observation noises $\mathbf{v}[n]$ and
the Bernoulli packet-drops $\beta[n]$. $\beta[n] = 0$ with probability
$p_e$. $\mathbf{x}[0]$, $\mathbf{w}[n]$ and $\mathbf{v}[n]$ are jointly Gaussian.

The objective is to find the best causal estimator $\mathbf{\widehat{x}}[n]$ of $\mathbf{x}[n]$ that minimizes the mean
square error (MMSE) $\mathbb{E}[(\mathbf{x}[n]-\mathbf{\widehat{x}}[n])^\dag
(\mathbf{x}[n]-\mathbf{\widehat{x}}[n])]$, i.e. $\mathbf{\widehat{x}}[n]=\mathbb{E}[\mathbf{x}[n]| \mathbf{y}^n]$. We assume that the statistics of all random variables are known to the estimator. If $\mathbf{x}[0]$, $\mathbf{w}[n]$ and $\mathbf{v}[n]$ do not have zero mean, the estimator can properly shift its estimation. Thus, without loss
of generality, $\mathbf{x}[0], \mathbf{w}[n]$ and
$\mathbf{v}[n]$ are assumed to be zero mean. $\mathbf{x}[0], \mathbf{w}[n]$ and
$\mathbf{v}[n]$ are independent and have uniformly bounded second moments so that there exists a positive $\sigma^2$ such that
\begin{align}
&\mathbb{E}[\mathbf{x}[0]\mathbf{x}[0]^\dag] \preceq \sigma^2 \mathbf{I} \label{eqn:dis:systemconst1} \\
&\mathbb{E}[\mathbf{w}[n]\mathbf{w}[n]^\dag] \preceq \sigma^2 \mathbf{I} \nonumber \\
&\mathbb{E}[\mathbf{v}[n]\mathbf{v}[n]^\dag] \preceq \sigma^2 \mathbf{I}. \nonumber
\end{align}

To prevent degeneracy, we also assume that there exists a positive $\sigma'^2$ such that
\footnote{The second condition on $\mathbf{v}[n]$ may seem redundant, and $\mathbf{v}[n]=0$ is enough since at each time the new disturbance $\mathbf{w}[n]$ is added. However, when $\mathbf{v}[n]=0$, we can make the following counterexample when the estimation error of the state is bounded even if the system matrices $(\mathbf{A},\mathbf{C})$ are not observable: $\mathbf{A}=\begin{bmatrix} 2 & 1 \\ 0 & 2 \end{bmatrix}, \mathbf{B}=\begin{bmatrix} 0 \\ 1 \end{bmatrix}, \mathbf{C}=\begin{bmatrix} 0 & 1 \end{bmatrix}$. Thus, this assumption is usually kept in the analysis of Kalman filtering including \cite[p.100]{KumarVaraiya}.}
\begin{align}
&\mathbb{E}[\mathbf{w}[n]\mathbf{w}[n]^\dag] \succeq \sigma'^2 \mathbf{I} \label{eqn:dis:systemconst2} \\
&\mathbb{E}[\mathbf{v}[n]\mathbf{v}[n]^\dag] \succeq \sigma'^2 \mathbf{I}. \nonumber
\end{align}

Under these assumptions we call \eqref{eqn:dis:system} and \eqref{eqn:dis:system2} an \textit{intermittent system}.

\begin{definition}
The linear system equations \eqref{eqn:dis:system} and \eqref{eqn:dis:system2} with the second moment conditions \eqref{eqn:dis:systemconst1} and \eqref{eqn:dis:systemconst2} are called an \textbf{intermittent system} $(\mathbf{A},\mathbf{B},\mathbf{C})$, or an \textbf{intermittent system} $(\mathbf{A},\mathbf{B},\mathbf{C})$ \textbf{with erasure probability} $p_e$ when we only want to specify the erasure probability, or an \textbf{intermittent system} $(\mathbf{A},\mathbf{B},\mathbf{C}, \sigma, \sigma')$ \textbf{with erasure probability} $p_e$ when we specify the upper and lower bounds on disturbances as well.
\end{definition}
We say that the intermittent system is \textit{intermittent observable} if the MMSE is uniformly bounded for all time.
\begin{definition}
An intermittent system $(\mathbf{A},\mathbf{B},\mathbf{C},\sigma,\sigma')$ with erasure probability $p_e$ is called \textbf{intermittent observable} if there exists a casual estimator $\mathbf{\widehat{x}}[n]$ of $\mathbf{x}[n]$ such that
\begin{align}
\sup_{n \in \mathbb{Z}^+}
\mathbb{E}[(\mathbf{x}[n]-\mathbf{\widehat{x}}[n])^\dag
(\mathbf{x}[n]-\mathbf{\widehat{x}}[n])] < \infty. \nonumber
\end{align}
\end{definition}

Before we discuss truly intermittent cases, let's consider two extreme cases, when $p_e=1$ and $p_e=0$, to get some insight into the problem. When $p_e=1$, the estimator does not have any observations. As a result, the system can be intermittent observable if and only if the system itself is stable. On the other hand, when $p_e=0$, the estimator has all the observations without any erasures. Intermittent observability reduces to observability. Thus, intermittent observability can be understood as a new concept which interpolates two core concepts of linear system theory: stability and observability.

Moreover, in intermittent systems, we can see the monotonicity of performance with the erasure probability $p_e$. A process with higher erasure probability can be simulated from a process with lower erasure probability by randomly dropping the observations. Therefore, it is obvious that the average estimation error is an increasing function on $p_e$. Especially, if we consider an unstable but observable system, when $p_e=1$ the estimation error goes to infinity, and when $p_e=0$ the estimation error is bounded. Therefore, between $1$ and $0$ there must be a threshold on $p_e$ when the estimation error first becomes infinity.

\begin{theorem}[Theorem 2. of \cite{Sinopoli_Kalman}]
Given an intermittent system $(\mathbf{A},\mathbf{B},\mathbf{C},\sigma,\sigma')$ with erasure probability $p_e$, let $(\mathbf{A},\mathbf{B})$ be controllable, $\sigma < \infty$, and $\sigma' > 0$.\footnote{See Definiton~\ref{def:con} for controllability} Then, there
exists a threshold $p_e^{\star}$, such that for $p_e < p_e^{\star}$
the intermittent system $(\mathbf{A},\mathbf{B},\mathbf{C},\sigma,\sigma')$ with erasure probability $p_e$ is intermittent
observable and for $p_e \geq p_e^{\star}$ the intermittent system
$(\mathbf{A},\mathbf{B},\mathbf{C},\sigma,\sigma')$ with erasure probability $p_e$ is not intermittent observable.
\end{theorem}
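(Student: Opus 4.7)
My plan is to establish the theorem by a monotonicity-plus-threshold argument on the optimal MMSE viewed as a function of $p_e$.

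First I would show \emph{monotonicity by coupling}. For $0\le p < p'\le 1$ I realise both erasure processes on one probability space: take $\beta_p[n]\sim\mathrm{Bernoulli}(1-p)$ i.i.d.\ and independent of the state and noise, let $\gamma[n]\sim\mathrm{Bernoulli}\bigl(\tfrac{1-p'}{1-p}\bigr)$ be i.i.d.\ and independent of everything, and set $\beta_{p'}[n]:=\beta_p[n]\,\gamma[n]$. Then $\beta_{p'}[n]$ is i.i.d.\ Bernoulli$(1-p')$, and the observation $\sigma$-algebra $\mathcal{F}_{p'}^{\,n}$ at erasure probability $p'$ is contained in $\sigma(\mathcal{F}_p^{\,n},\gamma^n)$ with $\gamma^n$ independent of $\mathbf{x}[n]$. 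Because conditioning on a coarser sub-$\sigma$-algebra weakly increases MMSE,
\begin{align*}
\mathbb{E}\bigl[\|\mathbf{x}[n]-\mathbb{E}[\mathbf{x}[n]\mid\mathcal{F}_{p'}^{\,n}]\|^2\bigr] \;\ge\; \mathbb{E}\bigl[\|\mathbf{x}[n]-\mathbb{E}[\mathbf{x}[n]\mid\mathcal{F}_p^{\,n}]\|^2\bigr]
\end{align*}
for every $n$. Writing $M(p):=\sup_n \mathbb{E}\bigl[\|\mathbf{x}[n]-\widehat{\mathbf{x}}_p[n]\|^2\bigr]$ for the optimal Kalman MMSE, this makes $M$ non-decreasing in $p$, so the observable set $S:=\{p\in[0,1]:M(p)<\infty\}$ is downward-closed.

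Next I would \emph{define} $p_e^{\star}:=\sup S$, with the convention $\sup\emptyset:=0$. Monotonicity immediately gives intermittent observability for every $p_e<p_e^{\star}$ and failure of intermittent observability for every $p_e>p_e^{\star}$.

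The remaining, and in my view main, obstacle is \emph{the behaviour at $p_e=p_e^{\star}$}: I must exclude the pathology $M(p_e^{\star})<\infty$ together with $M(p_e^{\star}+\varepsilon)=\infty$ for every $\varepsilon>0$. Soft semicontinuity alone does not suffice, since $M$ is a pointwise supremum of finite-horizon MMSEs that are continuous (in fact polynomial) in $p$; this yields lower semicontinuity, hence left-continuity for the monotone $M$, whereas what we need is right-continuity at $p_e^{\star}$. My plan is to show that $S$ is also open from above by working with the expected Kalman error covariance $\Pi_n(p):=\mathbb{E}[P_n]$ directly: bound $\Pi_n(p)$ above by a sequence $\overline{\Pi}_n(p)$ satisfying a modified Riccati inequality whose existence of a bounded fixed point is governed by a condition depending continuously on $p$ (the upper-bound modified Riccati construction from \cite{Sinopoli_Kalman}). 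If that stability condition holds at $p_e^{\star}$ it must hold strictly, hence it continues to hold at $p_e^{\star}+\varepsilon$ for sufficiently small $\varepsilon>0$, contradicting the definition of $p_e^{\star}$ as $\sup S$. The hard part of the whole argument is really the construction and analysis of this linearised upper recursion, because the true Riccati map is nonlinear and pointwise monotonicity by itself does not transfer a finite $\sup_n \Pi_n$ from $p_e^{\star}$ to a neighbourhood; the linearisation plus open-spectral-condition step is what closes the gap and forces $M(p_e^{\star})=\infty$, completing the proof.
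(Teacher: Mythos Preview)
The paper does not supply its own proof of this theorem; it is quoted from \cite{Sinopoli_Kalman}, and the only argument offered in the surrounding text is the informal observation that a higher-erasure process can be simulated from a lower-erasure one by independent thinning, so the MMSE is monotone in $p_e$ and some threshold must exist. Your coupling $\beta_{p'}[n]=\beta_p[n]\gamma[n]$ is exactly a rigorous rendering of that sentence, so on the monotonicity step you and the paper's discussion coincide.

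You go further than the paper by isolating the boundary case $p_e=p_e^\star$, which the $p_e\ge p_e^\star$ clause in the statement genuinely requires but which monotonicity alone does not settle. Your proposed fix---that observability at $p_e^\star$ would make an open stability condition hold and hence persist to $p_e^\star+\varepsilon$---has a gap as written: the upper-bound Riccati/LMI condition you invoke is, in this paper's Theorem~\ref{thm:lyainter}, only \emph{sufficient} for intermittent observability, being equivalent to boundedness under a suboptimal time-invariant gain, and the paper explicitly notes the resulting bound on $p_e^\star$ is loose in general. Finiteness of the \emph{optimal} MMSE at $p_e^\star$ therefore does not by itself force that LMI to hold there, so the openness argument does not close without an additional step linking optimal boundedness back to an open-in-$p_e$ condition. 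Within this paper the boundary question is ultimately resolved not abstractly but by the explicit characterisation of $p_e^\star$ in Theorem~\ref{thm:mainsingle}, whose necessity proof (Section~\ref{sec:dis:nece}) shows the error diverges at $p_e=p_e^\star$ directly.
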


Therefore, the characterization of intermittent observability reduces to the characterization of the critical erasure probability $p_e^\star$. For characterizing the critical erasure probability, we can consider it as a generalization of either stability or observability.

In \cite{Sinopoli_Kalman}, Sinopoli \textit{et al.} thought of intermittent observability as a generalization of stability.
Based on Lyapunov stability, they could find a lower bound on the critical erasure probability in a LMI (linear matrix inequality) form. However, this bound is not tight in general and does not give any insight into the solution. A more intuitive bound can be found in \cite{Elia_Remote}.

\begin{theorem}[Corollary 8.4. of \cite{Elia_Remote}]
Given an intermittent system $(\mathbf{A},\mathbf{B},\mathbf{C},\sigma,\sigma')$ with erasure probability $p_e$, let $(\mathbf{A},\mathbf{B})$ be controllable, $\sigma < \infty$, $\sigma' > 0$, and $(\mathbf{A},\mathbf{C})$ be observable. Then,
\begin{align}
\frac{1}{\prod_{i}|\lambda_i|^2} \leq p_e^{\star} \leq
\frac{1}{|\lambda_{max}|^2} ,\nonumber
\end{align}
where $\lambda_i$ are the unstable eigenvalues of $\mathbf{A}$ and $\lambda_{max}$ is the one with the largest magnitude.
\end{theorem}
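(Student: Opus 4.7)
The plan is to prove the two inequalities separately by a converse for the upper bound and an achievability for the lower bound.

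For the upper bound $p_e^\star \leq 1/|\lambda_{max}|^2$, I would use a genie reduction to a scalar problem. Let $\mathbf{u}$ and $\mathbf{r}$ be left and right eigenvectors of $\mathbf{A}$ corresponding to $\lambda_{max}$, and let $\xi[n] = \mathbf{u}^\dag \mathbf{x}[n]$, which satisfies the scalar recursion $\xi[n+1] = \lambda_{max}\, \xi[n] + \mathbf{u}^\dag \mathbf{B} \mathbf{w}[n]$. Reveal to the estimator, as side information, every component of $\mathbf{x}[n]$ in the complement of $\mathrm{span}(\mathbf{r})$; the genie can only decrease the MMSE. After the known part is subtracted from $\mathbf{C}\mathbf{x}[n]$, the remaining observation is $\beta[n](\mathbf{C}\mathbf{r})\xi[n] + \beta[n]\mathbf{v}[n]$, with $\mathbf{C}\mathbf{r}\neq 0$ because otherwise $\mathbf{r}$ would lie in the unobservable subspace, contradicting observability of $(\mathbf{A},\mathbf{C})$. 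This is precisely a scalar intermittent Kalman filter with eigenvalue $\lambda_{max}$, for which a direct analysis of the one-dimensional random Riccati recursion shows $\mathbb{E}[P[n]]$ grows without bound as soon as $p_e|\lambda_{max}|^2 \geq 1$. Hence the genie-aided MMSE diverges, and so does the original MMSE, giving $p_e^\star \leq 1/|\lambda_{max}|^2$.

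For the lower bound $1/\prod_i|\lambda_i|^2 \leq p_e^\star$, I would show that the true Kalman filter already has bounded MMSE whenever $p_e\prod_i|\lambda_i|^2 < 1$. The quantity that naturally captures the product $\prod_i|\lambda_i|^2$ is the determinant of the error covariance $\mathbf{P}[n]$: isolating the unstable subspace by a similarity transformation, one no-observation step scales $\det \mathbf{P}[n]$ by $|\det \mathbf{A}_u|^2 = \prod_i|\lambda_i|^2$, while any received observation strictly contracts it. Modelling the erasures as a renewal process with geometric inter-arrival gaps and summing the contribution of each gap yields a geometric series for $\mathbb{E}[\det \mathbf{P}[n]]$ that converges exactly when $p_e\prod_i|\lambda_i|^2 < 1$, in the spirit of \cite{Elia_Remote}. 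Observability of $(\mathbf{A},\mathbf{C})$ and controllability of $(\mathbf{A},\mathbf{B})$ then allow this determinant bound to be converted into a trace bound, because after every window of $m$ successfully received observations $\mathbf{P}[n]$ is uniformly well-conditioned, so that $\mathrm{tr}\,\mathbf{P}[n]$ and $(\det \mathbf{P}[n])^{1/m}$ are equivalent up to a bounded multiplicative constant that depends only on $\mathbf{A}$, $\mathbf{B}$, $\mathbf{C}$, $\sigma$, $\sigma'$.

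I expect the main obstacle to be exactly this determinant-to-trace bridging step in the lower bound. Determinants capture the volume of uncertainty, which is the correct object for $\prod_i|\lambda_i|^2$, but a highly anisotropic $\mathbf{P}[n]$ with one large eigenvalue and the rest small can have modest determinant yet unbounded trace, so a naive argument would yield only $\mathbb{E}[\det \mathbf{P}[n]]<\infty$ rather than a genuine MMSE bound. Observability is the structural assumption that rules this out, since it guarantees that every sufficiently long run of received observations, after being propagated through the dynamics, spans all of $\mathbb{C}^m$; the remaining technical work is to quantify how frequently (in probability) such a spanning run occurs once $p_e$ is below the threshold, and to combine this with the renewal computation on $\det\mathbf{P}[n]$ to obtain a uniform bound on $\sup_n \mathbb{E}[\mathrm{tr}\,\mathbf{P}[n]]$.
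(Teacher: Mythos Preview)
This theorem is not proved in the paper; it is quoted from \cite{Elia_Remote} as motivation, so there is no in-paper proof to match. Your upper-bound argument is correct and is exactly the genie/eigenvector reduction the paper itself uses for necessity in Section~\ref{sec:dis:nece}.

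The lower-bound argument has a genuine gap at the point you flag, and your proposed resolution does not close it. You claim that ``after every window of $m$ successfully received observations $\mathbf{P}[n]$ is uniformly well-conditioned.'' This is false: in the example $\mathbf{A}=\operatorname{diag}(2,-2)$, $\mathbf{C}=[1\;\;1]$ with $m=2$, two received observations at times of the same parity yield a \emph{singular} observability Gramian, so $\mathbf{P}[n]$ can have arbitrarily large condition number after any number of received packets. If instead you wait for $m$ \emph{consecutive} received observations, the Gramian is uniformly invertible, but the tail exponent of that waiting time is the dominant root of a degree-$m$ recursion in $p_e$ and has nothing to do with $\prod_i|\lambda_i|^2$; combining it with your determinant renewal will not reproduce the product threshold. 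The determinant recursion itself is also looser than you suggest: an erasure gives $\det(\mathbf{A}\mathbf{P}\mathbf{A}^\dag+\mathbf{Q})\geq|\det\mathbf{A}_u|^2\det\mathbf{P}$, which is the wrong inequality direction for achievability, and the contraction from a received packet is the state-dependent factor $R/(\mathbf{C}\mathbf{P}\mathbf{C}^\dag+R)$, which can be arbitrarily close to $1$ when the dominant eigenvector of $\mathbf{P}$ is nearly orthogonal to the row space of $\mathbf{C}$. So the clean geometric series you write for $\mathbb{E}[\det\mathbf{P}[n]]$ is not justified either.

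The route actually taken in \cite{Elia_Remote} (and implicit in the paper via Theorem~\ref{thm:lyainter}) avoids $\det\mathbf{P}$ entirely: one works with the suboptimal time-invariant estimator $\widehat{\mathbf{x}}[n{+}1]=\mathbf{A}\widehat{\mathbf{x}}[n]-\beta[n]\mathbf{K}(\mathbf{y}[n]-\mathbf{C}\widehat{\mathbf{x}}[n])$ and bounds the spectral radius of the deterministic linear map $\Sigma\mapsto p_e\mathbf{A}\Sigma\mathbf{A}^\dag+(1-p_e)(\mathbf{A}+\mathbf{K}\mathbf{C})\Sigma(\mathbf{A}+\mathbf{K}\mathbf{C})^\dag$ for a suitable observer gain $\mathbf{K}$. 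Since this is a fixed operator on $\mathbb{C}^{m\times m}$, no conditioning or renewal argument is needed, and the product $\prod_i|\lambda_i|^2$ emerges from that spectral computation rather than from $\det\mathbf{P}[n]$.
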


Therefore, the critical erasure probability characterization boils down to understanding where the gap between $\frac{1}{\prod_{i}|\lambda_i|^2}$ and $\frac{1}{|\lambda_{max}|^2}$ comes from.

In \cite{Yilin_Characterization}, Mo and Sinopoli found two interesting cases that give further insight into this question. The first is when $\mathbf{A}$ is diagonalizable and all eigenvalues of $\mathbf{A}$ have distinct magnitudes --- then the critical erasure probability is $\frac{1}{|\lambda_{max}|^2}$ just it would be in the formulation of \cite{Sahai_Thesis}. The second case is when $\mathbf{A}=\begin{bmatrix} 2 & 0 \\ 0 & -2 \end{bmatrix}$ and $\mathbf{C}=\begin{bmatrix}1 & 1  \end{bmatrix}$ --- the critical erasure probability is $\frac{1}{\prod_{i}|\lambda_i|^2}=\frac{1}{2^4}$. This second case showed that the gap is real and requiring packets to be about a scalar observation can have serious consequences.

To extend these cases and solve the general problem, we will apply insights from observability and introduce the new concept of an \textit{eigenvalue cycle}. As a corollary, we show that in the absence of eigenvalue cycles the critical value becomes $\frac{1}{|\lambda_{max}|^2}$. Furthermore, we show that simply by introducing nonuniform sampling to the sensor, eigenvalue cycles can be broken and the critical erasure probability becomes effectively $\frac{1}{|\lambda_{max}|^2}$.

These results can be surprising if we remember that computing random Lyapunov exponents are difficult problems in general~\cite{tsitsiklis1997lyapunov}. However, the intermittent Kalman filtering problem turns out to have a special structure which makes the problem tractable. Precisely speaking, as we will see in Section~\ref{sec:separability}, the subspaces of the vector state can be separated asymptotically.
To justify such separation, we use ideas from information theory (for example, decoding functions~\cite{nazer2007computation} or successive decoding~\cite{Cover}). Therefore, the whole system can be divided into parallel sub-systems in effect. As we will see in Section~\ref{sec:powerproperty}, each sub-system can be solved using ideas from large deviation theory~\cite{Dembo}.

\section{Definitions and Notations}
\label{sec:def}
Before we start the formal discussion of the problem, we first have to introduce mathematical definitions and notations.

We will use controllability and observability notions from linear system theory.
\begin{definition}
For a $m \times m$ matrix $\mathbf{A}$ and a $m \times p$ matrix $\mathbf{B}$, $(\mathbf{A},\mathbf{B})$ is called controllable if
\begin{align}
\mathbf{\mathcal{C}}=\begin{bmatrix}
\mathbf{B} & \mathbf{A}\mathbf{B} & \cdots & \mathbf{A^{m-1}}\mathbf{B}
\end{bmatrix} \nonumber
\end{align}
is full rank, or equivalently $\begin{bmatrix} \lambda \mathbf{I} - \mathbf{A} & \mathbf{B} \end{bmatrix}$ is full rank for all $\lambda \in \mathbb{C}$. Moreover, we call an eigenvalue $\lambda$ of $\mathbf{A}$ uncontrollable if $\begin{bmatrix} \lambda \mathbf{I} - \mathbf{A} & \mathbf{B} \end{bmatrix}$ is rank deficient.
\label{def:con}
\end{definition}
\begin{definition}
For a $m \times m$ matrix $\mathbf{A}$ and a $l \times m$ matrix $\mathbf{C}$, $(\mathbf{A},\mathbf{C})$ is called observable if
\begin{align}
\mathbf{\mathcal{O}}=\begin{bmatrix}
\mathbf{C} \\
\mathbf{C}\mathbf{A} \\
\vdots \\
\mathbf{C}\mathbf{A^{m-1}}
\end{bmatrix} \nonumber
\end{align}
is full rank, or equivalently $\begin{bmatrix} \lambda \mathbf{I} -\mathbf{A} \\ \mathbf{C} \end{bmatrix}$ is full rank for all $\lambda \in \mathbb{C}$. Moreover, we call an eigenvalue $\lambda$ of $\mathbf{A}$ unobservable if $\begin{bmatrix} \lambda \mathbf{I} -\mathbf{A} \\ \mathbf{C} \end{bmatrix}$ is rank deficient.
\end{definition}

We will use Bernoulli processes and geometric random variables from probability theory.
\begin{definition}
An one-sided discrete-time random process $a[n]~(n \geq 0)$ is called a Bernoulli random process with probability $p$ if $a[n]$ are i.i.d.~random variables with the following probability mass function (p.m.f.):
\begin{align}
\left\{
\begin{array}{l}
\mathbb{P}(a[n]=1)=p\\
\mathbb{P}(a[n]=0)=1-p
\end{array}\right.  \nonumber
\end{align}
We also call $a[n]$ as a Bernoulli random variable with erasure probability $1-p$. A two-sided Bernoulli random process is defined in the same way except that $n$ comes from the integers.
\end{definition}
\begin{definition}
A random variable $X \in \mathbb{Z^+}$ is called a geometric random variable with probability $p$ if it has a probability mass function
$\mathbb{P}\{ X=x \} = p(1-p)^{x}$ for $x \geq 0$. We also call $X$ as a geometric random variable with erasure probability $1-p$.
\end{definition}

Then, we have the following relationship between Bernoulli random processes and geometric random variables. Let
\begin{align}
X:=\min\{n \in \mathbb{Z}^+: a[n]=1 \mbox{ where $a[n]$ is a Bernoulli random variable with probability $p$} \}. \nonumber
\end{align}
Then, $X$ is a geometric random variable with probability $p$.

We will also use the following basic notions about matrices.
\begin{definition}
Given a matrix $\mathbf{A} \in \mathbb{C}^{m \times m}$, $|\mathbf{A}|_{max}$ is the elementwise max norm of $\mathbf{A}$ i.e. $|\mathbf{A}|_{max}=\max_{1 \leq i,j \leq m}|a_{ij}|$.
\end{definition}
\begin{definition}
Given a matrix $\mathbf{A} \in \mathbb{C}^{m \times m}$, $\dim \mathbf{A}$ denotes $m$. Given a column vector $\mathbf{x_1} \in \mathbb{C}^{m \times 1}$ and a row vector $\mathbf{x_2} \in \mathbb{C}^{1 \times m}$, $\dim \mathbf{x_1}$ and $\dim \mathbf{x_2}$ denote $m$.
\end{definition}
\begin{definition}
Given ${n_i} \times {n_i}$ matrices $\mathbf{\mathbf{A_i}}$ for $i \in \{1,2,\cdots, m\}$, $diag\{ \mathbf{A_1}, \mathbf{A_2}, \cdots, \mathbf{A_m} \}$ is a $\left(\sum^{m}_{i=1} n_i \right) \times \left(\sum^{m}_{i=1} n_i \right)$ matrix in the form of
$
\begin{bmatrix}
\mathbf{A_1} & 0 & \cdots & 0 \\
0 & \mathbf{A_2} & \cdots & 0 \\
\vdots & \vdots & \ddots & \vdots \\
0 & 0 & \cdots & \mathbf{A_m} \\
\end{bmatrix}
$.
\end{definition}

We also define modulo operation on numbers.
\begin{definition}
A sequence, $a_1,a_2,\cdots, a_n$, is called congruent mod $p$ if $a_i \equiv a_j (mod\ p)$ for all $i,j$.
\end{definition}

\begin{definition}
A sequence, $a_1,a_2,\cdots, a_n$, is called pairwise incongruent mod $p$ if $a_i \not\equiv a_j  (mod\ p)$ for all $i \neq j$.
\end{definition}

Since we will only focus on the scalings behavior, we will use the following definition which can be used as big $O$ and big $\Omega$ notations in complexity theory.
\begin{definition}
Consider two real functions $a(t)$ and $b(t)$ whose common domain is $T \in \mathbb{R}$. We say
$a(t) \lesssim b(t)$ for $t$ on $T$ if there exists a positive $c$ such that $a(t) \leq c b(t)$ for all $t \in T$.
\label{def:lesssim}
\end{definition}
We omit the argument and the domain of the above definition, when they are obvious from the context and do not cause confusion.

We will also use an abbreviated notation for a sequence of random variables.
\begin{definition}
Given a discrete time random variable $a[0], \cdots, a[n]$, we denote $a[n_1],\cdots,a[n_2]$ as $a_{n_1}^{n_2}$, and $a[0],\cdots,a[n]$ as $a^n$. Likewise given a continuous time random variable $b(t)$, we define $\mathbf{b}(t_1:t_2)$ to be $\mathbf{b}(t)$ for $t_1 \leq t \leq t_2$.
\end{definition}
\section{Intermittent Observability as an Extension of Stability}
\label{sec:connecting}

As we mentioned before, the characterization of the critical erasure probability can be considered from two different directions --- an extension of stability or an extension of observability. In \cite{Sinopoli_Kalman}, Sinopoli \textit{et al.} took the first approach, and attempted to characterize the critical erasure probability by the Lyapunov stability condition. Let's review a property of Schur complements and Lyapunov stability theorem.

\begin{lemma}[Schur complements]
Let $\mathbf{X}=\begin{bmatrix} \mathbf{A} & \mathbf{B} \\ \mathbf{B}^\dag & \mathbf{C} \end{bmatrix}$ be a symmetric matrix and $\mathbf{C}$ be invertible. Then,  $\mathbf{X} \succ 0$ if and only if $\mathbf{C} \succ 0$ and $\mathbf{A} - \mathbf{B}\mathbf{C}^{-1} \mathbf{B}^\dag \succ 0$.
\label{lem:schur}
\end{lemma}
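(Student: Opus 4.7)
The plan is to reduce $\mathbf{X}$ to block diagonal form by a congruence transformation, which preserves positive definiteness, and then read off the equivalence directly.

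First I would introduce the block lower-triangular matrix
\begin{align}
\mathbf{L} = \begin{bmatrix} \mathbf{I} & -\mathbf{B}\mathbf{C}^{-1} \\ 0 & \mathbf{I} \end{bmatrix}, \nonumber
\end{align}
which is well-defined because $\mathbf{C}$ is assumed invertible, and which is itself invertible with unit determinant. A direct multiplication then shows
\begin{align}
\mathbf{L}\,\mathbf{X}\,\mathbf{L}^\dag = \begin{bmatrix} \mathbf{A} - \mathbf{B}\mathbf{C}^{-1}\mathbf{B}^\dag & 0 \\ 0 & \mathbf{C} \end{bmatrix}, \nonumber
\end{align}
so $\mathbf{X}$ is congruent to a block-diagonal matrix whose blocks are exactly the two Schur-complement expressions appearing in the statement.

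Next I would invoke the standard fact that congruence by an invertible matrix preserves positive definiteness: for any invertible $\mathbf{L}$ and Hermitian $\mathbf{X}$, $\mathbf{X} \succ 0$ holds if and only if $\mathbf{L}\mathbf{X}\mathbf{L}^\dag \succ 0$ (since $\mathbf{y}^\dag \mathbf{X} \mathbf{y} = (\mathbf{L}^{-\dag}\mathbf{y})^\dag (\mathbf{L}\mathbf{X}\mathbf{L}^\dag)(\mathbf{L}^{-\dag}\mathbf{y})$ and $\mathbf{L}^{-\dag}$ is a bijection on $\mathbb{C}^{\dim \mathbf{X}}$). Thus $\mathbf{X} \succ 0$ is equivalent to the block-diagonal matrix above being positive definite.

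Finally I would note that a block-diagonal Hermitian matrix is positive definite if and only if each diagonal block is positive definite, which is immediate by plugging in test vectors supported on one block at a time. Applied here, this gives $\mathbf{X} \succ 0$ if and only if both $\mathbf{C} \succ 0$ and $\mathbf{A} - \mathbf{B}\mathbf{C}^{-1}\mathbf{B}^\dag \succ 0$, as desired. There is no real obstacle in this proof — the only thing to double-check is the block multiplication yielding the claimed block-diagonal form, which is a routine calculation I would verify by writing out the $2\times 2$ block product carefully.
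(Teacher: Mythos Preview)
Your proof is correct and is the standard congruence-transformation argument for Schur complements; the paper itself does not give a proof but merely cites \cite[p.~650]{Boyd}, where essentially this same argument appears. (One tiny slip: your $\mathbf{L}$ is block \emph{upper}-triangular, not lower-triangular, but this is purely terminological and does not affect the argument.)
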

\begin{proof}
See \cite[p. 650]{Boyd}.
\end{proof}

\begin{theorem}[Lyapunov Stability Theorem]
Given a linear system \eqref{eqn:dis:system}, the following three conditions are equivalent.\\
(i) The system is stable.\\
(ii)
$\exists \mathbf{M},\mathbf{N} \succ \mathbf{0} $ such that
\begin{align}
\mathbf{M} - \mathbf{A} \mathbf{M} \mathbf{A}^\dag = \mathbf{N}. \nonumber
\end{align}\\
(iii)
$\exists \mathbf{M} \succ \mathbf{0}$ such that
\begin{align}
\begin{bmatrix}
\mathbf{M} & \mathbf{A}\mathbf{M} \\
\mathbf{M} \mathbf{A}^\dag  & \mathbf{M}
\end{bmatrix} \succ \mathbf{0}. \nonumber
\end{align}\label{thm:lyapunov}
\end{theorem}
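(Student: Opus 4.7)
The plan is to prove (ii)$\,\Leftrightarrow\,$(iii) by a direct application of the Schur complement lemma just stated, and then to prove (i)$\,\Leftrightarrow\,$(ii) by the classical Lyapunov argument: eigenvalue extraction in one direction and an explicit series construction in the other.

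For (ii)$\,\Leftrightarrow\,$(iii), I would apply Lemma~\ref{lem:schur} to the block matrix in (iii) with $\mathbf{C}\leftarrow \mathbf{M}$, $\mathbf{B}\leftarrow \mathbf{A}\mathbf{M}$. Since $\mathbf{M}\succ\mathbf{0}$ is invertible, the block matrix is positive definite iff $\mathbf{M}\succ\mathbf{0}$ and $\mathbf{M} - (\mathbf{A}\mathbf{M})\mathbf{M}^{-1}(\mathbf{A}\mathbf{M})^\dag = \mathbf{M} - \mathbf{A}\mathbf{M}\mathbf{A}^\dag \succ\mathbf{0}$. Calling the latter difference $\mathbf{N}$ gives (ii) exactly, and conversely given (ii) one sets things up the other way. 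This step is short and mechanical.

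For (i)$\,\Rightarrow\,$(ii), I would construct the solution by the standard infinite sum. Pick any $\mathbf{N}\succ\mathbf{0}$ (for instance $\mathbf{N}=\mathbf{I}$) and define
\begin{align}
\mathbf{M} := \sum_{k=0}^{\infty} \mathbf{A}^k \mathbf{N} (\mathbf{A}^\dag)^k. \nonumber
\end{align}
Stability of $\mathbf{A}$ (spectral radius less than $1$) makes this series converge absolutely, since $\|\mathbf{A}^k\|$ decays geometrically. The sum is manifestly positive semidefinite, and the $k=0$ term $\mathbf{N}\succ\mathbf{0}$ forces $\mathbf{M}\succ\mathbf{0}$. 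A telescoping check shows $\mathbf{M} - \mathbf{A}\mathbf{M}\mathbf{A}^\dag = \mathbf{N}$, which is (ii).

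For (ii)$\,\Rightarrow\,$(i), I would test the identity against a left eigenvector of $\mathbf{A}$. If $\mathbf{u}^\dag \mathbf{A} = \lambda\, \mathbf{u}^\dag$ with $\mathbf{u}\neq\mathbf{0}$, then
\begin{align}
\mathbf{u}^\dag \mathbf{N} \mathbf{u} \;=\; \mathbf{u}^\dag \mathbf{M} \mathbf{u} - \mathbf{u}^\dag \mathbf{A}\mathbf{M}\mathbf{A}^\dag \mathbf{u} \;=\; (1-|\lambda|^2)\,\mathbf{u}^\dag \mathbf{M} \mathbf{u}. \nonumber
\end{align}
The left side is strictly positive and $\mathbf{u}^\dag \mathbf{M}\mathbf{u}>0$, so $|\lambda|<1$. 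Since every eigenvalue of $\mathbf{A}$ is a left eigenvalue as well, every eigenvalue has modulus strictly less than $1$, i.e.\ the system is stable.

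No step here poses a real obstacle: the Schur complement step is immediate from Lemma~\ref{lem:schur}, and both directions of the classical Lyapunov equivalence are standard linear-algebraic calculations. The only mild subtlety is using \emph{left} eigenvectors (equivalently, right eigenvectors of $\mathbf{A}^\dag$) in the (ii)$\,\Rightarrow\,$(i) direction so that the quadratic form pulls out a clean factor $|\lambda|^2$; using a right eigenvector of $\mathbf{A}$ would not interact correctly with the $\mathbf{A}\mathbf{M}\mathbf{A}^\dag$ term.
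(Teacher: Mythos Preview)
Your proposal is correct and follows the same approach as the paper: the equivalence (ii)$\,\Leftrightarrow\,$(iii) is handled identically via the Schur complement of Lemma~\ref{lem:schur}, and for (i)$\,\Leftrightarrow\,$(ii) the paper simply cites standard linear system theory textbooks, whereas you spell out the classical series construction and eigenvector argument that those references contain.
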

\begin{proof}
The equivalence between (i) and (ii) can be easily found in linear system theory books including \cite[p.30]{KumarVaraiya} and \cite[Theorem 5.D5]{Chen}. The equivalence between (ii) and (iii) comes from Schur complements in Lemma~\ref{lem:schur} by simply choosing $\mathbf{A}=\mathbf{M}$, $\mathbf{B}=\mathbf{A}\mathbf{M}$ and $\mathbf{C}=\mathbf{M}$.
\end{proof}

Before we consider intermittent observability, let's first characterize the standard observability condition using Lyapunov stability.
The fundamental theorem of observability tells that if $(\mathbf{A},\mathbf{C})$ is observable, the eigenvalues of the closed loop system $\mathbf{A}+\mathbf{K}\mathbf{C}$ can be placed anywhere by a proper selection of $\mathbf{K}$.
Based on this, we can characterize observability in terms of Lyapunov stability.

\begin{theorem}
Given a linear system \eqref{eqn:dis:system} and \eqref{eqn:dis:system2} with $p_e=0$, the following four conditions are equivalent.\\
(i) All the unstable modes of $\mathbf{A}$ are observable.\\
(ii) $\exists \mathbf{K}$ such that $\mathbf{A}+\mathbf{K}\mathbf{C}$ is stable.\\
(iii) $\exists \mathbf{K}$ and $\mathbf{M},\mathbf{N} \succ \mathbf{0}$ such that\\
\begin{align}
\mathbf{M} - (\mathbf{A}+\mathbf{K}\mathbf{C}) \mathbf{M} (\mathbf{A}+\mathbf{K}\mathbf{C})^\dag = \mathbf{N}. \nonumber
\end{align}
(iv) $\exists \mathbf{K}$ and $\mathbf{M} \succ \mathbf{0}$ such that\\
\begin{align}
\begin{bmatrix} \mathbf{M} & (\mathbf{A}+ \mathbf{K}\mathbf{C})\mathbf{M} \\ \mathbf{M} (\mathbf{A}+ \mathbf{K}\mathbf{C})^\dag  & \mathbf{M} \end{bmatrix} \succ 0.  \nonumber
\end{align}\label{thm:lyaob}
\end{theorem}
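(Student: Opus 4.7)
The plan is to combine the Lyapunov Stability Theorem (Theorem~\ref{thm:lyapunov}), applied not to $\mathbf{A}$ itself but to the closed-loop matrix $\mathbf{A}+\mathbf{K}\mathbf{C}$, with the classical detectability argument via PBH and the Kalman observability decomposition. The equivalences (ii) $\Leftrightarrow$ (iii) $\Leftrightarrow$ (iv) will come essentially for free once the quantifier over $\mathbf{K}$ is pulled outside, and the only real content is (i) $\Leftrightarrow$ (ii).

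First I would dispose of (ii) $\Leftrightarrow$ (iii) $\Leftrightarrow$ (iv). For any fixed $\mathbf{K}$, the matrix $\mathbf{A}+\mathbf{K}\mathbf{C}$ is just an $m \times m$ matrix, so Theorem~\ref{thm:lyapunov} asserts that stability of $\mathbf{A}+\mathbf{K}\mathbf{C}$, existence of $\mathbf{M},\mathbf{N}\succ\mathbf{0}$ satisfying the Lyapunov equation, and the $2\times 2$ block LMI are equivalent; taking the existential quantifier over $\mathbf{K}$ preserves equivalence. The Schur-complement step (iii) $\Leftrightarrow$ (iv) is exactly the calculation used inside Theorem~\ref{thm:lyapunov}: apply Lemma~\ref{lem:schur} with the lower-right block of the LMI in the role of $\mathbf{C}$, so that positivity of the full block matrix is equivalent to $\mathbf{M}\succ 0$ together with positivity of the Schur complement $\mathbf{M}-(\mathbf{A}+\mathbf{K}\mathbf{C})\mathbf{M}(\mathbf{A}+\mathbf{K}\mathbf{C})^\dag$, which can be identified with the Lyapunov residual $\mathbf{N}$.

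The real content is (i) $\Leftrightarrow$ (ii), which is the standard detectability theorem. For (ii) $\Rightarrow$ (i) I would give a direct PBH argument: if some unstable eigenvalue $\lambda$ of $\mathbf{A}$ were unobservable, then there is a nonzero $\mathbf{v}$ with $\mathbf{A}\mathbf{v}=\lambda\mathbf{v}$ and $\mathbf{C}\mathbf{v}=\mathbf{0}$; consequently $(\mathbf{A}+\mathbf{K}\mathbf{C})\mathbf{v}=\lambda\mathbf{v}$ for \emph{every} $\mathbf{K}$, so $\lambda$ survives as an eigenvalue of the closed-loop matrix and prevents stability, contradicting (ii). For (i) $\Rightarrow$ (ii) I would invoke the Kalman observability decomposition, bringing the pair into the standard block-triangular form in which an observable sub-pair $(\mathbf{A}_o,\mathbf{C}_o)$ is separated from an unobservable block $\mathbf{A}_{\bar{o}}$ whose eigenvalues are precisely the unobservable eigenvalues of $\mathbf{A}$ and therefore already stable by hypothesis. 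Choosing the transformed injection gain as $\bigl[\widetilde{\mathbf{K}}_1^\dag, \mathbf{0}\bigr]^\dag$ with $\widetilde{\mathbf{K}}_1$ selected by dual pole placement so that $\mathbf{A}_o+\widetilde{\mathbf{K}}_1\mathbf{C}_o$ is stable yields a block-triangular closed-loop matrix whose diagonal blocks are both stable, and pushing back through the change of basis produces the desired $\mathbf{K}$.

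The only delicate piece is this pole-placement step inside (i) $\Rightarrow$ (ii): it requires the Kalman observability decomposition together with the dual (via conjugate transpose) of the standard pole-placement theorem for controllable pairs. Both are entirely classical (see for instance \cite{Chen}), and I would simply cite them rather than reproving them; everything else is bookkeeping around an already-proved scalar-case equivalence.
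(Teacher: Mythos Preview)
Your proposal is correct and matches the paper's approach: the paper likewise obtains (ii) $\Leftrightarrow$ (iii) $\Leftrightarrow$ (iv) by applying Theorem~\ref{thm:lyapunov} to $\mathbf{A}+\mathbf{K}\mathbf{C}$, and for (i) $\Leftrightarrow$ (ii) simply cites the fundamental theorem of observability in \cite[Theorem~8.M3]{Chen}. Your PBH/Kalman-decomposition sketch just unpacks what that citation contains.
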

\begin{proof}
The equivalence of (i) and (ii) is the fundamental theorem of observability~\cite[Theorem 8.M3]{Chen}. The equivalence of (ii), (iii) and (iv) follows from Theorem~\ref{thm:lyapunov}.
\end{proof}

Unfortunately, this observability characterization based on Lyapunov stability cannot be generalized for intermittent observability.
The main reason is that in intermittent Kalman filtering the optimal estimator does not converge to a linear time-invariant one.
In conventional Kalman filtering for linear time-invariant systems, it is well-known that the optimal Kalman filter converges to the linear time-invariant estimator which is known as the \textit{Wiener filter}~\cite{wiener1964extrapolation}. In fact, we can directly plug in the Wiener filter gain for the matrix $\mathbf{K}$ of Theorem~\ref{thm:lyaob}. However, when observations are erased, the optimal estimator also depends on the erasure pattern and since the erasure pattern is random and time-varying, the whole system becomes random and time-varying. Therefore, the optimal estimator is also time-varying and does not converge.

In \cite{Sinopoli_Kalman}, Sinopoli \textit{et al.} wrote the optimal time-varying linear estimator in a recursive equation form. The strictly causal estimator $\mathbf{\widehat{x}}[n] = \mathbb{E}[\mathbf{x}[n]|\mathbf{y}^{n-1}]$, is given as follows:
\begin{align}
\mathbf{\widehat{x}}[n+1]=\mathbf{A}\mathbf{\widehat{x}}[n]-\mathbf{K_n}(\mathbf{y}[n]-\mathbf{C}\mathbf{\widehat{x}}[n]) \label{eqn:lyainter}
\end{align}
Here, $\mathbf{K_n}$ depends not only on $n$ but also the history of the $\beta[n]$, and does not converge to a constant matrix in probability.
Therefore, in the intermittent Kalman filtering problem it is not possible to find a stability-optimal time-invariant gain $\mathbf{K}$ in Theorem~$\ref{thm:lyaob}$.

However, we can still force the estimator to be linear time-invariant, and thereby find a sufficient condition for intermittent observability using Lyapunov stability ideas. This is the idea that Sinopoli \textit{et al.} used to find a lower bound on the critical erasure probability in \cite{Sinopoli_Kalman}. By restricting the filtering gain to be a linear time-invariant matrix $\mathbf{K}$, we get the following sub-optimal estimator which looks similar to \eqref{eqn:lyainter}.
\begin{align}
\mathbf{\widehat{x}}[n+1]&=\mathbf{A}\mathbf{\widehat{x}}[n]-\beta[n]\mathbf{K}(\mathbf{y}[n]-\mathbf{C}\mathbf{\widehat{x}}[n])
\label{eqn:connect:sub1}
\end{align}
with $\mathbf{\widehat{x}}[0]=\mathbf{0}$. By analyzing this sub-optimal estimator, Sinopoli \textit{et al.} found the following sufficient condition for intermittent observability. Here, we further prove that their condition is both necessary and sufficient for the sub-optimal estimators of \eqref{eqn:connect:sub1} to have an expected estimation error uniformly bounded over time.\footnote{This fact is implicitly shown in Elia's paper~\cite{Elia_Remote}.}
\begin{theorem}[Extension of Theorem~5 of \cite{Sinopoli_Kalman}]
Given an intermittent system $(\mathbf{A},\mathbf{B},\mathbf{C},\sigma,\sigma')$ with erasure probability $p_e$, let $(\mathbf{A},\mathbf{B})$ be controllable, $\sigma < \infty$, and $\sigma' > 0$.
Then, the following three conditions are equivalent.\\
(i) The system is intermittently observable by the suboptimal estimator of \eqref{eqn:connect:sub1} with some $\mathbf{K}$.\\
(ii) $\exists \mathbf{K}$ and $\mathbf{M}, \mathbf{N} \succ \mathbf{0}$ such that
\begin{align}
\mathbf{M} - p_e \mathbf{A} \mathbf{M} \mathbf{A}^\dag - (1- p_e)(\mathbf{A}+\mathbf{K}\mathbf{C})\mathbf{M}(\mathbf{A}+\mathbf{K}\mathbf{C})^\dag = \mathbf{N}. \nonumber
\end{align}
(iii) $\exists \mathbf{K}$ and $\mathbf{M} \succ \mathbf{0}$ such that
\begin{align}
\begin{bmatrix}
\mathbf{M} & \sqrt{1-p_e}( \mathbf{M}\mathbf{A} + \mathbf{K}\mathbf{C})  & \sqrt{p_e} \mathbf{M} \mathbf{A} \\
\sqrt{1-p_e}( \mathbf{M}\mathbf{A} + \mathbf{K}\mathbf{C})^\dag & \mathbf{M} & 0 \\
\sqrt{p_e}( \mathbf{M}\mathbf{A} )^\dag & 0 & \mathbf{M}
\end{bmatrix}\succ \mathbf{0}. \nonumber
\end{align}
\label{thm:lyainter}
\end{theorem}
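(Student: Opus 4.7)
My plan is to reduce the question to the stability of an affine linear iteration satisfied by the error covariance, and then to invoke two classical tools: a Lyapunov stability argument that parallels Theorem~\ref{thm:lyapunov} for (i)$\Leftrightarrow$(ii), and Schur complements (Lemma~\ref{lem:schur}) together with a change of filter-gain variable for (ii)$\Leftrightarrow$(iii). First define the error $\mathbf{e}[n]:=\mathbf{x}[n]-\mathbf{\widehat{x}}[n]$. Substituting (\ref{eqn:dis:system}), (\ref{eqn:dis:system2}) and (\ref{eqn:connect:sub1}) and using $\beta[n]^2=\beta[n]$ gives
\begin{align}
\mathbf{e}[n+1]=(\mathbf{A}+\beta[n]\mathbf{K}\mathbf{C})\mathbf{e}[n]+\mathbf{B}\mathbf{w}[n]+\beta[n]\mathbf{K}\mathbf{v}[n]. \nonumber
\end{align}
Since $\beta[n]$ is independent of $\mathbf{e}[n]$, $\mathbf{w}[n]$ and $\mathbf{v}[n]$, writing $\mathbf{P}[n]:=\mathbb{E}[\mathbf{e}[n]\mathbf{e}[n]^\dag]$ and taking the expected outer product yields the affine iteration $\mathbf{P}[n+1]=\mathcal{L}(\mathbf{P}[n])+\mathbf{Q}$, where
\begin{align}
\mathcal{L}(\mathbf{X}):=p_e\mathbf{A}\mathbf{X}\mathbf{A}^\dag+(1-p_e)(\mathbf{A}+\mathbf{K}\mathbf{C})\mathbf{X}(\mathbf{A}+\mathbf{K}\mathbf{C})^\dag \nonumber
\end{align}
is a positive (hence monotone) linear operator on Hermitian matrices and $\mathbf{Q}:=\mathbf{B}\,\mathbb{E}[\mathbf{w}\mathbf{w}^\dag]\mathbf{B}^\dag+(1-p_e)\mathbf{K}\,\mathbb{E}[\mathbf{v}\mathbf{v}^\dag]\mathbf{K}^\dag$. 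Condition (i) is exactly the statement that the orbit $\{\mathbf{P}[n]\}$ is uniformly bounded.

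For (ii)$\Rightarrow$(i) I would pick a scalar $c>0$ with $c\mathbf{M}\succeq\mathbf{P}[0]$ and $c\mathbf{N}\succeq\mathbf{Q}$; then monotonicity of $\mathcal{L}$ together with the identity $\mathcal{L}(c\mathbf{M})+c\mathbf{N}=c\mathbf{M}$ gives $\mathbf{P}[n]\preceq c\mathbf{M}$ for all $n$ by induction. For (i)$\Rightarrow$(ii) I would first extract $\rho(\mathcal{L})<1$ and then construct $\mathbf{M}$. Restarting the iteration at $\mathbf{P}'[0]=\mathbf{0}$ makes $\mathbf{P}'[n]=\sum_{k=0}^{n-1}\mathcal{L}^k(\mathbf{Q})$ monotone non-decreasing and dominated by the bounded original trajectory (since their difference equals $\mathcal{L}^n(\mathbf{P}[0])\succeq\mathbf{0}$), so it converges to some $\mathbf{P}_\infty$ satisfying $\mathbf{P}_\infty-\mathcal{L}(\mathbf{P}_\infty)=\mathbf{Q}$. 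Controllability of $(\mathbf{A},\mathbf{B})$ combined with $\sigma'>0$ forces $\mathbf{P}_\infty\succ\mathbf{0}$: on the realization $\beta[0]=\cdots=\beta[k-1]=0$ (probability $p_e^k$) the Grammian bound $\mathcal{L}^k(\mathbf{Q})\succeq p_e^k\sigma'^2\mathbf{A}^k\mathbf{B}\mathbf{B}^\dag(\mathbf{A}^\dag)^k$ yields
\begin{align}
\mathbf{P}_\infty\succeq\sigma'^2\sum_{k=0}^{m-1}p_e^k\mathbf{A}^k\mathbf{B}\mathbf{B}^\dag(\mathbf{A}^\dag)^k\succ\mathbf{0} \nonumber
\end{align}
(the degenerate case $p_e=0$ reducing to standard Kalman filtering). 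The Cauchy criterion for the convergent sum gives $\mathcal{L}^N(\mathbf{P}_\infty)\to\mathbf{0}$, and since $\mathbf{P}_\infty\succ\mathbf{0}$ dominates every Hermitian matrix up to a scalar (after decomposing into positive and negative parts), monotonicity of $\mathcal{L}$ promotes this to $\mathcal{L}^N\to 0$ in operator norm, i.e.\ $\rho(\mathcal{L})<1$. Finally, setting $\mathbf{M}:=\sum_{k=0}^\infty\mathcal{L}^k(\mathbf{I})$ gives $\mathbf{M}-\mathcal{L}(\mathbf{M})=\mathbf{I}\succ\mathbf{0}$ with $\mathbf{M}\succeq\mathbf{I}\succ\mathbf{0}$.

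The equivalence (ii)$\Leftrightarrow$(iii) is pure algebra. The key is the reparametrization $\mathbf{K}=\mathbf{M}\widetilde{\mathbf{K}}$ that linearizes the LMI in the unknowns so that Lemma~\ref{lem:schur} applies cleanly. Applying the Schur complement to (iii) with the bottom-right block $\mathrm{diag}(\mathbf{M},\mathbf{M})\succ\mathbf{0}$ reduces it to
\begin{align}
\mathbf{M}-(1-p_e)(\mathbf{M}\mathbf{A}+\mathbf{K}\mathbf{C})\mathbf{M}^{-1}(\mathbf{M}\mathbf{A}+\mathbf{K}\mathbf{C})^\dag-p_e\mathbf{M}\mathbf{A}\mathbf{M}^{-1}\mathbf{A}^\dag\mathbf{M}\succ\mathbf{0}. \nonumber
\end{align}
Writing $\mathbf{K}=\mathbf{M}\widetilde{\mathbf{K}}$ gives $\mathbf{M}\mathbf{A}+\mathbf{K}\mathbf{C}=\mathbf{M}(\mathbf{A}+\widetilde{\mathbf{K}}\mathbf{C})$; conjugating both sides by $\mathbf{M}^{-1}$ and setting $\mathbf{M}':=\mathbf{M}^{-1}\succ\mathbf{0}$ produces exactly condition (ii) with gain $\widetilde{\mathbf{K}}$ and Lyapunov matrix $\mathbf{M}'$. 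The reverse direction retraces these steps under the bijection $(\mathbf{M}',\widetilde{\mathbf{K}})\mapsto((\mathbf{M}')^{-1},(\mathbf{M}')^{-1}\widetilde{\mathbf{K}})$ on feasible pairs.

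The hardest part will be the positivity bookkeeping in (i)$\Rightarrow$(ii): simultaneously showing that $\mathbf{P}_\infty$ is \emph{strictly} positive definite, upgrading bounded orbits to the strict spectral bound $\rho(\mathcal{L})<1$, and extracting an $\mathbf{N}\succ\mathbf{0}$, even when $\mathbf{Q}$ itself is only positive semidefinite (as happens whenever $\mathbf{B}$ has fewer columns than rows). This is the one place where the controllability hypothesis on $(\mathbf{A},\mathbf{B})$ and the strict lower bound $\sigma'>0$ are truly essential; the remaining steps are direct analogues of Theorem~\ref{thm:lyapunov} and Lemma~\ref{lem:schur}.
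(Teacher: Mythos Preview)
Your proposal is correct. The implications (ii)$\Rightarrow$(i) and (ii)$\Leftrightarrow$(iii) are essentially the paper's arguments: the paper also scales $\mathbf{M},\mathbf{N}$ and inducts for (ii)$\Rightarrow$(i), and for (ii)$\Leftrightarrow$(iii) it applies two successive Schur complements and then the congruence by $\operatorname{diag}(\mathbf{M}^{-1},\mathbf{I},\mathbf{I})$ followed by the same relabeling $\mathbf{M}\mapsto\mathbf{M}^{-1}$, $\mathbf{K}\mapsto\mathbf{M}^{-1}\mathbf{K}$ that you package as $\mathbf{K}=\mathbf{M}\widetilde{\mathbf{K}}$.

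The genuine difference is in (i)$\Rightarrow$(ii). The paper never isolates $\rho(\mathcal{L})<1$; instead it writes out $\mathbb{E}[\mathbf{e}[n]\mathbf{e}[n]^\dag]$ as the explicit sum $\sigma'^2\mathbf{B}\mathbf{B}^\dag+\sum_{k\geq 1}\sum_{l\in\{-1,1\}^k}\mathbf{\Delta}_l\mathbf{\Delta}_l^\dag$ over erasure patterns, and then defines $\mathbf{M}$ and $\mathbf{N}$ directly as \emph{weighted} versions of this series (weights $1,2,\dots,m,m,m,\dots$ for $\mathbf{M}$; the first $m$ terms for $\mathbf{N}$), chosen so that the fixed-point identity $\mathbf{M}=\mathcal{L}(\mathbf{M})+\mathbf{N}$ holds by inspection and both matrices dominate the controllability Grammian $\sigma'^2\sum_{k=0}^{m-1}p_e^k\mathbf{A}^k\mathbf{B}\mathbf{B}^\dag(\mathbf{A}^\dag)^k\succ\mathbf{0}$. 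Your route---show $\mathbf{P}_\infty\succ\mathbf{0}$ from the same Grammian bound, deduce $\mathcal{L}^N\to 0$ on all of Hermitian space by domination, then take $\mathbf{M}=\sum_k\mathcal{L}^k(\mathbf{I})$, $\mathbf{N}=\mathbf{I}$---is cleaner and treats $\mathcal{L}$ as an abstract completely positive map; the paper's route is more hands-on and avoids any spectral-radius reasoning. Both hinge on exactly the same use of controllability and $\sigma'>0$, which you correctly flag as the crux.
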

\begin{proof}
By \eqref{eqn:dis:system}, \eqref{eqn:dis:system2} and \eqref{eqn:connect:sub1}, we can see that the estimation error follows the following dynamics:
\begin{align}
\mathbf{x}[n+1]-\mathbf{\widehat{x}}[n+1]
&=\mathbf{A}\mathbf{x}[n]+\mathbf{B}\mathbf{w}[n]-(\mathbf{A}\mathbf{\widehat{x}}[n]-\beta[n] \mathbf{K}(\mathbf{y}[n]-\mathbf{C}\mathbf{\widehat{x}}[n])) \nonumber \\
&=\mathbf{A}\mathbf{x}[n]+\mathbf{B}\mathbf{w}[n]-(\mathbf{A}\mathbf{\widehat{x}}[n]-\beta[n] \mathbf{K}(\mathbf{C}\mathbf{x}[n]+\mathbf{v}[n]-\mathbf{C}\mathbf{\widehat{x}}[n])) \nonumber \\
&=(\mathbf{A}+\beta[n]\mathbf{K}\mathbf{C})(\mathbf{x}[n]-\mathbf{\widehat{x}}[n])+\mathbf{B}\mathbf{w}[n]
+\beta[n]\mathbf{K}\mathbf{v}[n]. \label{eqn:connect:sub4}
\end{align}
Denote $(\mathbf{x}[n]-\mathbf{\widehat{x}}[n])$ as $(\mathbf{e}[n]$ and $\mathbf{B}\mathbf{w}[n]
+\beta[n]\mathbf{K}\mathbf{v}[n])$ as $\mathbf{w'}[n]$. Then, $\mathbf{w'}[n]$ also has a uniformly bounded variance over time, and \eqref{eqn:connect:sub4} can be written as
\begin{align}
\mathbf{e}[n+1]=(\mathbf{A}+\beta[n]\mathbf{K}\mathbf{C})\mathbf{e}[n]+\mathbf{w'}[n]. \nonumber
\end{align}
Since $\mathbf{e}[n]$ is independent from $\mathbf{w'}[n], \beta[n]$ by causality, the covariance matrix of $\mathbf{e}[n]$ follows the following dynamics:
\begin{align}
\mathbb{E}[ \mathbf{e}[0]\mathbf{e}^\dag[0]] &= \mathbb{E}[\mathbf{x}[0]\mathbf{x}^\dag[0]], \nonumber \\
\mathbb{E}[ \mathbf{e}[n+1]\mathbf{e}^\dag [n+1] ]&= \mathbb{E}[(\mathbf{A}+\beta[n]\mathbf{K}\mathbf{C})\mathbf{e}[n]\mathbf{e}^\dag[n](\mathbf{A}+\beta[n]\mathbf{K}\mathbf{C})^\dag]+\mathbb{E}[\mathbf{w'}[n]\mathbf{w'}^\dag[n]]
\nonumber \\
&=p_e \mathbf{A} \mathbb{E}[\mathbf{e}[n] \mathbf{e}^\dag[n]] \mathbf{A}^\dag
+(1-p_e) (\mathbf{A}+\mathbf{K}\mathbf{C})\mathbb{E}[\mathbf{e}[n]\mathbf{e}^\dag[n]](\mathbf{A}+\mathbf{K}\mathbf{C})^\dag
+\mathbb{E}[\mathbf{w'}[n]\mathbf{w'}^\dag[n]]. \label{eqn:connect:sub2}
\end{align}
Now, we will prove the theorem in three steps.

(1) Condition (i) implies condition (ii).\\
First of all, by linearity we can prove that the estimation error $\mathbb{E}[\mathbf{e}[n]\mathbf{e}^\dag[n]]$ is an increasing function of the variance of the underlying random variables.

Thus, if the system is intermittently observable by $\mathbf{K}$, the same system with $\mathbf{x}[0]=0$, $\mathbf{v}[n]=0$, $\mathbb{E}[\mathbf{w}[n]\mathbf{w}^\dag[n]]= \sigma'^2 \mathbf{I}$ is also intermittently observable. So set $\mathbf{x}[0]=0$, $\mathbf{v}[n]=0$, $\mathbb{E}[\mathbf{w}[n]\mathbf{w}^\dag[n]]= \sigma'^2 \mathbf{I}$ without loss of generality. With these parameters, we have $\mathbb{E}[\mathbf{e}[0]\mathbf{e}^\dag[0]]=0$ and $\mathbb{E}[\mathbf{w'}[n]\mathbf{w'}^\dag[n]]=\sigma'^2\mathbf{B} \mathbf{B}^\dag$. By the recursive equation in \eqref{eqn:connect:sub2}, we can show that for $n \geq 1$, the covariance matrix of $\mathbf{e}[n]$ can be written as
\begin{align}
\mathbb{E}[\mathbf{e}[n]\mathbf{e}^\dag[n]]=\sigma'^2 \mathbf{B}\mathbf{B}^\dag + \sum^n_{k=1} \sum_{l \in \{-1,1 \}^k} \mathbf{\Delta}_l \mathbf{\Delta}_l^\dag.
\end{align}
where
\begin{align}
\mathbf{\Delta}_l := (\sqrt{p_e}\mathbf{A})^{\frac{1+l_1}{2}}(\sqrt{1-p_e}(\mathbf{A}+\mathbf{K}\mathbf{C}))^\frac{1-l_1}{2} \cdots (\sqrt{p_e}\mathbf{A})^{\frac{1+l_k}{2}}(\sqrt{1-p_e}(\mathbf{A}+\mathbf{K}\mathbf{C}))^\frac{1-l_k}{2} \sigma' \mathbf{B}. \nonumber
\end{align}
Here, $l_i=1$ means the $i$th observation was erased and $l_i=-1$ means that the $i$th observation was not erased.

Here, we can notice that $\mathbb{E}[\mathbf{e}[n]\mathbf{e}^\dag[n]]$ are positive semidefinite matrices and increasing in $n$. Furthermore, since the system is intermittently observable by condtion (i), $\mathbb{E}[\mathbf{e}[n]\mathbf{e}^\dag[n]]$ has to  be uniformly bounded over time. Therefore,
\begin{align}
\mathbf{\bar{M}} := \lim_{n \rightarrow \infty}\mathbb{E}[\mathbf{e}[n]\mathbf{e}^\dag[n]] = \sigma'^2 \mathbf{B}\mathbf{B}^\dag + \sum^{\infty}_{k=1} \sum_{l \in \{-1,1 \}^k} \mathbf{\Delta}_l \mathbf{\Delta}_l^\dag \label{eqn:barm}
\end{align}
must exist even though it involves an infinite sum. Let's define $\mathbf{M}$ and $\mathbf{N}$ as follows:
\begin{align}
\mathbf{M}&:= \sigma'^2 \mathbf{B}\mathbf{B}^\dag + \sum_{k=1}^{m-1} \sum_{l=\{-1,1\}^k} (k+1) \mathbf{\Delta}_l \mathbf{\Delta}_l^\dag + \sum_{k'=m}^{\infty} \sum_{l' = \{-1,1\}^{k'}} m \mathbf{\Delta}_l \mathbf{\Delta}_l^\dag \label{eqn:defM}\\
\mathbf{N}&:= \sigma'^2 \mathbf{B}\mathbf{B}^\dag + \sum_{k=1}^{m-1} \sum_{l \in \{-1,1\}^{k}} \mathbf{\Delta}_l \mathbf{\Delta}_l^\dag
\label{eqn:defN}
\end{align}
where $m$ is the dimension of $\mathbf{A}$ as we defined in Section~\ref{sec:statement}. By the definitions of $\mathbf{\bar{M}}$ and $\mathbf{M}$, we can easily see that $m \mathbf{\bar{M}}  \succeq \mathbf{M}$. Therefore, $\mathbf{M}$ also exists even though it involves an infinite sum. Furthermore, by the definitions of $\mathbf{M}$ and $\mathbf{N}$, we can easily see that
\begin{align}
\mathbf{M} \succeq \sigma'^2(\mathbf{B}\mathbf{B}^\dag+p_e \mathbf{A}\mathbf{B}\mathbf{B}^\dag \mathbf{A}^{\dag} + \cdots + p_e^m \mathbf{A}^{m}\mathbf{B}\mathbf{B}^\dag \mathbf{A}^{\dag m} ) \\
\mathbf{N} \succeq \sigma'^2(\mathbf{B}\mathbf{B}^\dag+p_e \mathbf{A}\mathbf{B}\mathbf{B}^\dag \mathbf{A}^{\dag} + \cdots + p_e^m \mathbf{A}^{m}\mathbf{B}\mathbf{B}^\dag \mathbf{A}^{\dag m} )
\end{align}
since the terms in L.H.S. are just subsets of the terms in $\mathbf{M}$ and $\mathbf{N}$.

Thus, we can see that $\mathbf{M} \succ 0$, $\mathbf{N} \succ 0$ since $\begin{bmatrix}
\mathbf{B} & \mathbf{A}\mathbf{B} & \cdots & \mathbf{A}^{m-1}\mathbf{B} \end{bmatrix}$ is full rank by the controllability of $(\mathbf{A},\mathbf{B})$ and all terms $\mathbf{B}\mathbf{B}^\dag, \cdots, p_e^m \mathbf{A}^{m}\mathbf{B}\mathbf{B}^\dag \mathbf{A}^{\dag m}$ are positive semidefinite. Finally, by the definitions and simple matrix algebra, we can verify that $\mathbf{M}$ and $\mathbf{N}$ satisfy the following relationship:
\begin{align}
\mathbf{M} = p_e \mathbf{A}\mathbf{M}\mathbf{A}^\dag + (1-p_e)  (\mathbf{A}+\mathbf{K}\mathbf{C}) \mathbf{M} (\mathbf{A}+\mathbf{K}\mathbf{C})^\dag + \mathbf{N}. \label{eqn:fixedpoint}
\end{align}
Therefore, $\mathbf{M}$ and $\mathbf{N}$ satisfy condition (ii).\footnote{

Consider a fixed point equation, $f(x)=xf(x)+g(x)$. There exist multiple $f(x)$ and $g(x)$ that satisfy this equation. For example, $(f(x), g(x))=(1+x+x^2+ \cdots, 1)$, $(f(x), g(x))=(1+2x+2x^2+ \cdots, 1+x)$, $\cdots$, $(f(x), g(x))=(1+2x+ \cdots + (k-1)x^{k-1} + kx^k + kx^{k+1} \cdots, 1+x+\cdots+x^k)$ all satisfy the equation. Likewise, there are multiple matrices that satisfy the fixed point equation of \eqref{eqn:fixedpoint}. For example, we can easily check that $\mathbf{\bar{M}}$ of \eqref{eqn:barm} and $\mathbf{\bar{N}}:=\sigma'^2 \mathbf{B}\mathbf{B}^\dag$ satisfy \eqref{eqn:fixedpoint}, i.e. $\mathbf{\bar{M}}=p_e \mathbf{A}\mathbf{\bar{M}}\mathbf{A}^\dag+(1-p_e)(\mathbf{A}+\mathbf{K}\mathbf{C})(\mathbf{A}+\mathbf{K}\mathbf{C})^\dag + \mathbf{\bar{N}}$. However, unlike $\mathbf{N}$, $\mathbf{\bar{N}}$ does not have to be positive definite. Thus, the choice of $\mathbf{\bar{M}}$, $\mathbf{\bar{N}}$ is not enough to prove the theorem. Here, we choose $\mathbf{M}$, $\mathbf{N}$ as shown in \eqref{eqn:defM}, \eqref{eqn:defN} as another solution for \eqref{eqn:fixedpoint}. In fact, the choice of coefficient in $\mathbf{M}, \mathbf{N}$ was inspired by the solutions of $f(x)=xf(x)+g(x)$ shown above.}

(2) Condition (ii) implies condition (i).\\
Since $\mathbf{M}$ and $\mathbf{N}$ of condition (ii) are positive definite, we can find $a$ such that
$a^2 \mathbf{M} \succ \mathbb{E}[\mathbf{x}[0]\mathbf{x}^\dag[0]]$ and $a^2 \mathbf{N} \succ \mathbb{E}[\mathbf{w'}[n]\mathbf{w'}^\dag[n]]$ for all $n \in \mathbb{Z}^+$. And we can easily see that even if we replace $\mathbf{K}$, $\mathbf{M}$, $\mathbf{N}$ with $\mathbf{K}$, $a^2 \mathbf{M}$, $a^2 \mathbf{N}$, condition (ii) still holds.

We will prove that $a^2 \mathbf{M} \succ \mathbb{E}[\mathbf{e}[n]\mathbf{e}^\dag[n]]$ for all $n \in \mathbb{Z}^+$ by induction. Since $a^2\mathbf{M} \succ \mathbb{E}[\mathbf{x}[0]\mathbf{x}^\dag[0]]=\mathbb{E}[\mathbf{e}[0]\mathbf{e}^\dag[0]]$, the claim is true for $n=0$. Assume the claim is true for $n$. Then, from the definition of $a$ and \eqref{eqn:connect:sub2},
\begin{align}
\mathbb{E}[\mathbf{e}[n+1]\mathbf{e}^\dag[n+1]] \prec p_e \mathbf{A} (a^2 \mathbf{M}) \mathbf{A}^\dag
+(1-p_e) (\mathbf{A}+\mathbf{K}\mathbf{C})(a^2 \mathbf{M})(\mathbf{A}+\mathbf{K}\mathbf{C})^\dag
+a^2 \mathbf{N}=a^2 \mathbf{M} \nonumber
\end{align}
where the last equality comes from condition (ii). Therefore, the estimation error is uniformly upper bounded by $a^2 \mathbf{M}$ when we use the $\mathbf{K}$ of condition (ii) as a gain matrix, and so condition (ii) implies condition (i).

(3) Condition (ii) is equivalent to condition (iii).\\
Since $\mathbf{M}^{-1} \succ \mathbf{0}$, by Schur complements in Lemma~\ref{lem:schur}, condition (ii) is equivalent to
\begin{align}
\begin{bmatrix}
\mathbf{M}- p_e \mathbf{A} \mathbf{M} \mathbf{A}^\dag  & \sqrt{1-p_e} (\mathbf{A}+\mathbf{K}\mathbf{C}) \\
\sqrt{1-p_e}(\mathbf{A}+\mathbf{K}\mathbf{C})^\dag & \mathbf{M}^{-1}
\end{bmatrix}
\succ \mathbf{0} . \nonumber
\end{align}
Since
\begin{align}
&\begin{bmatrix}
\mathbf{M}- p_e \mathbf{A} \mathbf{M} \mathbf{A}^\dag  & \sqrt{1-p_e} (\mathbf{A}+\mathbf{K}\mathbf{C}) \\
\sqrt{1-p_e}(\mathbf{A}+\mathbf{K}\mathbf{C})^\dag & \mathbf{M}^{-1}
\end{bmatrix} \nonumber\\
&=
\begin{bmatrix}
\mathbf{M} & \sqrt{1-p_e} (\mathbf{A}+\mathbf{K}\mathbf{C}) \\
\sqrt{1-p_e}(\mathbf{A}+\mathbf{K}\mathbf{C})^\dag & \mathbf{M}^{-1}
\end{bmatrix}
-
\begin{bmatrix}
\sqrt{p_e}\mathbf{A} \\
0
\end{bmatrix}
\mathbf{M}
\begin{bmatrix}
\sqrt{p_e}\mathbf{A}^\dag & 0
\end{bmatrix} \nonumber
\end{align}
and $\mathbf{M}^{-1} \succ \mathbf{0}$, we can apply Schur complement again. Thus, condition (ii) is equivalent to
\begin{align}
\begin{bmatrix}
\mathbf{M} & \sqrt{1-p_e}(\mathbf{A}+\mathbf{K}\mathbf{C}) & \sqrt{p_e} \mathbf{A} \\
\sqrt{1-p_e}(\mathbf{A}+\mathbf{K}\mathbf{C})^\dag & \mathbf{M}^{-1} & 0 \\
\sqrt{p_e} \mathbf{A}^\dag & 0 & \mathbf{M}^{-1}
\end{bmatrix} \succ \mathbf{0}. \nonumber
\end{align}
Since $\mathbf{M}^{-1} \succ 0$, this condition is again equivalent to
\begin{align}
&\begin{bmatrix}
\mathbf{M}^{-1} & 0 & 0 \\
0 & \mathbf{I} & 0 \\
0 & 0 & \mathbf{I}
\end{bmatrix}
\begin{bmatrix}
\mathbf{M} & \sqrt{1-p_e}(\mathbf{A}+\mathbf{K}\mathbf{C}) & \sqrt{p_e} \mathbf{A} \\
\sqrt{1-p_e}(\mathbf{A}+\mathbf{K}\mathbf{C})^\dag & \mathbf{M}^{-1} & 0 \\
\sqrt{p_e} \mathbf{A}^\dag & 0 & \mathbf{M}^{-1}
\end{bmatrix}
\begin{bmatrix}
\mathbf{M}^{-1} & 0 & 0 \\
0 & \mathbf{I} & 0 \\
0 & 0 & \mathbf{I}
\end{bmatrix} \nonumber \\
&=
\begin{bmatrix}
\mathbf{M}^{-1} & \sqrt{1-p_e}(\mathbf{M}^{-1}\mathbf{A}+ \mathbf{M}^{-1}\mathbf{K}\mathbf{C}) & \sqrt{p_e}\mathbf{M}^{-1}\mathbf{A} \\
\sqrt{1-p_e}(\mathbf{M}^{-1}\mathbf{A}+ \mathbf{M}^{-1}\mathbf{K}\mathbf{C})^\dag & \mathbf{M}^{-1} & 0 \\
\sqrt{p_e} (\mathbf{M}^{-1}\mathbf{A})^\dag & 0 & \mathbf{M}^{-1}
\end{bmatrix}
\succ \mathbf{0}. \nonumber
\end{align}
Since $\mathbf{M}^{-1} \succ \mathbf{0}$ and $\mathbf{K}$ is an arbitrary matrix, by replacing $\mathbf{M}^{-1}$ by $\mathbf{M}$ and $\mathbf{M}^{-1}\mathbf{K}$ by $\mathbf{K}$ we get condition (iii).
\end{proof}

As we can expect, the conditions of this theorem reduce to those of stability and those of observability when $p_e=1$ and $p_e=0$ respectively. One can easily observe that condition (ii) of Theorem~\ref{thm:lyainter} reduces to condition (ii) of Theorem~\ref{thm:lyapunov} when $p_e=1$ and condition (iii) of Theorem~\ref{thm:lyaob} when $p_e=0$. Likewise, condition (iii) of Theorem~\ref{thm:lyainter} reduces to condition (iii) of Theorem~\ref{thm:lyapunov} and condition (iv) of Theorem~\ref{thm:lyaob} respectively.

Even though condition (ii) and (iii) of Theorem~\ref{thm:lyainter} are equivalent, condition (iii) is preferred since it is given in a LMI (linear matrix inequality) form and convex optimization techniques~\cite{Boyd} are applicable. In fact, in \cite{Sinopoli_Kalman} Sinopoli~\textit{et al.} related condition (iii) with quasi-convex problems.

Since we imposed an additional linear time-invariant constraint on the estimator, Theorem~\ref{thm:lyainter} gives a lower bound on $p_e^\star$. However, we can conclude that this lower bound is loose in general.\footnote{Numerical computation of the lower bound of Theorem~\ref{thm:lyainter} is shown in Figure 4 of \cite{Sinopoli_Kalman}.
For a system with $\mathbf{A}=\begin{bmatrix} 1.25 & 0 \\ 1 & 1.1 \end{bmatrix}$ and $\mathbf{C}=\begin{bmatrix}1 &1 \end{bmatrix}$. The numerical simulation shows the lower bound is approximately $\frac{1}{(1.25 \times 1.1)^2}=0.528\cdots$, while the exact characterization of Theorem~\ref{thm:mainsingle}
tells the critical erasure probability is $\frac{1}{1.25^2}=0.64$.} Moreover, even for stability, the characterization that the magnitudes of all eigenvalues are less than $1$ is much more intuitive than the LMI condition based on Lyapunov stability. Therefore, researchers including \cite{Elia_Remote} and \cite{Yilin_Characterization} were looking for a tight and intuitive characterization of the critical erasure probability.

\section{Intermittent observability as an extension of observability: Main Intuition}
\label{sec:intui}
To reach this goal, we borrow insights from a characterization of observability. $(\mathbf{A},\mathbf{C})$ is observable if and only if for all $s \in \mathbb{C}$
\begin{align}
\begin{bmatrix}
s \mathbf{I} - \mathbf{A} \\
\mathbf{C}
\end{bmatrix} \mbox{ is full rank.}\nonumber
\end{align}
Moreover, by a similarity transform~\cite{Chen} we can assume that $\mathbf{A}$ is in Jordan form\footnote{Throughout the paper, we will use the Jordan form that induces an upper triangular matrix.} without loss of generality. With this additional assumption, the observability condition can be further simplified.
\begin{theorem}[\cite{Chen}]
Consider a linear system with system matrices $(\mathbf{A},\mathbf{C})$ where $\mathbf{A}$ is given in a Jordan form. For an eigenvalue $\lambda$ of $\mathbf{A}$, denote $\mathbf{C}_\lambda$ as a matrix whose columns are consist of the columns of $\mathbf{C}$ which correspond to the first elements of the Jordan blocks in $\mathbf{A}$ associated with $\lambda$. Then, the states associated with $\lambda$ are observable if and only if the rank of $\mathbf{C}_\lambda$ is equal to the number of Jordan blocks associated with $\lambda$. The whole system is observable if and only if all states associated with all eigenvalues are observable.
\label{thm:jordanob}
\end{theorem}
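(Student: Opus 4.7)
The plan is to reduce the claim to the PBH rank test that is quoted just before the theorem, and then read that test off directly from the Jordan structure. First I would note that $\begin{bmatrix} s\mathbf{I}-\mathbf{A} \\ \mathbf{C} \end{bmatrix}$ is automatically full column rank whenever $s$ is not an eigenvalue of $\mathbf{A}$, because the top block is already invertible. So it suffices to check the rank condition at $s=\lambda$ for each eigenvalue $\lambda$ of $\mathbf{A}$ separately, and the set of columns that can participate in a rank deficiency at $\lambda$ are disjoint from those for any other eigenvalue (they live in the Jordan chains attached to $\lambda$). This already gives the decoupling claim in the last sentence of the theorem, so the remaining task is the per-eigenvalue statement.

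Fix an eigenvalue $\lambda$. Since $\mathbf{A}$ is in upper-triangular Jordan form, $\lambda\mathbf{I}-\mathbf{A}$ is block-diagonal into blocks of the form $\lambda\mathbf{I}-J_{\lambda_i}$. Any block with $\lambda_i\neq\lambda$ is invertible and therefore contributes full rank. For a block of size $k$ with $\lambda_i=\lambda$, the matrix $\lambda\mathbf{I}-J_\lambda$ is a nilpotent strictly super-diagonal matrix of rank $k-1$ whose kernel is one-dimensional and, under the upper-triangular Jordan convention adopted in the paper, is spanned by the first coordinate vector of that Jordan chain. Stacking these observations, the kernel $V_\lambda$ of $\lambda\mathbf{I}-\mathbf{A}$ is precisely the span of the standard basis vectors pointing to the first column of each Jordan block associated with $\lambda$, so $\dim V_\lambda$ equals the number of Jordan blocks at $\lambda$ (the geometric multiplicity).

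Now the PBH block matrix fails to be full column rank at $s=\lambda$ iff some nonzero vector lies in the kernels of both $\lambda\mathbf{I}-\mathbf{A}$ and $\mathbf{C}$, i.e.\ iff the restriction $\mathbf{C}|_{V_\lambda}$ is not injective. But this restriction is represented exactly by the submatrix $\mathbf{C}_\lambda$ defined in the statement (its columns are the columns of $\mathbf{C}$ picking out the coordinates indexing $V_\lambda$), so injectivity is equivalent to $\mathrm{rank}(\mathbf{C}_\lambda)$ equaling the number of Jordan blocks at $\lambda$, which is the claimed condition. The principal obstacle is not mathematical depth but bookkeeping: I would be careful to keep track of which columns of $\mathbf{C}$ correspond to which Jordan block and to flag the upper-triangular Jordan convention, since under the opposite convention the kernel $V_\lambda$ would be spanned by the \emph{last} basis vectors of each chain and $\mathbf{C}_\lambda$ would have to be defined accordingly.
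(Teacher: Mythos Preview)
Your argument via the PBH test is correct and is the standard proof of this fact. The paper itself does not prove this theorem --- it is quoted from \cite{Chen} without proof --- so there is no in-paper argument to compare against; your careful tracking of the upper-triangular Jordan convention (kernel of $\lambda\mathbf{I}-J_\lambda$ spanned by the first coordinate of each block) matches the convention the paper declares and uses throughout.
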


For example, let
\begin{align}
&\mathbf{A}=
\begin{bmatrix}
2 & 1 & 0 & 0 \\
0 & 2 & 0 & 0 \\
0 & 0 & 2 & 0 \\
0 & 0 & 0 & 3
\end{bmatrix}\nonumber \\
&\mathbf{C}=
\begin{bmatrix}
\mathbf{c_1} & \mathbf{c_2} & \mathbf{c_3} & \mathbf{c_4}
\end{bmatrix}.\nonumber
\end{align}
Then, $\mathbf{C}_2 = \begin{bmatrix} \mathbf{c_1} & \mathbf{c_3} \end{bmatrix}$ and $\mathbf{C}_3= \begin{bmatrix} \mathbf{c_4} \end{bmatrix}$. The eigenvalue $2$ is observable if and only if $\mathbf{C}_2$ is full rank, and the eigenvalue $3$ is observable if and only if $\mathbf{C}_3$ is full rank. The whole system with $(\mathbf{A},\mathbf{C})$ is observable if and only if both eigenvalues are observable.

This characterization reminds us of a \textit{divide-and-conquer} approach. First, divide the observability problem into smaller problems according to the identical eigenvalues. Then, check whether the smaller sub-problem for each eigenvalue is observable. Finally, the whole system is observable if and only if all the sub-problems are observable.

This suggests applying a divide-and-conquer approach for the characterization of intermittent observability. However, before we apply a divide-and-conquer approach, we first have to answer the following three questions:\\
(a) What are the minimal irreducible sub-problems?\\
(b) How can we solve each sub-problem?\\
(c) How can we combine the answers of the sub-problems?

We will make an exact characterization of intermittent observability by resolving these questions. The concept of eigenvalue cycles appears naturally as the answer of the question (a).

Before we answer these questions, let's first start from the simplest case, scalar plants. For simplicity, we will only give hand-waving arguments in this section, and the rigorous justification will be shown in later sections. The basic idea for the characterization of the intermittent observability is to consider the dynamics reverse in time. For example, consider the following scalar system: for $n \in \mathbb{Z}^+$,
\begin{align}
\left\{
\begin{array}{l}
x[n+1]=2 x[n] + w[n] \\
y[n]=\beta[n] x[n]
\end{array}\right. . \label{eqn:intui:1}
\end{align}
Here, $x[0]=0$, $w[n]$ are i.i.d.~zero-mean unit-variance Gaussian, and $\beta[n]$ is an independent Bernoulli process with probability $1-p_e$. Then, we will show that the critical erasure probability $p_e^\star=\frac{1}{2^2}$.

First, we extend the one-sided random process \eqref{eqn:intui:1} to the two-sided process. Let $w[n]=0$ for $n \in \mathbb{Z}^{--}$ where $\mathbb{Z}^{--}$ implies negative integers, and $\beta[n]$ be a two-sided Bernoulli process with probability $1-p_e$. Then, we can see that the new two-sided process is equivalent to the original process except that $x[n]=0, y[n]=0$ for $n \in \mathbb{Z}^{--}$.

Let $n-S$ be the most recent non-erased observation at time $n$, i.e. $S:=\min \{k \geq 0: \beta[n-k]=1 \}$. Since $\beta[n]$ is a two-sided Bernoulli process, the stopping time $S$ is a geometric random variable, i.e. $\mathbb{P}\{S=s \}=(1-p_e){p_e}^s$.

(1) Sufficiency: We first prove that $p_e < \frac{1}{2^2}$ is sufficient for the intermittent observability of the example. For this, we analyze the performance of a suboptimal estimator $\widehat{x}[n]=2^S y[n-S]=2^S x[n-S]$. Then, the estimation error is upper bounded by
\begin{align}
\mathbb{E}[(x[n]-\widehat{x}[n])^2]&=\mathbb{E}[\mathbb{E}[ (x[n]-\widehat{x}[n])^2 |S]]\\
&=\mathbb{E}[\mathbb{E}[(2^S x[n-S]+2^{S-1}w[n-S]+\cdots+w[n-1]-2^S x[n-S])^2|S]]\\
&\leq \mathbb{E}[2^{2(S-1)}+2^{2(S-2)}+\cdots+1]\\
&=\mathbb{E}[\frac{2^{2S}-1}{2^2-1}]\\
&=\frac{1}{2^2-1}\left(\left( \sum^{\infty}_{i=0}(1-p_e)(p_e 2^2)^{i} \right) - 1 \right).
\end{align}
Therefore, the estimation error is uniformly bounded if $p_e < \frac{1}{2^2}$.

(2) Necessity: For necessity, we use the fact that the disturbance $w[n-S]$ is independent of the non-erased observations present up to the time $n$. Therefore, the estimation error is lower bounded by
\begin{align}
\mathbb{E}[(x[n]-\mathbb{E}[x[n]|y^n])^2] & \geq \mathbb{E}[\mathbb{E}[(2^{S-1} w[n-S])^2 | S]] \\
&=\mathbb{E}[2^{2(S-1)}\cdot \mathbf{1}(n-S \geq 0)] \\
&=\frac{1}{2^2}\left( \sum^{n}_{i=0} (1-p_e)(p_e 2^2)^{i}\right)
\end{align}
Therefore, if $p_e \geq \frac{1}{2^2}$ the estimation error must diverge to $\infty$.

(3) Remarks: From the above proof, we can notice that the intermittent observability is decided by whether $p_e 2^2$ is less than $1$. Here, $2$ is the largest eigenvalue of the system, and $p_e$ is the probability mass function (p.m.f.) tail of $S$ which can be defined as $\exp \limsup_{s \rightarrow \infty} \frac{1}{s} \ln \mathbb{P}\{S=s\}$. Thus, we can think of two potential differences between scalar and vector systems: (i) The maximum eigenvalue (ii) The p.m.f. tail.

It turns out the latter is true, and the p.m.f. tail is the difference between scalar and vector systems. The following example shows why and how the p.m.f tail changes in vector systems.

\subsection{Power Property}
\label{sec:powerproperty}
The power property answers question (b) of the previous section, ``How can we solve each sub-problem?".
Consider the example of \cite{Yilin_Characterization}.
\begin{align}
\left\{
\begin{array}{l}
\mathbf{x}[n+1]=\begin{bmatrix}
2 & 0 \\
0 & -2
\end{bmatrix}
\mathbf{x}[n] + \mathbf{w}[n] \\
y[n]=\beta[n] \begin{bmatrix}
1 & 1
\end{bmatrix}
\mathbf{x}[n]
\end{array}
\right. \nonumber
\end{align}
Like above, we put $\mathbf{x}[0]=\mathbf{0}$, $\mathbf{w}[n]$ is $2$-dimensional i.i.d. Gaussian vector with mean $\mathbf{0}$ and variance $\mathbf{I}$, and $\beta[n]$ is an independent Bernoulli process with probability $1-p_e$. We also extend the one-sided process to the two-sided process in the same way.

We can see the states are $2$-dimensional, while the observations are $1$-dimensional. Therefore, unlike scalar systems at least two observations are required to estimate the states. Moreover, if we write the observability Gramian matrix, we immediately notice cyclic behavior:
\begin{align}
&\mathbf{C}=\begin{bmatrix}1 & 1 \end{bmatrix} \nonumber \\
&\mathbf{C}\mathbf{A^{-1}}=\begin{bmatrix} \frac{1}{2} & -\frac{1}{2} \end{bmatrix} \nonumber \\
&\mathbf{C}\mathbf{A^{-2}}=\begin{bmatrix} \frac{1}{4} & \frac{1}{4} \end{bmatrix} \nonumber \\
&\mathbf{C}\mathbf{A^{-3}}=\begin{bmatrix} \frac{1}{8} & -\frac{1}{8} \end{bmatrix} \nonumber \\
&\vdots \nonumber
\end{align}
Notice that
$\mathbf{C},\mathbf{C}\mathbf{A^{-2}},\mathbf{C}\mathbf{A^{-4}},\cdots$
are linearly dependent and
$\mathbf{CA^{-1}},\mathbf{C}\mathbf{A^{-3}},\mathbf{C}\mathbf{A^{-5}},\cdots$
are linearly dependent. Therefore, as observed in \cite{Yilin_Characterization}, we need both even and odd time observations to estimate the states. In this example, we will show that $p_e^\star = \frac{1}{2^4}$.

(1) Sufficiency: Let $p_e < \frac{1}{2^4}$. From \eqref{eqn:dis:system} and \eqref{eqn:dis:system2}, we can see that when $\beta[n-k]=1$ the following equations hold:
\begin{align}
\mathbf{x}[n]&=\mathbf{A}^k \mathbf{x}[n-k] + \mathbf{A}^{k-1}\mathbf{w}[n-k]+ \cdots + \mathbf{w}[n-1] \label{eqn:intui:2}\\
\mathbf{y}[n-k]&=\mathbf{C}\mathbf{x}[n-k] + \mathbf{v}[n-k] \nonumber \\
&=\mathbf{C}\mathbf{A}^{-k} \mathbf{x}[n] - \underbrace{(\mathbf{C} \mathbf{A}^{-1} \mathbf{w}[n-k] + \cdots + \mathbf{C} \mathbf{A}^{-k} \mathbf{w}[n-1] - \mathbf{v}[n-k])}_{:=\mathbf{v'}[n-k]} \label{eqn:intui:3}
\end{align}
Here, we can see the variance of $\mathbf{v'}[n-k]$ is bounded as $\mathbb{E}[|\mathbf{v'}[n-k]|^2]=
\mathbb{E}[(
\begin{bmatrix}\frac{1}{2} & -\frac{1}{2} \end{bmatrix}\mathbf{w}[n-1]+\cdots+\begin{bmatrix}\frac{1}{2^k} & \frac{1}{(-2)^k} \end{bmatrix}\mathbf{w}[n-k])^2] \leq 2 \frac{\frac{1}{4}}{1-\frac{1}{4}}=\frac{2}{3}$.

Now, the stopping time $S$ until we have enough observations to estimate the states becomes the first time until we get both even and odd time observations, i.e. $S:=\inf \{k : 0 \leq k_1 < k_2 \leq k,
\beta[n-k_1]=1, \beta[n-k_2]=1, k_1 \neq k_2 (\bmod 2) \}$. Here, the p.m.f. of $S$ gets thicker than that of scalar cases. We can actually prove that the p.m.f. tail of $S$ is $\exp \limsup_{s\rightarrow \infty} \frac{1}{s} \ln \mathbb{P}\{S=s \}=p_e^{\frac{1}{2}}$, which we will rigorously justify in Lemma~\ref{lem:app:geo}. Thus, we can find $\delta, c>0$ such that $p_e < \frac{1}{2^4}-\delta$ and $\mathbb{P}\{S=s \} \leq c\left( \frac{1}{2^4}-\delta \right)^{\frac{s}{2}}$ for all $s \in \mathbb{Z}^+$.

Now, we will analyze the performance of a suboptimal estimator which only uses two observations. Let $\mathbf{\widehat{x}}[n]:=\begin{bmatrix} \mathbf{C}\mathbf{A}^{-k_1} \\ \mathbf{C}\mathbf{A}^{-k_2} \end{bmatrix}^{-1} \begin{bmatrix} \mathbf{y}[n-k_1] \\ \mathbf{y}[n-k_2] \end{bmatrix}$. Here, we can see the matrix inverse exists since $k_1$ and $k_2$ are even and odd time observations. Let $\mathcal{F}_{\beta}$ be the $\sigma$-field generated by $\beta[n]$. Then, $k_1, k_2, S$ are deterministic variables conditioned on $\mathcal{F}_{\beta}$. The estimation error is upper bounded by
\begin{align}
\mathbb{E}[|\mathbf{x}[n]-\mathbf{\widehat{x}}[n]|_2^2]&=
\mathbb{E}[\mathbb{E}[|\mathbf{x}[n]-\mathbf{\widehat{x}}[n]|_2^2| \mathcal{F}_{\beta}]]=
\mathbb{E}[\mathbb{E}[\left|\begin{bmatrix}\mathbf{C}\mathbf{A}^{-k_1} \\ \mathbf{C}\mathbf{A}^{-k_2}\end{bmatrix}^{-1}
\begin{bmatrix}
\mathbf{v'}[n-k_1]\\
\mathbf{v'}[n-k_2]
\end{bmatrix}
\right|_2^2
|\mathcal{F}_{\beta}]] \nonumber \\
&\leq
\mathbb{E}[\mathbb{E}[ 8 \cdot \left|\begin{bmatrix}\mathbf{C}\mathbf{A}^{-k_1} \\ \mathbf{C}\mathbf{A}^{-k_2}\end{bmatrix}^{-1}
 \right|_{max}^2 \cdot \left|\begin{bmatrix}
\mathbf{v'}[n-k_1]\\
\mathbf{v'}[n-k_2]
\end{bmatrix}
\right|_{max}^2
|\mathcal{F}_{\beta}]] \nonumber \\
&=
8 \cdot \mathbb{E}[ \left|
\begin{bmatrix}
2^{-k_1}& (-2)^{-k_1}\\
2^{-k_2}& (-2)^{-k_2}
\end{bmatrix}^{-1}
 \right|_{max}^2 \cdot
 \mathbb{E}[ \left|\begin{bmatrix}
\mathbf{v'}[n-k_1]\\
\mathbf{v'}[n-k_2]
\end{bmatrix}
\right|_{max}^2
|\mathcal{F}_{\beta}]] \nonumber \\
&=
8 \cdot \mathbb{E}[ \left|
\frac{1}{2 \cdot 2^{-k_1} \cdot (-2)^{-k_2}}
\begin{bmatrix}
(-2)^{-k_2} & -(-2)^{-k_1} \\
-2^{-k_2} & 2^{-k_1} \\
\end{bmatrix}
 \right|_{max}^2 \cdot
 \mathbb{E}[ \left|\begin{bmatrix}
\mathbf{v'}[n-k_1]\\
\mathbf{v'}[n-k_2]
\end{bmatrix}
\right|_{max}^2
|\mathcal{F}_{\beta}]] \nonumber \\
&=
8 \cdot \mathbb{E}[
\frac{1}{2^2} \left(\frac{2^{-k_1}}{2^{-k_1} \cdot 2^{-k_2}}\right)^2 \cdot
 \mathbb{E}[ \left|\begin{bmatrix}
\mathbf{v'}[n-k_1]\\
\mathbf{v'}[n-k_2]
\end{bmatrix}
\right|_{max}^2
|\mathcal{F}_{\beta}]] \nonumber \\
&\leq
2 \cdot \mathbb{E}[
 2^{2 k_2} \cdot
 \mathbb{E}[ |\mathbf{v'}[n-k_1]|^2+|\mathbf{v'}[n-k_2]|^2
|\mathcal{F}_{\beta}]]\nonumber \\
&\leq \frac{8}{3} \mathbb{E}[2^{2 S}]
\leq
\frac{8}{3} \sum^{\infty}_{s=0} 2^{2s} c \left( \frac{1}{2^4}-\delta \right)^{\frac{s}{2}}= \frac{8}{3} \sum^{\infty}_{s=0} c ( 1- 2^4 \delta )^{\frac{s}{2}} < \infty \nonumber
\end{align}
Therefore, the estimation error is uniformly bounded for $p_e < \frac{1}{2^4}$.

(2) Necessity: We will show that the system is not intermittent observable when $p_e \geq \frac{1}{2^4}$.
Denote the stopping time $S'$ to be $\inf\{k \geq 0 : \beta[n-k]=1, \mbox{$k$ is even} \}$. Then, $\mathbb{P}\{S'=0 \}=1-p_e$, $ \mathbb{P}\{S'=1 \}=0$, $\mathbb{P}\{S'=2 \}=(1-p_e)p_e$, $\cdots$. Thus, the p.m.f. tail of $S'$, $\exp \limsup_{s \rightarrow \infty}\frac{1}{s} \ln \mathbb{P}\{S'=s \}$, is $p_e^{\frac{1}{2}}$.

The state disturbance $\mathbf{w}[n-S']$ can be decomposed into two orthogonal components, $\mathbf{w}[n-S']=\begin{bmatrix} 1 \\ 1\end{bmatrix} w_1[n-S']+\begin{bmatrix} 1 \\ -1\end{bmatrix}w_2[n-S']$ where $w_1[n-S']$ and $w_2[n-S']$ are independent Gaussian random variables with zero mean and variance $\frac{1}{2}$. From the system equations \eqref{eqn:intui:2}, \eqref{eqn:intui:3} and the definition of $S'$, we can see that all the observations between time $n-S'$ and $n$ are orthogonal to $w_2[n-S']$. Thus, the estimator does not  know anything about $w_2[n-S']$ at time $n$, and thus we can lower bound the estimation error as follows.
\begin{align}
\mathbb{E}[(\mathbf{x}[n]-\mathbb{E}[\mathbf{x}[n]|\mathbf{y}^n])^2]
&\geq \mathbb{E}[\mathbb{E}[|\mathbf{A}^{S'-1}\begin{bmatrix} 1 \\ -1 \end{bmatrix} w_2[n-S']|_2^2]|S']  \nonumber \\
&\geq \mathbb{E}[2^{2(S'-1)}\mathbb{E}[(w_2[n-S'])^2|S']]= \frac{1}{2^3} \mathbb{E}[2^{2S'} \cdot \mathbf{1}(S' \geq n)] \nonumber \\
&=\frac{1}{2^3} \sum^{\lfloor \frac{n}{2} \rfloor}_{i=0} (1-p_e) ( \sqrt{p_e} 2^2)^{2i} \nonumber
\end{align}
Thus, if $p_e \geq \frac{1}{2^4}$ the estimation error diverges to $\infty$.


(3) Remarks: Compared to the scalar case, the p.m.f. tails of both $S$ and $S'$ in this vector system thicken to $\sqrt{p_e}$. This results in decreasing the critical erasure to $\frac{1}{2^4}$. The cyclic behavior of the observability Gramian matrix, $\mathbf{C}$, $\mathbf{C}\mathbf{A}^{-1}$, $\cdots$, causes the thickening of the p.m.f. tails. Thus, to capture this cyclic behavior of the observability Gramian matrix, we tentatively define an eigenvalue cycle as follows\footnote{We will formally define eigenvalue cycles later in Section~\ref{sec:interob}.}: We say that the eigenvalues of $\mathbf{A}$, $\lambda_1$ and $\lambda_2$ belong to the same \textbf{eigenvalue cycle} if $\frac{\lambda_1}{\lambda_2}$ is a root of unity, i.e. $\left(\frac{\lambda_1}{\lambda_2}\right)^{n}=1$ for some $n\in \mathbb{Z}$. Moreover, we say that $\mathbf{A}$ has \textbf{no eigenvalue cycles} if all the ratios between the eigenvalues of $\mathbf{A}$ are $1$ or not roots of unity, which implies $\mathbf{A}$ has no nontrivial eigenvalue cycles.

To generalize this example and find the p.m.f. tail for arbitrary eigenvalue cycles, we use the idea of large deviations~\cite{Dembo} which is equivalent to a union bound for simple cases. The idea goes as follows.

First, consider test channels that are erasure-type channels which would make the observability gramian rank-deficient. For this example, these would be the channel that erases every odd-time observations, the channel that erases every even-time observations and the channel that erases all observations.\footnote{In the actual characterization shown in Section~\ref{sec:interob}, we will see that the set $S'$ in \eqref{eqn:def:lprime:thm} is a proxy for these test channels. This minimum distance to the test channels will be denoted as $l_i$ in \eqref{eqn:def:lprime:thm}.}

Next, measure the distance from the true channel to the test channels. In our case, the true channel is the channel without any restriction and the distance measure between the true and test channel is the hamming distance. For the test channels considered above, the distance to the odd-time erasure channel is $1$ since we are restricting every one out of two indexes to be erasure. Likewise, the distance to the even-time erasure channel is $1$ and the distance to the all erasure channel is $2$.

Then, the large deviation principle intuitively says that the performance is decided by the minimum-distance test channel. For the example, the odd-time or even-time erasure channel whose distances are $1$ will govern the performance.

So the effect of the eigenvalue cycle is to thicken the tail of the stopping time until you get enough observations to estimate the states. Analytically, the effect is equivalent to taking a proper power to the $p_e$ and hence the name ``power property''.

\subsection{Max Combining}
\label{sec:maxcombining}
This property answers the question (c) i.e. how we go from a single eigenvalue cycle to
multiple eigenvalue cycles. Consider the following example with two eigenvalue cycles:
\begin{align}
\left\{
\begin{array}{l}
\begin{bmatrix}
x_1[n+1] \\
x_2[n+1] \\
x_3[n+1]
\end{bmatrix}=\begin{bmatrix}
3 & 0 & 0 \\
0 & 2 & 0 \\
0 & 0 & -2 \\
\end{bmatrix}
\begin{bmatrix}
x_1[n] \\
x_2[n] \\
x_3[n]
\end{bmatrix} + \mathbf{w}[n] \\
y[n]=\beta[n] \begin{bmatrix}
1 & 1 & 1
\end{bmatrix}
\mathbf{x}[n]
\end{array}
\right.
\end{align}
As before, we put $\mathbf{x}[0]=\mathbf{0}$, $\mathbf{w}[n]$ be i.i.d. Gaussian with mean $\mathbf{0}$ and variance $\mathbf{I}$, and $\beta[n]$ be an independent Bernoulli process with probability $1-p_e$. We also extend the one-sided process to the two-sided process. Here, we can see there are two eigenvalue cycles. One eigenvalue cycle is $\{ 2,-2\}$ and the other one is $\{3\}$, which can be thought as two subsystems of the original system.

Then, from the previous arguments, we can see that the p.m.f. tails for these two systems are different. The p.m.f. tail for the eigenvalue cycle $\{3\}$ is $p_e$, while the p.m.f. tail for the eigenvalue cycle $\{2,-2 \}$ is thickened to $p_e^{\frac{1}{2}}$. Therefore, the question is whether the thickened tail in the eigenvalue cycle $\{2,-2 \}$ affects $\{ 3 \}$. The answer turns out to be ``No", and we can consider two subsystems separately. Thus, in this example, the system is intermittent observable if and only if both subsystems are intermittent observable, i.e. $p_e^\star = \frac{1}{\max\{3^2, 2^{2 \cdot 2}\}}$. The main idea to justify this is so-called \textit{successive decoding} developed in information theory~\cite{Cover}.

(1) Sufficiency: We will prove that $p_e < \frac{1}{\max\{ 3^2, 2^{2 \cdot 2} \}}$ is sufficient for the intermittent observability using a successive decoding idea. The idea is simple. We first estimate the state $x_1[n]$. Then, since we have an estimate for $x_1[n]$, we can subtract the estimate from the system and reduce the dimension of the system. The remaining estimation error is considered as noise.

Let $S$ be the stopping time until we receive three observations in the reverse process, i.e. $S:=\inf \{k : 0 \leq k_1 < k_2 < k_3 \leq k, \beta[n-k_1]=1, \beta[n-k_2]=1, \beta[n-k_3]=1 \}$. Here, we can prove that the p.m.f. tail of $S$ is the same as the scalar case. Therefore, $\exp\limsup_{s\rightarrow \infty} \ln \mathbb{P}\{S=s \}=p_e$, which we will justify in Lemma~\ref{lem:app:geo}. Since we have the three observations at time $n-k_1$, $n-k_2$ and $n-k_3$, by the pigeon-hole principle at least two among them have to be congruent mod $2$. Assume that $k_1$ and $k_2$ are both even. Then, by \eqref{eqn:intui:3} we have
\begin{align}
y[n-k_1]&=\begin{bmatrix} 1 & 1 & 1 \end{bmatrix} \begin{bmatrix} 3 & 0 & 0 \\ 0 & 2 & 0 \\ 0 & 0 & -2 \end{bmatrix}^{-k_1} \begin{bmatrix} x_1[n] \\ x_2[n] \\ x_3[n] \end{bmatrix} + v'[n-k_1]\\
&=\begin{bmatrix} 1 & 1 & 1 \end{bmatrix} \begin{bmatrix} 3^{-k_1} & 0 & 0 \\ 0 & 2^{-k_1} & 0 \\ 0 & 0 & 2^{-k_1} \end{bmatrix} \begin{bmatrix} x_1[n] \\ x_2[n] \\ x_3[n] \end{bmatrix} + v'[n-k_1] \\
&=\begin{bmatrix} 1 & 1 \end{bmatrix}  \begin{bmatrix} 3 & 0 \\ 0 & 2 \end{bmatrix}^{-k_1} \begin{bmatrix} x_1[n] \\ x_2[n]+x_3[n] \end{bmatrix}+ v'[n-k_1]
\end{align}
Like the above section, we can also prove that
$\mathbb{E}[|v'[n-k]|^2] \leq 2 \frac{\frac{1}{4}}{1-\frac{1}{4}}+\frac{\frac{1}{9}}{1-\frac{1}{9}}=\frac{19}{24}$.
Here, we can notice that instantaneously at time $n-k_1$ and $n-k_2$ the system equation behaves like the following system with no eigenvalue cycles:
\begin{align}
\left\{
\begin{array}{l}
\begin{bmatrix}
x_1[n+1] \\
x_2[n+1]+x_3[n+1]
\end{bmatrix}=\begin{bmatrix}
3 & 0\\
0 & 2\\
\end{bmatrix}
\begin{bmatrix}
x_1[n] \\
x_2[n]+x_3[n]
\end{bmatrix} +
\begin{bmatrix}
w_1[n] \\
w_2[n]+w_3[n]
\end{bmatrix}
\\
y[n]=\beta[n] \begin{bmatrix}
1 & 1 \end{bmatrix}
\begin{bmatrix}
x_1[n] \\
x_2[n]+x_3[n]
\end{bmatrix}
\end{array}
\right. \nonumber
\end{align}
Consider the suboptimal estimator $
\mathbf{\widehat{x}}[n]=
\begin{bmatrix}
\widehat{x}_1[n]\\
\widehat{x}_2[n]+\widehat{x}_3[n]
\end{bmatrix}=
\begin{bmatrix}
3^{-k_1} & 2^{-k_1} \\
3^{-k_2} & 2^{-k_2} \\
\end{bmatrix}^{-1}
\begin{bmatrix}
y[n-k_1] \\
y[n-k_2]
\end{bmatrix}
$. Let $\mathcal{F}_{\beta}$ be the $\sigma$-field generated by $\beta[n]$, and $F$ be the event that $k_1$ and $k_2$ are even. The estimation error is upper bounded by
\begin{align}
&\mathbb{E}[\left|\begin{bmatrix} x_1[n] \\ x_2[n]+x_3[n] \end{bmatrix} - \mathbf{\widehat{x}}[n] \right|_2^2 | \mathcal{F}_{\beta} \cap F ]
=\mathbb{E}[\left|
\begin{bmatrix}
3^{-k_1} & 2^{-k_1} \\
3^{-k_2} & 2^{-k_2} \\
\end{bmatrix}^{-1}
\begin{bmatrix}
v'[n-k_1] \\
v'[n-k_2]
\end{bmatrix}
\right|_2^2 | \mathcal{F}_{\beta} \cap F ] \\
&\leq
8 \cdot \left|
\begin{bmatrix}
3^{-k_1} & 2^{-k_1} \\
3^{-k_2} & 2^{-k_2} \\
\end{bmatrix}^{-1}\right|_{max}^2
\cdot \mathbb{E}[\left|\begin{bmatrix}
v'[n-k_1] \\
v'[n-k_2]
\end{bmatrix}
\right|_{max}^2 | \mathcal{F}_{\beta} \cap F ] \\
\\
&=
8 \cdot \frac{19}{12}\cdot \left| \frac{1}{3^{-k_1}2^{-k_2}-2^{-k_1}3^{-k_2}}
\begin{bmatrix}
2^{-k_2} & -2^{-k_1} \\
-3^{-k_2} & 3^{-k_1} \\
\end{bmatrix}\right|_{max}^2 \\
&=8 \cdot  \frac{19}{12} \cdot
\left(
\frac{2^{-k_1}}
{3^{-k_1} 2^{-k_2}\left(1- \left( \frac{2}{3} \right)^{k_2-k_1} \right)}
\right)^2 \\
&\leq 8 \cdot \frac{19}{12} \cdot 3^2  \cdot (3^{k_1} \cdot 2^{k_2-k_1})^2  \leq 57 \cdot 3^{2 k_2}\leq 57 \cdot 3^{2 S}
\end{align}
Likewise, we can prove the same bound holds even if $k_1$ and $k_2$ are not even. Therefore, the estimation error is bounded by $57 \cdot 3^{2 S}$. Like the previous section, we can prove that if $p_e < \frac{1}{3^2}$ then $\mathbb{E}[ 3^{2 S}]<\infty$. Thus, the expectation of the estimation error for $x_1[n]$ is uniformly bounded over time.

Once we estimate $x_3[n]$, we can subtract the estimation $\widehat{x}_3[n]$ from the observation, i.e. $y'[n]:=y[n]-\beta[n]\widehat{x}_1[n]$. Then, the new system with the observation $y'[n]$ behaves like the following system:
\begin{align}
\left\{
\begin{array}{l}
\begin{bmatrix}
x_2[n+1] \\
x_3[n+1]
\end{bmatrix}=\begin{bmatrix}
 2 & 0 \\
 0 & -2 \\
\end{bmatrix}
\begin{bmatrix}
x_2[n] \\
x_3[n]
\end{bmatrix} + \mathbf{w}[n] \\
y'[n]=\beta[n]\left( \begin{bmatrix}
1 & 1
\end{bmatrix}
\begin{bmatrix}
x_2[n] \\
x_3[n]
\end{bmatrix}
+(x_1[n]-\widehat{x}_1[n])
\right)
\end{array}
\right.
\end{align}
Since the expectation of the estimation error for $x_1[n]$ is uniformly bounded, it can be considered as a part of the observation noise.\footnote{Precisely speaking, the estimation error for $x_1[n]$ is a random variable which depends on the channel erasure process. Therefore, the rigorous proof of Section~\ref{sec:dis:suff} has more steps to justify the argument.} In the same way as the previous section, we can prove that the estimation error for $x_2[n], x_3[n]$ is uniformly bounded if $p_e < \frac{1}{2^{2 \cdot 2}}$. Notice that the minimum number of required information to estimate the state by observability gramian matrix inversion is $3$, the number of the states. However, here we used more number of observation to apply successive decoding idea.

(2) Necessity: To prove that the example is not intermittent observable if $p_e \geq \frac{1}{\max\{ 3^2, 2^{2 \cdot 2} \}}$, we will use a genie argument. If the states $x_2[n],x_3[n]$ is given to the estimator as a side-information, the remaining system with $x_1[n]$ is a scalar system with the eigenvalue $3$. We know that if $p_e \geq \frac{1}{3^2}$, $x_1[n]$ is not intermittent observable. We can also give $x_1[n]$ as a side-information to conclude that $p_e \geq \frac{1}{2^{2\cdot 2}}$ is a necessary condition for the intermittent observability.

(3) Remarks: In summary, we can solve problems with multiple eigenvalue cycles one by one without worrying about the existence of the other eigenvalue cycles. In other words, at each step we estimate the eigenvalue cycle associated with the largest eigenvalue. After the estimation, the eigenvalue cycle can be subtracted from the system except uniformly bounded estimation error. Then, we can simply repeat the steps for the remaining system. This procedure of successively solving and subtracting the unknowns is called successive decoding in information theory, and used as a decoding procedure for the multiple-access channel~\cite{Cover}.

Therefore, we can conclude that the intermittent observability for a multiple eigenvalue-cycle system is bottlenecked by the hardest-to-estimate eigenvalue cycle, which manifests as the max operation in the critical erasure probability calculation.

\subsection{Separability of Eigenvalue Cycles}
\label{sec:separability}
The remaining question is what are the minimal irreducible sub-problems, whose answer can be expected to be eigenvalue cycles from the discussion up to now. In other words, we will understand general systems with multiple eigenvalue cycles by dividing into sub-systems with a single eigenvalue cycle. In the max-combining property, we already saw an example with multiple eigenvalue cycles. In the example, we first reduce the problem with multiple eigenvalue cycles to the problem with no eigenvalue cycles by sub-sampling plants. For example, in Section~\ref{sec:maxcombining} we already saw that by sub-sampling (by $2$), the system with an eigenvalue cycle (period $2$) becomes a system with no eigenvalue cycles.

Thus, the question reduces to the fact that for systems with no eigenvalue cycles the critical erasure probability is $\frac{1}{|\lambda_{max}|^2}$, which will be shown in Corollary~\ref{thm:nocycle}. To intuitively understand why this is true, we will consider three cases depending on the structure of $\mathbf{A}$.

The first case is when $\mathbf{A}$ is a diagonal matrix, and the magnitudes of its eigenvalues are distinct. In fact, this case is already proved in \cite{Yilin_Characterization}. Let's consider a descriptive example when $\mathbf{A}=\begin{bmatrix} 3 & 0 \\ 0 & 2\end{bmatrix}$, $\mathbf{C}=\begin{bmatrix} 1 & 1 \end{bmatrix}$. Then, the observability gramian of the system becomes $\begin{bmatrix} \mathbf{C}\mathbf{A}^{n_1} \\ \mathbf{C}\mathbf{A}^{n_2} \end{bmatrix} = \begin{bmatrix} 3^{n_1} & 2^{n_1} \\ 3^{n_2} & 2^{n_2} \end{bmatrix}$. To prove that the critical erasure probability is given as $\frac{1}{|\lambda_{max}|^2} = \frac{1}{3^2}$, it is enough to prove that the determinant of the observability gramian is large enough for almost all distinct $n_1$ and $n_2$. To justify this, we can use the fact that the ratio of the elements, $(\frac{3}{2})^n$,  is an exponentially increasing function.

The second case is when $\mathbf{A}$ is a diagonal matrix, and the eigenvalues are distinct but have the same magnitude. Let's consider the system with $\mathbf{A}=\begin{bmatrix} e^{j} & 0 \\ 0 &
  e^{j\sqrt{2}} \end{bmatrix}$ and $\mathbf{C}=\begin{bmatrix} 1 & 1 \end{bmatrix}$. The observability gramian is given as $\begin{bmatrix} \mathbf{C}\mathbf{A}^{n_1} \\ \mathbf{C}\mathbf{A}^{n_2} \end{bmatrix} = \begin{bmatrix} e^{j n_1} & e^{j\sqrt{2}n_1} \\ e^{j n_2} & e^{j\sqrt{2}n_2} \end{bmatrix}$, and like above it is enough to show that the determinant of this observability gramian is large enough for almost all distinct $n_1$, $n_2$. Here, the arguments from \cite{Yilin_Characterization} cannot work. For this, we instead used Weyl's criterion~\cite{Kuipers} which tells each element $(e^{jn}, e^{j \sqrt{2}n})$ behaves like a random variable $(e^{j \theta_1}, e^{j \theta_2})$ where $\theta_1$ and $\theta_2$ are independent random variables uniformly distributed on $[0, 2\pi]$. In fact, the effect of the hypothetical random variables $(e^{j \theta_1}, e^{j \theta_2})$ is quite similar to the actually randomly-dithered nonuniform sampling discussed in Section~\ref{sec:nonuniform}.

The last case is when $\mathbf{A}$ is a Jordan block matrix. Let's consider the system with $\mathbf{A}=\begin{bmatrix} 2 & 1 \\ 0 &2 \end{bmatrix}$ and $\mathbf{C}=\begin{bmatrix} 1 & 0 \end{bmatrix}$. The observability gramian is given as $\begin{bmatrix} \mathbf{C}\mathbf{A}^{n_1} \\ \mathbf{C}\mathbf{A}^{n_2} \end{bmatrix} = \begin{bmatrix} 2^{n_1} & n_1 2^{n_1} \\
 2^{n_2} & n_2 2^{n_2}  \end{bmatrix}$, and we have to show that the determinant of this observability gramian is large enough for almost all distinct $n_1$, $n_2$. Unlike the above cases, this example has polynomial terms in $n_1$, $n_2$. Exploiting this fact, we can reduce the problem to the fact that a polynomial function on $n$ becomes zero only on a measure zero set.

By combining the insights from these three examples, we can prove that for a general matrix $A$ with no eigenvalue cycles, the critical erasure probability is given as $\frac{1}{|\lambda_{max}|^2}$.

\section{Intermittent Observability Characterization}
\label{sec:interob}
Based on the intuition of the previous section, the intermittent observability condition can be characterized. We begin with the formal definition of a cycle.
\begin{definition}
A multiset (a set that allows repetitions of its elements)
$\{a_1,a_2,\cdots, a_l \}$ is called a cycle with length $l$ and
period $p$ if $\left(\frac{a_i}{a_j}\right)^p=1$ for all $i,j \in \{
1,2,\cdots, l \}$ and some $p \in \mathbb{N}$. Following convention, $p$ is denoted\footnote{We use $\frac{0}{0}=1$, $\frac{1}{0}= \infty$, $1^{\infty}=\infty$ and $\frac{1}{\infty}=0$.} as
\begin{align}
p:=\min \left\{n \in \mathbb{N} :
  \left(\frac{a_i}{a_j}\right)^n =1, \forall i,j \in
  \left\{1,2,\cdots, l \right\} \right\}.
\end{align}
\end{definition}

For example, $\{a\}$ is a cycle with length $1$ and period $1$ by
itself. $\{ e^{j\omega}, e^{j (\omega+ \frac{2 \pi }{6})} \}$ is a
cycle with length $2$ and period $6$. $\{e^{j}, e^{j\sqrt{2}} \}$
and $\{1,2 \}$ are not cycles. One trivially necessary condition for
$a_1,a_2$ to belong to the same cycle is $|a_1|=|a_2|$. It can be also shown
that cycles are closed under overlapping unions, meaning that if $\{
a_1, a_2 \}$ and $\{a_2 ,a_3 \}$ are cycles, $\{a_1, a_2, a_3 \}$ is
also a cycle.

Now, we can define an eigenvalue cycle. It is well-known in linear system theory~\cite{Chen} that by properly changing coordinates, any linear system equations~\eqref{eqn:dis:system} can be written in an equivalent form with a Jordan matrix $\mathbf{A}$. Moreover, even though the MMSE value can be changed by the coordinate change, the condition for the boundedness (stabilizability) remains the same. Rigorously, for any system matrix $\mathbf{A}$, there exists an invertible matrix $\mathbf{U}$ and an upper-triangular Jordan matrix $\mathbf{A'}$ such that $\mathbf{A}=\mathbf{U}\mathbf{A'}\mathbf{U}^{-1}$.  We also define $\mathbf{B'}:=\mathbf{U}\mathbf{B}$ and $\mathbf{C'}:=\mathbf{C}\mathbf{U}$. Then, the matrix $\mathbf{A'}$ and $\mathbf{C'}$ can be written as the following form:
\begin{align}
&\mathbf{A'}=diag\{ \mathbf{A_{1,1}}, \mathbf{A_{1,2}}, \cdots, \mathbf{A_{\mu,\nu_\mu}}\} \nonumber \\
&\mathbf{C'}=\begin{bmatrix} \mathbf{C_{1,1}} & \mathbf{C_{1,2}} & \cdots & \mathbf{C_{\mu,\nu_\mu}} \end{bmatrix} \nonumber \\
&\mbox{where} \nonumber \\
&\quad \mbox{$\mathbf{A_{i,j}}$ is a Jordan block with an eigenvalue $\lambda_{i,j}$} \nonumber \\
&\quad \{ \lambda_{i,1},\cdots, \lambda_{i,\nu_i} \} \mbox{ is a cycle with length $\nu_i$ and period $p_i$}\nonumber \\
&\quad \mbox{For $i \neq i'$, $\{\lambda_{i,j},\lambda_{i',j'} \}$ is not a cycle} \nonumber \\
&\quad \mbox{$\mathbf{C_{i,j}}$ is a $l \times \dim \mathbf{A_{i,j}}$ complex matrix}.\label{eqn:ac:jordan:thm}
\end{align}
Since cycles are closed under overlapping unions, the eigenvalues of
$\mathbf{A}$ can be uniquely partitioned into maximal cycles, $\{\lambda_{i,1},\cdots,\lambda_{i,\nu_i} \}$. We call
these cycles \textit{eigenvalue cycles} and we say $\mathbf{A}$ has no
eigenvalue cycle if all of its eigenvalue cycles are period $1$.

Define
\begin{align}
&\mathbf{A_i}=diag\{ \lambda_{i,1},\cdots, \lambda_{i,\nu_i} \}\nonumber \\
&\mathbf{C_i}=\begin{bmatrix} \left(\mathbf{C_{i,1}}\right)_1 & \cdots & \left(\mathbf{C_{i,\nu_i}}\right)_1 \end{bmatrix} \nonumber\\
&\mbox{where $\left(\mathbf{C_{i,j}}\right)_1$ is the first column of $\mathbf{C_{i,j}}$.} \label{eqn:ac2:jordan:thm}
\end{align}
In other words, we are dividing the original problem to sub-problems according to eigenvalue cycles.

Let $l_i$ be the minimum cardinality among the sets $S' \subseteq \{ 0,1,\cdots,p_i-1 \}$ whose resulting $S:=\{ 0,1,\cdots, p_i-1 \} \setminus S'=\{s_1,s_2,\cdots,s_{|S|} \}$ makes
\begin{align}
\begin{bmatrix}
\mathbf{C_i}\mathbf{A_i}^{s_1}\\
\mathbf{C_i}\mathbf{A_i}^{s_2}\\
\vdots \\
\mathbf{C_i}\mathbf{A_i}^{s_{|S|}}
\end{bmatrix} \label{eqn:def:lprime:thm}
\end{align}
be rank deficient, i.e. the rank is strictly less than $\nu_i$. Here, $p_i$ and $l_i$ will be used for the power property. $l_i$ represents how many observations have to be erased out of $p_i$ time steps to make the observability Gramian matrix rank deficient. This corresponds to the critical error event in large deviation theory.

Now, we can apply the max-combination property to characterize intermittent observability. Here is the main theorem of the paper.
\begin{theorem}
Given an intermittent system $(\mathbf{A},\mathbf{B},\mathbf{C}, \sigma, \sigma')$ with probability of erasure $p_e$, let $\sigma < \infty$, $\sigma' > 0$, and $(\mathbf{A},\mathbf{B})$ be controllable. Then, the intermittent system is intermittent observable if and only if
\begin{align}
p_e < \frac{1}{\underset{1 \leq i \leq \mu}{\max} |\lambda_{i,1}|^{2 \frac{p_i}{l_i}}}
. \nonumber
\end{align}
or equivalently $\underset{1 \leq i \leq \mu}{\max} p_e^{\frac{l_i}{p_i}} |\lambda_{i,1}|^2 < 1$.
\label{thm:mainsingle}
\end{theorem}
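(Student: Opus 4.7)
The plan is to prove both directions by reducing, via a Jordan decomposition, to the three structural properties described in Section~\ref{sec:intui}: the power property, max combining, and separability of eigenvalue cycles. First, using an invertible change of coordinates $\mathbf{U}$, put $\mathbf{A}$ in the block form \eqref{eqn:ac:jordan:thm}. Since the controllability of $(\mathbf{A},\mathbf{B})$ is preserved and the boundedness of the MMSE is invariant under such a transformation, we may assume $\mathbf{A}$ is already in Jordan form grouped by eigenvalue cycles. Order the cycles so that $|\lambda_{1,1}|^{p_1/l_1} \geq |\lambda_{2,1}|^{p_2/l_2} \geq \cdots \geq |\lambda_{\mu,1}|^{p_\mu/l_\mu}$.

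For sufficiency, assume $p_e |\lambda_{i,1}|^{2p_i/l_i} < 1$ for every $i$, and estimate the eigenvalue cycles one by one in decreasing order of $|\lambda_{i,1}|^{p_i/l_i}$ (successive decoding from Section~\ref{sec:maxcombining}). Extend the process to two-sided time and, for the current target cycle $i$, consider the reverse-time process subsampled by $p_i$: after subsampling, the system restricted to the $i$th cycle has no eigenvalue cycle (period $1$). Define the stopping time $S_i$ as the smallest window of past samples in which every "bad" erasure pattern of cardinality $p_i - l_i$ (i.e.\ every choice of subset $S'$ in \eqref{eqn:def:lprime:thm} whose complement is rank-deficient) has been avoided at least once. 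A union-bound / large-deviations argument (formalized in Lemma~\ref{lem:app:geo}) shows $\mathbb{P}\{S_i = s\} \lesssim p_e^{s \, l_i / p_i}$. A least-squares inversion of the resulting observability matrix gives the subsystem state up to an additive noise term whose conditional squared magnitude is $\lesssim |\lambda_{i,1}|^{2 S_i}$, which has finite mean exactly when $p_e |\lambda_{i,1}|^{2 p_i / l_i} < 1$. The previously-estimated cycles then re-enter as additional observation noise, and since each had uniformly bounded error, they can be absorbed into $\mathbf{v}'[n]$ for the next step.

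For necessity, one direction suffices per cycle by a genie argument. Fix the cycle $i^\star$ that achieves the maximum of $p_e |\lambda_{i,1}|^{2p_i/l_i}$, and give the estimator all remaining coordinates as side information. This reduces the problem to the subsystem $(\mathbf{A}_{i^\star}, \ldots, \mathbf{C}_{i^\star})$ driven by the disturbance projected onto its subspace. Let $S' \subseteq \{0,\ldots,p_{i^\star}-1\}$ realize the minimum $l_{i^\star}$ in \eqref{eqn:def:lprime:thm}. Look at the stopping time $S'$ defined as the smallest $k \geq 0$ such that the erasures in the past window of length $k$ are consistent with the pattern $S'$ (i.e.\ every non-erased sample falls in the good residues mod $p_{i^\star}$); by geometric counting $\mathbb{P}\{S' = s\} \gtrsim p_e^{s l_{i^\star}/p_{i^\star}}$. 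By construction, the corresponding observability Gramian is rank deficient, so there is a persistent direction $\mathbf{u}$ with $\mathbf{C}_{i^\star} \mathbf{A}_{i^\star}^{-s} \mathbf{u} = 0$ for the observed residues, and the innovation $\mathbf{A}_{i^\star}^{S'-1} \mathbf{u} \, w_{\mathbf{u}}[n - S']$ is orthogonal to every observation in the window. Hence the MMSE is at least $\mathbb{E}[|\lambda_{i^\star,1}|^{2(S'-1)} \mathbf{1}(S' \leq n)]$, which diverges precisely when $p_e |\lambda_{i^\star,1}|^{2 p_{i^\star}/l_{i^\star}} \geq 1$.

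The main obstacle is the separability step inside sufficiency: after subsampling by $p_i$, we need the observability Gramian built from the subsampled system (now with no eigenvalue cycle) to be sufficiently well-conditioned with high enough probability that the inverse's norm is integrable against $|\lambda_{i,1}|^{2 S_i}$. This is where the three case analyses of Section~\ref{sec:separability} must be combined: exponential separation for distinct magnitudes, Weyl's equidistribution for distinct phases of equal magnitude, and non-vanishing of polynomial minors for Jordan blocks. Making these quantitative enough to tail-bound the worst-case determinant, and then verifying they compose across a general Jordan-plus-phase structure, is the delicate part; the other two properties (power and max combining) are essentially bookkeeping once the separability estimate is in hand.
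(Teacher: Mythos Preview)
Your overall architecture matches the paper's: Jordan reduction, successive decoding of eigenvalue cycles, stopping-time tail bounds via large deviations, and a genie/null-direction argument for necessity. You also correctly flag the separability estimate as the technical crux. But there is a real gap in the sufficiency direction.

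The cycles must be decoded in decreasing order of $|\lambda_{i,1}|$, not of $|\lambda_{i,1}|^{p_i/l_i}$. When you estimate cycle $i$, the observations still contain every not-yet-decoded cycle, and the norm of the (pseudo)inverse of the observability Gramian is governed by the \emph{largest remaining} modulus $|\lambda_j|$, not by $|\lambda_{i,1}|$. Hence your asserted bound ``error $\lesssim |\lambda_{i,1}|^{2S_i}$'' is only valid when $|\lambda_{i,1}|=\max_{j\text{ remaining}}|\lambda_j|$, which your ordering does not ensure. Concretely, take cycle $A$ with $|\lambda_A|=2$, $p_A=3$, $l_A=1$ (so $|\lambda_A|^{p_A/l_A}=8$) and cycle $B$ with $|\lambda_B|=3$, $p_B=l_B=1$ (so $|\lambda_B|^{p_B/l_B}=3$). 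Your ordering decodes $A$ first; with $B$ still present the inverse norm is $\sim 3^{S_A}$, and pairing that with the tail $p_e^{1/3}$ yields only $9\,p_e^{1/3}<1$, i.e.\ $p_e<1/729$, whereas the true threshold is $1/64$. The paper decodes $B$ first (ordering by $|\lambda|$), obtaining $9\,p_e<1$ and then $4\,p_e^{1/3}<1$, matching $1/64$. This is exactly why condition~(iii) in Lemma~\ref{lem:dis:achv} carries $|\lambda_{i,1}|^{S_i}$ while condition~(iv) carries the \emph{accumulated} tail $\max_{1\le j\le i}p_e^{l_j/p_j}$: the ordering by $|\lambda|$ is what makes the product $|\lambda_{i,1}|^2\max_{j\le i}p_e^{l_j/p_j}$ bounded by $\max_j|\lambda_{j,1}|^2 p_e^{l_j/p_j}$.

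Two secondary points. First, subsampling by $p_i$ alone does not remove periodicity in the other cycles (and in fact makes cycle $i$ unobservable from a single residue class); the paper subsamples by $p=\prod_j p_j$ so that \emph{all} cycles become period~$1$ simultaneously and then combines several residue classes via the matrices $\mathbf{L_{q,r}}$ before invoking the no-cycle Gramian estimates. Second, your necessity argument is the paper's informal intuition; the formal proof in Section~\ref{sec:dis:nece} instead lower-bounds the Kalman covariance directly, using Lemma~\ref{lem:dis:converse} to exhibit a common eigenvector of all one-period products $(\mathbf{A}-\mathbf{K}_{p_i-1}\bar{\mathbf{C}}_{p_i-1})\cdots(\mathbf{A}-\mathbf{K}_0\bar{\mathbf{C}}_0)$ with eigenvalue $\lambda_{i^\star,1}^{p_{i^\star}}$. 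The paper explicitly notes that the genie/single-disturbance route does not immediately handle the boundary case $p_e=1/\max_i|\lambda_{i,1}|^{2p_i/l_i}$, where the divergence is only algebraic; that is why it works with the optimal filter instead.
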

\begin{proof}
See Section~\ref{sec:dis:suff} for sufficiency, and Section~\ref{sec:dis:nece} for necessity.
\end{proof}

Here, we can notice that there is no assumption about stability or observability of the system. Let's first do a validity test of the theorem by trying stable modes and unobservable modes.
If $|\lambda_{i,1}|<1$, $\frac{1}{|\lambda_{i,1}|^{2 \frac{p_i}{l_i}}}>1$. Therefore, the stable modes do not contribute to the characterization of the critical erasure probability.
If $(\mathbf{A_i},\mathbf{C_i})$ are unobservable, $l_i=0$. So, $\frac{1}{|\lambda_{i,1}|^{2 \frac{p_i}{0}}}=0$ if $|\lambda_{i,1}| \geq 1$ and $\frac{1}{|\lambda_{i,1}|^{2 \frac{p_i}{0}}}=\infty$ if $|\lambda_{i,1}|< 1$.
Therefore, if the unobservable modes are stable they do not affect the intermittent observability of the system and if they are not the system is not intermittent observable even if $p_e=0$.\\

Even though in general $l_i$ does not admit a closed form, it is computable for special cases.
\begin{corollary}
Given an intermittent system $(\mathbf{A},\mathbf{B},\mathbf{C}, \sigma, \sigma')$ with probability of erasure $p_e$, let $\sigma < \infty$, $\sigma' > 0$, and $(\mathbf{A},\mathbf{B})$ be controllable.
We further assume that $(\mathbf{A},\mathbf{C})$ is observable and $\mathbf{A}$ has no eigenvalue cycles (i.e. $\left(\frac{\lambda_i}{\lambda_j}\right)^n \neq 1 $ for all $\lambda_i \neq \lambda_j$ and $n \in \mathbb{N}$). Then, the intermittent system is intermittent observable if and only if $p_e < \frac{1}{|\lambda_{max}|^2}$ where $\lambda_{max}$ is the largest magnitude eigenvalue of $\mathbf{A}$.
\label{thm:nocycle}
\end{corollary}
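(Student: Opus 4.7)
The plan is to deduce this corollary directly from Theorem~\ref{thm:mainsingle} by showing that, under the hypotheses of no eigenvalue cycles and observability, the exponent $p_i/l_i$ in the main formula collapses to $1$ for every index $i$. The corollary then becomes the immediate arithmetic consequence $p_e^\star = 1/\max_i |\lambda_{i,1}|^2 = 1/|\lambda_{\max}|^2$.

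First I would verify that the \emph{no eigenvalue cycles} hypothesis forces $p_i = 1$ for every $i$. By hypothesis, for any two distinct eigenvalues $\lambda_a \neq \lambda_b$ of $\mathbf{A}$, no positive integer power of $\lambda_a/\lambda_b$ equals $1$. Hence two eigenvalues of $\mathbf{A}$ can share the same eigenvalue cycle only when they are equal, and each maximal cycle $\{\lambda_{i,1},\ldots,\lambda_{i,\nu_i}\}$ appearing in the Jordan partition \eqref{eqn:ac:jordan:thm} is just $\nu_i$ copies of a single eigenvalue (one copy for each Jordan block carrying that eigenvalue). The period of such a single-value multiset is $p_i = 1$, so the index set $\{0,1,\ldots,p_i-1\}$ in the definition \eqref{eqn:def:lprime:thm} is $\{0\}$, and the candidate $S'$ must be either $\emptyset$ or $\{0\}$.

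Next I would compute $l_i$ using the observability hypothesis. With $S' = \emptyset$ we have $S = \{0\}$, and the stacked Gramian in \eqref{eqn:def:lprime:thm} reduces to $\mathbf{C}_i \mathbf{A}_i^0 = \mathbf{C}_i$. By the construction \eqref{eqn:ac2:jordan:thm}, this $\mathbf{C}_i$ is precisely the matrix whose columns are the columns of $\mathbf{C'}$ corresponding to the first basis vector of each Jordan block with eigenvalue $\lambda_{i,1}$ — i.e.\ the matrix ``$\mathbf{C}_\lambda$'' of Theorem~\ref{thm:jordanob}. Observability of $(\mathbf{A},\mathbf{C})$ therefore forces $\mathbf{C}_i$ to have full column rank $\nu_i$, so $S' = \emptyset$ does not produce rank deficiency. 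On the other hand, $S' = \{0\}$ yields the empty stack, whose rank is $0 < \nu_i$, hence is trivially rank deficient. Consequently $l_i = 1$ and $p_i/l_i = 1$ for every cycle.

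Finally I would plug these values into Theorem~\ref{thm:mainsingle} to obtain
\begin{align}
p_e^\star = \frac{1}{\max_{1\leq i\leq \mu} |\lambda_{i,1}|^{2 p_i/l_i}} = \frac{1}{\max_{1\leq i\leq \mu} |\lambda_{i,1}|^{2}} = \frac{1}{|\lambda_{\max}|^2}, \nonumber
\end{align}
since under the no-cycle hypothesis the indices $i$ enumerate the distinct eigenvalues of $\mathbf{A}$. I do not anticipate a real obstacle here — the whole content of the corollary is the bookkeeping in the two steps above — so the main thing to be careful about is confirming that the combinatorial definition of $l_i$ genuinely gives $l_i = 1$ for a full-column-rank $\mathbf{C}_i$ (in particular that the convention ``empty matrix has rank $0$'' is the one used in \eqref{eqn:def:lprime:thm}) and that stable eigenvalues with $|\lambda_{i,1}| < 1$ contribute factors strictly larger than $1$ to the max so that the active constraint comes, as expected, from the largest-magnitude eigenvalue.
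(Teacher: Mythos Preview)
Your proposal is correct and follows exactly the paper's own route: show $p_i=1$ from the no-cycle hypothesis, use observability via Theorem~\ref{thm:jordanob} to get $\mathbf{C}_i$ full rank and hence $l_i=1$, and plug into Theorem~\ref{thm:mainsingle}. If anything, your treatment is slightly more careful than the paper's terse proof in allowing $\nu_i>1$ (multiple Jordan blocks with the same eigenvalue) rather than saying ``$\mathbf{A}_i$ are scalars,'' but the substance is identical.
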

\begin{proof}
Since $\mathbf{A}$ has no eigenvalue cycles, $p_i$ equal to $1$ for all $i$ and $\mathbf{A_i}$ are scalars. Moreover, by the observability condition and Theorem~\ref{thm:jordanob}, $\mathbf{C_i}$ is full-rank.
Thus, $l_i=1$ for all $i$ and by Theorem~\ref{thm:mainsingle} the critical erasure probability is $\frac{1}{\max_{i} |\lambda_{i,1}|^2}=\frac{1}{|\lambda_{max}|^2}$.
\end{proof}

For a more precise understanding of the critical erasure
probability, we will focus on the case of a row vector $\mathbf{C}$
--- i.e. single-output systems. Heuristically, a row vector
$\mathbf{C}$ is the worst among $\mathbf{C}$ matrices since a vector
observation is clearly better than a scalar observation.

Furthermore, we will also restrict the periods of the all eigenvalue
cycles of $\mathbf{A}$ to be primes\footnote{For convenience, we
  include $1$ as a prime number here.}. The technical reason for this
restriction is that prime periods give us a useful invariance property
of the sub-eigenvalue cycles. Let $\{
\lambda_1,\lambda_2,\cdots,\lambda_l \}$ be an eigenvalue cycle with
prime period $p$. Then, all subsets of $\{
\lambda_1,\lambda_2,\cdots, \lambda_l \}$ with distinct elements are eigenvalue cycles with the same period $p$. This invariance property
need not hold for eigenvalue cycles with composite periods as we will see by example later.
\begin{corollary}
Given an intermittent system $(\mathbf{A},\mathbf{B},\mathbf{C}, \sigma, \sigma')$ with probability of erasure $p_e$, let $\sigma < \infty$, $\sigma' > 0$, and $(\mathbf{A},\mathbf{B})$ be controllable.
We further assume that $(\mathbf{A},\mathbf{C})$ is observable, $\mathbf{C}$ is a row vector, and $\mathbf{A}$ has only prime-period eigenvalue cycles of length $\nu_i$. Then, the intermittent system is intermittent observable if and only if $p_e <  \frac{1}{ \underset{1 \leq i \leq \mu}{\max}
  |\lambda_{i,1}|^{ \frac{2 p_i}{p_i-\nu_i+1}}}$.
\label{thm:cycle}
\end{corollary}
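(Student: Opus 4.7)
My plan is to derive Corollary~\ref{thm:cycle} from Theorem~\ref{thm:mainsingle} by computing $l_i$ explicitly under the stated hypotheses. The claim is
\[
l_i \;=\; p_i - \nu_i + 1 \quad \text{for every } i,
\]
which, plugged into Theorem~\ref{thm:mainsingle}, immediately produces the stated $\frac{1}{\max_i |\lambda_{i,1}|^{2p_i/(p_i - \nu_i + 1)}}$. The easy direction $l_i \le p_i - \nu_i + 1$ is trivial: any $S \subseteq \{0,\dots,p_i-1\}$ with $|S| \le \nu_i - 1$ produces a stacked matrix with fewer than $\nu_i$ rows, hence rank strictly less than $\nu_i$. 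All the content lies in the reverse direction, which I will reformulate as: for \emph{every} choice of $\nu_i$ distinct indices $s_1,\dots,s_{\nu_i} \in \{0,\dots,p_i-1\}$, the $\nu_i \times \nu_i$ matrix $M$ with entries $M_{kj} = c_j\,\lambda_{i,j}^{s_k}$ has full rank $\nu_i$, where $c_j$ denotes the $j$-th scalar entry of the row $\mathbf{C_i}$.

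To attack this I first combine observability with the row-vector form of $\mathbf{C}$. By Theorem~\ref{thm:jordanob}, each $\mathbf{C}_{\lambda_{i,j}}$ (a row vector whose length equals the number of Jordan blocks with eigenvalue $\lambda_{i,j}$) must have rank equal to that number of blocks; since a row vector has rank at most one, each $c_j$ must be nonzero \emph{and} each eigenvalue can support only a single Jordan block, so $\lambda_{i,1},\dots,\lambda_{i,\nu_i}$ are pairwise distinct. Because the cycle has period $p_i$, each ratio $\lambda_{i,j}/\lambda_{i,1}$ is a $p_i$-th root of unity, so I can write $\lambda_{i,j} = \lambda_{i,1}\,\zeta^{e_j}$ for some primitive $p_i$-th root of unity $\zeta$ and pairwise-distinct exponents $e_1,\dots,e_{\nu_i} \in \mathbb{Z}/p_i\mathbb{Z}$. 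This yields the factorization
\[
M \;=\; \mathrm{diag}\!\bigl(\lambda_{i,1}^{s_1},\dots,\lambda_{i,1}^{s_{\nu_i}}\bigr)\cdot W \cdot \mathrm{diag}(c_1,\dots,c_{\nu_i}),\qquad W_{kj} \;=\; \zeta^{e_j s_k}.
\]
The two diagonal factors are invertible ($c_j \neq 0$ by observability and $\lambda_{i,1} \neq 0$ for any cycle that actually participates in the $\max$ of the theorem), so full rank of $M$ reduces to $\det W \neq 0$.

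The main obstacle is this final determinant claim: $W$ is the $\nu_i \times \nu_i$ submatrix of the $p_i \times p_i$ DFT matrix cut out by the distinct row indices $\{s_k\}$ and distinct column indices $\{e_j\}$, and for general orders such submatrices can certainly be singular. For prime $p_i$, however, this is precisely Chebotarev's theorem on roots of unity: every square submatrix of a DFT matrix of prime order has nonzero determinant. Invoking that result gives $\det W \neq 0$ and hence $l_i \ge p_i - \nu_i + 1$, completing the computation of $l_i$. The primality hypothesis is doing all the work here; without it both the sub-cycle invariance quoted just before the corollary and the determinant bound itself can fail, consistent with the paper's warning about composite-period counterexamples. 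Substituting $l_i = p_i - \nu_i + 1$ into Theorem~\ref{thm:mainsingle} finishes the proof.
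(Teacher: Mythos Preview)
Your proof is correct and follows essentially the same route as the paper: reduce to Theorem~\ref{thm:mainsingle} by computing $l_i = p_i-\nu_i+1$, use observability with a row-vector $\mathbf{C}$ to get nonzero scalars $c_j$ and distinct eigenvalues, factor the stacked matrix as $\mathrm{diag}\times(\text{DFT submatrix})\times\mathrm{diag}$, and then invoke the nonvanishing of all minors of a prime-order DFT matrix. The only cosmetic difference is that the paper states this last fact as a generalized-Vandermonde result from \cite{Evans_Generalized}, whereas you name it Chebotarev's theorem; they are the same statement.
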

\begin{proof}
First, we introduce the following fact regarding Vandermonde matrix determinants~\cite{Evans_Generalized}: Let $p$ be a prime, $a_1,\cdots, a_n$ be pairwise incongruent in mod $p$ and $b_1,\cdots,b_n$ be pairwise incongruent in mod $p$. Then,
\begin{align}
\begin{bmatrix}
e^{j 2 \pi \frac{a_1 b_1}{p}} & e^{j 2 \pi \frac{a_1 b_2}{p}} & \cdots & e^{j 2 \pi \frac{a_1 b_n}{p}} \\
e^{j 2 \pi \frac{a_2 b_1}{p}} & e^{j 2 \pi \frac{a_2 b_2}{p}} & \cdots & e^{j 2 \pi \frac{a_2 b_n}{p}} \\
\vdots & \vdots & \ddots & \vdots \\
e^{j 2 \pi \frac{a_n b_1}{p}} & e^{j 2 \pi \frac{a_n b_2}{p}} & \cdots & e^{j 2 \pi \frac{a_n b_n}{p}} \\
\end{bmatrix}\nonumber
\end{align}
is full rank.

Furthermore, since $(\mathbf{A},\mathbf{C})$ is observable and $\mathbf{C}$ is a row vector, by Theorem~\ref{thm:jordanob}, $\lambda_{i,j}$ are distinct and $(\mathbf{C_{i,j}})_1$ are not zeros. Therefore, let $\lambda_{i,j}=|\lambda_i| e^{j 2 \pi \frac{q_{i,j}}{p_i}}$ where $q_{i,1}, \cdots, q_{i,\nu_i}$ are incongruent in mod $p_i$ and $p_i$ are primes.

Now, we will evaluate the critical erasure probability shown in Theorem~\ref{thm:mainsingle}. For this system, \eqref{eqn:def:lprime:thm} can be written as
\begin{align}
\begin{bmatrix}
\mathbf{C_i}\mathbf{A_i}^{s_1}\\
\vdots \\
\mathbf{C_i}\mathbf{A_i}^{s_{|S|}}\\
\end{bmatrix}
&=
\begin{bmatrix}
\lambda_{i,1}^{s_1} & \cdots & \lambda_{i,\nu_i}^{s_1} \\
\vdots & \ddots & \vdots \\
\lambda_{i,1}^{s_{|S|}} & \cdots & \lambda_{i,\nu_i}^{s_{|S|}}
\end{bmatrix}
\begin{bmatrix}
(\mathbf{C_{i,1}})_{1} & \cdots & 0 \\
\vdots & \ddots & \vdots \\
0 & \cdots & (\mathbf{C_{i,\nu_i}})_{1}
\end{bmatrix} \nonumber \\
&=
\begin{bmatrix}
|\lambda_i|^{s_1} & \cdots & 0 \\
\vdots & \ddots & \vdots \\
0 & \cdots & |\lambda_i|^{s_{|S|}}
\end{bmatrix}
\begin{bmatrix}
e^{j 2 \pi \frac{q_{i,1}}{p_i} s_1} & \cdots & e^{j 2 \pi \frac{q_{i,\nu_i}}{p_i} s_1} \\
\vdots & \ddots & \vdots \\
e^{j 2 \pi \frac{q_{i,1}}{p_i} s_{|S|}} & \cdots & e^{j 2 \pi \frac{q_{i,\nu_i}}{p_i} s_{|S|}}
\end{bmatrix}
\begin{bmatrix}
(\mathbf{C_{i,1}})_{1} & \cdots & 0 \\
\vdots & \ddots & \vdots \\
0 & \cdots & (\mathbf{C_{i,\nu_i}})_{1}
\end{bmatrix} \nonumber
\end{align}
Since $\lambda_i$ and $\mathbf{C_{i,j}}_1$ are non-zeros, the rank of $\begin{bmatrix} \mathbf{C_i}\mathbf{A_i}^{s_1}\\
\vdots \\
\mathbf{C_i}\mathbf{A_i}^{s_{|S|}} \end{bmatrix}$ is equal to the rank of $\begin{bmatrix}
e^{j 2 \pi \frac{q_{i,1}}{p_i} s_1} & \cdots & e^{j 2 \pi \frac{q_{i,\nu_i}}{p_i} s_1} \\
\vdots & \ddots & \vdots \\
e^{j 2 \pi \frac{q_{i,1}}{p_i} s_{|S|}} & \cdots & e^{j 2 \pi \frac{q_{i,\nu_i}}{p_i} s_{|S|}}
\end{bmatrix}$.

Furthermore, since $q_{i,1}, \cdots, q_{i,\nu_1}$ are incongruent in mod $p_i$ and $s_1, \cdots, s_{|S|}$ are also incongruent in mod $p_i$, by the property of the Vandermonde matrix discussed above, the rank of the observability gramian is greater or equal to $\nu_i$ if and only if $|S| \geq \nu_i$.

Therefore, $l_i$ of \eqref{eqn:def:lprime:thm} is $p_i-\nu_i+1$, and the corollary follows from Theorem~\ref{thm:mainsingle}.
\end{proof}

 One may wonder why we could not get a simple answer in Theorem~\ref{thm:mainsingle} unlike Corollary~\ref{thm:cycle}. To understand this, consider two potential extensions of Corollary~\ref{thm:cycle}:

(1) Eigenvalue cycles with periods that are composite numbers:
Consider $\mathbf{A}=\begin{bmatrix} 2 & 0 & 0 \\ 0 & 2 e^{j \frac{2
      \pi}{16}} & 0 \\ 0 & 0 & 2 e^{j\frac{2
      \pi}{16}9} \end{bmatrix}$ and $\mathbf{C}=\begin{bmatrix} 1 & 1 & 1 \end{bmatrix}$. The eigenvalue cycle
has length $3$ and period $16$. If we naively apply the formula of
Corollary~\ref{thm:cycle} then we would get a critical value
$\frac{1}{2^{2 \cdot \frac{16}{16-3+1}}}=\frac{1}{2^{\frac{16}{7}}}$. However, if we consider the sub-eigenvalue
cycle $\{ 2e^{j \frac{2 \pi}{16}}, 2e^{j \frac{2 \pi}{16}9}\}$, the
length is $2$ and the period is $2$. The formula of
Corollary~\ref{thm:cycle} gives $\frac{1}{2^{2 \cdot \frac{2}{2-2+1}}}=\frac{1}{2^4}$ as a critical value,
which gives a tighter condition than the previous one. In fact, the latter value is the correct critical erasure probability. Because the period invariant property does not
hold for a composite number cycle, the longest cycle does not necessarily give the
right critical probability.

(2) A general matrix $\mathbf{C}$, multiple-output systems:
If we have a vector observation, an eigenvalue cycle can be
divided into smaller cycles. As an extreme case, when $\mathbf{C}$
is an identity matrix every eigenvalue cycle is divided into trivial
cycles with length $1$ and the critical erasure probability becomes
$\frac{1}{|\lambda_{max}|^2}$ as observed in
\cite{Sinopoli_Kalman}. Consider now $\mathbf{A}=\begin{bmatrix} 2
  &
  0 & 0 & 0 \\ 0 & 2e^{j\frac{2 \pi }{5}} & 0 & 0 \\ 0 & 0 &
  2e^{j\frac{2 \pi }{5}2} & 0 \\ 0 & 0 & 0 & 2e^{j\frac{2 \pi
    }{5}3} \end{bmatrix}$ and
$\mathbf{C}=\begin{bmatrix} 1 & 2 & 3 & 4 \\ 0 & 0 & 0 &
  \delta \end{bmatrix}$. The eigenvalue cycle $\{ 2, 2e^{j\frac{2\pi}{5}},
2e^{j\frac{2\pi}{5}2}, 2e^{j\frac{2\pi}{5}3} \}$ of $\mathbf{A}$ has
length $4$ and period $5$.
However, if $\delta \neq 0$, by elementary row operations $\mathbf{C}$
can be converted to $\begin{bmatrix} 1 & 2 & 3 & 0 \\ 0 & 0 & 0 & 1 \end{bmatrix}$.
Thus, the eigenvalue cycle is divided into two sub-cycles, $\{ 2, 2e^{\frac{2 \pi}{5}}, 2 e^{\frac{2 \pi}{5}2} \}$ and $\{ 2 e^{\frac{2\pi}{5}3} \}$. The longer cycle with length $3$ would dominate and
the critical erasure probability would be $\frac{1}{2^{2 \cdot \frac{5}{5-3+1}}}=\frac{1}{2^{\frac{10}{3}}}$. Meanwhile, if $\delta = 0$, the second row of $\mathbf{C}$ would be ignorable. Thus, the eigenvalue cycle would not be divided and the critical erasure probability would be
$\frac{1}{2^{2 \cdot \frac{5}{5-4+1}}}=\frac{1}{2^{\frac{10}{2}}}$.

In this example, we can see that the critical erasure
probability depends on whether $\delta$ is equal to $0$ or not, which is related to the rank of $\mathbf{C}$. Thus, it is inevitable to have a rank condition of some sort in the characterization of the critical erasure probability.

\subsection{Extension to Intermittent Kalman Filtering with Parallel Channels}
The concept of eigenvalue cycles and the divide-and-conquer approach can be also applied to extensions and variations of the intermittent Kalman filtering.

Let's consider intermittent Kalman filtering with parallel erasure channels as introduced in \cite{Garone_LQG}.
\begin{align}
&\mathbf{x}[n+1]=\mathbf{A}\mathbf{x}[n]+\mathbf{B}\mathbf{w}[n]\nonumber \\
&\mathbf{y_1}[n]=\beta_1[n](\mathbf{C_1}\mathbf{x}[n]+\mathbf{v_1}[n]) \nonumber \\
&\vdots  \nonumber \\
&\mathbf{y_d}[n]=\beta_d[n](\mathbf{C_d}\mathbf{x}[n]+\mathbf{v_d}[n])\nonumber
\end{align}
Here $n$ is the non-negative integer-valued time index, and $\mathbf{x}[n] \in \mathbb{C}^{m}$, $\mathbf{w}[n] \in \mathbb{C}^{g}$, $\mathbf{y_i}[n] \in \mathbb{C}^{l_i}$, $\mathbf{v_i}[n] \in \mathbb{C}^{l_i}$, $\mathbf{A} \in \mathbb{C}^{m \times m}$, $\mathbf{B} \in \mathbb{C}^{m \times g}$, $\mathbf{C_i} \in \mathbb{C}^{l_i \times m}$. The underlying randomness comes from $\mathbf{x}[0]$, $\mathbf{w}[n]$, $\mathbf{v_i}[n]$ and $\beta_i[n]$. $\mathbf{x}[0]$, $\mathbf{w}[n]$ and $\mathbf{v_i}[n]$ are independent Gaussian vectors with zero mean, and there exist positive $\sigma^2$ and $\sigma'^2$ such that
\begin{align}
&\mathbb{E}[\mathbf{x}[0]\mathbf{x}[0]^\dag] \preceq \sigma^2 \mathbf{I} \nonumber \\
&\mathbb{E}[\mathbf{w}[n]\mathbf{w}[n]^\dag] \preceq \sigma^2 \mathbf{I} \nonumber \\
&\mathbb{E}[\mathbf{v_i}[n]\mathbf{v_i}[n]^\dag] \preceq \sigma^2 \mathbf{I} \nonumber\\
&\mathbb{E}[\mathbf{w}[n]\mathbf{w}[n]^\dag] \succeq \sigma'^2 \mathbf{I} \nonumber \\
&\mathbb{E}[\mathbf{v_i}[n]\mathbf{v_i}[n]^\dag] \succeq \sigma'^2 \mathbf{I}. \nonumber
\end{align}
$\beta_i[n]$ are independent Bernoulli random processes with erasure probabilities $p_{e,i}$.

We call this system as an intermittent system $(\mathbf{A},\mathbf{B},\mathbf{C_i})$ with erasure probabilities $p_{e,i}$.

Since the observations go through independent parallel erasure channels, we can expect diversity gain~\cite{Tse}, i.e. even though the observations from some channels are lost, we can still estimate the state based on other successfully transmitted observations. At the first glance, this extension may seem much harder than the original problem since we have to characterize the whole region $(p_{e,1},\cdots,p_{e,d})$ rather than a single critical erasure value. However, a simple extension of Theorem~\ref{thm:mainsingle} turns out to be enough to characterize this critical erasure probability region. As in Section~\ref{sec:interob}, let $\mathbf{A}=\mathbf{U}\mathbf{A'}\mathbf{U}^{-1}$ where $U$ is an invertible matrix and $\mathbf{A'}$ is an upper-triangular Jordan matrix. We also define $\mathbf{B'}:=\mathbf{U}\mathbf{B}$ and $\mathbf{C_i'}:=\mathbf{C_i}\mathbf{U}$.

Then, we can make the following generalized definitions of \eqref{eqn:ac:jordan:thm}, \eqref{eqn:ac2:jordan:thm}, \eqref{eqn:def:lprime:thm} for  $\mathbf{A'}$ and $\mathbf{C_i'}$.
\begin{align}
&\mathbf{A'}=diag\{ \mathbf{A_{1,1}}, \mathbf{A_{1,2}},\cdots, \mathbf{A_{\mu,\nu_{\mu}}} \} \nonumber \\
&\mathbf{C_i'}=\begin{bmatrix} \mathbf{C_{1,1,i}} & \mathbf{C_{1,2,i}} & \cdots & \mathbf{C_{\mu,\nu_{\mu},i}} \end{bmatrix} \nonumber \\
&\mbox{where}\nonumber \\
&\quad \mbox{$\mathbf{A_{i,j}}$ is a Jordan block matrix with an eigenvalue $\lambda_{i,j}$} \nonumber \\
&\quad \{ \lambda_{i,1},\cdots, \lambda_{i,\nu_i} \} \mbox{ is a cycle with length $\nu_i$ and period $p_i$}\nonumber \\
&\quad \mbox{For $i \neq i'$, $\{\lambda_{i,j},\lambda_{i',j'} \}$ is not a cycle} \nonumber \\
&\quad \mbox{$\mathbf{C_{i,j,k}}$ is a $l_k \times \dim \mathbf{A_{i,j}}$ matrix}.\nonumber
\end{align}
Denote \begin{align}
&\mathbf{A_i}=diag\{ \lambda_{i,1},\cdots, \lambda_{i,\nu_i} \}\nonumber \\
&\mathbf{C_{i,j}}=\begin{bmatrix} (\mathbf{C_{i,1,j}})_1,\cdots, (\mathbf{C_{i,\nu_i,j}})_1 \end{bmatrix} \nonumber\\
&\mbox{where $(\mathbf{C_{i,j,k}})_1$ is the first column of $\mathbf{C_{i,j,k}}$}.\nonumber
\end{align}
Let $(l_{i,1},l_{i,2},\cdots,l_{i,d})$ be the cardinality vector of the sets $S_1',S_2',\cdots,S_d'$ such that $S_j := \{0,1,\cdots, p_i-1 \} \setminus S_j' = \{ s_{j,1}, s_{j,2}, \cdots, s_{j,|S_j|} \}$ and
\begin{align}
\begin{bmatrix}
\mathbf{C_{i,1}} \mathbf{A_i}^{s_{1,1}} \\
\vdots \\
\mathbf{C_{i,1}} \mathbf{A_i}^{s_{1,|S_1|}} \\
\mathbf{C_{i,2}} \mathbf{A_i}^{s_{2,1}} \\
\vdots \\
\mathbf{C_{i,d}} \mathbf{A_i}^{s_{d,|S_d|}} \\
\end{bmatrix}\nonumber
\end{align}
is rank deficient, i.e. has rank strictly less than $\nu_i$. Denote $L_i$ as a set of all such vectors.

Then, the intermittent observability with parallel channels is characterized as follows.
\begin{proposition}
Given an intermittent system $(\mathbf{A},\mathbf{B},\mathbf{C_i},\sigma,\sigma')$ with probabilities of erasures $(p_{e,1}, \cdots, p_{e,d} )$, let $\sigma < \infty$, $\sigma' > 0$, and $(\mathbf{A},\mathbf{B})$ be controllable. Then, the intermittent system is intermittent observable if and only if
\begin{align}
\max_{1 \leq i \leq \mu} \max_{(l_{i,1},l_{i,2},\cdots, l_{i,d}) \in L_i} \left( \prod_{1 \leq j \leq d} p_{e,j}^{\frac{l_{i,j}}{p_i}} \right) |\lambda_{i,1}|^{2} < 1. \nonumber
\end{align} \label{thm:multi}
\end{proposition}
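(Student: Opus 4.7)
The plan is to follow the exact three-part architecture used for Theorem~\ref{thm:mainsingle}: decompose the Jordan form of $\mathbf{A}$ into eigenvalue cycles, reduce to one cycle at a time by successive decoding as in Section~\ref{sec:maxcombining}, and for each cycle control a stopping-time driven suboptimal estimator via the power property of Section~\ref{sec:powerproperty}. The only genuinely new ingredient is that the underlying erasure process is now a $d$-tuple $(\beta_1[n],\ldots,\beta_d[n])$ of independent Bernoulli processes, so the one-dimensional large-deviation tail in Lemma~\ref{lem:app:geo} must be upgraded to a multi-dimensional version indexed by the finite set $L_i$ of violating erasure profiles.

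For sufficiency, I would fix a cycle $i$ and define the stopping time
\begin{align}
S_i := \min \bigl\{ k \geq 0 : \text{the observations from all $d$ channels in the last $k$ steps give a rank-$\nu_i$ joint gramian for } (\mathbf{A_i},\mathbf{C_{i,1}},\ldots,\mathbf{C_{i,d}}) \bigr\}. \nonumber
\end{align}
By the definition of $L_i$, a block of $p_i$ consecutive time steps fails to raise the joint rank exactly when its channel-wise erasure counts dominate some tuple in $L_i$. Using independence across channels and block-periodicity mod $p_i$, a standard block-decomposition argument paralleling Lemma~\ref{lem:app:geo} gives
\begin{align}
\limsup_{s \to \infty} \tfrac{1}{s} \ln \mathbb{P}\{S_i \geq s\} \;=\; \ln \Bigl( \max_{(l_{i,1},\ldots,l_{i,d}) \in L_i} \prod_{j=1}^d p_{e,j}^{l_{i,j}/p_i} \Bigr). \nonumber
\end{align}
Inverting the smallest full-rank sub-block of past observations yields a suboptimal estimator whose conditional squared error is $O(|\lambda_{i,1}|^{2 S_i})$; taking expectations and using the exponent above, this is finite precisely under the hypothesis of the proposition restricted to cycle $i$. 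The successive-decoding step of Section~\ref{sec:maxcombining} then strips cycle $i$ from the remaining sub-system, absorbing its bounded residual error into the effective observation noise, so cycles are dispatched one at a time and the outer $\max_{i}$ in the statement emerges automatically.

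For necessity, I would fix any cycle $i$ and any violating tuple $(l_{i,1}^\star,\ldots,l_{i,d}^\star) \in L_i$, and invoke a genie that reveals every state coordinate not associated with cycle $i$; the remaining problem is a parallel-channel intermittent system with matrices $(\mathbf{A_i},\mathbf{C_{i,1}},\ldots,\mathbf{C_{i,d}})$. Choosing inside each length-$p_i$ block the $l_{i,j}^\star$ erasure positions in channel $j$ that realize rank deficiency, the projection of the state disturbance onto the resulting unobservable direction is independent of every non-erased observation in that block. A matching second-moment lower bound, combined with the reverse direction of the multi-channel large-deviation estimate, then produces an $n \to \infty$ divergence of the MMSE whenever $\bigl(\prod_{j} p_{e,j}^{l_{i,j}^\star/p_i}\bigr)|\lambda_{i,1}|^2 \geq 1$. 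Taking the union over choices of $(i,(l_{i,1}^\star,\ldots,l_{i,d}^\star))$ yields the stated necessary condition.

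The main obstacle is the multi-channel large-deviation estimate itself. In the scalar case the bad patterns are labelled by a single integer $l_i$, so Lemma~\ref{lem:app:geo} reduces to a geometric-tail computation with sub-exponential combinatorial factors. Here the bad patterns live on the finite lattice $L_i \subset \mathbb{Z}_+^d$, and one must show (i) that the exponential rate of $\mathbb{P}\{S_i \geq s\}$ is exactly $\max_{(l_{i,1},\ldots,l_{i,d}) \in L_i} \prod_j p_{e,j}^{l_{i,j}/p_i}$, (ii) that the Vandermonde/rank prefactors guaranteeing full-rank for non-violating patterns remain sub-exponential, and (iii) that ties between maximizers in $L_i$ do not spoil the matching lower bound used for necessity. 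Once this lemma is in hand, the rest of the argument is essentially a cosmetic rewrite of Sections~\ref{sec:dis:suff} and~\ref{sec:dis:nece} with erasure-count vectors in place of scalars.
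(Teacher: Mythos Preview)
Your proposal is correct and matches the paper's approach: the paper explicitly omits the proof of this proposition, stating only that it is ``similar to that of Theorem~\ref{thm:mainsingle}'' and that ``the max-combination and separability principle remain the same, but the test channels in the power property become more complicated''---precisely the multi-channel large-deviation upgrade of Lemma~\ref{lem:dis:geo0} that you identify as the main new ingredient. Your sketch is in fact more detailed than what the paper provides.
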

We omit the proof of the proposition, since it is similar to that of Theorem~\ref{thm:mainsingle}.

Compared to Theorem~\ref{thm:mainsingle}, the max-combination and separability principle remain the same, but the test channels in the power property become more complicated.
Here, $(S_1',\cdots,S_d')$ represents the test channels such that when they are erased, the observability Gramian becomes rank-deficient. $(l_{i,1},\cdots,l_{i,d})$ represents the distance vector to these test channels.


\section{Intermittent Kalman Filtering with Nonuniform Sampling}
\label{sec:nonuniform}
In the previous section, we proved that eigenvalue cycles are the
only factor that prevents us from having the critical erasure probability be
$\frac{1}{|\lambda_{max}|^2}$. Based on this understanding, we can
look for a simple way to avoid this troublesome phenomenon. Here, we
propose nonuniform sampling as a simple way of breaking the eigenvalue
cycles and achieving the critical value
$\frac{1}{|\lambda_{max}|^2}$.

As an intuitive example, consider $\mathbf{A}=\begin{bmatrix} 1 & 0 \\
  0 & -1 \end{bmatrix}$. Then, $\mathbf{A}=\begin{bmatrix}1 & 0 \\ 0 &
  -1 \end{bmatrix}, \mathbf{A}^2=\begin{bmatrix}1 & 0 \\ 0 &
  1 \end{bmatrix}, \mathbf{A}^3=\begin{bmatrix}1 & 0 \\ 0 &
  -1 \end{bmatrix}, \mathbf{A}^4=\begin{bmatrix}1 & 0 \\ 0 &
  1 \end{bmatrix}, \cdots$. What the eigenvalue cycle is capturing is
that half of $\mathbf{A},\mathbf{A}^2,\mathbf{A}^3,\cdots$ are
identical. Therefore, the question is how we can make every matrix in
$\mathbf{A},\mathbf{A}^2,\mathbf{A}^3,\cdots$ distinct. To simplify
the question, consider the sequence of $-1,1,-1,1,\cdots$  which
corresponds to $(2,2)$ elements of
$\mathbf{A},\mathbf{A}^2,\mathbf{A}^3,\cdots$.

Rewrite this sequence $-1,1,-1,1,\cdots$ as $(e^{j \pi})^1,(e^{j
  \pi})^2,(e^{j \pi})^3,(e^{j \pi})^4,\cdots$ and introduce a jitter
$t_i$ to each sampling time. The resulting sequence becomes $(e^{j  \pi})^{1+t_1},(e^{j \pi})^{2+t_2},(e^{j \pi})^{3+t_3},(e^{j  \pi})^{4+t_4},\cdots$ and if $t_i$s are uniformly distributed i.i.d.~random variables on $[0,T]$ each element in the sequence is distinct almost surely as long as $T>0$.

Operationally, this idea can be implemented as follows: at
design-time, the sensor and the estimator agree on the nonuniform
sampling pattern which is a realization of i.i.d.~random variables
whose distribution is uniform on $[0,T]~(T>0)$. Whenever the sensor
samples the system, it jitters its sampling time according to this
nonuniform pattern. Knowing the sampling time jitter, the sampled
continuous-time system looks like a discrete {\em time-varying} system
to the estimator. The joint Gaussianity between the observation and the
state is preserved, and furthermore, Kalman filters are optimal even for
time-varying systems!  This intermittent Kalman
filtering problem with nonuniform samples has the critical erasure
probability $\frac{1}{|\lambda_{max}|^2}$ almost surely. Therefore, an
eigenvalue cycle is breakable by nonuniform sampling.

One may be bothered by the probabilistic argument on the nonuniform
sampling pattern. However, this probabilistic proof is an indirect
argument for the existence of an appropriate deterministic nonuniform
sampling pattern, which is similar to how the existence of
capacity achieving codes is proved in information
theory~\cite{Shannon_mathematical}.

To write the scheme formally, consider a continuous-time dynamic system:
\begin{align}
&d\mathbf{x_c}(t)=\mathbf{A_c} \mathbf{x_c}(t)dt + \mathbf{B_c} d \mathbf{W_c}(t) \label{eqn:contistate}\\
&\mathbf{y_c}(t)=\mathbf{C_c} \mathbf{x_c}(t) + \mathbf{D_c} \frac{d \mathbf{V_c}(t)}{dt}. \label{eqn:contiob}
\end{align}
Here $t$ is the non-negative real-valued time index. $\mathbf{W_c}(t)$ and $\mathbf{V_c}(t)$ are independent $g$ and $l$-dimension standard Wiener processes respectively, i.e. for $a,b \geq 0$, $\mathbf{W_c}(a+b)-\mathbf{W_c}(b)$ is distributed as $\mathcal{N}(\mathbf{0},a\mathbf{I})$ and $\mathbf{V_c}(a+b)-\mathbf{V_c}(b)$ is also distributed as $\mathcal{N}(0,a\mathbf{I})$. $\mathbf{A_c} \in \mathbb{C}^{m \times m}$, $\mathbf{B_c} \in \mathbb{C}^{m \times g}$, $\mathbf{C_c} \in \mathbb{C}^{l \times m}$, and $\mathbf{D_c} \in \mathbb{C}^{l \times l}$ where $\mathbf{D_c}$ is invertible. Thus, $\mathbf{x}[n] \in \mathbb{C}^{m}$ and $\mathbf{y}[n] \in \mathbb{C}^{l}$. For a convenience, we assume $\mathbf{x}[0]=0$ but the results of this paper hold for any $\mathbf{x}[0]$ with finite variance. Throughout this paper, we use the Ito's integral~\cite[p.80]{Gardiner} for stochastic calculus.

The process of \eqref{eqn:contistate} is known as Ornstein-Uhlenbeck process~\cite[p.109]{Gardiner} whose solution is $\mathbf{x_c}(t)=e^{\mathbf{A_c}t}\mathbf{x_c}(0)+\int^t_0 e^{\mathbf{A_c}(t-t')} \mathbf{B_c} d \mathbf{W_c}(t')$. Therefore, for $t_1 \leq t_2$ we have
\begin{align}
\mathbf{x_c}(t_2)&=e^{\mathbf{A_c}t_2} \mathbf{x_c}(0)+ \int^{t_2}_{0} e^{\mathbf{A_c}(t_2 -t')}\mathbf{B_c} d \mathbf{W_c}(t') \label{eqn:non:0} \\
&=e^{\mathbf{A_c}(t_2-t_1)} \left( e^{\mathbf{A_c}t_1} \mathbf{x_c}(0) + \int^{t_2}_{0} e^{\mathbf{A_c}(t_1-t')} \mathbf{B_c} d \mathbf{W_c}(t') \right) \nonumber  \\
&=e^{\mathbf{A_c}(t_2-t_1)} \left( e^{\mathbf{A_c}t_1} \mathbf{x_c}(0) + \int^{t_1}_{0} e^{\mathbf{A_c}(t_1-t')} \mathbf{B_c} d \mathbf{W_c}(t') + \int^{t_2}_{t_1} e^{\mathbf{A_c}(t_1 - t')} \mathbf{B_c}d \mathbf{W_c}(t')
\right) \nonumber \\
&=e^{\mathbf{A_c}(t_2-t_1)} \left( \mathbf{x_c}(t_1) + \int^{t_2}_{t_1} e^{\mathbf{A_c}(t_1 - t')} \mathbf{B_c}d \mathbf{W_c}(t')
\right)\nonumber
\end{align}
which can be rewritten as
\begin{align}
\mathbf{x_c}(t_1)=e^{\mathbf{A_c}(t_1-t_2)} \mathbf{x_c}(t_2) - \int^{t_2}_{t_1} e^{\mathbf{A_c}(t_1-t')} \mathbf{B_c} d\mathbf{W_c}(t').\label{eqn:non:1}
\end{align}
The point of doing this is to understand the values of the states during sampling intervals in terms of the states at the end of the interval.

Let's say we want to sample the system with a sampling interval $I~(I>0)$.
Conventional samplers uses integration filters to sample, i.e. in the uniform sampling case, the $n$th
sample $\mathbf{y}[n]$ corresponds to the integration of $\mathbf{y_c}(t)$ for $(n-1)I \leq t < nI$:
\begin{align}
\mathbf{y}[n]&=\int^{nI}_{(n-1)I} \mathbf{y_c}(t) dt. \nonumber
\end{align}

Nonuniform sampling can be thought of in two ways with respect to sampler's integration filters: (1)
The starting times of the integrations are uniform, but the sampling intervals are non-uniform. (2)
The sampling intervals are uniform, but the starting times of the integrations are non-uniform. Since the
analysis and performance is similar in both cases, we will focus on
the latter case. To take the $n$th sample of the system, the non-uniform sampler takes the integration of $\mathbf{y_c}(t)$ for $(n-1)I-t_n \leq t < nI - t_n$:
\begin{align}
\mathbf{y_o}[n]&=\int^{nI-t_n}_{(n-1)I-t_n} \mathbf{y_c}(t) dt \nonumber \\
&=\int^{nI-t_n}_{(n-1)I-t_n} \mathbf{C_c} \mathbf{x_c}(t) dt + \int^{nI-t_n}_{(n-1)I-t_n} \mathbf{D_c}d\mathbf{V_c}(t) \label{eqn:non:2}\\
&=\int^{nI-t_n}_{(n-1)I-t_n} \mathbf{C_c} \left( e^{\mathbf{A_c}(t-(nI-t_n))} \mathbf{x_c}(nI-t_n)
-\int^{nI-t_n}_{t} e^{\mathbf{A_c}(t-t')} \mathbf{B_c} d\mathbf{W_c}(t')
 \right) dt + \int^{nI-t_n}_{(n-1)I-t_n} \mathbf{D_c}d\mathbf{V_c}(t) \label{eqn:non:3} \\
&=\left( \int^{nI-t_n}_{(n-1)I-t_n} \mathbf{C_c}e^{\mathbf{A_c}(t-(nI-t_n))} dt \right)\mathbf{x_c}(nI-t_n) \nonumber  \\
&-\int^{nI-t_n}_{(n-1)I-t_n} \int^{nI-t_n}_{t} \mathbf{C_c} e^{\mathbf{A_c}(t-t')} \mathbf{B_c} d\mathbf{W_c}(t')dt
+ \int^{nI-t_n}_{(n-1)I-t_n} \mathbf{D_c}d\mathbf{V_c}(t) \nonumber \\
&=\underbrace{\left( \int^{I}_{0} \mathbf{C_c} e^{\mathbf{A_c}(t-I)} dt \right)}_{:=\mathbf{C}} \mathbf{x_c}(nI-t_n) \nonumber \\
&\underbrace{-\int^{nI-t_n}_{(n-1)I-t_n} \int^{nI-t_n}_{t} \mathbf{C_c} e^{\mathbf{A_c}(t-t')} \mathbf{B_c} d\mathbf{W_c}(t')dt
+ \int^{nI-t_n}_{(n-1)I-t_n} \mathbf{D_c}d\mathbf{V_c}(t)}_{:=\mathbf{v}[n]} \label{eqn:non:4}
\end{align}
Here \eqref{eqn:non:2} follows from \eqref{eqn:contiob}, and \eqref{eqn:non:3} follows from \eqref{eqn:non:1}. Since $\mathbf{y_o}[n]$ is transmitted over the erasure channel, the intermittent system $(\mathbf{A_c},\mathbf{B_c},\mathbf{C})$ with nonuniform samples and erasure probability $p_e$ has the following system equation:
\begin{align}
&d\mathbf{x_c}(t)=\mathbf{A_c}\mathbf{x_c}(t)dt+\mathbf{B_c}d \mathbf{W_c}(t) \label{eqn:conti:xsample}\\
&\mathbf{y}[n]=\beta[n](\mathbf{C}\mathbf{x_c}(nI-t_n)+\mathbf{v}[n])\label{eqn:conti:ysample}
\end{align}
where $\mathbf{y}[n] \in \mathbb{C}^{l}$ and $\beta[n]$ is an independent Bernoulli random process with erasure probability $p_e$. The variance of $\mathbf{v}[n]$ is uniformly bounded since the integration interval is bounded, but $\mathbf{v}[n]$ can be correlated since the integration intervals could overlap. Since $\mathbf{C}$ is a function of $\mathbf{C_c}$, the observability of $(\mathbf{A_c},\mathbf{C_c})$ does not necessarily imply the observability of $(\mathbf{A_c},\mathbf{C})$ while the observability of $(\mathbf{A_c},\mathbf{C})$ always implies the observability of $(\mathbf{A_c},\mathbf{C_c})$.

\begin{figure*}[t]
\begin{center}
\includegraphics[width=3.7in]{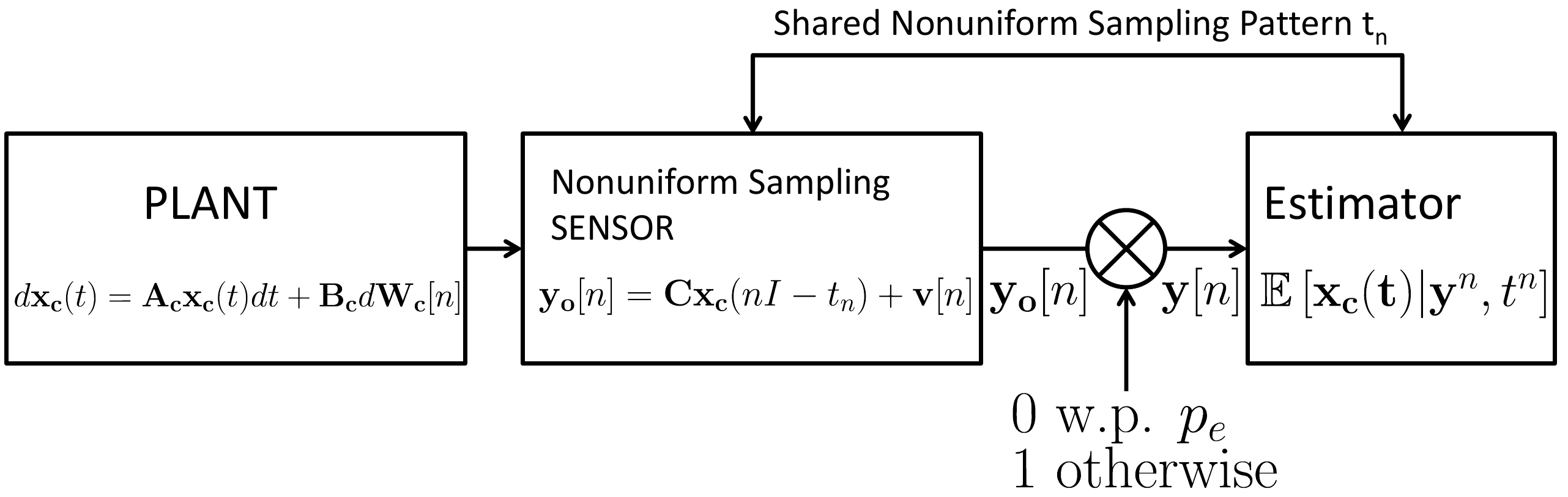}
\caption{System diagram for `intermittent Kalman filtering with nonuniform sampling'.
The sensor samples the plant according to the nonuniform sampling pattern $t_n$, and sends the observation through the real erasure channel without any coding. The estimator tries to estimate the state based on its received signals and the nonuniform sampling pattern $t_n$.}
\label{fig:system3}
\end{center}
\end{figure*}

Figure~\ref{fig:system3} shows the system diagram for intermittent Kalman filtering with nonuniform sampling. The nonuniform sampler samples the plant according to the nonuniform sampling pattern $t_n$ and generates the observation $y_o[n]$. The observation is transmitted through the real erasure channel without any coding. Then, the estimator tries to estimate the state $x_c(t)$ based on its received signals $y^n$ and the nonuniform sampling pattern $t^n$.

As before, the intermittent system $(\mathbf{A_c},\mathbf{B_c},\mathbf{C})$ with nonuniform samples is called intermittent observable if there exists a causal estimator $\mathbf{\widehat{x}}(t)$ of $\mathbf{x}(t)$ based on $\mathbf{y}[\lfloor \frac{t}{I} \rfloor ], \cdots, \mathbf{y}[0]$ such that
\begin{align}
\sup_{t \in \mathbb{R}^+} \mathbb{E}[(\mathbf{x}(t)-\mathbf{\widehat{x}}(t))^\dag(\mathbf{x}(t)-\mathbf{\widehat{x}}(t))] < \infty.
\end{align}
Intermittent observability with nonuniform samples is characterized by the following theorem.
\begin{theorem}
Let $t_n$ be i.i.d.~random variables uniformly distributed on $[0,T]~(T>0)$, and $(\mathbf{A_c},\mathbf{B_c})$ be controllable.
When $(\mathbf{A_c},\mathbf{C})$ has unobservable and unstable eigenvalues --- i.e. $\exists \lambda \in \mathbb{C}^+$ such that $\begin{bmatrix} \lambda \mathbf{I} - \mathbf{A_c} \\ \mathbf{C} \end{bmatrix}$ is rank deficient ---, the intermittent system $(\mathbf{A_c},\mathbf{B_c},\mathbf{C})$ with nonuniform samples is not intermittent observable for all $p_e$. Otherwise, the intermittent system $(\mathbf{A_c},\mathbf{B_c},\mathbf{C})$ with nonuniform samples is intermittent observable if and only if $p_e < \frac{1}{|e^{2 \lambda_{max}I}|}$. Here $\lambda_{max}$ is the eigenvalue of $\mathbf{A_c}$ with the largest real part.
\label{thm:nonuniform}
\end{theorem}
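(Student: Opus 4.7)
The plan is to reduce the continuous-time problem to an equivalent time-varying discrete-time intermittent Kalman filtering problem, and then use the randomness in the jitters to break all eigenvalue cycles almost surely so that the no-cycle answer of Corollary~\ref{thm:nocycle} applies with discrete eigenvalues $e^{\lambda_i I}$. Define $\mathbf{x}[n] := \mathbf{x_c}(nI-t_n)$. Applying the Ornstein-Uhlenbeck solution \eqref{eqn:non:0} between consecutive sampling instants gives $\mathbf{x}[n+1] = e^{\mathbf{A_c}(I + t_n - t_{n+1})}\mathbf{x}[n] + \mathbf{w}[n]$ for a bounded-variance noise $\mathbf{w}[n]$. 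The time-varying change of coordinates $\widetilde{\mathbf{x}}[n] := e^{-\mathbf{A_c}t_n}\mathbf{x}[n]$ converts this to the normal form $\widetilde{\mathbf{x}}[n+1] = e^{\mathbf{A_c}I}\widetilde{\mathbf{x}}[n] + \widetilde{\mathbf{w}}[n]$ with observations $\mathbf{y}[n] = \beta[n]\bigl(\mathbf{C}_n\widetilde{\mathbf{x}}[n] + \mathbf{v}[n]\bigr)$, where $\mathbf{C}_n := \mathbf{C}\,e^{\mathbf{A_c}t_n}$. So the state evolves by the time-invariant $e^{\mathbf{A_c}I}$, while the observation matrix is randomly time-varying.

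For necessity, if $\lambda\in\mathbb{C}^+$ is unobservable, pick a right eigenvector $\mathbf{v}$ of $\mathbf{A_c}$ with $\mathbf{A_c}\mathbf{v}=\lambda\mathbf{v}$ and $\mathbf{C}\mathbf{v}=0$. Then $\mathbf{C}_n\mathbf{v} = \mathbf{C}e^{\mathbf{A_c}t_n}\mathbf{v} = e^{\lambda t_n}\mathbf{C}\mathbf{v} = 0$ for every realization of $t_n$, so the $\mathbf{v}$-component of $\widetilde{\mathbf{x}}[n]$ is never observed yet drives at rate $|e^{\lambda I}|>1$. For the tight bound $p_e<|e^{2\lambda_{max}I}|^{-1}$ in the observable case, I would project onto the eigenspace of $\lambda_{max}$ and repeat the scalar argument of Section~\ref{sec:intui}: the most recent non-erased observation is $S$ steps old with $S$ geometric of parameter $1-p_e$, and the contribution of $\widetilde{\mathbf{w}}[n-S]$ yields an estimation-error lower bound growing like $\mathbb{E}\bigl[|e^{\lambda_{max}I}|^{2S}\bigr]$, which diverges when $p_e|e^{\lambda_{max}I}|^2\geq 1$.

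For sufficiency, I would follow the suboptimal-estimator strategy of Section~\ref{sec:intui}: wait until $m=\dim\mathbf{A_c}$ non-erased observations have arrived at distinct reverse-time indices $k_1<\cdots<k_m$ and invert the Gramian
\begin{align}
\mathbf{G}(t_{n-k_1},\ldots,t_{n-k_m}) = \begin{bmatrix} \mathbf{C}\,e^{\mathbf{A_c}t_{n-k_1}}\,e^{-\mathbf{A_c}k_1 I} \\ \vdots \\ \mathbf{C}\,e^{\mathbf{A_c}t_{n-k_m}}\,e^{-\mathbf{A_c}k_m I} \end{bmatrix}. \nonumber
\end{align}
Since each entry is an analytic function of the independent jitters, and since $(\mathbf{A_c},\mathbf{C})$ is observable on the unstable subspace by hypothesis, $\det\mathbf{G}$ is a nontrivial analytic (in fact entire) function on $[0,T]^m$, hence vanishes only on a Lebesgue-null set. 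Thus $\mathbf{G}$ is almost surely invertible, and the eigenvalue cycles of $e^{\mathbf{A_c}I}$ that could have made the deterministic sampling pattern $t_n\equiv 0$ rank-deficient are now destroyed by the jitter. Combining this with the observation that the stopping time $S$ to collect $m$ non-erased samples is geometric with tail $p_e$ (no cycle-induced thickening survives the jitter), the error is controlled by $\mathbb{E}\bigl[|\mathbf{G}^{-1}|^2|e^{\lambda_{max}I}|^{2S}\bigr]$, which is finite whenever $p_e|e^{\lambda_{max}I}|^2<1$. The main obstacle is upgrading the almost-sure invertibility of $\mathbf{G}$ to a uniform moment bound on $|\mathbf{G}^{-1}|$ over every index pattern $(k_1,\ldots,k_m)$; this requires a quantitative small-ball estimate on $|\det\mathbf{G}|$ as a function of the jitters, handled case-by-case along the lines of Section~\ref{sec:separability} (distinct-magnitude, equal-magnitude via Weyl equidistribution, and Jordan-block subcases), and is where the bulk of the technical work will live.
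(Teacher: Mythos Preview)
Your high-level plan is the same as the paper's: reduce to a discrete-time problem driven by $e^{\mathbf{A_c}I}$, exploit the jitter randomness to make the observability Gramian almost surely nonsingular, and control the norm of its inverse against the geometric tail of the erasure process. Your necessity argument (eigenvector in the unobservable direction, then the scalar lower bound on the $\lambda_{max}$ mode) is essentially what the paper does in Section~\ref{sec:cont:nec}.

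For sufficiency, you have correctly isolated the real obstacle, but the way you propose to attack it is not quite what works. You write the error as $\mathbb{E}\bigl[|\mathbf{G}^{-1}|^2|e^{\lambda_{max}I}|^{2S}\bigr]$ with $S$ the time to collect $m$ non-erased samples; this quantity is not obviously finite, because a small-ball estimate of the form $\mathbb{P}(|\det\mathbf{G}|<\epsilon)\to 0$ does not by itself give second moments of $|\mathbf{G}^{-1}|$. The paper sidesteps this entirely by making the stopping time \emph{adaptive}: it fixes a threshold $\epsilon$ and waits not just for $m$ non-erased observations but for observations whose jitters make $|\det\mathbf{G}|\ge \epsilon\cdot(\text{normalization})$. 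This yields a \emph{deterministic} bound $|\mathbf{M}|_{max}\le \frac{p(S(\epsilon,k))}{\epsilon}e^{\lambda_1 S(\epsilon,k)I}$ (Lemma~\ref{lem:conti:mo}, conditions (ii)--(iii)); the small-ball estimate is then used only to show that this adaptive stopping time still has exponential tail $p_e+o_\epsilon(1)$ (condition (iv)). This decoupling of the norm bound from the tail bound is the key device you are missing.

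Second, your pointer to Section~\ref{sec:separability} and Weyl equidistribution is a red herring for this theorem. Weyl's criterion is what the paper uses for the \emph{discrete}-time Theorem~\ref{thm:mainsingle}, where the ``randomness'' must be extracted from the deterministic phases $e^{j2\pi\omega_i n}$. Here the jitter $t_n$ is already a genuine uniform random variable, so the small-ball estimate comes directly from the fact that a nontrivial analytic function of $t$ has measure-zero zero set; the paper upgrades this to a \emph{uniform} estimate over all coefficient parameters via Dini's theorem (Lemma~\ref{lem:singleun}). Finally, for a general matrix $\mathbf{C}$ the paper does not simply invert an $m\times m$ block but uses successive decoding (Lemma~\ref{lem:conti:mo}, proof in Appendix~\ref{sec:app:2}), potentially consuming more than $m$ observations; your ``first $m$ non-erased samples'' scheme only covers the scalar-output case.
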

\begin{proof}
See Section~\ref{sec:cont:suf} for sufficiency, and Section~\ref{sec:cont:nec} for necessity.
\end{proof}

Since $\exp\left(\mbox{(eigenvalue of $\mathbf{A_c}$)}I\right)$ corresponds to the eigenvalue of the
sampled discrete time system, the critical value of
Theorem~\ref{thm:nonuniform} is equivalent to that of
Corollary~\ref{thm:nocycle}. The nonuniform sampling allows us to no
longer care if eigenvalue cycles could exist for the original
continuous-time system under uniform sampling.


Nonuniform sampling is the right way of breaking eigenvalue cycles
from a practical point of view. So the critical erasure probability
of $\frac{1}{|\lambda_{max}|^2}$ can be achieved not only by using the
computationally challenging estimation-before-packetization strategy
of \cite{Sahai_Thesis}, but also by the simple memoryless approach of
dithered sampling before packetization. And so, even if the sensors
were themselves distributed, the critical erasure probability with
nonuniform sampling is still \textit{critical value optimal} in a sense that
they can achieve the same critical erasure probability as sensors with causal or noncausal information about the erasure pattern and with unbounded complexity.

\subsection{Extensions of Intermittent Kalman Filtering with Nonuniform Sampling}
In this section, we discuss variations and extensions of intermittent Kalman filtering with nonuniform samples. Since the proofs of the results shown in this section are similar to that of Theorem~\ref{thm:nonuniform}, we only present the results without proofs.

\subsubsection{General Distribution on $t_n$}
First, we relax the condition on the distribution of $t_n$ of Theorem~\ref{thm:nonuniform}. There, we assume that $t_n$ are identically and uniformly distributed. However, they do not have to be identical or uniform.
\begin{proposition}
Assume that $t_0,t_1,\cdots$ are independent and there exist $a,c>0$ such that $\mathbb{P}\{ |t_n| \geq a \} =0 $ and $\mathbb{P}\{ t_n \in B \} \leq c |B|_{\mathcal{L}}$ for all $n \in \mathbb{Z}^+$ and $B \in \mathcal{B}$, where $\mathcal{B}$ is Borel $\sigma$-algebra and $|\cdot|_{\mathcal{L}}$ is Lebesgue measure. Then, Theorem~\ref{thm:nonuniform} still holds, i.e. if $(\mathbf{A_c},\mathbf{C})$ has no unobservable and unstable eigenvalues, the intermittent system with nonuniform samples is intermittent observable if and only if $p_e < \frac{1}{|e^{2 \lambda_{max} I}|}$.
\end{proposition}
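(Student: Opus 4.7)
The plan is to adapt the proof of Theorem~\ref{thm:nonuniform} and pinpoint exactly where the assumptions of ``uniform'' and ``identically distributed'' are actually invoked; then verify that the weaker hypothesis (independence, bounded support by $a$, and density dominated by $c$ times Lebesgue measure) suffices in each place. Inspecting the proof outline for Theorem~\ref{thm:nonuniform} (sufficiency in Section~\ref{sec:cont:suf}, necessity in Section~\ref{sec:cont:nec}), the role of the distribution of $t_n$ is essentially twofold: (i) to guarantee that certain ``degenerate'' joint configurations of the jitters — those forcing rank-deficiency in the time-varying observability Gramian — occur with probability zero, and (ii) to keep the per-sample observation noise uniformly bounded in variance across $n$. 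I will argue each separately.

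For the necessity direction, the lower bound $p_e \geq \tfrac{1}{|e^{2\lambda_{max} I}|}$ comes from a genie argument on the continuous-time Ornstein--Uhlenbeck dynamics \eqref{eqn:contistate}: conditioned on the erasure pattern, the waiting time between successful packets is geometric with parameter $1-p_e$, and the disturbance injected by $\mathbf{W_c}$ during that interval is amplified by roughly $e^{\lambda_{max}(\text{interval length})}$ along the direction of the fastest eigenvector. This argument depends only on the sampling \emph{interval} $I$ and on the erasure statistics, not on the perturbations $t_n$ — in fact, the bounded-support condition $|t_n|\leq a$ is exactly what is needed to make the genie bound tight (so that jitter-induced perturbations do not cumulate to shift the effective time scale). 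So the necessity part goes through verbatim.

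For sufficiency, the core lemma used in Theorem~\ref{thm:nonuniform} is a measure-theoretic statement of the form: the set of jitter tuples $(t_{n_1},\ldots,t_{n_k})$ for which the matrix $[\,\mathbf{C}e^{\mathbf{A_c}(I - t_{n_j})}\,]_{j=1}^{k}$ fails to have full column rank is contained in the zero-set of a nontrivial real-analytic function in the $t_{n_j}$'s, hence has Lebesgue measure zero in $\mathbb{R}^k$ (using the assumption that $(\mathbf{A_c},\mathbf{C})$ has no unobservable unstable eigenvalue, which makes the relevant analytic function nonzero). Under Theorem~\ref{thm:nonuniform}'s hypotheses, the i.i.d.~uniform law assigns zero probability to this set. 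The only thing to check under the proposition's weaker assumption is the same conclusion: by independence of the $t_n$, the joint law on $\mathbb{R}^k$ is a product measure, and the hypothesis $\mathbb{P}\{t_n\in B\}\leq c|B|_{\mathcal{L}}$ implies the product law is dominated by $c^k$ times $k$-dimensional Lebesgue measure. Thus Lebesgue-null sets remain probability-null, and the almost-sure full-rank property survives. The large-deviation/power-property estimates from Section~\ref{sec:powerproperty} that convert the almost-sure rank condition into a tail bound on the stopping-time until a well-conditioned observability block appears are oblivious to whether the $t_n$ are identically distributed, so they too carry over.

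The main obstacle — or rather the only genuinely new check — is verifying that the observation noise $\mathbf{v}[n]$ defined in \eqref{eqn:non:4} has second moment bounded \emph{uniformly in $n$}. In the uniform i.i.d.~case this is immediate because the integration windows all have length $I$ and are shifted by at most $T$. Under the new assumption, the length of each window is still $I$, and the shifts are bounded by $a$; the formula for $\mathbf{v}[n]$ is a stochastic integral over a window of length $I$ starting within $a$ of $(n-1)I$, so $\textrm{Var}(\mathbf{v}[n])$ is upper bounded by a constant depending only on $I$, $a$, $\|\mathbf{A_c}\|$, $\|\mathbf{B_c}\|$, $\|\mathbf{C_c}\|$ and $\|\mathbf{D_c}\|$. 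The potential correlation across $n$ (because adjacent integration windows may overlap when $a>I/2$) is harmless: the Kalman filter is optimal among all causal estimators regardless, and the suboptimal estimator used in Section~\ref{sec:cont:suf} handles $\mathbf{v}[n]$ only through a uniform variance bound. With these pieces in place, the sufficiency proof of Theorem~\ref{thm:nonuniform} applies mutatis mutandis and the critical erasure probability remains $\tfrac{1}{|e^{2\lambda_{max} I}|}$.
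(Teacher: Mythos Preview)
Your proposal is correct and matches the paper's approach: the paper's own argument is the one-liner that one reruns the proof of Theorem~\ref{thm:nonuniform} with the uniform law on $[0,T]$ replaced by the dominating improper measure $\mu(A)=c\,|A\cap[-a,a]|_{\mathcal{L}}$, which is exactly your domination-by-Lebesgue observation. One sharpening worth making explicit: the crucial use of the jitter law in the sufficiency proof is not merely that rank-deficient tuples form a Lebesgue-null set, but the \emph{quantitative} estimate (Lemma~\ref{lem:singleun} feeding Lemma~\ref{lem:conti:single}) that $\mathbb{P}\{|f(t)|<\epsilon\}\to 0$ uniformly over the coefficient simplex as $\epsilon\downarrow 0$ --- and this too carries over, since your hypothesis bounds each such probability by $c\cdot 2a$ times the corresponding probability under the uniform law on $[-a,a]$.
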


For the proof of the proposition, we can repeat the proof steps of Theorem~\ref{thm:nonuniform} using an improper distribution $\mu$ such that $\mu(A)=c|A \cap [-a,a]|_{\mathcal{L}}$.

\subsubsection{Deterministic Sequences for $t_n$}
The randomness assumption on $t_n$ can be also removed. As we mentioned earlier, the probabilistic proof is an indirect proof for the existence of deterministic nonuniform sampling patterns. In fact, any nonuniform sequence satisfying Weyl's criteria ---which gives the sufficient and necessary condition for a sequence equidistributed on the interval --- can be used to break eigenvalue cycles.
\begin{proposition}
Let a sequence $t_n \in [0,T]$ satisfy Weyl's criteria, i.e. for all $h \in \mathbb{Z}\setminus \{ 0 \}$, $\underset{N \rightarrow \infty}{\lim} | \frac{1}{N} \underset{1 \leq n \leq N}{\sum} e^{j 2\pi h \cdot \frac{t}{T}} | =0$. Then, Theorem~\ref{thm:nonuniform} still holds, i.e. if $(\mathbf{A_c},\mathbf{C})$ has no unobservable and unstable eigenvalues, the intermittent system with nonuniform samples is intermittent observable if and only if $p_e < \frac{1}{|e^{2 \lambda_{max} I}|}$.
\end{proposition}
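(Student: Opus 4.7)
The plan is to replay the sufficiency argument of Theorem~\ref{thm:nonuniform} with two modifications: replace the i.i.d.\ randomness of $t_n$ by deterministic equidistribution, and replace ensemble expectations over the jitter by time-averages over $n$. Weyl's criterion is precisely the statement that $\{t_n/T\}$ is equidistributed modulo one, so for every Riemann-integrable $f\colon[0,T]\to\mathbb{C}$,
\begin{equation*}
\lim_{N\to\infty}\frac{1}{N}\sum_{n=1}^{N} f(t_n)=\frac{1}{T}\int_0^{T} f(s)\,ds,
\end{equation*}
and the standard multidimensional extension (the tuples $(t_{n},t_{n+1},\ldots,t_{n+k-1})$ are also equidistributed in $[0,T]^k$ under the same criterion) supplies the substitute for independence across blocks of samples used in Theorem~\ref{thm:nonuniform}.

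For sufficiency, the proof of Theorem~\ref{thm:nonuniform} reconstructs $\mathbf{x_c}(nI-t_n)$ from the most recent non-erased observations $\mathbf{y}[n-s_1],\ldots,\mathbf{y}[n-s_k]$ by inverting an observability Gramian of the form
\begin{equation*}
\mathbf{G}:=\begin{bmatrix} \mathbf{C}e^{\mathbf{A_c}(-s_1 I + t_n - t_{n-s_1})} \\ \vdots \\ \mathbf{C}e^{\mathbf{A_c}(-s_k I + t_n - t_{n-s_k})} \end{bmatrix},
\end{equation*}
and then bounds the expected error by a constant multiple of $|\mathbf{G}^{-1}|_{max}^{2}\cdot |e^{\mathbf{A_c}I}|_{max}^{2S}$, where $S$ is the (geometric-tailed) stopping time determined by $\beta[\cdot]$. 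Since $(\mathbf{A_c},\mathbf{C})$ has no unstable unobservable eigenvalues, $\det\mathbf{G}$ is a real-analytic, generically nonzero function of the jitter coordinates, hence its zero set has Lebesgue measure zero. I would truncate $|\mathbf{G}^{-1}|_{max}^{2}$ at a threshold $M$, apply multidimensional Weyl equidistribution to the truncated (hence Riemann-integrable) function to obtain a deterministic time-average bound, and then let $M\to\infty$ using integrability of $|\mathbf{G}^{-1}|_{max}^{2}$ over $[0,T]^k$. This replaces the probabilistic expectation of $|\mathbf{G}^{-1}|_{max}^{2}$ used in Theorem~\ref{thm:nonuniform} by a deterministic time-average, after which the rest of the error calculation carries over verbatim and yields intermittent observability whenever $p_e<1/|e^{2\lambda_{max} I}|$.

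For necessity, the argument of Theorem~\ref{thm:nonuniform} is essentially pathwise: the lower bound on the MSE comes from directions of the sampled state that are unobservable over a long enough erasure burst, and the tail is governed by $\beta[\cdot]$ alone. The jitter $t_n$ enters only as a bounded deterministic shift of the known continuous-time dynamics, so no distributional property of $t_n$ is needed and the lower bound transcribes directly.

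The main technical obstacle is the truncation-and-limit step controlling $|\mathbf{G}^{-1}|_{max}^{2}$. Weyl's criterion only guarantees convergence of time-averages for Riemann-integrable test functions, and a naive application fails because $|\mathbf{G}^{-1}|_{max}^{2}$ blows up near the zero set of $\det\mathbf{G}$. The remedy is a quantitative discrepancy estimate bounding the number of indices $n\le N$ whose jitter tuple lands within $\varepsilon$ of the zero set; combined with the fact that this zero set has Lebesgue measure $O(\varepsilon^{\alpha})$ for some $\alpha>0$ by real-analyticity of $\det\mathbf{G}$, this yields a uniform-in-$N$ time-average bound on $|\mathbf{G}^{-1}|_{max}^{2}$ strong enough to close the sufficiency argument.
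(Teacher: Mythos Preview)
Your proposal contains a genuine error in the claimed multidimensional equidistribution. You assert that ``the tuples $(t_{n},t_{n+1},\ldots,t_{n+k-1})$ are also equidistributed in $[0,T]^k$ under the same criterion.'' This is false: the one-dimensional Weyl criterion for $\{t_n\}$ does not imply joint equidistribution of consecutive blocks. The sequence $t_n=\alpha n\pmod T$ with $\alpha/T$ irrational satisfies the hypothesis of the proposition, yet $(t_n,t_{n+1})$ is confined to a line in $[0,T]^2$ and is certainly not equidistributed there. Your entire truncation-and-discrepancy argument for $|\mathbf G^{-1}|_{max}^2$ rests on this joint equidistribution, so the sufficiency proof does not go through as written.

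The paper's route (``merge the proofs of Theorem~\ref{thm:mainsingle} and Theorem~\ref{thm:nonuniform}'') avoids this trap by never needing multidimensional equidistribution of the jitter. In the proof of Theorem~\ref{thm:nonuniform}, the observability Gramian is built up inductively: at each step one appends a single new row $\mathbf C e^{-(k_m I+t_{k_m})\mathbf A_c}$, and the cofactor condition reduces the determinant lower bound to a statement about a function of the \emph{single} new jitter $t_{k_m}$, uniformly over the previous jitter values (this uniformity is exactly Lemma~\ref{lem:singleun}). In the i.i.d.\ case that single jitter is a fresh uniform random variable; in the deterministic case one replaces the probability bound by a counting-measure bound over the time index $n$, using the one-dimensional Weyl criterion on $\{t_n\}$ together with the counting-measure machinery already developed for Theorem~\ref{thm:mainsingle} (Lemmas~\ref{lem:dis:single} and~\ref{lem:dis:geofinal}). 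The successive-decoding structure then stitches these one-dimensional bounds together without ever invoking joint equidistribution of $(t_{n-s_1},\ldots,t_{n-s_k})$.
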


For example, a sequence like $t_n = \sqrt{2}n - \lfloor \sqrt{2}n \rfloor$ can be used to break eigenvalue cycles. The proof is by merging the proof of Theorem~\ref{thm:mainsingle} and Theorem~\ref{thm:nonuniform}.

\subsubsection{Nonuniform-length integration interval}
In Theorem~\ref{thm:nonuniform}, we introduce nonuniform sampling by changing the starting time of the length of the integration. Another way of introducing nonuniform sampling is changing the integration interval. To take the $n$th sample of the system, the sensor integrates $\mathbf{y_c}(t)$ from $(n-1)I-t_n$ to $nI$. Parallel to \eqref{eqn:non:4}, we have the following equation.
\begin{align}
\mathbf{y_o}[n] &= \int^{nI}_{(n-1)I-t_n} \mathbf{y_c}(t) dt  \nonumber \\
&=\left( \int^{nI}_{(n-1)I-t_n} \mathbf{C_c} e^{\mathbf{A_c}(t-nI)} \right) \mathbf{x_c}(nI)  \nonumber \\
&- \int^{nI}_{(n-1)I-t_n} \int^{nI-t_n}_{t} \mathbf{C_c}e^{\mathbf{A_c}(t-t')}\mathbf{B_c}d\mathbf{W_c}(t')dt + \int^{nI}_{(n-1)I-t_n} \mathbf{D_c} d\mathbf{V_c}(t)  \nonumber \\
&=\underbrace{\left( \int^{n+t_n}_{0} \mathbf{C_c} e^{\mathbf{A_c}(t-nI-t_n)} \right)}_{:=\mathbf{C_n}} \mathbf{x_c}(nI)  \nonumber \\
&\underbrace{-\int^{nI}_{(n-1)I-t_n} \int^{nI-t_n}_{t} \mathbf{C_c}e^{\mathbf{A_c}(t-t')}\mathbf{B_c}d\mathbf{W_c}(t')dt + \int^{nI}_{(n-1)I-t_n} \mathbf{D_c} d\mathbf{V_c}(t)}_{:=\mathbf{v}[n]}  \nonumber
\end{align}
$\mathbf{y_o}[n]$ is transmitted over the erasure channel, and the intermittent system $(\mathbf{A_c},\mathbf{B_c},\mathbf{C_n})$ with nonuniform samples and erasure probability $p_e$ has the following system equations which correspond to \eqref{eqn:conti:xsample} and \eqref{eqn:conti:ysample}.
\begin{align}
&d\mathbf{x_c}(t)=\mathbf{A_c}\mathbf{x_c}(t)dt+\mathbf{B_c} d\mathbf{W_c}(t) \nonumber \\
&\mathbf{y}[n]=\beta[n](\mathbf{C_n}\mathbf{x_c}(nI) + \mathbf{v}[n]) \nonumber
\end{align}
Then, the intermittent observability condition for $(\mathbf{A_c},\mathbf{B_c},\mathbf{C_n})$ is similar to Theorem~\ref{thm:nonuniform}.
\begin{proposition}
Let $t_n$ be i.i.d.~random variables uniformly distributed on $[0,T]\ (T>0)$, and $(\mathbf{A_c},\mathbf{B_c})$ be controllable. If $(\mathbf{A_c},\mathbf{C_c})$ has unobservable and unstable eigenvalues, the intermittent system $(\mathbf{A_c},\mathbf{B_c},\mathbf{C_n})$ with nonuniform samples is not intermittent observable for all $p_e$. Otherwise, the intermittent system $(\mathbf{A_c},\mathbf{B_c},\mathbf{C_n})$ with nonuniform samples is intermittent observable if and only if $p_e < \frac{1}{|e^{2 \lambda_{max} I}|}$ where $\lambda_{max}$ is the eigenvalue of $\mathbf{A_c}$ with the largest real part.
\label{prop:4}
\end{proposition}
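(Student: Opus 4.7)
My plan is to reduce Proposition~\ref{prop:4} to a discrete-time intermittent system with a \emph{fixed} transition matrix and \emph{time-varying} observation matrices, and then re-run the argument of Theorem~\ref{thm:nonuniform} on this reduction. Integrating \eqref{eqn:contistate} over $[(n-1)I,nI]$ gives $\mathbf{x_c}(nI) = e^{\mathbf{A_c}I}\mathbf{x_c}((n-1)I) + \int^{nI}_{(n-1)I} e^{\mathbf{A_c}(nI-t')}\mathbf{B_c}\, d\mathbf{W_c}(t')$, so with $\mathbf{x}[n] := \mathbf{x_c}(nI)$ we get a discrete system $\mathbf{x}[n+1] = \mathbf{A}\mathbf{x}[n] + \mathbf{w'}[n]$ with $\mathbf{A} := e^{\mathbf{A_c}I}$ and Gaussian $\mathbf{w'}[n]$ of uniformly bounded, strictly positive-definite (in the controllable directions) covariance. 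Conditioned on $\{t_n\}$, the observation is $\mathbf{y}[n] = \beta[n](\mathbf{C_n}\mathbf{x}[n] + \mathbf{v}[n])$ with $\mathbf{v}[n]$ of uniformly bounded covariance. The eigenvalues of $\mathbf{A}$ are $e^{\lambda_i I}$, so $\lambda_{max}$ of $\mathbf{A_c}$ corresponds to the largest-modulus eigenvalue of $\mathbf{A}$, and the target threshold $1/|e^{2\lambda_{max}I}|$ is exactly $1/|\lambda_{max}(\mathbf{A})|^2$.

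For the degenerate case, suppose $\mathbf{v}$ is a right eigenvector of $\mathbf{A_c}$ with eigenvalue $\lambda$, $\mathrm{Re}(\lambda) \geq 0$, and $\mathbf{C_c}\mathbf{v} = 0$. Then $\mathbf{C_c} e^{\mathbf{A_c}s}\mathbf{v} = e^{\lambda s}\mathbf{C_c}\mathbf{v} = 0$ for all $s$, so $\mathbf{C_n}\mathbf{v} = 0$ for every $n$ and every realization of $t_n$. The projection of $\mathbf{x_c}(nI)$ onto $\mathbf{v}$ is an unstable mode invisible to all observations, and a standard genie/variance lower bound (as in Section~\ref{sec:intui}) shows the MMSE diverges for every $p_e$. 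For the necessity of $p_e < 1/|e^{2\lambda_{max}I}|$ in the non-degenerate case, I would reveal the orthogonal complement of the $\lambda_{max}$-eigenspace to the estimator as side information; the residual problem is a scalar intermittent system with eigenvalue $e^{\lambda_{max}I}$, and the scalar calculation of Section~\ref{sec:intui} forces $\mathbb{E}[(x[n]-\widehat{x}[n])^2] \to \infty$ whenever $p_e \geq 1/|e^{\lambda_{max}I}|^2$.

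The substantive part is sufficiency. I would put $\mathbf{A_c}$ in Jordan form so that $\mathbf{A}$ splits into eigenvalue cycles, and then apply the successive-decoding divide-and-conquer of Theorem~\ref{thm:mainsingle} conditionally on $\{t_n\}$, using a suboptimal estimator based on the most recent non-erased observations within each cycle. Inside a cycle of length $\nu$, the estimator requires inverting the observability gramian
\begin{align}
\mathbf{G}(t_{n-k_1},\ldots,t_{n-k_\nu}) := \begin{bmatrix} \mathbf{C_{n-k_1}}\mathbf{A}^{-k_1} \\ \vdots \\ \mathbf{C_{n-k_\nu}}\mathbf{A}^{-k_\nu} \end{bmatrix}, \nonumber
\end{align}
and the main obstacle is showing that, almost surely over the dithering variables, $\|\mathbf{G}^{-1}\|$ grows at most like $|\lambda_{max}|^{k_\nu}$ up to polynomial factors. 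Each entry of $\mathbf{C_n}$ is an entire analytic function of $t_n$, so $\det\mathbf{G}$ is a non-trivial analytic function of the independent variables $t_{n-k_j}$ (non-triviality is checked by reducing a judicious limit to the uniform-sampling gramian $\begin{bmatrix}\mathbf{C}\,e^{\mathbf{A_c}(-k_j I)}\end{bmatrix}$ governed by Theorem~\ref{thm:nonuniform}). The Weyl-type equidistribution / analytic-function small-ball estimates from the proof of Theorem~\ref{thm:nonuniform} then give a quantitative almost-sure lower bound on $|\det\mathbf{G}|$ that loses only a polynomial factor in $k_\nu$; combining this with the geometric tail of the erasure stopping time yields uniform boundedness of the MMSE whenever $p_e |\lambda_{max}|^2 < 1$. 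This small-ball/nondegeneracy step for $\det\mathbf{G}$ in the dithering variables is the key technical hurdle; the remainder is bookkeeping that parallels Theorem~\ref{thm:nonuniform} verbatim.
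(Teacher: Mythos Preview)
Your overall architecture---discretize to $\mathbf{A}=e^{\mathbf{A_c}I}$ with time-varying $\mathbf{C_n}$, handle necessity by the genie/eigenvector argument, and re-run the successive-decoding machinery of Theorem~\ref{thm:nonuniform} for sufficiency---is correct and is exactly what the paper intends when it says the proof is ``similar to that of Theorem~\ref{thm:nonuniform}.'' The degenerate-case argument ($\mathbf{C_c}\mathbf{v}=0$ forces $\mathbf{C_n}\mathbf{v}=0$ for every realization of $t_n$) is clean and correctly identifies why the unobservability hypothesis here is on $(\mathbf{A_c},\mathbf{C_c})$ rather than $(\mathbf{A_c},\mathbf{C})$.

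There is, however, a genuine gap at the one place Proposition~\ref{prop:4} actually differs from Theorem~\ref{thm:nonuniform}. You propose to verify non-triviality of $\det\mathbf{G}$ by ``reducing a judicious limit to the uniform-sampling gramian $[\mathbf{C}\,e^{\mathbf{A_c}(-k_jI)}]$ governed by Theorem~\ref{thm:nonuniform}.'' But Proposition~\ref{prop:4} assumes only that $(\mathbf{A_c},\mathbf{C_c})$ is observable, \emph{not} that $(\mathbf{A_c},\mathbf{C})$ is. The paper's own example $\lambda=j\tfrac{2\pi}{I}$ has $\int_0^I e^{\lambda(t-I)}\,dt=0$, so the fixed-interval $\mathbf{C}$ annihilates that mode and the limiting gramian you reduce to is singular. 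Your non-triviality check therefore fails precisely in the regime where Proposition~\ref{prop:4} is saying something beyond Theorem~\ref{thm:nonuniform}.

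The repair is to argue non-triviality directly from $(\mathbf{A_c},\mathbf{C_c})$. For a scalar mode with eigenvalue $\lambda\neq 0$ and $\mathbf{C_c}$-weight $c$, the corresponding entry of $\mathbf{C_n}$ is
\[
c\int_0^{I+t_n} e^{\lambda(s-I-t_n)}\,ds=\frac{c}{\lambda}\bigl(1-e^{-\lambda(I+t_n)}\bigr),
\]
a nonconstant analytic function of $t_n$ whenever $c\neq 0$; distinct eigenvalues of $\mathbf{A_c}$ contribute distinct exponentials in $t_n$. Hence the small-ball machinery of Lemma~\ref{lem:singleun} applies with the hypothesis fed by $\mathbf{C_c}$-observability rather than $\mathbf{C}$-observability. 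This substitution is the actual content behind the paper's remark that the observability condition is ``relaxed'' in Proposition~\ref{prop:4}; your writeup should make it explicit rather than deferring to a limit that need not be nondegenerate.
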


Compared to Theorem~\ref{thm:nonuniform}, we can see that the observability condition of $(\mathbf{A_c},\mathbf{C})$ is relaxed to the observability condition of $(\mathbf{A_c},\mathbf{C_c})$. This is due to the following fact:
$\int^{nI-t_n}_{(n-1)I-t_n}e^{j \frac{2 \pi}{I}t}dt=0$ for all $t_n$ and $\int^{nI}_{(n-1)I-t_n}e^{j\frac{2 \pi}{I}t}dt \neq 0$ for some $t_n$. Even if $(\mathbf{A_c},\mathbf{C_c})$ is observable, $(\mathbf{A_c},\mathbf{C})$ can be unobservable for all $t_n$ while $(\mathbf{A_c},\mathbf{C_n})$ is observable for almost all $t_n$.

\subsubsection{Nonuniform Time-varying Filtering}
In some cases, it is impossible to change the sampling time. In this case, we can use nonuniform time-varying filtering to break eigenvalue cycles. Consider the following discrete-time system:
\begin{align}
&\mathbf{x}[n+1]=\mathbf{A}\mathbf{x}[n]+\mathbf{B}\mathbf{w}[n] \nonumber \\
&\mathbf{y_o}[n]=\mathbf{C}\mathbf{x}[n]+\mathbf{v}[n] \nonumber
\end{align}
Here $\mathbf{y_o}[n]$ are the observations at the sensor, and the sensor cannot change the sampling intervals. Instead, the sensor introduces nonuniform filtering to the observations as follows:
\begin{align}
&\mathbf{y_o'}[n]=\alpha[n]\mathbf{y_o}[n]+\alpha'[n]\mathbf{y_o}[n-1] \nonumber
\end{align}

This is just like introducing an FIR (finite impulse response) filter at the sensor except that the impulse response of the filter keeps changing over time.

The output of the nonuniform time-varying filter, $\mathbf{y_o'}[n]$, is transmitted over the erasure channel. Therefore, the intermittent system $(\mathbf{A},\mathbf{B},\mathbf{C})$ with erasure probability $p_e$ and nonuniform time-varying filtering has the following system equations:
\begin{align}
&\mathbf{x}[n+1]=\mathbf{A}\mathbf{x}[n]+\mathbf{B}\mathbf{w}[n] \nonumber \\
&\mathbf{y}[n]=\beta[n](\mathbf{y_o'}[n]) \nonumber \\
&\ \quad=\beta[n](\alpha[n]\mathbf{C}\mathbf{x}[n]+\alpha'[n]\mathbf{C}\mathbf{x}[n-1]+\alpha[n]\mathbf{v}[n]+\alpha'[n]\mathbf{v}[n-1] ) \nonumber
\end{align}
The intermittent observability with nonuniform filtering is given as the following proposition.
\begin{proposition}
Let $\alpha[n]$ and $\alpha'[n]$ be i.i.d.~random variables uniformly distributed on $[0,T]\ (T>0)$, and $(\mathbf{A},\mathbf{B})$ be controllable. If $(\mathbf{A},\mathbf{C})$ has unobservable and unstable eigenvalues, the intermittent system $(\mathbf{A},\mathbf{B},\mathbf{C})$ with nonuniform filtering is not intermittent observable for all $p_e$. Otherwise, the intermittent system $(\mathbf{A},\mathbf{B},\mathbf{C})$ with nonuniform filtering is intermittent observable if and only if $p_e < \frac{1}{|\lambda_{max}|^2}$ where $\lambda_{max}$ is the largest magnitude eigenvalue of $\mathbf{A}$.
\label{prop:5}
\end{proposition}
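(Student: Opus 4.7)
The plan is to mirror the proof structure of Theorem~\ref{thm:nonuniform}, but with continuous-time sampling jitter replaced by the two random filter taps $\alpha[n], \alpha'[n]$. First I would absorb the filtering into an effective time-varying observation. Substituting $\mathbf{x}[n]=\mathbf{A}\mathbf{x}[n-1]+\mathbf{B}\mathbf{w}[n-1]$ into $\mathbf{y_o'}[n]$ gives
\begin{align}
\mathbf{y}[n] = \beta[n]\bigl(\mathbf{C}_n\,\mathbf{x}[n-1] + \widetilde{\mathbf{v}}[n]\bigr), \qquad \mathbf{C}_n := \alpha[n]\mathbf{C}\mathbf{A} + \alpha'[n]\mathbf{C}, \nonumber
\end{align}
where $\widetilde{\mathbf{v}}[n]$ is a linear combination of $\mathbf{w}[n-1]$, $\mathbf{v}[n]$ and $\mathbf{v}[n-1]$. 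The variance of $\widetilde{\mathbf{v}}[n]$ is uniformly bounded, although $\widetilde{\mathbf{v}}[n]$ and $\widetilde{\mathbf{v}}[n+1]$ share the term $\mathbf{v}[n]$ and so are correlated at unit lag. Thus the filtered system looks like an intermittent system with a random, time-varying observation matrix $\mathbf{C}_n$, in exact analogy with the role of $\mathbf{C}$ built from $\mathbf{C_c}$ in \eqref{eqn:non:4}.

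For necessity I would argue as in Theorem~\ref{thm:nonuniform}. If $\mathbf{v}\neq 0$ is a right eigenvector of $\mathbf{A}$ with eigenvalue $\lambda$, $|\lambda|\ge 1$, and $\mathbf{C}\mathbf{v}=0$, then $\mathbf{C}\mathbf{A}\mathbf{v} = \lambda\mathbf{C}\mathbf{v} = 0$, so $\mathbf{C}_n\mathbf{v}=0$ for every realization of $\alpha[n],\alpha'[n]$; the unstable unobservable mode stays hidden, and the MMSE diverges for every $p_e$. Otherwise, the lower bound $p_e\ge 1/|\lambda_{max}|^2\Rightarrow$ not observable follows by the genie argument from Section~\ref{sec:intui}: reveal all modes except the one associated with $\lambda_{max}$. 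The residual scalar system has gain $\alpha[n]\lambda_{max}+\alpha'[n]$, which is a.s.\ nonzero and integrable, so the scalar Section~\ref{sec:intui} computation gives a lower bound on the error growing like $\mathbb{E}[|\lambda_{max}|^{2S}]$ with $S$ geometric of parameter $1-p_e$, and this diverges when $p_e|\lambda_{max}|^2\ge 1$.

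The heart of the proof is sufficiency. Fix $p_e<1/|\lambda_{max}|^2$ and, working in the Jordan coordinates of Section~\ref{sec:interob}, consider a suboptimal estimator that uses the first $m=\dim\mathbf{A}$ unerased, non-adjacent observations preceding time $n$. Call their indices $n_1<n_2<\cdots<n_m$ and let $S:=n-n_m$ be the waiting time. Because adjacency is forbidden only by a factor of two, $S$ remains geometric up to a constant, and in particular has p.m.f.\ tail $p_e$ (no ``thickening'', unlike the periodic case). The estimator inverts the time-varying observability Gramian
\begin{align}
\mathbf{O} = \begin{bmatrix} \mathbf{C}_{n_1}\mathbf{A}^{n_1-1-n_m} \\ \vdots \\ \mathbf{C}_{n_m}\mathbf{A}^{-1} \end{bmatrix}. \nonumber
\end{align}
Once we control $\|\mathbf{O}^{-1}\|$ in terms of $S$ only, the sufficiency calculation of Theorem~\ref{thm:mainsingle} yields MMSE $\lesssim \mathbb{E}[|\lambda_{max}|^{2S}]<\infty$. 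Combining modes via the successive-decoding/max-combining argument of Section~\ref{sec:maxcombining} then reduces everything to the single-cycle bound, and the constant $1/|\lambda_{max}|^2$ comes out correctly.

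The main obstacle is the Gramian conditioning lemma: one must show that, almost surely in $\{\alpha[n_i],\alpha'[n_i]\}$, the determinant of $\mathbf{O}$ is bounded below by something that grows only polynomially in the gaps $n_{i+1}-n_i$ times $\prod_i |\lambda_{max}|^{n_i-1-n_m}$, despite the fact that $\mathbf{C}$ alone may admit eigenvalue cycles. The key point, parallel to the Weyl/Vandermonde arguments of Section~\ref{sec:separability} and the jitter analysis of Theorem~\ref{thm:nonuniform}, is that the determinant is a nontrivial multilinear polynomial in the $2m$ coefficients $\{\alpha[n_i],\alpha'[n_i]\}$ whenever $\mathbf{C}$ and $\mathbf{C}\mathbf{A}$ together resolve all modes on the observable subspace — which is exactly the hypothesis that $(\mathbf{A},\mathbf{C})$ has no unobservable unstable eigenvalue. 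Nontriviality of this polynomial then gives the a.s.\ bound by the same polynomial-zero-set argument used in the Jordan-block case of Section~\ref{sec:separability}. The mild noise correlation in $\widetilde{\mathbf{v}}[n]$ is handled by the non-adjacency constraint on the $n_i$, which keeps the observation noises independent.
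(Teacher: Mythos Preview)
There is a concrete gap in your Gramian conditioning step. Your key claim --- that the determinant of $\mathbf{O}$ is a \emph{nontrivial} polynomial in $\{\alpha[n_i],\alpha'[n_i]\}$ whenever $(\mathbf{A},\mathbf{C})$ has no unobservable unstable mode --- is false in general. Take the period-$3$ cycle $\mathbf{A}=\mathrm{diag}(2,2\omega,2\omega^2)$ with $\omega=e^{j2\pi/3}$ and $\mathbf{C}=\begin{bmatrix}1&1&1\end{bmatrix}$, and suppose your three chosen delays all satisfy $k_i\equiv 0\pmod 3$. Since $\mathbf{A}^{-3}=2^{-3}\mathbf{I}$, the $i$th row of $\mathbf{O}$ is $2^{-k_i}\bigl(\alpha[n_i]\,\mathbf{C}+\alpha'[n_i]\,\mathbf{C}\mathbf{A}^{-1}\bigr)$ up to a fixed scalar, so the $3\times 3$ Gramian factors through the fixed $2\times 3$ block $\begin{bmatrix}\mathbf{C}\\\mathbf{C}\mathbf{A}^{-1}\end{bmatrix}$ and its determinant vanishes identically in the filter coefficients. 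More generally, a two-tap filter mixes only the consecutive pair $\mathbf{C}\mathbf{A}^{-k},\mathbf{C}\mathbf{A}^{-k-1}$; it cannot break a cycle of period $p\ge 3$ when all chosen delays lie in a single residue class modulo $p$. Observability of $(\mathbf{A},\mathbf{C})$ is a statement about the full list $\mathbf{C},\mathbf{C}\mathbf{A},\dots,\mathbf{C}\mathbf{A}^{m-1}$, not about the pair $\mathbf{C},\mathbf{C}\mathbf{A}$ alone, and your identification of these two hypotheses is the error.

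This is not cosmetic. The erasure pattern that confines all non-erased indices to one residue class mod $p$ has probability of order $p_e^{(p-1)s/p}$ over a window of length $s$, so if you wait for that pattern to be broken the tail of your stopping time thickens to $p_e^{(p-1)/p}$ rather than $p_e$, and your final bound becomes $p_e^{(p-1)/p}|\lambda_{\max}|^2<1$ instead of $p_e|\lambda_{\max}|^2<1$. The paper omits its proof and points to Theorem~\ref{thm:nonuniform}, but note the structural difference: there the continuous jitter $e^{-t\mathbf{A_c}}$ sweeps through a one-parameter family of powers at each sample, whereas your two discrete taps access only the fixed pair $\{\mathbf{C}\mathbf{A}^{-k},\mathbf{C}\mathbf{A}^{-k-1}\}$. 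To salvage your route you would need either as many random taps as the longest cycle period, or a selection rule for the $n_i$ that guarantees at least two residue classes are represented, together with a tail analysis showing this does not cost more than a constant factor.
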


\begin{figure}[top]
\centering
\subfloat[]{\label{fig:1a}\includegraphics[width = 1.5in]{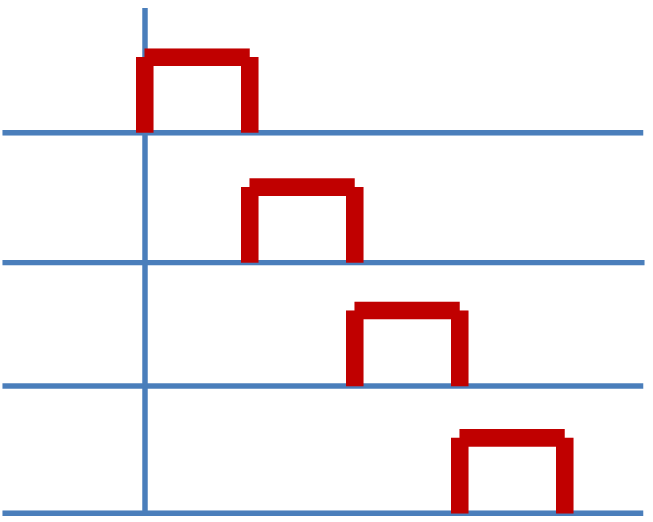}} \quad
\subfloat[]{\label{fig:1b}\includegraphics[width = 1.5in]{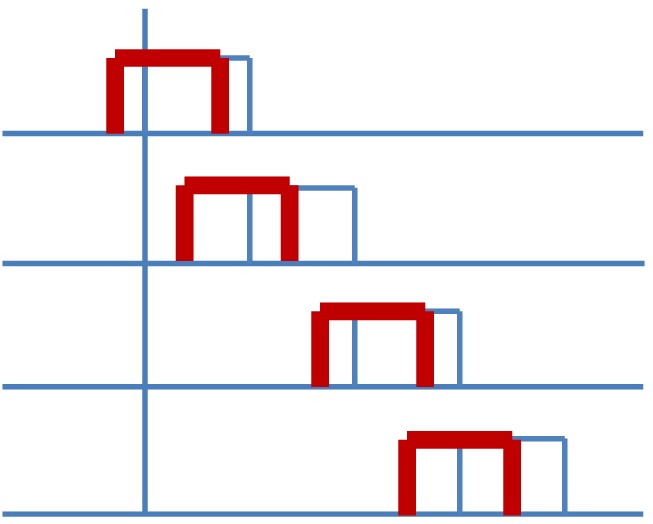}} \quad
\subfloat[]{\label{fig:1c}\includegraphics[width = 1.5in]{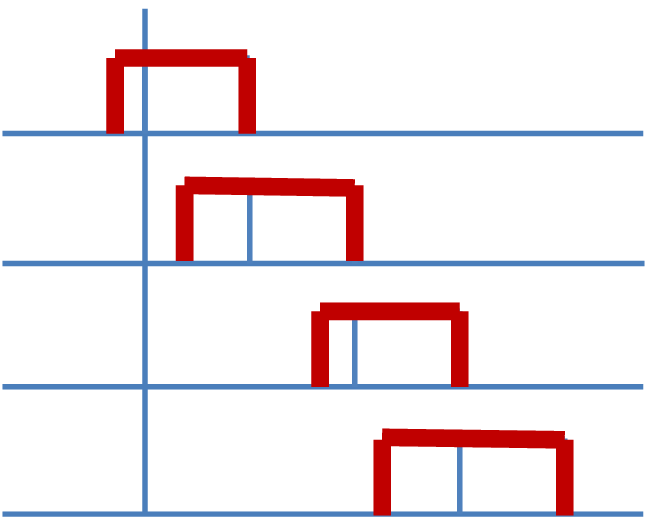}}\\
\subfloat[]{\label{fig:1d}\includegraphics[width = 1.5in]{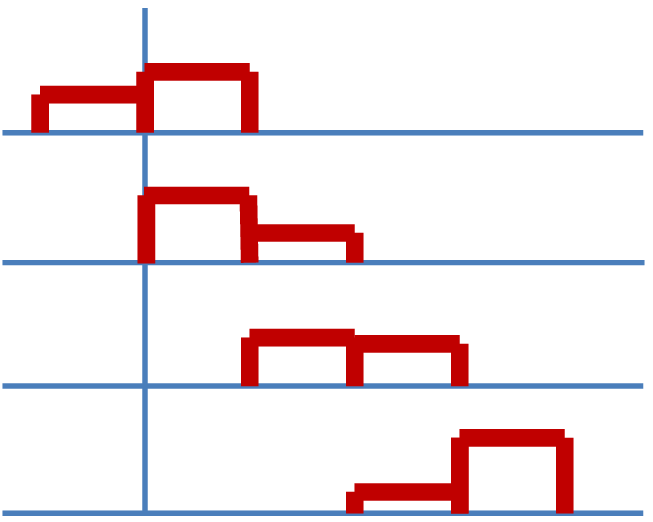}} \quad
\subfloat[]{\label{fig:1e}\includegraphics[width = 1.5in]{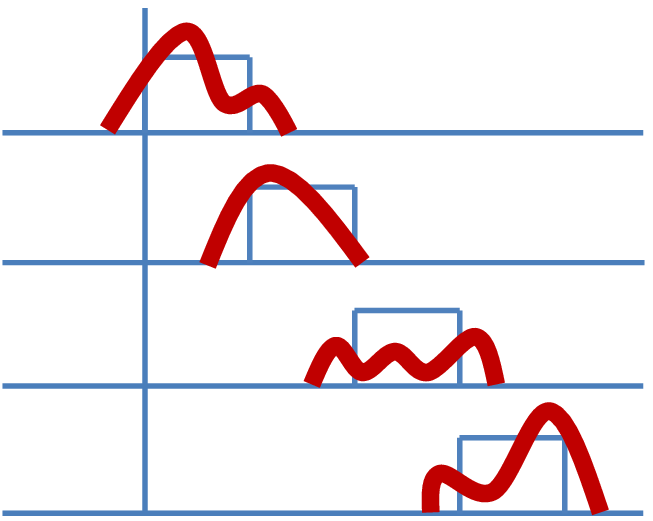}}
\caption{(a): uniform sampling of Theorem~\ref{thm:mainsingle}, (b): nonuniform sampling of Theorem~\ref{thm:nonuniform}, (c): nonuniform sampling of Proposition~\ref{prop:4}, (d): nonuniform filtering of Proposition~\ref{prop:5}, (e): nonuniform sampling with nonuniform waveforms}
\label{fig:1}
\end{figure}

\subsubsection{Sampling with Nonuniform Waveforms}
So far in Theorem~\ref{thm:nonuniform}, Proposition~\ref{prop:4}, and Proposition~\ref{prop:5}, we have seen three different ways of breaking eigenvalue cycles. However, these methods are essentially the same and generalized to nonuniform sampling with nonuniform waveforms.

Fig.~\ref{fig:1} shows the nonuniform sampling methods used to break eigenvalue cycles with respect to their waveforms. First, Fig.~\ref{fig:1a} shows the uniform sampling which is implicitly used to make  discrete-time system~\eqref{eqn:dis:system}, \eqref{eqn:dis:system2} from the underlying continuous-time system. As we saw in Theorem~\ref{thm:mainsingle}, the eigenvalue cycles were not broken in this case. Fig.~\ref{fig:1b} shows the nonuniform sampling by changing the starting time of the integration, which is used in Theorem~\ref{thm:nonuniform}. In this case, the eigenvalue cycles were successfully broken, but we can still observe the regularity in the integration intervals. Due to this regularity, we needed the observability of $(\mathbf{A_c,\mathbf{C}})$ instead of the observability of $(\mathbf{A_c},\mathbf{C_c})$. Fig.~\ref{fig:1c} shows the nonuniform sampling by changing the integration interval, which is used in Proposition~\ref{prop:4}. The eigenvalue cycles were also broken in this case and due to the lack of regularity in sampling intervals the observability of $(\mathbf{A_c},\mathbf{C_c})$ was enough. Fig.~\ref{fig:1d} shows the nonuniform filtering, which is used in Proposition~\ref{prop:5} and successfully break the eigenvalue cycles. Therefore, we can conclude that as long as the sampling waveforms are not uniform as Fig.~\ref{fig:1a} the eigenvalue cycles are broken. In general, nonuniform waveforms shown in Fig.~\ref{fig:1e} can be used to break eigenvalue cycles, and it is an interesting technical equation to find the minimal condition on nonuniform waveforms to break eigenvalue cycles.

\subsubsection{Extension to Parallel Channels}
Theorem~\ref{thm:nonuniform} can also be extended to the multiple sensors that transmit their observations through parallel erasure channels. Consider the following continuous-time system equations.
\begin{align}
&d\mathbf{x_c}(t)=\mathbf{A_c}\mathbf{x_c}(t)dt + \mathbf{B_c}d\mathbf{W_c}(t) \nonumber \\
&\mathbf{y_{c,1}}(t)=\mathbf{C_{c,1}}\mathbf{x_c}(t) + \mathbf{D_{c,1}} \frac{d\mathbf{V_{c,1}(t)}}{dt} \nonumber \\
&\vdots \nonumber \\
&\mathbf{y_{c,d}}(t)=\mathbf{C_{c,d}}\mathbf{x_c}(t) + \mathbf{D_{c,d}} \frac{d\mathbf{V_{c,d}(t)}}{dt} \nonumber
\end{align}
Here $t$ is non-negative real-valued time index. $\mathbf{A_c} \in \mathbb{C}^{m \times m}$, $\mathbf{B_c} \in \mathbb{C}^{m \times g}$ , $\mathbf{C_{c,i}} \in \mathbb{C}^{l_i \times m}$ and $\mathbf{D_{c,i}} \in \mathbb{C}^{l_i \times l_i}$ where $\mathbf{D_{c,i}}$ is invertible. $\mathbf{W_c}(t)$ and $\mathbf{V_{c,1}}(t)$ are independent $g$ and $l_i$-dimensional standard Wiener process respectively.

Like \eqref{eqn:non:4}, the $n$th sample at the sensor $i$ is obtained by integrating $\mathbf{y_{c,i}}(t)$ from $(n-1)I-t_{n,i}$ to $nI-t_{n,i}$:
\begin{align}
\mathbf{y_{o,i}}[n]&=\int^{nI-t_{n,i}}_{(n-1)I-t_{n,i}} \mathbf{y_{c,i}}(t) dt \nonumber \\
&=\underbrace{\left( \int^{I}_{0} \mathbf{C_{c,i}}e^{\mathbf{A_c}(t-I)}dt \right)}_{:=\mathbf{C_i}} \mathbf{x_c}(nI-t_{n,i}) \nonumber \\
&\underbrace{-\int^{nI-t_{n,i}}_{(n-1)I-t_{n,i}} \int^{nI-t_{n,i}}_{t} \mathbf{C_{c,i}} e^{\mathbf{A_c}(t-t')} \mathbf{B_c} d\mathbf{W_c}(t')dt
+\int^{nI-t_{n,i}}_{(n-1)I-t_{n,i}} \mathbf{D_{c,i}} d\mathbf{V_{c,i}}(t)}_{:=\mathbf{v_i}[n]} \nonumber
\end{align}

Since $\mathbf{y_{o,i}}[n]$ are transmitted over the parallel erasure channel, the intermittent system $(\mathbf{A_c},\mathbf{B_c},\mathbf{C_i})$ with parallel channel has the following system equation:
\begin{align}
&d\mathbf{x_c}(t)=\mathbf{A_c}\mathbf{x_c}(t)dt+\mathbf{B_c}d\mathbf{W_c}(t) \nonumber \\
&\mathbf{y_1}[n]=\beta_1[n](\mathbf{C_1}\mathbf{x_c}(nI-t_{n,1})+\mathbf{v_1}[n]) \nonumber \\
&\vdots \nonumber \\
&\mathbf{y_d}[n]=\beta_d[n](\mathbf{C_d}\mathbf{x_c}(nI-t_{n,d})+\mathbf{v_d}[n]) \nonumber
\end{align}
where $\mathbf{y_i}[n] \in \mathbb{C}^{l_i}$ and $\beta_i[n]$ are independent Bernoulli random processes with erasure probability $p_{e,i}$.

Like before, by a change of coordinates, we can rewrite the above system equations to the ones with a Jordan form $\mathbf{A_c}$ without changing the intermittent observability. Therefore, like \eqref{eqn:ac:jordan:thm}, \eqref{eqn:ac2:jordan:thm} and \eqref{eqn:def:lprime:thm} we can write $\mathbf{A_c}$ and $\mathbf{C_i}$ as follows without loss of generality.
\begin{align}
&\mathbf{A_c}=diag\{ \mathbf{A_{1,1}}, \mathbf{A_{1,2}}, \cdots, \mathbf{A_{\mu,\nu_{\mu}}} \} \nonumber \\
&\mathbf{C_i}=\begin{bmatrix} \mathbf{C_{1,1,i}} & \mathbf{C_{1,2,i}} & \cdots & \mathbf{C_{\mu,\nu_\mu,i}} \end{bmatrix} \nonumber \\
&\mbox{where } \nonumber\\
&\quad \mathbf{A_{i,j}} \mbox{ is a Jordan block with eigenvalue $\lambda_i$} \nonumber \\
&\quad \lambda_1, \cdots , \lambda_\mu \mbox{ are pairwise distinct} \nonumber \\
&\quad \mathbf{C_{i,j,k}}\mbox{ is a $l_k \times \dim \mathbf{A_{i,j}}$ complex matrix.} \nonumber
\end{align}

Denote
\begin{align}
&\mathbf{C_{i,j}}=\begin{bmatrix} (\mathbf{C_{i,1,j}})_1 & \cdots & (\mathbf{C_{i,\nu_i,j}})_1 \end{bmatrix}\nonumber \\
&\mbox{where $\left( \mathbf{C_{i,j,k}} \right)_1$ implies the first column of $\mathbf{C_{i,j,k}}$} \nonumber
\end{align}

Let $(l_{i,1},l_{i,2},\cdots,l_{i,d}) \in \{ 0,1 \}^d$ such that
\begin{align}
\begin{bmatrix}
\mathbf{1}(l_{i,1}=0)\mathbf{C_{i,1}} \\
\vdots \\
\mathbf{1}(l_{i,d}=0)\mathbf{C_{i,d}}
\end{bmatrix} \nonumber
\end{align}
is rank deficient, i.e. the rank is strictly less than $\nu_i$.

Denote $L_i$ as the set of such $(l_{i,1},l_{i,2},\cdots,l_{i,d})$ vectors. Then, the intermittent observability of the system $(\mathbf{A_c},\mathbf{B_c},\mathbf{C_i})$ with parallel channel is characterized by the following proposition.
\begin{proposition}
Given an intermittent system $(\mathbf{A_c}, \mathbf{B_c}, \mathbf{C_i})$ with probability of erasures $(p_{e,1}, \cdots, p_{e,d})$, let $(\mathbf{A_c},\mathbf{B_c})$ be controllable, and $t_{n,i}$ be independent random variables uniformly distributed on $[0,T]\ (T>0)$. The intermittent system $(\mathbf{A_c},\mathbf{B_c},\mathbf{C_i})$ with parallel channel is intermittent observable if and only if
\begin{align}
\max_{1 \leq i \leq \mu} \max_{(l_{i,1},l_{i,2},\cdots,l_{i,d}) \in L_i}\left( \prod_{1 \leq j \leq d} p_{e,j}^{l_{i,j}} \right) |e^{2 \lambda_i I}| < 1. \nonumber
\end{align}
\end{proposition}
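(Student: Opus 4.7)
The plan is to extend the proof of Theorem~\ref{thm:nonuniform} to the parallel-channel setting by merging its main ideas with those used to establish Proposition~\ref{thm:multi}. The first step is a similarity transformation putting $\mathbf{A_c}$ into Jordan form and separating the system according to its distinct eigenvalues; because nonuniform sampling will break all eigenvalue cycles (this is the continuous-time analogue of Corollary~\ref{thm:nocycle} that is already established inside the proof of Theorem~\ref{thm:nonuniform}), each distinct eigenvalue $\lambda_i$ of $\mathbf{A_c}$ plays the role of its own length-$\nu_i$ ``eigenvalue cycle'' of period $1$ in the sampled system. After this reduction, a successive-decoding / max-combining argument exactly as in Section~\ref{sec:maxcombining} reduces the multi-eigenvalue problem to $\mu$ single-eigenvalue sub-problems that are coupled only through the outer $\max_{1\le i\le \mu}$.

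For each sub-problem, the proof follows the template of Theorem~\ref{thm:nonuniform} with the power-property bookkeeping generalized to parallel channels as in Proposition~\ref{thm:multi}. On the sufficiency side, I would analyze a suboptimal estimator that, for each eigenvalue block, looks backwards in time until the received samples form a full-rank observability gramian on the $\nu_i$-dimensional block. The crucial consequence of nonuniform sampling is that once a channel $j$ with $\mathbf{C_{i,j}}$ non-rank-deficient delivers enough non-erased samples, those samples are almost surely in ``general position'' (by the same Weyl-type equidistribution / polynomial-non-vanishing arguments sketched in Section~\ref{sec:separability}), so loss of rank across an interval of length $k$ can only happen if, for some $(l_{i,1},\ldots,l_{i,d})\in L_i$, every channel $j$ with $l_{i,j}=1$ is erased at essentially all $k$ time steps. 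A union bound over the finite set $L_i$ then yields a stopping-time tail $\mathbb{P}\{S \ge k\} \lesssim \max_{L_i} \prod_j p_{e,j}^{k\,l_{i,j}}$ up to polynomial factors, and since the state can grow at most like $|e^{\lambda_i I}|^{k}$ over that interval, $\mathbb{E}[|e^{\lambda_i I}|^{2S}]$ is finite precisely under the stated strict inequality; successive decoding then propagates this boundedness from the largest eigenvalue down.

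For necessity I would use the genie argument from Section~\ref{sec:maxcombining}: reveal all other eigenvalue blocks to the estimator and reduce to a single $\lambda_i$-block. Fix the maximizing $(l_{i,1}^\ast,\ldots,l_{i,d}^\ast)\in L_i$; whenever every channel $j$ with $l_{i,j}^\ast=1$ is erased at a common time step, the observability gramian restricted to the surviving channels is rank-deficient on that block, so some direction of the disturbance $\mathbf{w_c}$ injected at that moment is unobservable until one of the bottleneck channels fires again. Lower-bounding the MMSE by the variance of this unseen disturbance, scaled by $|e^{\lambda_i I}|^{2k}$ to account for growth, produces a geometric series with common ratio $\bigl(\prod_j p_{e,j}^{l_{i,j}^\ast}\bigr)|e^{2\lambda_i I}|$, which diverges exactly when the stated inequality fails. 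The main technical obstacle, as in the single-channel case, is the ``full rank almost surely'' claim for the heterogeneous gramian built from the random dithers $\{t_{n,j}\}$: one has to show that its relevant minor determinants, viewed as real-analytic functions of the dithers, are not identically zero and then invoke analyticity to conclude non-vanishing outside a measure-zero set. Once this is in hand, the remainder is routine bookkeeping that merges the large-deviation estimate for $S$ from Theorem~\ref{thm:nonuniform} with the multi-channel index set $L_i$ from Proposition~\ref{thm:multi}.
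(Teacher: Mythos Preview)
Your proposal is correct and matches the paper's (implicit) approach: the paper does not give a separate proof for this proposition but presents it as the natural merger of Theorem~\ref{thm:nonuniform} (nonuniform sampling breaks all eigenvalue cycles, so each distinct eigenvalue becomes a period-$1$ block) with Proposition~\ref{thm:multi} (parallel-channel large-deviation bookkeeping over the finite index sets $L_i$), which is exactly the template you describe. The successive-decoding sufficiency argument, the genie-based necessity argument, and your identification of the ``Gramian determinant is a nonzero real-analytic function of the dithers'' step as the key technical ingredient all mirror the structure of the detailed proofs in Section~\ref{sec:proof}.
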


\section{Proofs}

The proofs of Theorem~\ref{thm:mainsingle} and Theorem~\ref{thm:nonuniform} are quite similar, and we can directly relate them by Weyl's criterion~\cite{Kuipers}. For presentation purposes, we will first present the proof of the nonuniform sampling case, Theorem~\ref{thm:nonuniform}, which is easier than that of Theorem~\ref{thm:mainsingle}.

\label{sec:proof}

\subsection{Sufficiency Proof of Theorem~\ref{thm:nonuniform} (Non-uniform Sampling)}
\label{sec:cont:suf}
We will prove that if $(\mathbf{A_c},\mathbf{C})$ does not have unobservable and unstable eigenvalues and $p_e < \frac{1}{|e^{2\lambda_{max}I}|}$, the system is intermittent observable.

$\bullet$ Reduction to a Jordan form matrix $\mathbf{A_c}$: To simplify the problem, we first restrict to system equations \eqref{eqn:conti:xsample} and \eqref{eqn:conti:ysample} with the following properties. We will also justify that  this restriction is without loss of generality and does not change intermittent observability.\\
(a) The system matrix $\mathbf{A_c}$ is a Jordan form matrix.\\
(b) All eigenvalues of $\mathbf{A_c}$ are unstable, i.e. the real parts are nonnegative.\\
(c) \eqref{eqn:conti:xsample} and \eqref{eqn:conti:ysample} can be extended to two-sided processes.

The restriction (a) can be justified by a similarity transform~\cite{Chen}. As mentioned before, it is known~\cite{Chen} that for any square matrix $\mathbf{A_c}$, there exists an invertible matrix $\mathbf{U}$ and an upper-triangular Jordan matrix $\mathbf{A_c'}$ such that $\mathbf{A_c}=\mathbf{U}\mathbf{A_c'}\mathbf{U}^{-1}$. Then, equations \eqref{eqn:non:0} and \eqref{eqn:non:4} can be rewritten as
\begin{align}
\mathbf{U}^{-1}\mathbf{x_c}(t)&=e^{\mathbf{A_c'}t}\mathbf{U}^{-1}\mathbf{x_c}(0)+\int^t_0 e^{\mathbf{A_c'}(t-t')}\mathbf{U}^{-1}\mathbf{B_c} d\mathbf{W_c}(t') \nonumber \\
\mathbf{y_o}[n]&=
\int^{I}_{0}\mathbf{C_c}\mathbf{U}e^{\mathbf{A_c'}(t-I)}dt  \mathbf{U}^{-1}\mathbf{x_c}(nI-t_n) \nonumber \\
&-\int^{nI-t_n}_{(n-1)I-t_n}\int^{nI-t_n}_{t}\mathbf{C_c}\mathbf{U}e^{\mathbf{A_c'}(t-t')}\mathbf{U}^{-1}\mathbf{B_c}d\mathbf{W_c}(t')dt
+\int^{nI-t_n}_{(n-1)I-t_n} \mathbf{D_c} d\mathbf{V_c}(t). \nonumber
\end{align}
Thus, by denoting $\mathbf{x_c'}(t):=\mathbf{U}^{-1}\mathbf{x_c}(t)$, $\mathbf{B_c'}:=\mathbf{U}^{-1}\mathbf{B_c}$, and $\mathbf{C_c'}:=\mathbf{C_c}\mathbf{U}$, the system equations \eqref{eqn:contistate}, \eqref{eqn:contiob} and \eqref{eqn:conti:ysample} can be written in the following equivalent forms.
\begin{align}
&d\mathbf{x_c'}(t)=\mathbf{A_c'}\mathbf{x_c'}(t) dt + \mathbf{B_c'}d \mathbf{W_c}(t)  \nonumber \\
&\mathbf{y_c}(t)=\mathbf{C_c'}\mathbf{x_c'}(t)+\mathbf{D_c}\frac{d\mathbf{V_c}(t)}{dt}\nonumber \\
&\mathbf{y_o}[n]=\mathbf{C'}\mathbf{x_c'}(nI-t_n)+\mathbf{v}[n] \nonumber
\end{align}
where $\mathbf{C'}:=\int^I_0 \mathbf{C_c'}e^{\mathbf{A_c'}(t-I)}dt=\int^I_0 \mathbf{C_c} \mathbf{U} \mathbf{U}^{-1} e^{\mathbf{A_c}(t-I)} \mathbf{U}dt=\mathbf{C}\mathbf{U}$.

Since $\mathbf{U}$ is invertible, $(\mathbf{A_c},\mathbf{C})$ has an unobservable eigenvalue $\lambda$ if and only if $(\mathbf{A_c'},\mathbf{C'})$ has an unobservable eigenvalue $\lambda$. Moreover, since $\mathbf{x'_c}=\mathbf{U}^{-1}\mathbf{x_c}(t)$, the original intermittent system $(\mathbf{A_c},\mathbf{B_c},\mathbf{C})$ with nonuniform samples is intermittent observable if and only if the new intermittent system $(\mathbf{A_c'},\mathbf{B_c'},\mathbf{C'})$ with nonuniform samples is intermittent observable. Thus, without loss of generality, we can assume $\mathbf{A_c}$ is given in a Jordan form, which justifies (a).

Once $\mathbf{A_c}$ is given in a Jordan form, there is a natural correspondence between the eigenvalues and the states. If there is a stable eigenvalue --- i.e. the real part of the eigenvalue is negative ---, the variance of the corresponding state is uniformly bounded. Thus, we do not have to estimate the state to make the estimation error finite. In the observation $\mathbf{y}[n]$, the stable states can be considered as a part of observation noise $\mathbf{v}[n]$, and the variance of $\mathbf{v}[n]$ is still uniformly bounded (even if $\mathbf{v}[n]$ can be correlated). Therefore, we can assume (b) without loss of generality.

To justify restriction (c), we put $\mathbf{W_c}(t)=0$ for $t < 0$, $\mathbf{V_c}(t)=0$ for $t <0$, and let $\beta[n]$ be a two-sided Bernoulli process with probability $1-p_e$. Then, the resulting two-sided processes $\mathbf{x_c}(t)$ and $\mathbf{y}[n]$ are identical to the original one-sided processes except that $\mathbf{x_c}(t)$ and $\mathbf{y}[n]$ except that $\mathbf{x_c}(t)=0$ for $t \in \mathbb{R}^{--}$ and $\mathbf{y}[n]=0$ for $n \in \mathbb{Z}^{--}$.

In summary, without loss of generality we can assume that $\mathbf{A_c}$ is in a Jordan form, all eigenvalues of $\mathbf{A_c}$ are stable, and \eqref{eqn:conti:xsample} and \eqref{eqn:conti:ysample} are two-sided processes. Thus, we can assume $\mathbf{A_c} \in \mathbb{C}^{m \times m}$ and $\mathbf{C} \in \mathbb{C}^{l \times m}$ is given as follows.
\begin{align}
&\mathbf{A_c}=diag\{\mathbf{A_{1,1}},\mathbf{A_{1,2}},\cdots, \mathbf{A_{1,\nu_{1}}},\cdots,\mathbf{A_{\mu,1}},\cdots,\mathbf{A_{\mu,\nu_{\mu}}}\}
\label{eqn:conti:a2} \\
&\mathbf{C}=\begin{bmatrix}
\mathbf{C_{1,1}} & \mathbf{C_{1,2}} & \cdots & \mathbf{C_{1,\nu_{1}}} & \cdots & \mathbf{C_{\mu,1}} & \cdots & \mathbf{C_{\mu,\nu_{\mu}}}
\end{bmatrix}
\label{eqn:conti:c2} \\
&\mbox{where } \nonumber\\
&\quad\mathbf{A_{i,j}} \mbox{ is a Jordan block with eigenvalue $\lambda_{i}+j\omega_{i}$ and size $m_{i,j}$} \nonumber \\
&\quad m_{i,1} \leq m_{i,2} \leq \cdots \leq m_{i,\nu_i} \mbox{ for all }i=1,\cdots,\mu \nonumber \\
&\quad \lambda_1 \geq \lambda_2 \geq \cdots \geq \lambda_\mu \geq 0 \nonumber \\
&\quad \lambda_1+j\omega_1, \lambda_2+j\omega_2, \cdots , \lambda_\mu+j\omega_\mu \mbox{ are pairwise distinct} \nonumber \\
&\quad \mathbf{C_{i,j}}\mbox{ is a $l \times m_{i,j}$ complex matrix} \nonumber \\
&\quad \mbox{The first columns of $\mathbf{C_{i,1}},\mathbf{C_{i,2}},\cdots,\mathbf{C_{i,\nu_i}}$ are linearly independent}. \nonumber
\end{align}
Here, $\mathbf{A_{i,1}}, \cdots, \mathbf{A_{i,\nu_i}}$ are the Jordan blocks corresponding to the same eigenvalue. The Jordan blocks are sorted in a descending order in the real parts of the eigenvalues. The permutation of Jordan blocks can be justified since they are block diagonal matrices. The linear independence of $\mathbf{C_{i,1}}, \mathbf{C_{i,2}}, \cdots, \mathbf{C_{i,\nu_i}}$ comes from the observability of $(\mathbf{A_c},\mathbf{C})$ (by Theorem~\ref{thm:jordanob}).

$\bullet$ Uniform boundedness of observation noise: To prove the intermittent observability, we will propose a suboptimal maximum likelihood estimator, and analyze it. To upper bound the estimation error, we upper bound the disturbances and observation noises in the system.

By \eqref{eqn:non:1}, we have
\begin{align}
\mathbf{x_c}((n-k)I-t_{n-k})=e^{-\mathbf{A_c}(kI+t_{n-k})}\mathbf{x_c}(nI)
\underbrace{-\int^{nI}_{(n-k)I-t_{n-k}} e^{\mathbf{A_c}((n-k)I-t_{n-k}-t')} \mathbf{B_c} d\mathbf{W_c}(t')}_{:=\mathbf{w'}[n-k]}. \label{eqn:non:5}
\end{align}
By plugging this equation into \eqref{eqn:conti:ysample}, we get
\begin{align}
\mathbf{y}[n-k]&=\mathbf{C}\mathbf{x_c}((n-k)I-t_{n-k})+\mathbf{v}[n-k] \nonumber \\
&=\mathbf{C}e^{-\mathbf{A_c}(kI+t_{n-k})}\mathbf{x_c}(nI)+\underbrace{\mathbf{C}\mathbf{w'}[n-k]+\mathbf{v}[n-k]}_{:=\mathbf{v'}[n-k]}. \label{eqn:nonuniform:1}
\end{align}
We will upper  bound the variance of $\mathbf{v'}[n-k]$. First, consider the variance of $w'[n-k]$. By the assumption (b), all eigenvalues of $\mathbf{A_c}$ are unstable, and since $t_{n-k} \in [0,T]$, $((n-k)I-t_{n-k}-t')$ ranges within $[-(kI+T),0]$. Thus, there exits $p' \in \mathbb{N}$ such that
\begin{align}
\mathbb{E}[\mathbf{w'}[n-k]^\dag \mathbf{w'}[n-k]] \lesssim 1+k^{p'} \label{eqn:pprimedef2}
\end{align}
where $\lesssim$ holds for all $n$. (See Definition~\ref{def:lesssim} for the definition of $\lesssim$.)

By \eqref{eqn:non:4}, the variance of $\mathbf{v}[n]$ is uniformly bounded\footnote{To justify assumption (b), we consider the stable states as a part of observation noise $\mathbf{v}[n]$. However, this does not change the uniform boundedness since the variances of the stable states are also uniformly bounded.} for all $n$. Therefore, we have $\mathbb{E}[\mathbf{v'}[n-k]^\dag \mathbf{v'}[n-k]] \lesssim 1+k^{p'}$ for all $n$.

Moreover, since $W_c(t)$ is a standard Wiener process with unit variance, $\underset{n \in \mathbb{Z}}{\sup} \mathbb{E}[(\mathbf{x}(nI)-\mathbf{\widehat{x}}(nI))^\dag (\mathbf{x}(nI)-\mathbf{\widehat{x}}(nI))] < \infty$ implies $\underset{t \in \mathbb{R}}{\sup} \mathbb{E}[(\mathbf{x}(t)-\mathbf{\widehat{x}}(t))^\dag (\mathbf{x}(t)-\mathbf{\widehat{x}}(t))] < \infty$. Thus, it is enough to estimate the state only at discrete time steps.
%
%
%

$\bullet$ Suboptimal Maximum Likelihood Estimator: Now, we will give the suboptimal state estimator which only uses a finite number of recent observations. We first need the following key lemma.
\begin{lemma}
Let $\mathbf{A_c}$ and $\mathbf{C}$ be given as in \eqref{eqn:conti:a2} and \eqref{eqn:conti:c2}, $\beta[n]$ be a Bernoulli process with probability $1-p_e$, and $t_n$ be i.i.d.~random variables whose distribution is uniform on $[0,T]~(T>0)$.
Then, we can find $m' \in \mathbb{N}$, a polynomial $p(k)$ and a family of stopping times $\{ S(\epsilon,k): k \in \mathbb{Z}^+, 0 < \epsilon < 1 \}$ such that for all $k \in \mathbb{Z}^+$ and $0 < \epsilon < 1$ there exist $k \leq k_1 < k_2 < \cdots < k_{m'} \leq S(\epsilon,k)$ and a $m \times m'l$ matrix $\mathbf{M}$ satisfying the following four conditions:\\
(i) $\beta[k_i] = 1$ for all $1 \leq i \leq m'$\\
(ii)
$
\mathbf{M}
\begin{bmatrix}
\mathbf{C} e^{-(k_1 I + t_{k_1})\mathbf{A_c}} \\
\mathbf{C} e^{-(k_2 I + t_{k_2})\mathbf{A_c}} \\
\vdots \\
\mathbf{C} e^{-(k_{m'} I + t_{k_{m'}})\mathbf{A_c}} \\
\end{bmatrix} = \mathbf{I}_{m \times m}
$\\
(iii)
$
\left| \mathbf{M} \right|_{max} \leq \frac{p(S(\epsilon,k))}{\epsilon} e^{\lambda_1 S(\epsilon,k) I}
$
\\
(iv)
$
\lim_{\epsilon \downarrow 0} \left(\exp \limsup_{s \rightarrow \infty} \sup_{k\in \mathbb{Z^+}} \frac{1}{s} \log \mathbb{P} \{ S(\epsilon,k)-k=s \} \right) \leq p_e
$.
\label{lem:conti:mo}
\end{lemma}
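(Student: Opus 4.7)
The plan is to define $S(\epsilon,k)$ as the first $s$ at which a ``good'' $m'$-subset of non-erased indices is available in $\{k, k+1, \ldots, k+s\}$; here $m'$ is any integer with $m'l \geq m$, and taking $m' = m$ suffices. Given such a subset $k_1 < \cdots < k_{m'}$, the goodness criterion is that the stacked observation matrix admits a left inverse $\mathbf{M}$ meeting the norm bound (iii); conditions (i) and (ii) are then automatic. That $m'$ many observations suffice to identify the full state is guaranteed by the observability of $(\mathbf{A_c}, \mathbf{C})$, which in turn follows from the hypothesis in \eqref{eqn:conti:c2} that the first columns of $\mathbf{C_{i,1}}, \ldots, \mathbf{C_{i,\nu_i}}$ are linearly independent.

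\textbf{Anti-concentration step.} The core task is to show that, conditional on any fixed $m'$-tuple of non-erased indices $k_1 < \cdots < k_{m'}$, the stacked matrix is well-conditioned with probability at least $1 - c\,\epsilon^{1/D}$ over the i.i.d.\ uniform jitters $(t_{k_1}, \ldots, t_{k_{m'}})$, for constants $c, D$ depending only on $m$. Using the Jordan structure of $\mathbf{A_c}$, each row is a sum over Jordan blocks of terms of the form $e^{-(k_i I + t_{k_i})(\lambda_r + j\omega_r)} P_r(k_i I + t_{k_i})$, with $P_r$ polynomial of degree at most $m-1$. After extracting the row-by-row scalar factor $e^{-\lambda_1 (k_i I + t_{k_i})}$, the determinant of a full-rank $m \times m$ submatrix becomes a finite sum of products of trigonometric and polynomial terms of bounded total degree in $(t_{k_1}, \ldots, t_{k_{m'}})$. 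Its non-vanishing as a function of the jitters follows from the observability hypothesis together with the pairwise distinctness of the $\lambda_i + j\omega_i$ across Jordan blocks, via a Vandermonde-type separation of the exponential modes. A polynomial anti-concentration inequality of Carbery--Wright type then bounds the probability of a small determinant, and a cofactor estimate converts this into the demanded upper bound $|\mathbf{M}|_{\max} \leq p(s)\,\epsilon^{-1}\,e^{\lambda_1 s I}$ of (iii), where the polynomial $p$ absorbs both the polynomial row-growth of $\mathbf{O}$ and the cofactor count.

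\textbf{Tail bound and main obstacle.} For condition (iv), decompose $\{S(\epsilon,k) - k \geq s\}$ into (a) the event that fewer than $m'$ non-erasures occur in $\{k, \ldots, k+s\}$, and (b) the event that at least $m'$ non-erasures occur but no $m'$-subset among them is good. Event (a) has probability at most a binomial tail $O(s^{m'-1} p_e^{\,s-m'+1})$, whose exponential decay rate is exactly $\log p_e$. For event (b), the non-erased positions can be split into $\Theta(s)$ disjoint groups of size $m'$; since the jitters in disjoint groups are mutually independent, each group's goodness test fails with probability at most $c\,\epsilon^{1/D}$ independently of the others, so the probability that every group fails is at most $(c\,\epsilon^{1/D})^{\Theta(s)}$. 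For $\epsilon$ small, this decays strictly faster than $p_e^s$, so sending $\epsilon \downarrow 0$ after $s \to \infty$ isolates the rate $\log p_e$ coming from (a), which gives (iv). The main technical obstacle is the non-vanishing step in the middle paragraph: writing the normalized determinant as an explicit polynomial/trigonometric expression in the jitters and verifying that none of its ``leading'' coefficients can vanish requires a careful separation of contributions from distinct eigenvalue blocks (handling the oscillatory parts $e^{-j\omega_r t}$) from those of the nilpotent parts within each block (handling the polynomial growth in $t$), mirroring the three representative cases treated in Section~\ref{sec:separability}.
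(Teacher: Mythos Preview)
Your approach differs substantially from the paper's: you propose an all-at-once anti-concentration over the $m'$ jitters simultaneously, while the paper builds the observability matrix one row at a time by induction on $m$ (Lemma~\ref{lem:conti:singlec}) and, for matrix $\mathbf{C}$, peels off one state coordinate at a time via successive decoding.

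There is a concrete error in your anti-concentration paragraph. After row-normalizing by $e^{-\lambda_1(k_iI+t_{k_i})}$, the entry associated with an eigenvalue block of real part $\lambda_r < \lambda_1$ carries a factor $e^{(\lambda_1-\lambda_r)(k_iI+t_{k_i})}$, which is a genuine real exponential in $t_{k_i}$ --- not ``trigonometric and polynomial''. Carbery--Wright is stated for polynomials and does not apply here. More importantly, even granting an analytic-function small-ball bound for each fixed tuple $(k_1,\ldots,k_{m'})$, you need the constants $c, D$ in your bound $1 - c\epsilon^{1/D}$ to be \emph{uniform} over all such tuples, and this is precisely the hard step you have not supplied. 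The normalized determinant depends on the gaps $k_j - k_i$ through coefficients like $e^{-(k_j-k_i)I(\lambda_a-\lambda_b)}$, which range over a noncompact set. The paper sidesteps this by (a) imposing a logarithmic gap $k_m - k_{m-1} \geq g_\epsilon(k_{m-1})$ so that different-real-part cross terms are negligible (Lemma~\ref{lem:det:lower}), (b) reducing to single-variable anti-concentration in the last jitter $t_{k_m}$ alone (Lemma~\ref{lem:conti:single}), and (c) securing uniformity over the remaining parameters via Dini's theorem on a compactified coefficient domain (Lemma~\ref{lem:singleun}). Your all-at-once route would require a multivariate analogue of this machinery, which neither Carbery--Wright nor your sketch provides.

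Your tail decomposition into events (a) and (b) with independent disjoint groups is clean and, once uniform anti-concentration is in hand, gives (iv) more directly than the paper's iterated use of Lemma~\ref{lem:app:geo} to compose stopping-time tails.
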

\begin{proof}
See Appendix~\ref{sec:app:2}.
\end{proof}

Since we have $p_e<\frac{1}{|e^{2 \lambda_{max} I}|}=\frac{1}{e^{2 \lambda_1 I}}$, there exists $\delta > 1$ such that $\delta^5 p_e < \frac{1}{e^{2 \lambda_1 I}}$. By Lemma~\ref{lem:conti:mo}, we can find $m' \in \mathbb{N}$, $0 < \epsilon < 1$, a polynomial $p(k)$ and a family of stopping times $\{ S(n) : n \in \mathbb{Z}^+ \}$ such that for all $n$, there exist $0 \leq k_1 < k_2 < \cdots < k_{m'} \leq S(n)$ and a $m \times m'l$ matrix $\mathbf{M_n}$ satisfying the following four conditions:\\
(i') $\beta[n-k_i] = 1$ for $1 \leq i \leq m'$\\
(ii')
$
\mathbf{M_n}
\begin{bmatrix}
\mathbf{C} e^{-(k_1 I + t_{n-k_1})\mathbf{A_c}} \\
\mathbf{C} e^{-(k_2 I + t_{n-k_2})\mathbf{A_c}} \\
\vdots \\
\mathbf{C} e^{-(k_{m'} I + t_{n-k_{m'}})\mathbf{A_c}} \\
\end{bmatrix} = \mathbf{I}_{m \times m}
$\\
(iii')
$
\left| \mathbf{M_n} \right|_{max} \leq \frac{p(S(n))}{\epsilon} e^{\lambda_1 I \cdot S(n)}
$
\\
(iv')
$
\exp \left(\limsup_{s \rightarrow \infty} \sup_{n \in \mathbb{Z^+}} \frac{1}{s} \log \mathbb{P} \{ S(n)=s \} \right) \leq  \sqrt{\delta} p_e
$.

Then, here is the proposed suboptimal maximum likelihood estimator for $\mathbf{x}(nI)$:
\begin{align}
\mathbf{\widehat{x}}(nI)=\mathbf{M_n}\begin{bmatrix}
\mathbf{y}[n-k_1] \\
\mathbf{y}[n-k_2] \\
\vdots \\
\mathbf{y}[n-k_{m'}] \\
\end{bmatrix}. \label{eqn:nonuniform:2}
\end{align}
Here, $k_i$ also depends on $n$, but we omit the dependency in notation for simplicity. Notice that the number of the observation of the estimation, $m'$, is much larger than the dimension of the system, $m$. In other words, the estimator proposed here may use much more number of observations than the number of states (the number of observations that a simple matrix inverse observer needs). This is because we use successive decoding idea in the proof of Lemma~\ref{lem:conti:mo}.

$\mathbf{\bullet}$ Analysis of the estimation error: Now, we will analyze the performance of the proposed estimator.
Remind that $p'$ is defined in \eqref{eqn:pprimedef2} and $\delta > 1$
By (iv') and well-known properties of polynomial and exponential functions, we can find $c > 0$ that satisfies the following three conditions:\\
(i'') $(1+k^{p'}) \leq c \cdot \delta^k$ for all $k \geq 0$\\
(ii'') $p(k) \leq c \cdot \delta^k$ for all $k \geq 0$\\
(iii'') $\sup_{n \in \mathbb{N}}\mathbb{P}\{ S(n) =s  \} \leq c \cdot (\delta \cdot p_e)^s$ for all $s \in \mathbb{Z}^+$

Let $\mathcal{F}_{\beta}$ be the $\sigma$-field generated by $\beta[n]$ and $t_i$. Then, $k_i$, $S(n)$, and $t_i$ are deterministic variables conditioned on $\mathcal{F}_{\beta}$. The estimation error is upper bounded by
\begin{align}
\sup_{n}\mathbb{E}[|\mathbf{x}(nI)-\mathbf{\widehat{x}}(nI)|_2^2]&=\sup_{n}
\mathbb{E}[\mathbb{E}[|\mathbf{x}(nI)-\mathbf{\widehat{x}}(nI)|_2^2| \mathcal{F}_{\beta}]]\nonumber \\
&\overset{(A)}{=}\sup_{n}
\mathbb{E}[\mathbb{E}[\left|\mathbf{x}(nI)-\mathbf{M_n}(\begin{bmatrix} \mathbf{C}e^{-\mathbf{A_c}(k_1 I + t_{n-k_1})} \\ \mathbf{C}e^{-\mathbf{A_c}(k_2 I + t_{n-k_2})} \\ \vdots \\ \mathbf{C}e^{-\mathbf{A_c}(k_{m'} I + t_{n-k_{m'}})} \end{bmatrix}\mathbf{x}(nI) + \begin{bmatrix} \mathbf{v'}[n-k_1] \\ \mathbf{v'}[n-k_2] \\ \vdots \\ \mathbf{v'}[n-k_{m'}] \end{bmatrix})\right|_2^2|\mathcal{F}_{\beta}]]\nonumber \\
&\overset{(B)}{=}\sup_{n}\mathbb{E}[\mathbb{E}[\left|\mathbf{M_n}\begin{bmatrix} \mathbf{v'}[n-k_1] \\ \mathbf{v'}[n-k_2] \\ \vdots \\ \mathbf{v'}[n-k_{\sum_{1 \leq i \leq \mu}m_i'}] \end{bmatrix}\right|_2^2|\mathcal{F}_{\beta}]]   \nonumber\\
&\lesssim
\sup_{n}\mathbb{E}[|\mathbf{M_n}|_{max}^2 \cdot \mathbb{E}[\left|\begin{bmatrix} \mathbf{v'}[n-k_1] \\ \mathbf{v'}[n-k_2] \\ \vdots \\ \mathbf{v'}[n-k_{m'}] \end{bmatrix}\right|_{max}^2|\mathcal{F}_{\beta}]] \nonumber\\
&\overset{(C)}{\lesssim}
\sup_{n}\mathbb{E}[|\mathbf{M_n}|_{max}^2 \cdot (1+S(n)^{p'})^2 ] \nonumber \\
&\overset{(D)}{\leq}
\sup_{n}\mathbb{E}[\left( \frac{p(S(n))}{\epsilon} e^{\lambda_1 I \cdot S(n)} \right)^2 \cdot (1+S(n)^{p'})^2 ] \nonumber\\
&\overset{(E)}{\lesssim}
\sup_{n}\mathbb{E}[\delta^{2S(n)}\cdot e^{2\lambda_1 I \cdot S(n)} \cdot \delta^{2S(n)}] \nonumber\\
&\overset{(F)}{\lesssim}
\sum^{\infty}_{s=0} \delta^{4s}\cdot e^{2 \lambda_1 I \cdot s} \cdot (\delta \cdot p_e)^s \nonumber\\
&\overset{(G)}{=}
\sum^{\infty}_{s=0} (\delta^5 \cdot e^{2 \lambda_1 I} \cdot p_e)^s \nonumber \\
&< \infty \nonumber
\end{align}
where $\lesssim$ holds for all $n$.\\
(A): By \eqref{eqn:nonuniform:1} and \eqref{eqn:nonuniform:2}.\\
(B): By condition (ii').\\
(C): Since $\mathbb{E}[\mathbf{v'}[n-k]^\dag \mathbf{v'}[n-k]] \lesssim 1+k^{p'}$ by definition.\\
(D): By condition (iii').\\
(E): By condition (i'') and (ii'').\\
(F): By condition (iii'').\\
(G): Since we choose $\delta$ so that $\delta^5 p_e \cdot e^{2 \lambda_1 I} < 1$.

Therefore, the estimation error is uniformly bounded over $t\in \mathbb{R}^+$ when $p_e < \frac{1}{e^{2 \lambda_1 I}}$, which finishes the proof.

\subsection{Necessity Proof of Theorem~\ref{thm:nonuniform}}
\label{sec:cont:nec}
The necessity proof divides into two parts. First, we prove that if $p_e \geq \frac{1}{|e^{2 \lambda_{max} I }|}$, then the system is not intermittent observable. Second, we prove that if $(\mathbf{A_c},\mathbf{C})$ has unobservable and unstable eigenvalues --- i.e. $\exists \lambda \in \mathbb{C}^+$ such that $\begin{bmatrix} \lambda \mathbf{I} - \mathbf{A_c} \\ \mathbf{C}\end{bmatrix}$ is rank deficient --- then the system is not intermittent observable.


$\bullet$ When $p_e \geq \frac{1}{|e^{2 \lambda_{max} I }|}$: Intuitively speaking, we will give all states except the one corresponding to the maximum eigenvalue as side-information. Thus, we will reduce the problem to the scalar system discussed in Section~\ref{sec:intui}.

Formally, let $\mathbf{\Sigma_{t|t}}:=\mathbb{E}[(\mathbf{x_c}(t)-\mathbb{E}[\mathbf{x_c}(t)| \mathbf{y}^{\lfloor \frac{t}{I} \rfloor} ])(\mathbf{x_c}(t)-\mathbb{E}[\mathbf{x_c}(t)| \mathbf{y}^{\lfloor \frac{t}{I} \rfloor} ])^\dag | \mathcal{F}_{\beta}]$ where $\mathcal{F}_{\beta}$ is the $\sigma$-field generated by $\beta[n]$ and $t_i$. Notice that $\mathbf{\Sigma_{t|t}}$ is a random variable.

It is known that when $(\mathbf{A_c},\mathbf{B_c})$ is controllable, the estimation error of $\mathbf{x_c}(t)$ even based on all the causally available information $\mathbf{y_c}(0:t)$ is positive definite when $t$ is large enough. Therefore, there exists $t' > 0$ and $\sigma^2 > 0$ such that for all $t \geq t'$,
$\mathbf{\Sigma_{t|t}} \succeq \sigma^2 \mathbf{I}$ with probability one. Let $\mathbf{e}$ be a right eigenvector of $\mathbf{A_c}$ associated with the eigenvalue $\lambda_{max}$, i.e. $\mathbf{A_c}\mathbf{e}=\lambda_{max} \mathbf{e}$. Then, we can find $\sigma'^2 > 0$ such that for all $t \geq t'$, $\mathbf{\Sigma_{t|t}} \succeq \sigma'^2 \mathbf{e}\mathbf{e}^\dag$ with probability one.

Define the stopping time $S_n' := \inf\{ k \in \mathbb{Z}^+| \beta[n-k]=1 \}$ as the time until the most recent observation.

The observations between discrete time $n-S_n'+1$ and $n$ are all erased. This implies the received observations at discrete time $n$ are independent from $\mathbf{y_c}((n-S_n)I:nI)$. 
Thus, conditioned on $(n-S_n')I \geq t'$, $\mathbf{\Sigma_{nI|nI}}$ is lower bounded as follows with probability one.\footnote{The lower bound does not hold for $\Re (\lambda) =0$ which induces $p_e=1$. However, in this case we do not have any observation, so trivially the system is unstable.}
\begin{align}
\mathbb{E}[\mathbf{\Sigma_{nI|nI}}| S_n', (n-S_n')I \geq t'] & \succeq (e^{\mathbf{A_c}(S_n' I)}) \mathbf{\Sigma_{(n-S_n')I|(n-S_n')I}} (e^{\mathbf{A_c}(S_n' I)})^{\dag}\\
& \succeq \sigma'^2 (e^{\mathbf{A_c}(S_n' I)}) \mathbf{e}\mathbf{e}^{\dag} (e^{\mathbf{A_c}(S_n' I)})^{\dag} \\
& \succeq \sigma'^2 |e^{2\lambda_{max}I}|^{S_n'} \mathbf{e} \mathbf{e}^\dag
\end{align}
Here we use the fact that when $\mathbf{e}$ is an eigenvector of $\mathbf{A_c}$ associated with an eigenvalue $\lambda_{max}$, $\mathbf{e}$ is also an eigenvector of $e^{\mathbf{A_c}t}$ associated with the eigenvalue $e^{\lambda_{max}t}$ for all $t$.

Since $p_e \geq \frac{1}{|e^{2 \lambda_{max}I}|}$, the average estimator error is lower bounded as follows:
\begin{align}
&\mathbb{E}[(\mathbf{x_c}(nI)-\mathbb{E}[ \mathbf{x_c}(nI) | \mathbf{y}^n ])^\dag (\mathbf{x_c}(nI)-\mathbb{E}[ \mathbf{x_c}(nI) | \mathbf{y}^n ])]\\
&\geq \mathbb{E}[ \sigma'^2 |e^{2\lambda_{max}I}|^{S_n'} |\mathbf{e}|^2 \cdot \mathbf{1}( (n-S_n')I \geq t')  ]\\
&\geq \sigma'^2 |\mathbf{e}|^2 \cdot \sum_{0 \leq s \leq \lfloor n - \frac{t}{I} \rfloor} |e^{2 \lambda_{max} I}|^s \cdot (1-p_e)p_e^s \\
&\geq \sigma'^2 |\mathbf{e}|^2 \cdot (1-p_e) \cdot (\lfloor n - \frac{t}{I} \rfloor + 1)
\end{align}
Thus, the estimation error goes to infinity as $n \rightarrow \infty$, so the system is not intermittently observable.

$\bullet$ When $(\mathbf{A_c},\mathbf{C})$ has unobservable and unstable eigenvalues: Now, we prove that if $(\mathbf{A_c},\mathbf{C})$ has unobservable and unstable eigenvalues, the system is not intermittent observable. This proof seems trivial, but the original continuous-time system $(\mathbf{A_c},\mathbf{C_c})$ can still be observable while the sampled system $(\mathbf{A_c},\mathbf{C})$ is not. Thus, it still needs justification.

Let $\lambda \in \mathbb{C}^+$ be the unobservable and unstable eigenvalue. Then, $\begin{bmatrix} \lambda \mathbf{I}- \mathbf{A_c} \\ \mathbf{C} \end{bmatrix}$ is rank deficient, and we can find $\mathbf{i}$ such that $\begin{bmatrix} \lambda \mathbf{I}-\mathbf{A_c} \end{bmatrix} \mathbf{i} = \mathbf{0}$. Then, $\mathbf{i}$ satisfies $\mathbf{C}\mathbf{i}=\mathbf{0}$, $\mathbf{A_c}\mathbf{i}=\lambda \mathbf{i}$, and we can notice that $\mathbf{C}e^{\mathbf{A_c}t}\mathbf{i}=e^{\lambda t}\mathbf{C} \mathbf{i}= \mathbf{0}$. We will prove that the uncertainty in the direction $\mathbf{i}$ is not observable by any observations.

By the controllability of $(\mathbf{A_c},\mathbf{B_c})$, as above there exists $t'$ such that for all $t \geq t'$, $\mathbf{x_c}(t)-\mathbb{E}[\mathbf{x_c}(t)|\mathbf{y_c}(0:t)]$ has a positive definite covariance matrix. Therefore, we can write $\mathbf{x_c}(t)-\mathbb{E}[\mathbf{x_c}(t)|\mathbf{y_c}(0:t)]=\mathbf{i} \cdot x_c'(t)+ \mathbf{x_c''}(t)$ where $x_c'(t)$, $\mathbf{x_c''}(t)$ and $\mathbf{y_c}(0:t)$ are independent and $\mathbb{E}[|x_c'(t)|^2]\geq \sigma''^2$ for some $\sigma''^2>0$ and all $t \geq t'$.

Then, we will prove that the sampled observations are independent from $x_c'(t)$. By \eqref{eqn:non:0} and \eqref{eqn:non:4}, for all $\tau \leq (n-1)I-t_n$ we have
\begin{align}
\mathbf{y_o}[n] &= \mathbf{C}(e^{\mathbf{A_c}(nI-t_n-\tau)}(\mathbf{x_c}(\tau)+\int^{nI-t_n}_{\tau} e^{\mathbf{A_c}(\tau - t')}\mathbf{B_c} d \mathbf{W_c}(t'))) \\
&-\int^{nI-t_n}_{(n-1)I-t_n} \int^{nI-t_n}_{t} \mathbf{C_c} e^{\mathbf{A_c}(t-t')} \mathbf{B_c} d\mathbf{W_c}(t')dt
+ \int^{nI-t_n}_{(n-1)I-t_n} \mathbf{D_c}d\mathbf{V_c}(t)\\
&= \mathbf{C}(e^{\mathbf{A_c}(nI-t_n-\tau)}(\mathbf{i} \cdot x_c'(\tau) + \mathbf{x_c''}(\tau) + \mathbf{E}[\mathbf{x_c}(\tau)|\mathbf{y_c}(0:\tau)] +\int^{nI-t_n}_{\tau} e^{\mathbf{A_c}(\tau - t')}\mathbf{B_c} d \mathbf{W_c}(t'))) \\
&-\int^{nI-t_n}_{(n-1)I-t_n} \int^{nI-t_n}_{t} \mathbf{C_c} e^{\mathbf{A_c}(t-t')} \mathbf{B_c} d\mathbf{W_c}(t')dt
+ \int^{nI-t_n}_{(n-1)I-t_n} \mathbf{D_c}d\mathbf{V_c}(t)\\
&= \mathbf{C}(e^{\mathbf{A_c}(nI-t_n-\tau)}( \mathbf{x_c''}(\tau) + \mathbf{E}[\mathbf{x_c}(\tau)|\mathbf{y_c}(0:\tau)] +\int^{nI-t_n}_{\tau} e^{\mathbf{A_c}(\tau - t')}\mathbf{B_c} d \mathbf{W_c}(t'))) \\
&-\int^{nI-t_n}_{(n-1)I-t_n} \int^{nI-t_n}_{t} \mathbf{C_c} e^{\mathbf{A_c}(t-t')} \mathbf{B_c} d\mathbf{W_c}(t')dt
+ \int^{nI-t_n}_{(n-1)I-t_n} \mathbf{D_c}d\mathbf{V_c}(t)
\end{align}
where the last equality comes from $\mathbf{C}e^{\mathbf{A_c}t}\mathbf{i}=0$. Moreover, by the causality and definitions, the last equation is independent from $\mathbf{x_c''}(\tau)$.

Now, we will prove that the uncertainty $\mathbf{x_c''}(\tau)$ can be arbitrarily amplified. Since $t_{i}$ are uniform random variables on $[0,T]$, there exists a positive probability such that $(n-1)I-t_n \leq (n+n'-1)I - t_{n+n'}$ for all $n' \in \mathbb{N}$. Denote such an event as $E$. Then, by choosing $n$ large enough so that $(n-1)I-t_n \geq t'$, we have the following lower bound on the estimation error for all $t \geq (n-1)I-t_n$:
\begin{align}
&\mathbb{E}[|\mathbf{x_c}(t)-\mathbb{E}[\mathbf{x_c}(t)|\mathbf{y}^{\lfloor \frac{t}{I} \rfloor}]|^2] \\
&\geq \mathbb{E}[
|\mathbf{x_c}(t)-\mathbb{E}[\mathbf{x_c}(t)|\mathbf{y}^{\lfloor \frac{t}{I} \rfloor}]|^2
 | E] \mathbb{P}(E) \\
&\overset{(a)}{\geq} \mathbb{E}[ |e^{\mathbf{A_c}(t-((n-1)I-t_n))}\mathbf{i} \cdot x_c''((n-1)I-t_n)|^2 |E]
\mathbb{P}(E) \\
&=|e^{\lambda(t-((n-1)I-T))}  \cdot \mathbf{i}|^2 \sigma''^2  \cdot \mathbb{P}(E) \label{cont:nec:lowerbound}
\end{align}
(a): By \eqref{eqn:non:0}, $\mathbf{x_c}(t)= e^{\mathbf{A_c}(t-((n-1)I-t_n))}\mathbf{x_c}((n-1)I-t_n)+
\int^t_{(n-1)I-t_n} e^{\mathbf{A_c}((n-1)I-t_n -t')} \mathbf{B_c} d \mathbf{W_c}(t')
$. Moreover, $x''_c((n-1)I-t_n)$ is independent from $\mathbf{y_c}(0:(n-1)I-t_n)$ and $y_o[n],y_o[n+1] \cdots$.

Since we can choose $t$ arbitrarily large, this finishes the proof for $\Re (\lambda) > 0$. To prove for the case of $\Re (\lambda) = 0$, we can bound \eqref{cont:nec:lowerbound} more carefully and justify that independent estimation errors accumulates in the direction of $\mathbf{i}$. We omit the proof here since the argument is essentially equivalent to that of the well-known fact that an eigenvalue with zero real part is unstable in continuous-time systems.

\subsection{Sufficiency Proof of Theorem~\ref{thm:mainsingle} (Discrete-Time Systems)}
\label{sec:dis:suff}
We will prove that if $p_e < \frac{1}{\underset{1 \leq i \leq \mu}{\max} |\lambda_{i,1}|^{2 \frac{p_i}{l_i}}}$ then the system is intermittent observable.

$\bullet$ Reduction to a Jordan form matrix $\mathbf{A}$: As in Section~\ref{sec:cont:suf}, we will restrict attention to system equations \eqref{eqn:dis:system} and \eqref{eqn:dis:system2} with the following properties, and justify that such a restriction is without loss of generality and does not change the intermittent observability.\\
(a) The system matrix $\mathbf{A}$ is a Jordan form matrix.\\
(b) All eigenvalues of $\mathbf{A}$ are unstable, i.e. the magnitude of all eigenvalues are greater or equal to $1$.\\
(c) \eqref{eqn:dis:system} and \eqref{eqn:dis:system2} can be extended to two-sided processes.

The restriction (a) can be justifies by a similarity transform~\cite{Chen}. It is known~\cite{Chen} that for any square matrix $\mathbf{A}$, there exists an invertible matrix $\mathbf{U}$ and an upper-triangular Jordan matrix $\mathbf{A'}$ such that $\mathbf{A}=\mathbf{U}\mathbf{A'}\mathbf{U}^{-1}$. Then, the system equations \eqref{eqn:dis:system} and \eqref{eqn:dis:system2} can be rewritten as:
\begin{align}
&\mathbf{U}^{-1}\mathbf{x}[n+1]=\mathbf{A'}\mathbf{U}^{-1}\mathbf{x}[n]+\mathbf{U}^{-1}\mathbf{B}\mathbf{w}[n] \nonumber \\
&\mathbf{y}[n]=\beta[n](\mathbf{C}\mathbf{U}\mathbf{U}^{-1}\mathbf{x}[n]+\mathbf{v}[n]). \nonumber
\end{align}
Thus, by denoting $\mathbf{x'}[n]:=\mathbf{U}^{-1}\mathbf{x}[n]$, $\mathbf{B'}:=\mathbf{U}^{-1}\mathbf{B}$, and $\mathbf{C'}:=\mathbf{C}\mathbf{U}$, we get
\begin{align}
&\mathbf{x'}[n+1]=\mathbf{A'}\mathbf{x'}[n]+\mathbf{B'}\mathbf{w}[n] \nonumber \\
&\mathbf{y}[n]=\beta[n](\mathbf{C'}\mathbf{x'}[n]+\mathbf{v}[n]). \nonumber
\end{align}

Since $\mathbf{U}$ is invertible, the controllability of $(\mathbf{A},\mathbf{B},\mathbf{C})$ remains the same for the new intermittent system $(\mathbf{A'},\mathbf{B'},\mathbf{C'})$. Moreover, since $\mathbf{x'}[n]=\mathbf{U}^{-1}\mathbf{x}[n]$, the original intermittent system is intermittent observable if and only if the new intermittent system is intermittent observable. Thus, without loss of generality, we can assume that $\mathbf{A}$ is given in a Jordan form, which justifies (a).

Once $\mathbf{A}$ is given in Jordan form, there is a natural correspondence between the eigenvalues and the states. If there is a stable eigenvalue --- i.e. the magnitude of the eigenvalue is less than $1$ ---, the variance of the corresponding state is uniformly bounded. Thus, we do not have to estimate that particular state to make the estimation error finite. In the observation $\mathbf{y}[n]$, the stable states can be considered as a part of observation noise $\mathbf{v}[n]$, and the variance of $\mathbf{v}[n]$ is still uniformly bounded (even if $\mathbf{v}[n]$ can be correlated). Therefore, we can assume (b) without loss of generality.

To justify restriction (c), rewrite \eqref{eqn:dis:system} as
\begin{align}
\mathbf{x}[n+1]=\mathbf{A}\mathbf{x}[n]+\mathbf{I}\mathbf{w'}[n] \nonumber
\end{align}
where $\mathbf{w'}[n]=\mathbf{B}\mathbf{w}[n]$ for $n \geq 0$. Let $\mathbf{w'}[-1]=\mathbf{x}[0]$, $\mathbf{w}[n]=\mathbf{0}$ for $n < -1 $, and $\mathbf{v}[n]$ for $n < 0$. We also extend $\beta[n]$ to a two-sided Bernoulli process with probability $1-p_e$. Then, the resulting two-sided processes $\mathbf{x}[n]$ and $\mathbf{y}[n]$ are identical to the original one-sided processes except that $\mathbf{x}[n]=\mathbf{0}$ and $\mathbf{y}[n]=\mathbf{0}$ for $n \in \mathbb{Z}^{--}$.

In summary, without loss of generality we can assume that $\mathbf{A}$ is in a Jordan form, all eigenvalues of $\mathbf{A}$ is stable, and \eqref{eqn:dis:system} and \eqref{eqn:dis:system2} are two-sided process. Therefore, we can assume that $\mathbf{A} \in \mathbb{C}^{m \times m}$ and $\mathbf{C} \in \mathbb{C}^{l \times m}$ are given as
\begin{align}
&\mathbf{A}=diag\{ \mathbf{A_{1,1}}, \mathbf{A_{1,2}}, \cdots, \mathbf{A_{1,\nu_1}}, \cdots, \mathbf{A_{\mu,1}}, \cdots, \mathbf{A_{\mu,\nu_\mu}}\} \nonumber \\
&\mathbf{C}=\begin{bmatrix} \mathbf{C_{1,1}} & \mathbf{C_{1,2}} & \cdots & \mathbf{C_{1,\nu_1}} & \cdots & \mathbf{C_{\mu,1}} & \cdots & \mathbf{C_{\mu,\nu_\mu}} \end{bmatrix} \nonumber \\
&\mbox{where} \nonumber \\
&\quad \mbox{$\mathbf{A_{i,j}}$ is a Jordan block with an eigenvalue $\lambda_{i,j}$ and size $m_{i,j}$} \nonumber \\
&\quad m_{i,1} \geq m_{i,2} \geq \cdots \geq m_{i,\nu_i} \mbox{ for all }i=1,\cdots,\mu \nonumber \\
&\quad |\lambda_{1,1}| \geq |\lambda_{2,1}| \geq \cdots \geq |\lambda_{\mu,1}| \geq 1 \nonumber \\
&\quad \{ \lambda_{i,1},\cdots, \lambda_{i,\nu_i} \} \mbox{ is cycle with length $\nu_i$ and period $p_i$}\nonumber \\
&\quad \mbox{For $i \neq i'$, $\{\lambda_{i,j},\lambda_{i',j'} \}$ is not a cycle} \nonumber \\
&\quad \mbox{$\mathbf{C_{i,j}}$ is a $l \times m_{i,j}$ complex matrix}.\label{eqn:ac:jordan}
\end{align}
Here, $\mathbf{A_{i,1}},\cdots, \mathbf{A_{i,\nu_i}}$ are the Jordan blocks corresponding to the same eigenvalue cycle.  The Jordan blocks are sorted in descending order by the magnitude of the eigenvalues. Such permutation of Jordan blocks can be justified since Jordan forms are block diagonal matrices.

Like \eqref{eqn:ac2:jordan:thm}, \eqref{eqn:def:lprime:thm}, we also define $\mathbf{A_i}$, $\mathbf{C_i}$, and $l_i$ as follows.
\begin{align}
&\mathbf{A_i}=diag\{ \lambda_{i,1},\cdots, \lambda_{i,\nu_i} \}\nonumber \\
&\mathbf{C_i}=\begin{bmatrix} \left(\mathbf{C_{i,1}}\right)_1 & \cdots & \left(\mathbf{C_{i,\nu_i}}\right)_1 \end{bmatrix} \nonumber\\
&\mbox{where $\left(\mathbf{C_{i,j}}\right)_1$ is the first column of $\mathbf{C_{i,j}}$.} \label{eqn:ac2:jordan}
\end{align}
$l_i$ is the minimum cardinality among the sets $S' \subseteq \{ 0,1,\cdots,p_i-1 \}$ whose resulting $S:=\{ 0,1,\cdots, p_i-1 \} \setminus S'=\{s_1,s_2,\cdots,s_{|S|} \}$ makes
\begin{align}
\begin{bmatrix}
\mathbf{C_i}\mathbf{A_i}^{s_1}\\
\mathbf{C_i}\mathbf{A_i}^{s_2}\\
\vdots \\
\mathbf{C_i}\mathbf{A_i}^{s_{|S|}}
\end{bmatrix} \label{eqn:def:lprime}
\end{align}
be rank deficient, i.e. the rank of the matrix~\eqref{eqn:def:lprime} is strictly less than $\nu_i$.

Moreover, in \eqref{eqn:dis:systemconst1}, we already assumed that there exists a finite $\sigma > 0$ such that
\begin{align}
&\sup_{n \in \mathbb{Z}} \mathbb{E}[\mathbf{w}[n]\mathbf{w}[n]^\dag] \preceq \sigma^2 \mathbf{I} \nonumber \\
&\sup_{n \in \mathbb{Z}} \mathbb{E}[\mathbf{v}[n]\mathbf{v}[n]^\dag] \preceq \sigma^2 \mathbf{I}. \label{eqn:dis:suf:1}
\end{align}

$\bullet$ Uniform boundedness of observation noise: To prove intermittent observability, we will propose a suboptimal maximum likelihood estimator, and analyze it. We first have to upper bound the disturbances and observation noises in the system. Following the same steps of \eqref{eqn:intui:3}, we can derive
\begin{align}
\mathbf{y}[n-k]=\mathbf{C}\mathbf{A}^{-k}\mathbf{x}[n]-\underbrace{(\mathbf{C}\mathbf{A}^{-1}\mathbf{w}[n-k]+\cdots+\mathbf{C}\mathbf{A}^{-k}\mathbf{w}[n-1]-\mathbf{v}[n-k])}_{\mathbf{v'}[n-k]}.
\label{eqn:dis:suf:11}
\end{align}
The invertibility of $\mathbf{A}$ is comes from assumption (b). Moreover, since all eigenvalues of $\mathbf{A}$ are unstable, by \eqref{eqn:dis:suf:1} we can find $p' \in \mathbb{N}$ such that
\begin{align}
\mathbb{E}[\mathbf{v'}[n-k]^\dag \mathbf{v'}[n-k]] \lesssim 1 + k^{p'} \label{eqn:pprimedef}
\end{align}
where $\lesssim$ holds for all $n, k (k \leq n)$.

$\bullet$ Suboptimal Maximum Likelihood Estimator: Now, we will give the suboptimal estimator for the state which only uses a finite number of recent observations. We first need the following lemma which plays a parallel role to Lemma~\ref{lem:conti:mo}.
\begin{lemma}
Let $\mathbf{A}$ and $\mathbf{C}$ be given as in \eqref{eqn:ac:jordan}, \eqref{eqn:ac2:jordan} and \eqref{eqn:def:lprime}, and $\beta[n]$ be a Bernoulli process with probability $1-p_e$. Then, we can find $m_1',\cdots,m_\mu' \in \mathbb{N}$, polynomials $p_1(k),\cdots,p_\mu(k)$ and families of stopping times $\{ S_1(\epsilon,k): k \in \mathbb{Z}^+ , 0 < \epsilon < 1 \},\cdots,\{ S_\mu(\epsilon,k): k \in \mathbb{Z}^+ , 0 < \epsilon < 1 \}$ such that for all $k \in \mathbb{Z}^+$ and $0 < \epsilon < 1$  there exist $k \leq k_1 < \cdots < k_{m_1'} \leq S_1(\epsilon,k) < k_{m_1'+1}< \cdots < k_{\sum_{1 \leq i \leq \mu }m_i' } \leq S_{\mu}(\epsilon,k) $ and a $m \times (\sum_{1 \leq i \leq \mu } m_i')l$ matrix $\mathbf{M}$ satisfying the following conditions:\\
(i) $\beta[k_i]=1$ for $1 \leq i \leq \sum_{1 \leq i \leq \mu} m_i'$\\
(ii) $\mathbf{M}
\begin{bmatrix}
\mathbf{C} \mathbf{A}^{-k_1}\\
\mathbf{C} \mathbf{A}^{-k_2}\\
\vdots \\
\mathbf{C} \mathbf{A}^{-k_{\sum_{1 \leq i \leq \mu}m_i'}}\\
\end{bmatrix}= \mathbf{I}_{m \times m}
$\\
(iii)
$
\left| \mathbf{M} \right|_{max} \leq \max_{1 \leq i \leq \mu} \left\{
\frac{p_i(S_i(\epsilon,k))}{\epsilon} |\lambda_{i,1}|^{S_i(\epsilon,k)}
\right\}
$
\\
(iv)
$
\lim_{\epsilon \downarrow 0} \exp \left(\limsup_{s \rightarrow \infty} \sup_{k \in \mathbb{Z}^+}\frac{1}{s} \log \mathbb{P} \{ S_i(\epsilon,k)-k=s \} \right) \leq
\max_{1 \leq j \leq i} \left\{ p_e^{\frac{l_j}{p_j}} \right\}
$ for $1 \leq i \leq \mu$\\
(v)
$
\lim_{\epsilon \downarrow 0} \exp \left(\limsup_{s \rightarrow \infty} \esssup \frac{1}{s} \log \mathbb{P} \{
S_a(\epsilon,k)-S_b(\epsilon,k)=s| \mathcal{F}_{S_b}
\} \right) \leq \max_{b < i \leq a} \left\{ p_e^{\frac{l_i}{p_i}} \right\} $ for $1 \leq b < a \leq \mu$
where $\mathcal{F}_{S_i}$ is the $\sigma$-field generated by $S_i(\epsilon,k)$.
\label{lem:dis:achv}
\end{lemma}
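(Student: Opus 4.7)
The plan is to prove the lemma by induction on the eigenvalue-cycle index $i$, implementing the successive-decoding strategy of Section~\ref{sec:maxcombining} combined with the large-deviations power-property arguments of Section~\ref{sec:powerproperty}. First I would fix the integers $m_i'$ upfront: for each cycle $i$ choose $m_i'$ large enough that, whenever any $m_i'$ distinct time indices mod $p_i$ hit at least $p_i - l_i + 1$ distinct residue classes, the resulting partial observability matrix for cycle $i$ (the stacked $\mathbf{C_i}\mathbf{A_i}^{s}$ restricted to those residues) has full rank $\nu_i$. This is guaranteed by the definition of $l_i$ together with a pigeonhole-style argument on how many times a given residue class can be hit before a new one must appear.

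Then I would construct the stopping times recursively. Set $S_0(\epsilon,k) := k$, and having defined $S_{i-1}(\epsilon,k)$, let $S_i(\epsilon,k)$ be the first time $s \geq S_{i-1}(\epsilon,k)$ such that the $\beta[n]=1$ observations strictly after $S_{i-1}(\epsilon,k)$ and up to $s$ (a) contain at least $m_i'$ indices, and (b) those indices, when reduced mod $p_i$, cover at least $p_i-l_i+1$ distinct residue classes, and moreover (c) a quantitative nondegeneracy test depending on $\epsilon$ is met (so that the corresponding Vandermonde-type partial observability matrix is $\epsilon$-well-conditioned, not merely invertible). The indices $k_{m_1'+\cdots+m_{i-1}'+1}, \ldots, k_{m_1'+\cdots+m_i'}$ are then chosen as the first $m_i'$ such successful observations in the window $(S_{i-1},S_i]$, so that condition (i) is automatic.

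Next I would build $\mathbf{M}$ in block form using successive decoding. Exploiting the block-diagonal structure of $\mathbf{A}$ in \eqref{eqn:ac:jordan}, for cycle $i$ I would construct a matrix $\mathbf{M}^{(i)}$ that, applied to the observations $\mathbf{y}[n-k_j]$ for the $m_i'$ indices allocated to cycle $i$, inverts the partial observability gramian for cycle $i$ alone --- this relies on the well-conditionedness enforced by the $\epsilon$-test and on a Vandermonde argument in the spirit of the proof of Corollary~\ref{thm:cycle}, which gives a bound like $|\mathbf{M}^{(i)}|_{\max} \lesssim \epsilon^{-1}p_i(S_i(\epsilon,k))|\lambda_{i,1}|^{S_i(\epsilon,k)}$ where $p_i$ absorbs the polynomial-in-time factors coming from the Jordan-block chain structure. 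The contributions of cycles $i' \neq i$ to these observations are subtracted off by combining $\mathbf{M}^{(i)}$ with the already-constructed $\mathbf{M}^{(i')}$ for $i' < i$, producing a lower block-triangular compound matrix $\mathbf{M}$ that satisfies (ii). Because each block grows at most like $|\lambda_{i,1}|^{S_i}$, an overall max-norm bound of the form stated in (iii) follows by taking the worst cycle.

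Finally, conditions (iv) and (v) would be proved via a large-deviations argument. The probability that $S_i(\epsilon,k)-S_{i-1}(\epsilon,k) > s$ is bounded by the probability that, among the $s$ time slots following $S_{i-1}$, the Bernoulli pattern $\beta[\cdot]$ fails to produce $p_i-l_i+1$ hit residue classes mod $p_i$ with the needed conditioning. A union bound over the at most $\binom{p_i}{l_i}$ choices of ``which $l_i$ residue classes to avoid'' reduces this to bounding the probability that all $\lfloor s/p_i \rfloor \cdot l_i$ slots lying in a chosen set of $l_i$ residue classes are erased, which is at most $p_e^{\lfloor s/p_i\rfloor \cdot l_i}$; hence the exponential rate is $p_e^{l_i/p_i}$, and taking the maximum over $j\leq i$ (resp.~$b < i \leq a$) gives (iv) and (v). The extra slack parameter $\epsilon$ is used to absorb the $o(s)$ corrections coming from the conditioning test, which is why the bounds in (iv) and (v) are stated as $\epsilon \downarrow 0$ limits. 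The main obstacle will be engineering $\mathbf{M}$ so that the block-triangular successive-decoding construction actually satisfies \emph{both} the exact identity in (ii) \emph{and} the cycle-wise maximum bound in (iii); keeping the off-diagonal subtraction blocks from contributing terms larger than $|\lambda_{i,1}|^{S_i}$ requires using condition (v) in the induction (so that $S_i - S_{i-1}$ is not too large relative to the eigenvalue ratio $|\lambda_{i,1}|/|\lambda_{i-1,1}|$) --- this interplay between the analytic bound on $\mathbf{M}$ and the probabilistic tail control of the stopping times is the technical heart of the lemma.
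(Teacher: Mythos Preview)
Your high-level plan---successive decoding cycle by cycle, with large-deviations tail bounds---matches the paper's intuition in Section~\ref{sec:maxcombining}, but the actual proof in Appendix~\ref{sec:app:cycleproof} is structured quite differently, and your proposal has a genuine gap at the point where you ``invert the partial observability Gramian for cycle $i$ alone.''

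The issue is this. The observations $\mathbf{C}\mathbf{A}^{-k_j}\mathbf{x}$ mix \emph{all} cycles, so when you reach cycle $i$ and have subtracted cycles $1,\dots,i-1$, the residual system still contains cycles $i,i+1,\dots,\mu$. To obtain the exact identity in~(ii) you must invert the Gramian for this entire residual system, not cycle $i$ in isolation. You propose to collect observations whose time indices hit $p_i-l_i+1$ residue classes mod~$p_i$; this does make the cycle-$i$ sub-Gramian full rank, but it says nothing about cycles $i+1,\dots,\mu$. If, say, $p_{i+1}$ is coprime to $p_i$, then within a single residue class mod~$p_i$ the periodicity of cycle $i+1$ is completely untouched, and the joint Gramian need not be well-conditioned (or even invertible) on the time set you have collected. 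Your $\epsilon$-nondegeneracy test~(c) would then fail systematically, not on a negligible set, and the tail exponent $p_e^{l_i/p_i}$ would be destroyed.

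The paper circumvents this by two devices you do not have. First, it inducts on the state dimension $m$ rather than the cycle index: at each step it decodes a single scalar state $(\mathbf{x})_{m_{1,1}}$ and recurses on the $(m-1)$-dimensional remainder. Second---and this is the key technical idea---to decode that one state it down-samples by $p=\prod_{j=1}^\mu p_j$, the product of \emph{all} periods (Claim~\ref{claim:donknow2}). After down-sampling by $p$, every eigenvalue cycle is broken simultaneously, so within each residue class mod~$p$ the system has no cycles and Lemma~\ref{lem:dis:geodet} (whose proof rests on the Weyl-criterion machinery of Lemma~\ref{lem:dis:geofinal}) applies with tail $p_e$. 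The tail $p_e^{l_1/p_1}$ then emerges in Claim~\ref{claim:donknow00} from a large-deviations count of how many of the $p$ residue classes must be covered before the cycle-1 rank condition \eqref{eqn:dis:geofinal:0} is met: exactly $l_1\cdot p/p_1$ classes can be missing, giving exponent $l_1\cdot(p/p_1)/p=l_1/p_1$. Your mod-$p_i$ construction cannot reproduce this step because it never reduces to the cycle-free situation where Lemma~\ref{lem:dis:geodet} applies.

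A secondary but related point: your condition~(c) is doing a lot of hidden work. In the paper the analogous conditioning is controlled by Lemma~\ref{lem:dis:single}, which shows via Weyl's criterion that the fraction of time indices where the determinant is $\epsilon$-small tends to zero uniformly. This is what guarantees the $\epsilon$-test does not inflate the tail exponent; without an equidistribution argument of that kind, you cannot just assert that the conditioning test costs only $o(s)$.
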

\begin{proof}
See Appendix~\ref{sec:app:cycleproof}.
\end{proof}

Since $p_e < \frac{1}{\underset{1 \leq i \leq \mu}{\max} |\lambda_{i,1}|^{2\frac{p_i}{l_i}}}$, there exists $\delta > 1$ such that $\delta^5 \cdot \underset{1 \leq i \leq \mu}{\max} p_e^{\frac{l_i}{p_i}} |\lambda_{i,1}|^2 < 1$. By Lemma~\ref{lem:dis:achv}, we can find $m_1',\cdots,m_{\mu}' \in \mathbb{N}$, $0<\epsilon<1$, polynomials $p_1(k),\cdots,p_{\mu}(k)$, and a family of stopping times $\{(S_1(n),\cdots, S_\mu(n)):n \in \mathbb{Z}^+ \}$ such that $\forall n$ there exist $0 \leq k_1 < \cdots < k_{m_1'} \leq S_1(n) < k_{m_1'+1}< \cdots < k_{\sum_{1 \leq i \leq \mu }m_i' } \leq S_{\mu}(n)$ and a $m \times (\sum_{1 \leq i \leq \mu } m_i')l$ matrix $\mathbf{M_n}$ satisfying the following conditions:\\
(i') $\beta[n-k_i]=1$ for $1 \leq i \leq \sum_{1 \leq i \leq \mu}m_i'$\\
(ii') $\mathbf{M_n}
\begin{bmatrix}
\mathbf{C} \mathbf{A}^{-k_1}\\
\mathbf{C} \mathbf{A}^{-k_2}\\
\vdots \\
\mathbf{C} \mathbf{A}^{-k_{\sum_{1 \leq i \leq \mu}m_i'}}\\
\end{bmatrix}= \mathbf{I}_{m \times m}$\\
(iii') $
\left| \mathbf{M_n} \right|_{max} \leq \max_{1 \leq i \leq \mu} \left\{
\frac{p_i(S_i(n))}{\epsilon} |\lambda_{i,1}|^{S_i(n)}
\right\}
$\\
(iv')
$
\exp \left( \limsup_{s \rightarrow \infty} \frac{1}{s} \log \mathbb{P} \{ S_i(n)=s \} \right) \leq \sqrt{\delta} \cdot
\max_{1 \leq j \leq i} \left\{ p_e^{\frac{l_j}{p_j}} \right\}
$ for $1 \leq i \leq \mu$\\
(v')
$
\exp \left( \limsup_{s \rightarrow \infty} \esssup \frac{1}{s} \log \mathbb{P} \{
S_a(n)-S_b(n)=s| \mathcal{F}_{S_b}
\} \right) \leq \sqrt{\delta} \cdot \max_{b < i \leq a} \left\{ p_e^{\frac{l_i}{p_i}} \right\} $ for $1 \leq b < a \leq \mu$
where $\mathcal{F}_{S_i}$ is the $\sigma$-field generated by $\beta[n-S_i(n)],\beta[n-S_i(n)+1],\cdots, \beta[n]$.

Then, here is the proposed suboptimal maximum likelihood estimator for $\mathbf{x}[n]$:
\begin{align}
\mathbf{\widehat{x}}[n]=\mathbf{M_n}\begin{bmatrix} \mathbf{y}[n-k_1] \\ \mathbf{y}[n-k_2] \\ \vdots \\ \mathbf{y}[n-k_{\sum_{1 \leq i \leq \mu} m_i'}]  \end{bmatrix}
\label{eqn:dis:suf:12}
\end{align}
Here, $k_i$ also depends on $n$, but we omit the dependency in notation for simplicity. Notice that the number of observations that this estimator uses, $k_{\sum_{1 \leq i \leq \mu} m_i'}$, can be much larger than the dimension of the system, $m$. In other words, the estimator proposed here may use much more number of observations than the number of states (the number of observations that a simple matrix inverse observer needs). This is because we use successive decoding idea in the proof of Lemma~\ref{lem:conti:mo}.

$\bullet$ Analysis of the estimation error: Now, we will analyze the performance of the proposed estimator. Remind that $p'$ is defined in \eqref{eqn:pprimedef} and $\delta > 1$. By  (iv') and (v'), we can find $c>0$ that satisfies the following four conditions:\\
(i'') $(1+k^{p'}) \leq c \cdot \delta^k$ for all $k \geq 0$\\
(ii'') $p_i(k) \leq c \cdot \delta^k$ for all $1 \leq i \leq \mu$ and $k \geq 0$\\
(iii'') $\mathbb{P}\{ S_i(n) =s \} \leq c \cdot (\delta \cdot \max_{1 \leq j \leq i} \left\{ p_e^{\frac{l_j}{p_j}} \right\})^s$ for all $1 \leq i \leq \mu$ and $s \in \mathbb{Z}^+$\\
(iv'') $\mathbb{P}\{ S_a(n)-S_b(n)=s | \mathcal{F}_{S_b} \} \leq c \cdot (\delta \cdot \max_{b < i \leq a} \left\{ p_e^{\frac{l_i}{p_i}} \right\})^s$ for all $1 \leq b < a \leq \mu$ and $s \in \mathbb{Z}^+$.

Let $\mathcal{F}_{\beta}$ be the $\sigma$-field generated by $\beta[n]$. Then, $k_i$ and $S_i$ are deterministic variables conditioned on $\mathcal{F}_{\beta}$. The estimation error is upper bounded by
\begin{align}
\mathbb{E}[|\mathbf{x}[n]-\mathbf{\widehat{x}}[n]|_2^2]&=
\mathbb{E}[\mathbb{E}[|\mathbf{x}[n]-\mathbf{\widehat{x}}[n]|_2^2| \mathcal{F}_{\beta}]] \nonumber \\
&\overset{(A)}{=}
\mathbb{E}[\mathbb{E}[\left|\mathbf{x}[n]-\mathbf{M_n}(\begin{bmatrix} \mathbf{C}\mathbf{A}^{-k_1} \\ \mathbf{C}\mathbf{A}^{-k_2} \\ \vdots \\ \mathbf{C}\mathbf{A}^{-k_{\sum_{1 \leq i \leq \mu}m_i'}} \end{bmatrix}\mathbf{x}[n]-\begin{bmatrix} \mathbf{v'}[n-k_1] \\ \mathbf{v'}[n-k_2] \\ \vdots \\ \mathbf{v'}[n-k_{\sum_{1 \leq i \leq \mu}m_i'}] \end{bmatrix})\right|_2^2|\mathcal{F}_{\beta}]] \nonumber \\
&\overset{(B)}{=}\mathbb{E}[\mathbb{E}[\left|\mathbf{M_n}\begin{bmatrix} \mathbf{v'}[n-k_1] \\ \mathbf{v'}[n-k_2] \\ \vdots \\ \mathbf{v'}[n-k_{\sum_{1 \leq i \leq \mu}m_i'}] \end{bmatrix}\right|_2^2|\mathcal{F}_{\beta}]] \nonumber \\
&\lesssim
\mathbb{E}[|\mathbf{M_n}|_{max}^2 \cdot \mathbb{E}[\left|\begin{bmatrix} \mathbf{v'}[n-k_1] \\ \mathbf{v'}[n-k_2] \\ \vdots \\ \mathbf{v'}[n-k_{\sum_{1 \leq i \leq \mu}m_i'}] \end{bmatrix}\right|_{max}^2|\mathcal{F}_{\beta}]] \nonumber \\
&\overset{(C)}{\lesssim}
\mathbb{E}[|\mathbf{M_n}|_{max}^2 \cdot (1+S_{\mu}^{p'}(n))^2 ] \nonumber \\
&\overset{(D)}{\leq}
\mathbb{E}[\max_{1 \leq i \leq \mu} \{\left( \frac{p_i(S_i(n))}{\epsilon} |\lambda_{i,1}|^{S_i(n)}\right)^2 \} \cdot (1+S_{\mu}^{p'}(n))^2 ] \nonumber \\
&\leq
\sum_{1 \leq i \leq \mu}\mathbb{E}[ \left( \frac{p_i(S_i(n))}{\epsilon} |\lambda_{i,1}|^{S_i(n)}\right)^2 \cdot (1+S_{\mu}^{p'}(n))^2 ]\nonumber \\
&\overset{(E)}{\lesssim}
\sum_{1 \leq i \leq \mu}\mathbb{E}[ \delta^{2 S_i(n)} \cdot |\lambda_{i,1}|^{2 S_i(n)} \cdot \delta^{2 S_\mu(n)} ] \nonumber \\
&=\sum_{1 \leq i \leq \mu}\mathbb{E}[ \delta^{4 S_i(n)} \cdot |\lambda_{i,1}|^{2 S_i(n)} \cdot \mathbb{E}[ \delta^{2 (S_\mu(n)-S_i(n))}|\mathcal{F}_{S_i(n)}] ] \nonumber \\
&\overset{(F)}{\lesssim} \sum_{1 \leq i \leq \mu}\mathbb{E}[
\delta^{4S_i(n)} \cdot |\lambda_{i,1}|^{2S_i(n)} \cdot \sum^{\infty}_{s=0} \delta^{2s} \cdot (\delta \cdot \max_{1 \leq i \leq \mu}\{ p_e^{\frac{l_i}{p_i}} \})^s
] \nonumber \\
&\overset{(G)}{\lesssim} \sum_{1 \leq i \leq \mu}\mathbb{E}[ \delta^{4S_i(n)} \cdot |\lambda_{i,1}|^{2S_i(n)} ] \nonumber\\
&\overset{(H)}{\lesssim} \sum_{1 \leq i \leq \mu} \sum^{\infty}_{s=0} \delta^{4s} \cdot |\lambda_{i,1}|^{2s} \cdot (\delta \cdot \max_{1 \leq j \leq i} \left\{ p_e^{\frac{l_i}{p_i}} \right\} )^s \nonumber \\
&= \sum_{1 \leq i \leq \mu} \sum^{\infty}_{s=0} (\delta^5 \cdot |\lambda_{i,1}|^2 \cdot \max_{1 \leq j \leq i} \left\{ p_e^{\frac{l_j}{p_j}} \right\} )^s \nonumber \\
&\overset{(I)}{<} \infty  \nonumber
\end{align}
where $\lesssim$ holds for all $n$.\\
(A): By \eqref{eqn:dis:suf:11} and \eqref{eqn:dis:suf:12}.\\
(B): By condition (ii').\\
(C): Since  $\mathbb{E}[\mathbf{v'}[n-k]^\dag \mathbf{v'}[n-k]] \lesssim 1 + k^{p'}$ by the definition of $p'$ of \eqref{eqn:pprimedef}, and thus each element of the $\mathbf{v'}[n]$ vector obeys max bound.\\
(D): By condition (iii').\\
(E): By condition (i'') and (ii'').\\
(F): By condition (iv'').\\
(G): Since $\delta^5 \cdot \underset{1 \leq i \leq \mu}{\max} p_e^{\frac{l_i}{p_i}} |\lambda_{i,1}|^2 < 1$.\\
(H): By condition (iii'').\\
(I): Since $\delta^5 \cdot \underset{1 \leq i \leq \mu}{\max} p_e^{\frac{l_i}{p_i}} |\lambda_{i,1}|^2 < 1$.


Therefore, the estimation error is uniformly bounded over $n$ when $p_e < \frac{1}{\underset{1 \leq i \leq \mu}{\max} |\lambda_{i,1}|^{2 \frac{p_i}{l_i}}}$, which finishes the proof.

\subsection{Necessity Proof of Theorem~\ref{thm:mainsingle}}
\label{sec:dis:nece}
Intuitively, we will give all states except the ones that corresponds to the bottleneck eigenvalue cycle as side-information to the estimator. Then, the problem reduces to the single eigenvalue cycle one discussed in Section~\ref{sec:powerproperty}, and we can prove the estimation error diverges similarly. This argument works for $p_e > \frac{1}{\max_i |\lambda_{i,1}|^{2 \frac{p_i}{l_i}}}$, since we can show that a single additional disturbance $\mathbf{w}[n]$ grows exponentially. However, for the equality case $p_e = \frac{1}{\max_i |\lambda_{i,1}|^{2 \frac{p_i}{l_i}}}$, the proof can be more complicated since not a single disturbance but the sum of disturbances algebraically diverges to infinity.

So, to make this argument complete and rigorous, we will analyze the optimal estimator, and prove that its estimation error diverges when the condition of the lemma is violated.

It is well-known that the optimal estimator is the Kalman filter and it can be written in recursive form. Let $\mathcal{F}_{\beta}$ be the $\sigma$-field generated by $\beta[n]$. Denote the one-step prediction error as $\mathbf{\Sigma_{n+1|n}}:=\mathbb{E}[(\mathbf{x}[n+1]-\mathbb{E}[\mathbf{x}[n+1]|\mathbf{y}^n])(\mathbf{x}[n+1]-\mathbb{E}[\mathbf{x}[n+1]|\mathbf{y}^n])^\dag |\mathcal{F}_{\beta}]$. Then, $\mathbf{\Sigma_{n+1|n}}$ follows the following recursive equation~\cite[p.101]{KumarVaraiya}.
\begin{align}
\mathbf{\Sigma_{n+1|n}} &= (\mathbf{A}-\mathbf{A}\mathbf{L_n}\mathbf{\bar{C}_n})\mathbf{\Sigma_{n|n-1}}(\mathbf{A}-\mathbf{A}\mathbf{L_n}\mathbf{\bar{C}_n})^\dag
+\mathbf{A}\mathbf{L_n} \mathbb{E}[\mathbf{v}[n]\mathbf{v}[n]^\dag]\mathbf{L_n}^\dag \mathbf{A}^\dag + \mathbf{B} \mathbb{E}[\mathbf{w}[n]\mathbf{w}[n]^\dag] \mathbf{B}^\dag
\label{eqn:dis:final:2}
\end{align}
Here, $\mathbf{L_n}=\mathbf{\Sigma_{n|n-1}} \mathbf{\bar{C}_n}^\dag \left[ \mathbf{\bar{C}_n} \mathbf{\Sigma_{n|n-1}} \mathbf{\bar{C}_n}^\dag + \mathbb{E}[\mathbf{v}[n]\mathbf{v}[n]^\dag] \right]^{-1}$, and $\mathbf{\bar{C}_n}=\mathbf{C}$ if $\beta[n]=1$ and $\mathbf{C_n}=\mathbf{0}$ otherwise. Notice that $\mathbf{\Sigma_{n+1|n}}$ is a random variable.

Moreover, it is also known that when $(\mathbf{A},\mathbf{B})$ is controllable, the one-step prediction error of $\mathbf{x}[n+1]$ based on $\mathbf{y}[n]$ becomes positive definite for large enough $n$ even if there are no erasures. Therefore, there exists $m \in \mathbb{N}$ and $\sigma^2>0$ such that $\mathbf{\Sigma_{n+1|n}} \succeq \sigma^2 \mathbf{I}$ with probability one for all $n \geq m$.
Therefore, by \eqref{eqn:dis:final:2} for all $n \geq n' \geq m$ we have
\begin{align}
\mathbf{\Sigma_{n+1|n}} &\succeq
(\mathbf{A}-\mathbf{A}\mathbf{L_n}\mathbf{\bar{C}_n}) \cdots
(\mathbf{A}-\mathbf{A}\mathbf{L_{n'}}\mathbf{\bar{C}_{n'}})
\mathbf{\Sigma_{n'|n'-1}}
(\mathbf{A}-\mathbf{A}\mathbf{L_{n'}}\mathbf{\bar{C}_{n'}})^\dag \cdots
(\mathbf{A}-\mathbf{A}\mathbf{L_n}\mathbf{\bar{C}_n})^\dag \\
&\succeq
\sigma^2
(\mathbf{A}-\mathbf{A}\mathbf{L_n}\mathbf{\bar{C}_n}) \cdots
(\mathbf{A}-\mathbf{A}\mathbf{L_{n'}}\mathbf{\bar{C}_{n'}})
\mathbf{I}
(\mathbf{A}-\mathbf{A}\mathbf{L_{n'}}\mathbf{\bar{C}_{n'}})^\dag \cdots
(\mathbf{A}-\mathbf{A}\mathbf{L_n}\mathbf{\bar{C}_n})^\dag. \label{eqn:dis:final:3}
\end{align}

Let's use the definitions of $\mathbf{U}$, $\mathbf{A'}$, $\mathbf{C'}$, $\mathbf{U}$, $\mathbf{A_i}$, $\mathbf{C_i}$, $\lambda_{i,j}$, $p_i$, $l_i$, $\nu_i$ from \eqref{eqn:ac:jordan:thm}, \eqref{eqn:ac2:jordan:thm} and \eqref{eqn:def:lprime:thm}.
Let $i^\star := \underset{1 \leq i \leq \mu}{argmax} |\lambda_{i,1}|^{2 \frac{p_i}{l_i}}$.
Let $S'^\star \subseteq \{0,1,\cdots,p_{i^\star}-1\}$ be a set achieving the minimum cardinality $l_{i^\star}$.  In other words, define $S^\star := \{s_1^\star,s_2^\star,\cdots,s_{|S^\star|}^\star \}= \{ 0, 1, \cdots, p_{i^\star}-1 \} \setminus S'^\star$. Then, $|S'^\star|=l_{i^\star}$ and
\begin{align}
\begin{bmatrix}
\mathbf{C_{i^\star}}\mathbf{A_{i^\star}}^{s_1^\star} \\
\mathbf{C_{i^\star}}\mathbf{A_{i^\star}}^{s_2^\star} \\
\vdots \\
\mathbf{C_{i^\star}}\mathbf{A_{i^\star}}^{s_{|S^\star|}^\star} \\
\end{bmatrix}
\nonumber
\end{align}
is rank deficient, i.e. the rank is strictly less than $\nu_{i^\star}$.

For a given time index $n$, define the stopping time $S_n$ as the most recent observation which does not belong to $S^\star$ in modulo $p_{i^\star}$, i.e.
\begin{align}
S_n:=\inf \{ k p_{i^\star} : k \in \mathbb{Z}^+ \mbox{ and there exists $k'$ such that } \beta[n-k']=1 , k p_{i^\star} \leq k' < (k+1)p_{i^\star}, -k'-1( mod\ p_{i^\star}) \in S'^\star \}.
\end{align}
Then, we can compute that $\mathbb{P}\{S_n = k p_{i^\star} \}=(1-p_e^{l_{i^\star}})(p_e^{l_{i^\star}})^{k}$ for all $k \in \mathbb{Z}^+$. From the definition of $S_n$, we can see that for all $0 \leq k < S_n$, $\beta[n-k]=1$ if and only if $-k-1(mod\ p_{i^\star}) \in S$.

Then, conditioned on $n-S_n \geq m$, by \eqref{eqn:dis:final:3} the following inequality holds with probability one:
\begin{align}
\mathbf{\Sigma_{n+1|n}}&\succeq
\sigma^2 (\mathbf{A}-\mathbf{A}\mathbf{L_n}\mathbf{\bar{C}_n}) \cdots (\mathbf{A}-\mathbf{A}\mathbf{L_{n-S_n+1}}\mathbf{\bar{C}_{n-S_n+1}})
I
(\mathbf{A}-\mathbf{A}\mathbf{L_{n-S_n+1}}\mathbf{\bar{C}_{n-S_n+1}})^\dag \cdots (\mathbf{A}-\mathbf{A}\mathbf{L_n}\mathbf{\bar{C}_n})^\dag. \label{eqn:dis:final:4}
\end{align}
where $\mathbf{\bar{C}_{n-S_n+k}}=\mathbf{C}$ if $-S_n+k-1(mod\ p_{i^\star})=k-1(mod\ p_{i^\star}) \in S$ and
$\mathbf{\bar{C}_{n-S_n+k}}=\mathbf{0}$ otherwise.

We will prove that the L.H.S. of \eqref{eqn:dis:final:4} grows exponentially. For this, we first need the following lemma.

\begin{lemma}
Consider $\mathbf{A}$, $\mathbf{C}$, $\mathbf{U}$, $\mathbf{A'}$, $\mathbf{C'}$, $\mathbf{A_i}$, $\mathbf{C_i}$, $\nu_i$, $p_i$ given as \eqref{eqn:ac:jordan:thm}, \eqref{eqn:ac2:jordan:thm} and \eqref{eqn:def:lprime:thm}.
For a given set $S:=\{s_1,\cdots,s_{|S|} \} \in \{0,1,\cdots,p_i-1 \}$, let
$\begin{bmatrix}\mathbf{C_i}\mathbf{A_i}^{s_1} \\ \mathbf{C_i}\mathbf{A_i}^{s_2} \\ \vdots \\ \mathbf{C_i}\mathbf{A_i}^{s_{|S|}} \end{bmatrix}$ be rank-deficient, i.e. the rank is less than $\nu_i$, and define
\begin{align}
\mathbf{\bar{A}}(\mathbf{K_0},\cdots,\mathbf{K_{p_i-1}}):=(\mathbf{A}-\mathbf{K_{p_i-1}}\mathbf{\bar{C}_{p_i-1}}) \cdots (\mathbf{A}-\mathbf{K_0}\mathbf{\bar{C}_0})\nonumber
\end{align}
where $\mathbf{C_j'}=\mathbf{C}$ when $j \in S$ and $\mathbf{C_j'}=\mathbf{0_{l\times m}}$ otherwise.\\
Then, for all $\mathbf{K_0},\cdots, \mathbf{K_{p_i-1}} \in \mathbb{C}^{m \times l}$, $\mathbf{\bar{A}}(\mathbf{K_0},\cdots,\mathbf{K_{p_i-1}})$ has a common right eigenvector $\mathbf{e}$ whose eigenvalue is $\lambda_{i,1}^{p_i}$.
\label{lem:dis:converse}
\end{lemma}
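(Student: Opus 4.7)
The plan is to construct an explicit common eigenvector by exploiting the rank deficiency hypothesis together with the Jordan structure that governs the $i$-th eigenvalue cycle. First, since the stacked matrix $\begin{bmatrix} \mathbf{C_i}\mathbf{A_i}^{s_1} \\ \vdots \\ \mathbf{C_i}\mathbf{A_i}^{s_{|S|}} \end{bmatrix}$ has rank strictly less than $\nu_i$, I can choose a nonzero $\mathbf{v}=(v_1,\dots,v_{\nu_i})^\dag \in \mathbb{C}^{\nu_i}$ in its kernel, so that $\mathbf{C_i}\mathbf{A_i}^{s}\mathbf{v}=\mathbf{0}$ for every $s \in S$. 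Passing through the Jordan-form decomposition $\mathbf{A}=\mathbf{U}\mathbf{A'}\mathbf{U^{-1}}$, let $\mathbf{e_{i,j}}$ denote the image under $\mathbf{U}$ of the standard basis vector pointing to the first row of the $(i,j)$-th Jordan block of $\mathbf{A'}$. Then $\mathbf{A}\mathbf{e_{i,j}}=\lambda_{i,j}\mathbf{e_{i,j}}$ and $\mathbf{C}\mathbf{e_{i,j}}=(\mathbf{C_{i,j}})_1$, so the candidate
\begin{align}
\mathbf{e}:=\sum_{j=1}^{\nu_i} v_j\, \mathbf{e_{i,j}} \nonumber
\end{align}
is a nonzero vector (the $\mathbf{e_{i,j}}$ are linearly independent since they sit in distinct Jordan blocks) satisfying $\mathbf{A}^k\mathbf{e}=\sum_j v_j \lambda_{i,j}^k \mathbf{e_{i,j}}$ and consequently $\mathbf{C}\mathbf{A}^k\mathbf{e}=\mathbf{C_i}\mathbf{A_i}^k\mathbf{v}$ for every $k \in \mathbb{Z}^+$.

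The second step is to peel the product one factor at a time. The key observation is that for every $k\in\{0,1,\dots,p_i-1\}$ one has $\mathbf{\bar{C}_k}\mathbf{A}^k\mathbf{e}=\mathbf{0}$: if $k\notin S$ the factor $\mathbf{\bar{C}_k}$ itself vanishes, and if $k\in S$ then $\mathbf{C}\mathbf{A}^k\mathbf{e}=\mathbf{C_i}\mathbf{A_i}^k\mathbf{v}=\mathbf{0}$ by the defining property of $\mathbf{v}$. Using this fact inductively,
\begin{align}
(\mathbf{A}-\mathbf{K_k}\mathbf{\bar{C}_k})\cdots(\mathbf{A}-\mathbf{K_0}\mathbf{\bar{C}_0})\mathbf{e}
= \mathbf{A}^{k+1}\mathbf{e}, \qquad 0 \leq k \leq p_i-1, \nonumber
\end{align}
since each $\mathbf{K_k}\mathbf{\bar{C}_k}$ annihilates the current iterate $\mathbf{A}^k\mathbf{e}$. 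The cancellations therefore make $\mathbf{\bar{A}}(\mathbf{K_0},\dots,\mathbf{K_{p_i-1}})\mathbf{e}=\mathbf{A}^{p_i}\mathbf{e}$ regardless of the gains.

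Finally, I exploit the cycle property: because $\{\lambda_{i,1},\dots,\lambda_{i,\nu_i}\}$ is a cycle with period $p_i$, $(\lambda_{i,j}/\lambda_{i,1})^{p_i}=1$ for each $j$, so $\lambda_{i,j}^{p_i}=\lambda_{i,1}^{p_i}$ uniformly in $j$. Hence
\begin{align}
\mathbf{A}^{p_i}\mathbf{e} = \sum_{j=1}^{\nu_i} v_j \lambda_{i,j}^{p_i}\mathbf{e_{i,j}} = \lambda_{i,1}^{p_i}\mathbf{e}, \nonumber
\end{align}
which is exactly the desired eigenvalue relation, independent of the choice of $\mathbf{K_0},\dots,\mathbf{K_{p_i-1}}$. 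I do not expect any significant obstacle: the construction of $\mathbf{v}$ is a direct consequence of the rank-deficiency hypothesis, the peeling argument is algebraic bookkeeping, and the cycle periodicity supplies the final identification of the eigenvalue. The only thing to watch carefully is the cohabitation of the two coordinate systems — verifying that the $\mathbf{e_{i,j}}$ defined via $\mathbf{U}$ genuinely give $\mathbf{C}\mathbf{e_{i,j}}=(\mathbf{C_{i,j}})_1$ so that the calculation $\mathbf{C}\mathbf{A}^k\mathbf{e}=\mathbf{C_i}\mathbf{A_i}^k\mathbf{v}$ is correct — but this follows immediately from $\mathbf{C'}=\mathbf{C}\mathbf{U}$ in \eqref{eqn:ac:jordan:thm}.
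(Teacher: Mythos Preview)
Your proposal is correct and follows essentially the same approach as the paper's proof: both construct the eigenvector by lifting a null vector of the rank-deficient stacked matrix to the first coordinates of the $(i,j)$-th Jordan blocks via $\mathbf{U}$, then peel the product factor by factor using $\mathbf{\bar C_k}\mathbf{A}^k\mathbf{e}=\mathbf{0}$, and finally invoke the cycle periodicity to identify the eigenvalue. Your formulation in terms of the individual eigenvectors $\mathbf{e_{i,j}}$ is slightly more streamlined notationally, but the construction and logic are identical.
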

\begin{proof}
For the simplicity of notation, we will set $i=1$, but the proof for general $i$ is the same. Let $\mathbf{e'}=\begin{bmatrix} e_1 \\ \vdots \\ e_{\nu_1} \end{bmatrix}$ be a nonzero vector that belongs to the right null space of
$\begin{bmatrix}\mathbf{C_1}\mathbf{A_1}^{s_1} \\ \mathbf{C_1}\mathbf{A_1}^{s_2} \\ \vdots \\ \mathbf{C_1}\mathbf{A_1}^{s_{|S|}} \end{bmatrix}$.
Let $\mathbf{e_1'}$ be a $m_{1,1} \times 1$ column vector whose first element is $e_1$ and the rest are $0$. Likewise, $\mathbf{e_2'}$ is a $m_{1,2} \times 1$ column vector with first element $e_2$ and the rest $0$. $\mathbf{e_3'}, \cdots, \mathbf{e_{\nu_1}'}$ are defined in the same way. Let a $m \times 1$ column vector $\mathbf{e''}$ be $\begin{bmatrix} \mathbf{e_1'} \\ \vdots \\ \mathbf{e_{\nu_1}'} \\ \mathbf{0}_{(m-\underset{1 \leq i \leq \nu_1}{\sum} m_{1,i}) \times 1} \end{bmatrix}$. Then, we will prove that $\mathbf{e}:=\mathbf{U}\mathbf{e''}$ is the eigenvector that satisfies the conditions of the lemma.

By construction, we can see that $\mathbf{C_1}\mathbf{A_1}^k \mathbf{e'}=\mathbf{0}$ for $k \in \{ s_1, \cdots, s_{|S|}\}$. Moreover, since $\mathbf{C}\mathbf{A^{k}}\mathbf{e} = \mathbf{C}\mathbf{U}
\mathbf{A'}^{k}\mathbf{U}^{-1}\mathbf{U}\mathbf{e''} = \mathbf{C'}\mathbf{A'}^{k}\mathbf{e''}$, we also have $\mathbf{C}\mathbf{A}^k \mathbf{e}=0$ for $k \in \{ s_1, \cdots, s_{|S|}\}$.
Thus, we can conclude
\begin{align}
&(\mathbf{A}-\mathbf{K_{p_1-1}}\mathbf{C_{p_1-1}'}) \cdots (\mathbf{A}-\mathbf{K_{s_1}}\mathbf{C_{s_1}'}) (\mathbf{A}-\mathbf{K_{s_1-1}}\mathbf{C_{s_1-1}'}) \cdots (\mathbf{A}-\mathbf{K_0}\mathbf{C_0'}) \mathbf{e} \nonumber \\
&=(\mathbf{A}-\mathbf{K_{p_1-1}}\mathbf{C_{p_1-1}'}) \cdots (\mathbf{A}-\mathbf{K_{s_1}}\mathbf{C}) (\mathbf{A}-\mathbf{K_{s_1-1}}\mathbf{0}) \cdots (\mathbf{A}-\mathbf{K_0}\mathbf{0}) \mathbf{e} \nonumber \\
&=(\mathbf{A}-\mathbf{K_{p_1-1}}\mathbf{C_{p_1-1}'}) \cdots (\mathbf{A}-\mathbf{K_{s_1}}\mathbf{C}) \mathbf{A}^{s_1} \mathbf{e} \nonumber \\
&=(\mathbf{A}-\mathbf{K_{p_1-1}}\mathbf{C_{p_1-1}'}) \cdots (\mathbf{A}^{s_1+1}\mathbf{e}-\mathbf{K_{s_1}}\mathbf{C} \mathbf{A}^{s_1}\mathbf{e} )  \nonumber \\
&\overset{(a)}{=}(\mathbf{A}-\mathbf{K_{p_1-1}}\mathbf{C_{p_1-1}'}) \cdots (\mathbf{A}^{s_1+1}\mathbf{e})   \nonumber\\
&\overset{(b)}{=}\mathbf{A}^{p_1}\mathbf{e} = \mathbf{U}\mathbf{A'}^{p_1} \mathbf{U}^{-1} \mathbf{e}= \mathbf{U}\mathbf{A'}^{p_1} \mathbf{e''} \nonumber\\
&\overset{(c)}{=} \mathbf{U}\lambda_{1,1}^{p_1} \mathbf{e''}= \lambda_{1,1}^{p_1} \mathbf{e} \nonumber
\end{align}
(a): $\mathbf{C}\mathbf{A}^{s_1}\mathbf{e}=\mathbf{0}$.\\
(b): Repetitive use of (a) for $s_2, \cdots, s_{|S|}$.\\
(c): $\mathbf{A_1}^{p_1}=\lambda_{1,1}^{p_1} \mathbf{I}$ and the definition of the vector $\mathbf{e''}$.

Thus, the lemma is proved.
\end{proof}

Let the vector $\mathbf{e}$ be the right eigenvector of Lemma~\ref{lem:dis:converse} for $i=i^\star$.
Then, there exists $\sigma' >0$ such that $\mathbf{I} \succeq  \sigma'^2\mathbf{e}\mathbf{e}^\dag$. \eqref{eqn:dis:final:4} is lower bounded as
\begin{align}
\mathbf{\Sigma_{n+1|n}}& \succeq \sigma^2 \sigma'^2  \lambda_{i^\star,1}^{S_n} \mathbf{e}\mathbf{e}^\dag (\lambda_{i^\star,1}^{S_n})^\dag. \nonumber
\end{align}

Since $p_e \geq \frac{1}{|\lambda_{i^\star,1}|^{2 \frac{p_{i^\star}}{l_{i^\star}}}}$, the expected one-step prediction error is lower bounded as follows:\footnote{The lower bound does not hold when $|\lambda_{i^\star,1}|=1$ which induces $p_e =1$. However, in this case we do not have any observation, so trivially the system is unstable.}
\begin{align}
&\mathbb{E}[ ( \mathbf{x}[n+1]-\mathbb{E}[\mathbf{x}[n+1]|\mathbf{y}^n ] )^\dag
( \mathbf{x}[n+1]-\mathbb{E}[\mathbf{x}[n+1]|\mathbf{y}^n ] )]\\
&\geq \mathbb{E}[\sigma^2 \sigma'^2 |\lambda_{i^\star,1}|^{2 S_n} |\mathbf{e}|^2 \cdot \mathbf{1}(n-S_n \geq m)]\\
&\geq \sigma^2 \sigma'^2 |\mathbf{e}|^2 \sum_{0 \leq s \leq \lfloor \frac{n-m}{p_{i^\star}} \rfloor}(1-p_e^{l_{i^\star}})(|\lambda_{i^\star,1}|^{2 p_{i^\star}} p_e^{l_{i^\star}})^s \\
&\geq \sigma^2 \sigma'^2 |\mathbf{e}|^2 \cdot (1-p_e^{l_{i^\star}}) \cdot (\lfloor \frac{n-m}{p_{i^\star}} \rfloor).
\end{align}
Therefore, as $n$ goes to infinity, the one-step prediction error diverges to infinity. The estimation error for the state is not uniformly bounded either, so the system is not intermittent observable.

\begin{figure*}[t]
\begin{center}
\includegraphics[width=6in]{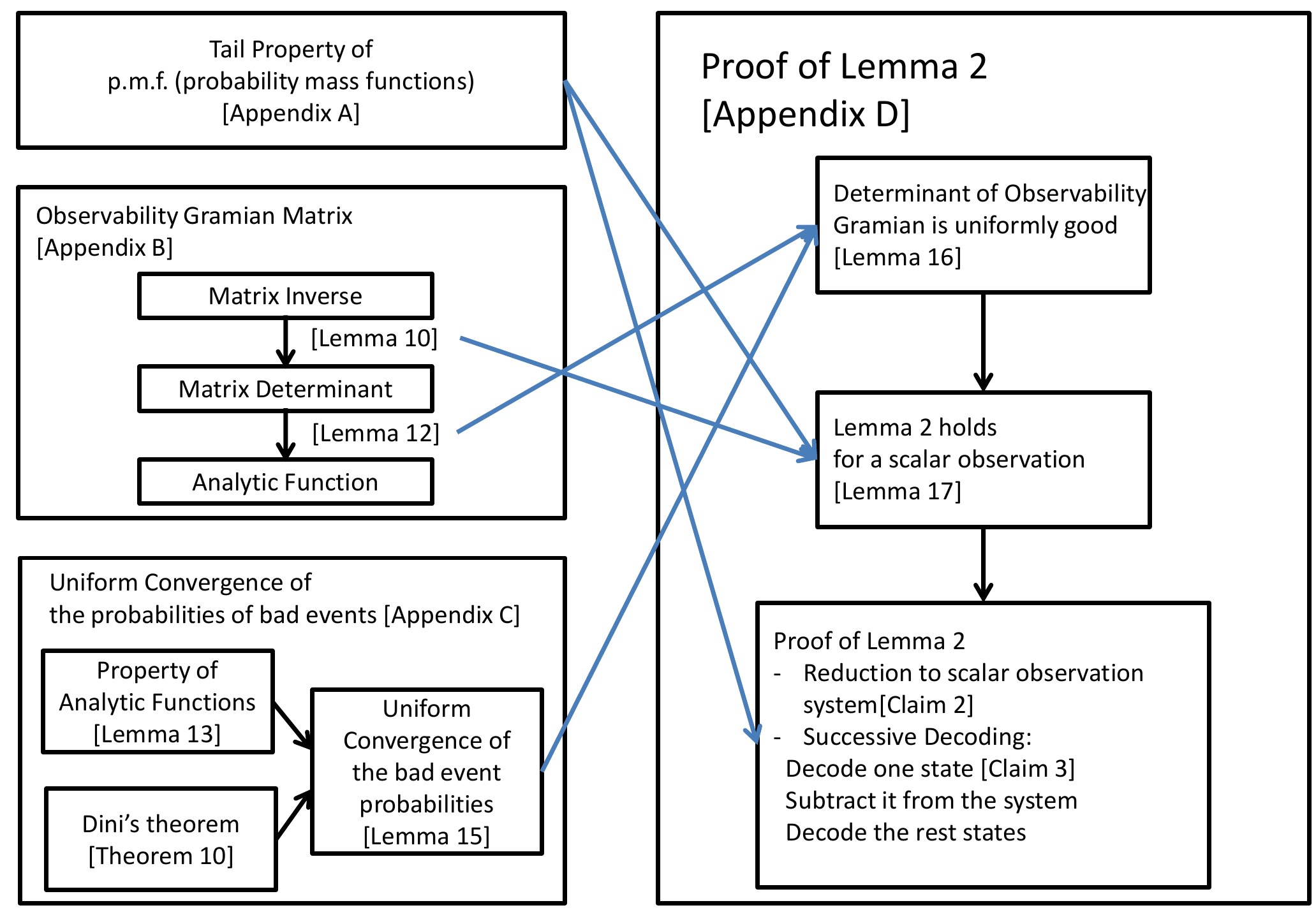}
\caption{Flow diagram of the proof of Lemma~\ref{lem:conti:mo}}
\label{fig:proofflow}
\end{center}
\end{figure*}

\subsection{Proof Outline of Lemma~\ref{lem:conti:mo} and Lemma~\ref{lem:dis:achv}}

Now, the proofs of Theorem~\ref{thm:nonuniform} and \ref{thm:mainsingle} boil down to the proofs of Lemma~\ref{lem:conti:mo} and \ref{lem:dis:achv}. Since the proofs of Lemma~\ref{lem:conti:mo} and \ref{lem:dis:achv} shown in Appendix are too involved, we give the outlines of the proofs in this section.

\subsubsection{Proof Outline of Lemma~\ref{lem:conti:mo}}

The proof flow of Lemma~\ref{lem:conti:mo} is shown in Figure~\ref{fig:proofflow}. As we saw in Section~\ref{sec:intui}, the tail behavior of probability mass functions (p.m.f.) is crucial in the characterization of the critical erasure probability. Thus, in Appendix~\ref{sec:app:1} we first study some properties of the p.m.f. tail.

In the sufficiency proof of Section~\ref{sec:cont:suf}, we analyzed a sub-optimal maximum likelihood estimator whose performance heavily depends on the norm of the inverse of the observability Gramian matrix. In Appendix~\ref{sec:app:3}, we will reduce the question about the norm of the matrix to a question about an analytic function. In Lemma~\ref{lem:conti:inverse2}, we first prove that if the determinant of the observability Gramian matrix is large enough than the norm of the inverse of the observability Gramian matrix is small enough. Thus, we can reduce the question about the norm to an question about the determinant. Since the determinant of the observability Gramian matrix is an analytic function, Lemma~\ref{lem:det:lower} further reduce the question to a question about an analytic function. In other words, if an analytic function is large enough, then the determinant of the observability Gramian matrix is also large enough.

For the intermittent observability, we want to prove that the estimation error is uniformly bounded over all time indexes with nonuniform sampling. It is enough that a set of analytic analytic functions is uniformly away from $0$ with high probability. Lemma~\ref{lem:singleun} of Appendix~\ref{app:unif:conti} captures this insight. In Lemma~\ref{lem:uni:1}, we first prove that each analytic function is away from $0$ with high probability using a property of analytic functions. After this, we apply Dini's theorem which tells pointwise convergence implies uniform convergence when the domain of the functions is compact, and prove the desired uniform convergence of Lemma~\ref{lem:singleun}.

Now, we are ready to prove Lemma~\ref{lem:conti:mo}. By merging the results of Lemma~\ref{lem:det:lower} and \ref{lem:singleun}, we can prove that the determinant of observability Gramian is large enough with high probability uniformly over all time indexes. Together with the properties of the p.m.f. tail, we can first prove Lemma~\ref{lem:conti:mo} for a scalar observation. We can finally prove the general case using the idea of successive decoding. In other words, we reduce the system to the one with a scalar observation, and estimate one state. Then, we subtract the estimation from the system, and repeat the same procedure until we decode all states.

\begin{figure*}[t]
\begin{center}
\includegraphics[width=6in]{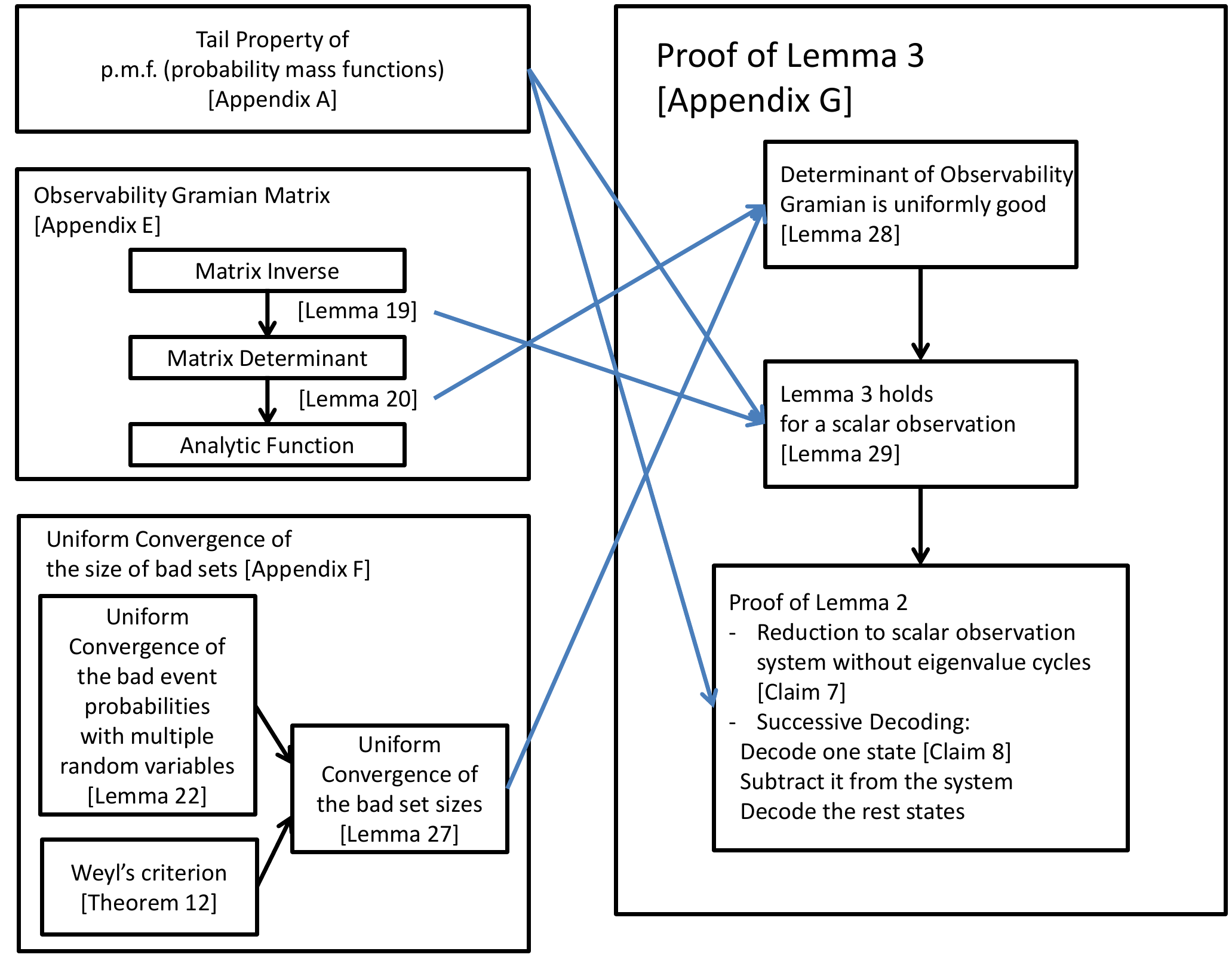}
\caption{Flow diagram of the proof of Lemma~\ref{lem:dis:achv}}
\label{fig:proofflow2}
\end{center}
\end{figure*}

\subsubsection{Proof Outline of Lemma~\ref{lem:dis:achv}}

As we can see in Figure~\ref{fig:proofflow2}, the proof outline of Lemma~\ref{lem:dis:achv} is essentially the same as that of Lemma~\ref{lem:conti:mo}.

We still use the tail properties of p.m.f. shown in Appendix~\ref{sec:app:1}. In Appendix~\ref{sec:dis:gramian}, we will state the lemmas about the observability Gramian matrices of discrete time systems which parallel to the ones of Appendix~\ref{sec:app:3}.

The main difference from the nonuniform sampling case is the uniform convergence shown in Appendix~\ref{sec:dis:uniform}. Consider the system without eigenvalue cycles. In this case, we have to justify that the system essentially reduces to multiple scalar systems, and the critical erasure probability only depends on the largest eigenvalue of the system. However, unlike the nonuniform sampling case, we do not have a random jitter at each observation and the determinant of the observability Gramian is a deterministic sequence in the time indexes. Therefore, we have to prove that the counting measure of the time indexes where the determinant of the observability Gramian is small converges to zero uniformly over all current time indexes.

For this, we apply the Weyl's criterion~\cite{Kuipers} which gives a sufficient condition for deterministic sequences to behave like uniform random variables. Morover, since different eigenvalue cycles behave like independent random variables, we first generalize Lemma~\ref{lem:singleun} of Appendix~\ref{app:unif:conti} for a single random variable to multiple random variables in Lemma~\ref{lem:dis:geo1}. Together with Weyl's criterion, we prove Lemma~\ref{lem:dis:geofinal} which tells the counting measure of the bad time indexes where the determinant of the observability Gramain becomes too small converges to zero uniformly over all current time indexes.

The remaining proof flow of Lemma~\ref{lem:dis:achv} is essentially the same as that of Lemma~\ref{lem:conti:mo}. We first estimate the state corresponds to the largest eigenvalue cycle, subtract the estimation from the system, and successively decode the remaining states.

\section{Comments}
The intermittent Kalman filtering problem was first motivated from
control over communication channels. Therefore, the problem is conventionally believed to fall into the intersection of control
and communication. However, if the plant is unstable the transmission
power of the sensor diverges to infinity if it is really going to
pack an ever increasing number of bits in there. Therefore,
it is hard to say that intermittent Kalman filtering has a direct
connection to communication theory. Instead, we propose that
the intersection of control and signal processing --- especially
sampling theory --- is the right conceptual category for intermittent
Kalman filtering. It should thus be interesting to explore the
connection between the results of this paper with  classical and modern results
of sampling theory.

Arguably, the closest problem to intermittent Kalman filtering is that
of observability after sampling. As we mentioned earlier, the
observability of $(\mathbf{A_c}, \mathbf{C_c})$ in
\eqref{eqn:contistate} and \eqref{eqn:contiob} does not implies the
observability of $(\mathbf{A_c},\mathbf{C})$ in \eqref{eqn:conti:xsample}
and \eqref{eqn:conti:ysample}. The well-known sufficient condition is:
\begin{theorem}[Theorem 6.9. of \cite{Chen}]
Suppose $(\mathbf{A_c},\mathbf{C_c})$ is observable. A sufficient
condition for its discretized system with sampling interval $I$ to be
observable is that $\frac{|\Im (\lambda_i -\lambda_j)I|}{2 \pi} \notin
\mathbb{N}$ whenever $\Re(\lambda_i-\lambda_j)=0$. 
\end{theorem}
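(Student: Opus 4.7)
The approach I would take is proof by contradiction using the equivalent rank characterization of observability, namely that $(\mathbf{A},\mathbf{C})$ is observable if and only if there is no nonzero $v$ with $\mathbf{A}v=\lambda v$ and $\mathbf{C}v=0$ for some $\lambda\in\mathbb{C}$ (the Popov--Belevitch--Hautus form stated in the observability definition). Assume $(\mathbf{A_c},\mathbf{C_c})$ is observable and the stated frequency-separation condition holds, but the discretized pair $(e^{\mathbf{A_c}I},\mathbf{C_c})$ fails this rank test. Then there exist $\mu\in\mathbb{C}$ and a nonzero $v$ with $e^{\mathbf{A_c}I}v=\mu v$ and $\mathbf{C_c}v=0$. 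The goal is to upgrade $v$ into an eigenvector of $\mathbf{A_c}$ itself, producing a nonzero vector satisfying $\mathbf{A_c}v=\lambda v$ and $\mathbf{C_c}v=0$, in direct contradiction with observability of $(\mathbf{A_c},\mathbf{C_c})$.

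The key intermediate object is the ordinary (not generalized) $\mu$-eigenspace $V:=\ker(e^{\mathbf{A_c}I}-\mu\mathbf{I})$. Since $\mathbf{A_c}$ commutes with $e^{\mathbf{A_c}I}$, $V$ is $\mathbf{A_c}$-invariant, so $\mathbf{A_c}|_V$ is well defined. Any eigenvalue $\nu$ of $\mathbf{A_c}|_V$ is also an eigenvalue of $\mathbf{A_c}$, and applying the eigenvalue relation inside $V$ gives $e^{\nu I}=\mu$. Under the hypothesis, at most one $\lambda$ in the spectrum of $\mathbf{A_c}$ can satisfy this: two distinct eigenvalues $\lambda_i\neq\lambda_j$ of $\mathbf{A_c}$ with $e^{\lambda_i I}=e^{\lambda_j I}$ would force $(\lambda_i-\lambda_j)I=2\pi k\sqrt{-1}$ for some nonzero integer $k$, giving $\Re(\lambda_i-\lambda_j)=0$ and $|\Im(\lambda_i-\lambda_j)I|/(2\pi)=|k|\in\mathbb{N}$, in violation of the theorem's assumption. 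Since $v\in V\setminus\{0\}$ forces $V\neq\{0\}$, exactly one such $\lambda$ exists and it is the unique eigenvalue of $\mathbf{A_c}|_V$.

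With uniqueness of $\lambda$ in hand, I would set $B:=\mathbf{A_c}|_V-\lambda\mathbf{I}$, which is nilpotent on $V$ (its only eigenvalue is $0$) and satisfies $e^{BI}=e^{-\lambda I}\,e^{\mathbf{A_c}I}|_V=e^{-\lambda I}\mu\mathbf{I}=\mathbf{I}$. Factoring the power series, $e^{BI}-\mathbf{I}=BI\bigl(\mathbf{I}+BI/2!+(BI)^2/3!+\cdots\bigr)=0$; the bracketed operator is the identity plus a nilpotent operator and is therefore invertible, forcing $B=0$ and hence $\mathbf{A_c}v=\lambda v$. This delivers the contradiction. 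The main obstacle is the uniqueness step: the hypothesis must be invoked precisely to prevent distinct continuous-time eigenvalues from collapsing onto a common sampled eigenvalue after the exponential map, and this is exactly what the separation condition on $|\Im(\lambda_i-\lambda_j)I|/(2\pi)$ provides. Once that is cleared, the nilpotent-exponential identity $e^{BI}=\mathbf{I}\Rightarrow B=0$ closes the argument in a line.
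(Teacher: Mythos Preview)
The paper does not supply a proof of this statement: it is quoted verbatim as Theorem~6.9 of \cite{Chen} in the Comments section and used only as background context, so there is no ``paper's own proof'' against which to compare. Your argument is a correct self-contained proof of the cited result. The PBH setup, the $\mathbf{A_c}$-invariance of the $\mu$-eigenspace $V$, the uniqueness of $\lambda$ via the frequency-separation hypothesis, and the step $e^{BI}=\mathbf{I}\Rightarrow B=0$ for nilpotent $B$ are all sound, and together they yield the contradiction with observability of $(\mathbf{A_c},\mathbf{C_c})$. One minor remark: in the paper's own sampling model the discretized output matrix is $\mathbf{C}=\int_0^I \mathbf{C_c}e^{\mathbf{A_c}(t-I)}\,dt$ rather than $\mathbf{C_c}$, but the theorem as stated is Chen's classical version with output matrix $\mathbf{C_c}$, which is exactly what you proved.
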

Since the eigenvalue of the sampled system is given as $\exp(\lambda_i
I)$, Corollary~\ref{thm:nocycle} can be written as the following
corollary for a sampled system.
\begin{corollary}
Suppose $(\mathbf{A_c},\mathbf{C_c})$ is observable. A sufficient
condition for its discretized system with sampling interval $I$ to
have $\frac{1}{|e^{2\lambda_{max}I}|}$ as a critical erasure
probability is that $\frac{|\Im (\lambda_i -\lambda_j)I|}{2 \pi}
\notin \mathbb{Q}$ whenever $\Re(\lambda_i-\lambda_j)=0$. 
\label{cor:1}
\end{corollary}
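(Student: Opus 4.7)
The plan is to reduce Corollary~\ref{cor:1} directly to Corollary~\ref{thm:nocycle} applied to the discretized system $(\mathbf{A}, \mathbf{C})$, where $\mathbf{A} = e^{\mathbf{A_c} I}$ and $\mathbf{C}$ is the discretized observation matrix. By the spectral mapping theorem, the eigenvalues of $\mathbf{A}$ are $\{e^{\lambda_i I}\}$ where $\{\lambda_i\}$ is the spectrum of $\mathbf{A_c}$, so the largest-magnitude discrete eigenvalue has $|e^{\lambda_{max} I}|^2 = |e^{2\lambda_{max} I}|$, matching the target critical value. It therefore suffices to verify the two hypotheses of Corollary~\ref{thm:nocycle}: (i) $(\mathbf{A}, \mathbf{C})$ is observable, and (ii) $\mathbf{A}$ has no nontrivial eigenvalue cycles.

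For (i), observe that $\mathbb{N} \subset \mathbb{Q}$, so the hypothesis $\frac{|\Im(\lambda_i - \lambda_j) I|}{2\pi} \notin \mathbb{Q}$ (whenever $\Re(\lambda_i - \lambda_j) = 0$) is strictly stronger than the condition $\frac{|\Im(\lambda_i - \lambda_j) I|}{2\pi} \notin \mathbb{N}$ used in Theorem 6.9 of~\cite{Chen}. Hence observability of $(\mathbf{A_c}, \mathbf{C_c})$ plus our hypothesis transfers to observability of $(\mathbf{A}, \mathbf{C})$ under sampling. As a byproduct, the same inequality $\frac{|\Im(\lambda_i - \lambda_j) I|}{2\pi} \notin \mathbb{N}$ rules out $e^{\lambda_i I} = e^{\lambda_j I}$ for distinct $\lambda_i \neq \lambda_j$, so the spectra of $\mathbf{A_c}$ and $\mathbf{A}$ are in bijection and Jordan structure is preserved.

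For (ii), take any two distinct eigenvalues $\mu_i = e^{\lambda_i I}$ and $\mu_j = e^{\lambda_j I}$ of $\mathbf{A}$ and show $(\mu_i/\mu_j)^n \neq 1$ for every $n \in \mathbb{N}$. Split into two cases. If $\Re(\lambda_i - \lambda_j) \neq 0$, then $|\mu_i/\mu_j| = e^{\Re(\lambda_i - \lambda_j) I} \neq 1$, so no integer power can equal $1$. Otherwise $\Re(\lambda_i - \lambda_j) = 0$ and (by the bijection above) $\Im(\lambda_i - \lambda_j) \neq 0$; then $(\mu_i/\mu_j)^n = e^{i n \Im(\lambda_i - \lambda_j) I} = 1$ would force $\frac{n\, \Im(\lambda_i - \lambda_j) I}{2\pi} \in \mathbb{Z}$, i.e., $\frac{|\Im(\lambda_i - \lambda_j) I|}{2\pi} \in \mathbb{Q}$, contradicting the hypothesis. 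Thus $\mathbf{A}$ has no eigenvalue cycles. Combining (i) and (ii) with the (implicitly inherited) controllability of $(\mathbf{A}, \mathbf{B})$, Corollary~\ref{thm:nocycle} yields $p_e^\star = 1/|e^{\lambda_{max} I}|^2 = 1/|e^{2\lambda_{max} I}|$.

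The only subtlety—and the step that warrants care rather than calculation—is the case split on $\Re(\lambda_i - \lambda_j)$: one must recognize that a cycle can fail for two different reasons (modulus $\neq 1$ versus irrational rotation on the unit circle), and that the single Diophantine hypothesis on $\frac{|\Im(\lambda_i - \lambda_j) I|}{2\pi}$ simultaneously (a) excludes eigenvalue coincidence under exponentiation, (b) preserves observability via Chen's theorem, and (c) rules out roots-of-unity ratios. Once this unified role of the hypothesis is seen, the corollary follows immediately and no quantitative estimates are required beyond what Corollary~\ref{thm:nocycle} already supplies.
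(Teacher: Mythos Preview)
Your proposal is correct and matches the paper's own (one-line) derivation: the paper simply observes that the sampled eigenvalues are $e^{\lambda_i I}$ and invokes Corollary~\ref{thm:nocycle}, which is exactly the reduction you carry out. Your case split on $\Re(\lambda_i-\lambda_j)$ and the appeal to Theorem~6.9 of~\cite{Chen} correctly fill in the details the paper leaves implicit.
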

The idea of breaking cyclic behavior using non-uniform sampling is
also shown in the context of sampling multiband signals
~\cite{Vaidyanathan_Efficient}. The lower bound on the sampling rate
is known to be the Lebesgue measure of the spectral support of the signal
sampled. To achieve this lower bound for a general multiband signal, a
nonuniform sampling pattern has to be used. Moreover, nonuniform
sampling is also well known as a necessary condition for the currently
hot field of compressed sensing~\cite{Donoho_Compressed}.

As a last comment, we would like to mention that the result is not
sensitive to the norm. In this paper, intermittent
observability is defined using the $l^2$-norm to follow the majority
of the literature. But, if the intermittent observability is defined
by the $l^\eta$-norm, we can simply replace
$2$ in every theorem by $\eta$. For example, the result of
Theorem~\ref{thm:mainsingle} becomes $\frac{1}{ \underset{i}{\max}
  |\lambda_{i,1}|^{\frac{\eta p_i}{l_i'}}}$.

\section{Appendix}
\subsection{Lemmas for Tails of Probability Mass Functions}
\label{sec:app:1}
In this section, we will prove some properties on the tails of probability mass functions (p.m.f.). By the tail, we mean how fast the probability decreases geometrically as we consider rarer and rarer events.

First, we define the essential supremum, $\esssup$.
\begin{definition}
For a given random variable $X$, $\esssup X$ is given as follows.
\begin{align}
\esssup X = \inf\{x \in \mathbb{R} : \mathbb{P}(X > x) = 0 \}.
\end{align}
\end{definition}

The following lemma shows that even if we increase a random variable sub-linearly, its p.m.f. tail remains the same.
\begin{lemma}
Consider $\sigma$-field $\mathcal{F}$ and a nonnegative discrete random variable $k$ whose probability mass function satisfies
\begin{align}
\exp( \limsup_{n \rightarrow \infty} \esssup \frac{1}{n} \log \mathbb{P}\{ k =n | \mathcal{F} \} ) \leq p \nonumber
\end{align}
Then, given a function $f(x)$ such that $f(x) \leq a( \log(x+1) + 1)$ for some $a \in \mathbb{R}^+$, the probability mass function of a random variable $k+f(k)$ satisfies the following:
\begin{align}
\exp( \limsup_{n \rightarrow \infty} \esssup \frac{1}{n} \log \mathbb{P}\{ k+f(k) = n | \mathcal{F} \} ) \leq p. \nonumber
\end{align}
\label{lem:conti:tailpoly}
\end{lemma}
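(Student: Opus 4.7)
The approach is direct: decompose the event $\{k+f(k)=n\}$ according to the preimage set $A_n:=\{m\in\mathbb{Z}^+: m+f(m)=n\}$, then use the sublinearity of $f$ to show (i) every $m\in A_n$ lies within a logarithmic window of $n$, and (ii) the window has only $O(\log n)$ integers in it. Together with the essential-supremum hypothesis on $k$, this forces the tail of $k+f(k)$ to decay at the same exponential rate as that of $k$, up to a polylogarithmic prefactor that is absorbed by $\frac{1}{n}\log(\cdot)$.

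The key estimate is on $A_n$. If $m\in A_n$, then $m=n-f(m)\geq n-a(\log(m+1)+1)$; iterating once shows there is a constant $a'$ (depending only on $a$) so that $m\geq n-a'(\log(n+1)+1)$ whenever $n$ is large. In the natural setting of this paper, where $f\geq 0$ represents a sublinear delay correction, this also gives $m\leq n$, so $|A_n|=O(\log n)$. (If one wishes to allow $f$ to be negative, the same cardinality estimate holds after restricting to $m\leq n$; the contribution from $m>n$ is bounded by $\sum_{m>n}(p+\epsilon)^m$, which decays at rate $(p+\epsilon)^n$ as well, so the conclusion is unchanged.)

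With $A_n$ controlled, the main computation is as follows. Fix $\epsilon>0$. By the hypothesis, choose $N$ with $\mathbb{P}\{k=m\mid\mathcal{F}\}\leq(p+\epsilon)^m$ almost surely for all $m\geq N$. For $n$ so large that $n-a'(\log(n+1)+1)\geq N$, write
\begin{align}
\mathbb{P}\{k+f(k)=n\mid\mathcal{F}\}
=\sum_{m\in A_n}\mathbb{P}\{k=m\mid\mathcal{F}\}
\leq |A_n|\,(p+\epsilon)^{\,n-a'(\log(n+1)+1)}.
\end{align}
Taking $\frac{1}{n}\log$, the factor $|A_n|=O(\log n)$ and the $(p+\epsilon)^{-a'(\log(n+1)+1)}$ correction both contribute terms of order $\log n / n$, so that
\begin{align}
\limsup_{n\to\infty}\esssup\tfrac{1}{n}\log \mathbb{P}\{k+f(k)=n\mid\mathcal{F}\}\leq\log(p+\epsilon),
\end{align}
and letting $\epsilon\downarrow 0$ and exponentiating gives the claim.

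The only real obstacle is bookkeeping: one must verify that the essential supremum passes through the sum over $A_n$ uniformly in $\mathcal{F}$, which is straightforward because $A_n$ is a deterministic set depending only on $f$ and $n$, and the bound $\mathbb{P}\{k=m\mid\mathcal{F}\}\leq(p+\epsilon)^m$ holds almost surely simultaneously for all $m\geq N$ (take a countable union of null sets). Once this is noted, the rest is routine: the proof is essentially a one-line calculation hidden inside a cardinality bound on the preimage of $n$ under $m\mapsto m+f(m)$.
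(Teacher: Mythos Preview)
Your argument is correct, but it proceeds differently from the paper's. The paper does not decompose the event $\{k+f(k)=n\}$ via its preimage; instead it immediately passes to the tail, bounding $\mathbb{P}\{k+f(k)=n\mid\mathcal{F}\}\leq\mathbb{P}\{k+f(k)\geq n\mid\mathcal{F}\}$, and then \emph{linearizes} the logarithmic bound on $f$: for each $\delta'>0$ it finds $c'$ with $f(x)\leq\delta'x+c'$, so that $k+f(k)\geq n$ forces $k\geq\lfloor(n-c')/(1+\delta')\rfloor$. Summing the resulting geometric tail gives the rate $(p+\delta)^{1/(1+\delta')}$, and one lets $\delta,\delta'\downarrow 0$ at the end. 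Your route is arguably cleaner---you exploit the logarithmic growth of $f$ directly, so the exponent correction is $O(\log n)$ rather than $\delta' n$, and you need only one limiting parameter $\epsilon$ rather than two. On the other hand, the paper's tail-and-linearize step sidesteps any analysis of the preimage set $A_n$ and any case split on the sign of $f$, which makes it slightly more robust as a black-box lemma. Both arrive at the same conclusion with comparable effort.
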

\begin{proof}
Since $\esssup \mathbb{P} \{ k=n | \mathcal{F} \}$ is bounded by $1$, for all $\delta > 0$ such that $p+\delta < 1$ we can find a positive $c$ such that $\esssup \mathbb{P}\{k=n | \mathcal{F} \} \leq c \left(p+\delta \right)^{n} \left(1- \left(p+\delta \right)\right)$. Moreover, since $f(x) \lesssim \log(x+1) + 1$, for all $\delta'>0$ we can find a positive $c'$ such that $f(x) \leq \delta' x + c'$ for all $x \in \mathbb{R}^+$. Then, we have
\begin{align}
\esssup \mathbb{P}\{k+f(k) = n | \mathcal{F} \} & \leq \esssup \mathbb{P}\{k+f(k) \geq n | \mathcal{F} \} \leq \esssup \mathbb{P}\{k+\delta' k + c' \geq n | \mathcal{F} \} \nonumber \\
&\leq \esssup \mathbb{P}\{ k \geq \lfloor \frac{n-c'}{1+\delta'} \rfloor | \mathcal{F} \}
\leq \sum^{\infty}_{i= \lfloor \frac{n-c'}{1+\delta'} \rfloor} \esssup \mathbb{P} \{ k=i | \mathcal{F} \} \nonumber \\
&\leq \sum^{\infty}_{i=\lfloor \frac{n-c'}{1+\delta'} \rfloor}
c(p+\delta)^{i}(1-(p+\delta))\nonumber \\
&=c(1-(p+\delta))\frac{(p+\delta)^{\lfloor \frac{n-c'}{1+\delta'} \rfloor}}{1-(p+\delta)} = c(p+\delta)^{\lfloor \frac{n-c'}{1+\delta'} \rfloor} \nonumber \\
& \leq c(p+\delta)^{\frac{n-c'}{1+\delta'} - 1} = c(p+\delta)^{-\frac{c'}{1+\delta'}-1} (p+\delta)^{\frac{n}{1+\delta'}}. \nonumber
\end{align}
Therefore,
\begin{align}
\exp \left( \limsup_{n \rightarrow \infty} \esssup \frac{1}{n} \log \mathbb{P}\{k+f(k) = n | \mathcal{F} \} \right) \leq (p+\delta)^{\frac{1}{1+\delta'}}. \nonumber
\end{align}
Since we can choose $\delta$ and $\delta'$ arbitrarily close to $0$,
\begin{align}
&\exp \left( \limsup_{n \rightarrow \infty} \esssup \frac{1}{n} \log \mathbb{P}\{k+f(k) = n | \mathcal{F} \} \right) \leq p, \nonumber
\end{align}
which finishes the proof.
\end{proof}

The following lemma tells that if we add independent random variables, the p.m.f. tail of the sum is equal to the heaviest one.
\begin{lemma}
Consider an increasing $\sigma$-fields sequence $\mathcal{F}_0,\mathcal{F}_1,\cdots,\mathcal{F}_{n-1}$ and a sequence of discrete random variables $k_1,k_2,\cdots,k_{n}$ satisfying two properties:\\
(i) $k_i \in \mathcal{F}_i$ for $i \in \{ 1, \cdots, n-1 \}$ \\
(ii) $\exp(\limsup_{k\rightarrow \infty} \esssup \frac{1}{k} \log \mathbb{P}( k_i = k | \mathcal{F}_{i-1} )) \leq p_i$.\\
Let $S=\sum^n_{i=1} k_i$. Then, $\exp(\limsup_{s \rightarrow \infty} \esssup \frac{1}{s} \log \mathbb{P}( S=s | \mathcal{F}_0 )) \leq  \max_{1 \leq i \leq n}\{ p_i\}$.
\label{lem:app:geo}
\end{lemma}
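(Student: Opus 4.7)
The plan is to prove this via a Chernoff-style bound on an iterated conditional moment generating function (MGF). The geometric-tail hypothesis on each $k_i$, conditional on $\mathcal{F}_{i-1}$, gives an a.s.~bound of the form $\mathbb{P}(k_i = k \mid \mathcal{F}_{i-1}) \leq c_i (p_i+\delta)^k$. This makes the conditional MGF $\mathbb{E}[\lambda^{k_i} \mid \mathcal{F}_{i-1}]$ bounded by a \emph{deterministic} constant for all $\lambda$ slightly below $1/\max_i(p_i+\delta)$, and deterministic bounds multiply out cleanly when we peel the sum apart through the filtration. Note that the naive union-bound approach via $\{S\geq s\}\subseteq\bigcup_i\{k_i\geq s/n\}$ only delivers a tail rate of $(\max_i p_i)^{1/n}$, which is why the MGF route is essential.

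The argument proceeds as follows. First dispose of the case $\max_i p_i \geq 1$, where the claim is trivial. So assume $\alpha := \max_i p_i < 1$ and fix arbitrary $\beta \in (\alpha, 1)$; it suffices to show $\esssup \mathbb{P}(S = s \mid \mathcal{F}_0) \lesssim \beta^s$, since $\beta \downarrow \alpha$ then finishes. Choose $\delta > 0$ small enough that $p_i + \delta < \beta$ for every $i$, and use hypothesis (ii) to fix deterministic constants $c_i$ with $\mathbb{P}(k_i = k \mid \mathcal{F}_{i-1}) \leq c_i (p_i+\delta)^k$ a.s.~for every $k \geq 0$ (inflating $c_i$ if necessary to absorb small $k$). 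Pick any $\lambda$ in the nonempty interval $(1/\beta,\ 1/\max_i(p_i+\delta))$, so that $\lambda > 1$ and $\lambda(p_i+\delta) < 1$ for each $i$; then
\[
\mathbb{E}[\lambda^{k_i} \mid \mathcal{F}_{i-1}] \;\leq\; \sum_{k \geq 0} c_i (\lambda(p_i+\delta))^k \;=\; \frac{c_i}{1 - \lambda(p_i+\delta)} \;=:\; C_i \quad\text{a.s.}
\]
Since $k_1,\ldots,k_{n-1}$ are $\mathcal{F}_{n-1}$-measurable, conditioning on $\mathcal{F}_{n-1}$ pulls $\lambda^{k_1+\cdots+k_{n-1}}$ outside and leaves $\mathbb{E}[\lambda^{k_n}\mid \mathcal{F}_{n-1}]\leq C_n$; iterating this on $\mathcal{F}_{n-2},\ldots,\mathcal{F}_0$ gives $\mathbb{E}[\lambda^S \mid \mathcal{F}_0] \leq C_1 C_2 \cdots C_n$ a.s. A Markov inequality then yields $\mathbb{P}(S=s\mid\mathcal{F}_0)\leq \mathbb{P}(S\geq s\mid\mathcal{F}_0)\leq \lambda^{-s}\prod_i C_i$ a.s., whence $\exp\!\left(\limsup_s\esssup (1/s)\log\mathbb{P}(S=s\mid\mathcal{F}_0)\right)\leq \lambda^{-1} < \beta$, and letting $\beta\downarrow\alpha$ completes the proof.

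The main obstacle is bookkeeping rather than a genuine conceptual difficulty. Specifically, one must verify that the a.s.~MGF bound transfers through iterated conditioning without running into a null-set union problem; this works because each $C_i$ is deterministic, so at each peeling step the exceptional null set can be absorbed into a single null set outside of all the conditional expectations. One also has to be a little careful about converting the $\limsup$ hypothesis of (ii) into a pointwise geometric bound valid for \emph{all} $k\geq 0$ (as opposed to only large $k$), which is exactly the maneuver already used in the proof of Lemma~\ref{lem:conti:tailpoly}. Beyond these checks, the remainder is a standard exponential-moment argument.
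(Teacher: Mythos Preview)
Your argument is correct. The route differs from the paper's in a clean and instructive way. Both proofs start from the same pointwise geometric bound $\mathbb{P}(k_i=k\mid\mathcal{F}_{i-1})\le c_i(p_i+\delta)^k$ a.s., and both peel the sum through the filtration one term at a time. The divergence is in what quantity gets peeled: the paper peels the \emph{pmf} of $S$ directly, bounding $\esssup\mathbb{P}(S=s\mid\mathcal{F}_0)$ by $\big(\prod_i c_i\big)\mathbb{P}(S'=s)$ where $S'$ is a sum of \emph{independent} geometrics with parameters $1-(p_i+\delta)$, and then reads off the tail exponent of $S'$ via the $Z$-transform/partial-fraction expansion of $\prod_i \frac{1-(p_i+\delta)}{1-(p_i+\delta)Z^{-1}}$. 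You instead peel the \emph{moment generating function} $\mathbb{E}[\lambda^S\mid\mathcal{F}_0]$ and finish with a Chernoff bound. Your version is more self-contained (no appeal to inverse transforms or partial fractions) and slightly shorter; the paper's version has the minor advantage of exhibiting an explicit comparison distribution $S'$, which makes the heuristic ``the sum behaves like a sum of independent geometrics'' literal. Either way the key content is the same deterministic conditional bound and the tower-property iteration.
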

\begin{proof}
Given $\delta > 0$, let $k'_i$ be independent geometric random variables with probability $1-(p_i+\delta)$. Denote $S':=\sum^{n}_{i=1} k_i'$. The moment generating function of $S'$ is
\begin{align}
\mathbb{E}[Z^{-S'}] &= \prod^{n}_{i=1} \frac{\left(1-\left(p_i+\delta \right)\right)}{1-\left(p_i+\delta \right) Z^{-1}}. \nonumber
\end{align}
By \cite{Oppenheim}, the last term can be expanded into a sum of rational functions whose denominators are $1-(p_i+\delta)Z^{-1}$. Therefore, by inverse Z-transform shown in \cite{Oppenheim}, we can prove that $\exp( \limsup_{s \rightarrow \infty} \frac{1}{s} \log \mathbb{P}(S'=s) ) \leq \max_{1 \leq i \leq n} \{ p_i + \delta\}$.\\
On the other hand, since $\esssup \mathbb{P}(k_i=k | \mathcal{F}_{i-1})$ is bounded by $1$, for all $\delta>0$ we can find positive $c_i$ such that
\begin{align}
\esssup \mathbb{P}(k_i=k | \mathcal{F}_{i-1}) \leq c_i \left(p_1+\delta\right)^{k}\left(1-\left(p_1+\delta\right)\right)=c_i \mathbb{P}(k_i'=k) \nonumber
\end{align}
for all $k \in \mathbb{Z}^+$. Then
\begin{align}
&\esssup \mathbb{P}( S = s | \mathcal{F}_0 ) \nonumber \\
&=\esssup \sum_{s=s_1+\cdots+s_n} \mathbb{P}(k_1=s_1|\mathcal{F}_0)\mathbb{P}(k_2=s_2|\mathcal{F}_0,k_1=s_1)\cdots \mathbb{P}(k_n=s_n|\mathcal{F}_0,k_1=s_1,\cdots,k_{n-1}=s_{n-1}) \nonumber \\
&\leq  \sum_{s=s_1+\cdots+s_n} \esssup \mathbb{P}(k_1=s_1|\mathcal{F}_0) \esssup \mathbb{P}(k_2=s_2|\mathcal{F}_1)\cdots \esssup \mathbb{P}(k_n=s_n|\mathcal{F}_{n-1}) \nonumber \\
&\leq \prod_{1 \leq i \leq n}c_i \cdot \sum_{s=s_1+\cdots+s_n} \mathbb{P}(k_1'=s_1) \mathbb{P}(k_2'=s_2)\cdots \mathbb{P}(k_n'=s_n) \nonumber \\
&\leq \prod_{1 \leq i \leq n}c_i \cdot \mathbb{P}(S'=s). \nonumber
\end{align}
Thus, $\exp( \limsup_{s \rightarrow \infty}\esssup \frac{1}{s} \log \mathbb{P}(S=s | \mathcal{F}_0) ) \leq \max_{1 \leq i \leq n}\{ p_i+\delta\}$.

Since this holds for all $\delta>0$, $\exp( \limsup_{s \rightarrow \infty} \esssup \frac{1}{s} \log \mathbb{P}(S=s | \mathcal{F}_0) ) \leq \max_{1 \leq i \leq n}\{ p_i \}$.
\end{proof}

The next lemma tells how the large deviation principle~\cite{Dembo} can be applied to stopping times, i.e. it formally states the ``test channel" and the ``distance idea" shown in the power property of Section~\ref{sec:powerproperty}.

\begin{lemma}
For given $n$, consider discrete random variables $k_1, k_2, \cdots, k_n$ and $\sigma$-algebra $\mathcal{F}$. The probability mass functions of $k_1, k_2 \cdots, k_n$ satisfy
\begin{align}
\exp ( \limsup_{k \rightarrow \infty} \esssup \frac{1}{k} \log \mathbb{P}\{ k_i = k | \mathcal{F} \} ) \leq p_i \nonumber
\end{align}
and $k_1, k_2, \cdots, k_n$ are conditionally independent given $\mathcal{F}$.\\
For given sets $T_1, T_2, \cdots, T_m  \subseteq \{1,2,\cdots, n \}$, define stopping times $M_1, \cdots, M_m$ as
\begin{align}
M_i := \max_{t \in T_i} k_t
\end{align}
and a stopping time $S$ as
\begin{align}
S := \min_{1 \leq i \leq m} M_i.
\end{align}

Then,
\begin{align}
 \exp \left(\limsup_{k \rightarrow \infty} \esssup \frac{1}{k} \log \mathbb{P}\{ S=k | \mathcal{F} \}\right) \leq
\max_{T=\{t_1, t_2, \cdots, t_{|T|}\} \subseteq \{1,2,\cdots, n \}
\small{\mbox{ s.t. }}T \cap T_i \neq \emptyset \small{\mbox{ for all }}i
} p_{t_1} p_{t_2} \cdots p_{t_{|T|}}. \nonumber
\end{align}
\label{lem:dis:geo0}
\end{lemma}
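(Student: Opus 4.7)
The plan is to identify $\{S \ge k\}$ with a purely combinatorial event about which $k_t$'s exceed $k$, and then use a union bound together with the tail estimates proved in Lemma~\ref{lem:app:geo}.

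The key observation is that $M_i \ge k$ iff there exists some $t \in T_i$ with $k_t \ge k$, so $S = \min_i M_i \ge k$ iff the random set $U_k := \{t \in \{1,\ldots,n\} : k_t \ge k\}$ intersects every $T_i$, i.e.\ $U_k$ is a ``transversal'' of the family $\{T_1,\ldots,T_m\}$. This rewrites the event in exactly the form that appears in the statement of the lemma. In particular, the set $\mathcal{T}$ of minimal transversals is finite (a subset of $2^{\{1,\ldots,n\}}$), and
\begin{align*}
\{S \ge k\} \ = \ \{U_k \text{ is a transversal}\} \ \subseteq \ \bigcup_{T \in \mathcal{T}} \bigcap_{t \in T} \{k_t \ge k\}.
\end{align*}

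Next, by the union bound, conditional independence of $k_1,\ldots,k_n$ given $\mathcal{F}$, and $\mathbb{P}\{S=k\mid\mathcal{F}\} \le \mathbb{P}\{S\ge k\mid\mathcal{F}\}$,
\begin{align*}
\esssup \mathbb{P}\{S = k \mid \mathcal{F}\} \ \le \ \sum_{T \in \mathcal{T}} \prod_{t \in T} \esssup \mathbb{P}\{k_t \ge k \mid \mathcal{F}\}.
\end{align*}
Fix $\delta > 0$. Arguing as in the proof of Lemma~\ref{lem:app:geo}, the tail assumption on $k_t$ gives a constant $c_t > 0$ with
$$\esssup \mathbb{P}\{k_t \ge k \mid \mathcal{F}\} \ \le \ c_t(p_t+\delta)^k$$
for every $k$. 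Substituting, with $C_T := \prod_{t\in T} c_t$,
\begin{align*}
\esssup \mathbb{P}\{S = k \mid \mathcal{F}\} \ \le \ \sum_{T \in \mathcal{T}} C_T \Bigl( \prod_{t \in T} (p_t + \delta) \Bigr)^k.
\end{align*}

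Because $\mathcal{T}$ is finite, taking $\tfrac{1}{k}\log$ and $\limsup_{k\to\infty}$ picks out the largest exponential rate, yielding
$$\exp\Bigl(\limsup_{k\to\infty}\esssup\tfrac{1}{k}\log\mathbb{P}\{S=k\mid\mathcal{F}\}\Bigr) \ \le \ \max_{T \in \mathcal{T}} \prod_{t \in T}(p_t+\delta).$$
Letting $\delta \downarrow 0$ and observing that restricting the max to minimal transversals is without loss of generality (since $p_t \le 1$ implies $\prod_{t \in T'} p_t \le \prod_{t \in T} p_t$ whenever $T \subseteq T'$) gives the claim. The main obstacle is purely notational, namely keeping the essential-supremum bounds uniform across the conditioning on $\mathcal{F}$; this is handled exactly as in Lemma~\ref{lem:app:geo} by absorbing the (data-independent) constants $c_t$ into the prefactors.
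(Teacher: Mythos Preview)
Your proposal is correct and follows essentially the same approach as the paper: both rewrite $\{S\ge k\}$ as the event that the set $\{t:k_t\ge k\}$ hits every $T_i$, apply a union bound over transversals, factor via conditional independence, insert the tail estimate $\esssup\mathbb{P}\{k_t\ge k\mid\mathcal{F}\}\le c_t(p_t+\delta)^k$, take $\limsup$, and let $\delta\downarrow 0$. Your use of minimal transversals and the observation that $p_t\le 1$ makes the restriction harmless is a clean touch, but otherwise the argument is the same as the paper's.
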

\begin{proof}
Since $\esssup \mathbb{P} \{ k_i = k | \mathcal{F} \} $ is bounded by $1$, for all $\delta > 0$ we can find $c>1$ such that
\begin{align}
\esssup \mathbb{P}\{ k_i = k | \mathcal{F} \} \leq c(p_i + \delta)^{k}\left(1- \left(p_i + \delta \right)\right).\nonumber
\end{align}
Thus, we have
\begin{align}
\esssup \mathbb{P}\{ k_i \geq k | \mathcal{F} \} & \leq  c (p_i + \delta)^{k}.\nonumber
\end{align}
Therefore,
\begin{align}
\esssup \mathbb{P}\{ S=k | \mathcal{F} \} &\leq \esssup \mathbb{P}\{ S \geq k | \mathcal{F} \} \nonumber \\
&= \esssup \mathbb{P}\{ M_1 \geq k, \cdots, M_m \geq k | \mathcal{F} \} \nonumber \\
&= \esssup \mathbb{P}\{ \mbox{There exists $T=\{t_1,t_2,\cdots,t_{|T|} \} \subseteq \{1,\cdots,n \}$ s.t. } T \cap T_i \neq \emptyset \mbox{ for all } i \mbox{ and } k_{t_1} \geq k, \cdots, k_{t_{|T|}} \geq k | \mathcal{F} \} \nonumber \\
&\leq \sum_{
\small{
\begin{array}{c}T=\{t_1,t_2,\cdots,t_{|T|} \} \subseteq \{1,\cdots,n \} \\
\small{\mbox{ s.t. }} T \cap T_i \neq \emptyset \small{\mbox{ for all }} i\end{array}}
}
\esssup \mathbb{P} \{ k_{t_1} \geq k, k_{t_2} \geq k, \cdots, k_{t_{|T|}} \geq k | \mathcal{F} \} \nonumber \\
&\leq
|\{ T=\{t_1,t_2,\cdots,t_{|T|} \} \subseteq \{1,\cdots,n \} {\mbox{ s.t. }} T \cap T_i \neq \emptyset {\mbox{ for all }} i \}|  \label{eqn:geo:large1}  \\
&\quad \cdot \max_{
\small{
\begin{array}{c}
T=\{t_1,t_2,\cdots,t_{|T|} \} \subseteq \{1,\cdots,n \} \\
\small{\mbox{ s.t. }} T \cap T_i \neq \emptyset \small{\mbox{ for all }} i
\end{array}
}
} \esssup \mathbb{P}\{ k_{t_1}\geq k | \mathcal{F} \} \cdots \esssup \mathbb{P}\{ k_{t_{|T|}} \geq k | \mathcal{F} \} \nonumber\\
& \leq c^n |\{ T=\{t_1,t_2,\cdots,t_{|T|} \} \subseteq \{1,\cdots,n \} {\mbox{ s.t. }} T \cap T_i \neq \emptyset {\mbox{ for all }} i \}| \nonumber \\
&\quad \cdot
\max_{
\small{
\begin{array}{c}
T=\{t_1,t_2,\cdots,t_{|T|} \} \subseteq \{1,\cdots,n \} \\
\small{\mbox{ s.t. }} T \cap T_i \neq \emptyset \small{\mbox{ for all }} i
\end{array}
}
}
(p_{t_1}+\delta)^{k-1} (p_{t_2}+\delta)^{k-1} \cdots (p_{t_{|T|}}+\delta)^{k-1}. \nonumber
\end{align}
\eqref{eqn:geo:large1} follows from union bound. Since the above inequality holds for all $\delta>0$,
\begin{align}
\exp\left( \limsup_{k \rightarrow \infty} \esssup \frac{1}{k} \log \mathbb{P} \{ S = k | \mathcal{F} \} \right) \leq
\max_{T=\{t_1, t_2, \cdots, t_{|T|}\} \subseteq \{1,2,\cdots, n \}
\small{\mbox{ s.t. }}T \cap T_i \neq \emptyset \small{\mbox{ for all }}i
} p_{t_1} p_{t_2} \cdots p_{t_{|T|}}. \nonumber
\end{align}
\end{proof}

\subsection{Lemmas about the Observability Gramian of Continuous-Time Systems}
\label{sec:app:3}
In linear system theory~\cite{Chen}, the observability Gramian plays a crucial role in estimating states from observations. Therefore, we also study the behavior of the observability Gramian, especially the norm of the inverse of the observability Gramian.

First, we start with a corollary of the classic rearrangement inequality~\cite{hardy1988inequalities}.
\begin{lemma}[Rearrangement Inequality]
For $\lambda_1 \geq \lambda_2 \geq \cdots \geq \lambda_m \geq 0$, $0 \leq k_1 \leq k_2 \leq \cdots \leq k_m$, and any permutation map $\sigma$, the following inequality is true:
\begin{align}
e^{-\lambda_{\sigma(1)} k_{1}} e^{-\lambda_{\sigma(2)} k_{2}} \cdots e^{-\lambda_{\sigma(m)} k_{m}} \leq e^{-\lambda_1 k_1} e^{- \lambda_2 k_2} \cdots e^{-\lambda_m k_m}. \nonumber
\end{align}
Moreover, the ratio of these two can also be upper bounded as
\begin{align}
\frac{e^{-\lambda_{\sigma(1)} k_{1}} e^{-\lambda_{\sigma(2)} k_{2}} \cdots e^{-\lambda_{\sigma(m)} k_{m}}}{e^{-\lambda_1 k_1} e^{- \lambda_2 k_2} \cdots e^{-\lambda_m k_m}} \leq e^{-(\lambda_{\sigma(m)}-\lambda_m)(k_m-k_{\sigma^{-1}(m)})}. \nonumber
\end{align}
\label{lem:conti:rearr}
\end{lemma}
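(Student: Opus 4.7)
The plan is to work in the log domain, where both inequalities become statements about the sum $\sum_i \lambda_{\sigma(i)} k_i$. Taking logarithms, the first claim is equivalent to
\begin{align}
\sum_{i=1}^{m} \lambda_i k_i \; \leq \; \sum_{i=1}^{m} \lambda_{\sigma(i)} k_i , \nonumber
\end{align}
and the second claim is equivalent to
\begin{align}
\sum_{i=1}^{m} \lambda_{\sigma(i)} k_i \; - \; \sum_{i=1}^{m} \lambda_i k_i \; \geq \; (\lambda_{\sigma(m)}-\lambda_m)(k_m-k_{\sigma^{-1}(m)}) . \nonumber
\end{align}
Both will be proved together by induction on $m$ using a single adjacent-style ``swap to the right place'' argument.

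The key step is the following swap inequality. Given the permutation $\sigma$, define $\sigma'$ by exchanging the values at positions $m$ and $\sigma^{-1}(m)$, so that $\sigma'(m)=m$ and $\sigma'(\sigma^{-1}(m))=\sigma(m)$, with $\sigma'(i)=\sigma(i)$ elsewhere. A direct computation of the only two terms that change gives
\begin{align}
\sum_{i=1}^{m} \lambda_{\sigma(i)} k_i - \sum_{i=1}^{m} \lambda_{\sigma'(i)} k_i = (\lambda_{\sigma(m)} k_m + \lambda_m k_{\sigma^{-1}(m)}) - (\lambda_m k_m + \lambda_{\sigma(m)} k_{\sigma^{-1}(m)}) = (\lambda_{\sigma(m)}-\lambda_m)(k_m - k_{\sigma^{-1}(m)}). \nonumber
\end{align}
By the hypotheses of the lemma, $\lambda_{\sigma(m)} \geq \lambda_m$ (since $\lambda_m$ is the smallest) and $k_m \geq k_{\sigma^{-1}(m)}$ (since $k_m$ is the largest), so the right-hand side is nonnegative.

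Now I would finish by induction on $m$. The base case $m=1$ is trivial. For the inductive step, apply the swap above to reduce $\sigma$ to $\sigma'$, which fixes the $m$-th coordinate and restricts to a permutation of $\{1,\dots,m-1\}$; the inductive hypothesis applied to the first $m-1$ indices (with the same monotonicity assumptions inherited from the truncation) yields $\sum_{i=1}^{m-1}\lambda_{\sigma'(i)}k_i \geq \sum_{i=1}^{m-1}\lambda_i k_i$. Adding back $\lambda_m k_m$ on both sides and combining with the swap identity gives the first inequality. The second inequality comes out of the same computation, because the swap identity already produces exactly the excess term $(\lambda_{\sigma(m)}-\lambda_m)(k_m-k_{\sigma^{-1}(m)})$, and the inductive step shows that passing from $\sigma'$ to the identity only makes the sum smaller, so the excess over the identity permutation is at least the excess produced by the single swap. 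Exponentiating both statements restores the product form claimed in the lemma. The only mildly delicate point is bookkeeping in the inductive step when $\sigma^{-1}(m)=m$ already, but in that case both sides of the second inequality vanish and the first reduces to the inductive hypothesis, so there is nothing to check.
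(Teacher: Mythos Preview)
Your proof is correct and follows essentially the same approach as the paper. The paper cites the classical rearrangement inequality for the first part rather than proving it by induction, but for the second inequality it does exactly your swap: replace $\lambda_m$ at position $\sigma^{-1}(m)$ by $\lambda_{\sigma(m)}$ (your $\sigma'$), pull out the factor $e^{-(\lambda_{\sigma(m)}-\lambda_m)(k_m-k_{\sigma^{-1}(m)})}$, and then bound the remaining product (your inductive step) by the first inequality applied to $\{\lambda_1,\dots,\lambda_{m-1}\}$ against $\{k_1,\dots,k_{m-1}\}$---the same computation you carry out in the log domain.
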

\begin{proof}
The first inequality directly follows from the classic rearrangement inequality. The second inequality is proved as follows: When $\sigma^{-1}(m)=m$, the inequality is trivial. When $\sigma^{-1}(m)\neq m $, we have
\begin{align}
&e^{-\lambda_{\sigma(1)}k_1} e^{-\lambda_{\sigma(2)}k_2} \cdots e^{-\lambda_m k_{\sigma^{-1}(m)}} \cdots e^{-\lambda_{\sigma(m-1)k_{m-1}} } e^{-\lambda_{\sigma(m)}k_m} \nonumber \\
&=
\underbrace{\left(e^{-\lambda_{\sigma(1)}k_1} e^{-\lambda_{\sigma(2)}k_2} \cdots e^{-\lambda_m k_{\sigma^{-1}(m)}} \cdots e^{-\lambda_{\sigma(m-1)}k_{m-1}} \right)}_{(a)}
\cdot e^{-\lambda_{\sigma(m)}k_m} \nonumber \\
&=
\underbrace{\left(e^{-\lambda_{\sigma(1)}k_1} e^{-\lambda_{\sigma(2)}k_2} \cdots e^{-\lambda_{\sigma(m)} k_{\sigma^{-1}(m)}} \cdots e^{-\lambda_{\sigma(m-1)}k_{m-1}} \right)}_{(b)}
\cdot \left( \frac{e^{-\lambda_m k_{\sigma^{-1}(m)}}}{e^{-\lambda_{\sigma(m)}k_{\sigma^{-1}(m)}}} \right) \cdot e^{-\lambda_{\sigma(m)}k_m}. \label{eqn:arrange:1}
\end{align}
We can notice that the exponent of $(a)$ has $\{ \lambda_1, \lambda_2,\cdots, \lambda_m \} \setminus \{\lambda_{\sigma(m)}\}$ and $\{ k_1, k_2, \cdots, k_m \} \setminus \{ k_m \}$ terms in it, and the exponent of $(b)$ has
\begin{align}
&\left( \{ \lambda_1, \lambda_2,\cdots, \lambda_m \} \setminus \{\lambda_{\sigma(m)}\} \right) \cup \{ \lambda_{\sigma(m)} \} \setminus \{ \lambda_m \} \nonumber \\
&=\{ \lambda_1, \lambda_2,\cdots, \lambda_m \} \setminus \{ \lambda_m \} \nonumber
\end{align}
and $\{ k_1, k_2, \cdots, k_m \} \setminus \{ k_m \}$ terms in it. Thus, by the first inequality of the lemma,
\begin{align}
(b) \leq e^{-\lambda_1 k_1} \cdots e^{-\lambda_{m-1} k_{m-1}}. \nonumber
\end{align}
Together with $\eqref{eqn:arrange:1}$, we have
\begin{align}
&\frac{e^{-\lambda_{\sigma(1)}k_1} e^{-\lambda_{\sigma(2)}k_2} \cdots e^{-\lambda_{\sigma(m)}k_m}}{ e^{-\lambda_1 k_1} e^{-\lambda_2 k_2} \cdots e^{-\lambda_m k_m} }  \nonumber \\
&\leq
\frac{
\left(e^{-\lambda_1 k_1} \cdots e^{-\lambda_{m-1} k_{m-1}}\right)
\cdot \left( \frac{e^{-\lambda_m k_{\sigma^{-1}(m)}}}{e^{-\lambda_{\sigma(m)}k_{\sigma^{-1}(m)}}} \right) \cdot e^{-\lambda_{\sigma(m)}k_m}
}{ e^{-\lambda_1 k_1} e^{-\lambda_2 k_2} \cdots e^{-\lambda_m k_m} }  \nonumber \\
&=
\frac{1}{e^{-\lambda_m k_m}}
\cdot \left( \frac{e^{-\lambda_m k_{\sigma^{-1}(m)}}}{e^{-\lambda_{\sigma(m)}k_{\sigma^{-1}(m)}}} \right) \cdot e^{-\lambda_{\sigma(m)}k_m}=e^{(\lambda_m-\lambda_{\sigma(m)})(k_m - k_{\sigma^{-1}(m)})} \nonumber
\end{align}
which finishes the proof.
\end{proof}


Even though Theorem~\ref{thm:nonuniform} is written for a general matrix $\mathbf{C}$, we will first start from the simpler case of a row vector $\mathbf{C}$. In fact, for the proof of the general case, we will reduce the system with a matrix $\mathbf{C}$ to a system with a row vector $\mathbf{C}$.

First, we introduce the definitions corresponding to \eqref{eqn:conti:a2}, \eqref{eqn:conti:c2} for a row vector $\mathbf{C}$. Let  $\mathbf{A_c}$ be a $m \times m$ Jordan form matrix, and $\mathbf{C}$ be a $1 \times m$ row vector $\mathbf{C}$ which are written as follows:
\begin{align}
&\mathbf{A_c}=diag\{\mathbf{A_{1,1}},\mathbf{A_{1,2}},\cdots, \mathbf{A_{1,\nu_{1}}},\cdots,\mathbf{A_{\mu,1}},\cdots,\mathbf{A_{\mu,\nu_{\mu}}}\}
\label{eqn:conti:a} \\
&\mathbf{C}=\begin{bmatrix}
\mathbf{C_{1,1}} & \mathbf{C_{1,2}} & \cdots & \mathbf{C_{1,\nu_{1}}} & \cdots & \mathbf{C_{\mu,1}} & \cdots & \mathbf{C_{\mu,\nu_{\mu}}}
\end{bmatrix}
\label{eqn:conti:c} \\
&\mbox{where } \mathbf{A_{i,j}} \mbox{ is a Jordan block with eigenvalue $\lambda_{i,j}+\sqrt{-1}\omega_{i,j}$ and size $m_{i,j}$} \nonumber \\
&\quad\quad m_{i,1} \leq m_{i,2} \leq \cdots \leq m_{i,\nu_i} \mbox{ for all }i=1,\cdots,\mu \nonumber \\
&\quad\quad m_i=\sum_{1 \leq j \leq \nu_i} m_{i,j} \mbox{ for all }i=1,\cdots,\mu \nonumber \\
&\quad\quad \lambda_{i,1}=\lambda_{i,2}=\cdots =\lambda_{i,\nu_i} \mbox{ for all }i=1,\cdots,\mu \nonumber \\
&\quad\quad \lambda_{1,1}>\lambda_{2,1} > \cdots > \lambda_{\mu,1} \geq 0 \nonumber \\
&\quad\quad \omega_{i,1}, \cdots ,\omega_{i,\nu_i} \mbox{ are pairwise distinct} \nonumber \\
&\quad\quad \mathbf{C_{i,j}}\mbox{ is a $1 \times m_{i,j}$ complex matrix and its first element is non-zero} \nonumber\\
&\quad\quad \mbox{$\lambda_i+ \sqrt{-1} \omega_i$ is $(i,i)$ element of $\mathbf{A_c}$}.\nonumber
\end{align}
Here, we can notice that the real parts of the eigenvalues of $\mathbf{A_{i,1}}, \cdots, \mathbf{A_{i,\nu_i}}$ are the same, but the eigenvalues of all Jordan blocks $\mathbf{A_{i,j}}$ are distinct. Therefore, by Theorem~\ref{thm:jordanob}, the condition that the first elements of $\mathbf{C_{i,j}}$ are non-zero corresponds to the observability of $(\mathbf{A_c},\mathbf{C})$.

The following lemma upper bounds the determinant of the observability Gramain of the sampled continuous system.
\begin{lemma}
Let $\mathbf{A_c}$ and $\mathbf{C}$ be given as \eqref{eqn:conti:a} and \eqref{eqn:conti:c}. For $0 \leq k_1 \leq k_2 \leq \cdots \leq k_m$, there exists $a > 0$, $p \in \mathbb{Z}^+$ such that
\begin{align}
\left| \det\left( \begin{bmatrix}
\mathbf{C}e^{-k_1 \mathbf{A_c}}\\
\mathbf{C}e^{-k_2 \mathbf{A_c}}\\
\vdots \\
\mathbf{C}e^{-k_m \mathbf{A_c}}\\
\end{bmatrix} \right) \right| \leq a (k_m^p+1) \prod_{1 \leq i \leq m } e^{-k_i \lambda_i} \nonumber
\end{align}
where $\lambda_i$ is the real part of $(i,i)$ component of $\mathbf{A_c}$.
\label{lem:det:upper}
\end{lemma}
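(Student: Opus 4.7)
The plan is to expand the determinant with Leibniz's formula, bound each summand using the rearrangement inequality of Lemma~\ref{lem:conti:rearr}, and then absorb the polynomial prefactors that come from the Jordan blocks into the factor $a(k_m^p+1)$.

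First I would exploit the block-diagonal structure of $\mathbf{A_c}$ to understand the entries of $\mathbf{C}e^{-k_j \mathbf{A_c}}$. For a Jordan block $\mathbf{A_{i,j}} = (\lambda_i+\sqrt{-1}\omega_{i,j})\mathbf{I} + \mathbf{N}$ with $\mathbf{N}$ nilpotent of size $m_{i,j}$, we have $e^{-k \mathbf{A_{i,j}}} = e^{-k(\lambda_i + \sqrt{-1}\omega_{i,j})}\sum_{r=0}^{m_{i,j}-1}\frac{(-k)^r}{r!}\mathbf{N}^r$. Hence every entry of $e^{-k_j\mathbf{A_c}}$ is a polynomial in $k_j$ of degree at most $m-1$ multiplied by an exponential $e^{-k_j(\lambda_c+\sqrt{-1}\omega_c)}$, where $\lambda_c + \sqrt{-1}\omega_c$ is the eigenvalue of the Jordan block containing that column. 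Writing $\lambda_c$ and $\omega_c$ for the real and imaginary parts of the $(c,c)$ entry of $\mathbf{A_c}$, each entry $(j,c)$ of $\mathbf{C}e^{-k_j\mathbf{A_c}}$ can therefore be written as $Q_c(k_j)\,e^{-k_j(\lambda_c+\sqrt{-1}\omega_c)}$ for a polynomial $Q_c$ (depending on $\mathbf{C}$ and the block structure, but not on $j$) of degree at most $m-1$.

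Next I would apply the Leibniz formula:
\begin{align}
\det\begin{bmatrix} \mathbf{C}e^{-k_1\mathbf{A_c}} \\ \vdots \\ \mathbf{C}e^{-k_m\mathbf{A_c}} \end{bmatrix}
= \sum_{\sigma \in S_m} \mathrm{sgn}(\sigma)\prod_{j=1}^m Q_{\sigma(j)}(k_j)\, e^{-k_j(\lambda_{\sigma(j)}+\sqrt{-1}\omega_{\sigma(j)})}. \nonumber
\end{align}
Taking absolute values and using the triangle inequality yields
\begin{align}
\left|\det(\cdot)\right| \leq \sum_{\sigma \in S_m} \Big(\prod_{j=1}^m |Q_{\sigma(j)}(k_j)|\Big)\Big(\prod_{j=1}^m e^{-k_j \lambda_{\sigma(j)}}\Big). \nonumber
\end{align}
By the ordering convention of \eqref{eqn:conti:a}, the sequence $\lambda_1,\dots,\lambda_m$ of diagonal real parts is non-increasing, while $k_1 \leq \dots \leq k_m$. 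Hence Lemma~\ref{lem:conti:rearr} gives $\prod_j e^{-k_j\lambda_{\sigma(j)}} \leq \prod_j e^{-k_j\lambda_j}$ for every permutation $\sigma$.

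Finally I would dominate the polynomial factors uniformly. Since each $Q_c$ has degree at most $m-1$ and there are only finitely many of them, there exist constants $c_0>0$ and an exponent $p \in \mathbb{Z}^+$ (e.g.\ $p = m(m-1)$) such that $\prod_{j=1}^m |Q_{\sigma(j)}(k_j)| \leq c_0 (k_m^p + 1)$ for every $\sigma$ and every $0\leq k_1 \leq \dots \leq k_m$. Summing over the $m!$ permutations and setting $a := m!\, c_0$ yields the claimed bound. The main step that requires care is checking that the sequence $\lambda_1,\dots,\lambda_m$ read off the diagonal of $\mathbf{A_c}$ is indeed monotone non-increasing so that the rearrangement inequality applies directly; this follows from the prescribed ordering of the Jordan blocks in \eqref{eqn:conti:a}, so no further work is needed beyond this bookkeeping.
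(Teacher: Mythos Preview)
Your proof is correct and follows essentially the same approach as the paper: expand the determinant via the Leibniz formula, bound the exponential part of each term by $\prod_i e^{-k_i\lambda_i}$ using the rearrangement inequality of Lemma~\ref{lem:conti:rearr}, and absorb the Jordan-block polynomial factors into $a(k_m^p+1)$. The only organizational difference is that the paper first treats the diagonal case explicitly and then argues that the Jordan case differs only by polynomial factors, whereas you handle the Jordan form directly in one pass; both routes use the same ingredients and the same key inequality.
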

\begin{proof}
First consider a diagonal matrix, i.e.
$\mathbf{A_c}=\begin{bmatrix} \lambda_1+ j \omega_1 & 0 & \cdots & 0 \\ 0 & \lambda_2+ j \omega_2 & \cdots & 0 \\ \vdots & \vdots & \ddots & \vdots \\  0 & 0 & \cdots & \lambda_m + j \omega_m \end{bmatrix}$. Then,
\begin{align}
&\left| \det \left( \begin{bmatrix} \mathbf{C}e^{-k_1 \mathbf{A_c}} \\ \mathbf{C}e^{-k_2 \mathbf{A_c}} \\ \vdots \\ \mathbf{C}e^{-k_m \mathbf{A_c}}  \end{bmatrix} \right) \right| \nonumber \\
&=\left| \sum_{\sigma \in S_m} sgn(\sigma) \prod^m_{i=1} c_i e^{-k_{\sigma(i)}(\lambda_i+j \omega_i)} \right| \nonumber \\
&\leq m! \max_{\sigma \in S_m} \left| \prod^m_{i=1} c_i e^{-k_{\sigma(i)}(\lambda_i+j \omega_i)} \right| \nonumber \\
&= m! \left| \prod^m_{i=1}c_i \right| \max_{\sigma \in S_m} \left| \prod^m_{i=1} e^{-k_{\sigma(i)}\lambda_i} \right| \nonumber \\
&= m! \left| \prod^m_{i=1}c_i \right| \prod^m_{i=1} e^{-k_i \lambda_i} (\because Lemma~\ref{lem:conti:rearr}) \nonumber \\
&\lesssim \prod^m_{i=1} e^{-k_i \lambda_i} \label{eqn:matrix:det1}
\end{align}
where $c_i$ are $i$th component of $\mathbf{C}$, $S_m$ is the set of all permutations on $\{1,\cdots,m \}$, and $sgn(\sigma)$ is $+1$ if $\sigma$ is an even permutation $-1$ otherwise. Therefore, the lemma is true for a diagonal $\mathbf{A_c}$.

To extend to a general Jordan matrix $\mathbf{A_c}$, consider a matrix $\mathbf{A_c'}$ which is obtained by erasing the off-diagonal elements of $\mathbf{A_c}$.
Then, we can easily see the ratio between the elements of $\begin{bmatrix}
\mathbf{C} e^{-k_1 \mathbf{A_c}} \\
\vdots \\
\mathbf{C} e^{-k_m \mathbf{A_c}}
\end{bmatrix}$ and the corresponding elements of $\begin{bmatrix}
\mathbf{C} e^{-k_1 \mathbf{A_c'}} \\
\vdots \\
\mathbf{C} e^{-k_m \mathbf{A_c'}}
\end{bmatrix}$ is a polynomial whose degree is less than $m$. Therefore, by repeating the steps of \eqref{eqn:matrix:det1} we can easily obtain
\begin{align}
\left| \det \left(
\begin{bmatrix}
\mathbf{C}e^{-k_1 \mathbf{A_c}} \\ \mathbf{C}e^{-k_2 \mathbf{A_c}} \\ \vdots \\ \mathbf{C}e^{-k_m \mathbf{A_c}}
\end{bmatrix}
\right)\right| \lesssim (1+k_m^{m^2}) \prod^m_{i=1} e^{-k_i \lambda_i},  \nonumber
\end{align}
which finishes the proof.
\end{proof}

The next lemma upper bounds the norm of the inverse of the observability Gramian, given the lower bound on the observability Gramian determinant. Therefore, we can reduce the matrix inverse problem to the matrix determinant problem.

\begin{lemma}
Consider $\mathbf{A_c}$ and $\mathbf{C}$ given as \eqref{eqn:conti:a} and \eqref{eqn:conti:c}. Let $\lambda_i$ be the real part of $(i,i)$ element of $\mathbf{A_c}$. Then, there exists a positive polynomial $p(k)$ such that for all $\epsilon>0$ and $0 \leq k_1 \leq \cdots \leq k_m$, if
\begin{align}
\left| \det\left( \begin{bmatrix} \mathbf{C}e^{-k_1 \mathbf{A_c}} \\ \mathbf{C}e^{-k_2 \mathbf{A_c}} \\ \vdots \\ \mathbf{C}e^{-k_m \mathbf{A_c}}  \end{bmatrix} \right) \right| \geq \epsilon \prod_{1 \leq i \leq m} e^{- k_i \lambda_i} \nonumber
\end{align}
then
\begin{align}
\left| \begin{bmatrix} \mathbf{C}e^{-k_1 \mathbf{A_c}} \\ \mathbf{C}e^{-k_2 \mathbf{A_c}} \\ \vdots \\ \mathbf{C}e^{-k_m \mathbf{A_c}}  \end{bmatrix}^{-1} \right|_{max} \leq \frac{p(k_m)}{\epsilon} e^{\lambda_1 k_m}. \nonumber
\end{align}
\label{lem:conti:inverse2}
\end{lemma}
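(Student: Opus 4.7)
The plan is to invoke Cramer's rule: writing $\mathbf{M}:=\begin{bmatrix}\mathbf{C}e^{-k_1\mathbf{A_c}}\\ \vdots \\ \mathbf{C}e^{-k_m\mathbf{A_c}}\end{bmatrix}$, every entry of $\mathbf{M}^{-1}$ equals $\pm M_{ji}/\det\mathbf{M}$, where $M_{ji}$ is the $(m-1)\times(m-1)$ minor obtained by deleting row $j$ and column $i$. The hypothesis already lower-bounds $|\det\mathbf{M}|$ by $\epsilon\prod_{\ell=1}^m e^{-k_\ell\lambda_\ell}$, so it suffices to produce an upper bound
\begin{align*}
|M_{ji}|\;\lesssim\;p(k_m)\,e^{\lambda_1 k_m}\prod_{\ell=1}^m e^{-k_\ell\lambda_\ell}
\end{align*}
for some polynomial $p$ independent of $i,j$; dividing by the determinant bound then yields exactly the stated estimate on $|\mathbf{M}^{-1}|_{\max}$.

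To bound $|M_{ji}|$, I would expand it by the Leibniz formula as a sum of $(m-1)!$ signed products. Each entry of row $\ell$ of $\mathbf{M}$ has the form $[\mathbf{C}e^{-k_\ell\mathbf{A_c}}]_s=q_s(k_\ell)e^{-k_\ell(\lambda_s+j\omega_s)}$ for some polynomial $q_s$ arising from the Jordan block structure (this is the same observation driving the proof of Lemma~\ref{lem:det:upper}). Taking absolute values yields
\begin{align*}
|M_{ji}|\;\le\;(m-1)!\,\tilde p(k_m)\,\max_{\pi}\prod_{\ell\ne j}e^{-k_\ell\lambda_{\pi(\ell)}},
\end{align*}
where the maximum ranges over bijections $\pi:\{1,\dots,m\}\setminus\{j\}\to\{1,\dots,m\}\setminus\{i\}$ and $\tilde p$ is a polynomial collecting the $q_s$ factors.

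The key comparison is then to match this "reduced" pairing against the full pairing $\prod_{\ell=1}^m e^{-k_\ell\lambda_\ell}$. Any bijection $\pi$ on the reduced index sets extends to a permutation $\tilde\pi$ on $\{1,\dots,m\}$ by setting $\tilde\pi(j)=i$, and by the rearrangement inequality (Lemma~\ref{lem:conti:rearr}) applied with the ordering $k_1\le\cdots\le k_m$ and $\lambda_1\ge\cdots\ge\lambda_m\ge 0$, the pairing $k_\ell\leftrightarrow\lambda_\ell$ minimises $\sum_\ell k_\ell\lambda_{\tilde\pi(\ell)}$. Hence
\begin{align*}
\sum_{\ell\ne j}k_\ell\lambda_{\pi(\ell)}\;=\;\sum_{\ell=1}^m k_\ell\lambda_{\tilde\pi(\ell)}-k_j\lambda_i\;\ge\;\sum_{\ell=1}^m k_\ell\lambda_\ell-k_j\lambda_i,
\end{align*}
which, since $k_j\lambda_i\le k_m\lambda_1$, gives $\max_\pi\prod_{\ell\ne j}e^{-k_\ell\lambda_{\pi(\ell)}}\le e^{\lambda_1 k_m}\prod_{\ell=1}^m e^{-k_\ell\lambda_\ell}$. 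Plugging back completes the proof with $p(k_m):=(m-1)!\,\tilde p(k_m)$.

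The main obstacle is this last rearrangement comparison: one must recognise that a minor corresponds to a partial pairing of the $k$'s with the $\lambda$'s, and that extending the partial pairing back to a full pairing by the mapping $j\mapsto i$ costs at most the single ``missing'' exponent $k_j\lambda_i$, which is uniformly controlled by $k_m\lambda_1$. Everything else (the Leibniz expansion, absorbing Jordan-block polynomial factors into $p(k_m)$, and dividing by the determinant bound) is routine once this combinatorial inequality is in hand.
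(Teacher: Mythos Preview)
Your proposal is correct and follows essentially the same route as the paper: Cramer's rule, Leibniz expansion of each cofactor, and the rearrangement inequality (Lemma~\ref{lem:conti:rearr}) to compare the resulting partial pairing of $k$'s and $\lambda$'s against the full product $\prod_\ell e^{-k_\ell\lambda_\ell}$. The only cosmetic difference is that the paper first invokes Lemma~\ref{lem:det:upper} on the $(m-1)\times(m-1)$ submatrix to get an intermediate cofactor bound $\prod_{i=2}^m e^{-\lambda_i k_{i-1}}$ and then telescopes, whereas your ``extend $\pi$ to $\tilde\pi$ and bound the missing exponent $k_j\lambda_i\le k_m\lambda_1$'' trick reaches the same endpoint in one step.
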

\begin{proof}
Let $\mathbf{O_{i,j}}$ be the matrix obtained by removing the $i$th row and $j$th column of $\begin{bmatrix} \mathbf{C}e^{-k_1 \mathbf{A_c}} \\ \mathbf{C}e^{-k_2 \mathbf{A_c}} \\ \vdots \\ \mathbf{C}e^{-k_m \mathbf{A_c}} \end{bmatrix}$. Let $\mathbf{A_c(j)}$ be the $(m-1) \times (m-1)$ matrix that we can obtain by removing the $j$th row and column of $\mathbf{A_c}$, and $\mathbf{C(j)}$ be the row vector that we can obtain by removing the $j$th element of $\mathbf{C}$.

First, let's consider the case when $\mathbf{A_c}$ is a diagonal matrix. In this case, using properties of diagonal matrices we can easily check that $\mathbf{O_{i,j}}=\begin{bmatrix} \mathbf{C(j)e^{-k_1 \mathbf{A_c(j)}}} \\ \vdots \\  \mathbf{C(j)e^{-k_{i-1} \mathbf{A_c(j)}}} \\ \mathbf{C(j)e^{-k_{i+1} \mathbf{A_c(j)}}} \\ \vdots \\ \mathbf{C}e^{-k_m \mathbf{A_c}} \end{bmatrix}$. 

In other words, $\mathbf{O_{i,j}}$ are also the observability Gramian of $(\mathbf{A_c(j)}, \mathbf{C(j)})$. Since $C_{i,j}$ is the determinant of $\mathbf{O_{i,j}}$, we can apply Lemma~\ref{lem:det:upper} to conclude that there exists a positive polynomial $p_{i,j}$ such that
\begin{align}
|C_{i,j}| \leq \left\{
\begin{array}{ll}
p_{i,j}(k_m) \left(\prod^{j-1}_{l=1} e^{-\lambda_l k_l} \right)\cdot \left(\prod^{i-1}_{l=j} e^{-\lambda_{l+1}k_l} \right)\cdot \left(\prod^{m}_{l=i+1}e^{-\lambda_l k_l} \right) & \mbox{if }i \geq j \ \\
p_{i,j}(k_m) \left(\prod^{i-1}_{l=1} e^{-\lambda_l k_l} \right)\cdot \left(\prod^{j-1}_{l=i} e^{-\lambda_l k_{l+1}} \right)\cdot \left(\prod^{m}_{l=j+1}e^{-\lambda_l k_l} \right) & \mbox{if }i \leq j
\end{array}
\right. 
\label{eqn:cofactorupper}
\end{align}

Then, let's consider the case when $\mathbf{A_c}$ is a general Jordan form matrix. Compared to the case of diagonal matrix $\mathbf{A_c}$, the elements of $\mathbf{O_{i,j}}$ only differ by polynomials on $k_i$ in ratio. Therefore, by the same argument of the proof of Lemma~\ref{lem:det:upper}, we can still find a positive polynomial $p_{i,j}$ satisfying \eqref{eqn:cofactorupper}.

Moreover, since $\lambda_1 \geq \lambda_2 \geq \cdots \geq \lambda_m \geq 0$ and $0 \leq k_1 \leq k_2 \leq \cdots \leq k_m$, we have
\begin{align}
&\left(\prod^{j-1}_{l=1} e^{-\lambda_l k_l} \right)\cdot \left(\prod^{i-1}_{l=j} e^{-\lambda_{l+1}k_l} \right)\cdot \left(\prod^{m}_{l=i+1}e^{-\lambda_l k_l} \right) \leq \prod^m_{i=2}e^{-\lambda_i k_{i-1}}, \nonumber \\
&\left(\prod^{i-1}_{l=1} e^{-\lambda_l k_l} \right)\cdot \left(\prod^{j-1}_{l=i} e^{-\lambda_l k_{l+1}} \right)\cdot \left(\prod^{m}_{l=j+1}e^{-\lambda_l k_l} \right) \leq \prod^m_{i=2}e^{-\lambda_i k_{i-1}}. \nonumber
\end{align}
Therefore, we can further bound the cofactor as follows:
\begin{align}
|C_{i,j}| \leq \max_{i,j}p_{i,j}(k_m) \prod^m_{i=2}e^{-\lambda_i k_{i-1}}. \nonumber
\end{align}
Then, we have
\begin{align}
&\left| \begin{bmatrix} \mathbf{C}e^{-k_1 \mathbf{A_c}} \\ \mathbf{C}e^{-k_2 \mathbf{A_c}} \\ \vdots \\ \mathbf{C}e^{-k_m \mathbf{A_c}} \end{bmatrix}^{-1} \right|_{max}  = \frac{\max_{i,j} |C_{i,j}|}{\left| \det\left( \begin{bmatrix}\mathbf{C}e^{-k_1 \mathbf{A_c}} \\ \mathbf{C}e^{-k_2 \mathbf{A_c}} \\ \vdots \\ \mathbf{C}e^{-k_m \mathbf{A_c}} \end{bmatrix} \right) \right|} \leq \frac{\max_{i,j} |C_{i,j}|}{\epsilon \prod_{1 \leq i \leq m}e^{-k_i \lambda_i}} \nonumber \\
& \leq \frac{\max_{i,j}p_{i,j}(k_m) \prod^m_{i=2}e^{-\lambda_i k_{i-1}}}{\epsilon \prod_{1 \leq i \leq m}e^{-k_i \lambda_i}}\nonumber \\
&=\frac{\max_{i,j}p_{i,j}(k_m)}{\epsilon} e^{\lambda_1 k_1} \prod^{m}_{i=2}e^{\lambda_i(k_i - k_{i-1})} \nonumber \\
&\leq \frac{\max_{i,j}p_{i,j}(k_m)}{\epsilon} e^{\lambda_1 k_1} \prod^{m}_{i=2}e^{\lambda_1(k_i - k_{i-1})} (\because \lambda_1 \geq \lambda_i \geq 0, k_i - k_{i-1} \geq 0 )\nonumber \\
&=\frac{\max_{i,j}p_{i,j}(k_m)}{\epsilon} e^{\lambda_1 k_m} \nonumber \\
&\leq \frac{\sum_{i,j}p_{i,j}(k_m)}{\epsilon} e^{\lambda_1 k_m} \nonumber
\end{align}
Therefore, the lemma is true.
\end{proof}

Now, the question is reduced to an issue regarding that the observability Gramian determinant has to be large enough. We will find a sufficient condition for the determinant to be large in terms of a simpler analytic function. For this, we first need the following lemma that basically asserts that polynomials increases slower than the exponentials.


\begin{lemma}
For any given polynomial $f(x)$, $\lambda > 0$ and $\epsilon>0$,
there exists $a > 0$ such that
\begin{align}
|f(k+x)| \leq \epsilon e^{\lambda \cdot x}
\end{align}
for all $x \geq a(\log(k+1)+1)$ and $k \geq 0$.
\label{lem:conti:ineq}
\end{lemma}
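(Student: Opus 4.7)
The plan is to bound $|f(k+x)|$ above by a polynomial in $k+1$ and in $x+1$ separately, and then split the exponential $e^{\lambda x}$ on the right-hand side into two halves, absorbing one half into the $k$-dependent factor and the other half into the $x$-dependent factor.

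Concretely, let $d=\deg f$. First I would observe that there is a constant $C>0$ with $|f(y)|\le C(|y|+1)^{d}$ for all $y\ge 0$, and then use the elementary inequality $k+x+1\le (k+1)(x+1)$ to get
\begin{align}
|f(k+x)|\ \le\ C\,(k+1)^{d}(x+1)^{d}
\qquad \text{for all } k,x\ge 0. \nonumber
\end{align}
Next I would write $e^{\lambda x}=e^{\lambda x/2}\cdot e^{\lambda x/2}$ and match factors: I want $(k+1)^{d}\le e^{\lambda x/2}$ and $C(x+1)^{d}\le \epsilon\,e^{\lambda x/2}$.

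For the first matching, note that $x\ge a(\log(k+1)+1)$ implies in particular $x\ge a\log(k+1)$, so $e^{\lambda x/2}\ge (k+1)^{\lambda a/2}$; choosing $a\ge 2d/\lambda$ gives $(k+1)^{d}\le e^{\lambda x/2}$ for every $k\ge 0$. For the second matching, since $(x+1)^{d}/e^{\lambda x/2}\to 0$ as $x\to\infty$, there exists $x_{0}$ with $C(x+1)^{d}\le \epsilon\,e^{\lambda x/2}$ for all $x\ge x_{0}$; because $\log(k+1)+1\ge 1$, the hypothesis forces $x\ge a$, so taking $a\ge x_{0}$ is enough. Multiplying the two matchings yields $|f(k+x)|\le \epsilon\,e^{\lambda x}$.

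Thus the choice $a:=\max\{2d/\lambda,\,x_{0}\}$ works, and there is no real obstacle here: the argument is essentially the routine fact that exponentials dominate polynomials, applied uniformly in the two variables by cutting $e^{\lambda x}$ in half. The only point that requires a little attention is making sure that the $a\log(k+1)$ term in the hypothesis is strong enough to swallow the $(k+1)^{d}$ growth uniformly in $k$, which is exactly why the bound is stated with a logarithmic $k$-dependence rather than a constant.
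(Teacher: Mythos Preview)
Your argument is correct. The key steps---the bound $(k+x+1)\le(k+1)(x+1)$ for $k,x\ge 0$, the splitting $e^{\lambda x}=e^{\lambda x/2}e^{\lambda x/2}$, and the two matchings---all hold as you claim, and the final choice $a=\max\{2d/\lambda,x_0\}$ does the job.

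Your route differs from the paper's. The paper first bounds $|f(x)|\le c(1+x^{p+1})$ and then argues by a case split on whether $x\le k$ or $x>k$: in the first case it replaces $k+x$ by $2k$ and uses the logarithmic lower bound on $x$ directly; in the second it replaces $k+x$ by $2x$ and uses a threshold $t$ chosen so that $\frac{1}{\lambda}\log\frac{c}{\epsilon}+\frac{1}{\lambda}\log(1+(2x)^{p+1})\le x$ for $x\ge t$. Your multiplicative separation via $(k+1)(x+1)$ avoids the case split entirely and makes the dependence of $a$ on $d$, $\lambda$, and $\epsilon$ more transparent (namely $a\ge 2d/\lambda$ handles the $k$-factor uniformly, and the $\epsilon$-dependence enters only through $x_0$). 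The paper's approach, on the other hand, keeps the bound additive and works in a single variable at a time, which is slightly less slick but equally valid. Both are routine applications of the fact that exponentials dominate polynomials; yours is a bit cleaner.
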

\begin{proof}
Let the order of $f(x)$ be $p$. Then, there exists $c>0$ such that for all $x \geq 0$,
\begin{align}
|f(x)| \leq c( 1+x^{p+1}). \nonumber
\end{align}
If we consider $\frac{1}{\lambda} \log \frac{c}{\epsilon} + \frac{1}{\lambda} \log(1+(2x)^{p+1})$ and $x$, the former grows logarithmically on $x$ while the later grows linearly on $x$. Therefore, we can find $t>0$ such that
\begin{align}
\frac{1}{\lambda} \log \frac{c}{\epsilon} + \frac{1}{\lambda} \log(1+(2x)^{p+1}) \leq x \nonumber
\end{align}
for all $x \geq t$. We can also finde $a > 0$ such that $a(\log(k+1)+1) \geq \max\left\{ \frac{1}{\lambda} \log \frac{c}{\epsilon} + \frac{1}{\lambda} \log(1+(2k)^{p+1})  , t \right\}$ for all $k \geq 0$.

To check the condition, $|f(k+x)| \leq \epsilon e^{\lambda \cdot x}$, we divide into two cases.

(a) When $x \leq k$,

$|f(k+x)|$ is bounded as follows:
\begin{align}
|f(k+x)| &\leq c\left( 1+ \left(k+x \right)^{p+1} \right) \nonumber \\
&\leq c \left(1+ \left(2k \right)^{p+1} \right) \nonumber \\
&= \epsilon e^{\lambda(\frac{1}{\lambda} \log \frac{c}{\epsilon} + \frac{1}{\lambda} \log(1+(2k)^{p+1}) )} \nonumber \\
&\leq \epsilon e^{\lambda \cdot x} \nonumber
\end{align}
where the last inequality comes from $\frac{1}{\lambda} \log \frac{c}{\epsilon} + \frac{1}{\lambda} \log(1+(2k)^{p+1}) \leq x$.

(b) When $x > k$,

Since $t \leq x$, $\frac{1}{\lambda} \log \frac{c}{\epsilon} + \frac{1}{\lambda} \log(1+(2x)^{p+1}) \leq x$. Then, we can bound $|f(k+x)|$ as follows:
\begin{align}
|f(k+x)| &\leq c\left( 1+ \left(k+x \right)^{p+1} \right) \nonumber \\
& \leq c \left(1+ \left(2x \right)^{p+1} \right) \nonumber \\
&= \epsilon e^{\lambda(\frac{1}{\lambda} \log \frac{c}{\epsilon} + \frac{1}{\lambda} \log(1+(2x)^{p+1})  )} \nonumber \\
&\leq \epsilon e^{\lambda \cdot x}. \nonumber
\end{align}
Therefore, the lemma is proved.
\end{proof}

Now, we give a sufficient condition to guarantee that the determinant of the observability Gramian is large enough.

\begin{lemma}
Let $\mathbf{A_c}$ and $\mathbf{C}$ be given as \eqref{eqn:conti:a} and \eqref{eqn:conti:c}. Denote $a_{i,j}$ and $C_{i,j}$ be the $(i,j)$ element and cofactor of
$\begin{bmatrix} \mathbf{C} e^{-k_1 \mathbf{A_c}}  \\
\mathbf{C} e^{-k_2 \mathbf{A_c}}  \\
\vdots \\
\mathbf{C} e^{-k_m \mathbf{A_c}}\end{bmatrix}$ respectively.
Then there exist $g_{\epsilon}(k):\mathbb{R}^+ \rightarrow \mathbb{R}^+$ and $a \in \mathbb{R}^+$such that for all $\epsilon > 0$ and $k_1,\cdots,k_m$ satisfying
\begin{align}
&(i)~ 0 \leq k_1 < k_2 < \cdots < k_m \nonumber \\
&(ii)~ k_{m}-k_{m-1} \geq g_\epsilon(k_{m-1}) \nonumber\\
&(iii)~ g_\epsilon(k) \leq a(  1+\log (k+1)) \nonumber \\
&(iv)~|\sum_{m-m_{\mu}+1 \leq i \leq m}a_{m,i}C_{m,i}| \geq \epsilon \prod_{1 \leq i \leq m}  e^{- k_i \lambda_{i}} \nonumber
\end{align}
the following inequality holds:
\begin{align}
\left| \det\left(
\begin{bmatrix}
\mathbf{C}e^{-k_1 \mathbf{A_c}} \\
\mathbf{C}e^{-k_2 \mathbf{A_c}} \\
\vdots \\
\mathbf{C}e^{-k_m \mathbf{A_c}}
\end{bmatrix}
\right) \right|
\geq \frac{1}{2} \epsilon \prod_{1 \leq i \leq m}  e^{- k_i \lambda_{i}}. \nonumber
\end{align}
\label{lem:det:lower}
\end{lemma}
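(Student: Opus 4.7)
The plan is to expand the determinant along the last row of the observability Gramian and show that the ``principal'' portion of the expansion, which is already controlled by hypothesis (iv), is not swamped by the remaining terms once $k_m-k_{m-1}$ is moderately large. Concretely, write
\begin{align}
\det\left(\begin{bmatrix}
\mathbf{C}e^{-k_1 \mathbf{A_c}} \\
\vdots \\
\mathbf{C}e^{-k_m \mathbf{A_c}}
\end{bmatrix}\right)
= \underbrace{\sum_{i=1}^{m-m_\mu} a_{m,i}C_{m,i}}_{=:S_1}
+ \underbrace{\sum_{i=m-m_\mu+1}^{m} a_{m,i}C_{m,i}}_{=:S_2}. \nonumber
\end{align}
Hypothesis (iv) gives $|S_2|\geq \epsilon \prod_{l=1}^m e^{-\lambda_l k_l}$, so it suffices to prove $|S_1|\leq \tfrac12 \epsilon \prod_{l=1}^m e^{-\lambda_l k_l}$, after which the triangle inequality finishes the proof.

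For a fixed $i \leq m-m_\mu$, the position $i$ sits inside a Jordan block whose eigenvalue has real part $\lambda_i$ with $\lambda_i > \lambda_m = \lambda_{\mu,1}$ (because the last $m_\mu$ columns are precisely those associated with the smallest real part $\lambda_{\mu,1}$). Reading off the structure of $e^{-k_m \mathbf{A_c}}$, I would bound the matrix entry as $|a_{m,i}| \lesssim q_i(k_m)\, e^{-\lambda_i k_m}$ for some polynomial $q_i$, and the cofactor through the bound \eqref{eqn:cofactorupper} (which is just Lemma~\ref{lem:det:upper} applied to the $(m-1)\times(m-1)$ submatrix with column $i$ deleted), giving
\begin{align}
|C_{m,i}| \leq p_{m,i}(k_{m-1}) \prod_{l=1}^{i-1} e^{-\lambda_l k_l}\prod_{l=i}^{m-1}e^{-\lambda_{l+1}k_l}. \nonumber
\end{align}
Multiplying these two bounds and dividing by $\prod_{l=1}^m e^{-\lambda_l k_l}$, the telescoping cancellations (using $\lambda_1\geq \lambda_2\geq\cdots\geq \lambda_m$ and $k_1\leq\cdots\leq k_m$, exactly as in Lemma~\ref{lem:conti:rearr}) collapse all the intermediate exponential factors and I would arrive at the clean estimate
\begin{align}
\frac{|a_{m,i}C_{m,i}|}{\prod_{l=1}^m e^{-\lambda_l k_l}} \;\lesssim\; q_i(k_m)\,p_{m,i}(k_{m-1})\,e^{-(\lambda_i-\lambda_m)(k_m-k_{m-1})}. \nonumber
\end{align}

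Because $\delta:=\min_{i\leq m-m_\mu}(\lambda_i-\lambda_m)>0$ is a fixed positive number determined only by $\mathbf{A_c}$, the exponential factor decays strictly in the gap $k_m-k_{m-1}$. I would invoke Lemma~\ref{lem:conti:ineq} (with rate $\delta/2$ say, applied to the polynomials $q_i\cdot p_{m,i}$) to absorb both polynomial pieces into $e^{\delta(k_m-k_{m-1})/2}$ as soon as $k_m-k_{m-1}\geq a_0(\log(k_{m-1}+1)+1)$ for some constant $a_0$ depending only on the polynomials (hence only on $\mathbf{A_c},\mathbf{C}$). After this absorption,
\begin{align}
\frac{|a_{m,i}C_{m,i}|}{\prod_{l=1}^m e^{-\lambda_l k_l}} \;\lesssim\; e^{-\frac{\delta}{2}(k_m-k_{m-1})}, \nonumber
\end{align}
and I would then choose $g_\epsilon(k):=a_0(\log(k+1)+1)+\frac{2}{\delta}\log\frac{2mK}{\epsilon}$ (with $K$ the implicit constant above) so that summing over the at most $m-m_\mu$ indices yields $|S_1|\leq \tfrac12\epsilon\prod_{l=1}^m e^{-\lambda_l k_l}$. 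This $g_\epsilon$ has the required logarithmic-in-$k$ growth, so condition (iii) is satisfied with an appropriate $a=a(\epsilon)$, and the lemma follows.

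The main technical obstacle is the bookkeeping that produces the single clean factor $e^{-(\lambda_i-\lambda_m)(k_m-k_{m-1})}$: one has to use monotonicity of the $\lambda_l$'s and the $k_l$'s together with the telescoping structure of the cofactor bound to cancel all intermediate $k_l$'s (for $j\leq l\leq m-1$) so that only the final gap $k_m-k_{m-1}$ remains. Everything else is a standard application of Lemmas~\ref{lem:det:upper} and~\ref{lem:conti:ineq}.
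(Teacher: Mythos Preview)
Your proposal is correct and follows essentially the same approach as the paper: expand along the last row, use hypothesis (iv) for the columns $m-m_\mu+1,\dots,m$, and show the remaining terms are dominated by an exponential in the gap $k_m-k_{m-1}$, which is then traded against the polynomial via Lemma~\ref{lem:conti:ineq}. The only cosmetic difference is that the paper works with the full permutation sum and invokes the second inequality of Lemma~\ref{lem:conti:rearr} directly to produce the factor $e^{-\lambda'(k_m-k_{m-1})}$ (with $\lambda'=\lambda_{\mu-1,1}-\lambda_{\mu,1}$), whereas you bound $|a_{m,i}|$ and $|C_{m,i}|$ separately via \eqref{eqn:cofactorupper} and then do the telescoping yourself; your $\delta$ equals the paper's $\lambda'$, and the resulting bounds coincide.
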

\begin{proof}
First of all, because $\mathbf{A_c}$ is in Jordan form, it is well known that the elements of $e^{-k \mathbf{A_c}}$ take a specific form~\cite{Chen}. Thus, we can prove that for all $a_{i,j}$ there exists a polynomial $p_{i,j}(k)$ such that $a_{i,j}=p_{i,j}(k_i)e^{-k_i (\lambda_j+j \omega_j)}$.
Then, we can find $p(k)$ in the form of $a(1+k^b)$ $(a>0)$ such that $p(k) \geq \max_{i,j} |p_{i,j}(k)|$ for all $k \geq 0$.
Denote $\lambda':=\lambda_{\mu-1,1}-\lambda_{\mu,1}>0$.
\begin{align}
&\left| \det\left(\begin{bmatrix} \mathbf{C}e^{-k_1 \mathbf{A_c}} \\
\mathbf{C}e^{-k_2 \mathbf{A_c}} \\
\vdots \\
\mathbf{C}e^{-k_m \mathbf{A_c}}
\end{bmatrix}
\right)\right| = \left| \sum_{ 1 \leq i \leq m } a_{m,i} C_{m,i} \right|  = \left| \sum_{\sigma \in \Sigma_m}  sgn(\sigma) \prod^m_{i=1} a_{i,\sigma(i)} \right|\nonumber \\
& \geq \left| \sum_{ m-m_{\mu}+1 \leq i \leq m } a_{m,i} C_{m,i} \right| - \left| \sum_{ 1 \leq i \leq m-m_{\mu} } a_{m,i} C_{m,i} \right| \nonumber\\
&=
\left| \sum_{\sigma \in S_m, m-m_{\mu}+1 \leq \sigma(m) \leq m} sgn(\sigma) \prod^m_{i=1} a_{i,\sigma(i)} \right| - \left| \sum_{\sigma \in S_m, 1 \leq \sigma(m) \leq m-m_{\mu}} sgn(\sigma) \prod^m_{i=1} a_{i,\sigma(i)} \right|
\nonumber \\
& \geq \epsilon \prod_{1 \leq i \leq m}e^{-k_i \lambda_i} - \left| \sum_{ 1 \leq i \leq m-m_{\mu} } a_{m,i} C_{m,i} \right|
(\because Assumption~(iv))
\nonumber \\
& = \epsilon \prod_{1 \leq i \leq m}e^{-k_i \lambda_i} - \left| \sum_{\sigma \in S_m, 1 \leq \sigma(m) \leq m-m_{\mu}} sgn(\sigma) \prod^m_{i=1} a_{i,\sigma(i)} \right| 
\nonumber \\
& \geq \epsilon \prod_{1 \leq i \leq m}e^{-k_i \lambda_i} - \sum_{\sigma \in S_m, 1 \leq \sigma(m) \leq m-m_{\mu}} \left|  \prod^m_{i=1} a_{i,\sigma(i)} \right| \nonumber \\
& = \epsilon \prod_{1 \leq i \leq m}e^{-k_i \lambda_i} - \sum_{\sigma \in S_m, 1 \leq \sigma(m) \leq m-m_{\mu}} \left|  \prod^m_{i=1} p_{i,\sigma(i)}(k_i)e^{-k_i (\lambda_{\sigma(i)}+j \omega_{\sigma(i)})} \right| \nonumber \\
& \geq \epsilon \prod_{1 \leq i \leq m}e^{-k_i \lambda_i} - \sum_{\sigma \in S_m, 1 \leq \sigma(m) \leq m-m_{\mu}} \left(  e^{(\lambda_m-\lambda_{\sigma(m)})(k_m-k_{\sigma^{-1}(m)})} \cdot \prod^m_{i=1} p(k_i) e^{-k_i \lambda_i}  \right) (\because Lemma~\ref{lem:conti:rearr}) \nonumber \\
& \geq \prod_{1 \leq i \leq m} e^{-k_i \lambda_i} \left( \epsilon - \sum_{\sigma \in S_m, 1 \leq \sigma(m) \leq m-m_{\mu}} p(k_m)^m e^{(\lambda_m-\lambda_{\sigma(m)})(k_m - k_{\sigma^{-1}(m)})} \right) (\because p(k) \mbox{ is an increasing function.}) \nonumber \\
& \geq \prod_{1 \leq i \leq m} e^{-k_i \lambda_i} \left( \epsilon - \sum_{\sigma \in S_m, 1 \leq \sigma(m) \leq m-m_{\mu}} p(k_m)^m e^{-\lambda'(k_m - k_{m-1})} \right)
(\because \lambda_{\sigma(m)} - \lambda_m  \geq \lambda_{\mu-1,1} - \lambda_{\mu,1} = \lambda' ) \nonumber \\
& \geq \prod_{1 \leq i \leq m} e^{-k_i \lambda_i} \left( \epsilon - m! p(k_m)^m e^{-\lambda'(k_m - k_{m-1})} \right) \nonumber
\end{align}
Since $m! p(x)^m$ is a polynomial in $x$, by Lemma~\ref{lem:conti:ineq} there exists $g_{\epsilon}(k):\mathbb{R}^+ \rightarrow \mathbb{R}^+$ such that \\
(i) $g_{\epsilon}(k) \lesssim \log (k+1) + 1$\\
(ii) $| m! p(k+x)^m  | \leq \frac{\epsilon}{2}e^{\lambda' \cdot x}$ for all $x \geq g_\epsilon(k)$ and $k \geq 0$.\\
Therefore, for all $k_m$ such that $k_m-k_{m-1} \geq g_{\epsilon}(k_{m-1})$,
\begin{align}
 &\left| \det \left( \begin{bmatrix} \mathbf{C}e^{-k_1 \mathbf{A_c}} \\
\mathbf{C}e^{-k_2 \mathbf{A_c}} \\
\vdots \\
\mathbf{C}e^{-k_m \mathbf{A_c}}
\end{bmatrix} \right)  \right|
\geq \prod_{1 \leq i \leq m} e^{-k_i \lambda_i} \left( \epsilon - \frac{\epsilon}{2}e^{\lambda' \cdot (k_m - k_{m-1})} e^{-\lambda' \cdot (k_m - k_{m-1})} \right)
\geq \frac{\epsilon}{2} \prod_{1 \leq i \leq m} e^{-k_i \lambda_i}. \nonumber
\end{align}
Thus, the lemma is proved.
\end{proof}

\subsection{Uniform Convergence of a Set of Analytic Functions (Continuous-Time Systems)}
\label{app:unif:conti}
We will prove that after introducing nonuniform sampling, the determinant of the observability Gramian will become large enough regardless of the erasure pattern. Since the determinant of the observability Gramian is an analytic function, to prove that the observability Gramian is large enough it is enough prove that a set of specific analytic functions are large enough. To this end, we will prove a set of analytic functions are uniformly away from $0$.

First, we prove that an analytic function can become zero only on sets of zero Lebesgue-measure set, as long as the function is not zero for all values. The intuition for the lemma is that analytic functions can be locally determined by that Taylor expansions. Thus, if an analytic function is zero for any open interval with non-zero Lebesgue-measure, it is identically zero.
\begin{lemma}
For a given nonnegative integer $p$ and distinct positive reals $\omega_{i,1}, \omega_{i,2}, \cdots, \omega_{i,\nu_i}$, define
\begin{align}
f(x):= \sum^{p}_{i=0} x^i \left( \sum^{\nu_i}_{j=1} a_{R,i,j} \cos(\omega_{i,j}x) + a_{I,i,j} \sin(\omega_{i,j}x) \right) \nonumber
\end{align}
where at least one coefficient among $a_{R,i,j}, a_{I,i,j}$ is non-zero.
Let $X$ be a uniform random variable in $[0,T]~(T>0)$. Then, for all $h \in \mathbb{R}$, the following is true:
\begin{align}
\mathbb{P} \{ | f(X)-h | < \epsilon \} \rightarrow 0 \mbox{ as } \epsilon \downarrow 0. \nonumber
\end{align}
\label{lem:uni:1}
\end{lemma}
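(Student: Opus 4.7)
The plan is to observe that $f(x)-h$ is a real-analytic function that cannot be identically zero, so its zero set is at most countable (hence Lebesgue-null), and then to extract the desired convergence by continuity of the Lebesgue measure from above.

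First I would argue that $f-h$ is not the zero function. Rewriting $\cos(\omega x)=\tfrac{1}{2}(e^{i\omega x}+e^{-i\omega x})$ and $\sin(\omega x)=\tfrac{1}{2i}(e^{i\omega x}-e^{-i\omega x})$, the function $f(x)-h$ takes the form of an exponential polynomial
\[
f(x)-h \;=\; -h \;+\; \sum_{(i,j)} x^{i}\bigl(c^{+}_{i,j}e^{i\omega_{i,j}x}+c^{-}_{i,j}e^{-i\omega_{i,j}x}\bigr),
\]
where the frequencies $\{0,\pm\omega_{i,j}\}$ are pairwise distinct (because the $\omega_{i,j}$ are distinct and strictly positive). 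A standard fact about exponential polynomials --- distinct complex exponents give linearly independent terms $x^{i}e^{\lambda x}$ over $\mathbb{C}$ --- then forces $f-h\equiv 0$ to require every coefficient $c^{\pm}_{i,j}$ to vanish (and $h=0$). Converting back, this would mean every $a_{R,i,j}$ and $a_{I,i,j}$ is zero, contradicting the hypothesis that at least one coefficient is nonzero.

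Next, because $f-h$ is real-analytic on $\mathbb{R}$ and not identically zero, its zero set $Z:=\{x\in[0,T]:f(x)=h\}$ consists of isolated points, hence is at most countable. In particular $|Z|_{\mathcal L}=0$, so $\mathbb{P}\{f(X)=h\}=0$.

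Finally I would conclude by continuity from above. Setting
\[
A_{\epsilon}:=\{x\in[0,T]:|f(x)-h|<\epsilon\},
\]
the family $A_{\epsilon}$ is monotonically decreasing as $\epsilon\downarrow 0$, with $\bigcap_{\epsilon>0}A_{\epsilon}=Z$ since $f$ is continuous. Since $X$ is uniform on $[0,T]$ and hence absolutely continuous with respect to Lebesgue measure, continuity of measure from above gives $\mathbb{P}\{A_\epsilon\}\to \mathbb{P}\{Z\}=0$, which is the desired statement.

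The only nontrivial step is the first one: verifying that the specific form of $f$ rules out $f\equiv h$. The argument is routine modulo the linear independence of $\{x^{i}e^{\lambda_k x}\}_{k,i}$ for distinct complex $\lambda_k$, which can be proved by successively applying the operator $(\tfrac{d}{dx}-\lambda_k)^{m_k+1}$ to kill one exponential group at a time, or by noting that any nontrivial such combination has Wronskian proportional to $\prod_{k<k'}(\lambda_{k'}-\lambda_k)$ times a nonzero factor. Everything else is a short measure-theoretic tidy-up.
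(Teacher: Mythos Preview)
Your argument is correct and reaches the same conclusion as the paper, but the step showing $f-h\not\equiv 0$ is handled differently. The paper splits into cases: for $i^\star=0$ (no polynomial factor) it computes $\int_s^{s+r}(f(x)-h)^2\,dx$ and observes that the diagonal terms grow linearly in $r$ while all cross terms are bounded sinusoids, so the integral is eventually positive; for $i^\star\ge 1$ it argues that the highest-degree polynomial factor eventually dominates the lower-degree ones. Your route via complex exponentials and the linear independence of $\{x^i e^{i\lambda x}\}$ for distinct $\lambda$ is more algebraic and treats both cases at once, which is arguably cleaner. The remaining steps---isolated zeros of a nonzero real-analytic function, and continuity of measure from above (the paper phrases this as right-continuity of the CDF of $|f(X)-h|$)---are essentially identical.

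One small caveat: the hypothesis only says that $\omega_{i,1},\ldots,\omega_{i,\nu_i}$ are distinct \emph{for each fixed $i$}, not that all the $\omega_{i,j}$ are globally distinct, so your sentence ``the frequencies $\{0,\pm\omega_{i,j}\}$ are pairwise distinct'' is slightly off. This does not break your argument: group by the distinct values $\lambda$ among $\{0,\pm i\omega_{i,j}\}$, obtain a polynomial coefficient $P_\lambda(x)$ for each, and use linear independence to force $P_\lambda\equiv 0$; since for each fixed $i$ at most one $j$ contributes to a given $\lambda$, the coefficient of $x^i$ in $P_\lambda$ is exactly one $c^{\pm}_{i,j}$, and you still recover $a_{R,i,j}=a_{I,i,j}=0$ for all $(i,j)$.
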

\begin{proof}
First, notice that $f(x)-h$ is an analytic function. It is well-known that if an analytic function $f(x)-h$ is not identically zero, the set $\{x \in [0,T] : f(x)-h=0 \}$ is an isolated set~\cite{krantz2002primer}, which is countable. Therefore, $\mathbb{P}\{ |f(X)-h|= 0 \} =0$. Moreover, $\mathbb{P}\{ |f(X)-h| < \epsilon \} \leq \mathbb{P}\{ |f(X)-h| \leq \epsilon \}$, which is a cumulative distribution function. Since cumulative distribution functions are right-continuous, $\lim_{\epsilon \downarrow 0} \mathbb{P}\{|f(X)-h| < \epsilon \}
\leq \lim_{\epsilon \downarrow 0} \mathbb{P}\{|f(X)-h| \leq \epsilon \} = \mathbb{P}\{|f(X)-h| = 0 \}=0$.

Thus, the proof reduces to proving $f(x)-h$ is not zero for all $x$.
Let $i^*$ be the largest $i$ such that either $a_{R,i,j}$ or $a_{I,i,j}$ is non-zero.

(i) When $i^*=0$,

In this case, there are no polynomial terns and only sinusoidal terms exist.
Let's compute the energy of $f(x)-h$ in interval $[s, s+r]$ and prove that $f(x)-h$ is not identically zero for all $s$ as long as $r$ is large enough.
\begin{align}
&\int^{s+r}_{s} \left( \sum^{\nu_{i^*}}_{j=1} (a_{R,i^*,j}\cos(\omega_{i^*,j} x) + a_{I,i^*,j} \sin( \omega_{i^*,j} x)) -h \right)^2  dx \nonumber \\
&= \int^{s+r}_{s}  \sum^{\nu_{i^*}}_{j=1} \left(a^2_{R,i^*,j} \cos^2 ( \omega_{i^*,j} x )+a^2_{I,i^*,j} \sin^2 (\omega_{i^*,j} x) \right) + h^2 +2 \sum_{i\leq j} a_{R,i^*,i}a_{I,i^*,j} \cos(\omega_{i^*,i}x)  \sin(\omega_{i^*,j}x) \nonumber \\
&+ 2 \sum_{i < j} a_{R,i^*,i}a_{R,i^*,j} \cos(\omega_{i^*,i}x)\cos(\omega_{i^*,j}x)
+ 2 \sum_{i < j} a_{I,i^*,i}a_{I,i^*,j} \sin(\omega_{i^*,i}x)\sin(\omega_{i^*,j}x) \nonumber \\
&- 2 \sum^{\nu_{i^*}}_{j=1} (a_{R,i^*,j}\cos(\omega_{i^*,j} x) + a_{I,i^*,j} \sin( \omega_{i^*,j} x)) h
\,\,dx \nonumber \\
&= \int^{s+r}_{s}  \sum^{\nu_{i^*}}_{j=1} \left(a^2_{R,i^*,j} \frac{1+\cos 2\omega_{i^*,j}x}{2}+a^2_{I,i^*,j} \frac{1-\cos 2\omega_{i^*,j}x}{2}\right) \,\,dx\nonumber \\
&+ \int^{s+r}_{s} \sum_{i\leq j} a_{R,i^*,i}a_{I,i^*,j}\left(\sin\left(\left(\omega_{i^*,j}+\omega_{i^*,j}\right)x \right) - \sin\left(\left(\omega_{i^*,j}-\omega_{i^*,j}\right)x \right)\right)\,\,dx  \nonumber \\
&+ \int^{s+r}_{s} \sum_{i < j} a_{R,i^*,i}a_{R,i^*,j} \left(\cos\left(\left(\omega_{i^*,j}-\omega_{i^*,j}\right)x \right) + \cos\left(\left(\omega_{i^*,j}+\omega_{i^*,j}\right)x \right)\right)\,\,dx \nonumber \\
&+ \int^{s+r}_{s} \sum_{i < j} a_{I,i^*,i}a_{I,i^*,j} \left(\cos\left(\left(\omega_{i^*,j}-\omega_{i^*,j}\right)x \right) - \cos\left(\left(\omega_{i^*,j}+\omega_{i^*,j}\right)x \right)\right)\,\,dx  \nonumber \\
&- \int^{s+r}_{s} 2 \sum^{\nu_{i^*}}_{j=1} (a_{R,i^*,j}\cos(\omega_{i^*,j} x) + a_{I,i^*,j} \sin( \omega_{i^*,j} x)) h \,\,dx.  \label{eqn:analyticnotzero}
\end{align}
Therefore, as $r$ increases, the first term in \eqref{eqn:analyticnotzero} arbitrarily increases regardless of $s$, while the remaining terms in \eqref{eqn:analyticnotzero} are sinusoidal and so bounded. Thus, $f(x)-h$ is not identically zero for all $s$ when $r$ is large enough. Thus, there exist $\delta > 0$ and $r > 0$ such that for all $s$, $|f(x)-x| \geq \delta$ holds for some $x \in [s,s+r]$.

(ii) When $i^* \geq 1$,

In this case, we have polynomial terms and we will prove that the term with the highest degree will dominate the reaming terms.
By the argument of (i), we can find $\delta > 0$ and $r > 0$ such that for all $s \geq 0$ we can find $x \in [s,s+r]$ satisfying
\begin{align}
|f(x)-h| \geq \delta x^{i^*}  - \sum^{i^*-1}_{i=0} \left( \sum^{\nu_i}_{j=1} |a_{R,i,j}|+|a_{I,i,j}| \right) x^i - |h|. \nonumber
\end{align}
Since we can choose $s$ arbitrarily large, $|f(x)-h|$ has to be greater than $0$ for some $x$. Thus, $f(x)-h$ is not identically zero.

Therefore, the lemma is true.
\end{proof}

To prove uniform convergence, we need the following Dini's theorem which says that for compact sets, pointwise convergence implies uniform convergence. The intuition behind this theorem is as follows: since we can find a finite open cover for a compact set, we can convert the uniform convergence of an infinite number of functions to the uniform convergence of only finitely many functions when the domain is compact. The uniform convergence of a finite number of functions immediately follows from pointwise convergence.

\begin{theorem}[Dini's Theorem]
\cite[p. 81]{Gelbaum}
If $\{f_n\}$ is a sequence of functions defined on a set $A$ and converging on $A$ to a function $f$, and if\\
(i) the convergence is monotonic,\\
(ii) $f_n$ is continuous on $A$, $n=1,2,\cdots$\\
(iii) $f$ is continuous on $A$,\\
(iv) $A$  is compact,\\
then the convergence is uniform on $A$.\label{thm:dini}
\end{theorem}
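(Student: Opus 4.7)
The plan is standard and well known: reduce to a monotonically decreasing sequence of nonnegative continuous functions converging pointwise to zero, then exploit compactness and continuity to upgrade pointwise convergence to uniform.

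First, I would normalize. By replacing $f_n$ with $f_n - f$ or $f - f_n$ according to whether the monotonic convergence is from above or below, I can assume without loss of generality that $g_n := |f_n - f|$ is a sequence of nonnegative continuous functions on $A$ satisfying $g_{n+1}(x) \leq g_n(x)$ and $g_n(x) \downarrow 0$ for every $x \in A$. Uniform convergence of $f_n$ to $f$ on $A$ is then equivalent to uniform convergence of $g_n$ to $0$ on $A$.

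Next, I would build a local covering argument. Fix $\varepsilon > 0$. For each $x \in A$, pointwise convergence yields an index $N(x) \in \mathbb{N}$ with $g_{N(x)}(x) < \varepsilon/2$. Continuity of $g_{N(x)}$ at $x$ then produces an open neighborhood $U(x) \ni x$ in $A$ on which $g_{N(x)} < \varepsilon$. The key point is that pointwise monotonicity transports this bound to all later indices: for every $n \geq N(x)$ and every $y \in U(x)$,
\begin{equation*}
g_n(y) \;\leq\; g_{N(x)}(y) \;<\; \varepsilon.
\end{equation*}

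Finally, I would invoke compactness. The collection $\{U(x) : x \in A\}$ is an open cover of $A$, so compactness extracts a finite subcover $U(x_1), \ldots, U(x_k)$. Setting $N := \max_{1 \leq i \leq k} N(x_i)$, any $y \in A$ belongs to some $U(x_i)$, and for every $n \geq N \geq N(x_i)$ we conclude $g_n(y) < \varepsilon$. Since $\varepsilon$ was arbitrary, this is uniform convergence of $g_n$ to $0$, hence of $f_n$ to $f$, on $A$. The only subtle point — really bookkeeping rather than a genuine obstacle — is verifying that the reduction in the first step handles both directions of monotonic convergence cleanly, so that the single-index bound supplied by continuity propagates to all larger indices via monotonicity on each neighborhood.
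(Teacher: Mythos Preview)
Your argument is the standard correct proof of Dini's theorem. The paper itself does not give a proof at all; it simply cites \cite[p.~81]{Gelbaum} and moves on. Your open-cover-plus-compactness argument is exactly the classical one found in that reference (and most analysis texts), so there is nothing to compare: you have supplied a complete proof where the paper only gestures at an external source.
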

\begin{proof}
See \cite[p. 81]{Gelbaum} for the proof.
\end{proof}

Now, using the pointwise convergence of Lemma~\ref{lem:uni:1} and Dini's theorem, we can prove the uniform convergence of the relevant functions over a set of parameters.


\begin{lemma}
Let $p$, $\nu_{0}, \cdots, \nu_p $ be nonnegative integers with $\nu_{p}>0$. Suppose $\gamma$ and $\Gamma$ are strictly positive reals such that $\gamma \leq \Gamma$. For each $0 \leq i \leq p$, $\omega_{i,1},\omega_{i,2}, \cdots, \omega_{i,\nu_i}$ are distinct reals.
Let $X$ be a uniform random variable on $[0,T]$ for some $T > 0$. Then, for all $m,n$ such that $0 \leq m \leq p$ and $1 \leq n \leq \nu_m$, we have the following inequality:
\begin{align}
\sup_{ |a_{m,n}| \geq \gamma, \forall i,j, |a_{i,j}| \leq \Gamma} \mathbb{P}\left\{\left| \sum^{p}_{i=0} X^i \left( \sum^{\nu_i}_{j=1} a_{i,j}e^{j\omega_{i,j}X} \right) \right| < \epsilon \right\} \rightarrow 0 \mbox{ as }\epsilon \downarrow 0 \nonumber
\end{align}
where $a_{i,j}$ are taken from $\mathbb{C}$.
\label{lem:single}
\end{lemma}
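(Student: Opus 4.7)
The plan is to combine the pointwise convergence supplied by Lemma~\ref{lem:uni:1} with the compactness of the coefficient set through a contradiction argument. Let $K := \{\mathbf{a} = (a_{i,j})_{0 \leq i \leq p,\, 1 \leq j \leq \nu_i} \in \mathbb{C}^{\sum_i \nu_i} : |a_{m,n}| \geq \gamma,\ |a_{i,j}| \leq \Gamma\, \forall i,j\}$, which is closed and bounded and therefore compact. Write $f_{\mathbf{a}}(x) := \sum_{i=0}^{p} x^{i} \sum_{j=1}^{\nu_i} a_{i,j}\, e^{j \omega_{i,j} x}$. The map $(\mathbf{a},x) \mapsto f_{\mathbf{a}}(x)$ is jointly continuous on the compact set $K \times [0,T]$, so $\mathbf{a} \mapsto f_{\mathbf{a}}$ is continuous from $K$ into $C([0,T])$ equipped with the sup norm.

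First I would establish pointwise convergence: for each fixed $\mathbf{a} \in K$, $\lim_{\epsilon \downarrow 0} \mathbb{P}\{|f_{\mathbf{a}}(X)| < \epsilon\} = 0$. Expanding $f_{\mathbf{a}}$ into real and imaginary parts produces two real-valued functions of precisely the form handled in Lemma~\ref{lem:uni:1}. The hypothesis $|a_{m,n}| \geq \gamma > 0$ forces at least one of $\Re(a_{m,n}), \Im(a_{m,n})$ to be non-zero, hence either $\Re f_{\mathbf{a}}$ or $\Im f_{\mathbf{a}}$ has a non-vanishing coefficient in the expansion of Lemma~\ref{lem:uni:1} (with $h=0$). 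Since $|f_{\mathbf{a}}(X)| < \epsilon$ implies both $|\Re f_{\mathbf{a}}(X)| < \epsilon$ and $|\Im f_{\mathbf{a}}(X)| < \epsilon$, the pointwise statement follows directly from Lemma~\ref{lem:uni:1}.

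Next, I would suppose for contradiction that the uniform conclusion fails. Then there exist $\delta > 0$, a sequence $\epsilon_k \downarrow 0$, and $\mathbf{a}_k \in K$ with $\mathbb{P}\{|f_{\mathbf{a}_k}(X)| < \epsilon_k\} \geq \delta$ for every $k$. By compactness of $K$, pass to a subsequence along which $\mathbf{a}_k \to \mathbf{a}^* \in K$; by joint continuity, $f_{\mathbf{a}_k} \to f_{\mathbf{a}^*}$ uniformly on $[0,T]$. Now fix any $\eta > 0$. For $k$ sufficiently large, $\sup_{x \in [0,T]} |f_{\mathbf{a}_k}(x) - f_{\mathbf{a}^*}(x)| < \eta$ and $\epsilon_k < \eta$, whence $\{x : |f_{\mathbf{a}_k}(x)| < \epsilon_k\} \subseteq \{x : |f_{\mathbf{a}^*}(x)| < 2\eta\}$, yielding $\delta \leq \mathbb{P}\{|f_{\mathbf{a}^*}(X)| < 2\eta\}$ for every $\eta > 0$. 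Letting $\eta \downarrow 0$ and invoking the pointwise statement for $\mathbf{a}^* \in K$ (which is valid because $|a_{m,n}^*| \geq \gamma$ persists in the limit) produces $\delta \leq 0$, the desired contradiction.

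There is no deep obstacle here; the main conceptual point is simply that the two hypotheses play complementary roles --- the upper bound $|a_{i,j}| \leq \Gamma$ supplies compactness, while the lower bound $|a_{m,n}| \geq \gamma$ is a closed condition that survives taking limits, ensuring Lemma~\ref{lem:uni:1} remains applicable to the limit point $\mathbf{a}^*$. One could alternatively phrase the argument via Dini's theorem (Theorem~\ref{thm:dini}) applied to $g_\epsilon(\mathbf{a}) := \mathbb{P}\{|f_{\mathbf{a}}(X)| \leq \epsilon\}$, but verifying continuity of $g_\epsilon$ in $\mathbf{a}$ requires ruling out the degenerate case $|f_{\mathbf{a}}|^2 \equiv \epsilon^2$ and is marginally more delicate than the compactness-plus-subsequence argument above, which is why I would prefer the contradiction route.
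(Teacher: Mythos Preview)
Your argument is correct and is cleaner than the paper's. Both proofs reduce to Lemma~\ref{lem:uni:1} for the pointwise statement and then exploit compactness of the coefficient domain, but they upgrade pointwise to uniform convergence differently. The paper applies Dini's theorem (Theorem~\ref{thm:dini}) to the family $\mathbf{a}\mapsto \mathbb{P}\{|f_{\mathbf{a}}(X)|<\epsilon\}$ along a sequence $\epsilon_i\downarrow 0$, which obliges it to verify that this map is continuous in $\mathbf{a}$; that verification takes up most of the proof and uses Lemma~\ref{lem:uni:1} a second time (with $h=\pm\epsilon_i$) to control the mass near the level set $\{|f_{\mathbf{a}}|=\epsilon_i\}$. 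Your subsequence-plus-contradiction route bypasses that continuity check entirely by working at the level of the functions $f_{\mathbf{a}}$ themselves, where uniform convergence on $[0,T]$ is immediate from linearity in $\mathbf{a}$. The paper's approach is more ``packaged'' but heavier; yours is shorter and exposes more transparently why the closed lower bound $|a_{m,n}|\ge\gamma$ is exactly what is needed to keep Lemma~\ref{lem:uni:1} applicable at the limit point.

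One small omission: Lemma~\ref{lem:uni:1} as stated requires the $\omega_{i,j}$ to be \emph{positive}, whereas here they are merely distinct reals. The paper handles this at the very start by the shift $\omega_{i,j}\mapsto \omega_{i,j}-\omega_{\min}$ (which leaves $|f_{\mathbf{a}}(X)|$ unchanged since $|e^{j\omega_{\min}X}|=1$); you should insert the same one-line reduction before invoking Lemma~\ref{lem:uni:1}.
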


\begin{proof}
The purpose of this proof is reducing the lemma to Dini's theorem (Theorem~\ref{thm:dini}).

First, we will assume the $w_{i,j}$ are positive without loss of generality. To justify this, let $\omega_{min}=\min\{ \min_{i,j} { \omega_{i,j}}, 0 \}-\delta$ for some $\delta>0$. Then,
\begin{align}
&\sup_{|a_{m,n}| \geq \gamma, |a_{i,j}| \leq \Gamma} \mathbb{P}\left\{ \left| \sum^{p}_{i=0} X^i \left( \sum^{\nu_i}_{j=1} a_{i,j}e^{j\omega_{i,j}X} \right) \right| < \epsilon \right\} \nonumber\\
&=\sup_{|a_{m,n}| \geq \gamma, |a_{i,j}| \leq \Gamma} \mathbb{P}\left\{ \left| \sum^{p}_{i=0} X^i \left( \sum^{\nu_i}_{j=1} a_{i,j}e^{j(\omega_{i,j}-\omega_{min})X} \right) \right| < \epsilon \right\}. \nonumber
\end{align}
Here, for each $i$, $\omega_{i,1}-\omega_{min},\omega_{i,2}-\omega_{min},\cdots,\omega_{i,\nu_i}-\omega_{min}$ are distinct and strictly positive. Therefore, without loss of generality, we can assume that for each i, $\omega_{i,1},\omega_{i,2},\cdots,\omega_{i,\nu_i}$ are distinct and strictly positive.

Let $a_{i,j}=a_{R,i,j}-j a_{I,i,j}$ where $a_{R,i,j}$ and $a_{I,i,j}$ are real. Since $|a_{m,n}| \geq \gamma$, at least one of $|a_{R,m,n}|$ or $|a_{I,m,n}|$ should be greater than $\frac{\gamma}{\sqrt{2}}$. First, consider the case when $|a_{R,m,n}| \geq \frac{\gamma}{\sqrt{2}}$. It is sufficient to prove that the real part of $\sum^{p}_{i=0} X^i \left( \sum^{\nu_i}_{j=1} a_{i,j}e^{j\omega_{i,j}X} \right)$ satisfies the lemma, i.e.
\begin{align}
\sup_{ a_{R,m,n} \geq \frac{\gamma}{\sqrt{2}} , |a_{R,i,j}| \leq \Gamma, |a_{I,i,j}| \leq \Gamma} \mathbb{P}\left\{\left| \sum^{p}_{i=0} X^i \left( \sum^{\nu_i}_{j=1} a_{R,i,j}\cos(\omega_{i,j}X)+a_{I,i,j}\sin(\omega_{i,j}X) \right) \right| < \epsilon \right\} \rightarrow 0 \mbox{ as }\epsilon \downarrow 0.\nonumber
\end{align}
Here, we take the supremum over $a_{R,m,n}\geq \frac{\gamma}{\sqrt{2}}$ instead of the supremum over $|a_{R,m,n}|\geq \frac{\gamma}{\sqrt{2}}$ by symmetry.

Now, we apply Dini's theorem~\ref{thm:dini} and prove the claim.

Fix a positive sequence $\epsilon_i$ such that $ \epsilon_i \downarrow 0$ as $i \rightarrow \infty$. Define a sequence of functions $\{ f_i \}$ as
\begin{align}
f_i(a_{R,1,1},a_{I,1,1},\cdots,a_{I,p,\nu_p}) := \mathbb{P}\left\{\left| \sum^{p}_{i=0} X^i \left( \sum^{\nu_i}_{j=1} a_{R,i,j}\cos(\omega_{i,j}X)+a_{I,i,j}\sin(\omega_{i,j}X) \right) \right| < \epsilon_i \right\}\nonumber
\end{align}
where the domain $A$ of the functions is $A:=\{(a_{R,1,1},a_{I,1,1},\cdots,a_{I,p,{\nu_p}}): a_{R,m,n} \geq \frac{\gamma}{\sqrt{2}} , |a_{R,i,j}| \leq \Gamma, |a_{I,i,j}| \leq \Gamma\}$. Let $f(a_{R,1,1},a_{I,1,1},\cdots,a_{I,p,\nu_p})$ be the identically zero function.
Then, we will prove that $\{f_i\}$ converges to $f=0$ uniformly on $A$ by checking the conditions of Theorem~\ref{thm:dini}.

$\bullet$ $f_i$ point-wisely converges to $f$:

Since $a_{R,m,n} \geq \frac{\gamma}{\sqrt{2}}$, $\sum^{p}_{i=0} x^i \left( \sum^{\nu_i}_{j=1} a_{R,i,j}\cos(\omega_{i,j}x)+a_{I,i,j}\sin(\omega_{i,j}x) \right)$ satisfies the assumptions of Lemma~\ref{lem:uni:1}. Thus, for all $h$
\begin{align}
\mathbb{P}\left\{\left| \sum^{p}_{i=0} X^i \left( \sum^{\nu_i}_{j=1} a_{R,i,j}\cos(\omega_{i,j}X)+a_{I,i,j}\sin(\omega_{i,j}X) \right) - h \right| < \epsilon \right\} \rightarrow 0 \mbox{ as } \epsilon \downarrow 0.\label{eqn:lem:single:2}
\end{align}
Therefore, by selecting $h=0$, $f_i(a_{R,1,1},a_{I,1,1},\cdots,a_{I,p,\nu_p})$ converges to $f=0$ for all $a_{R,1,1},a_{I,1,1},\cdots,a_{I,p,\nu_p}$ in $A$.

$\bullet$ Convergence is monotone: Since $\epsilon_i$ monotonically converge to $0$, $f_i$ is also a monotonically decreasing function sequence. Thus, the convergence is monotone.

$\bullet$ $f_n$ is continuous on $A$: For continuity (does not have to be uniformly continuous), we will prove that for given $a_{R,1,1},a_{I,1,1},\cdots,a_{I,p,\nu_p}$ and for all $\sigma > 0$, there exists $\delta(\sigma)>0$ such that $|f_i(a_{R,1,1}+\nabla a_{R,1,1},a_{I,1,1}+\nabla a_{I,1,1},\cdots,a_{I,p,\nu_p}+\nabla a_{I,p,\nu_p})-f_i(a_{R,1,1},a_{I,1,1},\cdots,a_{I,p,\nu_p})|<\sigma$ for all $|\nabla a_{R,i,j}|<\delta(\sigma)$ and $|\nabla a_{I,i,j}|<\delta(\sigma)$.

By \eqref{eqn:lem:single:2}, we can find $\delta'(\sigma)$ for all $\sigma$ such that
\begin{align}
&\mathbb{P}\left\{\left| \sum^{p}_{i=0} X^i \left( \sum^{\nu_i}_{j=1} a_{R,i,j}\cos(\omega_{i,j}X)+a_{I,i,j}\sin(\omega_{i,j}X) \right) - \epsilon_i \right| < \delta'(\sigma) \right\} < \frac{\sigma}{2} \mbox{ and } \nonumber \\
&\mathbb{P}\left\{\left| \sum^{p}_{i=0} X^i \left( \sum^{\nu_i}_{j=1} a_{R,i,j}\cos(\omega_{i,j}X)+a_{I,i,j}\sin(\omega_{i,j}X) \right) - \left(-\epsilon_i \right) \right| < \delta'(\sigma) \right\} < \frac{\sigma}{2}\nonumber.
\end{align}
Denote $\delta(\sigma):= \frac{\min\left(\frac{1}{T^p}, 1 \right)}{2 \sum^{p}_{i=0}\nu_i} \delta'(\sigma)$. Then, for all $|\nabla a_{R,i,j}| < \delta(\sigma)$ and $|\nabla a_{I,i,j}| < \delta(\sigma)$, the following inequality is true.
\begin{align}
&\mathbb{P}\left\{\left| \sum^{p}_{i=0} X^i \left( \sum^{\nu_i}_{j=1} \left(a_{R,i,j}+\nabla a_{R,i,j}\right)\cos(\omega_{i,j}X)+\left(a_{I,i,j}+ \nabla a_{R,i,j} \right)\sin(\omega_{i,j}X) \right) \right| < \epsilon_i  \right\} \nonumber \\
&\geq \mathbb{P}\left\{\left| \sum^{p}_{i=0} X^i \left( \sum^{\nu_i}_{j=1} a_{R,i,j}\cos(\omega_{i,j}X)+a_{I,i,j}\sin(\omega_{i,j}X) \right) \right| < \epsilon_i - \left|
\sum^{p}_{i=0} X^i \left( \sum^{\nu_i}_{j=1} \nabla a_{R,i,j}\cos(\omega_{i,j}X)+ \nabla a_{I,i,j}\sin(\omega_{i,j}X) \right)
\right|   \right\} \nonumber \\
&\geq \mathbb{P}\left\{ \left| \sum^{p}_{i=0} X^i \left( \sum^{\nu_i}_{j=1} a_{R,i,j}\cos(\omega_{i,j}X)+a_{I,i,j}\sin(\omega_{i,j}X) \right) \right| < \epsilon_i - \delta'(\sigma) \right\} \label{eqn:deltaprime}\\
&= \mathbb{P}\left\{ \left| \sum^{p}_{i=0} X^i \left( \sum^{\nu_i}_{j=1} a_{R,i,j}\cos(\omega_{i,j}X)+a_{I,i,j}\sin(\omega_{i,j}X) \right) \right| < \epsilon_i \right\} \nonumber \\
&\quad -\mathbb{P}\left\{ \epsilon_i - \delta'(\sigma) \leq \left| \sum^{p}_{i=0} X^i \left( \sum^{\nu_i}_{j=1} a_{R,i,j}\cos(\omega_{i,j}X)+a_{I,i,j}\sin(\omega_{i,j}X) \right) \right| < \epsilon_i \right\} \nonumber \\
&\geq  \mathbb{P}\left\{ \left| \sum^{p}_{i=0} X^i \left( \sum^{\nu_i}_{j=1} a_{R,i,j}\cos(\omega_{i,j}X)+a_{I,i,j}\sin(\omega_{i,j}X) \right) \right| < \epsilon_i \right\} \nonumber \\
&\quad -\mathbb{P}\left\{ \left| \sum^{p}_{i=0} X^i \left( \sum^{\nu_i}_{j=1} a_{R,i,j}\cos(\omega_{i,j}X)+a_{I,i,j}\sin(\omega_{i,j}X) \right)-\epsilon_i \right| < \delta'(\sigma) \right\} \nonumber \\
&\quad -\mathbb{P}\left\{ \left| \sum^{p}_{i=0} X^i \left( \sum^{\nu_i}_{j=1} a_{R,i,j}\cos(\omega_{i,j}X)+a_{I,i,j}\sin(\omega_{i,j}X) \right)-(-\epsilon_i) \right| < \delta'(\sigma) \right\} \nonumber \\
&> \mathbb{P}\left\{ \left| \sum^{p}_{i=0} X^i \left( \sum^{\nu_i}_{j=1} a_{R,i,j}\cos(\omega_{i,j}X)+a_{I,i,j}\sin(\omega_{i,j}X) \right) \right| < \epsilon_i \right\} - \sigma \nonumber.
\end{align}
Here, \eqref{eqn:deltaprime} can be shown as follows:
\begin{align}
&\left| \sum^{p}_{i=0} X^i \left( \sum^{\nu_i}_{j=1} \nabla a_{R,i,j}\cos(\omega_{i,j}X)+ \nabla a_{I,i,j}\sin(\omega_{i,j}X) \right) \right| \nonumber \\
& \leq \sum^{p}_{i=0} |X^i|  \sum^{\nu_i}_{j=1}\left( |\nabla a_{R,i,j}| + |\nabla a_{I,i,j}| \right) \nonumber \\
& \leq \max(T^p, 1) 2 \nu_i \delta(\sigma) (\because 0 \leq X \leq T\mbox{ w.p. 1})\nonumber \\
& = \delta'(\sigma) (\because \mbox{definition of }\delta(\sigma))
\end{align}

Therefore, by the definition of $f_i$ we have
\begin{align}
f_i(a_{R,1,1}+\nabla a_{R,1,1},a_{I,1,1}+\nabla a_{I,1,1},\cdots,a_{I,p,\nu_p}+\nabla a_{I,p,\nu_p})-f_i(a_{R,1,1},a_{I,1,1},\cdots,a_{I,p,\nu_p}) > -\sigma \label{eqn:lem:single:3}.
\end{align}

Likewise, we can prove that
\begin{align}
&\mathbb{P}\left\{\left| \sum^{p}_{i=0} X^i \left( \sum^{\nu_i}_{j=1} \left(a_{R,i,j}+\nabla a_{R,i,j}\right)\cos(\omega_{i,j}X)+\left(a_{I,i,j}+ \nabla a_{R,i,j} \right)\sin(\omega_{i,j}X) \right) \right| < \epsilon_i  \right\} \nonumber \\
&\leq \mathbb{P}\left\{ \left| \sum^{p}_{i=0} X^i \left( \sum^{\nu_i}_{j=1} a_{R,i,j}\cos(\omega_{i,j}X)+a_{I,i,j}\sin(\omega_{i,j}X) \right) \right| < \epsilon_i + \delta'(\sigma) \right\} \nonumber \\
&(\because \mbox{The same step as \eqref{eqn:deltaprime}}) \nonumber \\
&= \mathbb{P}\left\{ \left| \sum^{p}_{i=0} X^i \left( \sum^{\nu_i}_{j=1} a_{R,i,j}\cos(\omega_{i,j}X)+a_{I,i,j}\sin(\omega_{i,j}X) \right) \right| < \epsilon_i \right\} \nonumber \\
&\quad +\mathbb{P}\left\{ \epsilon_i  \leq \left| \sum^{p}_{i=0} X^i \left( \sum^{\nu_i}_{j=1} a_{R,i,j}\cos(\omega_{i,j}X)+a_{I,i,j}\sin(\omega_{i,j}X) \right) \right| < \epsilon_i+ \delta'(\sigma) \right\} \nonumber \\
&\leq  \mathbb{P}\left\{ \left| \sum^{p}_{i=0} X^i \left( \sum^{\nu_i}_{j=1} a_{R,i,j}\cos(\omega_{i,j}X)+a_{I,i,j}\sin(\omega_{i,j}X) \right) \right| < \epsilon_i \right\} \nonumber \\
&\quad +\mathbb{P}\left\{ \left| \sum^{p}_{i=0} X^i \left( \sum^{\nu_i}_{j=1} a_{R,i,j}\cos(\omega_{i,j}X)+a_{I,i,j}\sin(\omega_{i,j}X) \right)-\epsilon_i \right| < \delta'(\sigma) \right\} \nonumber \\
&\quad +\mathbb{P}\left\{ \left| \sum^{p}_{i=0} X^i \left( \sum^{\nu_i}_{j=1} a_{R,i,j}\cos(\omega_{i,j}X)+a_{I,i,j}\sin(\omega_{i,j}X) \right)-(-\epsilon_i) \right| < \delta'(\sigma) \right\} \nonumber \\
&< \mathbb{P}\left\{ \left| \sum^{p}_{i=0} X^i \left( \sum^{\nu_i}_{j=1} a_{R,i,j}\cos(\omega_{i,j}X)+a_{I,i,j}\sin(\omega_{i,j}X) \right) \right| < \epsilon_i \right\} + \sigma \nonumber
\end{align}
which implies
\begin{align}
f_i(a_{R,1,1}+\nabla a_{R,1,1},a_{I,1,1}+\nabla a_{I,1,1},\cdots,a_{I,p,\nu_p}+\nabla a_{I,p,\nu_p})-f_i(a_{R,1,1},a_{I,1,1},\cdots,a_{I,p,\nu_p}) < \sigma. \label{eqn:lem:single:4}
\end{align}
By \eqref{eqn:lem:single:3} and \eqref{eqn:lem:single:4},
\begin{align}
\left| f_i(a_{R,1,1}+\nabla a_{R,1,1},a_{I,1,1}+\nabla a_{I,1,1},\cdots,a_{I,p,\nu_p}+\nabla a_{I,p,\nu_p})-f_i(a_{R,1,1},a_{I,1,1},\cdots,a_{I,p,\nu_p}) \right| < \sigma. \nonumber
\end{align}
Therefore, $f_i(a_{R,1,1},a_{I,1,1},\cdots,a_{I,p,\nu_p})$ is continuous.

$\bullet$ $f$ is continuous on $A$: $f$ is obviously continuous, since $f$ is identically zero.

$\bullet$ $A$ is compact: $A$ is compact since it is closed and bounded.

Thus, by Dini's theorem~\ref{thm:dini}, the convergence is uniform on $A$, which finishes the proof for the case of $|a_{R,m,n}| \geq \frac{\gamma}{\sqrt{2}}$. The proof for the case of $|a_{I,m,n}| \geq \frac{\gamma}{\sqrt{2}}$ follows in an identical manner. Since there are only two cases, the function
\begin{align}
g_i(a_{1,1}, \cdots, a_{p,\nu_p}):= \mathbb{P}\left\{ \left| \sum^{p}_{i=0} X^i \left( \sum^{\nu_i}_{j=1} a_{i,j}e^{j\omega_{i,j}X} \right) \right| < \epsilon_i \right\}
\end{align}
converges uniformly on $\{ a_{i,j} : |a_{m,n}| \geq \gamma, |a_{i,j}| \leq \Gamma \}$. This finishes the proof of the lemma.
\end{proof}

In Lemma~\ref{lem:single}, we have a boundedness condition on the coefficients ($|a_{i,j}| \leq \Gamma$) to guarantee compactness. However, we can easily notice the functions only get larger as $a_{i,j}$ increases. Therefore, we can prove that Lemma~\ref{lem:single} still holds without the boundedness condition.

\begin{lemma}
Let $p$ be nonnegative integer and $\nu_{0}, \cdots, \nu_p $ be also nonnegative integers with $\nu_{p}>0$. $\gamma$ is a strictly positive real. For each $0 \leq i \leq p$, $\omega_{i,1},\omega_{i,2}, \cdots, \omega_{i,\nu_i}$ are distinct reals.
Let $X$ be a uniform random variable on $[0,T]$ for some $T > 0$. Then, for all $m,n$ such that $0 \leq m \leq p$ and $1 \leq n \leq \nu_m$, we have the following inequality:
\begin{align}
\sup_{ |a_{m,n}| \geq \gamma} \mathbb{P}\left\{\left| \sum^{p}_{i=0} X^i \left( \sum^{\nu_i}_{j=1} a_{i,j}e^{j\omega_{i,j}X} \right) \right| < \epsilon \right\} \rightarrow 0 \mbox{ as }\epsilon \downarrow 0 \nonumber
\end{align}
where $a_{i,j}$ are taken from $\mathbb{C}$.
\label{lem:singleun}
\end{lemma}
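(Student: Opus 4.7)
The plan is to reduce Lemma~\ref{lem:singleun} to the already-established bounded-coefficient version (Lemma~\ref{lem:single}) by rescaling by the largest coefficient magnitude. The idea is that making an auxiliary coefficient $a_{i,j}$ arbitrarily large can only make the function $f(X) := \sum_i X^i \sum_j a_{i,j} e^{j\omega_{i,j}X}$ \emph{more} oscillatory, never less, so the probability of $|f(X)|<\epsilon$ cannot be inflated by removing the upper bound $|a_{i,j}|\le \Gamma$. The main technical point is to turn that intuition into a uniform statement, and the only obstacle is that the index achieving the maximum depends on the configuration, so one has to handle all indices simultaneously.

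Concretely, given coefficients $(a_{i,j})$ with $|a_{m,n}|\ge\gamma$, set $M := \max_{i,j}|a_{i,j}|$ and $b_{i,j} := a_{i,j}/M$, so that $f(X) = M\,g(X)$ with $g(X) := \sum_i X^i\sum_j b_{i,j} e^{j\omega_{i,j}X}$. Then $|b_{i,j}|\le 1$ for every pair, and there exists at least one index $(i^\star,j^\star)$ with $|b_{i^\star,j^\star}|=1$. Because $M\ge|a_{m,n}|\ge\gamma$, we obtain the key deterministic bound
\begin{align}
\mathbb{P}\{|f(X)|<\epsilon\} \;=\; \mathbb{P}\{|g(X)|<\epsilon/M\} \;\le\; \mathbb{P}\{|g(X)|<\epsilon/\gamma\}. \nonumber
\end{align}

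Taking the supremum over admissible $(a_{i,j})$ and conditioning on which pair $(i^\star,j^\star)$ achieves the maximum, one gets
\begin{align}
\sup_{|a_{m,n}|\ge\gamma}\mathbb{P}\{|f(X)|<\epsilon\} \;\le\; \sum_{(i^\star,j^\star)} \sup_{|b_{i^\star,j^\star}|\ge 1,\; |b_{i,j}|\le 1}\mathbb{P}\{|g(X)|<\epsilon/\gamma\}. \nonumber
\end{align}
Each summand is an instance of Lemma~\ref{lem:single} applied with the role of the distinguished index $(m,n)$ played by $(i^\star,j^\star)$ and with both parameters $\gamma'=\Gamma'=1$; hence each summand tends to $0$ as $\epsilon\downarrow 0$. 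Since the outer sum is over a finite set of pairs $(i^\star,j^\star)$, the whole bound tends to $0$ as $\epsilon\downarrow 0$, which is exactly the conclusion of the lemma. The only subtlety worth double-checking is that the rescaling preserves the hypotheses of Lemma~\ref{lem:single} for \emph{every} candidate index $(i^\star,j^\star)$, but this is automatic because the $\omega_{i,j}$'s and the degree structure are intrinsic to $g$ and do not depend on the choice of distinguished coefficient.
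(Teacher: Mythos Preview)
Your proof is correct and is genuinely simpler than the paper's own argument. The paper proceeds by strong induction on the total number of terms $\nu=\sum_i\nu_i$: in the inductive step it splits into two regimes depending on whether all ratios $|a_{i,j}|/|a_{m,n}|$ are bounded by $1/\kappa(\delta)$ (in which case it rescales by $|a_{m,n}|$ and invokes Lemma~\ref{lem:single} directly) or some $|a_{m',n'}|$ dominates $|a_{m,n}|$ by a factor $1/\kappa(\delta)$ (in which case it rescales by $|a_{m',n'}|$, absorbs the now-small $(m,n)$ term into the $\epsilon$, and reduces to the induction hypothesis with $\nu-1$ terms). Balancing the threshold $\kappa(\delta)$ against the induction hypothesis requires several pages of careful bookkeeping.

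Your argument sidesteps the induction entirely by observing that after dividing through by $M=\max_{i,j}|a_{i,j}|$, the normalized tuple always lands in the finite union $\bigcup_{(i^\star,j^\star)}\{|b_{i^\star,j^\star}|=1,\ |b_{i,j}|\le 1\}$, each piece of which is already covered by Lemma~\ref{lem:single} with $\gamma'=\Gamma'=1$ and the distinguished index taken to be $(i^\star,j^\star)$ rather than the original $(m,n)$. The crucial point---which the paper's induction works hard to emulate---is that Lemma~\ref{lem:single} is agnostic about \emph{which} coefficient is bounded below, so one is free to switch the distinguished index to whichever coefficient happens to be maximal. This buys you a one-line reduction where the paper has a multi-case induction; the paper's route, on the other hand, makes the ``peeling off one term at a time'' mechanism explicit, which is closer in spirit to the later Weyl-criterion arguments in Appendix~\ref{sec:dis:uniform}.
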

\begin{proof}
Denote $\nu :=\sum^{p}_{i=0} \nu_i$. The proof is by strong induction on $\nu$.

(i) When $\nu=1$.

\begin{align}
&\sup_{|a_{p,1}| \geq \gamma} \mathbb{P}\left\{ \left| a_{p,1} X^p e^{j \omega_{p,1}X} \right| < \epsilon \right\} \label{eqn:lem:sigleun:0} \\
&= \sup_{|a_{p,1}| \geq \gamma} \mathbb{P}\left\{ \left| \frac{\gamma}{|a_{p,1}|} a_{p,1} X^p e^{j \omega_{p,1}X} \right| < \frac{\gamma}{|a_{p,1}|} \epsilon \right\} \nonumber \\
&\leq \sup_{|a'_{p,1}| = \gamma} \mathbb{P}\left\{ \left| a'_{p,1} X^p e^{j \omega_{p,1}X} \right| < \epsilon \right\} \left( \because \frac{\gamma}{|a_{p,1}|} \leq 1 \right)\label{eqn:lem:sigleun:1}
\end{align}
By lemma \ref{lem:single}, \eqref{eqn:lem:sigleun:1} converges to 0 as $\epsilon \downarrow 0$. Thus, \eqref{eqn:lem:sigleun:0} converges to 0 as $\epsilon \downarrow 0$.

(ii) As an induction hypothesis, we assume the lemma is true for $\nu=1,\cdots,n-1$ and prove that the lemma still holds for $\nu = n$. We will prove this by dividing into two cases: (a) When all $a_{i,j}$ are not much bigger than $a_{m,n}$. In this case, the claim reduces to Lemma~\ref{lem:single}. (b) When there is an $a_{m',n'}$ which is much bigger than $a_{m,n}$. In this case, we can ignore the term associated with $a_{m,n}$ and reduce the number of terms in the functions. Thus, either way the claim reduces to the induction hypothesis.

To prove the lemma for $\nu = n$, it is enough to show that for a fixed $\gamma$ and every $\delta > 0$, there exists $\epsilon(\delta)>0$ such that
\begin{align}
\sup_{ |a_{m,n}| \geq \gamma } \mathbb{P}\left\{\left| \sum^{p}_{i=0} X^i \left( \sum^{\nu_i}_{j=1} a_{i,j}e^{j\omega_{i,j}X} \right) \right| < \epsilon(\delta) \right\} < \delta. \nonumber
\end{align}
By the induction hypothesis for all $(m',n') \neq (m,n)$ we can find $\epsilon_{m',n'}(\delta)>0$ such that
\begin{align}
&\sup_{a_{m,n}=0, |a_{m',n'}|\geq \gamma} \mathbb{P} \left\{ \left| \sum^p_{i=0} X^p \left( \sum^{\nu_i}_{j=1} a_{i,j}e^{j\omega_{i,j}X} \right) \right| < \epsilon_{m',n'}(\delta) \right\} < \delta. \label{eqn:lem:sigleun:2}
\end{align}
We choose $\kappa(\delta)$ as $\min\left\{ \min_{(m',n')\neq (m,n)} \left\{ \frac{\epsilon_{m',n'}(\delta)}{2 \gamma T^m} \right\}, 1 \right\}$. By Lemma \ref{lem:single}, there exists $\epsilon'(\delta)>0$ such that
\begin{align}
&\sup_{|a_{m,n}| = \gamma , a_{i,j} \leq \frac{\gamma}{\kappa(\delta)} } \mathbb{P} \left\{ \left| \sum^p_{i=0} X^p \left( \sum^{\nu_i}_{j=1} a_{i,j}e^{j\omega_{i,j}X} \right) \right| < \epsilon'(\delta) \right\} < \delta. \label{eqn:lem:sigleun:22}
\end{align}
Denote $\epsilon(\delta):=\min\left\{ \epsilon'(\delta), \min_{(m',n')\neq (m,n)} \left\{ \frac{\epsilon_{m',n'}(\delta)}{2} \right\} \right\}$. Then, we have
\begin{align}
&\sup_{ |a_{m,n}| \geq \gamma } \mathbb{P}\left\{\left| \sum^{p}_{i=0} X^i \left( \sum^{\nu_i}_{j=1} a_{i,j}e^{j\omega_{i,j}X} \right) \right| < \epsilon(\delta) \right\} \nonumber \\
&= \max \{  \sup_{ |a_{m,n}| \geq \gamma, \frac{|a_{i,j}|}{|a_{m,n}|} \leq  \frac{1}{\kappa(\delta)} } \mathbb{P}\left\{\left| \sum^{p}_{i=0} X^i \left( \sum^{\nu_i}_{j=1} a_{i,j}e^{j\omega_{i,j}X} \right) \right| < \epsilon(\delta) \right\}, \label{eqn:lem:sigleun:20} \\
& \max_{(m',n') \neq (m,n)} \sup_{ |a_{m,n}| \geq \gamma, \frac{|a_{m',n'}|}{|a_{m,n}|} \geq  \frac{1}{\kappa(\delta)} } \mathbb{P}\left\{\left| \sum^{p}_{i=0} X^i \left( \sum^{\nu_i}_{j=1} a_{i,j}e^{j\omega_{i,j}X} \right) \right| < \epsilon(\delta) \right\} \label{eqn:lem:sigleun:21} \\
&\}. \label{eqn:lem:sigleun:8}
\end{align}

$\bullet$ When $a_{i,j}$ are not too bigger than $a_{m,n}$: Let's bound the first term in \eqref{eqn:lem:sigleun:20}. Set $a'_{i,j}:=\frac{\gamma}{|a_{m,n}|}a_{i,j}$. Then, \eqref{eqn:lem:sigleun:20} is upper bounded as follows:
\begin{align}
&\sup_{ |a_{m,n}| \geq \gamma, \frac{|a_{i,j}|}{|a_{m,n}|} \leq  \frac{1}{\kappa(\delta)} } \mathbb{P}\left\{\left| \sum^{p}_{i=0} X^i \left( \sum^{\nu_i}_{j=1} a_{i,j}e^{j\omega_{i,j}X} \right) \right| < \epsilon(\delta) \right\}\nonumber \\
&= \sup_{ |a_{m,n}| \geq \gamma, \frac{|a_{i,j}|}{|a_{m,n}|} \leq  \frac{1}{\kappa(\delta)} } \mathbb{P}\left\{\left| \sum^{p}_{i=0} X^i \left( \sum^{\nu_i}_{j=1} \frac{\gamma}{|a_{m,n}|}a_{i,j}e^{j\omega_{i,j}X} \right) \right| < \frac{\gamma}{|a_{m,n}|}\epsilon(\delta) \right\} \nonumber \\
&= \sup_{ |a'_{m,n}| = \gamma, |a'_{i,j}| \leq  \frac{\gamma}{\kappa(\delta)} } \mathbb{P}\left\{\left| \sum^{p}_{i=0} X^i \left( \sum^{\nu_i}_{j=1} a'_{i,j}e^{j\omega_{i,j}X} \right) \right| < \frac{\gamma}{|a_{m,n}|}\epsilon(\delta) \right\}\nonumber \\
&\leq \sup_{ |a'_{m,n}| = \gamma, |a'_{i,j}| \leq  \frac{\gamma}{\kappa(\delta)} } \mathbb{P}\left\{\left| \sum^{p}_{i=0} X^i \left( \sum^{\nu_i}_{j=1} a'_{i,j}e^{j\omega_{i,j}X} \right) \right| < \epsilon(\delta) \right\} (\because \frac{\gamma}{|a_{m,n}|}\leq 1) \nonumber \\
&\leq \sup_{ |a'_{m,n}| = \gamma, |a'_{i,j}| \leq  \frac{\gamma}{\kappa(\delta)} } \mathbb{P}\left\{\left| \sum^{p}_{i=0} X^i \left( \sum^{\nu_i}_{j=1} a'_{i,j}e^{j\omega_{i,j}X} \right) \right| < \epsilon'(\delta) \right\} (\because \mbox{definition of $\epsilon(\delta)$})\nonumber \\
&< \delta (\because \eqref{eqn:lem:sigleun:22})\label{eqn:lem:sigleun:3}
\end{align}

$\bullet$ When $a_{m',n'}$ is much bigger than $a_{m,n}$: Let's bound the second term in \eqref{eqn:lem:sigleun:21}. For given $m', n'$, set $a''_{i,j}:=\frac{\gamma}{|a_{m',n'}|}a_{i,j}$. Then, \eqref{eqn:lem:sigleun:21} is upper bounded by
\begin{align}
&\max_{(m',n') \neq (m,n)} \sup_{ |a_{m,n}| \geq \gamma, \frac{|a_{m',n'}|}{|a_{m,n}|} \geq  \frac{1}{\kappa(\delta)} } \mathbb{P}\left\{\left| \sum^{p}_{i=0} X^i \left( \sum^{\nu_i}_{j=1} a_{i,j}e^{j\omega_{i,j}X} \right) \right| < \epsilon(\delta) \right\} \nonumber \\
&= \max_{(m',n') \neq (m,n)} \sup_{ |a_{m,n}| \geq \gamma, \frac{|a_{m',n'}|}{|a_{m,n}|} \geq  \frac{1}{\kappa(\delta)} } \mathbb{P}\left\{\left| \sum^{p}_{i=0} X^i \left( \sum^{\nu_i}_{j=1} \frac{\gamma}{|a_{m',n'}|} a_{i,j}e^{j\omega_{i,j}X} \right) \right| < \frac{\gamma}{|a_{m',n'}|}\epsilon(\delta) \right\} \nonumber \\
&\leq  \max_{(m',n') \neq (m,n)} \sup_{ |a_{m,n}| \geq \gamma, \frac{|a_{m',n'}|}{|a_{m,n}|} \geq  \frac{1}{\kappa(\delta)} } \mathbb{P}\Bigg\{\left| \sum^{p}_{i=0} X^i \left( \sum^{\nu_i}_{j=1} \frac{\gamma}{|a_{m',n'}|} a_{i,j}e^{j\omega_{i,j}X} \right)
- X^m \frac{\gamma}{|a_{m',n'}|}a_{m,n}e^{j \omega_{m,n} X }
\right|  \nonumber \\
&<  \max_{(m',n') \neq (m,n)} \frac{\gamma}{|a_{m',n'}|}\epsilon(\delta) + \frac{\gamma }{|a_{m',n'}|} |a_{m,n}| T^m  \Bigg\} \nonumber \\
&\leq \max_{(m',n') \neq (m,n)} \sup_{ |a_{m,n}| \geq \gamma, \frac{|a_{m',n'}|}{|a_{m,n}|} \geq  \frac{1}{\kappa(\delta)} } \mathbb{P}\left\{\left| \sum^{p}_{i=0} X^i \left( \sum^{\nu_i}_{j=1} \frac{\gamma}{|a_{m',n'}|} a_{i,j}e^{j\omega_{i,j}X} \right)
- X^m \frac{\gamma}{|a_{m',n'}|}a_{m,n}e^{j \omega_{m,n} X }
\right| < \epsilon_{m',n'}(\delta)  \right\} \label{eqn:lem:sigleun:4} \\
&\leq \max_{(m',n') \neq (m,n)} \sup_{ a_{m,n}''=0, |a_{m',n'}''| =  \gamma } \mathbb{P}\left\{\left| \sum^{p-1}_{i=0} X^i \left( \sum^{\nu_i}_{j=1} a''_{i,j}e^{j\omega_{i,j}X} \right)
\right| < \epsilon_{m',n'}(\delta)  \right\} \quad
(\because \mbox{By definition, } a''_{m',n'}=\frac{\gamma}{|a_{m',n'}|} a_{m',n'}) \nonumber \\
&< \delta (\because \eqref{eqn:lem:sigleun:2}) \label{eqn:lem:sigleun:7}
\end{align}
Here, \eqref{eqn:lem:sigleun:4} can be derived as follows:
First, we have
\begin{align}
1 &\geq \kappa(\delta)\quad (\because \mbox{Definition of $\kappa(\delta)$}) \nonumber \\
&\geq \frac{\gamma \cdot \kappa(\delta)}{|a_{m,n}|} \quad (\because |a_{m,n}|\geq \gamma) \nonumber \\
&\geq \frac{\gamma}{|a_{m',n'}|}. \quad (\because \frac{|a_{m',n'}|}{|a_{m,n}|} \geq \frac{1}{\kappa(\delta)} ) \label{eqn:lem:sigleun:5}
\end{align}
We also have
\begin{align}
\frac{\gamma}{|a_{m',n'}|} |a_{m,n}| T^m &\leq \gamma  \cdot \kappa(\delta) T^m \quad (\because \frac{|a_{m',n'}|}{|a_{m,n}|} \geq \frac{1}{\kappa(\delta)} )\nonumber \\
&\leq \gamma \frac{\epsilon_{m',n'}(\delta)}{2 \gamma T^m} T^m \quad(\because \mbox{By definition, } \kappa(\delta) \leq \frac{\epsilon_{m',n'}(\delta)}{2 \gamma T^m}) \nonumber \\
&= \frac{\epsilon_{m',n'}(\delta)}{2}. \label{eqn:lem:sigleun:6}
\end{align}
Therefore,
\begin{align}
\frac{\gamma}{|a_{m',n'}|}\epsilon(\delta) + \frac{\gamma }{|a_{m',n'}|} |a_{m,n}| T^m &\leq \epsilon(\delta)+\frac{\epsilon_{m',n'}(\delta)}{2} \quad (\because \eqref{eqn:lem:sigleun:5},\eqref{eqn:lem:sigleun:6} )\nonumber \\
&\leq \epsilon_{m',n'}(\delta). \quad (\because \mbox{By definition, }\epsilon(\delta) \leq \frac{\epsilon_{m',n'}(\delta)}{2}) \nonumber
\end{align}
Therefore, \eqref{eqn:lem:sigleun:4} is true.

By plugging \eqref{eqn:lem:sigleun:3} and \eqref{eqn:lem:sigleun:7} into \eqref{eqn:lem:sigleun:8}, we get
\begin{align}
&\sup_{ |a_{m,n}| \geq \gamma } \mathbb{P}\left\{\left| \sum^{p}_{i=0} X^i \left( \sum^{\nu_i}_{j=1} a_{i,j}e^{j\omega_{i,j}X} \right) \right| < \epsilon(\delta) \right\} < \delta, \nonumber
\end{align}
which finishes the proof.
\end{proof}

\subsection{Proof of Lemma~\ref{lem:conti:mo}}
\label{sec:app:2}

In this section, we will merge the properties about the observability Gramian shown in Section~\ref{sec:app:3} with the uniform convergence of Section~\ref{app:unif:conti}, and prove Lemma~\ref{lem:conti:mo} of page~\pageref{lem:conti:mo}.


We first prove the following lemma which tells us that the determinant of the observability Gramian is large with high probability under a cofactor condition on the Gramian.
\begin{lemma}
Let $\mathbf{A_c}$ and $\mathbf{C}$ be given as \eqref{eqn:conti:a} and \eqref{eqn:conti:c}. Let $a_{i,j}$ and $C_{i,j}$ be the $(i,j)$ element and cofactor of
$\begin{bmatrix}
\mathbf{C} e^{-(k_1 I + t_1)\mathbf{A_c}} \\
\vdots \\
\mathbf{C} e^{-(k_{m-1} I + t_{m-1})\mathbf{A_c}} \\
\mathbf{C} e^{-(k_m I + t)\mathbf{A_c}}
\end{bmatrix}$ respectively, where $t$ is a random variable which is uniformly distributed on $[0,T]$ and $I$ is the sampling interval defined in \eqref{eqn:non:4}.
Then, there exist $a \in \mathbb{R}^+$ and a family of increasing functions $\{g_{\epsilon}(\cdot) : \epsilon > 0, g_{\epsilon}:\mathbb{R}^+ \rightarrow \mathbb{R}^+ \}$ satisfying:\\
(i) For all $\epsilon>0$, $k_1 < k_2 < \cdots < k_{m-1}$, $0\leq t_i \leq T$ if $|C_{m,m}| > \epsilon \prod_{1 \leq i \leq m-1} e^{-k_i I \cdot \lambda_i}$ the following is true:
\begin{align}
\sup_{k_m \in \mathbb{Z}, k_m - k_{m-1} \geq g_{\epsilon}(k_{m-1})} \mathbb{P} \left\{ \left| \det\left(
\begin{bmatrix}
\mathbf{C}e^{-(k_1 I + t_1)\mathbf{A_c}} \\
\vdots \\
\mathbf{C}e^{-(k_{m-1} I + t_{m-1})\mathbf{A_c}} \\
\mathbf{C}e^{-(k_{m} I + t)\mathbf{A_c}} \\
\end{bmatrix}
\right) \right| < \epsilon^2 \prod_{1 \leq i \leq m} e^{-k_i I \cdot \lambda_i} \right\} \rightarrow 0 \mbox{ as } \epsilon \downarrow 0 \nonumber
\end{align}
(ii) For all $\epsilon>0$, $g_{\epsilon}(k) \leq a( 1 + \log (k+1))$.
\label{lem:conti:single}
\end{lemma}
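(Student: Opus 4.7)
The plan is to combine Lemma~\ref{lem:det:lower} with Lemma~\ref{lem:singleun} by exploiting the fact that the cofactors $C_{m,i}$ are deterministic functions of $t_1,\dots,t_{m-1}$ only: since they arise from deleting the $m$-th row, they depend on $t$ only through the last-row entries $a_{m,i}(t)$. I will therefore recast the hypothesis (iv) of Lemma~\ref{lem:det:lower} as a statement about an analytic function of $t$ with a witness coefficient of magnitude bounded below by a constant times $\epsilon$, and then read off the conclusion from Lemma~\ref{lem:singleun}.

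For each $i\in\{m-m_\mu+1,\dots,m\}$ in the last group, the eigenvalues all share real part $\lambda_\mu=\lambda_m$ and distinct imaginary parts $\omega_{\mu,1},\dots,\omega_{\mu,\nu_\mu}$, so
\[
a_{m,i}(t)=p_{m,i}(k_m I+t)\,e^{-(k_m I+t)(\lambda_\mu+j\omega_{m,i})},
\]
with $p_{m,i}$ a polynomial of degree at most $m_{\mu,\nu_\mu}-1$. Factoring out $e^{-(k_m I+t)\lambda_\mu}$, dividing by $\prod_i e^{-k_iI\lambda_i}$, and binomially expanding $(k_mI+t)^k$ in powers of $t$ recasts the cofactor sum $\Phi(t):=\sum_{i=m-m_\mu+1}^m a_{m,i}(t)\,C_{m,i}$ as
\[
\frac{\Phi(t)}{\prod_{i=1}^m e^{-k_iI\lambda_i}}=e^{-t\lambda_\mu}\sum_{\ell=0}^{m_{\mu,\nu_\mu}-1}t^\ell\sum_{j=1}^{\nu_\mu}\alpha_{\ell,j}\,e^{-j t\omega_{\mu,j}},
\]
where the coefficients $\alpha_{\ell,j}$ are explicit in $k_m$, in the entries of $\mathbf{C}$, and in the normalized cofactors $\widetilde C_{m,i}:=C_{m,i}/\prod_{i'\le m-1}e^{-k_{i'}I\lambda_{i'}}$. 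By the ordering $m_{\mu,1}\le\cdots\le m_{\mu,\nu_\mu}$, only $p_{m,m}$ attains the top degree $d_m:=m_{\mu,\nu_\mu}-1$, and an explicit computation of the last column of $e^{-s\mathbf{A_{\mu,\nu_\mu}}}$ identifies the corresponding witness coefficient as
\[
\alpha_{d_m,\nu_\mu}=\frac{(-1)^{d_m}c_1}{d_m!}\,\widetilde C_{m,m}\,e^{-jk_mI\omega_{\mu,\nu_\mu}},
\]
where $c_1$ is the non-zero first entry of $\mathbf{C_{\mu,\nu_\mu}}$. The hypothesis $|C_{m,m}|>\epsilon\prod_{i\le m-1}e^{-k_iI\lambda_i}$ therefore gives $|\alpha_{d_m,\nu_\mu}|>\gamma_0\epsilon$ for a constant $\gamma_0>0$ depending only on the fixed data $(\mathbf{A_c},\mathbf{C})$.

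Applying Lemma~\ref{lem:singleun} to the function obtained after dividing all coefficients by $\gamma_0\epsilon$, the witness now has magnitude $\ge 1$, so the probability that the rescaled function is less than $\delta$ tends to zero as $\delta\downarrow 0$, uniformly in $k_m$ and in the unconstrained $\widetilde C_{m,i}$ for $i\ne m$. Choosing $\delta=2\epsilon\,e^{T\lambda_\mu}/\gamma_0$ (which goes to zero as $\epsilon\downarrow 0$) and using $e^{-t\lambda_\mu}\ge e^{-T\lambda_\mu}$ on $[0,T]$ yields
\[
\sup_{k_m}\mathbb{P}\!\left\{|\Phi(t)|<2\epsilon^2\prod_{i=1}^m e^{-k_iI\lambda_i}\right\}\longrightarrow 0 \ \text{as}\ \epsilon\downarrow 0.
\]
Setting $g_\epsilon(\cdot):=\tilde g_{2\epsilon^2}(\cdot)$ for the function $\tilde g$ furnished by Lemma~\ref{lem:det:lower}, whenever the event above fails and $k_m-k_{m-1}\ge g_\epsilon(k_{m-1})$, Lemma~\ref{lem:det:lower} gives $|\det|\ge\epsilon^2\prod_i e^{-k_iI\lambda_i}$, establishing condition~(i); the logarithmic bound~(ii) is inherited from the analogous bound on $\tilde g$ in Lemma~\ref{lem:det:lower}.

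The main obstacle will be the algebraic bookkeeping that isolates a single witness coefficient $\alpha_{d_m,\nu_\mu}$ whose magnitude is bounded below by $\gamma_0\epsilon$ uniformly in $k_m$, in spite of the fact that every other $\alpha_{\ell,j}$ can grow or shrink unboundedly with $k_m$ through the powers $(k_mI)^{k-\ell}$ and through the uncontrolled cofactors $\widetilde C_{m,i}$ for $i\ne m$. This is precisely where both the ordering of Jordan block sizes and the non-vanishing of the first entry of $\mathbf{C_{\mu,\nu_\mu}}$ are essential: the former ensures that the top-degree $t^{d_m}$ contribution isolates $i=m$, while the latter keeps the corresponding multiplicative constant $\gamma_0$ strictly positive and independent of $k_m$.
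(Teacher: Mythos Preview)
Your approach is essentially the same as the paper's: normalize the cofactor sum $\Phi(t)$ by the product of exponentials, identify the unique top-degree witness coefficient $\alpha_{d_m,\nu_\mu}$ carried by $\widetilde C_{m,m}$ (using the size ordering $m_{\mu,1}\le\cdots\le m_{\mu,\nu_\mu}$ and non-vanishing of $c_1$), apply Lemma~\ref{lem:singleun} to get the probability bound, and then feed this into Lemma~\ref{lem:det:lower} to control the full determinant.

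There is one technical point you glossed over that the paper handles explicitly. Lemma~\ref{lem:det:lower} is stated for ordered arguments $0\le\kappa_1<\cdots<\kappa_m$ and its conclusion is $\frac{1}{2}\epsilon'\prod_i e^{-\kappa_i\lambda_i}$, but here the actual arguments are $k_iI+t_i$, which need not be sorted even if the $k_i$ are (since the $t_i$ can reorder nearby samples). The paper inserts a short combinatorial claim showing that the sorted sequence $(k_{(i)}I+t_{(i)})$ satisfies $0\le (k_{(i)}I+t_{(i)})-k_iI\le T$, which yields the sandwich
\[
\prod_{i} e^{-\lambda_i T}\prod_i e^{-k_iI\lambda_i}\ \le\ \prod_i e^{-(k_{(i)}I+t_{(i)})\lambda_i}\ \le\ \prod_i e^{-k_iI\lambda_i},
\]
and absorbs the constant $\prod_i e^{\lambda_i T}$ into the definition $\epsilon'=2\epsilon^2\prod_i e^{\lambda_i T}$. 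Your choice $g_\epsilon:=\tilde g_{2\epsilon^2}$ and the direct appeal to Lemma~\ref{lem:det:lower} with the unsorted $k_iI+t_i$ therefore needs this extra step (and you should also ensure $g_\epsilon$ is large enough that $k_mI+t$ dominates all earlier samples). Once this bookkeeping is added, your argument matches the paper's.
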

\begin{proof}
Let $\epsilon' = 2 \epsilon^2 \prod_{1 \leq i \leq m}e^{\lambda_i T}$. Define $a'_{i,j}$, $C'_{i,j}$ as the $(i,j)$ element and cofactor of $\begin{bmatrix} \mathbf{C}e^{- \kappa_1 \mathbf{A_c}} \\ \vdots \\ \mathbf{C}e^{- \kappa_m \mathbf{A_c}}  \end{bmatrix}$.

Then, by Lemma~\ref{lem:det:lower}, we can find a function $g'_{\epsilon'}(k)$ such that for all $0 \leq \kappa_1 < \kappa_2 < \cdots < \kappa_m$ satisfying:\\
(i') $\kappa_m - \kappa_{m-1} \geq g'_{\epsilon'}(\kappa_{m-1})$\\
(ii') $g'_{\epsilon'}(\kappa) \lesssim 1 + \log(\kappa+1)$\\
(iii') $| \sum_{m-m_{\mu}+1 \leq i \leq m} a'_{m,i} C'_{m,i} | \geq \epsilon' \prod_{1 \leq i \leq m} e^{-\kappa_i  \lambda_i}$\\
the following inequality holds:
\begin{align}
\left| \det\left( \begin{bmatrix} \mathbf{C}e^{- \kappa_1 \mathbf{A_c}} \\ \vdots \\ \mathbf{C}e^{- \kappa_m \mathbf{A_c}} \end{bmatrix} \right) \right| \geq \frac{1}{2} \epsilon' \prod_{1 \leq i \leq m}e^{- \kappa_i \lambda_i}. \label{eqn:equalitycomment}
\end{align}

Let's use $t_m$ and $t$ interchangeably which are in $[0,T]$ with probability one. Ideally, we want to plug $k_i I + t_i$ into $\kappa_i$. However, even though the sequence $k_1, \cdots, k_m$ is sorted, the sequence $k_1 I + t_1, \cdots,  k_m I + t_m$ may not be sorted. Therefore, we define $k_{(1)}I +t_{(1)}, \cdots, k_{(m)}I + t_{(m)}$ as the sorted sequence of $k_1 I + t_1, \cdots,  k_m I + t_m$. Then, we can see this sorted sequence has the following property.
\begin{claim}
Consider two sequences, $\alpha_1, \alpha_2, \cdots, \alpha_n$ and $\beta_1, \beta_2, \cdots, \beta_n$ where $\alpha_1 \leq \alpha_2 \leq \cdots \leq \alpha_n$ and $\beta_i \in [0, T]$ $(T > 0)$.  Let $\alpha_{(1)}+\beta_{(1)}, \alpha_{(2)}+\beta_{(2)}, \cdots, \alpha_{(n)}+\beta_{(n)}$ be the ascending ordered set of $\alpha_{1}+\beta_{1}, \alpha_{2}+\beta_{2}, \cdots, \alpha_{n}+\beta_{n}$. In other words,

Then, for all $i \in \{1, \cdots, n \}$, we have
\begin{align}
0 \leq \alpha_{(i)}+ \beta_{(i)} - \alpha_i \leq T.
\end{align}
\label{claim:sort}
\end{claim}
\begin{proof}
We will prove this by contradiction. Let's say there exists $i$ such that
\begin{align}
\alpha_{(i)} + \beta_{(i)} - \alpha_i < 0.
\end{align}
Then, we have
\begin{align}
\alpha_{(i)} + \beta_{(i)} < \alpha_i \leq \alpha_{i+1} \leq \cdots \leq \alpha_n.
\end{align}
Since $\beta_1, \cdots, \beta_n \geq 0$, we can conclude
$\alpha_{(i)} + \beta_{(i)} < \alpha_i + \beta_i$, $\cdots$, $\alpha_{(i)} + \beta_{(i)} < \alpha_n + \beta_n$. Thus, in the sequence $\alpha_1+\beta_1, \cdots, \alpha_n + \beta_n$, there exists $n-i+1$ elements which are larger than $\alpha_{(i)}+\beta{(i)}$. This contradicts to the fact that $\alpha_{(i)}+\beta_{(i)}$ is $i$th largest element among $\alpha_1+\beta_1, \cdots, \alpha_n + \beta_n$.

Likewise, let's say there exists $i$ such that
\begin{align}
\alpha_{(i)}+\beta_{(i)}-\alpha_i > T.
\end{align}
Then, we have
\begin{align}
\alpha_{(i)}+\beta_{(i)} > \alpha_i + T \leq \alpha_{i-1} + T \leq \alpha_{1} + T.
\end{align}
Since $\beta_1, \cdots, \beta_n \leq T$, we can conclude
$\alpha_{(i)}+\beta_{(i)} > \alpha_i + \beta_i$, $\cdots$, $\alpha_{(i)}+\beta_{(i)} > \alpha_1 + \beta_1$. Thus, in the sequence $\alpha_1+\beta_1, \cdots, \alpha_n + \beta_n$, there exists $i$ elements which are smaller than $\alpha_{(i)}+\beta{(i)}$. This contradicts to the fact that $\alpha_{(i)}+\beta_{(i)}$ is $i$th smallest element among $\alpha_1+\beta_1, \cdots, \alpha_n + \beta_n$.
\end{proof}

Therefore, by the claim, we have
\begin{align}
\prod_{1 \leq i \leq m} e^{-\lambda_i T} \prod_{1 \leq i \leq m} e^{-k_i I \cdot \lambda_i} \leq \prod_{1 \leq i \leq m} e^{-(k_{(i)}I + t_{(i)}) \lambda_i}
\leq \prod_{1 \leq i \leq m} e^{-k_i I \cdot \lambda_i}. \label{eqn:sortedineq}
\end{align}


Finally, we can plug $k_{(i)}I +t_{(i)}$ into $\kappa_i$ to conclude the following statement. For all $0 \leq k_1 < \cdots < k_m$, $0 \leq t_i \leq T$, $0 \leq t \leq T$ such that\footnote{Here, we select $g''_{\epsilon'}(k)$ large enough so that when $k_m - k_{m-1} \geq g''_{\epsilon'}(k_{m-1})$, we always have $k_mI + t \geq k_{m-1}I + t_{m-1}$, i.e. $k_mI+t$ becomes the largest.}\\
(i'') $k_m - k_{m-1} \geq g''_{\epsilon'}(k_{m-1})$ \\
(ii'') $g''_{\epsilon'}(k) \lesssim 1 + \log(k+1)$ \\
(iii'') $\left| \sum_{m-m_{\mu}+1 \leq i \leq m} a_{m,i}C_{m,i} \right| \geq
\epsilon' \prod_{1 \leq i \leq m} e^{- k_i I \cdot \lambda_i} \overset{(A)}{\geq} \epsilon' \prod_{1 \leq i \leq m} e^{- (k_{(i)} I + t_{(i)} ) \lambda_i}$\\
the following inequality holds:
\begin{align}
\left| \det \left(
\begin{bmatrix}
\mathbf{C}e^{-(k_1 I + t_1)\mathbf{A_c}} \\
\vdots \\
\mathbf{C}e^{-(k_{m-1} I + t_{m-1})\mathbf{A_c}} \\
\mathbf{C}e^{-(k_m I + t)\mathbf{A_c}}
\end{bmatrix}
\right)  \right| & \geq \frac{1}{2}\epsilon' \prod_{1 \leq i \leq m} e^{- (k_{(i)} I + t_{(i)} ) \lambda_i} \overset{(B)}{\geq} \frac{1}{2} \epsilon' \prod_{1 \leq i \leq m}e^{-\lambda_i T} \prod_{1 \leq i \leq m} e^{-k_i I \cdot \lambda_i} \\
& \overset{(C)}{=} \epsilon^2 \prod_{1 \leq i \leq m} e^{-k_i I \cdot \lambda_i} .
\end{align}
Here, (A) and (B) always hold by \eqref{eqn:sortedineq}. (C) follows from the definition of $\epsilon'$.
%

Let $g_{\epsilon}(k)$ be $g''_{e'}(k)$. Then, we can easily check such $g_{\epsilon}(k)$ satisfies (ii) of the lemma. Let's show that such $g_{\epsilon}(k)$ also satisfies (i) of the lemma.
\begin{align}
&\sup_{k_m \in \mathbb{Z}, k_m - k_{m-1} \geq g_{\epsilon}(k_{m-1})} \mathbb{P} \left\{ \left| \det \left(
\begin{bmatrix}
\mathbf{C} e^{-(k_1 I + t_1)\mathbf{A_c}} \\
\vdots \\
\mathbf{C} e^{-(k_{m-1} I + t_{m-1})\mathbf{A_c}} \\
\mathbf{C} e^{-(k_m I + t)\mathbf{A_c}}
\end{bmatrix}
\right) \right|  < \epsilon^2 \prod_{1 \leq i \leq m} e^{-k_i I \cdot \lambda_i}\right\} \nonumber \\
& \leq \sup_{k_m \in \mathbb{Z}, k_m - k_{m-1} \geq g_{\epsilon}(k_{m-1})} \mathbb{P} \left\{
\left|
\sum_{m-m_{\mu}+1 \leq i \leq m} C_{m,i} a_{m,i}
\right| < 2 \epsilon^2 \prod_{1 \leq i \leq m} e^{\lambda_i T}\cdot \prod_{1 \leq i \leq m} e^{-k_i I \cdot \lambda_i}
\right\} \nonumber \\
& = \sup_{k_m \in \mathbb{Z}, k_m - k_{m-1} \geq g_{\epsilon}(k_{m-1})} \mathbb{P} \left\{
\left|
\sum_{m-m_{\mu}+1 \leq i \leq m}
\frac{C_{m,i}}{\epsilon \prod_{1 \leq i \leq m-1} e^{-k_i I \cdot \lambda_i} } \frac{a_{m,i}}{e^{-(k_m  I+t ) \lambda_m}}
\right| < 2 \epsilon \cdot e^{\lambda_m t} \prod_{1 \leq i \leq m} e^{\lambda_i T}
\right\} \nonumber \\
&\leq \sup_{|b_m|\geq 1} \mathbb{P}\left\{ \left| \sum_{m-m_{\mu}+1 \leq i \leq m} b_i \frac{a_{m,i}}{e^{-( k_m I +t )\lambda_m}} \right| < 2 \epsilon \cdot  e^{\lambda_m T} \prod_{1 \leq i \leq m} e^{\lambda_i T}  \right\}. \label{eqn:lem:dettail:1}
\end{align}
where the last inequality comes from the assumption of (ii), $|C_{m,m}| > \epsilon \prod_{1 \leq i \leq m-1} e^{-k_i I \cdot \lambda_i}$, and $t \in [0,T]$ with probability one.

Now, it is enough to prove that \eqref{eqn:lem:dettail:1} converges to $0$ as $\epsilon \downarrow 0$. To this end, let's study $a_{m,i}$ which are the elements of the observability gramian. Let the $\mathbf{C_{\mu,\nu_{\mu}}}$ defined in \eqref{eqn:conti:c} be $\begin{bmatrix} c'_{1} & \cdots & c'_{m_{\mu,\nu_{\mu}}} \end{bmatrix}$. Then, we have
\begin{align}
&e^{-(k_m I + t)\mathbf{A_{\mu,\nu_{\mu}}}}\nonumber \\
&= \begin{bmatrix}
e^{-(k_m I + t)(\lambda_{\mu,\nu_\mu} + j \omega_{\mu,\nu_\mu})} & -(k_m I + t) e^{-(k_m I + t)(\lambda_{\mu,\nu_\mu} + j \omega_{\mu,\nu_\mu})} & \cdots &
\frac{(-1)^{m_{\mu,\nu_\mu}-1} (k_m I + t)^{m_{\mu,\nu_\mu}-1}}{(m_{\mu,\nu_\mu}-1)!} e^{-(k_m I + t)(\lambda_{\mu,\nu_\mu} + j \omega_{\mu,\nu_\mu})}\\
0 & e^{-(k_m I + t)(\lambda_{\mu,\nu_\mu} + j \omega_{\mu,\nu_\mu})} & \cdots &
\frac{(-1)^{m_{\mu,\nu_\mu}-2} (k_m I + t)^{m_{\mu,\nu_\mu}-2}}{(m_{\mu,\nu_\mu}-2)!} e^{-(k_m I + t)(\lambda_{\mu,\nu_\mu} + j \omega_{\mu,\nu_\mu})}\\
\vdots & \vdots & \ddots & \vdots \\
0 & 0 & \cdots & e^{-(k_m I + t)(\lambda_{\mu,\nu_\mu} + j \omega_{\mu,\nu_\mu})}
\end{bmatrix}. \nonumber
\end{align}
Thus, we can see that
\begin{align}
a_{m,m}&=\sum_{1 \leq i \leq m_{\mu,\nu_{\mu}}} c'_{i} \frac{(-1)^{m_{\mu,\nu_{\mu}}-i}(k_m I +t)^{m_{\mu,\nu_{\mu}}-i} }{(m_{\mu,\nu_{\mu}}-i)!} e^{-(k_m I + t)(\lambda_{m}+j \omega_{\mu,\nu_\mu})}.  \nonumber
\end{align}
Therefore,
\begin{align}
\frac{a_{m,m}}{e^{-(k_m I + t)\lambda_m}} &=\sum_{1 \leq i \leq m_{\mu,\nu_{\mu}}} c'_{i} \frac{(-1)^{m_{\mu,\nu_{\mu}}-i}(k_m I +t)^{m_{\mu,\nu_{\mu}}-i} }{(m_{\mu,\nu_{\mu}}-i)!} e^{-(k_m I + t)(j \omega_{\mu,\nu_\mu})}. \nonumber
\end{align}
Moreover, when $a_{m,i}$ is considered as a function of $t$, the $t^{m_{\mu,\nu_\mu}-1} e^{-j \omega_{\mu,\nu_\mu} t }$ term only shows up in $\frac{a_{m,m}}{e^{-(k_m I + t)\lambda_m}}$ among $\frac{a_{m,m-m_{\mu}+1}}{e^{-(k_m I + t)\lambda_m}}, \cdots, \frac{a_{m,m}}{e^{-(k_m I + t)\lambda_m}}$,
and the coefficient is $c_1' \frac{(-1)^{m_{\mu,\nu_\mu}-1}}{(m_{\mu,\nu_\mu}-1)!}e^{-j \omega_{\mu,\nu_\mu} k_m I }$. Since we put $|b_m| \geq 1$ in \eqref{eqn:lem:dettail:1}, by defining $c':=\frac{|c_1'|}{(m_{\mu,\nu_\mu}-1)!}$ we can see that the magnitude of the corresponding coefficient is greater or equal to $c'$. Furthermore, the remaining terms $\frac{a_{m,m-m_{\mu}+1}}{e^{-(k_m I + t)\lambda_m}}, \cdots, \frac{a_{m,m-1}}{e^{-(k_m I + t)\lambda_m}}$ only have $e^{-j \omega_{\mu,1}t}, \cdots, t^{m_{\mu,1}-1} e^{-j \omega_{\mu,1}t}$, $e^{-j \omega_{\mu,2}t}, \cdots, t^{m_{\mu,2}-1} e^{-j \omega_{\mu,2}t}$, $\cdots$, $e^{-j \omega_{\mu,\nu_\mu}t}, \cdots, t^{m_{\mu,\nu_\mu}-2} e^{-j \omega_{\mu,\nu_\mu}t}$ when they are considered as functions in $t$. Thus, using the assumption that $m_{\nu,1} \leq \cdots \leq m_{\nu,\mu_\nu}$,\eqref{eqn:lem:dettail:1} can be upper bounded as follows:
\begin{align}
\eqref{eqn:lem:dettail:1} \leq \sup_{|a'_{m_{{\mu},\nu_{\mu}},\nu_\mu}|\geq c' } \mathbb{P} \left\{
\left| \sum^{m_{\mu,\nu_\mu}}_{i=1}
t^{i-1}\left(
\sum^{\nu_{\mu}}_{j=1}
a'_{i,j} e^{-j \omega_{\mu,j}t}
\right)
\right|
\leq
2 \epsilon e^{\lambda_m T} \cdot \prod_{1 \leq i \leq m} e^{\lambda_i T}
\right\}. \label{eqn:lem:dettail:2}
\end{align}
By Lemma~\ref{lem:singleun} (by setting $\gamma$ as $c'$, $(m,n)$ as $(m_{\mu,\nu_\mu}, \nu_\mu)$, $p$ as $m_{\mu,\nu_\mu}$, $\nu_0, \cdots, \nu_p$ as $\nu_\mu$, $\omega_{0,j}, \cdots, \omega_{p,j}$ as $-\omega_{\mu,j}$, and $\epsilon$ as $2 \epsilon \prod_{1 \leq i \leq m} e^{\lambda_i T} \cdot e^{\lambda_m T}$), we get
\begin{align}
\sup_{|a'_{m_{{\mu},\nu_{\mu}},\nu_\mu}|\geq c' } \mathbb{P} \left\{
\left| \sum^{m_{\mu,\nu_\mu}}_{i=1}
t^{i-1}\left(
\sum^{\nu_{\mu}}_{j=1}
a'_{i,j} e^{-j \omega_{\mu,j}t}
\right)
\right|
\leq
2 \epsilon e^{\lambda_m T} \cdot \prod_{1 \leq i \leq m} e^{\lambda_i T}
\right\} \rightarrow 0 \mbox{ as } \epsilon \downarrow 0. \label{eqn:lem:dettail:3}
\end{align}
Therefore, by \eqref{eqn:lem:dettail:1}, \eqref{eqn:lem:dettail:2}, \eqref{eqn:lem:dettail:3} we can say that
\begin{align}
&\sup_{k_m \in \mathbb{Z}, k_m - k_{m-1} \geq g_{\epsilon}(k_{m-1})} \mathbb{P} \left\{ \left| \det \left(
\begin{bmatrix}
\mathbf{C} e^{-(k_1 I + t_1)\mathbf{A_c}} \\
\vdots \\
\mathbf{C} e^{-(k_{m-1} I + t_{m-1})\mathbf{A_c}} \\
\mathbf{C} e^{-(k_m I + t)\mathbf{A_c}}
\end{bmatrix}
\right) \right|  < \epsilon^2 \prod_{1 \leq i \leq m} e^{-k_i I \cdot \lambda_i}\right\} \rightarrow 0 \mbox{ as } \epsilon \downarrow 0 \nonumber
\end{align}
which finishes the proof.
\end{proof}

Based on the previous lemma, we will integrate the properties of p.m.f. tails shown in Section~\ref{sec:app:1} with the properties of the observability Gramian discussed in Section~\ref{sec:app:3}, and prove Lemma~\ref{lem:conti:mo} for the case of a row vector $\mathbf{C}$.

\begin{lemma}
Let $\mathbf{A_c}$ and $\mathbf{C}$ be given as \eqref{eqn:conti:a} and \eqref{eqn:conti:c}. Let $\beta[n]~(n \in \mathbb{Z}^+)$ be a Bernoulli random process with probability $1-p_e$ and $t_n$ be i.i.d.~random variables which are uniformly distributed on $[0,T]~(T>0)$. Then, we can find a polynomial $p(k)$ and a family of stopping times $\{ S(\epsilon,k): k \in \mathbb{Z}^+, \epsilon > 0 \}$ such that for all $\epsilon > 0$, $k \in \mathbb{Z}^+$ there exist
 $k \leq k_1 < k_2 < \cdots < k_m \leq S(\epsilon,k)$ and $\mathbf{M}$ satisfying the following conditions:\\
(i) $\beta[k_i]=1$ for $1 \leq i \leq m$\\
(ii) $\mathbf{M} \begin{bmatrix}
\mathbf{C}e^{-(k_1 I + t_{k_1})\mathbf{A_c}} \\
\mathbf{C}e^{-(k_2 I + t_{k_2})\mathbf{A_c}} \\
\vdots \\
\mathbf{C}e^{-(k_m I + t_{k_m})\mathbf{A_c}} \\
\end{bmatrix}=\mathbf{I}$\\
(iii) $|\mathbf{M}|_{max} \leq \frac{p(S(\epsilon,k))}{\epsilon} e^{\lambda_1 S(\epsilon,k) I}$\\
(iv) $\lim_{\epsilon \downarrow 0} \exp \limsup_{s \rightarrow \infty} \sup_{k \in \mathbb{Z}^+} \frac{1}{s} \log \mathbb{P}\left\{ S(\epsilon,k)-k=s\right\} \leq p_e$.
\label{lem:conti:singlec}
\end{lemma}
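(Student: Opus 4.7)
\emph{Strategy.} The plan is to build $k_1 < k_2 < \cdots < k_m$ sequentially by induction on the state dimension $j$, carrying at each stage a quantitative lower bound on the determinant of the $j \times j$ principal sub-Gramian so that Lemma~\ref{lem:conti:single} drives the step from $j$ to $j{+}1$. Write $\mathbf{A_c}^{(j)}$ for the top-left $j \times j$ submatrix of $\mathbf{A_c}$ and $\mathbf{C}^{(j)}$ for the first $j$ entries of $\mathbf{C}$; by the sorted Jordan structure of \eqref{eqn:conti:a}--\eqref{eqn:conti:c}, each pair $(\mathbf{A_c}^{(j)},\mathbf{C}^{(j)})$ retains that form, and the cofactor $C_{j,j}$ of the $j \times j$ Gramian equals the determinant of the $(j{-}1) \times (j{-}1)$ sub-Gramian.

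\emph{Induction.} The base case $j=1$ takes $S_1$ to be the first $n \geq k$ with $\beta[n]=1$ and $k_1 = S_1$: the scalar $|\mathbf{C}^{(1)} e^{-(k_1 I + t_{k_1})\mathbf{A_c}^{(1)}}|$ is bounded below by a positive constant times $e^{-k_1 I \lambda_1}$ (the first entry of $\mathbf{C}$ is nonzero and $t_{k_1} \in [0,T]$), and the waiting time has tail exponent exactly $p_e$. For the inductive step $j{-}1 \to j$, given the invariant $|\det_{(j-1)}| \geq \epsilon_{j-1} \prod_{i<j} e^{-k_i I \lambda_i}$, I apply Lemma~\ref{lem:conti:single} to the $j$-dimensional subsystem $(\mathbf{A_c}^{(j)},\mathbf{C}^{(j)})$ with parameter $\epsilon_{j-1}$: this furnishes a skip $g_{\epsilon_{j-1}}(\cdot) \lesssim 1+\log(\cdot+1)$ and a failure probability $\delta_j(\epsilon_{j-1}) \downarrow 0$ such that, for any $n$ with $n - k_{j-1} \geq g_{\epsilon_{j-1}}(k_{j-1})$, setting $k_j = n$ yields $|\det_{(j)}| \geq \epsilon_{j-1}^2 \prod_{i \leq j} e^{-k_i I \lambda_i}$ off a $t_n$-event of probability at most $\delta_j(\epsilon_{j-1})$. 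I scan $n = S_{j-1} + \lceil g_{\epsilon_{j-1}}(S_{j-1}) \rceil, S_{j-1} + \lceil g_{\epsilon_{j-1}}(S_{j-1}) \rceil + 1, \ldots$ and declare success at the first $n$ at which both $\beta[n]=1$ and the determinant bound hold, setting $S_j := n$, $k_j := n$, $\epsilon_j := \epsilon_{j-1}^2$. Since $\{t_n\}$ are i.i.d.\ and independent of $\{\beta[n]\}$, successive trials are conditionally independent given the past $\sigma$-field, each succeeding with probability at least $(1-p_e)(1-\delta_j)$; thus $S_j - S_{j-1}$ is stochastically dominated by a $\lesssim 1 + \log(S_{j-1}+1)$ deterministic shift plus a geometric variable of failure probability $p_e + \delta_j(1-p_e)$, whose tail exponent (by Lemma~\ref{lem:conti:tailpoly}) equals that of the geometric factor.

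\emph{Conclusion.} Chaining $j = 1, \ldots, m$ via Lemma~\ref{lem:app:geo} bounds the tail exponent of $S_m - k$ by $p_e + \max_j \delta_j$, uniformly in $k$ by time-translation invariance of the joint $(\beta,t)$-process. To force all $\delta_j \to 0$ as the global $\epsilon \downarrow 0$, I reparametrize by setting $\epsilon_{m-r} := \epsilon^{1/2^r}$ for $r = 0, 1, \ldots, m-2$, compatible with the recurrence $\epsilon_j = \epsilon_{j-1}^2$; then every $\delta_j(\epsilon_{j-1}) \to 0$, yielding (iv). Setting $S(\epsilon,k) := S_m$ and letting $\mathbf{M}$ be the inverse of the resulting full $m \times m$ Gramian delivers (i) and (ii) directly; for (iii), I invoke Lemma~\ref{lem:conti:inverse2} with the final determinant bound $\epsilon_m \prod_i e^{-k_i I \lambda_i}$, absorbing the bounded jitter factors $e^{-t_{k_i} \lambda_i}$ with $t_{k_i} \in [0,T]$ and the $e^{\lambda_1 T}$ correction from $k_m I + t_{k_m} \leq k_m I + T$ into the polynomial $p(\cdot)$.

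\emph{Main obstacle.} The delicate point is the probabilistic accounting of the $k_j$-search loop: each attempt uses a fresh jitter $t_n$, so consecutive failures are conditionally independent of the past, but the attempt position $n$ is itself a stopping time of the joint $(\beta,t)$-filtration, so one must invoke the strong Markov property to legitimately multiply per-trial failure probabilities into a clean geometric-type tail uniform in $k$. A secondary subtlety is that the polynomial $p(\cdot)$ in (iii) must depend only on $m$ and on the skip functions furnished by Lemmas~\ref{lem:conti:single} and \ref{lem:conti:inverse2}, not on $\epsilon$; this requires propagating the polynomials composed through the $m{-}1$ inductive squarings $\epsilon_j \to \epsilon_j^2$ while keeping their degrees under control.
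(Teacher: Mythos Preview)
Your proposal is correct and mirrors the paper's own proof essentially step for step: both reduce (ii)--(iii) to a determinant lower bound via Lemma~\ref{lem:conti:inverse2}, then argue by induction on the dimension $j$ using the identity that the $(j,j)$ cofactor of the $j\times j$ sub-Gramian equals the determinant of the $(j{-}1)\times(j{-}1)$ sub-Gramian, invoke Lemma~\ref{lem:conti:single} to bound the failure probability at each new jitter, scan for the next $k_j$ as a geometric-plus-log-shift waiting time, and finally combine tails through Lemmas~\ref{lem:conti:tailpoly} and~\ref{lem:app:geo} with an $\epsilon \mapsto \epsilon^{1/2}$ reparametrization at each stage. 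Your flagged ``main obstacle'' about conditional independence of successive trials is handled in the paper exactly as you sketch (the fresh $t_n$ and $\beta[n]$ at each attempt are independent of $\mathcal{F}_{n-1}$), and your secondary worry about the polynomial $p(\cdot)$ depending on $\epsilon$ is moot because $p(\cdot)$ is supplied once and for all by Lemma~\ref{lem:conti:inverse2} at the very end, independently of the inductive $\epsilon$-squaring.
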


\begin{proof}
By Lemma~\ref{lem:conti:inverse2}, instead of conditions (ii) and (iii), it is enough to prove that
\begin{align}
\left| \det \left(
\begin{bmatrix}
\mathbf{C}e^{-(k_1 I + t_{k_1})\mathbf{A_c}} \\
\mathbf{C}e^{-(k_2 I + t_{k_2})\mathbf{A_c}} \\
\vdots \\
\mathbf{C}e^{-(k_m I + t_{k_m})\mathbf{A_c}} \\
\end{bmatrix}\right)\right| \geq \epsilon \prod_{1 \leq i \leq m} e^{-(k_i I + t_{k_i}) \lambda_i}. \nonumber
\end{align}
Furthermore, since $t_i \geq 0$ it is sufficient to prove that
\begin{align}
\left| \det \left(
\begin{bmatrix}
\mathbf{C}e^{-(k_1 I + t_{k_1})\mathbf{A_c}} \\
\mathbf{C}e^{-(k_2 I + t_{k_2})\mathbf{A_c}} \\
\vdots \\
\mathbf{C}e^{-(k_m I + t_{k_m})\mathbf{A_c}} \\
\end{bmatrix}\right)\right| \geq \epsilon \prod_{1 \leq i \leq m} e^{-k_i I \cdot \lambda_i}. \nonumber
\end{align}
Therefore, it is enough to prove the following claim:

We can find a family of stopping times $\{ S(\epsilon,k) : k \in \mathbb{Z}^+, \epsilon > 0 \}$ such that for all $\epsilon>0$ and $k \in \mathbb{Z}^+$
there exist $k \leq k_1 < k_2 < \cdots < k_m \leq S(\epsilon,k)$ satisfying the following condition:\\
(a) $\beta[k_i]=1$ for $1 \leq i \leq m$ \\
(b) $\left| \det \left(
\begin{bmatrix}
\mathbf{C}e^{-(k_1 I + t_{k_1})\mathbf{A_c}} \\
\mathbf{C}e^{-(k_2 I + t_{k_2})\mathbf{A_c}} \\
\vdots \\
\mathbf{C}e^{-(k_m I + t_{k_m})\mathbf{A_c}} \\
\end{bmatrix}\right)\right| \geq \epsilon  \prod_{1 \leq i \leq m} e^{-k_i I \cdot \lambda_i}$ \\
(c) $\lim_{\epsilon \downarrow 0} \exp \limsup_{s \rightarrow \infty} \sup_{k \in \mathbb{Z}^+} \frac{1}{s} \log \mathbb{P}\left\{ S(\epsilon,k)-k=s\right\} \leq p_e$

We will prove the claim by induction on $m$, the size of the $\mathbf{A_c}$ matrix.

(i) When $m=1$,

Since we only have to care about small enough $\epsilon$, let $\epsilon \leq |c_1| e^{-2 T \lambda_1}$. Denote $S(\epsilon,k) := \inf \{ n \geq k : \beta[n]=1 \}$ and $k_1=S(\epsilon,k)$. Then, $\beta[k_1]=1$ and $\left| \det\left( \begin{bmatrix}
c_1 e^{-(k_1 I + t_{k_1})(\lambda_1 + j \omega_1)}
\end{bmatrix} \right) \right|\geq |c_1| e^{-T \lambda_1} e^{-k_1 I \cdot \lambda_1} \geq \epsilon e^{-k_1 I \cdot \lambda_1}$.\\
Moreover, since $S(\epsilon,k)-k$ is a geometric random variable with probability $1-p_e$,
\begin{align}
\exp \limsup_{s \rightarrow \infty} \sup_{k \in \mathbb{Z}^+} \log \mathbb{P}\left\{ S(\epsilon,k)-k=s \right\} = p_e.  \nonumber
\end{align}
Therefore, $S(\epsilon,k)$ satisfies all the conditions of the lemma.

(ii) Now, we assume that the lemma is true for $m-1$ and prove the lemma still holds for $m$. We will see that the induction hypothesis corresponds to the cofactor condition of Lemma~\ref{lem:conti:single}, which tells us that the determinant of the observability Gramian is large enough with high probability.

Let $\mathbf{A_c'}$ be the $(m-1) \times (m-1)$ matrix obtained by removing $m$th row and column of $\mathbf{A_c}$. Likewise, $\mathbf{C'}$ is a $1 \times (m-1)$ vector obtained by removing $m$th element of $\mathbf{C}$. Then, since $\mathbf{A_c}$ is given in a Jordan form, we can easily check that once we remove the last element from the row vector $\mathbf{C}e^{-(k_i I + t_{k_i})\mathbf{A_c}}$, we get $\mathbf{C'}e^{-(k_i I + t_{k_i})\mathbf{A_c'}}$. Therefore, we can see that
\begin{align}
\det\left( \begin{bmatrix}
\mathbf{C'} e^{-(k_1 I + t_{k_1})\mathbf{A_c'}} \\
\vdots \\
\mathbf{C'} e^{-(k_{m-1} I + t_{k_{m-1}})\mathbf{A_c'}}
\end{bmatrix} \right)
= cof_{m,m}\left(
\begin{bmatrix}
\mathbf{C} e^{-(k_1 I + t_{k_1})\mathbf{A_c}} \\
\vdots \\
\mathbf{C} e^{-(k_m I + t_{k_m})\mathbf{A_c}} \\
\end{bmatrix}
\right) \label{eqn:cofdet1}
\end{align}
where $cof_{i,j}(\mathbf{A})$ implies the cofactor matrix of $\mathbf{A}$ with respect to $(i,j)$ element.

By the induction hypothesis, there exists a stopping time $S'(\epsilon,k)$ such that we can find $k \leq k_1 < k_2 < \cdots < k_{m-1} \leq S'(\epsilon,k)$ satisfying:\\
(a) $\beta[k_i]=1$ for $1 \leq i \leq m-1$ \\
(b) $\left| \det \left(
\begin{bmatrix}
\mathbf{C'}e^{-(k_1 I + t_{k_1})\mathbf{A_c'}} \\
\vdots \\
\mathbf{C'}e^{-(k_{m-1} I + t_{k_{m-1}})\mathbf{A_c'}}
\end{bmatrix}
\right) \right| \geq \epsilon  \prod_{1 \leq i \leq m-1} e^{-k_i I \cdot \lambda_i}$\\
(c) $\lim_{\epsilon \downarrow 0} \exp \limsup_{s \rightarrow \infty} \sup_{k \in \mathbb{Z}^+} \frac{1}{s} \log \mathbb{P} \left\{ S'(\epsilon,k)-k=s \right\} \leq p_e$.

Let $\mathcal{F}_{i}$ be a $\sigma$-field generated by $\beta[0],\cdots,\beta[i]$, and $t_0,\cdots, t_i$. Let $g_{\epsilon}: \mathbb{R}^+ \rightarrow \mathbb{R}^+$ be the function of Lemma~\ref{lem:conti:single}. Denote
\begin{align}
p'(\epsilon):=\esssup \sup_{k_m \in \mathbb{Z}, k_m-S'(\epsilon, k) \geq g_{\epsilon}(S'(\epsilon, k))} \mathbb{P}_t \left\{
\left|
\det \left(
\begin{bmatrix}
\mathbf{C}e^{-(k_1 I + t_{k_1})\mathbf{A_c}}\\
\vdots\\
\mathbf{C}e^{-(k_{m-1} I + t_{k_{m-1}})\mathbf{A_c}}\\
\mathbf{C}e^{-(k_{m} I + t)\mathbf{A_c}}
\end{bmatrix}
\right)
\right| < \epsilon^2 \prod_{1 \leq i \leq m} e^{-k_i I \cdot \lambda_i}
| \mathcal{F}_{S'(\epsilon,k)}
\right\}. \label{eqn:stoppingtimedef1}
\end{align}

Here, given $\mathcal{F}_{S'(\epsilon,k)}$, $k_1, \cdots, k_{m-1}$, $t_{k_1}, \cdots, t_{k_{m-1}}$, $S'(\epsilon,k)$ are all fixed, we took the supremum over $k_m$ such that $k_m - S'(\epsilon, k) \geq g_{\epsilon}(S'(\epsilon, k))$, and $t$ is a uniform random variable on $[0,T]$ which we computed the probability over.

Since $k_m \geq S'(\epsilon, k)+ g_{\epsilon}(S'(\epsilon, k)) \geq k_{m-1}+g_{\epsilon}(k_{m-1})$, and by \eqref{eqn:cofdet1}, (b) implies $cof_{m,m}\left(
\begin{bmatrix}
\mathbf{C} e^{-(k_1 I + t_{k_1})\mathbf{A_c}} \\
\vdots \\
\mathbf{C} e^{-(k_{m} I + t_{k_m})\mathbf{A_c}} \\
\end{bmatrix}
\right) \geq \epsilon \prod_{1 \leq i \leq m-1} e^{- k_i I \cdot \lambda_i}$, by Lemma~\ref{lem:conti:single} we have $\lim_{\epsilon \downarrow 0}p'(\epsilon) = 0$.

Denote $S''(\epsilon,k) := \lceil S'(\epsilon,k)+g_{\epsilon}(S'(\epsilon,k)) \rceil$. From (ii) of Lemma~\ref{lem:conti:single}  we know $g_{\epsilon}(k) \lesssim 1 + \log (k+1)$ for all $\epsilon>0$.
Therefore, by (c) and Lemma~\ref{lem:conti:tailpoly} we have
\begin{align}
\lim_{\epsilon \downarrow 0} \exp \limsup_{s \rightarrow \infty} \sup_{k \in \mathbb{Z}^+} \frac{1}{s} \log \mathbb{P}\{ S''(\epsilon,k)-k=s \} \leq p_e. \label{eqn:lem:single:5}
\end{align}
Denote a stopping time
\begin{align}
&S'''(\epsilon,k) \nonumber \\
&:=\inf \left\{n \geq S''(\epsilon): \beta[n]=1 \mbox{ and }
\left|
\det \left(
\begin{bmatrix}
\mathbf{C}e^{-(k_1 I + t_{k_1})\mathbf{A_c}} \\
\vdots \\
\mathbf{C}e^{-(k_{m-1} I + t_{k_{m-1}})\mathbf{A_c}} \\
\mathbf{C}e^{-(n I + t_n)\mathbf{A_c}} \\
\end{bmatrix}
\right)
\right|
\geq \epsilon^2  e^{-nI \cdot \lambda_m } \prod_{1 \leq i \leq m-1} e^{-k_i I \cdot \lambda_i}
\right\}. \label{eqn:stoppingtimedef2}
\end{align}

Since $\beta[n]$ and $t_n$ are independent processes, for $S'''(\epsilon,k)=n$ to hold, $\beta[n]=1$ and the determinant of \eqref{eqn:stoppingtimedef2} has to be large enough. By \eqref{eqn:stoppingtimedef1}, we already know the probability for the determinant not being large enough is upper bounded by $p'(\epsilon)$. Therefore, given that $S'''(\epsilon,k) \geq n$, the probability that $S'''(\epsilon,k) \neq n$ is upper bounded by $(p_e + (1-p_e)p'(\epsilon))$ --- (erasure) or (not erased but small determinant). Thus, for all $s \in \mathbb{Z}^+$, we have
\begin{align}
\esssup \mathbb{P}\{ S'''(\epsilon,k)- S''(\epsilon,k) \geq s | \mathcal{F}_{S''(\epsilon,k)} \} \leq \left(p_e + \left(1-p_e\right)p'(\epsilon)\right)^{s}. \nonumber
\end{align}

Since we know $\lim_{\epsilon \downarrow 0}p'(\epsilon) = 0$, we have
\begin{align}
\lim_{\epsilon \downarrow 0} \exp \limsup_{s \rightarrow \infty} \esssup \frac{1}{s} \log \mathbb{P}\{ S'''(\epsilon,k)-  S''(\epsilon,k)  = s | \mathcal{F}_{S''(\epsilon,k)}  \} \leq p_e. \label{eqn:lem:single:6}
\end{align}

By applying Lemma~\ref{lem:app:geo} to \eqref{eqn:lem:single:5} and \eqref{eqn:lem:single:6}, we can conclude that
\begin{align}
\lim_{\epsilon \downarrow 0} \exp \limsup_{s \rightarrow \infty} \sup_{k \in \mathbb{Z}^+} \frac{1}{s} \log \mathbb{P}\{S'''(\epsilon,k)-k=s \} \leq p_e. \nonumber
\end{align}
Therefore, if we denote $S(\epsilon,k):=S'''(\epsilon^{\frac{1}{2}},k)$, $S(\epsilon,k)$ satisfies all the conditions of the claim.
\end{proof}

Before we prove Lemma~\ref{lem:conti:mo}, we will first prove the following lemma which allows to merge two Jordan blocks associated with the same eigenvalue into one Jordan block.

\begin{lemma}
Let $\mathbf{A}$ be a Jordan block matrix with an eigenvalue $\lambda \in \mathbb{C}$ and a size $m \in \mathbb{N}$, i.e.
$\mathbf{A}=\begin{bmatrix}
\lambda & 1 & \cdots & 0 \\
0 & \lambda & \cdots & 0 \\
\vdots & \vdots & \ddots & \vdots \\
0 & 0 & \cdots & \lambda \\
\end{bmatrix}$. $\mathbf{C}$ and $\mathbf{C'}$ are $1 \times m$ matrices such that
\begin{align}
&\mathbf{C}=\begin{bmatrix} c_1 & c_2 & \cdots & c_m\end{bmatrix} \nonumber\\
&\mathbf{C'}=\begin{bmatrix} c'_1 & c'_2 & \cdots & c'_m\end{bmatrix}
\end{align}
where $c_i, c'_i \in \mathbb{C}$ and $c_1 \neq 0$.\\
For all $k \in \mathbb{R}$ and $m \times 1$ matrices $\mathbf{X}=\begin{bmatrix} x_1 \\ x_2 \\ \vdots \\ x_m \end{bmatrix}$ and $\mathbf{X'}=\begin{bmatrix} x'_1 \\ x'_2 \\ \vdots \\ x'_m \end{bmatrix}$, there exists $\mathbf{T}$ such that\\
\begin{align}
&(i) \mathbf{T} \mbox{ is an upper triangular matrix.} \nonumber \\
&(ii) \mathbf{C}e^{k\mathbf{A}}\mathbf{X}+\mathbf{C'}e^{k\mathbf{A}}\mathbf{X'}=\mathbf{C}e^{k\mathbf{A}}\left(\mathbf{X}+\mathbf{T}\mathbf{X'}\right) \nonumber
\end{align}
Moreover, the diagonal elements of $\mathbf{T}$ are $\frac{c_1'}{c_1}$.
\label{lem:conti:jordan}
\end{lemma}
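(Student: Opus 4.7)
The plan is to reduce the statement to a purely algebraic one by removing $\mathbf{X}$ and $\mathbf{X}'$. Setting $\mathbf{X} = \mathbf{0}$ and letting $\mathbf{X}'$ range over a basis shows that the identity in (ii) is equivalent to requiring a single, $k$-independent, upper triangular $\mathbf{T}$ such that
\begin{align}
\mathbf{C}' e^{k\mathbf{A}} \;=\; \mathbf{C}\, e^{k\mathbf{A}}\, \mathbf{T} \quad\text{for all } k \in \mathbb{R}. \nonumber
\end{align}
Conversely, once this row-vector identity holds, applying both sides to an arbitrary $\mathbf{X}'$ and adding $\mathbf{C} e^{k\mathbf{A}} \mathbf{X}$ recovers (ii).

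To meet this requirement I would restrict the search for $\mathbf{T}$ to upper triangular Toeplitz matrices, i.e. matrices of the form $\mathbf{T}_{ij} = t_{j-i+1}$ for $j \geq i$. The key elementary fact is that every upper triangular Toeplitz matrix of size $m$ is a polynomial in the nilpotent shift $\mathbf{N} := \mathbf{A}-\lambda \mathbf{I}$, and therefore commutes with $\mathbf{A}$; hence also with $e^{k\mathbf{A}}$ for every $k$. With such a $\mathbf{T}$ the identity above collapses to the single algebraic condition $\mathbf{C}\, \mathbf{T} = \mathbf{C}'$, independent of $k$.

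The remaining step is to solve this triangular linear system for the scalars $t_1,\dots,t_m$. Writing it out, $(\mathbf{C}\mathbf{T})_j = \sum_{i=1}^{j} c_i\, t_{j-i+1}$, so the equations become
\begin{align}
c_1 t_1 &= c_1', \nonumber \\
c_1 t_2 + c_2 t_1 &= c_2', \nonumber \\
&\;\;\vdots \nonumber \\
c_1 t_m + c_2 t_{m-1} + \cdots + c_m t_1 &= c_m'. \nonumber
\end{align}
Because $c_1 \neq 0$, this forward-substitution system can be solved uniquely for $t_1, t_2, \dots, t_m$, and in particular $t_1 = c_1'/c_1$, which is exactly the common value of every diagonal entry of the Toeplitz $\mathbf{T}$. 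I do not anticipate any real obstacle in this proof; the only conceptual ingredient is the commutativity of upper triangular Toeplitz matrices with Jordan blocks, after which the problem becomes a routine triangular solve whose solvability is guaranteed by the hypothesis $c_1 \neq 0$.
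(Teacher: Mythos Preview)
Your proof is correct and takes a cleaner route than the paper. The paper argues by induction on $m$: it writes $\mathbf{C}' = \tfrac{c_1'}{c_1}\mathbf{C} + \begin{bmatrix}0 & c_2'-\tfrac{c_1'}{c_1}c_2 & \cdots & c_{m+1}'-\tfrac{c_1'}{c_1}c_{m+1}\end{bmatrix}$, expands $e^{k\mathbf{A}}$ explicitly, and reduces the second piece to an instance of the lemma in dimension $m-1$, then assembles the resulting $(m-1)\times(m-1)$ upper-triangular matrix into an $m\times m$ one. Your approach instead isolates the structural fact up front---upper triangular Toeplitz matrices are polynomials in $\mathbf{N}=\mathbf{A}-\lambda\mathbf{I}$ and hence commute with $e^{k\mathbf{A}}$---so that the $k$-dependence disappears and the problem collapses to the single linear system $\mathbf{C}\mathbf{T}=\mathbf{C}'$. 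This is more conceptual and actually establishes slightly more than the lemma asks (that $\mathbf{T}$ can be taken Toeplitz, not merely upper triangular with constant diagonal); the paper's induction is essentially carrying out your forward substitution one coordinate at a time without naming the commutation principle.
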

\begin{proof}
The proof is an induction on $m$, the size of the $\mathbf{A}$ matrix. The lemma is trivial when $m=1$. Thus, we can assume the lemma is true for $m$ as an induction hypothesis, and consider $m+1$ as the dimension of $\mathbf{A}$.
\begin{align}
&\mathbf{C} e^{k\mathbf{A}} \mathbf{X} + \mathbf{C'} e^{k\mathbf{A}} \mathbf{X'} \nonumber \\
&=\mathbf{C} \begin{bmatrix}
e^{k \lambda} & \frac{k}{1!} e^{k \lambda} & \cdots & \frac{k^m}{m!} e^{k \lambda} \\
0 & e^{k \lambda} & \cdots & \frac{k^{m-1}}{(m-1)!} e^{k \lambda} \\
\vdots & \vdots & \ddots & \vdots \\
0 & 0 & \cdots & e^{k \lambda}\\
\end{bmatrix}\mathbf{X} +
\mathbf{C'} \begin{bmatrix}
e^{k \lambda} & \frac{k}{1!} e^{k \lambda} & \cdots & \frac{k^m}{m!} e^{k \lambda} \\
0 & e^{k \lambda} & \cdots & \frac{k^{m-1}}{(m-1)!} e^{k \lambda} \\
\vdots & \vdots & \ddots & \vdots \\
0 & 0 & \cdots & e^{k \lambda}\\
\end{bmatrix}\mathbf{X'} \nonumber \\
&=\mathbf{C} \begin{bmatrix}
e^{k \lambda} & \frac{k}{1!} e^{k \lambda} & \cdots & \frac{k^m}{m!} e^{k \lambda} \\
0 & e^{k \lambda} & \cdots & \frac{k^{m-1}}{(m-1)!} e^{k \lambda} \\
\vdots & \vdots & \ddots & \vdots \\
0 & 0 & \cdots & e^{k \lambda}\\
\end{bmatrix}\mathbf{X} \nonumber \\
&+
\left(
\frac{c_1'}{c_1}\mathbf{C}+\begin{bmatrix}
0 & c_2'-\frac{c_1'}{c_1}c_2 & \cdots & c_{m+1}'-\frac{c_1'}{c_1}c_{m+1}
\end{bmatrix}
\right)
\begin{bmatrix}
e^{k \lambda} & \frac{k}{1!} e^{k \lambda} & \cdots & \frac{k^m}{m!} e^{k \lambda} \\
0 & e^{k \lambda} & \cdots & \frac{k^{m-1}}{(m-1)!} e^{k \lambda} \\
\vdots & \vdots & \ddots & \vdots \\
0 & 0 & \cdots & e^{k \lambda}\\
\end{bmatrix}\mathbf{X'} \nonumber \\
&=\mathbf{C} \begin{bmatrix}
e^{k \lambda} & \frac{k}{1!} e^{k \lambda} & \cdots & \frac{k^m}{m!} e^{k \lambda} \\
0 & e^{k \lambda} & \cdots & \frac{k^{m-1}}{(m-1)!} e^{k \lambda} \\
\vdots & \vdots & \ddots & \vdots \\
0 & 0 & \cdots & e^{k \lambda}\\
\end{bmatrix}\left(\mathbf{X}+\frac{c_1'}{c_1}\mathbf{X'} \right) \nonumber \\
&+
\begin{bmatrix}
0 & c_2'-\frac{c_1'}{c_1}c_2 & \cdots & c_m'-\frac{c_1'}{c_1}c_m
\end{bmatrix}
\begin{bmatrix}
e^{k \lambda} & \frac{k}{1!} e^{k \lambda} & \cdots & \frac{k^m}{m!} e^{k \lambda} \\
0 & e^{k \lambda} & \cdots & \frac{k^{m-1}}{(m-1)!} e^{k \lambda} \\
\vdots & \vdots & \ddots & \vdots \\
0 & 0 & \cdots & e^{k \lambda}\\
\end{bmatrix}\mathbf{X'} \nonumber \\
&=\mathbf{C} \begin{bmatrix}
e^{k \lambda} & \frac{k}{1!} e^{k \lambda} & \cdots & \frac{k^m}{m!} e^{k \lambda} \\
0 & e^{k \lambda} & \cdots & \frac{k^{m-1}}{(m-1)!} e^{k \lambda} \\
\vdots & \vdots & \ddots & \vdots \\
0 & 0 & \cdots & e^{k \lambda}\\
\end{bmatrix}\left(\begin{bmatrix} 0 \\ 0 \\ \vdots \\ x_{m+1}+\frac{c_1'}{c_1}x'_{m+1} \end{bmatrix}+ \begin{bmatrix}x_1+\frac{c_1'}{c_1}x'_1 \\ x_2+\frac{c_1'}{c_1}x'_2 \\ \vdots \\ 0 \end{bmatrix} \right) \nonumber \\
&+\begin{bmatrix}
c_2'-\frac{c_1'}{c_1}c_2 & c_3'-\frac{c_1'}{c_1}c_3 & \cdots & c_{m+1}'-\frac{c_1'}{c_1}c_{m+1}
\end{bmatrix}
\begin{bmatrix}
e^{k \lambda} & \frac{k}{1!} e^{k \lambda} & \cdots & \frac{k^{m-1}}{(m-1)!} e^{k \lambda} \\
0 & e^{k \lambda} & \cdots & \frac{k^{m-2}}{(m-2)!} e^{k \lambda} \\
\vdots & \vdots & \ddots & \vdots \\
0 & 0 & \cdots & e^{k \lambda}\\
\end{bmatrix}
\begin{bmatrix}
x_2' \\
x_3' \\
\vdots \\
x_{m+1}' \\
\end{bmatrix}
\nonumber
\end{align}
\begin{align}
&=\mathbf{C} \begin{bmatrix}
e^{k \lambda} & \frac{k}{1!} e^{k \lambda} & \cdots & \frac{k^m}{m!} e^{k \lambda} \\
0 & e^{k \lambda} & \cdots & \frac{k^{m-1}}{(m-1)!} e^{k \lambda} \\
\vdots & \vdots & \ddots & \vdots \\
0 & 0 & \cdots & e^{k \lambda}\\
\end{bmatrix}\begin{bmatrix} 0 \\ 0 \\ \vdots \\ x_{m+1}+\frac{c_1'}{c_1}x'_{m+1} \end{bmatrix}\nonumber\\
&+\begin{bmatrix} c_1 & c_2 & \cdots & c_m \end{bmatrix}
\begin{bmatrix}
e^{k \lambda} & \frac{k}{1!} e^{k \lambda} & \cdots & \frac{k^{m-1}}{(m-1)!} e^{k \lambda} \\
0 & e^{k \lambda} & \cdots & \frac{k^{m-2}}{(m-2)!} e^{k \lambda} \\
\vdots & \vdots & \ddots & \vdots \\
0 & 0 & \cdots & e^{k \lambda}\\
\end{bmatrix}\begin{bmatrix}x_1+\frac{c_1'}{c_1}x'_1 \\ x_2+\frac{c_1'}{c_1}x'_2 \\ \vdots \\ x_{m}+\frac{c_1'}{c_1}x'_{m} \end{bmatrix}\nonumber\\
&+\begin{bmatrix}
c_2'-\frac{c_1'}{c_1}c_2 & c_3'-\frac{c_1'}{c_1}c_3 & \cdots & c_{m+1}'-\frac{c_1'}{c_1}c_{m+1}
\end{bmatrix}
\begin{bmatrix}
e^{k \lambda} & \frac{k}{1!} e^{k \lambda} & \frac{k}{2!} e^{k \lambda} & \cdots & \frac{k^{m-1}}{(m-1)!} e^{k \lambda} \\
0 & e^{k \lambda} & \frac{k}{1!} e^{k \lambda} & \cdots & \frac{k^{m-2}}{(m-2)!} e^{k \lambda} \\
\vdots & \vdots & \vdots & \ddots & \vdots \\
0 & 0 & 0 & \cdots & e^{k \lambda}\\
\end{bmatrix}
\begin{bmatrix}
x_2' \\
x_3' \\
\vdots \\
x_{m+1}' \\
\end{bmatrix}
\nonumber \\
&=
\mathbf{C} \begin{bmatrix}
e^{k \lambda} & \frac{k}{1!} e^{k \lambda} & \cdots & \frac{k^m}{m!} e^{k \lambda} \\
0 & e^{k \lambda} & \cdots & \frac{k^{m-1}}{(m-1)!} e^{k \lambda} \\
\vdots & \vdots & \ddots & \vdots \\
0 & 0 & \cdots & e^{k \lambda}\\
\end{bmatrix}\begin{bmatrix} 0 \\ 0 \\ \vdots \\ x_{m+1}+\frac{c_1'}{c_1}x'_{m+1} \end{bmatrix}\nonumber\\
&+\begin{bmatrix} c_1 & c_2 & \cdots & c_{m} \end{bmatrix}
\begin{bmatrix}
e^{k \lambda} & \frac{k}{1!} e^{k \lambda} & \cdots & \frac{k^{m-1}}{(m-1)!} e^{k \lambda} \\
0 & e^{k \lambda} & \cdots & \frac{k^{m-2}}{(m-2)!} e^{k \lambda} \\
\vdots & \vdots & \ddots & \vdots \\
0 & 0 & \cdots & e^{k \lambda}\\
\end{bmatrix}
\left(
\begin{bmatrix}x_1+\frac{c_1'}{c_1}x'_1 \\ x_2+\frac{c_1'}{c_1}x'_2 \\ \vdots \\ x_{m}+\frac{c_1'}{c_1}x'_{m} \end{bmatrix} +
\begin{bmatrix}
t'_{1,1} & t'_{1,2} & \cdots & t'_{1,m} \\
0 & t'_{2,2} & \cdots & t'_{2,m} \\
\vdots & \vdots & \ddots & \vdots \\
0 & 0 & \cdots & t'_{m,m}
\end{bmatrix}
\begin{bmatrix}
x_2' \\
x_3' \\
\vdots \\
x_{m+1}' \\
\end{bmatrix}\right)
\label{eqn:dis:matrix:1}\\
&=
\mathbf{C} \begin{bmatrix}
e^{k \lambda} & \frac{k}{1!} e^{k \lambda} & \cdots & \frac{k^m}{m!} e^{k \lambda} \\
0 & e^{k \lambda} & \cdots & \frac{k^{m-1}}{(m-1)!} e^{k \lambda} \\
\vdots & \vdots & \ddots & \vdots \\
0 & 0 & \cdots & e^{k \lambda}\\
\end{bmatrix} \left(
\begin{bmatrix} 0 \\ 0 \\ \vdots \\ x_{m+1}+\frac{c_1'}{c_1}x'_{m+1} \end{bmatrix} +
\begin{bmatrix}x_1+\frac{c_1'}{c_1}x'_1 \\ x_2+\frac{c_1'}{c_1}x'_2 \\ \vdots \\ 0 \end{bmatrix} +
\begin{bmatrix}
t'_{1,1} & t'_{1,2} & \cdots & t'_{1,m} \\
0 & t'_{2,2} & \cdots & t'_{2,m} \\
\vdots & \vdots & \ddots & \vdots \\
0 & 0 & \cdots & 0
\end{bmatrix}
\begin{bmatrix}
x_2' \\
x_3' \\
\vdots \\
x_{m+1}' \\
\end{bmatrix}
\right)\nonumber\\
&=
\mathbf{C} \begin{bmatrix}
e^{k \lambda} & \frac{k}{1!} e^{k \lambda} & \cdots & \frac{k^m}{m!} e^{k \lambda} \\
0 & e^{k \lambda} & \cdots & \frac{k^{m-1}}{(m-1)!} e^{k \lambda} \\
\vdots & \vdots & \ddots & \vdots \\
0 & 0 & \cdots & \frac{k}{1!} e^{k \lambda}\\
0 & 0 & \cdots & e^{k \lambda}\\
\end{bmatrix}
\left(
\begin{bmatrix}
x_1 \\
x_2 \\
\vdots \\
x_{m+1} \\
\end{bmatrix}
+
\begin{bmatrix}
\frac{c'_1}{c_1} & t'_{1,1} & \cdots & t'_{1,m} \\
0 & \frac{c'_1}{c_1} & \cdots & t'_{2,m} \\
\vdots & \vdots & \ddots & \vdots \\
0 & 0 & \cdots & \frac{c'_1}{c_1}
\end{bmatrix}
\begin{bmatrix}
x'_1 \\
x'_2 \\
\vdots \\
x'_{m+1} \\
\end{bmatrix}
\right)
\nonumber
\end{align}
where \eqref{eqn:dis:matrix:1} follows from the induction hypothesis. The lemma is true.
\end{proof}

Now, we are ready to prove Lemma~\ref{lem:conti:mo}.
\begin{proof}[Proof of Lemma~\ref{lem:conti:mo}]
The proof is an induction on $m$, the size of matrix $\mathbf{A_c}$.  Remind that here $\mathbf{C}$ can be a general matrix, so we use the definitions of $\mathbf{A_c},\mathbf{C}$ given as \eqref{eqn:conti:a2}, \eqref{eqn:conti:c2}.

(i) When $m=1$,

In this case, the system is scalar, and  the lemma is trivially true. A rigorous proof goes as follows: Since $(\mathbf{A_c},\mathbf{C})$ is observable, we can find a $1 \times l$ matrix $\mathbf{L}$ such that $\mathbf{L}\mathbf{C}$ is not zero. Then, $(\mathbf{A_c},\mathbf{L}\mathbf{C})$ is observable, and the lemma is reduced to Lemma~\ref{lem:conti:singlec}.

(ii) We will assume that the lemma holds for $(m-1)$-dimensional systems as an induction hypothesis, and prove the lemma holds for $m$.

The proof goes in three steps. First, we reduce the system to reducing a system with scalar observations to apply Lemma~\ref{lem:conti:singlec}. Then, we estimate one of the states, and subtract the estimation from the system --- this procedure is known as successive decoding in information theory. Now, the system reduces to the $(m-1)$-dimensional one, so we apply the induction hypothesis.

For this, we define $\mathbf{x}:=\begin{bmatrix} \mathbf{x_{1,1}} \\ \mathbf{x_{1,2}} \\ \vdots \\ \mathbf{x_{\mu,\nu_{\mu}}} \end{bmatrix}$ where $\mathbf{x_{i,j}}$ are $m_{i,j} \times 1$ vectors, and $(\mathbf{x_{1,\nu_1}})_{m_{1,\nu_1}}$ as the $m_{1,\nu_1}$th element of $\mathbf{x_{1,\nu_1}}$. We also define $(\mathbf{x})_k$ as the $k$th element of a vector $\mathbf{x}$ in general. Here, $\mathbf{x}$ can be thought as the states of the system. We first decode $(\mathbf{x_{1,\nu_1}})_{m_{1,\nu_1}}$, and decode the remaining elements in $\mathbf{x}$.


$\bullet$ Reduction to Systems with Scalar Observations: By Lemma~\ref{lem:conti:singlec}, we already know that the lemma is true for systems with scalar observations. Therefore, we will reduce the general systems with vector observations to system with scalar observations.

\begin{claim}
There exist $\mathbf{L}, \mathbf{C'}, \mathbf{A'}, \mathbf{x'}$ that satisfies the following conditions.\\
(i) $\mathbf{L}$ is a $1 \times l$ row vector.\\
(ii) $\mathbf{A'}$ is a $m' \times m'$ square matrix given in a Jordan form. The eigenvalues of $\mathbf{A'}$ belong to $\{ \lambda_1 + j \omega_1, \cdots, \lambda_{\mu} + j \omega_{\mu}\}$ which is the set of the eigenvalues of $\mathbf{A}$. The first Jordan block of $\mathbf{A'}$ is equal to $\mathbf{A_{1,\nu_1}}$.\\
(iii) $\mathbf{C'}$ is a $l \times m'$ matrix and $(\mathbf{A'}, \mathbf{L}\mathbf{C'})$ is observable.\\
(iv) $\mathbf{x'}$ is a $m' \times l$ column vector. $(\mathbf{x'})_{m_{1,\nu_1}} = (\mathbf{x_{1,\nu_1}})_{m_1, \nu_1}$. \\
(v) $\mathbf{L}\mathbf{C}e^{-k \mathbf{A_c}} \mathbf{x} = \mathbf{L} \mathbf{C'} e^{-k \mathbf{A'}}\mathbf{x'}$.
\label{claim:dummy}
\end{claim}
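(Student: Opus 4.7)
The plan is to collapse the vector observation $\mathbf{C}\mathbf{x}$ to a scalar observation $\mathbf{L}\mathbf{C}\mathbf{x}$ for a carefully chosen row vector $\mathbf{L}$, then absorb the multiple Jordan blocks sharing an eigenvalue into a single Jordan block per eigenvalue by iterated application of Lemma~\ref{lem:conti:jordan}. Because $(\mathbf{A_c},\mathbf{C})$ is observable, Theorem~\ref{thm:jordanob} forces the vectors $(\mathbf{C_{i,1}})_1,\ldots,(\mathbf{C_{i,\nu_i}})_1$ in $\mathbb{C}^{l\times 1}$ to be linearly independent for every $i$. A generic $\mathbf{L}$ outside the finite union of hyperplanes $\{\mathbf{L}:\mathbf{L}(\mathbf{C_{i,j}})_1=0\}$ therefore satisfies $\mathbf{L}(\mathbf{C_{i,j}})_1\neq 0$ for every $(i,j)$; the non-vanishing of $\mathbf{L}(\mathbf{C_{i,\nu_i}})_1$ is what Lemma~\ref{lem:conti:jordan} needs in order to use $\mathbf{A_{i,\nu_i}}$ as the primary block for each eigenvalue.

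For each $i$ and each $j<\nu_i$ with $m_{i,j}<m_{i,\nu_i}$, I would pad the smaller state up to size $m_{i,\nu_i}$ by setting $\tilde{\mathbf{x}}_{i,j}:=\begin{bmatrix}\mathbf{x_{i,j}}\\ \mathbf{0}\end{bmatrix}$ and $\widetilde{\mathbf{L}\mathbf{C_{i,j}}}:=\begin{bmatrix}\mathbf{L}\mathbf{C_{i,j}} & \mathbf{0}\end{bmatrix}$. Since $\mathbf{A_{i,j}}$ is the leading principal submatrix of $\mathbf{A_{i,\nu_i}}$ and $e^{-k\mathbf{A_{i,\nu_i}}}$ is upper triangular, a direct computation gives $e^{-k\mathbf{A_{i,\nu_i}}}\tilde{\mathbf{x}}_{i,j}=\begin{bmatrix}e^{-k\mathbf{A_{i,j}}}\mathbf{x_{i,j}}\\ \mathbf{0}\end{bmatrix}$, hence $\mathbf{L}\mathbf{C_{i,j}}e^{-k\mathbf{A_{i,j}}}\mathbf{x_{i,j}}=\widetilde{\mathbf{L}\mathbf{C_{i,j}}}e^{-k\mathbf{A_{i,\nu_i}}}\tilde{\mathbf{x}}_{i,j}$. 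Iterated application of Lemma~\ref{lem:conti:jordan} then collapses $\sum_{j=1}^{\nu_i}\mathbf{L}\mathbf{C_{i,j}}e^{-k\mathbf{A_{i,j}}}\mathbf{x_{i,j}}$ to $\mathbf{L}\mathbf{C_{i,\nu_i}}e^{-k\mathbf{A_{i,\nu_i}}}\mathbf{x}'_i$, where $\mathbf{x}'_i=\mathbf{x_{i,\nu_i}}+\sum_{j<\nu_i}\mathbf{T}_{i,j}\tilde{\mathbf{x}}_{i,j}$ with each $\mathbf{T}_{i,j}$ upper triangular with diagonal $\mathbf{L}(\mathbf{C_{i,j}})_1/\mathbf{L}(\mathbf{C_{i,\nu_i}})_1$. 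Assembling the blocks gives $\mathbf{A'}:=\mathrm{diag}(\mathbf{A_{1,\nu_1}},\ldots,\mathbf{A_{\mu,\nu_\mu}})$, $\mathbf{C'}:=\begin{bmatrix}\mathbf{C_{1,\nu_1}} & \cdots & \mathbf{C_{\mu,\nu_\mu}}\end{bmatrix}$, and $\mathbf{x'}:=\begin{bmatrix}\mathbf{x}'_1\\ \vdots\\ \mathbf{x}'_\mu\end{bmatrix}$; this immediately yields (i), (ii), and (v), while (iii) follows from Theorem~\ref{thm:jordanob} because $\mathbf{A'}$ has exactly one Jordan block per eigenvalue with non-zero leading observation coefficient.

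The delicate piece is (iv). The key observation is that every $\mathbf{T}_{1,j}$ from Lemma~\ref{lem:conti:jordan} is upper triangular, so its last row is supported only on the diagonal. When $m_{1,j}<m_{1,\nu_1}$ the padding forces $(\tilde{\mathbf{x}}_{1,j})_{m_{1,\nu_1}}=0$ and the last coordinate of $\mathbf{T}_{1,j}\tilde{\mathbf{x}}_{1,j}$ vanishes automatically, so $(\mathbf{x}'_1)_{m_{1,\nu_1}}=(\mathbf{x_{1,\nu_1}})_{m_{1,\nu_1}}$. The main obstacle is that if several blocks share the maximum size $m_{1,\nu_1}$ then the padding argument fails for those indices. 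To handle this case I would further constrain $\mathbf{L}$ to lie in the subspace orthogonal to $\{(\mathbf{C_{1,j}})_1 : j<\nu_1,\ m_{1,j}=m_{1,\nu_1}\}$ while remaining non-orthogonal to $(\mathbf{C_{1,\nu_1}})_1$ and to $(\mathbf{C_{i,\nu_i}})_1$ for $i\geq 2$; this is possible precisely because of the linear independence of first columns guaranteed by observability together with the hyperplane-avoidance argument for the non-orthogonality conditions. With this strengthened $\mathbf{L}$, the diagonals of the problematic $\mathbf{T}_{1,j}$ vanish, making those $\mathbf{T}_{1,j}$ strictly upper triangular, so their last rows are zero and (iv) is recovered.
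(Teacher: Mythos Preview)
Your overall architecture matches the paper's: choose a row vector $\mathbf{L}$, pad every Jordan block sharing a given eigenvalue up to the maximal size, and collapse them into a single block via Lemma~\ref{lem:conti:jordan}. The paper does exactly this (front-padding rather than bottom-padding, but either convention works), so parts (i), (ii), (v) of your sketch are fine.

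There is, however, a genuine gap in your treatment of (iii) and (iv). Your ``strengthened $\mathbf{L}$'' must lie in the orthogonal complement of $S:=\{(\mathbf{C_{1,j}})_1: j<\nu_1,\ m_{1,j}=m_{1,\nu_1}\}$ while satisfying $\mathbf{L}(\mathbf{C_{i,\nu_i}})_1\neq 0$ for all $i\geq 2$. The hyperplane-avoidance argument you invoke needs $(\mathbf{C_{i,\nu_i}})_1\notin\mathrm{span}\,S$ for each $i\geq 2$, but observability (Theorem~\ref{thm:jordanob}) only guarantees linear independence of first columns \emph{within} each eigenvalue group, not across groups. For instance, take $l=2$, $\nu_1=2$ with $m_{1,1}=m_{1,2}$, $(\mathbf{C_{1,1}})_1=(1,0)^T$, $(\mathbf{C_{1,2}})_1=(0,1)^T$, and $(\mathbf{C_{2,\nu_2}})_1=(1,0)^T$: orthogonality to $(\mathbf{C_{1,1}})_1$ forces $\mathbf{L}=[0,L_2]$, which automatically kills $(\mathbf{C_{2,\nu_2}})_1$, so no admissible $\mathbf{L}$ exists and your $(\mathbf{A'},\mathbf{L}\mathbf{C'})$ fails to be observable.

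The paper avoids this by \emph{not} requiring $\mathbf{L}(\mathbf{C_{i,\nu_i}})_1\neq 0$ for $i\geq 2$. It chooses $\mathbf{L}$ only to annihilate all $(\mathbf{C_{1,j}})_1$ with $j<\nu_1$ (possible by within-group independence), and then for each $i\geq 2$ it finds the first index $k_i$ at which \emph{some} $\mathbf{L}\bar{\mathbf{C}}_{i,j}$ has a nonzero entry, truncates the block down to that position, and takes whichever $j=\nu_i^\star$ achieves the nonzero as the primary block in Lemma~\ref{lem:conti:jordan}. If all $\mathbf{L}\bar{\mathbf{C}}_{i,j}$ vanish identically, that eigenvalue is simply dropped from $\mathbf{A'}$---allowed, since the claim only asks the eigenvalues of $\mathbf{A'}$ to \emph{belong to} the original set. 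This truncation step is the missing ingredient in your argument.
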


What this claim implies is the following. By multiplying the matrix $\mathbf{L}$ to the vector observations, we can reduce the vector observations to the scalar observations. However, the resulting system may not be observable any more. Therefore, we will carefully design $\mathbf{L}$ matrix and reduced system matrices $\mathbf{A'}$, $\mathbf{C'}$, so that the system remains observable even with a scalar observation and the information about $(\mathbf{x_{1,\nu_1}})_{m_{1,\nu_1}}$ remains intact.

\begin{proof}
Since the first columns of $\mathbf{C_{1,1}}, \mathbf{C_{1,2}}, \cdots, \mathbf{C_{1,\nu_1}}$ are linearly independent, there exists a $1 \times l $ matrix $\mathbf{L}$ such that the first elements of $\mathbf{L}\mathbf{C_{1,1}},\mathbf{L}\mathbf{C_{1,2}},\cdots, \mathbf{L}\mathbf{C_{1,\nu_1-1}}$ are zeros and the first element of $\mathbf{L}\mathbf{C_{1,\nu_1}}$ is non-zero. Then, we can observe that
\begin{align}
\mathbf{L}\mathbf{C}e^{-k \mathbf{A_c}} \mathbf{x}&=
\mathbf{L}
\begin{bmatrix}
\mathbf{C_{1,1}} & \cdots & \mathbf{C_{\mu,\nu_{\mu}}}
\end{bmatrix}
\begin{bmatrix}
e^{-k \mathbf{A_{1,1}}} & \cdots & 0 \\
\vdots & \ddots & \vdots \\
0 & \cdots & e^{-k \mathbf{A_{\mu,\nu_{\mu}}}}
\end{bmatrix}
\begin{bmatrix}
\mathbf{x_{1,1}} \\
\vdots \\
\mathbf{x_{\mu,\nu_\mu}}
\end{bmatrix}
\nonumber \\
&=\mathbf{L} \mathbf{C_{1,1}} e^{-k \mathbf{A_{1,1}}} \mathbf{x_{1,1}} + \mathbf{L} \mathbf{C_{1,2}} e^{-k \mathbf{A_{1,2}}} \mathbf{x_{1,2}} + \cdots +
\mathbf{L} \mathbf{C_{\mu,\nu_\mu}} e^{-k \mathbf{A_{\mu,\nu_\mu}}} \mathbf{x_{\mu,\nu_\mu}}
\label{eqn:lem:idontknow}
\end{align}
Remind that the Jordan blocks $\mathbf{A_{i,1}}, \cdots ,\mathbf{A_{i,\nu_i}}$ correspond to the same eigenvalue. We will merge these Jordan blocks into one Jordan block. However, since the size of Jordan blocks $\mathbf{A_{i,1}},  \cdots ,\mathbf{A_{i,\nu_i}}$ are distinct, we will extend the small Jordan block to the size of the largest one by adding zero elements. Let the dimension of $\mathbf{A_{i,\bar{\nu}_i}}$ be the largest among $\mathbf{A_{i,1}},  \cdots ,\mathbf{A_{i,\nu_i}}$, and $m_{i,\bar{\nu}_i}$ be the corresponding dimension. Then, we define $\mathbf{\bar{C}_{i,j}}$ as a matrix where the first $m_{i,\bar{\nu}_i} - m_{i,j}$ vectors are all zeros, and the remaining vectors are the same as those of $\mathbf{C_{i,j}}$. $\mathbf{\bar{A}_{i,j}}$ is defined as the same matrix as $\mathbf{{A}_{i,\bar{\nu}_i}}$. $\mathbf{\bar{x}_{i,j}}$ is defined as a column vector whose first $m_{i,\bar{\nu}_i} - m_{i,j}$ elements are all zeros, and the remaining elements are those of $\mathbf{x_{i,j}}$.

Then, by the construction, we know
\begin{align}
\eqref{eqn:lem:idontknow}=\mathbf{L} \mathbf{\bar{C}_{1,1}} e^{-k \mathbf{\bar{A}_{1,1}}} \mathbf{\bar{x}_{1,1}} + \mathbf{L} \mathbf{\bar{C}_{1,2}} e^{-k \mathbf{\bar{A}_{1,2}}} \mathbf{\bar{x}_{1,2}} + \cdots +
\mathbf{L} \mathbf{\bar{C}_{\mu,\nu_\mu}} e^{-k \mathbf{\bar{A}_{\mu,\nu_\mu}}} \mathbf{\bar{x}_{\mu,\nu_\mu}}.
\end{align}
Furthermore, $\mathbf{A_{1,\nu_1}}=\mathbf{\bar{A}_{1,\nu_1}}$, $\mathbf{C_{1,\nu_1}}=\mathbf{\bar{C}_{1,\nu_1}}$, $\mathbf{x_{1,\nu_1}}=\mathbf{\bar{x}_{1,\nu_1}}$.
The first elements of $\mathbf{L}\mathbf{\mathbf{C_{1,1}}},\mathbf{L}\mathbf{\mathbf{C_{1,2}}}, \cdots, \mathbf{L}\mathbf{C_{1,\nu_1-1}}$ are zeros and the first element of $\mathbf{L}\mathbf{C_{1,\nu_1}}$ is non-zero.

Now, we get the same dimension systems $(\mathbf{\bar{A}_{i,1}},\mathbf{L}\mathbf{\bar{C}_{i,1}})$, $\cdots$, $(\mathbf{\bar{A}_{i,\nu_i}},\mathbf{L}\mathbf{\bar{C}_{i,\nu_i}})$. However, none of them might be observable. Thus, we will truncate the matrices to make sure that at least one of them is observable. Remind that since $\mathbf{L}\mathbf{\bar{C}_{i,j}}$ is a row vector and $\mathbf{\bar{A}_{i,j}}$ is a single Jordan block, the system is observable as long as the first element of $\mathbf{L} \mathbf{\bar{C}_{i,j}}$ is not zero. Thus, we will truncate the matrices until we see at least one nonzero element among the first elements of  $\mathbf{L}\mathbf{\bar{C}_{i,1}}$, $\cdots$, $\mathbf{L}\mathbf{\bar{C}_{i,\nu_i}}$. Let $k_i$ be the smallest number such that at least one of the $k_i$th elements of $\mathbf{L}\mathbf{\bar{C}_{i,1}}, \cdots, \mathbf{L}\mathbf{\bar{C}_{i,\nu_i}}$ becomes nonzero, and let $\mathbf{L}\mathbf{\bar{C}_{i,\nu_i^\star}}$ be the vector that achieves the minimum. 

Then, we will reduce the dimensions of $(\mathbf{\bar{A}_{i,j}},\mathbf{L}\mathbf{\bar{C}_{i,j}})$ by truncating the first $(k_i-1)$ vectors. Define $\mathbf{C_{i,j}'}$ as a matrix obtained by removing the first $(k_i-1)$ columns from $\mathbf{\bar{C}_{i,j}}$, $\mathbf{A_{i,j}'}$ as a matrix obtained by removing the first $(k_i-1)$ rows and columns from $\mathbf{\bar{A}_{i,j}}$, and $\mathbf{x_{i,j}'}$ as a matrix obtained by removing the first $(k_i-1)$ elements from $\mathbf{\bar{x}_{i,j}}$.

Then, by the construction, the resulting systems $(\mathbf{A_{i,\nu_i^\star}'}, \mathbf{L}\mathbf{C_{i,\nu_i^\star}'})$ are observable. 
We can also see that $\nu_{1}^\star=\nu_1$, $\mathbf{C_{1,\nu_1^\star}'}=\mathbf{\bar{C}_{1,\nu_1}}=\mathbf{C_{1,\nu_1}}$, $\mathbf{A_{1,\nu_1^\star}'}=\mathbf{\bar{A}_{1,\nu_1}}=\mathbf{A_{1,\nu_1}}$, and $\mathbf{x'_{1,\nu_1^\star}}=\mathbf{\bar{x}_{1,\nu_1}}=\mathbf{x_{1,\nu_1}}$. In words, the Jordan block $\mathbf{A_{1,\nu_1}}$ was not affected by the above manipulations. Moreover, by the construction, the first elements of $\mathbf{L}\mathbf{C'_{1,1}},\cdots,\mathbf{L}\mathbf{C'_{1,\nu_1-1}}$ are all zero.

Denote $\mathbf{C'}:=\begin{bmatrix} \mathbf{C'_{1,\nu_1^\star}} & \mathbf{C'_{2,\nu_2^\star}} & \cdots &  \mathbf{C'_{\mu,\nu_\mu^\star}} \end{bmatrix}$ and $\mathbf{A'}:= diag \{ \mathbf{A'_{1,\nu_1^\star}}, \mathbf{A'_{2,\nu_2^\star}}, \cdots, \mathbf{A'_{\mu,\nu_\mu^\star}} \}$. Then, \eqref{eqn:lem:idontknow} can be written as follows:
\begin{align}
\eqref{eqn:lem:idontknow}&=\mathbf{L} \mathbf{C'_{1,1}} e^{-k \mathbf{A'_{1,1}}} \mathbf{x'_{1,1}} + \mathbf{L} \mathbf{C'_{1,2}} e^{-k \mathbf{A'_{1,2}}} \mathbf{x'_{1,2}} + \cdots +
\mathbf{L} \mathbf{C'_{\mu,\nu_\mu}} e^{-k \mathbf{A'_{\mu,\nu_\mu}}} \mathbf{x'_{\mu,\nu_\mu}} \nonumber \\
&=\mathbf{L} \mathbf{C'_{1,\nu_1^\star}} e^{-k \mathbf{A'_{1,\nu_1^\star}}} (
\mathbf{x'_{1,\nu_1^\star}}+\sum_{j \in \{1,\cdots,\nu_1 \} \setminus \nu_1^\star} \mathbf{T_{1,j}}\mathbf{x_{1,j}'})+ \cdots  \nonumber \\
&\quad+\mathbf{L} \mathbf{C'_{\mu,\nu_\mu^\star}} e^{-k \mathbf{A'_{\mu,\nu_\mu^\star}}} (
\mathbf{x'_{\mu,\nu_\mu^\star}}+\sum_{j \in \{1,\cdots,\nu_\mu \} \setminus \nu_\mu^\star} \mathbf{T_{\mu,j}}\mathbf{x_{\mu,j}'}
) \label{eqn:lem:contigeo:10}\\
&=
\begin{bmatrix}
\mathbf{L}\mathbf{C'_{1,\nu_1^\star}} & \cdots & \mathbf{L}\mathbf{C'_{1,\nu_\mu^\star}}
\end{bmatrix}
\begin{bmatrix}
e^{-k \mathbf{A'_{1,\nu_1^\star}}} & \cdots & 0 \\
\vdots & \ddots & \vdots \\
0 & \cdots & e^{-k \mathbf{A'_{\mu,\nu_\mu^\star}}}
\end{bmatrix}
\underbrace{
\begin{bmatrix}
\mathbf{x'_{1,\nu_1^\star}}+\sum_{j \in \{1,\cdots,\nu_1 \} \setminus \nu_1^\star} \mathbf{T_{1,j}}\mathbf{x_{1,j}'}\\
\vdots \\
\mathbf{x'_{\mu,\nu_\mu^\star}}+\sum_{j \in \{1,\cdots,\nu_\mu \} \setminus \nu_\mu^\star} \mathbf{T_{\mu,j}}\mathbf{x_{\mu,j}'}
\end{bmatrix}
}_{:= \mathbf{x'} } \nonumber \\
&=\mathbf{L}\mathbf{C'} e^{-k \mathbf{A'}} \mathbf{x'}\label{eqn:lem:contigeo:3}
\end{align}
where \eqref{eqn:lem:contigeo:10} follows from Lemma~\ref{lem:conti:jordan}. Here, we can easily see that $\mathbf{A'}$ satisfies the condition (ii) of the claim, and $(\mathbf{A'}, \mathbf{L}\mathbf{C'})$ is observable since each $(\mathbf{A_{i,\nu_i^\star}'}, \mathbf{L}\mathbf{C_{i,\nu_i^\star}'})$ is observable.

Moreover, by Lemma~\ref{lem:conti:jordan}, we know that $\mathbf{T_{1,1}}, \cdots, \mathbf{T_{1,\nu_1-1}}$ are upper triangular matrices whose diagonal elements are zeros. Therefore, $(\mathbf{x'})_{m_{1,\nu_1}}=(\mathbf{x_{1,\nu_1}'})_{m_{1,\nu_1}}=(\mathbf{x_{1,\nu_1}})_{m_{1,\nu_1}}$. Therefore, the condition (iv) of the claim is also satisfied.
\end{proof}

$\bullet$ Decoding $(\mathbf{x_{1,\nu_1}})_{m_{1,\nu_1}}$: Now, we reduced the system to a system with a scalar observation. Then, we can apply Lemma~\ref{lem:conti:singlec} to decode $(\mathbf{x_{1,\nu_1}})_{m_{1,\nu_1}}$.

\begin{claim}
We can find a polynomial $p'(k)$ and a family of stopping time $\{S'(\epsilon,k): k \in \mathbb{Z^+}, \epsilon > 0\}$ such that for all $\epsilon>0$, $k \in \mathbb{Z}^+$ there exist $k \leq k_1 < k_2 < \cdots < k_{m'} \leq S'(\epsilon,k)$ and $\mathbf{M_1'}$ satisfying:\\
(i) $\beta[k_i]=1$ for $1 \leq i \leq m'$\\
(ii) $\mathbf{M_1'}
\begin{bmatrix}
\mathbf{L} & 0 & \cdots & 0 \\
0 & \mathbf{L} & \cdots & 0 \\
\vdots & \vdots & \ddots & \vdots \\
0 & 0 & \cdots & \mathbf{L}
\end{bmatrix}
\begin{bmatrix}
\mathbf{C} e^{-(k_1 I + t_{k_1}) \mathbf{A}} \\
\mathbf{C} e^{-(k_2 I + t_{k_2}) \mathbf{A}} \\
\vdots \\
\mathbf{C} e^{-(k_{m'} I + t_{k_{m'}}) \mathbf{A}} \\
\end{bmatrix} \mathbf{x}=(\mathbf{x_{1,\nu_1}})_{m_{1,\nu_1}}
$\\
(iii) $\left|\mathbf{M_1'}\right|_{max} \leq  \frac{p'(S'(\epsilon,k))}{\epsilon} e^{\lambda_1 S'(\epsilon,k)I}$\\
(iv) $\lim_{\epsilon \downarrow 0} \exp \limsup_{s \rightarrow \infty} \sup_{k \in \mathbb{Z^+}} \frac{1}{s} \log \mathbb{P}\{ S'(\epsilon,k)-k = s\} \leq p_e$.
\label{claim:donknow}
\end{claim}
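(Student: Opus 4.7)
The plan is to apply Lemma~\ref{lem:conti:singlec} directly to the reduced single-output pair $(\mathbf{A'},\mathbf{L}\mathbf{C'})$ constructed in Claim~\ref{claim:dummy}, and then read off the desired coordinate by extracting a single row. By Claim~\ref{claim:dummy}, $\mathbf{L}\mathbf{C'}$ is a row vector, $(\mathbf{A'},\mathbf{L}\mathbf{C'})$ is observable, and $\mathbf{A'}$ is in Jordan form whose first Jordan block is $\mathbf{A_{1,\nu_1}}$; the eigenvalues of $\mathbf{A'}$ are a subset of those of $\mathbf{A_c}$, so after at most a harmless block-reordering to match the convention of \eqref{eqn:conti:a}, the pair $(\mathbf{A'},\mathbf{L}\mathbf{C'})$ fits the hypotheses of Lemma~\ref{lem:conti:singlec}. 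Crucially, the presence of $\mathbf{A_{1,\nu_1}}$ as a block guarantees that the largest real part among the eigenvalues of $\mathbf{A'}$ is precisely $\lambda_1$, matching the original $\lambda_1$ of $\mathbf{A_c}$.

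Writing $m':=\dim\mathbf{A'}$, Lemma~\ref{lem:conti:singlec} then supplies a polynomial $p(k)$, a family of stopping times $\{S'(\epsilon,k)\}$, indices $k\le k_1<\cdots<k_{m'}\le S'(\epsilon,k)$ with $\beta[k_i]=1$, and an $m'\times m'$ matrix $\mathbf{M}$ satisfying
\[
\mathbf{M}\begin{bmatrix}\mathbf{L}\mathbf{C'}\,e^{-(k_1 I+t_{k_1})\mathbf{A'}}\\ \vdots \\ \mathbf{L}\mathbf{C'}\,e^{-(k_{m'} I+t_{k_{m'}})\mathbf{A'}}\end{bmatrix}=\mathbf{I}_{m'\times m'},
\]
together with the entrywise bound $|\mathbf{M}|_{max}\le \frac{p(S'(\epsilon,k))}{\epsilon}e^{\lambda_1 S'(\epsilon,k) I}$ and the desired tail estimate for $S'(\epsilon,k)$. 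The identity $\mathbf{L}\mathbf{C}e^{-k\mathbf{A_c}}\mathbf{x}=\mathbf{L}\mathbf{C'}e^{-k\mathbf{A'}}\mathbf{x'}$ from Claim~\ref{claim:dummy}(v) lets me replace the stacked $\mathbf{L}\mathbf{C'}e^{-\cdot\mathbf{A'}}\mathbf{x'}$ block by the stacked $\mathbf{L}\mathbf{C}e^{-\cdot\mathbf{A_c}}\mathbf{x}$, which factors through a block-diagonal $\mathbf{L}$ acting on the full observability stack and yields
\[
\mathbf{M}\cdot diag\{\mathbf{L},\ldots,\mathbf{L}\}\begin{bmatrix}\mathbf{C}\,e^{-(k_1 I+t_{k_1})\mathbf{A_c}}\\ \vdots \\ \mathbf{C}\,e^{-(k_{m'} I+t_{k_{m'}})\mathbf{A_c}}\end{bmatrix}\mathbf{x}=\mathbf{x'}.
\]

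Defining $\mathbf{M_1'}$ to be the $m_{1,\nu_1}$-th row of $\mathbf{M}$ and invoking Claim~\ref{claim:dummy}(iv), which identifies $(\mathbf{x'})_{m_{1,\nu_1}}$ with $(\mathbf{x_{1,\nu_1}})_{m_{1,\nu_1}}$, immediately yields condition (ii) of the present claim. Conditions (i) and (iv) are inherited verbatim from Lemma~\ref{lem:conti:singlec}; condition (iii) follows because extracting a row cannot increase the entrywise max norm, so $|\mathbf{M_1'}|_{max}\le |\mathbf{M}|_{max}\le \frac{p(S'(\epsilon,k))}{\epsilon}e^{\lambda_1 S'(\epsilon,k) I}$, and one takes $p'(k)=p(k)$. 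I expect no serious obstacle at this stage: the substantive content is already in Claim~\ref{claim:dummy} (which sets up the observable scalar-output proxy preserving both the target coordinate and $\lambda_1$ as the dominant growth rate) and in Lemma~\ref{lem:conti:singlec} (which supplies the probabilistic construction). The only point worth checking carefully is that the Jordan-form conventions of \eqref{eqn:conti:a} are honored by $(\mathbf{A'},\mathbf{L}\mathbf{C'})$, so that the exponent delivered by Lemma~\ref{lem:conti:singlec} is indeed $e^{\lambda_1 S'(\epsilon,k) I}$ and not some weaker bound with a smaller real part.
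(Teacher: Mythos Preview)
Your proposal is correct and follows essentially the same route as the paper: apply Lemma~\ref{lem:conti:singlec} to the observable scalar-output proxy $(\mathbf{A'},\mathbf{L}\mathbf{C'})$ furnished by Claim~\ref{claim:dummy}, use part~(v) of that claim to pass back to the original system, and then extract the $m_{1,\nu_1}$-th row and invoke part~(iv) to read off $(\mathbf{x_{1,\nu_1}})_{m_{1,\nu_1}}$. Your remark about checking that the dominant real part of $\mathbf{A'}$ equals $\lambda_1$ is exactly the right point to verify, and Claim~\ref{claim:dummy}(ii) guarantees it since $\mathbf{A_{1,\nu_1}}$ appears as a block of $\mathbf{A'}$.
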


This claim is showing that there exist an estimator $\mathbf{M_1'} diag\{ \mathbf{L}, \cdots, \mathbf{L} \}$ which can estimate the state $(\mathbf{x_{1,\nu_1}})_{m_{1,\nu_1}}$ with observations at time $k_1, \cdots, k_m$.

\begin{proof}
By the construction, $(\mathbf{A'},\mathbf{L}\mathbf{C'})$ is observable and $\mathbf{L}\mathbf{C'}$ is a row vector. Thus, by Lemma~\ref{lem:conti:singlec} we can find a polynomial $p'(k)$ and a family of stopping time $\{S'(\epsilon,k): k \in \mathbb{Z^+}, \epsilon > 0\}$ such that for all $\epsilon>0$, $k \in \mathbb{Z}^+$ there exist $k \leq k_1 < k_2 < \cdots < k_{m'} \leq S'(\epsilon,k)$ and $\mathbf{M'}$ satisfying:\\
(i) $\beta[k_i]=1$ for $1 \leq i \leq m'$\\
(ii) $\mathbf{M'}
\begin{bmatrix}
\mathbf{L}\mathbf{C'} e^{-(k_1 I + t_{k_1}) \mathbf{A'}} \\
\mathbf{L}\mathbf{C'} e^{-(k_2 I + t_{k_2}) \mathbf{A'}} \\
\vdots \\
\mathbf{L}\mathbf{C'} e^{-(k_{m'} I + t_{k_{m'}}) \mathbf{A'}} \\
\end{bmatrix}=\mathbf{I}
$\\
(iii) $\left|\mathbf{M'}\right|_{max} \leq  \frac{p'(S'(\epsilon,k))}{\epsilon} e^{\lambda_1 S'(\epsilon,k)I}$\\
(iv) $\lim_{\epsilon \downarrow 0} \exp \limsup_{s \rightarrow \infty} \sup_{k \in \mathbb{Z^+}} \frac{1}{s} \log \mathbb{P}\{ S'(\epsilon,k)-k = s\} \leq p_e$.

Let $\mathbf{M'_{1}}$ be the $m_{1,\nu_1}$th row of $\mathbf{M'}$. Then,
\begin{align}
&\mathbf{M'_{1}}
\begin{bmatrix}
\mathbf{L} & 0 & \cdots & 0 \\
0 & \mathbf{L} & \cdots & 0 \\
\vdots & \vdots & \ddots & \vdots \\
0 & 0 & \cdots & \mathbf{L}
\end{bmatrix}
\begin{bmatrix}
\mathbf{C}e^{-(k_1 I + t_{k_1})\mathbf{A_c}} \\
\mathbf{C}e^{-(k_2 I + t_{k_2})\mathbf{A_c}} \\
\vdots \\
\mathbf{C}e^{-(k_{m'} I + t{k_{m'}})\mathbf{A_c}} \\
\end{bmatrix}
\mathbf{x}
=
\mathbf{M'_{1}}
\begin{bmatrix}
\mathbf{L}\mathbf{C}e^{-(k_1 I + t_{k_1})\mathbf{A_c}}\mathbf{x} \\
\mathbf{L}\mathbf{C}e^{-(k_2 I + t_{k_2})\mathbf{A_c}}\mathbf{x} \\
\vdots \\
\mathbf{L}\mathbf{C}e^{-(k_{m'} I + t_{k_{m'}})\mathbf{A_c}}\mathbf{x} \\
\end{bmatrix} \nonumber \\
&=
\mathbf{M'_{1}}
\begin{bmatrix}
\mathbf{L'}\mathbf{C'}e^{-(k_1 I + t_{k_1})\mathbf{A'}}\mathbf{x'} \\
\mathbf{L'}\mathbf{C'}e^{-(k_2 I + t_{k_2})\mathbf{A'}}\mathbf{x'} \\
\vdots \\
\mathbf{L'}\mathbf{C'}e^{-(k_{m'} I + t_{k_{m'}})\mathbf{A'}}\mathbf{x'} \\
\end{bmatrix} (\because Claim~\ref{claim:dummy}\mbox{ (v)})\nonumber \\
&=
\mathbf{M'_{1}}
\begin{bmatrix}
\mathbf{L'}\mathbf{C'}e^{-(k_1 I + t_{k_1})\mathbf{A'}} \\
\mathbf{L'}\mathbf{C'}e^{-(k_2 I + t_{k_2})\mathbf{A'}} \\
\vdots \\
\mathbf{L'}\mathbf{C'}e^{-(k_{m'} I + t_{k_{m'}})\mathbf{A'}} \\
\end{bmatrix}
\mathbf{x'} = (\mathbf{x'})_{m_{1,\nu_1}} = (\mathbf{x_{1,\nu_1}})_{m_{1,\nu_1}} (\because Claim~\ref{claim:dummy}\mbox{ (iv)}).
\end{align}
\end{proof}

$\bullet$ Subtracting $(\mathbf{x_{1,\nu_1}})_{m_{1,\nu_1}}$ from the observations: Now, we have an estimation for $(\mathbf{x_{1,\nu_1}})_{m_{1,\nu_1}}$. We will remove it from the system. $\mathbf{A''}$,$\mathbf{C''}$ and $\mathbf{x''}$ are the system matrices after the removal. Formally, $\mathbf{A''}$,$\mathbf{C''}$ and $\mathbf{x''}$ are obtained by removing $\sum_{1 \leq i \leq \nu_i} m_{1,i}$th row and column from $\mathbf{A_c}$, removing $\sum_{1 \leq i \leq \nu_i} m_{1,i}$th row from $\mathbf{C}$ and removing $\sum_{1 \leq i \leq \nu_i} m_{1,i}$th component from $\mathbf{x}$ respectively.

Obviously, $\mathbf{A''} \in \mathbb{C}^{(m-1) \times (m-1)}$ and $\mathbf{C''} \in \mathbb{C}^{l \times (m-1)}$. Moreover, since the last element of the Jordan block $\mathbf{A_{1,\nu_1}}$ is removed and the observability only depends on the first element, $(\mathbf{A''},\mathbf{C''})$ is observable. Denote $\lambda_1''+\omega_1''$ be the eigenvalue of $\mathbf{A''}$ with the largest real part. Then, trivially $\lambda_1'' \leq \lambda_1$.

The new system $(\mathbf{A''},\mathbf{C''})$ and the original system $(\mathbf{A},\mathbf{C})$ are related as follows. Denote the $\sum_{1 \leq i \leq \nu_i} m_{1,i}$th column of $\mathbf{C}e^{-k \mathbf{A_c}}$ as $\mathbf{R}(k)$. Then, we have
\begin{align}
\mathbf{C}\mathbf{e^{-k \mathbf{A_c}}} \mathbf{x} -\mathbf{R}(k)(\mathbf{x_{1,\nu_1}})_{m_{1,\nu_1}} &= \mathbf{C''}\mathbf{e^{-k \mathbf{A''}}} \mathbf{x''} \label{eqn:lem:contigeo:4}
\end{align}
which can be easily proved from the block diagonal structure of $\mathbf{A_c}$. We can further see that there exists a polynomial $p'''(k)$ such that $\left| \mathbf{R}(k) \right|_{max} \leq p'''(k)e^{-k \lambda_1}$.

$\bullet$ Decoding the remaining element of $\mathbf{x}$:
We decoded and subtracted the state $(\mathbf{x_{1,\nu_1}})_{m_{1,\nu_1}}$ from the system. Now, we can apply the induction hypothesis to the remaining $(m-1)$-dimensional system and estimate the remaining states.

By induction hypothesis, for given $S'(\epsilon,k)$, we can find $m'' \in \mathbb{Z}$ and a polynomial $p''(k)$ and a family of stopping time $\{S''(\epsilon,S'(\epsilon,k)): S'(\epsilon,k) \in \mathbb{Z^+}, 0 < \epsilon < 1 \}$ such that for all $0< \epsilon < 1$ there exist $S'(\epsilon,k) < k_{m'+1} < \cdots < k_{m''} \leq S''(\epsilon,S'(\epsilon,k))$ and a $(m-1) \times (m''-m')l$ matrix $\mathbf{M''}$ satisfying the following conditions:\\
(i) $\beta[k_i] = 1$ for $m'+1 \leq i \leq m''$\\
(ii)
$
\mathbf{M''}
\begin{bmatrix}
\mathbf{C''} e^{-(k_{m'+1} I + t_{k_{m'+1}})\mathbf{A''}} \\
\mathbf{C''} e^{-(k_{m'+2} I + t_{k_{m'+2}})\mathbf{A''}} \\
\vdots \\
\mathbf{C''} e^{-(k_{m''} I + t_{k_{m''}})\mathbf{A''}} \\
\end{bmatrix} = \mathbf{I}_{(m-1) \times (m-1)}
$ \\
(iii)
$
\left| \mathbf{M''} \right|_{max} \leq \frac{p''(S''(\epsilon,S'(\epsilon,k)))}{\epsilon} e^{\lambda_1'' S''(\epsilon,S'(\epsilon,k)) I}
$
\\
(iv)
$
\lim_{\epsilon \downarrow 0} \exp \limsup_{s \rightarrow \infty} \esssup \frac{1}{s} \log \mathbb{P} \{ S''(\epsilon,S'(\epsilon,k))-S'(\epsilon,k)=s | \mathcal{F}_{S'(\epsilon,k)} \} \leq p_e
$\\
where $\mathcal{F}_n$ is the $\sigma$-field generated by $\beta[0], \cdots, \beta[n]$ and $t_0, \cdots, t_n$.

Then,
\begin{align}
\mathbf{x''}&=\mathbf{M''}
\begin{bmatrix}
\mathbf{C''}e^{-(k_{m'+1}I + t_{k_{m'+1}})\mathbf{A''}} \\
\mathbf{C''}e^{-(k_{m'+2}I + t_{k_{m'+2}})\mathbf{A''}} \\
\vdots \\
\mathbf{C''}e^{-(k_{m''}I + t_{k_{m''}})\mathbf{A''}} \\
\end{bmatrix}
\mathbf{x''} \nonumber \\
&=
\mathbf{M''}
\begin{bmatrix}
\mathbf{C}e^{-(k_{m'+1}I + t_{k_{m'+1}})\mathbf{A_c}}\mathbf{x}-\mathbf{R}(k_{m'+1}I + t_{k_{m'+1}})(\mathbf{x_{1,\nu_1}})_{m_{1,\nu_1}} \\
\mathbf{C}e^{-(k_{m'+2}I + t_{k_{m'+2}})\mathbf{A_c}}\mathbf{x}-\mathbf{R}(k_{m'+2}I + t_{k_{m'+2}})(\mathbf{x_{1,\nu_1}})_{m_{1,\nu_1}} \\
\vdots \\
\mathbf{C}e^{-(k_{m''}I + t_{k_{m''}})\mathbf{A_c}}\mathbf{x}-\mathbf{R}(k_{m''}I + t_{k_{m''}})(\mathbf{x_{1,\nu_1}})_{m_{1,\nu_1}} \\
\end{bmatrix}
\label{eqn:lem:contigeo:11} \\
&=
\mathbf{M''} \left(
\begin{bmatrix}
\mathbf{C}e^{-(k_{m'+1}I + t_{k_{m'+1}})\mathbf{A_c}}\\
\mathbf{C}e^{-(k_{m'+2}I + t_{k_{m'+2}})\mathbf{A_c}}\\
\vdots \\
\mathbf{C}e^{-(k_{m''}I + t_{k_{m''}})\mathbf{A_c}}\\
\end{bmatrix}
\mathbf{x}
-
\begin{bmatrix}
\mathbf{R}(k_{m'+1}I + t_{k_{m'+1}}) \\
\mathbf{R}(k_{m'+2}I + t_{k_{m'+2}}) \\
\vdots \\
\mathbf{R}(k_{m''}I + t_{k_{m''}}) \\
\end{bmatrix}
(\mathbf{x_{1,\nu_1}})_{m_{1,\nu_1}}
\right)
\nonumber \\
&=\mathbf{M''} \left(
\begin{bmatrix}
\mathbf{C}e^{-(k_{m'+1}I + t_{k_{m'+1}})\mathbf{A_c}}\\
\mathbf{C}e^{-(k_{m'+2}I + t_{k_{m'+2}})\mathbf{A_c}}\\
\vdots \\
\mathbf{C}e^{-(k_{m''}I + t_{k_{m''}})\mathbf{A_c}}\\
\end{bmatrix}
\mathbf{x}
-
\begin{bmatrix}
\mathbf{R}(k_{m'+1}I + t_{k_{m'+1}}) \\
\mathbf{R}(k_{m'+2}I + t_{k_{m'+2}}) \\
\vdots \\
\mathbf{R}(k_{m''}I + t_{k_{m''}}) \\
\end{bmatrix}
\mathbf{M_1'}
\begin{bmatrix}
\mathbf{L} & 0 & \cdots & 0 \\
0 & \mathbf{L} & \cdots & 0 \\
\vdots & \vdots & \ddots & \vdots \\
0 & 0 & \cdots & \mathbf{L}
\end{bmatrix}
\begin{bmatrix}
\mathbf{C}e^{-(k_1 I + t_{k_1})\mathbf{A_c}} \\
\mathbf{C}e^{-(k_2 I + t_{k_2})\mathbf{A_c}} \\
\vdots \\
\mathbf{C}e^{-(k_{m'} I + t_{k_{m'}})\mathbf{A_c}} \\
\end{bmatrix}
\mathbf{x}
\right)
\label{eqn:lem:contigeo:2} \\
&=
\mathbf{M''}
\begin{bmatrix}
-
\begin{bmatrix}
\mathbf{R}(k_{m'+1}I + t_{k_{m'+1}}) \\
\mathbf{R}(k_{m'+2}I + t_{k_{m'+2}}) \\
\vdots \\
\mathbf{R}(k_{m''}I + t_{k_{m''}}) \\
\end{bmatrix}
\mathbf{M_1'}
\begin{bmatrix}
\mathbf{L} & 0 & \cdots & 0 \\
0 & \mathbf{L} & \cdots & 0 \\
\vdots & \vdots & \ddots & \vdots \\
0 & 0 & \cdots & \mathbf{L}
\end{bmatrix}
&
\mathbf{I}
\end{bmatrix}
\begin{bmatrix}
\mathbf{C}e^{-(k_1 I + t_{k_1})\mathbf{A_c}} \\
\mathbf{C}e^{-(k_2 I + t_{k_2})\mathbf{A_c}} \\
\vdots \\
\mathbf{C}e^{-(k_{m''} I + t_{k_{m''}})\mathbf{A_c}} \\
\end{bmatrix}
\mathbf{x} \nonumber
\end{align}
where \eqref{eqn:lem:contigeo:11} follows from \eqref{eqn:lem:contigeo:4}, and \eqref{eqn:lem:contigeo:2} follows from the condition (ii) of Claim~\ref{claim:donknow}.
Therefore, we can recover the remaining states of $\mathbf{x}$.

Moreover, we have
\begin{align}
&\left| \mathbf{M''}
\begin{bmatrix}
-
\begin{bmatrix}
\mathbf{R}(k_{m'+1}I + t_{k_{m'+1}}) \\
\mathbf{R}(k_{m'+2}I + t_{k_{m'+2}}) \\
\vdots \\
\mathbf{R}(k_{m''}I + t_{k_{m''}}) \\
\end{bmatrix}
\mathbf{M_1'}
\begin{bmatrix}
\mathbf{L} & 0 & \cdots & 0 \\
0 & \mathbf{L} & \cdots & 0 \\
\vdots & \vdots & \ddots & \vdots \\
0 & 0 & \cdots & \mathbf{L}
\end{bmatrix}
&
\mathbf{I}
\end{bmatrix} \right|_{max}
\nonumber \\
&\lesssim
\left| \mathbf{M''} \right|_{max} \cdot \max\left\{
\left|
\begin{bmatrix}
\mathbf{R}(k_{m'+1}I + t_{k_{m'+1}}) \\
\mathbf{R}(k_{m'+2}I + t_{k_{m'+2}}) \\
\vdots \\
\mathbf{R}(k_{m''}I + t_{k_{m''}}) \\
\end{bmatrix}
\right|_{max}
\left| \mathbf{M_1'} \right|_{max}
\left| \mathbf{L} \right|_{max}
,1
\right\} \nonumber \\
&\lesssim
\frac{p''(S''(\epsilon,S'(\epsilon,k)))}{\epsilon} e^{\lambda_1'' S''(\epsilon,S'(\epsilon,k)) I}
\max\left\{
p'''(k_{m''}I+t_{k_{m''}}) e^{-\lambda_1 ( k_{m'+1}I + t_{k_{m'+1}} )}
\cdot \frac{p'(S'(\epsilon,k))}{\epsilon}e^{\lambda_1 S'(\epsilon,k) I}
\cdot \left| \mathbf{L} \right|_{max}
,1
\right\} \nonumber \\
&\lesssim \frac{\bar{p}(S''(\epsilon,S'(\epsilon,k)))}{\epsilon^2} e^{\lambda_1 S''(\epsilon,S'(\epsilon,k))I}\ (\because S'(\epsilon,k)< k_{m'+1} < k_{m''} \leq S''(\epsilon,S'(\epsilon,k)),\ \lambda_1'' \leq \lambda_1) \nonumber
\end{align}
for some polynomial $\bar{p}(k)$. Since for some $\bar{\bar{p}}(k)$
\begin{align}
\left| \mathbf{M_1'} \right|_{max} \leq \frac{p'(S'(\epsilon,k))}{\epsilon} e^{\lambda_1 S'(\epsilon,k)I} \leq \frac{\bar{\bar{p}}(S''(\epsilon,S'(\epsilon,k)))}{\epsilon^2} e^{\lambda_1 S''(\epsilon,S'(\epsilon,k))I} \nonumber
\end{align}
and we can recover $\mathbf{x}$ from $\mathbf{x''}$ and $(\mathbf{x_{1,\nu_1}})_{m_{1,\nu_1}}$, there exists $\mathbf{M}$ and a polynomial $p(k)$ such that
\begin{align}
\mathbf{M}
\begin{bmatrix}
\mathbf{C}e^{-(k_1 I + t_{k_1})\mathbf{A_c}} \\
\vdots \\
\mathbf{C}e^{-(k_{m''} I + t_{k_{m''}})\mathbf{A_c}}
\end{bmatrix}
=\mathbf{I}_{m \times m} \nonumber
\end{align}
and
\begin{align}
\left| \mathbf{M} \right|_{max} \leq \frac{p(S''(\epsilon,S'(\epsilon,k)))}{\epsilon^2} e^{\lambda_1 S''(\epsilon,S'(\epsilon,k)) I}. \nonumber
\end{align}
Moreover, since
\begin{align}
\lim_{\epsilon \downarrow 0} \exp \limsup_{s \rightarrow \infty} \esssup \log \mathbb{P}\{ S''(\epsilon,S'(\epsilon,k)) - S'(\epsilon,k) |  \mathcal{F}_{S'(\epsilon,k)}\} \leq p_e \nonumber
\end{align}
and
\begin{align}
\lim_{\epsilon \downarrow 0} \exp \limsup_{s \rightarrow \infty} \sup_{k \in \mathbb{Z^+}} \log \mathbb{P}\{ S'(\epsilon,k) - k \} \leq p_e, \nonumber
\end{align}
by Lemma~\ref{lem:app:geo}
\begin{align}
\lim_{\epsilon \downarrow 0} \exp \limsup_{s \rightarrow \infty} \sup_{k \in \mathbb{Z^+}} \log \mathbb{P}\{ S''(\epsilon,S'(\epsilon,k)) - k \} \leq p_e.\nonumber
\end{align}
Therefore, by putting $S(\epsilon,k):=S''(\epsilon^{\frac{1}{2}},S'(\epsilon^{\frac{1}{2}},k))$, $S(\epsilon,k)$ satisfies all the conditions of the lemma.
\end{proof}

\subsection{Lemmas about the Observability Gramian of Discrete-Time Systems}
\label{sec:dis:gramian}
Now, we will consider the discrete-time systems discussed in Section~\ref{sec:interob}.
Like the continuous time case, we start from a simpler case when $\mathbf{C}$ is a row vector and $\mathbf{A}$ has no eigenvalue cycles. The definitions corresponding to \eqref{eqn:ac:jordan} for the row vector case are given as follows:
Let $\mathbf{A}$ be a $m \times m$ Jordan form matrix and $\mathbf{C}$ be $1 \times m$ row vector which can be written as
\begin{align}
&\mathbf{A}=diag\{ \mathbf{A_{1,1}}, \mathbf{A_{1,2}}, \cdots, \mathbf{A_{1,\nu_1}}, \cdots, \mathbf{A_{\mu,1}}, \cdots, \mathbf{A_{\mu,\nu_\mu}}\} \label{eqn:ac:jordansingle} \\
&\mathbf{C}=\begin{bmatrix} \mathbf{C_{1,1}}, \mathbf{C_{1,2}}, \cdots, \mathbf{C_{1,\nu_1}}, \cdots, \mathbf{C_{\mu,1}}, \cdots, \mathbf{C_{\mu,\nu_\mu}} \end{bmatrix} \label{eqn:ac:jordansinglec} \\
&\mbox{where} \nonumber \\
&\quad \mbox{$\mathbf{A_{i,j}}$ is a Jordan block with eigenvalue $\lambda_{i,j}e^{j 2 \pi \omega_{i,j}}$ and size $m_{i,j}$} \nonumber \\
&\quad m_{i,1} \leq m_{i,2} \leq \cdots \leq m_{i,\nu_i}\mbox{ for all }i=1,\cdots,\mu \nonumber \\
&\quad \lambda_{i,1}=\lambda_{i,2}=\cdots=\lambda_{i,\nu_i}\mbox{ for all }i=1,\cdots,\mu \nonumber \\
&\quad \lambda_{1,1} > \lambda_{2,1} > \cdots > \lambda_{\mu,1} \geq 1  \nonumber \\
&\quad \{ \lambda_{i,1},\cdots, \lambda_{i,\nu_i} \} \mbox{ is cycle with length $\nu_i$ and period $p_i$}\nonumber \\
&\quad \mbox{For all $(i,j )\neq (i',j')$, $\omega_{i,j}-\omega_{i',j'} \notin \mathbb{Q}$} \nonumber \\
&\quad \mbox{$\mathbf{C_{i,j}}$ is a $1 \times m_{i,j}$ complex matrix and its first element is non-zero} \nonumber \\
&\quad \mbox{$\lambda_i e^{j 2 \pi \omega_i}$ is $(i,i)$ element of $\mathbf{A}$}. \nonumber
\end{align}
Here, we can notice that $\mathbf{A}$ has no eigenvalue cycles since $\omega_{i,j}-\omega_{i',j'} \notin \mathbb{Q}$ for all $(i,j )\neq (i',j')$, and $\mathbf{C}$ is a row vector. By Theorem~\ref{thm:jordanob}, the condition that the first elements of $\mathbf{C_{i,j}}$ are non-zero corresponds to the observability condition of $(\mathbf{A},\mathbf{C})$ since $\mathbf{C}$ is a row vector.

Let's state lemmas which parallel Lemma~\ref{lem:conti:inverse2} and Lemma~\ref{lem:det:lower}. In fact, the proofs of the lemmas are very similar to those of Lemma~\ref{lem:conti:inverse2} and Lemma~\ref{lem:det:lower} and we omit the proofs here.
\begin{lemma}
Let $\mathbf{A}$ and $\mathbf{C}$ be given as \eqref{eqn:ac:jordansingle} and \eqref{eqn:ac:jordansinglec}. Then, there exists a polynomial $p(k)$ such that for all $\epsilon>0$ and $0 \leq k_1 \leq \cdots \leq k_m$, if
\begin{align}
\left| \det\left(
\begin{bmatrix}
\mathbf{C} \mathbf{A}^{-k_1} \\
\mathbf{C} \mathbf{A}^{-k_2} \\
\vdots \\
\mathbf{C} \mathbf{A}^{-k_m}
\end{bmatrix}
\right)  \right| \geq \epsilon \prod_{1 \leq i \leq m} \lambda_i^{-k_i} \nonumber
\end{align}
then
\begin{align}
\left|
\begin{bmatrix}
\mathbf{C} \mathbf{A}^{-k_1} \\
\mathbf{C} \mathbf{A}^{-k_2} \\
\vdots \\
\mathbf{C} \mathbf{A}^{-k_m} \\
\end{bmatrix}
\right|_{max} \leq \frac{p(k_m)}{\epsilon} \lambda_1^{k_m} \nonumber
\end{align}
\label{lem:dis:inverse}
\end{lemma}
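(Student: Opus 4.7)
The plan is to mirror the argument of Lemma~\ref{lem:conti:inverse2}, replacing the continuous propagator $e^{-k\mathbf{A_c}}$ by the discrete one $\mathbf{A}^{-k}$. Writing $M := \begin{bmatrix} \mathbf{C}\mathbf{A}^{-k_1} \\ \vdots \\ \mathbf{C}\mathbf{A}^{-k_m} \end{bmatrix}$ and letting $a_{i,j}$ and $C_{i,j}$ denote its entries and cofactors, the standard cofactor expansion gives
\begin{align}
|M^{-1}|_{max} = \frac{\max_{i,j}|C_{i,j}|}{|\det M|}. \nonumber
\end{align}
Since the hypothesis already supplies a lower bound on $|\det M|$, the whole task reduces to establishing a uniform upper bound
\begin{align}
\max_{i,j}|C_{i,j}| \leq q(k_m)\prod_{i=2}^{m} \lambda_i^{k_{i-1}} \nonumber
\end{align}
for some polynomial $q$; combining this with the given determinant bound and telescoping $\prod_{i=2}^m \lambda_i^{k_{i-1}}/\prod_{i=1}^m \lambda_i^{-k_i} = \lambda_1^{k_1}\prod_{i=2}^m \lambda_i^{k_i-k_{i-1}} \leq \lambda_1^{k_m}$ (using $\lambda_1 \geq \lambda_i \geq 1$ and $k_i \geq k_{i-1}$) produces the advertised $\frac{p(k_m)}{\epsilon}\lambda_1^{k_m}$.

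To bound the cofactors, I would prove a discrete analog of Lemma~\ref{lem:det:upper}: for $\mathbf{A}$ in Jordan form with $(i,i)$ entry $\lambda_i e^{j2\pi\omega_i}$,
\begin{align}
\left|\det\begin{bmatrix} \mathbf{C}\mathbf{A}^{-k_1} \\ \vdots \\ \mathbf{C}\mathbf{A}^{-k_m}\end{bmatrix}\right| \leq a(1+k_m^p)\prod_{1\leq i \leq m}\lambda_i^{-k_i}. \nonumber
\end{align}
First reduce to the diagonal case: the entries of $\mathbf{A}^{-k}$ are of the form $\binom{-k}{\ell}\lambda_i^{-k}e^{-j2\pi\omega_i k}$, which differ from their diagonal-part counterparts only by polynomial prefactors in $k$, so any bound that works in the diagonal case inflates by at most a polynomial factor $(1+k_m^p)$. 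In the diagonal case the determinant is a signed sum over $\sigma \in S_m$ of $\prod_i c_i \lambda_{\sigma(i)}^{-k_i}$, and each summand's magnitude is $\prod_i \lambda_{\sigma(i)}^{-k_i}$. Here the discrete rearrangement inequality (which has the same statement as Lemma~\ref{lem:conti:rearr} with $e^{-\lambda_i k_j}$ replaced by $\lambda_i^{-k_j}$, and an identical proof) gives $\prod_i \lambda_{\sigma(i)}^{-k_i} \leq \prod_i \lambda_i^{-k_i}$. The cofactor $C_{i,j}$ is itself the determinant of a reduced observability Gramian on the diagonal submatrix $\mathbf{A}(j)$ with $\mathbf{C}(j)$ and $(m-1)$ of the time indices, so the same rearrangement argument, applied to the reduced spectrum, yields
\begin{align}
|C_{i,j}| \leq p_{i,j}(k_m)\prod_{i=2}^{m}\lambda_i^{-k_{i-1}} \nonumber
\end{align}
after using $\lambda_1 \geq \cdots \geq \lambda_m$ and $k_1 \leq \cdots \leq k_m$ to dominate the many sub-cases that arise depending on which row and column have been deleted, exactly as in the proof of Lemma~\ref{lem:conti:inverse2}.

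The only genuinely new ingredient relative to the continuous-time proof is verifying that the Jordan-to-diagonal comparison really produces only polynomial slack in the discrete setting, where $(\mathbf{A}^{-k})_{ij}$ involves binomial coefficients $\binom{-k}{\ell}$ rather than the cleaner $k^\ell/\ell!$ factors of $e^{-k\mathbf{A_c}}$. This is essentially bookkeeping: $|\binom{-k}{\ell}| \leq (k+1)^\ell$, which is polynomial of bounded degree (bounded by the size of the largest Jordan block). Everything else — the rearrangement step, the cofactor identity, and the telescoping over $\lambda_i^{k_i - k_{i-1}}$ — carries over verbatim. The only nuisance worth watching is that the hypothesis imposes $\lambda_i \geq 1$ (assumption (b) of Section~\ref{sec:dis:suff}), which is exactly what is needed so that $\prod_{i=2}^m \lambda_i^{k_i-k_{i-1}} \leq \lambda_1^{k_m - k_1}$; without this the telescoping would fail and one would have to keep the more delicate bound involving individual $\lambda_i$'s.
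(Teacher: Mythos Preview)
Your proposal is correct and takes essentially the same approach as the paper, which simply states that Lemma~\ref{lem:dis:inverse} ``can be easily proved in a similar way to Lemma~\ref{lem:conti:inverse2}''; you have carried out precisely that translation from the continuous to the discrete setting. One small typo: your first displayed cofactor bound and the numerator of the telescoping identity should read $\prod_{i=2}^m \lambda_i^{-k_{i-1}}$ (with the minus sign), matching your later, correct display and making the telescoping $\lambda_1^{k_1}\prod_{i=2}^m \lambda_i^{k_i-k_{i-1}}\le \lambda_1^{k_m}$ go through.
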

\begin{proof}
It can be easily proved in a similar way to Lemma~\ref{lem:conti:inverse2}
\end{proof}

\begin{lemma}
Let $\mathbf{A}$ and $\mathbf{C}$ be given as \eqref{eqn:ac:jordansingle} and \eqref{eqn:ac:jordansinglec}.
Define $a_{i,j}$ and $C_{i,j}$ as the $(i,j)$ element and cofactor of
$\begin{bmatrix} \mathbf{C}\mathbf{A}^{-k_1} \\ \mathbf{C}\mathbf{A}^{-k_2} \\ \vdots \\ \mathbf{C}\mathbf{A}^{-k_m} \end{bmatrix}$ respectively.
Then there exists $g_{\epsilon}(k):\mathbb{R}^+ \rightarrow \mathbb{R}^+$ and $a \in \mathbb{R}^+$ such that for all $\epsilon>0$  and $k_1,\cdots, k_m$ satisfying
\begin{align}
&(i) 0 \leq k_1 < k_2 < \cdots < k_m \nonumber \\
&(ii) k_m - k_{m-1} \geq g_{\epsilon}(k_{m-1})  \nonumber \\
&(iii) g_{\epsilon}(k) \leq a(1+\log (k+1)) \nonumber \\
&(iv)\left| \sum_{m-m_{\mu}+1 \leq i \leq m} a_{m,i} C_{m,i} \right| \geq \epsilon \prod_{1 \leq i \leq m} {\lambda_i}^{-k_i} \nonumber
\end{align}
the following inequality holds:
\begin{align}
\left| \det \left(
\begin{bmatrix}
\mathbf{C}\mathbf{A}^{-k_1} \\
\mathbf{C}\mathbf{A}^{-k_2} \\
\vdots \\
\mathbf{C}\mathbf{A}^{-k_m} \\
\end{bmatrix}
\right) \right| \geq \frac{1}{2} \epsilon \prod_{1 \leq i \leq m} {\lambda_i}^{-k_i}. \nonumber
\end{align}
\label{lem:dis:det:lower}
\end{lemma}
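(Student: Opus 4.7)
The plan is to mirror the continuous-time argument of Lemma~\ref{lem:det:lower} almost line for line, replacing the exponential factors $e^{-\lambda_i t}$ by the geometric factors $\lambda_i^{-k}$ and the additive spectral gap $\lambda_{\mu-1,1}-\lambda_{\mu,1}$ by the multiplicative gap $\rho:=\lambda_{\mu-1,1}/\lambda_{\mu,1}>1$. First I would expand the determinant along the last row,
\begin{align}
\det\Bigl(\begin{bmatrix}\mathbf{C}\mathbf{A}^{-k_1}\\ \vdots\\ \mathbf{C}\mathbf{A}^{-k_m}\end{bmatrix}\Bigr)=\sum_{1\leq i\leq m} a_{m,i}C_{m,i},
\end{align}
and split the sum into the ``good'' part $\mathcal{G}:=\sum_{m-m_{\mu}+1\leq i\leq m} a_{m,i}C_{m,i}$ (the columns belonging to the last Jordan block of $\mathbf{A}$) and the ``bad'' part $\mathcal{B}:=\sum_{1\leq i\leq m-m_{\mu}} a_{m,i}C_{m,i}$. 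By assumption (iv) we immediately have $|\mathcal{G}|\geq \epsilon\prod_{1\leq i\leq m}\lambda_i^{-k_i}$, so the entire task is to show that $|\mathcal{B}|\leq\tfrac{\epsilon}{2}\prod_{1\leq i\leq m}\lambda_i^{-k_i}$ whenever $k_m-k_{m-1}\geq g_\epsilon(k_{m-1})$ for a suitable $g_\epsilon$ that grows only logarithmically in its argument.

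To bound $\mathcal{B}$, I would use the Leibniz formula to write $\mathcal{B}$ as a signed sum, over permutations $\sigma\in S_m$ with $1\leq\sigma(m)\leq m-m_{\mu}$, of products $\prod_{i=1}^{m} a_{i,\sigma(i)}$. Since $\mathbf{A}$ is a Jordan matrix, each entry of $\mathbf{A}^{-k}$ has the form $p_{i,j}(k)\lambda_{(j)}^{-k}e^{-\mathbf{j}2\pi\omega_{(j)}k}$ for some polynomial $p_{i,j}$ of degree less than $m$; consequently $|a_{i,\sigma(i)}|\leq p(k_i)\lambda_{\sigma(i)}^{-k_i}$ for a single positive polynomial $p$ (chosen as $p(k)=a(1+k^b)$ dominating all the $|p_{i,j}|$). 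The key estimate is then the discrete analogue of the second inequality in Lemma~\ref{lem:conti:rearr}: by the (multiplicative) rearrangement inequality applied to $\lambda_1\geq\cdots\geq\lambda_m\geq 1$ and $k_1\leq\cdots\leq k_m$,
\begin{align}
\frac{\prod_{i=1}^{m}\lambda_{\sigma(i)}^{-k_i}}{\prod_{i=1}^{m}\lambda_i^{-k_i}}\leq \Bigl(\frac{\lambda_m}{\lambda_{\sigma(m)}}\Bigr)^{k_m-k_{\sigma^{-1}(m)}}\leq \rho^{-(k_m-k_{m-1})},
\end{align}
where the last step uses $\sigma(m)\neq m$ (so $\lambda_{\sigma(m)}\geq\lambda_{\mu-1,1}$) and $k_{\sigma^{-1}(m)}\leq k_{m-1}$.

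Combining these two ingredients yields
\begin{align}
|\mathcal{B}|\leq m!\, p(k_m)^{m}\, \rho^{-(k_m-k_{m-1})}\prod_{1\leq i\leq m}\lambda_i^{-k_i}.
\end{align}
The final step is a discrete version of Lemma~\ref{lem:conti:ineq}: since $\rho>1$ and $m!\,p(k)^{m}$ is polynomial in $k$, for any $\epsilon>0$ one can choose $g_\epsilon(k)$ with $g_\epsilon(k)\leq a(1+\log(k+1))$ so that $m!\,p(k+x)^{m}\rho^{-x}\leq \epsilon/2$ for all $x\geq g_\epsilon(k)$ and $k\geq 0$; this is proved by solving $\tfrac{1}{\log\rho}\log(\tfrac{2\,m!\,p(k+x)^m}{\epsilon})\leq x$ asymptotically, exactly as in the continuous case with $\lambda'$ replaced by $\log\rho$. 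Applying this with $k=k_{m-1}$ and $x=k_m-k_{m-1}$ gives $|\mathcal{B}|\leq\tfrac{\epsilon}{2}\prod \lambda_i^{-k_i}$, whence $|\det|\geq|\mathcal{G}|-|\mathcal{B}|\geq\tfrac{\epsilon}{2}\prod\lambda_i^{-k_i}$.

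The only step that requires real care is the multiplicative rearrangement bound: one must verify that in the discrete setting the ``swap the last coordinate'' argument used in the proof of Lemma~\ref{lem:conti:rearr} still produces the factor $(\lambda_m/\lambda_{\sigma(m)})^{k_m-k_{\sigma^{-1}(m)}}$ rather than some weaker bound, and that the polynomial dressings $p_{i,j}(k_i)$ arising from the Jordan-block structure can be absorbed into a single polynomial $p(k_m)^{m}$ without disturbing the geometric decay. Once this is in hand, the rest is bookkeeping identical to the continuous-time proof, and the resulting $g_\epsilon$ automatically satisfies the logarithmic growth bound required in condition (iii).
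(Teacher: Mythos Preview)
Your proposal is correct and is exactly the approach the paper intends: the paper's own proof of this lemma consists of the single sentence ``It can be easily proved in a similar way to Lemma~\ref{lem:det:lower},'' and what you have written is precisely that discrete-time transcription (Leibniz expansion along the last row, splitting off the last $m_\mu$ columns, the multiplicative rearrangement bound of Lemma~\ref{lem:conti:rearr} with $e^{-\lambda_i}$ replaced by $\lambda_i^{-1}$, and the polynomial-versus-geometric estimate from Lemma~\ref{lem:conti:ineq} with $\lambda'$ replaced by $\log\rho$). The only cosmetic point is that when $\mu=1$ the bad part $\mathcal{B}$ is empty and $\rho$ is undefined, but then the conclusion is immediate from assumption (iv); you may wish to remark on this explicitly.
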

\begin{proof}
It can be easily proved in a similar way to Lemma~\ref{lem:det:lower}.
\end{proof}
Like the continuous-time case, these lemmas reduce questions about the inverse of the observability Gramian to questions about the determinant of the observability Gramian.

\subsection{Uniform Convergence of Sequences satisfying Weyl's criterion (Discrete-Time Systems)}
\label{sec:dis:uniform}
As we did in the continuous-time case, we will prove that the determinant of the observability matrix is large enough regardless of the erasure pattern. The main difference from the continuous-time case of Appendix~\ref{app:unif:conti} is the measure that must be used. While we used the Lebesgue measure to measure the bad event ---the event that the determinant of the observability matrix is small---, we use the counting measure in this section.

The main idea of this section is approximating aperiodic deterministic sequences by random variables using ergodic theory~\cite{Kuipers}. The necessary and sufficient condition for a sequence to behave like uniformly distributed random variables in $[0,1]$ is known as Weyl's criterion. We first state a general ergodic theorem, and derive the Weyl's criterion as a corollary.
\begin{theorem}[Koksma and Szusz inequality~\cite{Kuipers}]
Consider a $s$-dimensional sequence $\mathbf{x_1}, \mathbf{x_2}, \cdots \in \mathbb{R}^s$, and let $\alpha := (\alpha_1, \cdots, \alpha_s)$ and $\beta:=(\beta_1, \cdots, \beta_s)$. For any positive integer $m$, we have
\begin{align}
\sup_{0 \leq \alpha_i < \beta_i \leq 1} \left| \frac{A\left([\mathbf{\alpha},\mathbf{\beta});N, \{\mathbf{x_n}\}\right)}{N} - \prod_{1 \leq i \leq s} (\beta_i - \alpha_i)  \right| \leq 2s^2 3^{s+1} \left( \frac{1}{m} + \sum_{\mathbf{h} \in \mathbb{Z}^s, 0 < |\mathbf{h}|_{\infty} \leq m} \frac{1}{r(\mathbf{h})} \left| \frac{1}{N} \sum_{1 \leq n \leq N} e^{2 \pi \sqrt{-1} \left<\mathbf{h},\mathbf{x_n}\right>} \right| \right) \nonumber
\end{align}
where
\begin{align}
&A\left([\mathbf{\alpha},\mathbf{\beta});N, \{\mathbf{x_n}\}\right):=\sum_{1 \leq n \leq N} \mathbf{1}\left\{\mathbf{x_n} \in [\alpha_1, \beta_1)\times [\alpha_2, \beta_2) \cdots \times [\alpha_s, \beta_s)  \right\} \label{eqn:defcount} \\
&r(\mathbf{h}):= \prod_{1 \leq j \leq s} \max\{|h_j|,1\}. \nonumber
\end{align}
\label{thm:koksma}
\end{theorem}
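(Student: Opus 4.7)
My plan is to prove this by sandwiching the indicator function of the box $[\alpha,\beta)=\prod_{i=1}^{s}[\alpha_i,\beta_i)$ between two trigonometric polynomials of degree at most $m$ and then averaging over $n=1,\dots,N$. Writing $\chi_{[\alpha,\beta)}$ for the indicator (extended 1-periodically in each coordinate), I would construct trigonometric polynomials
\begin{align}
\Phi_m^-(\mathbf{x}) \;\leq\; \chi_{[\alpha,\beta)}(\mathbf{x}) \;\leq\; \Phi_m^+(\mathbf{x}) \nonumber
\end{align}
with Fourier expansions $\Phi_m^{\pm}(\mathbf{x}) = \sum_{\mathbf{h}\in\mathbb{Z}^s,\, |\mathbf{h}|_\infty\leq m} \widehat{\Phi}_m^{\pm}(\mathbf{h}) e^{2\pi\sqrt{-1}\langle \mathbf{h},\mathbf{x}\rangle}$ satisfying two key properties: (a) $\widehat{\Phi}_m^{\pm}(\mathbf{0}) = \prod_{i=1}^{s}(\beta_i-\alpha_i) \pm O(s/m)$, and (b) $|\widehat{\Phi}_m^{\pm}(\mathbf{h})| \leq C_s / r(\mathbf{h})$ for all nonzero $\mathbf{h}$ with $|\mathbf{h}|_\infty \leq m$, where $C_s$ depends only on $s$.

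The construction of these approximants would be carried out dimension by dimension. In one dimension, I would use the classical Selberg/Vaaler majorant and minorant of $\chi_{[\alpha_i,\beta_i)}$: trigonometric polynomials of degree at most $m$ whose $L^1$-distance to $\chi_{[\alpha_i,\beta_i)}$ is at most $1/(m+1)$ and whose nonzero Fourier coefficients are bounded by $O(1/|h|)$. Taking the tensor products $\Phi_m^+(\mathbf{x}) = \prod_{i=1}^{s} U_m^+(x_i)$ and $\Phi_m^-(\mathbf{x}) = \prod_{i=1}^{s} U_m^-(x_i)$ (after a standard symmetrization so the products stay genuine upper/lower bounds) yields the required two properties, with constants of the claimed order $2s^2 3^{s+1}$ after tracking the combinatorics of expanding the product of one-dimensional Fourier expansions. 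The $1/r(\mathbf{h})$ shape for the coefficients is precisely what falls out of the product of $1/|h_i|$ bounds combined with the constant coefficient $(\beta_i-\alpha_i)$ in the factors where $h_i=0$.

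Once the polynomials are in hand, the proof reduces to averaging: the sandwich gives
\begin{align}
\frac{1}{N}\sum_{n=1}^{N}\Phi_m^-(\mathbf{x_n}) \;\leq\; \frac{A([\alpha,\beta);N,\{\mathbf{x_n}\})}{N} \;\leq\; \frac{1}{N}\sum_{n=1}^{N}\Phi_m^+(\mathbf{x_n}), \nonumber
\end{align}
and expanding the left and right sides in Fourier series separates off the constant term $\prod_i(\beta_i-\alpha_i)+O(1/m)$ from the remaining exponential sums $\frac{1}{N}\sum_{n=1}^{N} e^{2\pi\sqrt{-1}\langle \mathbf{h},\mathbf{x_n}\rangle}$ weighted by $|\widehat{\Phi}_m^{\pm}(\mathbf{h})| \leq C_s / r(\mathbf{h})$. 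Subtracting $\prod_i(\beta_i-\alpha_i)$ and taking the supremum over $\alpha,\beta$ produces the desired inequality, with the explicit constant $2s^2 3^{s+1}$ accounting for the number of boundary contributions in each coordinate and the upper/lower sandwich.

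The main obstacle will be the multidimensional bookkeeping: the one-dimensional Selberg construction is standard, but pushing the coefficient bound $C_s/r(\mathbf{h})$ uniformly through the $s$-fold tensor product (especially when some coordinates $h_i$ vanish, so that the factor $\beta_i-\alpha_i$ enters instead of a $1/|h_i|$ bound) and getting precisely the advertised constant $2s^2 3^{s+1}$ requires careful combinatorial tracking. The actual probabilistic/analytic ideas — Fourier approximation plus averaging — are straightforward; the work lies in the constants.
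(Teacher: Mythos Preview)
The paper does not actually prove this statement: its entire proof reads ``See \cite{Kuipers} for the proof.'' So there is nothing to compare against, and your sketch is a legitimate stand-in. What you outline is precisely the standard route to the Erd\H{o}s--Tur\'an--Koksma inequality via Selberg--Vaaler-type one-dimensional majorants/minorants tensored across coordinates, followed by averaging the Fourier expansion over the sequence. That is the argument one finds in Kuipers--Niederreiter and related references, so in substance your approach agrees with what the paper is implicitly invoking.

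One caveat on the bookkeeping you flag: the naive tensor product of one-dimensional minorants need not be a minorant (since the one-dimensional minorant can take negative values, so the product of $s$ of them need not stay below the product of the indicators). The standard fix is to build the multidimensional lower bound from the upper bound via inclusion--exclusion or to use the identity $\prod U_i^- \ge \prod \chi_i - \sum_j(\chi_j - U_j^-)\prod_{i\ne j}U_i^+$-type telescoping; your parenthetical ``after a standard symmetrization'' acknowledges this, but be aware that this step is where the factor that eventually becomes $2s^2 3^{s+1}$ actually arises, and it is not entirely mechanical. If you were to write this out in full, that is the place to be careful.
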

\begin{proof}
See \cite{Kuipers} for the proof.
\end{proof}
Here, we can see $A([\alpha, \beta);N, \{\mathbf{x_n})$ is the counting measure of the event that a sequence falls in the set $[\alpha,\beta)$. The theorem tells us that the counting measure is close to the Lebesgue measure of the set $[\alpha,\beta)$ uniformly over all $\alpha, \beta$.

Using this theorem, we can easily derive\footnote{The original Weyl's criterion is shown for only one sequence. But, here we extend Weyl's criterion to a family of sequences. For this, we state a generalized theorem of the Weyl's criterion and prove it.} the Weyl's criterion for a family of sequences.

\begin{definition}
Consider a family of $s$-dimensional sequences $\mathcal{J}=\{ (\mathbf{x_{1,\sigma}}, \mathbf{x_{2,\sigma}}, \cdots): \sigma \in J , x_{i,\sigma} \in \mathbb{R}^s  \}$. Here, the index set for the sequences, $J$, can be infinite. If for all $\mathbf{h} \in \mathbb{Z}^s \setminus \{\mathbf{0} \}$,
\begin{align}
\lim_{N \rightarrow \infty} \sup_{\sigma \in \mathcal{J}} \left| \frac{1}{N} \sum_{1 \leq n \leq N} e^{j 2 \pi \left<\mathbf{h},\mathbf{x_{n,\sigma}}\right>}\right| = 0 \nonumber
\end{align}
then the family of sequences is said to satisfy Weyl's criterion.
\end{definition}

\begin{theorem}[Weyl's criterion~\cite{Kuipers}]
Consider a family of $s$-dimensional sequences $\mathcal{J}=\{ (\mathbf{x_{1,\sigma}}, \mathbf{x_{2,\sigma}}, \cdots): \sigma \in J , x_{i,\sigma} \in \mathbb{R}^s  \}$, which satisfy the Weyl's criterion. Then, this family of sequences satisfies
\begin{align}
\lim_{N \rightarrow \infty} \sup_{\sigma \in \mathcal{J}} \sup_{0 \leq \alpha_i < \beta_i \leq 1} \left| \frac{A\left([\mathbf{\alpha},\mathbf{\beta});N, \{\mathbf{x_{n,\sigma}} \}\right)}{N} - \prod_{1 \leq i \leq s} (\beta_i - \alpha_i)  \right| =0, \nonumber
\end{align}
where the definition of $A\left([\mathbf{\alpha},\mathbf{\beta});N, \{\mathbf{x_{n,\sigma}} \}\right)$ is given in \eqref{eqn:defcount}.
\label{thm:weyl}
\end{theorem}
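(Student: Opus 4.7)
My plan is to reduce the statement to a direct application of the Koksma--Szusz inequality (Theorem~\ref{thm:koksma}). That inequality already delivers, for every individual sequence in $\mathcal{J}$, an upper bound on the discrepancy that is uniform in the rectangle $[\alpha,\beta)$; the only remaining work is to choose the truncation parameter $m$ so that the right-hand side is small uniformly in the family index $\sigma \in \mathcal{J}$. The hypothesis of the theorem is precisely the statement that the exponential sums converge to $0$ uniformly over $\sigma$ for each fixed frequency $h$, and a single truncation $m$ turns this pointwise-in-$h$ uniformity into a genuinely uniform statement about the discrepancy.

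Concretely, I would fix $\epsilon > 0$ and first pick an integer $m = m(s,\epsilon)$ large enough that $2 s^2 3^{s+1} / m < \epsilon/2$; this handles the $1/m$ term in Koksma--Szusz and does not depend on $\sigma$. For that fixed $m$, the remaining term is a finite sum over the at most $(2m+1)^s - 1$ lattice points $h \in \mathbb{Z}^s$ with $0 < |h|_\infty \leq m$, each weighted by $1/r(h)$. The hypothesis gives, for each such $h$, $\lim_{N\to\infty} \sup_{\sigma \in \mathcal{J}} | \frac{1}{N} \sum_{n=1}^N e^{j 2\pi \langle h, \mathbf{x}_{n,\sigma}\rangle}| = 0$. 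Since this is a finite collection of limits (its size depending only on $s$ and $m$), I can pick $N_0$ so that for $N \geq N_0$ and every $\sigma \in \mathcal{J}$, the whole truncated Fourier sum contributes at most $\epsilon/2$ to the Koksma--Szusz bound. Adding the two pieces gives the uniform bound claimed in the theorem.

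I do not expect any genuine obstacle: the only subtlety is observing that the Koksma--Szusz constant $2 s^2 3^{s+1}$ depends only on the ambient dimension $s$, so the same $m$ works for every $\sigma$ simultaneously. Thus the proof is essentially an $\epsilon / m$ trade-off that upgrades the pointwise-in-$h$ uniformity in the hypothesis to a uniform-in-rectangle, uniform-in-family discrepancy bound, exactly paralleling the classical proof of Weyl's criterion for a single sequence but carrying the $\sup_{\sigma}$ through each step unchanged.
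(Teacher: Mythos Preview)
Your proposal is correct and matches the paper's proof essentially line for line: both apply the Koksma--Szusz inequality, choose $m$ depending only on $s$ and the target error to kill the $1/m$ term, then use the finiteness of the lattice points $\{h : 0 < |h|_\infty \le m\}$ together with the uniform-in-$\sigma$ Weyl hypothesis to find a single $N_0$ controlling the remaining finite Fourier sum. There is no meaningful difference in strategy or execution.
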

\begin{proof}
By Theorem~\ref{thm:koksma}, for any positive integer $m$, we have
\begin{align}
&\sup_{\sigma \in \mathcal{J}} \sup_{0 \leq \alpha_i < \beta_i \leq 1} \left| \frac{A\left([\mathbf{\alpha},\mathbf{\beta});N, \{\mathbf{x_{n,\sigma}}\}\right)}{N} - \prod_{1 \leq i \leq s} (\beta_i - \alpha_i)  \right|\\
&\leq \sup_{\sigma \in \mathcal{J}}
2s^2 3^{s+1} \left( \frac{1}{m} + \sum_{0 < |\mathbf{h}|_{\infty} \leq m} \frac{1}{r(\mathbf{h})} \left| \frac{1}{N} \sum_{1 \leq n \leq N} e^{2 \pi j \left<\mathbf{h},\mathbf{x_{n,\sigma}}\right>} \right| \right)  \label{eqn:weyl:new:1}
\end{align}
To prove the theorem, it is enough to show that  for all $\delta > 0$ there exists $N'$ such that for all $N > N'$
\begin{align}
 \sup_{\sigma \in \mathcal{J}} \sup_{0 \leq \alpha_i < \beta_i \leq 1} \left| \frac{A\left([\mathbf{\alpha},\mathbf{\beta});N, \{\mathbf{x_{n,\sigma}}\}\right)}{N} - \prod_{1 \leq i \leq s} (\beta_i - \alpha_i)  \right| < \delta.\label{eqn:weyl:new:2}
\end{align}

Let's choose $m:=\frac{4s^2 3^{s+1}}{\delta}$ so that
\begin{align}
\frac{2s^2 3^{s+1}}{m} < \frac{\delta}{2}. \label{eqn:weyl:new:3}
\end{align}

Once we fix $m$, there are only $(2m+1)^s$ number of $\mathbf{h} \in \mathbb{Z}^s$ such that $|\mathbf{h}|_{\infty} \leq m$. Furthermore, by the definition of Weyl's criterion, we can find $N''$ such that for all $N > N''$,
\begin{align}
\sup_{\sigma \in \mathcal{J}} \left| \frac{1}{N} \sum_{1 \leq n \leq N} e^{j 2 \pi \left<\mathbf{h},\mathbf{x_{n,\sigma}}\right>}\right| < \frac{1}{(2m+1)^s 2 s^2 3^{s+1}} \frac{\delta}{2}.
\end{align}

Thus, we can find $N''$ such that for all $N > N''$ the following holds:
\begin{align}
2s^2 3^{s+1} s^{m+1} \max_{0 < |\mathbf{h}|_{\infty} \leq m} \sup_{\sigma \in \mathcal{J}} \left| \frac{1}{N} \sum_{1 \leq n \leq N} e^{j 2 \pi \left<\mathbf{h},\mathbf{x_{n,\sigma}}\right>}\right| < \frac{\delta}{2} \label{eqn:weyl:new:4}
\end{align}
Therefore, by plugging \eqref{eqn:weyl:new:3}, \eqref{eqn:weyl:new:4} into \eqref{eqn:weyl:new:1}, we can prove \eqref{eqn:weyl:new:2}. Thus, the theorem is true.
\end{proof}

Since we are mainly interested in the fractional part of sequences, it will be helpful to denote $\left<x \right> := x - \lfloor x \rfloor$. Although $\left<\mathbf{x},\mathbf{y}\right>$ is the inner product between two vectors, these two definitions can be distinguished by counting the number of arguments. Let's consider some specific sequences, and see whether they satisfies the Weyl's criterion.
\begin{example}
$\left(\left< \sqrt{2}n \right>, \left< \sqrt{3}n \right> \right)$ satisfies Weyl's criterion and $\left( \left<  \sqrt{2}n \right>, \left<(\sqrt{2}+\sqrt{3})n \right> \right)$ does too.\\
$\left(\left<\sqrt{2}n \right>, \left< \left( \sqrt{2} + 0.5 \right)n \right> \right)$ does not satisfy Weyl's criterion and neither does $\left(\left<\sqrt{2}n \right>, \left< \frac{\sqrt{2}}{2} n \right> \right)$.
\end{example}
Therefore, among general sequences in the form of $( \left<\omega_1 n \right>, \left<\omega_2 n \right>, \cdots, \left<\omega_m n \right> )$, there are sequences which satisfy Weyl's criterion and others do not. However, the following lemma reveals all sequences can be written as linear combinations of basis sequences which satisfy Weyl's criterion. This idea is very similar to that linear-algebraic concepts like linear decomposition and basis.

\begin{lemma}
Consider an $m$-dimensional sequence $( \left<\omega_1 n\right>, \left<\omega_2 n\right>, \cdots, \left<\omega_m n\right> )$. Then, there exists $k \leq m$ and $p \in \mathbb{N}$ such that
\begin{align}
\omega_i= \frac{q_{i,0}}{p}+\sum_{1 \leq j \leq k} q_{i,j}\gamma_j \nonumber
\end{align}
where
\begin{align}
&q_{i,j} \in \mathbb{Z},\nonumber \\
&( \left<\gamma_1 n\right>, \left<\gamma_2 n\right>, \cdots, \left<\gamma_k n\right> ) \mbox{ satisfies Weyl's criterion.} \nonumber
\end{align}
\label{lem:dis:weyl2}
\end{lemma}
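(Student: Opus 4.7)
The plan is to reduce the lemma to the classical Weyl criterion via a $\mathbb{Q}$-linear algebra argument on the real line. The key observation is that Weyl's criterion for $(\langle \gamma_1 n\rangle,\dots,\langle \gamma_k n\rangle)$ asks that $\frac{1}{N}\sum_{n=1}^N e^{j 2\pi \langle \mathbf{h},\boldsymbol{\gamma}\rangle n}\to 0$ for every $\mathbf{h}\in\mathbb{Z}^k\setminus\{\mathbf{0}\}$, and since $\frac{1}{N}\sum_{n=1}^N e^{j 2\pi \alpha n}\to 0$ exactly when $\alpha\notin\mathbb{Z}$, this is equivalent to saying that $\{1,\gamma_1,\dots,\gamma_k\}$ is linearly independent over $\mathbb{Q}$. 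So the task reduces to writing each $\omega_i$ as an integer combination of a $\mathbb{Q}$-independent set (after absorbing a common rational denominator into the rational part).

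First, I would view $\mathbb{R}$ as a $\mathbb{Q}$-vector space and let $V:=\operatorname{span}_{\mathbb{Q}}\{1,\omega_1,\dots,\omega_m\}$. Set $k+1:=\dim_{\mathbb{Q}} V$, so $0\leq k\leq m$. Extend $\{1\}$ to a $\mathbb{Q}$-basis $\{1,\tilde\gamma_1,\dots,\tilde\gamma_k\}$ of $V$ (choosing the $\tilde\gamma_j$ from among the $\omega_i$ themselves if convenient). By construction $\{1,\tilde\gamma_1,\dots,\tilde\gamma_k\}$ is $\mathbb{Q}$-linearly independent, and every $\omega_i$ admits a unique expansion
\begin{equation*}
\omega_i=r_{i,0}+\sum_{j=1}^{k} r_{i,j}\,\tilde\gamma_j,\qquad r_{i,j}\in\mathbb{Q}.
\end{equation*}

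Next I would clear denominators. Let $p\in\mathbb{N}$ be a common denominator of all the rationals $r_{i,j}$, so that $r_{i,j}=q_{i,j}/p$ with $q_{i,j}\in\mathbb{Z}$. Define the rescaled basis elements $\gamma_j:=\tilde\gamma_j/p$. Then
\begin{equation*}
\omega_i=\frac{q_{i,0}}{p}+\sum_{j=1}^{k} q_{i,j}\,\gamma_j,
\end{equation*}
which is exactly the form demanded by the lemma with integer coefficients $q_{i,j}$.

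Finally I would check that $(\langle\gamma_1 n\rangle,\dots,\langle\gamma_k n\rangle)$ satisfies Weyl's criterion. Since $\{1,\tilde\gamma_1,\dots,\tilde\gamma_k\}$ is $\mathbb{Q}$-linearly independent and $\mathbb{Q}$-scaling preserves $\mathbb{Q}$-linear independence, the set $\{1,\gamma_1,\dots,\gamma_k\}$ is also $\mathbb{Q}$-linearly independent. Hence for every $\mathbf{h}\in\mathbb{Z}^k\setminus\{\mathbf{0}\}$, $\langle \mathbf{h},\boldsymbol{\gamma}\rangle=\sum_j h_j\gamma_j$ is irrational, and applying the one-dimensional geometric-sum estimate $\bigl|\frac{1}{N}\sum_{n=1}^N e^{j 2\pi\alpha n}\bigr|\leq \frac{1}{N|1-e^{j 2\pi\alpha}|}\to 0$ for $\alpha\notin\mathbb{Z}$ gives the required Weyl condition.

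There is no real obstacle here beyond bookkeeping; the only subtlety is the choice to rescale by $p$ so that the coefficients become integers rather than rationals, and to verify that this rescaling preserves $\mathbb{Q}$-linear independence (which it trivially does). The degenerate case where every $\omega_i$ is rational is handled by $k=0$, leaving just the term $q_{i,0}/p$.
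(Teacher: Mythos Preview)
Your proof is correct and is in fact cleaner than the paper's. The paper proves the lemma by induction on $m$: at each step, if $(\langle\omega_1 n\rangle,\dots,\langle\omega_m n\rangle)$ already satisfies Weyl's criterion it takes $\gamma_i=\omega_i$; otherwise it uses the failing integer relation $\sum h_i\omega_i\in\mathbb{Z}$ to solve for one $\omega_i$ in terms of the rest, applies the induction hypothesis to the remaining $m-1$ frequencies, and then rescales $\gamma_j\mapsto\gamma_j/h_1$ (together with the second preliminary observation, that such rescalings preserve Weyl's criterion) to absorb the new denominators. Your approach bypasses the induction entirely by recognizing from the outset that Weyl's criterion for $(\langle\gamma_1 n\rangle,\dots,\langle\gamma_k n\rangle)$ is exactly $\mathbb{Q}$-linear independence of $\{1,\gamma_1,\dots,\gamma_k\}$, and then the decomposition is just ``extend $\{1\}$ to a $\mathbb{Q}$-basis of $\operatorname{span}_{\mathbb{Q}}\{1,\omega_1,\dots,\omega_m\}$ and clear denominators.'' This is the same underlying linear-algebra content, but packaged non-inductively; it also makes the bound $k\leq m$ immediate from the dimension count, whereas in the paper it comes out of tracking how many $\gamma$'s survive through the induction.
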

\begin{proof}
Before the proof, we can observe the following two facts.

First, since as long as $\left<\mathbf{h}, \mathbf{w}\right>$ is not an integer,
\begin{align}
\frac{1}{N} \sum_{1 \leq n \leq N} e^{j 2 \pi \left<\mathbf{h},(\left<\omega_1 n\right>, \left<\omega_2 n\right>, \cdots, \left<\omega_m n\right> ) \right> } = \frac{1}{N} \frac{e^{j2 \pi (h_1 \omega_1 + h_2 \omega_2 + \cdots + h_m \omega_m )}\left(1-e^{j2 \pi N(h_1 \omega_1 + h_2 \omega_2 + \cdots + h_m \omega_m )}\right)}{1- e^{j 2 \pi(h_1 \omega_1 + h_2  \omega_2 + \cdots + h_m \omega_m)}},\nonumber
\end{align}
the statement that the sequence $(\left<\omega_1 n\right>,\left<\omega_2 n\right>,\cdots,\left<\omega_m n\right>)$ does not satisfy Weyl's criterion is equivalent to there being $h_1, h_2, \cdots , h_m \in \mathbb{Z}$ that are not identically zero and make
\begin{align}
h_1 \omega_1 + h_2 \omega_2 + \cdots + h_m \omega_m \in \mathbb{Z}.\label{eqn:weyl:1}
\end{align}

The second observation is that if $( \left< \omega_1 n \right> , \left< \omega_2 n \right>, \cdots, \left< \omega_m n\right> )$ satisfies Weyl's criterion then for all $a_1,\cdots,a_m \in \mathbb{N}$, $(\left< \frac{\omega_1}{a_1} n\right>,\left< \frac{\omega_2}{a_2} n\right>,\cdots, \left< \frac{\omega_m}{a_m} n\right>)$ also satisfies Weyl's criterion. To see this, suppose $(\left< \frac{\omega_1}{a_1} n\right>,\left< \frac{\omega_2}{a_2} n\right>,\cdots, \left< \frac{\omega_m}{a_m} n\right>)$ did not satisfy Weyl's criterion. Then, by \eqref{eqn:weyl:1} there would exist $(h_1,h_2,\cdots,h_m) \in \mathbb{Z}^m \setminus \{\mathbf{0}\}$ such that $h_1 \frac{\omega_1}{a_1} + h_2 \frac{\omega_2}{a_2} + \cdots + h_m \frac{\omega_m}{a_m} \in \mathbb{Z}$. So, $ \frac{h_1 \prod_{1 \leq i \leq m}a_i}{a_1} {\omega_1} + \frac{h_2 \prod_{1 \leq i \leq m} a_i}{a_2} {\omega_2} + \cdots + \frac{h_m \prod_{1 \leq i \leq m} a_i}{a_m} {\omega_m} \in \mathbb{Z}$ as well as $(\frac{h_1 \prod_{1 \leq i \leq m}a_i}{a_1}, \cdots, \frac{h_m \prod_{1 \leq i \leq m}a_i}{a_m}) \in \mathbb{Z}^m \setminus \{ \mathbf{0} \}$. But since $(\left<\omega_1 n\right>,\left<\omega_2 n\right>, \cdots, \left<\omega_m n\right> )$ would not satisfy Weyl's criterion, this causes a contradiction.

Now, we will prove the lemma by induction on $m$.

(i) When $m=1$,

If $\left<\omega_1 n\right>$ satisfies Weyl's criterion, the lemma is trivially true by selecting $\gamma_1=\omega_1$ and $q_{1,1}=1$. If $\left<\omega_1 n\right>$ does not satisfy Weyl's criterion, then by \eqref{eqn:weyl:1}, $\omega_1$ is a rational number. So we can find $q_{1,0}$ and $p$ such that $\omega_1=\frac{q_{1,0}}{p}$, and set the $k=0$.

(ii) Assume that the lemma is true for $m-1$.

If $(\left<\omega_1 n\right>, \left<\omega_2 n\right>,\cdots, \left<\omega_m n\right>)$ satisfies Weyl's criterion, the lemma follows by selecting $k=m$, $\gamma_i=\omega_i$ and $q_{i,i}=1$.

If $(\left<\omega_1 n\right>, \left<\omega_2 n\right>,\cdots, \left<\omega_m n\right>)$ does not satisfy Weyl's criterion, by \eqref{eqn:weyl:1} there exists $(h_1,h_2,\cdots, h_m) \in \mathbb{Z}^m \setminus \{ \mathbf{0} \}$ and $h \in \mathbb{Z}$ such that $h_1 \omega_1 + h_2 \omega_2 + \cdots + h_m \omega_m = h$. Without loss of generality, let's say $h_1 \neq 0$. Then
\begin{align}
\omega_1 = - \frac{h_2}{h_1} \omega_2 - \frac{h_3}{h_1} \omega_3 - \cdots - \frac{h_m}{h_1} \omega_m + \frac{h}{h_1}. \label{eqn:dis:weyl:1}
\end{align}

By induction hypothesis, we know that there exists $k' \leq m-1$, $p' \in \mathbb{N}$, $q_{i,j}' \in \mathbb{Z}$, $\gamma_i'$ such that
\begin{align}
&\omega_2 = \frac{q_{2,0}'}{p'}+\sum_{1 \leq j \leq k'} q_{2,j}'\gamma_j' \nonumber \\
&\vdots \nonumber \\
&\omega_m = \frac{q_{m,0}'}{p'}+\sum_{1 \leq j \leq k'} q_{m,j}'\gamma_j'. \label{eqn:dis:weyl:2}
\end{align}
where $(\left<\gamma_1' n \right>, \left<\gamma_2' n\right>, \cdots, \left<\gamma_k' n\right>)$ satisfies Weyl's criterion. Therefore, by plugging \eqref{eqn:dis:weyl:2} to \eqref{eqn:dis:weyl:1} we can find $q'_{1,j} \in \mathbb{Z}$ such that
\begin{align}
\omega_1 = \frac{q'_{1,0}}{|h_1 \cdot p'|}+ \sum_{1 \leq i \leq k } q'_{1,i} \frac{\gamma_i'}{h_1}. \nonumber 
\end{align}
By the second observation, $(\left<\frac{\gamma_1'}{h_1}n\right>, \left<\frac{\gamma_2'}{h_1}n\right>,\cdots, \left<\frac{\gamma_k'}{h_1}n\right> )$ satisfies Weyl's criterion, so we can use $p=|h_1 \cdot p'|$ and $\gamma_i = \frac{\gamma_i'}{h_1}$ to show that the lemma also holds for $m$.

Therefore, by induction the lemma is true.
\end{proof}

Now, we can decompose the sequences into basis sequences which satisfy Weyl's criterion, and so behave like uniform random variables.
The main difference from the uniform convergence discussion of Appendix~\ref{app:unif:conti} is the number of random variables. In other words, in continuous-time systems with random jitter, only one random variable is introduced at each sample for the random jitter. However, this is not the case in discrete-time systems.

Let $\mathbf{A_1}=\begin{bmatrix} e^{j \sqrt{2}} & 0 \\ 0 & e^{j 2 \sqrt{2}} \end{bmatrix}$, $\mathbf{A_2}=\begin{bmatrix} e^{j \sqrt{2}} & 0 \\ 0 & e^{j \sqrt{3}} \end{bmatrix}$, $\mathbf{C}=\begin{bmatrix} 1 & 1 \end{bmatrix}$. The row of the observability gramian of $(\mathbf{\mathbf{A_1}}, \mathbf{C})$ is $\mathbf{C}\mathbf{A_1}^n = \begin{bmatrix} e^{j \sqrt{2} n} & e^{j 2\sqrt{2} n}\end{bmatrix}$. In this case, the elements of $\mathbf{C}\mathbf{A_1}^n$ do not satisfy Weyl's criterion. Thus, it can be approximated by $\begin{bmatrix} e^{j X} & e^{j 2X} \end{bmatrix}$ where $X$ is uniform in $[0, 2\pi]$, which involves only one random variable. 

However, the row of the observability gramian of $(\mathbf{\mathbf{A_2}}, \mathbf{C})$ is $\mathbf{C}\mathbf{A_2}^n = \begin{bmatrix} e^{j \sqrt{2} n} & e^{j \sqrt{3} n}\end{bmatrix}$ whose elements satisfy Weyl's criterion. Thus, it can be approximated by $\begin{bmatrix} e^{j X_1} & e^{j X_2} \end{bmatrix}$ where $X_1$, $X_2$ are independent uniform random variables in $[0, 2\pi]$, which involves two random variables. 

Therefore, the lemmas derived in Appendix~\ref{app:unif:conti} have to be generalized to multiple random variables, and then the multiple random variables can be used to model deterministic sequences. 

Intuitively, adding more randomness should not cause any problems, so generalization to multiple random variables must be possible.
We first extend Lemma~\ref{lem:single} which was written for a single random variable to multiple random variables.

\begin{lemma}
Let $\mathbf{X}$ be $(X_1,X_2,\cdots,X_{\nu})$ where $X_i$ are i.i.d.~random variables whose distribution is uniform between $0$ and $2 \pi$. Let $\mathbf{k_1},\mathbf{k_2},\cdots,\mathbf{k_{\mu}} \in \mathbb{R}^{\nu}$ be distinct. 
Then, for strictly positive $\gamma$, $\Gamma$ $(\gamma \leq \Gamma)$, and $m \in \{ 1, \cdots, \mu \}$
\begin{align}
\sup_{|a_{m}| \geq \gamma, |a_{i}| \leq \Gamma, a_i \in \mathbb{C}} \mathbb{P} \{ | \sum^{\mu}_{i=1} a_i e^{j<\mathbf{k_i},\mathbf{X}>} | < \epsilon \} \rightarrow 0 \mbox{ as } \epsilon \downarrow 0. \nonumber
\end{align}
\label{lem:dis:geo1}
\end{lemma}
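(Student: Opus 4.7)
The plan is to mirror the structure of the proof of Lemma~\ref{lem:single} from the single-variable case, with the two substantive modifications: replacing the one-variable uniform-measure-zero argument by a multivariate analogue, and running Dini's theorem on a higher-dimensional (but still compact) parameter set. The only real new content is the analogue of Lemma~\ref{lem:uni:1}: I need to show that for every fixed $h \in \mathbb{C}$ and every admissible coefficient vector in the supremum set, $\mathbb{P}\{|f(\mathbf{X})-h|<\epsilon\}\to 0$ as $\epsilon\downarrow 0$, where $f(\mathbf{x}) = \sum_{i=1}^\mu a_i e^{j\langle \mathbf{k_i},\mathbf{x}\rangle}$.

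First I would establish that $f(\mathbf{x})-h$ is a nontrivial real-analytic function on $\mathbb{R}^\nu$. The key ingredient is the classical linear independence of characters: if $\mathbf{k_1},\dots,\mathbf{k_\mu}$ are distinct vectors in $\mathbb{R}^\nu$, then the functions $\mathbf{x}\mapsto e^{j\langle \mathbf{k_i},\mathbf{x}\rangle}$ are linearly independent (apply the averaging $\lim_{T\to\infty}(2T)^{-\nu}\int_{[-T,T]^\nu} e^{-j\langle \mathbf{k},\mathbf{x}\rangle}(\cdot)\,d\mathbf{x}$ to any supposed zero linear combination to peel off coefficients). Since $|a_m|\geq \gamma>0$, $f$ is not identically zero, hence $f-h$ is not identically zero unless $h$ is a finite constant equal to a single $a_{i_0}$ with all other $a_i=0$ --- which is excluded once $\mu\geq 2$ (the $\mu=1$ case reduces to $|a_1 e^{j\langle\mathbf{k_1},\mathbf{X}\rangle}| = |a_1|\geq \gamma$, so is trivial for small $\epsilon$). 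A nontrivial real-analytic function on $\mathbb{R}^\nu$ has a zero set of Lebesgue measure zero (since $\mathbb{R}^\nu$ is connected and any accumulation point of zeros forces identical vanishing), so
\begin{align*}
\mathbb{P}\{|f(\mathbf{X})-h|=0\} = 0.
\end{align*}
Right-continuity of the cumulative distribution function of $|f(\mathbf{X})-h|$ then gives pointwise convergence $\mathbb{P}\{|f(\mathbf{X})-h|<\epsilon\}\to 0$ as $\epsilon \downarrow 0$, which is the multivariate analogue of Lemma~\ref{lem:uni:1}.

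Next I would lift this pointwise convergence to uniform convergence in the coefficients using Dini's theorem (Theorem~\ref{thm:dini}), exactly as in the proof of Lemma~\ref{lem:single}. Writing $a_i = a_{R,i} - j a_{I,i}$, either $|a_{R,m}|\geq \gamma/\sqrt{2}$ or $|a_{I,m}|\geq \gamma/\sqrt{2}$; by symmetry I restrict to $a_{R,m}\geq \gamma/\sqrt{2}$. Define the monotone sequence of functions
\begin{align*}
f_n(a_{R,1},a_{I,1},\dots,a_{I,\mu}) := \mathbb{P}\left\{\left|\sum_{i=1}^\mu(a_{R,i}-j a_{I,i})e^{j\langle\mathbf{k_i},\mathbf{X}\rangle}\right|<\epsilon_n\right\}
\end{align*}
on the compact set $A := \{ (a_{R,i},a_{I,i}) : a_{R,m}\geq \gamma/\sqrt{2},\ |a_{R,i}|\leq \Gamma,\ |a_{I,i}|\leq \Gamma\}$, where $\epsilon_n\downarrow 0$. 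The pointwise limit is $f\equiv 0$ by the previous paragraph, monotonicity in $n$ is immediate, and continuity of each $f_n$ follows from the same $\delta'$-perturbation bookkeeping used in Lemma~\ref{lem:single} (replacing the bound $|X^i|\leq T^p$ by $|e^{j\langle\mathbf{k_i},\mathbf{X}\rangle}|=1$, which actually simplifies the estimates). Dini's theorem gives uniform convergence on $A$.

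The main obstacle I anticipate is not any single technical estimate but the clean handling of the boundedness constraint $|a_i|\leq \Gamma$: in Lemma~\ref{lem:single} that bound was already part of the hypothesis, but Lemma~\ref{lem:singleun} later removed it by an induction on the total number of terms, via a rescaling $a_i' := (\gamma/|a_{m'}|) a_i$ and case-splitting on whether any other $|a_{m'}|$ dominates $|a_m|$. If one wants to remove the $\Gamma$ upper bound in the present lemma as well, the same induction should transfer verbatim, since nothing in it used the fact that $\mathbf{X}$ was one-dimensional; the only structural facts used were (i) boundedness of $|e^{j\langle\mathbf{k_i},\mathbf{X}\rangle}|$ by $1$ (strictly easier than the polynomial factor $T^p$ in the continuous case) and (ii) the pointwise convergence from step one applied to the lower-dimensional residual problem obtained after dropping the dominated term.
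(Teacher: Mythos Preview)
Your approach is correct in outline but takes a genuinely different route from the paper's. The paper proves the lemma by induction on $\nu$, the number of random variables: the base case $\nu=1$ is exactly Lemma~\ref{lem:single}, and the inductive step splits according to whether the first coordinates $k_{1,1},\dots,k_{\mu,1}$ all coincide. If they do, one factors out $e^{jk_{1,1}X_1}$ and is left with a $(\nu-1)$-variable problem with the same distinct residual vectors. If not, one groups the indices $i$ by the value of $k_{i,1}$, conditions on $X_2,\dots,X_\nu$, and observes that the conditional problem in $X_1$ alone is a single-variable instance whose leading coefficient (the partial sum over the group containing $m$) is itself a $(\nu-1)$-variable exponential sum, bounded below with high probability by the induction hypothesis; a two-$\epsilon$ argument then closes the loop. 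This avoids redoing any Dini/compactness machinery and reuses Lemma~\ref{lem:single} as a black box, at the cost of a slightly fiddly conditioning.

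Your direct multivariate-Dini route also works, but two details need more care than you indicate. First, your parenthetical justification that a nontrivial real-analytic function on $\mathbb{R}^\nu$ has measure-zero zero set (``any accumulation point of zeros forces identical vanishing'') is the one-variable identity theorem and is false verbatim for $\nu\geq 2$ (take $f(x,y)=x$); you need the genuine $\nu$-variable statement, e.g.\ via Fubini plus the one-variable case. Second, the continuity step in Dini is not literally ``the same $\delta'$-perturbation bookkeeping'' as in Lemma~\ref{lem:single}: there the paper first passes to the real part $g=\Re f$, so the boundary of $\{|g|<\epsilon_n\}$ is the two level sets $\{g=\pm\epsilon_n\}$, and two applications of the single-$h$ analogue of Lemma~\ref{lem:uni:1} suffice. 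You keep the complex modulus $|f|$, so the relevant boundary is the circle $\{|f|=\epsilon_n\}$, which your single-$h$ statement $\mathbb{P}\{|f(\mathbf X)-h|<\epsilon\}\to 0$ does not cover. The fix is immediate --- apply the measure-zero fact to the real-valued real-analytic function $|f(\mathbf x)|^2-\epsilon_n^2$, which is nontrivial because (projecting onto a generic line and looking at the extremal frequency in $|f|^2$) $|f|$ is nonconstant whenever at least two $a_i$ are nonzero, and equals $|a_m|^2-\epsilon_n^2>0$ identically otherwise once $\epsilon_n<\gamma$ --- but it is not the bookkeeping you cite. With these two patches your argument goes through and is arguably cleaner than the paper's induction.
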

\begin{proof}
We will prove the lemma by induction on $\nu$, the number of random variables.

(i) When $\nu=1$. The lemma reduces to Lemma~\ref{lem:single}.

(ii) Let's assume the lemma is true for $1,\cdots,\nu-1$.

Without loss of generality, we can assume $m=1$ by symmetry. We will prove the lemma by dividing into cases based on $\mathbf{k_i}$. Let the $j$th component of $\mathbf{k_i}$ be denoted as $k_{ij}$.

First, consider the case when $k_{1,1}=k_{2,1}=\cdots=k_{\mu, 1}$. Then, 
\begin{align}
&\sup_{|a_{1}| \geq \gamma, |a_{i}| \leq \Gamma} \mathbb{P} \{ | \sum^{\mu}_{i=1} a_i e^{j<\mathbf{k_i},\mathbf{X}>} | < \epsilon \}
=\sup_{|a_{1}| \geq \gamma, |a_{i}| \leq \Gamma} \mathbb{P} \{ | \sum^{\mu}_{i=1} a_i e^{j \sum_{1 \leq j \leq \nu} k_{i,j} X_j} | < \epsilon \} \nonumber \\
&=\sup_{|a_{1}| \geq \gamma, |a_{i}| \leq \Gamma} \mathbb{P} \{ | e^{j k_{1,1}X_1}| \cdot | \sum^{\mu}_{i=1} a_i e^{j \sum_{2 \leq j \leq \nu} k_{i,j} X_j} | < \epsilon \} \nonumber\\
&=\sup_{|a_{1}| \geq \gamma, |a_{i}| \leq \Gamma} \mathbb{P} \{ | \sum^{\mu}_{i=1} a_i e^{j \sum_{2 \leq j \leq \nu} k_{i,j} X_j} | < \epsilon \}
\rightarrow 0\ (\because \mbox{induction hypothesis}) \nonumber .
\end{align}

Second, consider the case when $k_{i,1} \neq k_{j,1}$ for some $i,j$. Without loss of generality, we can assume that $k_{1,1}=k_{2,1}=\cdots =k_{\mu_1,1}$ and $k_{1,1} \neq k_{j,1}$ for all $\mu_1 <j\leq \mu$. Then, for all $\epsilon' > 0$, we have
\begin{align}
&\sup_{|a_{1}| \geq \gamma, |a_{i}| \leq \Gamma} \mathbb{P} \{ | \sum^{\mu}_{i=1} a_i e^{j <\mathbf{k_i},\mathbf{X}>} | < \epsilon \} \nonumber\\
&=\sup_{|a_{1}| \geq \gamma, |a_{i}| \leq \Gamma} \mathbb{P} \{ | \sum^{\mu_1}_{i=1} a_i e^{j <\mathbf{k_i},\mathbf{X}>}+ \sum^{\mu}_{i=\mu_1+1} a_i e^{j <\mathbf{k_i},\mathbf{X}>} | < \epsilon \} \nonumber\\
&\leq \sup_{|a_{1}| \geq \gamma, |a_{i}| \leq \Gamma} \mathbb{P}
\{ | \sum^{\mu_1}_{i=1} a_i e^{j <\mathbf{k_i},\mathbf{X}>}+ \sum^{\mu}_{i=\mu_1+1} a_i e^{j <\mathbf{k_i},\mathbf{X}>} | < \epsilon \Big| |\sum^{\mu_1}_{i=1} a_i e^{j \sum_{2 \leq j \leq \nu} k_{i,j}X_j} | \geq \epsilon' \} +
\mathbb{P}\{ |\sum^{\mu_1}_{i=1} a_i e^{j \sum_{2 \leq j \leq \nu} k_{i,j}X_j} | < \epsilon' \} \nonumber \\
&= \sup_{|a_{1}| \geq \gamma, |a_{i}| \leq \Gamma} \mathbb{P}
\{ | ( \sum^{\mu_1}_{i=1} a_i e^{j \sum_{2 \leq j \leq \nu} k_{i,j} X_j} )e^{j k_{1,1}X_1}  + \sum^{\mu}_{i=\mu_1+1} a_i e^{j <\mathbf{k_i},\mathbf{X}>} | < \epsilon \Big| |\sum^{\mu_1}_{i=1} a_i e^{j \sum_{2 \leq j \leq \nu} k_{i,j}X_j} | \geq \epsilon' \}  \nonumber\\
&+\mathbb{P}\{ |\sum^{\mu_1}_{i=1} a_i e^{j \sum_{2 \leq j \leq \nu} k_{i,j}X_j} | < \epsilon' \} \nonumber \\
&\leq
\sup_{|a'_{1}| \geq \epsilon', |a'_{i}| \leq \mu\Gamma} \mathbb{P}_{X_1}
\{ |a'_{1} e^{j k_{1,1}X_1}+ \sum^{\mu}_{i=\mu_1+1} a'_i e^{j k_{i,1}X_1} | < \epsilon  \} +\sup_{|a_{1}| \geq \gamma, |a_{i}| \leq \Gamma} \mathbb{P}\{ |\sum^{\mu_1}_{i=1} a_i e^{j \sum_{2 \leq j \leq \nu} k_{i,j}X_j} | < \epsilon' \}. \nonumber \\
\end{align}
Therefore, by the induction hypothesis (since the first term has only one random variable, and the second term has $\nu-1$ random variables)
\begin{align}
&\lim_{\epsilon \rightarrow 0} \sup_{|a_{1}| \geq \gamma, |a_{i}| \leq \Gamma} \mathbb{P} \{ | \sum^{\mu}_{i=1} a_i e^{j <\mathbf{k_i},\mathbf{X}>} | < \epsilon \} \nonumber\\
&\leq \lim_{\epsilon' \rightarrow 0} \lim_{\epsilon \rightarrow 0} \sup_{|a'_{1}| \geq \epsilon', |a'_{i}| \leq \mu\Gamma} \mathbb{P}
\{ |a'_{1} e^{j k_{1,1}X_1}+ \sum^{\mu}_{i=\mu_1+1} a'_i e^{j k_{i,1}X_1} | < \epsilon  \} +\sup_{|a_{1}| \geq \gamma, |a_{i}| \leq \Gamma} \mathbb{P}\{ |\sum^{\mu_1}_{i=1} a_i e^{j \sum_{2 \leq j \leq \nu} k_{i,j}X_j} | < \epsilon' \} \nonumber\\
&=0. \nonumber
\end{align}
Therefore, the lemma is true.
\end{proof}

Now, we will consider a deterministic sequence in the form of $(<\omega_1 n>, \cdots, <\omega_{\mu}n>)$. As we have shown in Lemma~\ref{lem:dis:weyl2}, this sequence can be thought of as a linear combination of basis sequences which satisfy Weyl's criterion. Thus, we can approximate the deterministic sequence as a linear combination of multiple uniform random variables considered in Lemma~\ref{lem:dis:geo1}.

\begin{lemma}
Let $\omega_1,\omega_2,\cdots,\omega_{\mu}$ be real numbers such that $\omega_i - \omega_j \notin \mathbb{Q}$ for all $i \neq j$. Then, for strictly positive numbers $\gamma$ and $\Gamma$ $(\gamma \leq \Gamma)$, and $m \in \{1, \cdots, \mu\}$
\begin{align}
\lim_{\epsilon \downarrow 0}\lim_{N \rightarrow \infty} \sup_{|a_m| \geq \gamma, |a_{i}| \leq \Gamma, k \in \mathbb{Z}} \frac{1}{N} \sum^N_{n=1} \mathbf{1} \{ | \sum^{\mu}_{i=1} a_{i}e^{j2 \pi \omega_i(n+k)}| < \epsilon \}\rightarrow 0. \nonumber
\end{align}
\label{lem:dis:geo2}
\end{lemma}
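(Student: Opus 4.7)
My plan is to combine a decomposition of the frequencies via Lemma~\ref{lem:dis:weyl2} with Weyl equidistribution (Theorem~\ref{thm:weyl}) to reduce the counting-measure statement to a Lebesgue-measure statement, and then invoke the multidimensional uniform bound of Lemma~\ref{lem:dis:geo1}. First I would absorb the shift $k$: since $a_i e^{j2\pi\omega_i(n+k)} = (a_i e^{j2\pi\omega_i k}) e^{j2\pi\omega_i n}$ and the unit-modulus factor $e^{j2\pi\omega_i k}$ leaves $|a_i|$ unchanged, $\sup_{k\in\mathbb{Z}}$ is subsumed by $\sup_{(a_i)}$. Next, apply Lemma~\ref{lem:dis:weyl2} to $\omega_1,\ldots,\omega_\mu$ to obtain $p\in\mathbb{N}$, $k'\le\mu$, integers $q_{i,l}$ and reals $\gamma_1,\ldots,\gamma_{k'}$ with $(\langle\gamma_1 n\rangle,\ldots,\langle\gamma_{k'}n\rangle)$ satisfying Weyl's criterion and $\omega_i = q_{i,0}/p + \sum_l q_{i,l}\gamma_l$. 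The hypothesis $\omega_i-\omega_j\notin\mathbb{Q}$ forces the integer vectors $\mathbf{q_i}:=(q_{i,1},\ldots,q_{i,k'})$ to be pairwise distinct, for otherwise $\omega_i-\omega_j=(q_{i,0}-q_{j,0})/p\in\mathbb{Q}$.

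I would then split $\{1,\ldots,N\}$ into residue classes mod $p$. On the residue class $n\equiv r\pmod{p}$, the periodic factor $e^{j2\pi q_{i,0}n/p}=e^{j2\pi q_{i,0}r/p}$ is constant and absorbs into $a_i$, giving new coefficients $b_i^{(r)}:=a_i e^{j2\pi q_{i,0}r/p}$ with $|b_m^{(r)}|\ge\gamma$ and $|b_i^{(r)}|\le\Gamma$. Writing $\bm{\xi}_n:=(\langle\gamma_1 n\rangle,\ldots,\langle\gamma_{k'}n\rangle)$, the sum in the indicator reduces to $\sum_i b_i^{(r)}e^{j2\pi\langle\mathbf{q_i},\bm{\xi}_n\rangle}$. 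Since scaling the $\gamma_l$ by the positive integer $p$ and translating by the constant $r$ both preserve Weyl's criterion, the subsequence $(\bm{\xi}_{pn+r})_n$ still satisfies Weyl's criterion, and Theorem~\ref{thm:weyl} gives uniform (over axis-parallel rectangles in $[0,1]^{k'}$) convergence of its empirical measure to Lebesgue measure.

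The last ingredient is Lemma~\ref{lem:dis:geo1} applied to the uniform distribution on $[0,1]^{k'}$ (identify $\mathbf{X}=2\pi\bm{\xi}$ with the $\mathbf{k_i}$ there playing the role of the distinct integer vectors $\mathbf{q_i}$): the Lebesgue measure of the bad set $B_{\mathbf{b},\epsilon}:=\{\mathbf{x}\in[0,1]^{k'}:|\sum_i b_i e^{j2\pi\langle\mathbf{q_i},\mathbf{x}\rangle}|<\epsilon\}$ tends to $0$ as $\epsilon\downarrow 0$, uniformly over the compact coefficient set $\{|b_m|\in[\gamma,\Gamma],\,|b_i|\le\Gamma\}$. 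Combining this with Step~2 across the $p$ residue classes yields the desired uniform convergence.

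The main obstacle is upgrading the Weyl-type convergence, which is uniform only over a fixed family of rectangles, to uniform convergence over the moving family $\{B_{\mathbf{b},\epsilon}\}$ of bad sets indexed by the coefficient vector $\mathbf{b}$. I would address this in one of two equivalent ways: either mollify $\mathbf{1}_{B_{\mathbf{b},\epsilon}}$ to a continuous function sandwiched between $\mathbf{1}_{B_{\mathbf{b},\epsilon/2}}$ and $\mathbf{1}_{B_{\mathbf{b},2\epsilon}}$ and approximate it by a finite trigonometric polynomial in $\bm{\xi}$—whose exponentials are exactly the Cesàro sums controlled by the Weyl hypothesis—or exploit the fact that $\mathbf{x}\mapsto|\sum_i b_i e^{j2\pi\langle\mathbf{q_i},\mathbf{x}\rangle}|$ is Lipschitz in $\mathbf{x}$ with constant depending only on $\Gamma$ and the $\mathbf{q_i}$, so $B_{\mathbf{b},\epsilon}$ can be sandwiched between finite unions of rectangles whose Lebesgue measures differ by $o(1)$ in $\epsilon$, and use compactness of the coefficient set to make the sandwich uniform in $\mathbf{b}$.
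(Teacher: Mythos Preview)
Your proposal is correct and follows essentially the same route as the paper: decompose the $\omega_i$ via Lemma~\ref{lem:dis:weyl2}, use the hypothesis $\omega_i-\omega_j\notin\mathbb{Q}$ to force the integer vectors $\mathbf{q_i}$ to be distinct, invoke Weyl equidistribution (Theorem~\ref{thm:weyl}) to pass from counting measure to Lebesgue measure, and close with Lemma~\ref{lem:dis:geo1}; the bridge is exactly your second option---a Lipschitz bound on $\mathbf{x}\mapsto|\sum_i b_i e^{j2\pi\langle\mathbf{q_i},\mathbf{x}\rangle}|$ (with constant depending only on $\Gamma$ and the $\mathbf{q_i}$) combined with a dyadic grid of boxes of side $1/M$, choosing $M$ so the oscillation on each box is at most $\epsilon$. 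Your two cosmetic simplifications---absorbing $k$ into the coefficients at the outset and handling the rational part $q_{i,0}/p$ by a residue-class decomposition of $n$ rather than the paper's union bound over $s'\in\{0,\ldots,s-1\}$---are harmless and arguably cleaner.
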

\begin{proof}
By Lemma~\ref{lem:dis:weyl2}, $\omega_i$ can be written as $\left<\mathbf{q_i},\mathbf{\rho}\right>$ where $\mathbf{q_i}=(q_{i,0},q_{i,1},\cdots,q_{i,r}) \in \mathbb{Z}^{r+1}$, $\mathbf{\rho}=(\frac{1}{s},\rho_1,\cdots,\rho_r) \in \mathbb{R}^{r+1}$ and $s \in \mathbb{N}$. Here, $(\left<\rho_1 n\right>,\left<\rho_2 n\right>,\cdots,\left<\rho_r n\right>)$ satisfies Weyl's criterion. Since $\omega_i-\omega_j \notin \mathbb{Q}$ for all $i\neq j$, $(q_{i,1},q_{i,2},\cdots,q_{i,r}) \neq (q_{j,1},q_{j,2},\cdots,q_{j,r})$.

For given $k, N, M  \in \mathbb{N}$, and $m_1, \cdots, m_r \in \{1, \cdots, M\}$, define a set $S_{m_1, \cdots, m_r}$ as\footnote{Notice that the definition of $S_{m_1, \cdots, m_r}$ also depends on $k, N, M$ as well as $m_1, \cdots, m_r$. However, we omit the dependence on $k, N, M$ in the definition for simplicity.}
\begin{align}
\left\{n \in \{1, \cdots, N \}: \frac{m_1-1}{M} \leq \left<\rho_1 (n+k)\right> < \frac{m_1}{M},\cdots,\frac{m_r-1}{M} \leq \left<\rho_r (n+k)\right> < \frac{m_r}{M} \right\}. \nonumber
\end{align}

Then, for all $k, N, M  \in \mathbb{N}$ and $\epsilon > 0$, we have the following:
\begin{align}
&\sum^N_{n=1} \mathbf{1} \{ |\sum^{\mu}_{i=1}  a_i e^{j 2 \pi \omega_i (n+k)} | < \epsilon  \} \nonumber \\
&= \sum^N_{n=1} \sum_{1 \leq m_1 \leq M, \cdots, 1 \leq m_r \leq M} \mathbf{1} \{ | \sum^{\mu}_{i=1} a_i e^{j 2 \pi \omega_i (n+k) } | < \epsilon, n \in S_{m_1, \cdots, m_r} \} \nonumber \\
&\leq \sum^N_{n=1} \sum_{1 \leq m_1 \leq M, \cdots, 1 \leq m_r \leq M} \mathbf{1} \{ \min_{n \in S_{m_1, \cdots, m_r}} | \sum^{\mu}_{i=1} a_i e^{j 2 \pi \omega_i (n+k) } | < \epsilon,  n \in S_{m_1, \cdots, m_r}  \} \nonumber \\
&= \sum^N_{n=1} \sum_{1 \leq m_1 \leq M, \cdots, 1 \leq m_r \leq M} \mathbf{1} \{ \min_{n \in S_{m_1, \cdots, m_r}} | \sum^{\mu}_{i=1} a_i e^{j 2 \pi \omega_i (n+k) } | < \epsilon \} \cdot \mathbf{1}\{  n \in S_{m_1, \cdots, m_r} \}.
\label{eqn:dis:geo1:1}
\end{align}

Moreover, we also know by the definitions of $\mathbf{q_i}$ and $\mathbf{\rho}$,
\begin{align}
\sum^{\mu}_{i=1} a_i e^{j 2 \pi \omega_i(n+k)}&=\sum^{\mu}_{i=1} a_i e^{j 2 \pi \left<\mathbf{q_i},\mathbf{\rho}\right>(n+k)} \nonumber \\
&=\sum^{\mu}_{i=1} a_i e^{j 2 \pi \left(\frac{q_{i,0}}{s}(n+k)+q_{i,1}\rho_1(n+k)+\cdots + q_{i,r}\rho_r(n+k) \right)} \nonumber \\
&=\sum^{\mu}_{i=1} a_i e^{j 2 \pi \left(\frac{q_{i,0}}{s}(n+k)+q_{i,1}\left<\rho_1(n+k)\right>+\cdots + q_{i,r}\left<\rho_r(n+k)\right> \right)}  (\because q_{i,j} \in \mathbb{Z}).\nonumber
\end{align}

Thus, by defining $\mathbf{X_{m_1,\cdots,m_r}}$ as a random vector which is uniformly distributed over $[\frac{m_1-1}{M} , \frac{m_1}{M} ) \times \cdots \times [ \frac{m_r-1}{M} , \frac{m_r}{M} )$ and $\mathbf{q_i'}=(q_{i,1},q_{i,2},\cdots,q_{i,r})$, $\mathbf{\rho'}=(\rho_{1},\rho_{2},\cdots,\rho_{r})$, we can conclude
\begin{align}
\max_{n \in S_{m_1, \cdots, m_r}} | \sum^{\mu}_{i=1} a_i e^{j 2 \pi \omega_i (n+k) } |
&=\max_{n \in S_{m_1, \cdots, m_r}} | \sum^{\mu}_{i=1} a_i e^{j 2 \pi \left(\frac{q_{i,0}}{s}(n+k)+q_{i,1}\left<\rho_1(n+k)\right>+\cdots + q_{i,r}\left<\rho_r(n+k)\right> \right) } |\nonumber \\
&\geq
| \sum^{\mu}_{i=1} a_i e^{j 2 \pi \left( \frac{q_{i,0}}{s} (n+k) + \left<\mathbf{q_i'},\mathbf{X_{m_1,\cdots,m_r}}\right> \right) } |
\quad a.e. \label{eqn:weylupper1}
\end{align}

By \eqref{eqn:weylupper1}, \eqref{eqn:dis:geo1:1} can be upper bounded as follows:
\begin{align}
&\eqref{eqn:dis:geo1:1}\leq \sum^N_{n=1} \sum_{1 \leq m_1 \leq M, \cdots, 1 \leq m_r \leq M} \mathbb{P} \{ \min_{ n \in S_{m_1, \cdots, m_r}} | \sum^{\mu}_{i=1} a_i e^{j 2 \pi \omega_i (n+k) } | - \max_{ n \in S_{m_1, \cdots, m_r}} | \sum^{\mu}_{i=1} a_i e^{j 2 \pi \omega_i (n+k) } |
\nonumber \\
&+ | \sum^{\mu}_{i=1} a_i e^{j 2 \pi \left( \frac{q_{i,0}}{s} (n+k) + \left<\mathbf{q_i'},\mathbf{X_{m_1,\cdots,m_r}}\right> \right) } | < \epsilon \}
\cdot \mathbf{1}\{  n \in S_{m_1, \cdots, m_r} \}
\nonumber \\
&= \sum^N_{n=1} \sum_{1 \leq m_1 \leq M, \cdots, 1 \leq m_r \leq M} \mathbb{P} \{ \min_{ n \in S_{m_1, \cdots, m_r}} | \sum^{\mu}_{i=1} a_i e^{j 2 \pi \left( \frac{q_{i,0}}{s}(n+k)+\left<\mathbf{q_i'},\mathbf{\rho'}\right>(n+k) \right) } | - \max_{ n \in S_{m_1, \cdots, m_r}} | \sum^{\mu}_{i=1} a_i e^{j 2 \pi \left( \frac{q_{i,0}}{s}(n+k)+\left<\mathbf{q_i'},\mathbf{\rho'}\right>(n+k) \right) } | \nonumber \\
&+ | \sum^{\mu}_{i=1} a_i e^{j 2 \pi \left( \frac{q_{i,0}}{s} (n+k) + \left<\mathbf{q_i'},\mathbf{X_{m_1,\cdots,m_r}}\right> \right) } | < \epsilon \} \cdot \mathbf{1}\{  n \in S_{m_1, \cdots, m_r} \} \nonumber \\
&\leq \sum^N_{n=1} \sum_{1 \leq m_1 \leq M, \cdots, 1 \leq m_r \leq M} \max_{0 \leq s' < s} \mathbb{P} \{ \min_{ n \in S_{m_1, \cdots, m_r}} | \sum^{\mu}_{i=1} a_i e^{j 2 \pi \left( \frac{s'}{s}+\left<\mathbf{q_i'},\mathbf{\rho'}\right>(n+k) \right) } | - \max_{ n \in S_{m_1, \cdots, m_r}} | \sum^{\mu}_{i=1} a_i e^{j 2 \pi \left( \frac{s'}{s}+\left<\mathbf{q_i'},\mathbf{\rho'}\right>(n+k) \right) } | \nonumber \\
&+ | \sum^{\mu}_{i=1} a_i e^{j 2 \pi \left( \frac{s'}{s}  + \left<\mathbf{q_i'},\mathbf{X_{m_1,\cdots,m_r}}\right> \right) } | < \epsilon \}
 \cdot \mathbf{1}\{  n \in S_{m_1, \cdots, m_r} \}\nonumber \\
&\leq \sum^N_{n=1} \sum_{1 \leq m_1 \leq M, \cdots, 1 \leq m_r \leq M} \sum_{0 \leq s' < s} \mathbb{P} \{ \min_{ n \in S_{m_1, \cdots, m_r}} | \sum^{\mu}_{i=1} a_i e^{j 2 \pi \left( \frac{s'}{s}+\left<\mathbf{q_i'},\mathbf{\rho'}\right>(n+k) \right) } | - \max_{ n \in S_{m_1, \cdots, m_r}} | \sum^{\mu}_{i=1} a_i e^{j 2 \pi \left( \frac{s'}{s}+\left<\mathbf{q_i'},\mathbf{\rho'}\right>(n+k) \right) } | \nonumber \\
&+ | \sum^{\mu}_{i=1} a_i e^{j 2 \pi \left( \frac{s'}{s}  + \left<\mathbf{q_i'},\mathbf{X_{m_1,\cdots,m_r}}\right> \right) } | < \epsilon \}
 \cdot \mathbf{1}\{  n \in S_{m_1, \cdots, m_r} \}. \label{eqn:dis:geo1:3}
\end{align}

Here, we have
\begin{align}
&\max_{n \in S_{m_1, \cdots, m_r}}
| \sum^{\mu}_{i=1} a_i e^{j 2 \pi \left( \frac{s'}{s}+\left<\mathbf{q_i'},\mathbf{\rho'}\right>(n+k) \right) } |  \nonumber \\
&=\max_{n \in S_{m_1, \cdots, m_r}} | \sum^{\mu}_{i=1} a_i e^{j 2 \pi \left( \frac{s'}{s}+q_{i,1}\left<\rho_1(n+k)\right>+\cdots+q_{i,r}\left<\rho_r(n+k)\right> \right)}|
(\because q_{i,j} \in \mathbb{Z})\nonumber \\
&\leq \sup_{ 0 \leq \Delta_i < \frac{1}{M}} | \sum^{\mu}_{i=1} a_i e^{j 2 \pi \left( \frac{s'}{s}+q_{i,1}\frac{m_1-1}{M}+\cdots+q_{i,r}\frac{m_r-1}{M}+q_{i,1}\Delta_1+\cdots+q_{i,r}\Delta_r \right)}| \nonumber \\
&=\sup_{ 0 \leq \Delta_i < \frac{1}{M}} | \sum^{\mu}_{i=1} a_i e^{j 2 \pi \left( \frac{s'}{s}+q_{i,1}\frac{m_1-1}{M}+\cdots+q_{i,r}\frac{m_r-1}{M}\right)}  + a_i e^{j 2 \pi \left( \frac{s'}{s}+q_{i,1}\frac{m_1-1}{M}+\cdots+q_{i,r}\frac{m_r-1}{M}\right)} \nonumber\\
&\quad(-1+\cos 2\pi (q_{i,1}\Delta_1+\cdots+q_{i,r}\Delta_r )+j \sin 2\pi(q_{i,1}\Delta_1+\cdots+q_{i,r}\Delta_r) )| \nonumber \\
&\leq |\sum^{\mu}_{i=1} a_i e^{j 2 \pi \left( \frac{s'}{s}+q_{i,1}\frac{m_1-1}{M}+\cdots+q_{i,r}\frac{m_r-1}{M}\right)} | \nonumber \\
&+\sum^{\mu}_{i=1} |a_i e^{j 2 \pi \left( \frac{s'}{s}+q_{i,1}\frac{m_1-1}{M}+\cdots+q_{i,r}\frac{m_r-1}{M}\right)}|  \nonumber \\
&\cdot(\sup_{ 0 \leq \Delta_i < \frac{1}{M}} |-1+\cos2\pi(q_{i,1}\Delta_1+\cdots+q_{i,r}\Delta_r)|+ \sup_{ 0 \leq \Delta_i < \frac{1}{M}}|\sin2\pi(q_{i,1}\Delta_1+\cdots+q_{i,r}\Delta_r)|) \nonumber \\
&\leq |\sum^{\mu}_{i=1} a_i e^{j 2 \pi \left( \frac{s'}{s}+q_{i,1}\frac{m_1-1}{M}+\cdots+q_{i,r}\frac{m_r-1}{M}\right)} |
+4 \pi \sum^{\mu}_{i=1} |a_i| \sup_{0 \leq \Delta_i < \frac{1}{M}}|q_{i,1}\Delta_1+\cdots+q_{i,r}\Delta_r| \label{eqn:dis:geo1:2} \\
&\leq |\sum^{\mu}_{i=1} a_i e^{j 2 \pi \left( \frac{s'}{s}+q_{i,1}\frac{m_1-1}{M}+\cdots+q_{i,r}\frac{m_r-1}{M}\right)} |
+ \frac{4 \pi \Gamma}{M} \sum^{\mu}_{i=1}  \sum^{r}_{j=1} |q_{i,j}|.  (\because \mbox{We assumed }|a_i| \leq \Gamma)\nonumber
\end{align}
where \eqref{eqn:dis:geo1:2} comes from the fact that $|\sin x| \leq |x|$ and $|-1+\cos x| \leq |x|$ for all $x \in \mathbb{R}$. 

Likewise, we also have
\begin{align}
&\min_{n \in S_{m_1, \cdots, m_r}} | \sum^{\mu}_{i=1} a_i e^{j 2 \pi \left( \frac{s_i'}{s}+\left<\mathbf{q_i'},\mathbf{\rho'}\right>(n+k) \right) } |  \nonumber \\
&\geq |\sum^{\mu}_{i=1} a_i e^{j 2 \pi \left( \frac{s_i'}{s}+q_{i,1}\frac{m_1-1}{M}+\cdots+q_{i,r}\frac{m_r-1}{M}\right)} |
- \frac{4 \pi \Gamma}{M} \sum^{\mu}_{i=1}  \sum^{r}_{j=1} |q_{i,j}|. \nonumber
\end{align}

Therefore,
\begin{align}
&\sup_{\frac{m_i-1}{M} \leq {\left<\rho_i (n+k)\right>} < \frac{m_i}{M}} | \sum^{\mu}_{i=1} a_i e^{j 2 \pi \left( \frac{s_i'}{s}+\left<\mathbf{q_i'},\mathbf{\rho'}\right>(n+k) \right) } | -
\inf_{\frac{m_i-1}{M} \leq {\left<\rho_i (n+k)\right>} < \frac{m_i}{M}} | \sum^{\mu}_{i=1} a_i e^{j 2 \pi \left( \frac{s_i'}{s}+\left<\mathbf{q_i'},\mathbf{\rho'}\right>(n+k) \right) } |  \nonumber \\
&\leq \frac{8 \pi \Gamma}{M} \sum^{\mu}_{i=1}  \sum^{r}_{j=1} |q_{i,j}|. \nonumber
\end{align}

By selecting $M$ such that $\frac{8 \pi \Gamma}{M} \sum^{\mu}_{i=1}  \sum^{r}_{j=1} |q_{i,j}| \leq \epsilon$, \eqref{eqn:dis:geo1:3} is upper bounded by
\begin{align}
&\eqref{eqn:dis:geo1:3} \leq
\sum^N_{n=1} \sum_{1 \leq m_1 \leq M, \cdots, 1 \leq m_r \leq M} \sum_{0 \leq s' < s} \mathbb{P} \{  | \sum^{\mu}_{i=1} a_i e^{j 2 \pi \left( \frac{s'}{s}  + \left<\mathbf{q_i'},\mathbf{X_{m_1,\cdots,m_r}}\right> \right) } | < 2 \epsilon \} \cdot \mathbf{1}\{  n \in S_{m_1, \cdots, m_r} \}. \label{eqn:dis:geo1:4}
\end{align}
Since $(\left<\rho_1 n\right>,\cdots,\left<\rho_k n\right> )$ satisfies Weyl's criterion, by Theorem~\ref{thm:weyl}
\begin{align}
&\lim_{N \rightarrow \infty} \sup_{k \in \mathbb{Z}} \frac{1}{N}  \sum^N_{n=1} \mathbf{1}\{   n \in S_{m_1, \cdots, m_r} \} = \frac{1}{M^r}. \label{eqn:dis:geo1:5}
\end{align}
Therefore, if we let $\mathbf{X}$ be a $1 \times r$ random vector whose distribution is uniform on $[0,1)^r$, by \eqref{eqn:dis:geo1:4} and \eqref{eqn:dis:geo1:5}
\begin{align}
&\lim_{N \rightarrow \infty} \sup_{|a_{m}| \geq \gamma, |a_{i}| \leq \Gamma, k \in \mathbb{Z}} \frac{1}{N} \sum^N_{n=1} \mathbf{1} \{ | \sum^{\mu}_{i=1} a_{i}e^{j2 \pi \omega_i(n+k)}| < \epsilon \} \nonumber \\
&\leq \sup_{|a_{m}| \geq \gamma, |a_{i}| \leq \Gamma, k \in \mathbb{Z}} \sum_{1 \leq m_1 \leq M, \cdots, 1 \leq m_r \leq M} \sum_{0 \leq s' < s} \mathbb{P} \{  | \sum^{\mu}_{i=1} a_i e^{j 2 \pi \left( \frac{s'}{s}  + \left<\mathbf{q_i'},\mathbf{X_{m_1,\cdots,m_r}}\right> \right) } | < 2 \epsilon \} \cdot \frac{1}{M^r}
\\
&\leq \sup_{|a_{m}| \geq \gamma, |a_{i}| \leq \Gamma, k \in \mathbb{Z}} \sum_{0 \leq s' < s}
\mathbb{P} \{  | \sum^{\mu}_{i=1} a_i e^{j 2 \pi \left( \frac{s'}{s}  + \left<\mathbf{q_i'},\mathbf{X}\right> \right) } | < 2 \epsilon \}
(\because \mbox{definitions of } \mathbf{X_{m_1, \cdots, m_r}}, \mathbf{X})
\nonumber \\
&\leq \sup_{|a_{m}| \geq \gamma, |a_{i}| \leq \Gamma} s \cdot
\mathbb{P} \{  | \sum^{\mu}_{i=1} a_i e^{j 2 \pi \left(  \left<\mathbf{q_i'},\mathbf{X}\right> \right) } | < 2 \epsilon \}.
(\because e^{j 2 \pi \frac{s'}{s}} \mbox{ only rotates the phase.})
 \label{eqn:dis:geo1:6}
\end{align}
Since $\mathbf{q_i'}$ are distinct, by Lemma~\ref{lem:dis:geo1}, \eqref{eqn:dis:geo1:6} goes to 0 as $\epsilon \downarrow 0$.
\end{proof}

So far, we put the restriction that $|a_i| \leq \Gamma$. However, the functions are growing as $|a_i|$ increases. Therefore, Lemma~\ref{lem:dis:geo2} holds even after we remove such restrictions. The proof is similar to that of Lemma~\ref{lem:singleun}.

\begin{lemma}
Let $\omega_1,\omega_2,\cdots,\omega_{\mu}$ be real numbers such that $\omega_i - \omega_j \notin \mathbb{Q}$ for all $i \neq j$. Then, for strictly positive numbers $\gamma$, and any $m \in \{ 1, \cdots, \mu\}$
\begin{align}
\lim_{\epsilon \downarrow 0}\lim_{N \rightarrow \infty} \sup_{|a_m| \geq \gamma, a_i \in \mathbb{C}, k \in \mathbb{Z}} \frac{1}{N} \sum^N_{n=1} \mathbf{1} \{ | \sum^{\mu}_{i=1} a_{i}e^{j2 \pi \omega_i(n+k)}| < \epsilon \}\rightarrow 0. \nonumber
\end{align}
\label{lem:dis:geo3}
\end{lemma}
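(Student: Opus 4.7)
The plan is to mimic the proof of Lemma~\ref{lem:singleun} in the continuous-time setting: remove the boundedness assumption on the coefficients by a strong induction on the number of exponential terms $\mu$, using Lemma~\ref{lem:dis:geo2} as the base estimate (in place of Lemma~\ref{lem:single} which was used in the continuous case). The only difference in the induction step is that ``probability'' is replaced by the empirical counting measure $\frac{1}{N}\sum_{n=1}^N \mathbf{1}\{\cdot\}$, and one must take the $\lim_{N\to\infty}$ uniformly in $k$ before letting $\epsilon\downarrow 0$.

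For the base case $\mu = 1$, the statement is trivial: if $|a_1| \geq \gamma$, then $|a_1 e^{j2\pi\omega_1(n+k)}| = |a_1| \geq \gamma$, so the indicator vanishes identically once $\epsilon < \gamma$.

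For the inductive step, suppose the lemma holds for $1, 2, \ldots, \mu-1$. Given $\delta > 0$, I would apply the induction hypothesis to each of the $\mu-1$ sub-sums obtained by removing the $m$-th exponential and keeping any of the remaining terms as the ``large'' one; this gives, for each $(m',n') \neq (m,n)$, an $\epsilon_{m',n'}(\delta) > 0$ such that the corresponding lim-sup is less than $\delta$. Then I would choose a threshold $\kappa(\delta) \leq 1$ small enough that $\kappa(\delta)\cdot\gamma$ is dominated by the $\epsilon_{m',n'}(\delta)$, and apply Lemma~\ref{lem:dis:geo2} with the bound $\Gamma = \gamma/\kappa(\delta)$ to produce $\epsilon'(\delta) > 0$. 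Finally, $\epsilon(\delta)$ would be taken to be the minimum of $\epsilon'(\delta)$ and half of $\min_{(m',n')\neq(m,n)} \epsilon_{m',n'}(\delta)$.

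The supremum over $|a_m| \geq \gamma$ then splits into two regimes, handled exactly as in Lemma~\ref{lem:singleun}. In the \emph{comparable} regime $\max_i |a_i| \leq |a_m|/\kappa(\delta)$, I rescale all coefficients by $\gamma/|a_m| \leq 1$ to land in the bounded setting $\{|a'_m| = \gamma,\; |a'_i| \leq \gamma/\kappa(\delta)\}$, and apply Lemma~\ref{lem:dis:geo2} directly; this contributes less than $\delta$. In the \emph{dominated} regime $|a_{m'}|/|a_m| > 1/\kappa(\delta)$ for some $(m',n')\neq (m,n)$, I rescale by $\gamma/|a_{m'}|$: the $m$-th term has magnitude at most $\gamma \kappa(\delta)$, which by the choice of $\kappa(\delta)$ is absorbed into a width-$\epsilon_{m',n'}(\delta)$ tolerance, effectively dropping the $m$-th exponential so that only $\mu-1$ terms survive with a distinguished coefficient $|a''_{m'}| = \gamma$; the induction hypothesis then bounds this contribution by $\delta$ as well.

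The main subtlety I expect is ordering of the limits and suprema: Lemma~\ref{lem:dis:geo2} gives $\lim_{\epsilon\downarrow 0}\lim_{N\to\infty}\sup$ rather than a pointwise probability bound, so the absorption step of the small $m$-th term must be done uniformly in $n, k, N$ before the $\lim_{N\to\infty}$ is taken. Concretely, the event $\{|\sum a_i e^{j2\pi\omega_i(n+k)}| < \epsilon\}$ after dropping the small term must be contained in a slightly enlarged event $\{|\sum_{i\neq m} a'_i e^{j2\pi\omega_i(n+k)}| < \epsilon + \gamma\kappa(\delta)\}$ for \emph{every} $n$, so the inclusion of indicator functions holds termwise inside the $N^{-1}\sum_{n=1}^N$, after which the $\sup_k$ and $\lim_N$ can be applied and Lemma~\ref{lem:dis:geo2} or the induction hypothesis invoked. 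Once this uniformity is in place, the calculation is otherwise identical to the continuous-time argument in Lemma~\ref{lem:singleun}.
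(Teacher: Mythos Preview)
Your proposal is correct and follows essentially the same approach as the paper: strong induction on $\mu$, with Lemma~\ref{lem:dis:geo2} playing the role that Lemma~\ref{lem:single} played in Lemma~\ref{lem:singleun}, and the same split into the comparable/dominated regimes via rescaling. Your base case is in fact cleaner than the paper's (which rescales and invokes Lemma~\ref{lem:dis:geo2} unnecessarily), and the only cosmetic slip is writing $(m',n')\neq(m,n)$ where the single-index notation $m'\neq m$ is appropriate here.
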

\begin{proof}
The proof is by induction on $\mu$, the number of terms in the inner sum.

(i) When $\mu=1$.

Denote $a'_1$ as $\gamma \frac{a_1}{|a_1|}$. Then,
\begin{align}
&\lim_{N \rightarrow \infty} \sup_{|a_1| \geq \gamma,  k \in \mathbb{Z}} \frac{1}{N} \sum^N_{n=1} \mathbf{1} \{ |  a_{1}e^{j2 \pi \omega_1(n+k)}| < \epsilon \}\label{eqn:dis:geo3:1} \\
&=\lim_{N \rightarrow \infty} \sup_{|a_1| \geq \gamma,  k \in \mathbb{Z}} \frac{1}{N} \sum^N_{n=1} \mathbf{1} \{ | \frac{\gamma}{|a_1|}a_{1}e^{j2 \pi \omega_1(n+k)}| < \frac{\gamma}{|a_1|} \epsilon \}\nonumber \\
&\leq \lim_{N \rightarrow \infty} \sup_{|a'_1| = 1,  k \in \mathbb{Z}} \frac{1}{N} \sum^N_{n=1} \mathbf{1} \{ | a'_{1}e^{j2 \pi \omega_1(n+k)}| < \epsilon \} (\because \frac{\gamma}{|a_1|} \leq 1 )\label{eqn:dis:geo3:2}
\end{align}
By Lemma~\ref{lem:dis:geo2}, \eqref{eqn:dis:geo3:2} converges to $0$ as $\epsilon \downarrow 0$. Thus, \eqref{eqn:dis:geo3:1} converges to $0$ as $\epsilon \downarrow 0$.

(ii) As an induction hypothesis, we assume the lemma is true until $\mu-1$.

To prove the lemma for $\mu$, it is enough to show that for all $\delta > 0$ there exists $\epsilon(\delta)>0$ such that
\begin{align}
&\lim_{N \rightarrow \infty} \sup_{|a_m| \geq \gamma,  k \in \mathbb{Z}} \frac{1}{N} \sum^N_{n=1} \mathbf{1} \{ | \sum^{\mu}_{i=1} a_{i}e^{j2 \pi \omega_i(n+k)}| < \epsilon(\delta) \} < \delta. \nonumber
\end{align}

By the induction hypothesis, for all $m' \neq m$ we can find $\epsilon_{m'}(\delta) > 0$ such that
\begin{align}
&\lim_{N \rightarrow \infty} \sup_{|a_{m'}| \geq \gamma,  k \in \mathbb{Z}} \frac{1}{N} \sum^N_{n=1} \mathbf{1} \{ | \sum_{1 \leq i \leq \mu, i \neq m} a_{i}e^{j2 \pi \omega_i(n+k)}| < \epsilon_{m'}(\delta) \} < \delta. \label{eqn:limmax3}
\end{align}

Let $\kappa(\delta):=\min \left\{ \min_{m'\neq m} \left\{\frac{\epsilon_{m'}(\delta)}{2 \gamma } \right\},1 \right\}$. By Lemma~\ref{lem:dis:geo2}, there exists $\epsilon'(\delta)>0$  such that
\begin{align}
&\lim_{N \rightarrow \infty} \sup_{|a_m| \geq \gamma, |a_{i}| \leq \frac{\gamma}{\kappa(\delta)},  k \in \mathbb{Z}} \frac{1}{N} \sum^N_{n=1} \mathbf{1} \{ | \sum^{\mu}_{i=1} a_{i}e^{j2 \pi \omega_i(n+k)}| < \epsilon'(\delta) \} < \delta. \label{eqn:limmax2}
\end{align}
Set $\epsilon(\delta):= \min \left\{ \epsilon'(\delta) , \min_{m' \neq m} \left\{ \frac{\epsilon_{m'}(\delta)}{2} \right\} \right\}$. Then, we have
\begin{align}
&\lim_{N \rightarrow \infty} \sup_{|a_m| \geq \gamma,  k \in \mathbb{Z}} \frac{1}{N} \sum^N_{n=1} \mathbf{1} \{ | \sum^{\mu}_{i=1} a_{i}e^{j2 \pi \omega_i(n+k)}| < \epsilon(\delta) \}  \nonumber \\
&\leq
\lim_{N \rightarrow \infty} \max \{ \sup_{|a_m| \geq \gamma, \frac{|a_i|}{|a_m|} \leq \frac{1}{\kappa(\delta)},  k \in \mathbb{Z}} \frac{1}{N} \sum^N_{n=1} \mathbf{1} \{ | \sum^{\mu}_{i=1} a_{i}e^{j2 \pi \omega_i(n+k)}| < \epsilon({\delta}) \}, \nonumber \\
&\max_{m' \neq m}  \sup_{|a_m| \geq \gamma, \frac{|a_{m'}|}{|a_m|} \geq \frac{1}{\kappa(\delta)},  k \in \mathbb{Z}} \frac{1}{N} \sum^N_{n=1} \mathbf{1} \{ | \sum^{\mu}_{i=1} a_{i}e^{j2 \pi \omega_i(n+k)}| < \epsilon(\delta) \} \} \nonumber \\
&=\max \{ \lim_{N \rightarrow \infty} \sup_{|a_m| \geq \gamma, \frac{|a_i|}{|a_m|} \leq \frac{1}{\kappa(\delta)},  k \in \mathbb{Z}} \frac{1}{N} \sum^N_{n=1} \mathbf{1} \{ | \sum^{\mu}_{i=1} a_{i}e^{j2 \pi \omega_i(n+k)}| < \epsilon({\delta}) \}, \nonumber \\
&\max_{m' \neq m} \lim_{N \rightarrow \infty}  \sup_{|a_m| \geq \gamma, \frac{|a_{m'}|}{|a_m|} \geq \frac{1}{\kappa(\delta)},  k \in \mathbb{Z}} \frac{1}{N} \sum^N_{n=1} \mathbf{1} \{ | \sum^{\mu}_{i=1} a_{i}e^{j2 \pi \omega_i(n+k)}| < \epsilon(\delta) \} \}. \label{eqn:limmax1}
\end{align}

Let $a'_i := \frac{\gamma}{|a_m|} a_i$. Then, the first term in \eqref{eqn:limmax1} is upper bounded by
\begin{align}
&\lim_{N \rightarrow \infty}  \sup_{|a_m| \geq \gamma, \frac{|a_i|}{|a_m|} \leq \frac{1}{\kappa(\delta)},  k \in \mathbb{Z}} \frac{1}{N} \sum^N_{n=1} \mathbf{1} \{ | \sum^{\mu}_{i=1} a_{i}e^{j2 \pi \omega_i(n+k)}| < \epsilon({\delta}) \} \nonumber \\
&=\lim_{N \rightarrow \infty}  \sup_{|a_m| \geq \gamma, \frac{|a_i|}{|a_m|} \leq \frac{1}{\kappa(\delta)},  k \in \mathbb{Z}} \frac{1}{N} \sum^N_{n=1} \mathbf{1} \{ | \sum^{\mu}_{i=1} \frac{\gamma}{|a_m|} a_{i}e^{j2 \pi \omega_i(n+k)}| < \frac{\gamma}{|a_m|} \epsilon({\delta}) \} \nonumber \\
&=\lim_{N \rightarrow \infty}  \sup_{|a'_m| = \gamma, |a'_i| \leq \frac{\gamma}{\kappa(\delta)},  k \in \mathbb{Z}} \frac{1}{N} \sum^N_{n=1} \mathbf{1} \{ | \sum^{\mu}_{i=1} a'_{i}e^{j2 \pi \omega_i(n+k)}| < \frac{\gamma}{|a_m|} \epsilon({\delta}) \} \nonumber \\
&\leq \lim_{N \rightarrow \infty}  \sup_{|a'_m| = \gamma, |a'_i| \leq \frac{\gamma}{\kappa(\delta)},  k \in \mathbb{Z}} \frac{1}{N} \sum^N_{n=1} \mathbf{1} \{ | \sum^{\mu}_{i=1} a'_{i}e^{j2 \pi \omega_i(n+k)}| < \epsilon({\delta}) \} (\because \frac{\gamma}{|a_m|} \leq 1) \nonumber \\
&\leq \lim_{N \rightarrow \infty}  \sup_{|a'_m| = \gamma, |a'_i| \leq \frac{\gamma}{\kappa(\delta)},  k \in \mathbb{Z}} \frac{1}{N} \sum^N_{n=1} \mathbf{1} \{ | \sum^{\mu}_{i=1} a'_{i}e^{j2 \pi \omega_i(n+k)}| < \epsilon'(\delta) \} (\because \epsilon' \geq \epsilon)\nonumber \\
&< \delta. (\because \eqref{eqn:limmax2}) \label{eqn:limmax5}
\end{align}

Let $a''_i := \frac{\gamma}{|a_{m'}|} a_i$. Then, the second term in \eqref{eqn:limmax1} is upper bounded by
\begin{align}
&\lim_{N \rightarrow \infty}  \sup_{|a_m| \geq \gamma, \frac{|a_{m'}|}{|a_m|} \geq \frac{1}{\kappa(\delta)},  k \in \mathbb{Z}} \frac{1}{N} \sum^N_{n=1} \mathbf{1} \{ | \sum^{\mu}_{i=1} a_{i}e^{j2 \pi \omega_i(n+k)}| < \epsilon({\delta}) \} \nonumber \\
&=\lim_{N \rightarrow \infty}  \sup_{|a_m| \geq \gamma, \frac{|a_{m'}|}{|a_m|} \geq \frac{1}{\kappa(\delta)},  k \in \mathbb{Z}} \frac{1}{N} \sum^N_{n=1} \mathbf{1} \{ | \sum^{\mu}_{i=1} \frac{\gamma}{|a_{m'}|} a_{i}e^{j2 \pi \omega_i(n+k)}| < \frac{\gamma}{|a_{m'}|} \epsilon({\delta}) \} \nonumber \\
&\leq \lim_{N \rightarrow \infty}  \sup_{|a_m| \geq \gamma, \frac{|a_{m'}|}{|a_m|} \geq \frac{1}{\kappa(\delta)},  k \in \mathbb{Z}} \frac{1}{N} \sum^N_{n=1} \mathbf{1} \{ | \sum^{\mu}_{i=1} \frac{\gamma}{|a_{m'}|} a_{i}e^{j2 \pi \omega_i(n+k)} - \frac{\gamma}{|a_{m'}|} a_m e^{j 2 \pi \omega_m(n+k)} | < \frac{\gamma}{|a_{m'}|} \epsilon({\delta})+ \frac{\gamma}{|a_{m'}|} | a_m | \} \nonumber \\
&\leq \lim_{N \rightarrow \infty}  \sup_{|a_m| \geq \gamma, \frac{|a_{m'}|}{|a_m|} \geq \frac{1}{\kappa(\delta)},  k \in \mathbb{Z}} \frac{1}{N} \sum^N_{n=1} \mathbf{1} \{ | \sum^{\mu}_{i=1} \frac{\gamma}{|a_{m'}|} a_{i}e^{j2 \pi \omega_i(n+k)} - \frac{\gamma}{|a_{m'}|} a_m e^{j 2 \pi \omega_m(n+k)} | <
\epsilon_{m'}(\delta) \} \label{eqn:limmax4} \\
&\leq \lim_{N \rightarrow \infty}  \sup_{|a_{m'}''| = \gamma,  k \in \mathbb{Z}} \frac{1}{N} \sum^N_{n=1} \mathbf{1} \{ | \sum_{1 \leq i \leq \mu, i \neq m} a_i'' e^{j 2 \pi \omega_i(n+k)} | < \epsilon_{m'}(\delta) \} (\because \mbox{definition of }a_i'') \nonumber \\
&< \delta.  (\because \eqref{eqn:limmax3})  \label{eqn:limmax6}
\end{align}
Here, \eqref{eqn:limmax4} is justified as follows:
\begin{align}
&\frac{\gamma}{|a_m'|} \epsilon(\delta) + \frac{\gamma}{|a_m'|}|a_m| \\
&\leq \frac{\gamma}{|a_m|} \epsilon(\delta) + \gamma \kappa(\delta) (\because \frac{|a_{m'}|}{|a_m|} \geq \frac{1}{\kappa(\delta)} \mbox{, and by definition } \kappa(\delta) \leq 1)\\
&\leq \epsilon(\delta) + \gamma \kappa(\delta) (\because |a_m| \geq \gamma)\\
&\leq \frac{\epsilon_{m'}(\delta)}{2} + \frac{\epsilon_{m'}(\delta)}{2}. (\because \mbox{definitions of }\epsilon(\delta), \kappa(\delta))
\end{align}

Therefore, by plugging \eqref{eqn:limmax5} and \eqref{eqn:limmax6} into \eqref{eqn:limmax1},we get
\begin{align}
&\lim_{N \rightarrow \infty} \sup_{|a_m| \geq \gamma,  k \in \mathbb{Z}} \frac{1}{N} \sum^N_{n=1} \mathbf{1} \{ | \sum^{\mu}_{i=1} a_{i}e^{j2 \pi \omega_i(n+k)}| < \epsilon(\delta) \} < \delta, \nonumber
\end{align}
which finishes the proof.
\end{proof}

Now, we will generalize Lemma~\ref{lem:dis:geo3} by introducing polynomial terms. First, we prove that  a  set of polynomials is uniformly bounded away from $0$ when there is nonzero coefficient.

\begin{lemma}
For all $n\in \mathbb{N}$, $n' \in \mathbb{Z}^+$, $m \in \{1,\cdots,n \}$, $\gamma>0$ and $k > 0$,
\begin{align}
\lim_{T \rightarrow \infty} \sup_{|a_m| \geq \gamma, a_i \in \mathbb{C}}
\frac{| \{ x \in (0,T] : |\sum^{n}_{i=-n'} a_i x^i | < k \} |_{\mathbb{L}}}{T}=0 \nonumber
\end{align}
where $| \cdot |_{\mathbb{L}}$ is the Lebesgue measure of the set.
\label{lem:dis:leb}
\end{lemma}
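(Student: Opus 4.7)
The plan is to reduce the statement to a bounded interval by rescaling and then invoke classical polynomial inequalities (Remez and Markov) to produce a uniform quantitative estimate.

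First I would substitute $y = x/T$ and set $b_i := a_i T^i$, turning $f(x) = \sum_{i=-n'}^n a_i x^i$ into $h(y) := \sum_{i=-n'}^n b_i y^i$ on $y \in (0,1]$. The distinguished coefficient then satisfies $|b_m| = |a_m|T^m \geq \gamma T^m$ and diverges with $T$, while the other $b_i$ remain completely arbitrary. Under this substitution,
\begin{align}
\frac{|\{x \in (0,T]: |f(x)| < k\}|_{\mathbb{L}}}{T} = |\{y \in (0,1]: |h(y)| < k\}|_{\mathbb{L}}. \nonumber
\end{align}
Multiplying through by $y^{n'}$ produces a genuine polynomial $p(y) := y^{n'} h(y) = \sum_{j=0}^{n+n'} b_{j-n'} y^j$ of degree at most $d := n+n'$ whose coefficient at index $j^* := m+n'$ equals $b_m$. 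Since $y^{n'} \leq 1$ on $(0,1]$, the set $\{y: |h(y)| < k\}$ is contained in $\{y: |p(y)| < k\}$, so it suffices to bound the Lebesgue measure of the latter.

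The heart of the proof is the following uniform estimate: for every polynomial $p$ of degree at most $d$ with $|c_{j^*}| \geq \eta$ at some fixed index $j^* \geq 1$,
\begin{align}
|\{y \in [0,1]: |p(y)| < \epsilon\}|_{\mathbb{L}} \leq C_{d,j^*} \left(\frac{\epsilon}{\eta}\right)^{1/d}. \nonumber
\end{align}
I would derive this by combining two classical ingredients. Markov's inequality applied to the $j^*$-th derivative of $p$ gives $|c_{j^*}| = |p^{(j^*)}(0)|/j^*! \leq C'_{d,j^*} \|p\|_{L^\infty[0,1]}$, hence $\|p\|_{L^\infty[0,1]} \geq \eta/C'_{d,j^*}$. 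The Remez inequality (transported from $[-1,1]$ to $[0,1]$ by an affine change of variable) asserts that if $|p(y)| < \epsilon$ on a set $E \subseteq [0,1]$ of Lebesgue measure $\mu$, then $\|p\|_{L^\infty[0,1]} \leq T_d\bigl(\tfrac{4-\mu}{\mu}\bigr)\cdot \epsilon$, where $T_d$ is the Chebyshev polynomial of degree $d$. Chaining these inequalities forces $T_d\bigl(\tfrac{4-\mu}{\mu}\bigr) \geq \eta/(C'_{d,j^*}\epsilon)$, and inverting via the asymptotic $T_d^{-1}(z) \sim (2z)^{1/d}/2$ yields the claimed $\mu \lesssim (\epsilon/\eta)^{1/d}$.

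Applying this polynomial estimate to the rescaled $p$ with $\eta = |b_m| \geq \gamma T^m$ and $\epsilon = k$ gives
\begin{align}
\frac{|\{x \in (0,T]: |f(x)| < k\}|_{\mathbb{L}}}{T} \leq C_{d,j^*}\left(\frac{k}{\gamma T^m}\right)^{1/d}, \nonumber
\end{align}
which is uniform in $(a_i)_{i \neq m}$ and tends to $0$ as $T \to \infty$ since $m \geq 1$. The main obstacle is establishing the uniform polynomial estimate: the classical Polya--Szego form of this inequality controls the measure of $\{|p|<\epsilon\}$ only through the \emph{leading} coefficient of $p$, whereas in our setting that coefficient may be arbitrarily small (e.g., some $a_i$ with $i>m$ can be much larger than $a_m$, making the effective degree higher while leaving the top coefficient uncontrolled). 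Markov's derivative inequality is precisely the bridge that converts a lower bound on an \emph{interior} coefficient into a lower bound on $\|p\|_\infty$, after which Remez delivers the needed measure estimate uniformly in all the remaining $a_i$; this avoids the term-by-term induction used for the exponential analogue in Lemma~\ref{lem:dis:geo3}.
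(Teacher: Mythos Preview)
Your proof is correct and takes a genuinely different route from the paper. The paper rescales in the opposite direction: it divides the inequality by $T^m$ and absorbs the factors $T^{i-m}$ into the free coefficients $a_i$ (for $i\neq m$), so that the supremum remains over $|a_m|\geq\gamma$ but the threshold becomes $k/T^m\to 0$. After the same $y^{n'}$ trick to clear the negative powers, it then invokes Lemma~\ref{lem:singleun} directly---the pure polynomial case is the special case $\nu_i=1$, $\omega_{i,1}=0$ there---so the limit follows from the soft Dini-based uniform convergence already established in Appendix~\ref{app:unif:conti}, with no quantitative rate.

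Your argument instead keeps the threshold fixed and sends the distinguished coefficient to infinity, then combines Markov's inequality (to convert a lower bound on an interior coefficient into an $L^\infty$ lower bound on $[0,1]$) with Remez (to convert that into a sublevel-set measure bound). This is more self-contained---it does not lean on the chain Lemma~\ref{lem:uni:1}$\to$Lemma~\ref{lem:single}$\to$Lemma~\ref{lem:singleun}---and yields the explicit decay rate $O\bigl(T^{-m/(n+n')}\bigr)$, which the paper's compactness argument does not provide. The paper's approach, on the other hand, buys uniformity across the full analytic-function setting (polynomials times exponentials) with a single lemma, which is why it was set up that way. One small remark: Remez is normally stated for real polynomials, so for complex $a_i$ you should pass to real and imaginary parts (one of which carries at least $|b_m|/\sqrt{2}$ at index $j^*$); this costs only a universal constant.
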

\begin{proof}
Let $X$ be a uniform random variable on $(0,1]$. Then, we have
\begin{align}
&\sup_{|a_m| \geq \gamma} \frac{|\{ x \in (0,T] : | \sum^{n}_{i=-n'} a_i x^i | < k \}|_{\mathbb{L}}}{T} \nonumber \\
&=\sup_{|a_m| \geq \gamma} \frac{|\{ x \in (0,T] : | \sum^{n}_{i=-n'} a_i \frac{x^i}{T^m} | < \frac{k}{T^m} \}|_{\mathbb{L}}}{T} \nonumber \\
&=\sup_{|a_m| \geq \gamma} \frac{|\{ x \in (0,T] : | \sum^{n}_{i=-n'} a_i \left(\frac{x}{T}\right)^i | < \frac{k}{T^m} \}|_{\mathbb{L}}}{T} \nonumber \\
&=\sup_{|a_m| \geq \gamma} | \{ x \in (0,1] : |\sum^{n}_{i=-n'}a_i x^i | < \frac{k}{T^m} \} |_{\mathbb{L}} \nonumber \\
&=\sup_{|a_m| \geq \gamma} \mathbb{P} \{ |\sum^n_{i=-n'} a_i X^i | < \frac{k}{T^m} \} \nonumber\\
&=\sup_{|a_{m+n'}| \geq \gamma} \mathbb{P} \{ |\sum^{n+n'}_{i=0} a_i X^i | < \frac{k X^{n'}}{T^m} \} \nonumber\\
&\leq \sup_{|a_{m+n'}| \geq \gamma} \mathbb{P} \{ |\sum^{n+n'}_{i=0} a_i X^i | < \frac{k }{T^m} \}. (\because 0 < X \leq 1 \mbox{ w.p. 1}) \nonumber\\
\end{align}

Therefore, by Lemma~\ref{lem:singleun}
\begin{align}
&\lim_{T \rightarrow \infty} \sup_{|a_m| \geq \gamma, a_i \in \mathbb{C}}
\frac{| \{ x \in [0,T] : |\sum^{n}_{i=-n'} a_i x^i | < k \} |_{\mathbb{L}}}{T} \nonumber \\
&=\lim_{T \rightarrow \infty} \sup_{|a_{m+n'}| \geq \gamma, a_i \in \mathbb{C}}
\mathbb{P} \{ |\sum^{n+n'}_{i=0} a_i X^i | < \frac{k}{T^m} \}=0, \nonumber
\end{align}
which finishes the proof.
\end{proof}

The following lemma shows that the above lemma still holds even if we change Lebesgue measure to counting measure.
\begin{lemma}
For all $n \in \mathbb{N}$, $n' \in \mathbb{Z}^+$, $m \in \{1,\cdots,n \}$, $\gamma>0$ and $k > 0$,
\begin{align}
\lim_{N \rightarrow \infty} \sup_{|a_m| \geq \gamma, a_i \in \mathbb{C}}
\frac{| \{ x \in \{1,\cdots,N\} : |\sum^{n}_{i=-n'} a_i x^i | < k \} |_{\mathbb{C}}}{N}=0 \nonumber
\end{align}
where $| \cdot |_{\mathbb{C}}$ implies the counting measure of the set, the cardinality of the set.
\label{lem:dis:cnt}
\end{lemma}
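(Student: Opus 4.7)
The plan is to reduce this discrete statement to the continuous version already established in Lemma~\ref{lem:dis:leb} by exploiting the fact that the ``bad'' set
\[
S_k(a_{-n'},\ldots,a_n) := \Bigl\{ x > 0 : \Bigl| \sum_{i=-n'}^{n} a_i x^i \Bigr| < k \Bigr\}
\]
has only a bounded number of connected components, where the bound depends on $n$ and $n'$ but \emph{not} on the coefficients $a_i$. Once we have that, an interval containing $\ell$ of length $L$ contains at most $L+1$ integers, so the counting measure on $\{1,\ldots,N\}$ is controlled by the Lebesgue measure on $(0,N]$ plus a constant.

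The key step is to show that $S_k$ is a union of at most $M := 2(n+n')+1$ open intervals, where $M$ depends only on $n,n'$. Writing $a_i = b_i + j c_i$ with $b_i,c_i \in \mathbb{R}$, one has
\[
|f(x)|^2 = \Bigl(\sum_{i=-n'}^n b_i x^i\Bigr)^2 + \Bigl(\sum_{i=-n'}^n c_i x^i\Bigr)^2,
\]
so multiplying through by $x^{2n'}$ gives
\[
x^{2n'}\bigl(|f(x)|^2 - k^2\bigr) = P(x),
\]
where $P(x)$ is an ordinary polynomial in $x$ of degree at most $2(n+n')$. On $x>0$, the equation $|f(x)|=k$ is therefore equivalent to $P(x)=0$, which has at most $2(n+n')$ roots in $(0,\infty)$. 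Consequently, the boundary of $S_k$ in $(0,\infty)$ has at most $2(n+n')$ points, and $S_k$ itself is a union of at most $M$ open intervals, uniformly in the coefficients.

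With this structural fact in hand, the rest is immediate. Any interval $(\alpha,\beta) \subseteq (0,N]$ contains at most $(\beta-\alpha)+1$ integers of $\{1,\ldots,N\}$, so summing over the at most $M$ components,
\[
\bigl|\{x\in\{1,\ldots,N\}: |f(x)|<k\}\bigr|_{\mathbb{C}} \;\leq\; \bigl|S_k \cap (0,N]\bigr|_{\mathbb{L}} + M.
\]
Taking the supremum over $|a_m|\geq \gamma$ and dividing by $N$ yields
\[
\sup_{|a_m|\geq \gamma} \frac{\bigl|\{x\in\{1,\ldots,N\}: |f(x)|<k\}\bigr|_{\mathbb{C}}}{N} \;\leq\; \sup_{|a_m|\geq \gamma} \frac{|S_k\cap(0,N]|_{\mathbb{L}}}{N} + \frac{M}{N}.
\]
Sending $N\to\infty$, the first term vanishes by Lemma~\ref{lem:dis:leb} and the second vanishes since $M$ is a constant, completing the proof.

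There is no real obstacle here; the only step that requires any care is verifying the component-count bound, and that reduces to a one-line algebraic manipulation converting $|f|^2 = k^2$ into a genuine polynomial equation in $x$ by clearing the $x^{-n'}$ factors. The uniformity over unbounded coefficients comes for free because the component bound $M$ is coefficient-independent.
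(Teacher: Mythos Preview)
Your proof is correct and follows essentially the same strategy as the paper: reduce the counting-measure statement to the Lebesgue-measure statement (Lemma~\ref{lem:dis:leb}) by bounding the number of connected components of the sublevel set $\{|f|<k\}$ uniformly in the coefficients, then use that an interval of length $L$ contains at most $L+1$ integers. The only cosmetic difference is that you bound the number of components directly by counting the roots of the polynomial $P(x)=x^{2n'}(|f(x)|^2-k^2)$, whereas the paper phrases it via the number of local extrema of this same polynomial; both give the bound $2(n+n')+O(1)$ and the arguments are otherwise identical.
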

\begin{proof}
First, we will prove the following claim which relates Lebesgue measure with counting measure. 
\begin{claim}
Let $f:\mathbb{R^+} \rightarrow \mathbb{R}$ be a $\mathcal{C}^{\infty}$ function with $l$ local maxima and minima. Then,
\begin{align}
\left| \left\{x \in [1,N]  :  f(x) > 0 \right\} \right|_{\mathbb{L}} \leq \left|\left\{x \in \{1,\cdots, N \} : f(x) > 0 \right\}\right|_{\mathbb{C}}+ 3l +2. \nonumber
\end{align}
\end{claim}
\begin{proof}
Since $f(x)$ is a continuous function with $l$ local maxima and minima, we can prove that there exist $l' \leq l+1$, $s_i$ and $t_i$ $(1 \leq i \leq l')$ such that
\begin{align}
\left\{x \in \{1,\cdots, N \} : f(x) > 0 \right\}  = \{ s_1 , s_1+1 ,\cdots , s_1+t_1 \} \cup \cdots \cup \{ s_{l'} ,s_{l'}+1 , \cdots,  s_{l'}+t_{l'} \}.  \nonumber
\end{align}
One way to justify this is by contradiction, i.e. if we assume $l' > l+1$, there should exist more than $l$ local maxima and minima by the mean value theorem. Moreover, since the number of local maxima and minima is bounded by $l$, we have
\begin{align}
\left| \left\{x \in [1,N]  :  f(x) > 0 \right\} \right|_{\mathbb{L}} & \leq \left|[s_1-1,s_1+t_1+1]\right|_{\mathbb{L}}+  \cdots + \left|[s_{l'}-1,s_{l'}+t_{l'}+1]\right|_{\mathbb{L}} + l \nonumber \\
&\leq (t_1+2) + \cdots + (t_{l'}+2)+l \nonumber \\
&\leq  \left|\left\{x \in \{1,\cdots, N \} : f(x) > 0 \right\}\right|_{\mathbb{C}} +2l'+l \nonumber \\
&\leq \left|\left\{x \in \{1,\cdots, N \} : f(x) > 0 \right\}\right|_{\mathbb{C}}+ 3l +2. \nonumber
\end{align}
Thus, the claim is true.
\end{proof}
%

To prove the lemma, let $a_i=a_{R,i}+j a_{I,i}$ where $a_{R,i}, a_{I,i} \in \mathbb{R}$. Then,
\begin{align}
&|\sum^n_{i=-n'}a_i x^i| < k\\
& (\Leftrightarrow) |\sum^{n+n'}_{i=0}a_{i-n'} x^i| < k x^{n'} \\
& (\Leftrightarrow) (\sum^{n+n'}_{i=0}a_{R,i-n'} x^i)^2 + (\sum^{n+n'}_{i=0}a_{I,i-n'} x^i)^2 < k^2 x^{n'}.
\end{align}

Since $k^2 x^{2n'} - (\sum^{n+n'}_{i=0}a_{R,i-n'} x^i)^2 - (\sum^{n+n'}_{i=0}a_{I,i-n'} x^i)^2$ is a continuous function with at most $2(n+n')$ local maxima and minima, by the claim we have
\begin{align}
&\lim_{N \rightarrow \infty} \sup_{|a_m| \geq \gamma, a_i \in \mathbb{C}}
\frac{| \{ x \in \{1,\cdots,N\} : |\sum^{n}_{i=0} a_i x^i | < k \} |_{\mathbb{C}}}{N} \nonumber \\
& \leq \lim_{N \rightarrow \infty} \sup_{|a_m| \geq \gamma, a_i \in \mathbb{C}}
\frac{|\{ x \in [1,N] : |\sum^{n}_{i=0} a_i x^i | < k \}|_{\mathbb{L}}+6(n+n')+2 }{N} \nonumber \\
& = \lim_{N \rightarrow \infty} \sup_{|a_m| \geq \gamma, a_i \in \mathbb{C}}
\frac{|\{ x \in (0,N] : |\sum^{n}_{i=0} a_i x^i | < k \}|_{\mathbb{L}}}{N}=0\ (\because Lemma~\ref{lem:dis:leb}) \nonumber
\end{align}
Therefore, the lemma is proved.
\end{proof}

Now, we merge Lemma~\ref{lem:dis:cnt} with Lemma~\ref{lem:dis:geo3} to prove that Lemma~\ref{lem:dis:geofinal} still holds even after we introduce polynomial terms to the functions.

\begin{lemma}
Let $\omega_1,\omega_2,\cdots,\omega_{\mu}$ be real numbers such that $\omega_i - \omega_j \notin \mathbb{Q}$ for all $i \neq j$. Then, for strictly positive numbers $\gamma$,
\begin{align}
\lim_{\epsilon \downarrow 0}\lim_{N \rightarrow \infty} \sup_{|a_{1\nu_1}| \geq \gamma, a_{ij} \in \mathbb{C},k \in \mathbb{Z}} \frac{1}{N} \sum^N_{n=1} \mathbf{1} \left\{ \left| \sum^{\mu}_{i=1} \left( \sum^{\nu_i}_{j=0} a_{ij}n^j \right) e^{j2 \pi \omega_i (n+k)} \right| < \epsilon \right\}\rightarrow 0. \nonumber
\end{align}
\label{lem:dis:geofinal}
\end{lemma}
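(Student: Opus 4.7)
The plan is to extend Lemma~\ref{lem:dis:geo3} by replacing each constant coefficient $a_i$ with a polynomial coefficient $P_i(n) = \sum_{j=0}^{\nu_i} a_{ij} n^j$, with the designated leading coefficient $a_{1\nu_1}$ bounded away from zero. I would follow the same two-stage template used to pass from Lemma~\ref{lem:dis:geo2} to Lemma~\ref{lem:dis:geo3}: first prove a bounded version (assume $|a_{ij}|\le \Gamma$ in addition to $|a_{1\nu_1}|\ge \gamma$), then strip the boundedness by the standard dominance induction.

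For the bounded stage, I would quantize exactly as in the proof of Lemma~\ref{lem:dis:geo2}. Use Lemma~\ref{lem:dis:weyl2} to express each $\omega_i$ in terms of a rational offset $q_{i,0}/s$ and basis frequencies $\rho_1,\ldots,\rho_r$ whose sequence $(\langle\rho_t n\rangle)_t$ satisfies Weyl's criterion, and partition $\{1,\ldots,N\}$ into the cells $S_{m_1,\ldots,m_r}$ on which each $\langle \rho_t(n+k)\rangle$ lies in $[(m_t-1)/M, m_t/M)$. Theorem~\ref{thm:weyl} shows the density of each cell converges uniformly in $k$ to $1/M^r$. Inside a fixed cell the exponentials are approximately frozen: up to an error of order $\Gamma/M$ (controlled by the same Lipschitz computation used in \eqref{eqn:dis:geo1:2}), our expression becomes a genuine polynomial in $n$, $Q(n) = \sum_i P_i(n) e^{j \theta_i}$, whose $n^{\nu_1}$-coefficient is $L(\theta) := \sum_{i:\nu_i \ge \nu_1} a_{i\nu_1} e^{j\theta_i}$. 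Conditioned on the event $|L(\theta)|\ge \eta$, Lemma~\ref{lem:dis:cnt} bounds the density of $n$'s in the cell for which $|Q(n)|<\epsilon$ by a quantity that tends to $0$ as the cell size grows. The probability (under the approximately uniform cell indices) that $|L(\theta)|<\eta$ is controlled by Lemma~\ref{lem:dis:geo3} applied to the constant-coefficient problem, and goes to $0$ as $\eta\downarrow 0$ because $|a_{1\nu_1}|\ge\gamma$. Choosing $M$ large enough (so the freezing error is much less than $\epsilon$), then $\eta$ small (so the bad-phase density is small), then $\epsilon\downarrow 0$ along a slower scale, completes the bounded stage.

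For the unbounded stage, I would induct on the total number of nonzero coefficients in the double sum, exactly as in Lemma~\ref{lem:dis:geo3} and Lemma~\ref{lem:singleun}. The base case is a single monomial-exponential $a_{1\nu_1} n^{\nu_1} e^{j 2\pi\omega_1(n+k)}$, which reduces directly to Lemma~\ref{lem:dis:cnt}. For the inductive step, fix $\delta>0$, apply the inductive hypothesis to obtain thresholds $\epsilon_{(m',j')}(\delta)$ for every smaller subproblem, and choose $\kappa(\delta)$ small. Split into two cases depending on whether every $|a_{ij}|/|a_{1\nu_1}|\le 1/\kappa(\delta)$ (normalize by $a_{1\nu_1}$ and invoke the bounded stage) or some $|a_{m'j'}|/|a_{1\nu_1}|\ge 1/\kappa(\delta)$ (normalize by $a_{m'j'}$, absorb the now-tiny $a_{1\nu_1} n^{\nu_1}$ summand into the threshold, and apply the inductive hypothesis with $(m',j')$ as the new designated term). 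Both estimates give density at most $\delta$, which is what we need.

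The main obstacle is the bounded stage, specifically the coupled choice of scales. The block size must be large enough for Weyl equidistribution to kick in uniformly in $k$, and for Lemma~\ref{lem:dis:cnt} to yield a small density inside each cell, yet small enough that the Lipschitz error $\Gamma/M$ in freezing the exponentials stays well below $\epsilon$. Moreover the low-degree coefficients ($j<\nu_1$) of $Q(n)$ can be enormous even under boundedness — their magnitudes scale like $|a_{ij}| M^{\nu_1}$ on a cell of size $M$ — so one must verify that Lemma~\ref{lem:dis:cnt} applied to the polynomial $Q$ is still effective: the lemma only requires the \emph{leading} coefficient $L(\theta)$ to be $\ge \eta$, and since it is stated for arbitrary lower-order coefficients, this causes no issue. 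The delicate bookkeeping of these three scales ($M$, $\eta$, and $\epsilon$) is the heart of the argument and mirrors the interplay between Dini's theorem and pointwise analytic nonvanishing in the continuous-time proof of Lemma~\ref{lem:single}.
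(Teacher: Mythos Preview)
Your bounded-stage freezing step has a real gap. The Lipschitz estimate of \eqref{eqn:dis:geo1:2} bounds the error in replacing $e^{j2\pi\omega_i(n+k)}$ by a cell-constant phase as $O(1/M)$, but here that phase is multiplied by $P_i(n)$, so the error in each summand is $|P_i(n)|\cdot O(1/M)=O(\Gamma n^{\nu_1}/M)$ under your boundedness assumption, not $O(\Gamma/M)$. On the bulk of $\{1,\ldots,N\}$ this swamps $\epsilon$, so you cannot pass from ``original $<\epsilon$'' to ``$|Q_\theta(n)|<2\epsilon$'' and invoke Lemma~\ref{lem:dis:cnt} as written. The same growth obstruction breaks your unbounded-stage induction: when you normalize by a large $a_{m'j'}$ and try to absorb the now-tiny $a_{1\nu_1}n^{\nu_1}e^{j2\pi\omega_1(n+k)}$ term into the threshold, that term is tiny only as a coefficient --- as a function of $n$ it still grows like $n^{\nu_1}$ and cannot be bounded by any fixed $\epsilon_{(m',j')}(\delta)$. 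This is exactly where the polynomial-in-$n$ setting departs from the bounded-$X$ templates of Lemma~\ref{lem:singleun} and Lemma~\ref{lem:dis:geo3} that you are copying.

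The paper avoids the issue by freezing the \emph{polynomial}, not the exponential. It writes $n=bM+c$ with $c\in\{1,\ldots,M\}$, approximates $P_i(bM+c)\approx P_i(bM)$, and normalizes by $M_b:=\max_i|P_i(bM)|$. The normalized sum $\sum_i (P_i(bM)/M_b)\,e^{j2\pi\omega_i(bM+c)}$ has bounded coefficients with one of modulus exactly $1$, so Lemma~\ref{lem:dis:geo3} controls its small-value density in $c$ uniformly in $b$; no separate bounded/unbounded split is needed. The exceptional blocks --- those $b$ where $M_b\le 2$, or where the binomial correction $(bM+c)^j-(bM)^j$ is not small relative to $M_b$ --- have vanishing density in $b$ by Lemma~\ref{lem:dis:cnt}, and this is the only place the hypothesis $|a_{1\nu_1}|\ge\gamma$ enters.
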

\begin{proof}
To prove the lemma, it is enough to show that for all $\delta > 0$, there exist $\epsilon > 0$ and $N \in \mathbb{N}$ such that
\begin{align}
\sup_{|a_{1\nu_1}| \geq \gamma, a_{ij} \in \mathbb{C},k \in \mathbb{Z}} \frac{1}{N} \sum^{N}_{n=1} \mathbf{1} \left\{ \left| \sum^{\mu}_{i=1} \left( \sum^{\nu_i}_{j=0} a_{ij}n^j \right) e^{j2 \pi \omega_i (n+k)} \right| < \epsilon \right\} < \delta. \label{eqn:dis:target}
\end{align}

Since $\mu$ is finite, by Lemma~\ref{lem:dis:geo3}, there exist $\epsilon' > 0$ and $M \in \mathbb{N}$ such that
\begin{align}
\max_{d \in \{1, \cdots, \mu \}} \left(
\sup_{k \in \mathbb{Z}, a_i \in \mathbb{C}, |a_d| \geq 1}
\frac{1}{M} \sum_{c=1}^{M}
\mathbf{1} \left\{ \left| \sum^{\mu}_{i=1} a_i e^{j 2 \pi \omega_i (c+k)}  \right| < \epsilon'
\right\}
\right) < \frac{\delta}{2}. \label{eqn:dis:target2}
\end{align}

By Lemma~\ref{lem:dis:cnt}, there exists $B' \in \mathbb{N}$ such that
\begin{align}
\sup_{|a_{1\nu_1}'| \geq \gamma} \frac{
\left|
\left\{ b \in \{1,\cdots,B' \} :
\left|\sum^{\nu_1}_{j=0} a_{1j}' b^j \right| \leq 2
\right\}
\right|_{\mathbb{C}}
}{B'} < \frac{\delta}{4}.  \label{eqn:dis:target3}
\end{align}

Define $\kappa' := \frac{2 \sum^{\mu}_{i=1} \sum^{\nu_i}_{j=1}   \sum^j_{k=1} {{j}\choose{k}} }{\epsilon'}$.
By Lemma~\ref{lem:dis:cnt}, there exists $B'' \in \mathbb{N}$ such that
\begin{align}
\sum_{1 \leq i \leq \mu, 1 \leq j \leq \nu_i, 1 \leq k \leq j} \sup_{|a_{k}|=1} \frac{| \{ b \in \{1,\cdots,B'' \} : \kappa'  \geq |\sum^{\nu_i-j+k}_{j'=-j+k} a_{j'}b^{j'} |\}|_{\mathbb{C}}}{B''} < \frac{\delta}{4}. \label{eqn:dis:target4}
\end{align}

Define $B:=\max(B', B'')$. We will show that the choice of $\epsilon=\epsilon'$ and $N=M \cdot B$ satisfies \eqref{eqn:dis:target}.
\begin{align}
&\sup_{|a_{1\nu_1}| \geq \gamma, a_{ij} \in \mathbb{C}, k \in \mathbb{Z}} \frac{1}{N} \sum^N_{n=1}
\mathbf{1} \left\{ \left| \sum^{\mu}_{i=1} \left( \sum^{\nu_i}_{j=0} a_{ij}n^j \right) e^{j2 \pi \omega_i (n+k)} \right| < \epsilon \right\} \nonumber \\
&=\sup_{|a_{1\nu_1}| \geq \gamma, a_{ij} \in \mathbb{C}} \frac{1}{N} \sum^{N}_{n=1} \mathbf{1} \left\{ \left| \sum^{\mu}_{i=1}\left( \sum^{\nu_i}_{j=0} a_{ij}n^j \right)e^{j 2 \pi \omega_i n}  \right| < \epsilon \right\} 
(\because e^{j 2\pi \omega_i k} \mbox{ can be absorbed into the $a_{ij}$.})
\nonumber \\
&=\sup_{|a_{1\nu_1}| \geq \gamma, a_{ij} \in \mathbb{C}} \frac{1}{B \cdot M} \sum^{B-1}_{b=0} \sum^{M}_{c=1} \mathbf{1} \left\{ \left| \sum^{\mu}_{i=1}\left( \sum^{\nu_i}_{j=0} a_{ij}(bM+c)^j \right)e^{j 2 \pi \omega_i (bM+c)}  \right| < \epsilon \right\} (\because n \mbox{ is rewritten as $bM+c$.}) \nonumber \\
&=\sup_{|a_{1\nu_1}| \geq \gamma, a_{ij} \in \mathbb{C}} \frac{1}{B \cdot M} \sum^{B-1}_{b=0} \sum^{M}_{c=1} \mathbf{1} \left\{ \left| \sum^{\mu}_{i=1}\left( \sum^{\nu_i}_{j=0} a_{ij}\left((bM)^j+\sum^j_{k=1} {{j}\choose{k}}(bM)^{j-k}c^k \right) \right)e^{j 2 \pi \omega_i (bM+c)}  \right| < \epsilon \right\} \nonumber \\
&\leq \sup_{|a_{1\nu_1}| \geq \gamma, a_{ij} \in \mathbb{C}} \frac{1}{B \cdot M} \sum^{B-1}_{b=0} \sum^{M}_{c=1} \mathbf{1} \left\{ \left| \sum^{\mu}_{i=1}\left( \sum^{\nu_i}_{j=0} a_{ij} (bM)^j \right)e^{j 2 \pi \omega_i (bM+c)}  \right| < \epsilon
+\sum^{\mu}_{i=1} \sum^{\nu_i}_{j=1} |a_{ij}|  \sum^j_{k=1} {{j}\choose{k}}(bM)^{j-k}c^k
\right\} \nonumber \\
&\leq \sup_{|a_{1\nu_1}| \geq \gamma, a_{ij} \in \mathbb{C}} \frac{1}{B \cdot M} \sum^{B-1}_{b=0} \sum^{M}_{c=1} \mathbf{1} \left\{ \left| \sum^{\mu}_{i=1}\left( \sum^{\nu_i}_{j=0} a_{ij} (bM)^j \right)e^{j 2 \pi \omega_i (bM+c)}  \right| < \epsilon
+\sum^{\mu}_{i=1} \sum^{\nu_i}_{j=1}  \sum^j_{k=1} |a_{ij}|  {{j}\choose{k}}(bM)^{j-k}M^k
\right\} \label{eqn:geo:identity} \\
&\leq \sup_{|a_{1\nu_1}| \geq \gamma, a_{ij} \in \mathbb{C}} \frac{1}{B \cdot M} \sum^{B-1}_{b=0} \sum^{M}_{c=1} \mathbf{1} \Bigg\{ \left| \sum^{\mu}_{i=1}\left( \frac{\sum^{\nu_i}_{j=0} a_{ij} (bM)^j}{M_b} \right)e^{j 2 \pi \omega_i (bM+c)}  \right| < \nonumber \\
& \frac{\epsilon}{M_b}
+\frac{\sum^{\mu}_{i=1} \sum^{\nu_i}_{j=1}   \sum^j_{k=1} |a_{ij}| {{j}\choose{k}}(bM)^{j-k}M^k}{M_b}
\Bigg\}\nonumber\\
&\leq \sup_{|a_{1\nu_1}| \geq \gamma, a_{ij} \in \mathbb{C}} \frac{1}{B} \sum^{B-1}_{b=0} \Bigg\{ \frac{1}{M} \sum^{M}_{c=1} \mathbf{1} \Bigg\{ \left| \sum^{\mu}_{i=1}\left( \frac{\sum^{\nu_i}_{j=0} a_{ij} (bM)^j}{M_b} \right)e^{j 2 \pi \omega_i (bM+c)}  \right| < \epsilon \Bigg\} \nonumber \\
&+\mathbf{1}\Bigg\{ \frac{\epsilon}{M_b}
+\frac{\sum^{\mu}_{i=1} \sum^{\nu_i}_{j=1}   \sum^j_{k=1} |a_{ij}| {{j}\choose{k}}(bM)^{j-k}M^k}{M_b} \geq \epsilon  \Bigg\} \Bigg\} \nonumber \\
&\leq \sup_{|a_{1\nu_1}| \geq \gamma, a_{ij} \in \mathbb{C}} \frac{1}{B} \sum^{B-1}_{b=0} \Bigg\{ \frac{1}{M} \sum^{M}_{c=1} \mathbf{1} \Bigg\{ \left| \sum^{\mu}_{i=1}\left( \frac{\sum^{\nu_i}_{j=0} a_{ij} (bM)^j}{M_b} \right)e^{j 2 \pi \omega_i (bM+c)}  \right| < \epsilon \Bigg\} \nonumber \\
&+\mathbf{1}\Bigg\{ \frac{\epsilon}{M_b} \geq \frac{\epsilon}{2} \Bigg\} + \mathbf{1} \Bigg\{\frac{\sum^{\mu}_{i=1} \sum^{\nu_i}_{j=1}   \sum^j_{k=1} |a_{ij}| {{j}\choose{k}}(bM)^{j-k}M^k}{M_b} \geq \frac{\epsilon}{2}  \Bigg\} \Bigg\} 
\label{eqn:dis:geofinal:1}
\end{align}
where $M_b:=\max_{i} \left\{ \left| \sum^{\nu_i}_{j=0} a_{ij}\left(bM \right)^j \right| \right\}$ and when $M_b = 0$ the value of the indicator function is set to be $0$ since in this case, the indicator function of \eqref{eqn:geo:identity} is already $0$. 

First, let's prove that the first term of \eqref{eqn:dis:geofinal:1} is small enough. For all $a_{ij} \in \mathbb{C}$ such that $|a_{1 \nu_1}| \geq \gamma$ and $b \in \{ 0, \cdots, B\}$,  we have
\begin{align}
&\frac{1}{M} \sum^{M}_{c=1} \mathbf{1} \Bigg\{ \left| \sum^{\mu}_{i=1}\left( \frac{\sum^{\nu_i}_{j=0} a_{ij} (bM)^j}{M_b} \right)e^{j 2 \pi \omega_i (bM+c)}  \right| < \epsilon \Bigg\} \\
&\leq \max_{d \in \{1, \cdots, \mu \}} \left(
\sup_{k \in \mathbb{Z}, a_i \in \mathbb{C}, |a_d| \geq 1}
\frac{1}{M} \sum_{c=1}^{M}
\mathbf{1} \left\{ \left| \sum^{\mu}_{i=1} a_i e^{j \omega_i (c+k)}  \right| < \epsilon
\right\}
\right) \\
&(\because \mbox{By the definition of $M_b$, $\left|\frac{\sum^{\nu_i}_{j=0} a_{ij} (bM)^j}{M_b} \right|=1$ for some $i$})\\
&< \frac{\delta}{2}. (\because \eqref{eqn:dis:target2})\label{eqn:dis:target:100}
\end{align}

Let's prove that the second term of \eqref{eqn:dis:geofinal:1} is small enough. 
\begin{align}
&  \sup_{|a_{1 \nu_1}| \geq \gamma} \frac{\left| \left\{ b \in \{1,\cdots, B \} : M_b < 2 \right\} \right|_{\mathbb{C}}}{B} \nonumber \\
&\leq  \sup_{|a_{1\nu_1}| \geq \gamma} \frac{\left|
\left\{ b \in \{1,\cdots,B \} :
\left|\sum^{\nu_1}_{j=0} a_{1j} (bM)^j \right| < 2
\right\}
\right|_{\mathbb{C}}}{B} (\because \mbox{definition of $M_b$})\nonumber \\
&\leq \sup_{|a_{1\nu_1}'| \geq \gamma} \frac{
\left|
\left\{ b \in \{1,\cdots,B \} :
\left|\sum^{\nu_1}_{j=0} a_{1j}' b^j \right| < 2
\right\}
\right|_{\mathbb{C}}
}{B} (\because \mbox{putting $a_{1j}':=a_{1j}M^j$ and $M$ goes to infinity.})\nonumber \\
&< \frac{\delta}{4}. (\because \eqref{eqn:dis:target3}) \label{eqn:dis:target:101}
\end{align}

Now, we will prove that the third term of \eqref{eqn:dis:geofinal:1} is small enough.  
\begin{align}
&\sup_{|a_{1 \nu_1}| \geq \gamma, a_{ij} \in \mathbb{C}} \frac{1}{B} \sum_{b=0}^{B-1} \mathbf{1} \Bigg\{\frac{\sum^{\mu}_{i=1} \sum^{\nu_i}_{j=1}   \sum^j_{k=1} |a_{ij}| {{j}\choose{k}}(bM)^{j-k}M^k}{M_b} \geq \frac{\epsilon}{2}  \Bigg\} \\
&\leq \sup_{|a_{1 \nu_1}| \geq \gamma, a_{ij} \in \mathbb{C}} \frac{1}{B} \sum_{b=0}^{B-1} \mathbf{1} \Bigg\{
(\sum^{\mu}_{i'=1} \sum^{\nu_{i'}}_{j'=1}   \sum^{j'}_{k'=1}{{j'}\choose{k'}}) \cdot 
\max_{1 \leq i \leq \mu, 1 \leq j \leq \nu_i, 1\leq k \leq j} |a_{ij}| (bM)^{j-k}M^k \geq \frac{\epsilon}{2} {M_b} \Bigg\} \\
&\leq \sum_{1 \leq i \leq \mu, 1 \leq j \leq \nu_i, 1\leq k \leq j} \sup_{|a_{1 \nu_1}| \geq \gamma, a_{ij} \in \mathbb{C}} \frac{1}{B} \sum_{b=0}^{B-1} \mathbf{1} \Bigg\{ \kappa'  |a_{ij}| (bM)^{j-k}M^k \geq  M_b \Bigg\} \\
&\leq \sum_{1 \leq i \leq \mu, 1 \leq j \leq \nu_i, 1\leq k \leq j} \sup_{|a_{1 \nu_1}| \geq \gamma, a_{ij} \in \mathbb{C}} \frac{1}{B} \sum_{b=0}^{B-1} \mathbf{1} \Bigg\{ \kappa'  |a_{ij}| (bM)^{j-k}M^k \geq  |\sum^{\nu_i}_{j'=0} a_{ij'}(bM)^{j'} | \Bigg\} (\because \mbox{definition of $M_b$})\\
&\leq \sum_{1 \leq i \leq \mu, 1 \leq j \leq \nu_i, 1\leq k \leq j} \sup_{|a_{1 \nu_1}| \geq \gamma, a_{ij} \in \mathbb{C}} \frac{1}{B} \sum_{b=0}^{B-1} \mathbf{1} \Bigg\{ \kappa' \geq |\sum^{\nu_i}_{j'=0} \frac{a_{ij'}(bM)^{j'}}{|a_{ij}| b^{j-k} M^k} | \Bigg\} \\
&\leq \sum_{1 \leq i \leq \mu, 1 \leq j \leq \nu_i, 1\leq k \leq j} \sup_{|a_{k}| =1} \frac{1}{B} \sum_{b=0}^{B-1} \mathbf{1} \Bigg\{ \kappa' \geq  |\sum^{\nu_i-j+k}_{j'=-j+k} a_{j'}b^{j'} | \Bigg\}  \\
&< \frac{\delta}{4}. (\because \eqref{eqn:dis:target4})\label{eqn:dis:target:102}
\end{align}

Therefore, by \eqref{eqn:dis:target:100}, \eqref{eqn:dis:target:101}, \eqref{eqn:dis:target:102}, we can see $\eqref{eqn:dis:geofinal:1} < \delta$, which finishes the proof.
\end{proof}

\subsection{Proof of Lemma~\ref{lem:dis:achv}}
\label{sec:app:cycleproof}

In this section, we will merge the properties about the observability Gramian shown in Appendix~\ref{sec:dis:gramian} with the uniform convergence of Appendix~\ref{sec:dis:uniform}, and prove Lemma~\ref{lem:dis:achv} of page~\pageref{lem:dis:achv}.

Just as we did in Appendix~\ref{sec:app:2}, we must first prove the following lemma which tells that the determinant of the observability Gramian is large except a negligible set under a cofactor condition the Gramian matrix. The proof of the lemma is very similar to that of Lemma~\ref{lem:conti:single}.

\begin{lemma}
Let $\mathbf{A}$ and $\mathbf{C}$ be given as \eqref{eqn:ac:jordansingle} and \eqref{eqn:ac:jordansinglec}. 
Define $a_{i,j}$ and $C_{i,j}$ as the $(i,j)$ element and cofactor of $\begin{bmatrix} \mathbf{C}\mathbf{A}^{-k_1}  \\ \vdots \\ \mathbf{C}\mathbf{A}^{-k_{m-1}} \\ \mathbf{C}\mathbf{A}^{-n} \end{bmatrix}$ respectively.
Then, there exists a family of functions $\{ g_{\epsilon} : \epsilon > 0, g_{\epsilon}:\mathbb{R}^+ \rightarrow \mathbb{R}^+ \}$ satisfying:\\
(i) For all $\epsilon>0$, $k_1 < k_2 < \cdots < k_{m-1}$ and $|C_{m,m}| \geq \epsilon \prod_{1 \leq i \leq m-1} \lambda_i^{-k_i}$, the following is true.\\
\begin{align}
\lim_{N \rightarrow \infty} \sup_{k \in \mathbb{Z}, k-k_{m-1} \geq g_{\epsilon}(k_{m-1})}
\frac{1}{N}\sum_{n=k+1}^{k+N}
\mathbf{1} \left\{\left|
\det\left(
\begin{bmatrix}
\mathbf{C} \mathbf{A}^{-k_1} \\
\vdots \\
\mathbf{C} \mathbf{A}^{-k_{m-1}} \\
\mathbf{C} \mathbf{A}^{-n}
\end{bmatrix}
\right) \right|
 < \epsilon^2 \lambda_m^{-n} \prod_{1 \leq i \leq m-1} \lambda_i^{-k_i} \right\} \rightarrow 0
\mbox{ as } \epsilon \downarrow 0. \nonumber
\end{align}
(ii) For each $\epsilon>0$, $g_{\epsilon}(k) \lesssim 1 + \log(k+1)$.
\label{lem:dis:single}
\end{lemma}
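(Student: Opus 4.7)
The plan is to parallel the continuous-time proof of Lemma~\ref{lem:conti:single}, using the deterministic counting-measure convergence from Lemma~\ref{lem:dis:geofinal} in place of the Lebesgue-measure pointwise convergence from Lemma~\ref{lem:singleun}.

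First, I apply Lemma~\ref{lem:dis:det:lower} with $k_m = n$. Let $g'_{\epsilon'}$ denote the gap function produced there; set $g_\epsilon(k) := g'_{2\epsilon^2}(k)$, which inherits the bound $g_\epsilon(k) \lesssim 1 + \log(k+1)$, establishing (ii). Whenever $k - k_{m-1} \ge g_\epsilon(k_{m-1})$, any $n > k$ automatically satisfies the gap hypothesis of Lemma~\ref{lem:dis:det:lower}, so that lemma reduces the event $\{|\det(\cdot)| < \epsilon^2 \lambda_m^{-n} \prod_{1 \le i \le m-1} \lambda_i^{-k_i}\}$ to the event
\begin{align*}
\left\{ \left| \sum_{m-m_\mu+1 \le i \le m} a_{m,i} C_{m,i} \right| < 2\epsilon^2 \lambda_m^{-n} \prod_{1 \le i \le m-1} \lambda_i^{-k_i} \right\}.
\end{align*}
So it suffices to bound the counting measure of this latter event.

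Next, I write out this cofactor sum explicitly in $n$. Since the last $m_\mu := \sum_{j=1}^{\nu_\mu} m_{\mu,j}$ columns of $\mathbf{C}\mathbf{A}^{-n}$ come from the Jordan blocks $\mathbf{A}_{\mu,j}$ of eigenvalue $\lambda_\mu e^{j 2\pi \omega_{\mu,j}}$, each $a_{m,i}$ in the sum has the form (polynomial in $n$ of degree at most $m_{\mu,j(i)}-1$) times $\lambda_\mu^{-n} e^{-j 2\pi \omega_{\mu,j(i)} n}$, while the cofactors $C_{m,i}$ do not depend on $n$. Grouping by frequency and using $\lambda_m = \lambda_\mu$,
\begin{align*}
\sum_{m-m_\mu+1 \le i \le m} a_{m,i} C_{m,i} = \lambda_\mu^{-n} \sum_{j=1}^{\nu_\mu} P_j(n) e^{-j 2\pi \omega_{\mu,j} n},
\end{align*}
with $\deg P_j \le m_{\mu,j}-1$. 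I then identify the leading coefficient of $P_{\nu_\mu}$: only the rightmost column $a_{m,m}$ contributes a top-degree monomial $n^{m_{\mu,\nu_\mu}-1}$ at frequency $\omega_{\mu,\nu_\mu}$ (earlier columns of the same block produce strictly lower polynomial degree, and columns from the other blocks in the cycle carry different frequencies $\omega_{\mu,j}$, $j<\nu_\mu$). A direct expansion of $(\mathbf{C}_{\mu,\nu_\mu} \mathbf{A}_{\mu,\nu_\mu}^{-n})_{m_{\mu,\nu_\mu}}$ yields this coefficient as $C_{m,m}$ times a constant $c_0 > 0$ that depends only on $(\mathbf{C}_{\mu,\nu_\mu})_1 \ne 0$, $\lambda_\mu$, and $m_{\mu,\nu_\mu}$. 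Combined with the hypothesis $|C_{m,m}| \ge \epsilon \prod_{1 \le i \le m-1} \lambda_i^{-k_i}$, the leading coefficient of $P_{\nu_\mu}$ has magnitude at least $c_0 \epsilon \prod_{1 \le i \le m-1} \lambda_i^{-k_i}$.

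Finally, I apply Lemma~\ref{lem:dis:geofinal} to the normalized polynomials $\widetilde{P}_j(n) := P_j(n) / (c_0 \epsilon \prod_{1 \le i \le m-1} \lambda_i^{-k_i})$. The leading coefficient of $\widetilde{P}_{\nu_\mu}$ has magnitude at least $1$; the frequencies $\omega_{\mu,1}, \ldots, \omega_{\mu,\nu_\mu}$ are pairwise $\mathbb{Q}$-incongruent by the no-eigenvalue-cycles hypothesis in \eqref{eqn:ac:jordansingle}; and the bad event transforms into $\{|\sum_j \widetilde{P}_j(n) e^{-j 2\pi \omega_{\mu,j} n}| < 2\epsilon / c_0\}$, whose threshold tends to $0$ as $\epsilon \downarrow 0$. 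Lemma~\ref{lem:dis:geofinal} (with $\gamma = 1$) then gives that the counting fraction of such $n$ in $[k+1, k+N]$ vanishes uniformly in $k \in \mathbb{Z}$, which is exactly (i). The main obstacle I anticipate is the bookkeeping in the leading-coefficient step: verifying that the top monomial $n^{m_{\mu,\nu_\mu}-1} e^{-j 2\pi \omega_{\mu,\nu_\mu} n}$ arises uniquely from $a_{m,m} C_{m,m}$, so that $|C_{m,m}|$ cleanly propagates into the lower bound needed to invoke Lemma~\ref{lem:dis:geofinal} with a distinguished coefficient that is $\Theta(\epsilon)$ rather than getting swamped by contributions from neighboring cofactors.
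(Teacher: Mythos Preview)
Your proposal is correct and follows essentially the same route as the paper: define $g_\epsilon$ via Lemma~\ref{lem:dis:det:lower} with threshold $2\epsilon^2$, reduce the determinant event to the cofactor-sum event, isolate the unique top monomial $n^{m_{\mu,\nu_\mu}-1}e^{-j2\pi\omega_{\mu,\nu_\mu}n}$ arising from $a_{m,m}C_{m,m}$ (your anticipated obstacle is indeed the key step, and it goes through since the other columns in the $\nu_\mu$-th block have strictly lower polynomial degree while other blocks carry distinct frequencies), and finish with Lemma~\ref{lem:dis:geofinal}. The only detail you glossed over is the shift $n\mapsto n-k$ needed to match the summation range in Lemma~\ref{lem:dis:geofinal}, which the paper handles by noting that the leading coefficient of the polynomial part is invariant under this shift; also note the paper's proof contains a typo citing Lemma~\ref{lem:singleun} where Lemma~\ref{lem:dis:geofinal} is meant.
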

\begin{proof}
By Lemma~\ref{lem:dis:det:lower}, we can find a function $g'_{2\epsilon^2}(k)$ such that for all $0 \leq k_1 < k_2 < \cdots < k_{m-1} < n$ satisfying:\\
(i) $n-k_{m-1} \geq g'_{2\epsilon^2}(k_{m-1})$ \\
(ii) $g'_{2\epsilon^2}(k) \lesssim 1 + \log (k+1)$ \\
(iii) $\left| \sum_{m-m_{\mu}+1 \leq i \leq m} a_{m,i}C_{m,i} \right| \geq 2 \epsilon^2 \lambda_m^{-n} \prod_{1 \leq i \leq {m-1}} \lambda_i^{-k_i}$\\
the following inequality holds:
\begin{align}
\left| \det\left(
\begin{bmatrix}
\mathbf{C}\mathbf{A}^{-k_1} \\
\vdots \\
\mathbf{C}\mathbf{A}^{-k_{m-1}} \\
\mathbf{C}\mathbf{A}^{-n} \\
\end{bmatrix}
\right)  \right| & \geq \epsilon^2 \lambda_m^{-n} \prod_{1 \leq i \leq {m-1}} \lambda_i^{-k_i}. \nonumber
\end{align}
Let $g_{\epsilon}(k)$ be $g'_{2\epsilon^2}(k)$. Then, we have
\begin{align}
&\sup_{k \in \mathbb{Z}, k - k_{m-1} \geq g_{\epsilon}(k_{m-1})}
\frac{1}{N} \sum_{n=k+1}^{k+N}
\mathbf{1}
\left\{ \left| \det \left(
\begin{bmatrix}
\mathbf{C} \mathbf{A}^{-k_1} \\
\vdots \\
\mathbf{C} \mathbf{A}^{-k_{m-1}} \\
\mathbf{C} \mathbf{A}^{-n}
\end{bmatrix}
\right) \right|  < \epsilon^2 \lambda_m^{-n} \prod_{1 \leq i \leq m-1} \lambda_i^{-k_i}\right\} \nonumber \\
& \leq \sup_{k \in \mathbb{Z}, k - k_{m-1} \geq g_{\epsilon}(k_{m-1})}
\frac{1}{N} \sum_{n=k+1}^{k+N} \mathbf{1} \left\{
\left|
\sum_{m-m_{\mu}+1 \leq i \leq m} a_{m,i} C_{m,i}
\right| < 2 \epsilon^2 \lambda_m^{-n} \prod_{1 \leq i \leq m-1} \lambda_i^{-k_i}
\right\}  \label{eqn:lem:detail2:0} \\ 
& = \sup_{k \in \mathbb{Z}, k - k_{m-1} \geq g_{\epsilon}(k_{m-1})}
\frac{1}{N} \sum_{n=k+1}^{k+N}
\mathbf{1} \left\{
\left|
\sum_{m-m_{\mu}+1 \leq i \leq m}
\frac{a_{m,i}}{\lambda_m^{-n}} \frac{C_{m,i}}{\epsilon \prod_{1 \leq i \leq m-1} \lambda_i^{-k_i} }
\right| < 2 \epsilon
\right\} \nonumber \\
&\leq \sup_{k \in \mathbb{Z}, |b_m|\geq 1}
\frac{1}{N} \sum_{n=k+1}^{k+N}\mathbf{1}
\left\{ \left| \sum_{m-m_{\mu}+1 \leq i \leq m} b_i \frac{a_{m,i}}{\lambda_m^{-n}} \right| < 2 \epsilon \right\} \label{eqn:lem:detail2:1}
\end{align}
where \eqref{eqn:lem:detail2:0} is by the definition of $g_{\epsilon}(k)$ and Lemma~\ref{lem:dis:det:lower}, and \eqref{eqn:lem:detail2:1} is by  $|C_{m,m}| \geq \epsilon \prod_{1 \leq i \leq m-1} \lambda_i^{-k_i} $.

Let $\mathbf{C_{\mu,\nu_{\mu}}}$ denoted in \eqref{eqn:ac:jordansinglec} be $\begin{bmatrix} c'_{1} & \cdots & c'_{m_{\mu,\nu_{\mu}}} \end{bmatrix}$.\\
Moreover,
\begin{align}
&\mathbf{A_{\mu,\nu_\mu}}^{-n}\nonumber \\
&=\begin{bmatrix}
(\lambda_{\mu,\nu_\mu}e^{j 2 \pi \omega_{\mu,\nu_\mu}})^{-n} & {{-n}\choose{1}} (\lambda_{\mu,\nu_\mu}e^{j 2 \pi \omega_{\mu,\nu_\mu}})^{-n-1} & \cdots &
{{-n}\choose{m_{\mu,\nu_\mu}-1}} (\lambda_{\mu,\nu_\mu}e^{j 2 \pi \omega_{\mu,\nu_\mu}})^{-n-m_{\mu,\nu_\mu}+1} \\
0 & (\lambda_{\mu,\nu_\mu}e^{j 2 \pi \omega_{\mu,\nu_\mu}})^{-n} & \cdots &
{{-n}\choose{m_{\mu,\nu_\mu}-2}} (\lambda_{\mu,\nu_\mu}e^{j 2 \pi \omega_{\mu,\nu_\mu}})^{-n-m_{\mu,\nu_\mu}+2} \\
\vdots & \vdots & \ddots & \vdots \\
0 & 0 & \cdots & (\lambda_{\mu,\nu_\mu}e^{j 2 \pi \omega_{\mu,\nu_\mu}})^{-n}
\end{bmatrix}.\nonumber
\end{align}
Thus, we can see that
\begin{align}
a_{m,m}&=\sum_{1 \leq i \leq m_{\mu,\nu_{\mu}}} c'_i { -n \choose m_{\mu,\nu_\mu}-i }(\lambda_{\mu,\nu_\mu}e^{j 2 \pi \omega_{\mu,\nu_\mu}})^{-n-m_{\mu,\nu_\mu}+i} .\nonumber
\end{align}
Therefore,
\begin{align}
\frac{a_{m,m}}{\lambda_m^{-n}} &=\sum_{1 \leq i \leq m_{\mu,\nu_{\mu}}} c'_{i} { -n \choose m_{\mu,\nu_\mu}-i }\lambda_{\mu,\nu_\mu}^{-m_{\mu,\nu_\mu}+i}(e^{j 2 \pi \omega_{\mu,\nu_\mu}})^{-n-m_{\mu,\nu_\mu}+i}.\nonumber
\end{align}
Moreover, when $a_{m,i}$ is considered as a function of $n$, $n^{m_{\mu,\nu_\mu}-1} e^{-j 2 \pi \omega_{\mu,\nu_\mu} n}$ term is only shown up in $\frac{a_{m,m}}{\lambda_m^{-n}}$
among $\frac{a_{m,m-m_{\mu}+1}}{\lambda_m^{-n}},\cdots ,\frac{a_{m,m}}{\lambda_m^{-n}}$, and the associated coefficient is
$\frac{c_1'(-1)^{m_{\mu,\nu_\mu}-1}}{(m_{\mu,\nu_\mu}-1)!} \lambda_{\mu,\nu_\mu}^{-m_{\mu,\nu_\mu}+1} e^{j 2 \pi \omega_{\mu,\nu_\mu}(-m_{\mu,\nu_\mu}+1)}$.

Let $c':=\frac{|c_1'|}{(m_{\mu,\nu_\mu}-1)!} \lambda_{\mu,\nu_\mu}^{-m_{\mu,\nu_\mu}+1}$. Then, \eqref{eqn:lem:detail2:1} can be upper bounded as follows:
\begin{align}
\eqref{eqn:lem:detail2:1} &\leq \sup_{k \in \mathbb{Z},|a_{\nu_\mu,m_{\mu,\nu_\mu}}| \geq c'} \frac{1}{N}
\sum^{k+N}_{n=k+1} \mathbf{1} \left\{ \left| \sum_{1 \leq i \leq \nu_\mu} \left( \sum_{1 \leq j \leq m_{\mu,i}} a_{ij}n^{j-1} \right)e^{j 2 \pi (-\omega_{\mu,i})n} \right| < 2 \epsilon  \right\} \nonumber \\
&
=\sup_{k \in \mathbb{Z},|a_{\nu_\mu,m_{\mu,\nu_\mu}}| \geq  c'} \frac{1}{N}
\sum^{N}_{n=1} \mathbf{1} \left\{ \left| \sum_{1 \leq i \leq \nu_\mu} \left( \sum_{1 \leq j \leq m_{\mu,i}} a_{ij}(n+k)^{j-1} \right)e^{j 2 \pi (-\omega_{\mu,i})(n+k)} \right| < 2 \epsilon  \right\} \nonumber \\
&
\leq \sup_{k \in \mathbb{Z},|a_{\nu_\mu,m_{\mu,\nu_\mu}}| \geq c'} \frac{1}{N}
\sum^{N}_{n=1} \mathbf{1} \left\{ \left| \sum_{1 \leq i \leq \nu_\mu} \left( \sum_{1 \leq j \leq m_{\mu,i}} a_{ij}n^{j-1} \right)e^{j 2 \pi (-\omega_{\mu,i})(n+k)} \right| < 2 \epsilon  \right\} \label{eqn:lem:detail2:2}
\end{align}
The last inequality comes from the fact that the coefficient of $n^{m_{\mu,\nu_{\mu}}-1}$ is the same for both $\sum_{1 \leq j \leq m_{\mu,\nu_{\mu}}} a_{\nu_{\mu},j}(n+k)^{j-1}$ and $\sum_{1 \leq j \leq m_{\mu,\nu_{\mu}}} a_{\nu_{\mu},j}n^{j-1}$.

By Lemma~\ref{lem:singleun}, we get
\begin{align}
\lim_{N \rightarrow \infty} \sup_{k \in \mathbb{Z},|a_{\nu_\mu,m_{\mu,\nu_\mu}}| \geq c'} \frac{1}{N}
\sum^{N}_{n=1} \mathbf{1} \left\{ \left| \sum_{1 \leq i \leq \nu_\mu} \left( \sum_{1 \leq j \leq m_{\mu,i}} a_{ij}n^{j-1} \right)e^{j 2 \pi (-\omega_{\mu,i})(n+k)} \right| < 2 \epsilon  \right\} \rightarrow 0 \mbox{ as } \epsilon \downarrow 0. \nonumber
\end{align}
Therefore, by \eqref{eqn:lem:detail2:2} we can say that
\begin{align}
\lim_{N \rightarrow \infty} \sup_{k \in \mathbb{Z}, k-k_{m-1} \geq g_{\epsilon}(k_{m-1})}
\frac{1}{N}\sum_{n=k+1}^{k+N}
\mathbf{1} \left\{\left|
\det\left(
\begin{bmatrix}
\mathbf{C} \mathbf{A}^{-k_1} \\
\vdots \\
\mathbf{C} \mathbf{A}^{-k_{m-1}} \\
\mathbf{C} \mathbf{A}^{-n}
\end{bmatrix}
\right) \right|
 < \epsilon^2 \lambda_m^{-n} \prod_{1 \leq i \leq m-1} \lambda_i^{-k_i} \right\} \rightarrow 0
\mbox{ as } \epsilon \downarrow 0 \nonumber
\end{align}
which finishes the proof.
\end{proof}

Based on the previous lemma,  the properties of p.m.f. tails shown in Section~\ref{sec:app:1} and the properties of the observability Gramian discussed in Section~\ref{sec:dis:gramian}, we can prove Lemma~\ref{lem:dis:achv} for the case when the system has no eigenvalue cycles. Moreover, we will prove a lemma with multiple systems. This will turn out to be helpful in proving Lemma~\ref{lem:dis:achv} for the general systems with eigenvalue cycles.

Consider pairs of matrices $(\mathbf{A_1},\mathbf{C_1}),(\mathbf{A_2},\mathbf{C_2}),\cdots,(\mathbf{A_r},\mathbf{C_r})$ defined as follows:
\begin{align}
&\mbox{$\mathbf{A_i}$ is a $m_i \times m_i$ Jordan form matrix and $\mathbf{C_i}$ is a $1 \times m_i$ row vector}  \label{eqn:ac:jordansingle2} \\
&\mbox{Each $\mathbf{A_i}$ has no eigenvalues cycles and $(\mathbf{A_i},\mathbf{C_i})$ is observable} \nonumber \\
&\mbox{$\lambda_j^{(i)} e^{j 2 \pi \omega_j^{(i)}}$ is $(j,j)$ element of $\mathbf{A_i}$} \nonumber \\
&\lambda_1^{(i)} \geq \lambda_2^{(i)} \geq \cdots \geq \lambda_{m_i}^{(i)} \geq 1. \nonumber
\end{align}
Then, the following lemma holds.
\begin{lemma}
Consider systems $(\mathbf{A_1}, \mathbf{C_1}),(\mathbf{A_2}, \mathbf{C_2}),\cdots,(\mathbf{A_r}, \mathbf{C_r})$ given as \eqref{eqn:ac:jordansingle2}.
Then, we can find a polynomial $p(k)$ and a family of random variable $\{ S(\epsilon,k) : k \in \mathbb{Z}^+, \epsilon>0 \}$ such that for all $\epsilon>0$, $k \in \mathbb{Z}^+$ and $1 \leq i \leq r$
there exist $k \leq k_{i,1} < k_{i,2} < \cdots < k_{i,m_i} \leq S(\epsilon,k)$ and $\mathbf{M_i}$ satisfying the following conditions:\\
(i) $\beta[k_{i,j}]=1$ for $1 \leq i \leq \mu $ and $1 \leq j \leq m_i$\\
(ii) $\mathbf{M_i}\begin{bmatrix} \mathbf{C_i}\mathbf{A_i}^{-k_{i,1}} \\ \mathbf{C_i}\mathbf{A_i}^{-k_{i,2}} \\ \vdots \\ \mathbf{C_i}\mathbf{A_i}^{-k_{i,m_i}}  \end{bmatrix}=\mathbf{I}$\\
(iii) $|\mathbf{M_i}|_{max} \leq \frac{p(S(\epsilon,k))}{\epsilon} (\lambda_{1}^{(i)})^{S(\epsilon,k)}$\\
(iv) $\lim_{\epsilon \downarrow 0} \exp \limsup_{s \rightarrow \infty} \sup_{k \in \mathbb{Z}^+} \frac{1}{s} \log \mathbb{P}\{ S(\epsilon,k)-k=s \} = p_e $.
\label{lem:dis:geodet}
\end{lemma}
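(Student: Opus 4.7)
The strategy is to mirror the proof of Lemma~\ref{lem:conti:singlec} in the continuous-time setting, replacing the continuous-time Gramian lemmas (Lemma~\ref{lem:conti:inverse2}, Lemma~\ref{lem:det:lower}, Lemma~\ref{lem:conti:single}) with their discrete-time analogues (Lemma~\ref{lem:dis:inverse}, Lemma~\ref{lem:dis:det:lower}, Lemma~\ref{lem:dis:single}), and then chaining $r$ such inductions to obtain a single stopping time that serves all $r$ systems simultaneously. First I would reduce matrix-inverse bounds to determinant lower bounds via Lemma~\ref{lem:dis:inverse}, so that condition (iii) of the lemma follows once we can guarantee $\big|\det\big[\mathbf{C_i}\mathbf{A_i}^{-k_{i,1}};\cdots;\mathbf{C_i}\mathbf{A_i}^{-k_{i,m_i}}\big]\big| \geq \epsilon^{m_i} \prod_j (\lambda_j^{(i)})^{-k_{i,j}}$ for each~$i$.

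The core is a single-system induction on the dimension $m_i$, done once per system. For the base case $m_i=1$, $(\mathbf{A_i},\mathbf{C_i})$ is a nonzero scalar and we simply let $S(\epsilon,k)$ be the first $n\geq k$ with $\beta[n]=1$; this is geometric with parameter $p_e$ and trivially satisfies (iv). For the inductive step, assume a stopping time $S'(\epsilon,k)$ together with good times $k_{i,1}<\cdots<k_{i,m_i-1}\leq S'(\epsilon,k)$ has been constructed so that the $(m_i{-}1){\times}(m_i{-}1)$ observability Gramian obtained by deleting the last row and column of $\big[\mathbf{C_i}\mathbf{A_i}^{-k_{i,1}};\cdots;\mathbf{C_i}\mathbf{A_i}^{-n}\big]$ has a determinant at least $\epsilon\prod_j(\lambda_j^{(i)})^{-k_{i,j}}$ in magnitude, i.e.\ the cofactor condition $|C_{m_i,m_i}|\geq \epsilon\prod(\lambda_j^{(i)})^{-k_{i,j}}$ of Lemma~\ref{lem:dis:single} holds. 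Then define
\begin{align*}
S''(\epsilon,k) &:= \lceil S'(\epsilon,k)+g_\epsilon(S'(\epsilon,k))\rceil, \\
S'''(\epsilon,k) &:= \inf\Bigl\{n\geq S''(\epsilon,k) : \beta[n]=1 \text{ and the full $m_i$-dimensional Gramian determinant}\\
&\qquad\qquad \text{ obtained by appending } \mathbf{C_i}\mathbf{A_i}^{-n} \text{ exceeds } \epsilon^2\lambda_{m_i}^{-n}\prod_{j<m_i}(\lambda_j^{(i)})^{-k_{i,j}}\Bigr\},
\end{align*}
and set $k_{i,m_i}:=S'''(\epsilon,k)$. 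Since $g_\epsilon(k)\lesssim 1+\log(k+1)$, the jump $S''-S'$ has the same p.m.f.\ tail as $S'$ by Lemma~\ref{lem:conti:tailpoly}. The jump $S'''-S''$ has tail $p_e+(1-p_e)p'(\epsilon)$ where $p'(\epsilon)$ is the supremum over $k$ of the counting-measure fraction of bad $n$ in Lemma~\ref{lem:dis:single}; by that lemma $p'(\epsilon)\downarrow 0$ as $\epsilon\downarrow 0$, giving the desired $p_e$ tail in the limit. Applying Lemma~\ref{lem:app:geo} combines the two jumps while preserving the $p_e$ tail.

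To handle all $r$ systems simultaneously, I would simply iterate the above construction: start from the induction output for system $1$, then take its endpoint as the initial time for the system-$2$ induction, and so on through system~$r$, terminating at one common $S(\epsilon,k)$. Since each system contributes only finitely many stages whose jumps each have tail converging to~$p_e$, applying Lemma~\ref{lem:app:geo} once more across the finitely many systems shows $\lim_{\epsilon\downarrow 0}\exp\limsup_s \sup_k \frac{1}{s}\log\mathbb P\{S(\epsilon,k)-k=s\}\leq p_e$, which together with the trivial lower bound (since a single observation is needed) gives equality as required in (iv). The matrix norm bound (iii) follows from Lemma~\ref{lem:dis:inverse} applied to each of the $r$ resulting Gramians, absorbing the $r$-fold product of $\epsilon^{-m_i}$ factors into a single $\epsilon^{-1}$ by a standard rescaling $\epsilon\mapsto \epsilon^{c}$ at the final step, and taking the single polynomial $p$ to be a common majorant of the $r$ polynomials from Lemma~\ref{lem:dis:inverse} together with the polynomial factors that accumulate from the cofactor reductions.

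The main obstacle I expect is ensuring the probability bounds truly hold \emph{uniformly in the initial time $k$}. The uniformity in Lemma~\ref{lem:dis:single} is delivered through the counting-measure version of Weyl's criterion (Lemma~\ref{lem:dis:geofinal}), and this must be carefully threaded through the induction: at each inductive step, the cofactor condition is guaranteed with high probability only by the previous step's determinant bound, and the new stopping time is built relative to the $\sigma$-field $\mathcal{F}_{S'}$ generated by the previous stage. One has to verify that the conditional probability of failure at each step is bounded by $p_e+(1-p_e)p'(\epsilon)$ uniformly over the realization of $\mathcal{F}_{S'}$, which is where the \emph{sup in $k$} inside Lemma~\ref{lem:dis:single} is essential. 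A secondary technical wrinkle is that the auxiliary function $g_\epsilon$ in Lemma~\ref{lem:dis:single} depends on $\epsilon$, but because $g_\epsilon(k)\lesssim 1+\log(k+1)$ with implicit constant depending only on $\epsilon$ (not on the realization), Lemma~\ref{lem:conti:tailpoly} still applies and preserves the p.m.f.\ tail.
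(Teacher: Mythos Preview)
Your plan is essentially the paper's own proof: reduce (ii)--(iii) to a determinant lower bound via Lemma~\ref{lem:dis:inverse}, run the single-system induction on $m_i$ using Lemma~\ref{lem:dis:single} for the cofactor-to-full-determinant step, and then chain the $r$ systems sequentially with Lemma~\ref{lem:app:geo}. The paper states the single-system version as an intermediate claim, proves it exactly as you describe, and then concatenates $S_1,\ldots,S_r$ to form the common $S$.

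One small correction. For the jump $S'''-S''$ you imported the continuous-time bound $p_e+(1-p_e)p'(\epsilon)$, which in Lemma~\ref{lem:conti:singlec} was valid because the jitter $t_n$ makes the ``bad determinant'' event at each $n$ genuinely independent across $n$. In the discrete case there is no jitter: conditioned on $k_1,\ldots,k_{m_i-1}$, the set of bad indices $n$ is a \emph{deterministic} subset of the integers whose density in a window of length $N$ is at most $d(\epsilon,N)$. The correct bound is therefore $\mathbb{P}\{S'''-S''\geq N\mid \mathcal{F}_{S''}\}\leq p_e^{N(1-d(\epsilon,N))}$, since you must erase every good index in the window; this is what the paper uses. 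Your stated bound is not the right probability computation here, though it happens to dominate the correct one (by concavity, $p_e^{1-d}\leq p_e+(1-p_e)d$), so your conclusion about the limiting tail rate survives. Just be aware that $d(\epsilon,N)$ depends on $N$ and only goes to zero after $N\to\infty$, so the tail argument must take the limsup in $N$ first, then send $\epsilon\downarrow 0$.
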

\begin{proof}
By Lemma~\ref{lem:dis:inverse}, instead of the condition (ii) and (iii) it is enough to prove that
\begin{align}
\left| \det \left(
\begin{bmatrix}
\mathbf{C_i}\mathbf{A_i}^{-k_{i,1}} \\
\mathbf{C_i}\mathbf{A_i}^{-k_{i,2}} \\
\vdots \\
\mathbf{C_i}\mathbf{A_i}^{-k_{i,m_i}} \\
\end{bmatrix}\right)\right| \geq \epsilon \prod_{1 \leq j \leq m_i}  (\lambda_j^{(i)})^{-k_{i,j}}. \nonumber
\end{align}
Therefore, it is enough to prove the following claim:
\begin{claim}
We can find a family of stopping times $\{ S(\epsilon,k) : k \in \mathbb{Z}^+, \epsilon > 0 \}$ such that for all $\epsilon>0$, $k \in \mathbb{Z}^+$ and $1 \leq i \leq r$
there exist $k \leq k_{i,1} < k_{i,2} < \cdots < k_{i,m_i} \leq S(\epsilon,k)$ satisfying the following condition:\\
(a) $\beta[k_{i,j}]=1$ for $1 \leq i \leq \mu$ and $1 \leq j \leq m_i$ \\
(b) $\left| \det \left(
\begin{bmatrix}
\mathbf{C_i}\mathbf{A_i}^{-k_{i,1}} \\
\mathbf{C_i}\mathbf{A_i}^{-k_{i,2}} \\
\vdots \\
\mathbf{C_i}\mathbf{A_i}^{-k_{i,m_i}} \\
\end{bmatrix}\right)\right| \geq \epsilon \prod_{1 \leq j \leq m_i}  (\lambda_j^{(i)})^{-k_{i,j}}$ \\
(c) $\lim_{\epsilon \downarrow 0} \exp \limsup_{s \rightarrow \infty} \sup_{k \in \mathbb{Z}^+} \frac{1}{s} \log \mathbb{P}\left\{ S(\epsilon,k)-k=s\right\} \leq p_e$.
\label{claim:dis:1}
\end{claim}
Before we prove the above claim, we first prove the claim for a single system.
\begin{claim}
We can find a family of stopping times $\{ S_1(\epsilon,k) : k \in \mathbb{Z}^+, \epsilon > 0 \}$ such that for all $\epsilon>0$ and $k \in \mathbb{Z}^+$
there exist $k \leq k_1' < k_2' < \cdots < k_{m_1}' \leq S_1(\epsilon,k)$ satisfying the following condition:\\
(a') $\beta[k_j']=1$ for $1 \leq j \leq m_1$ \\
(b') $\left| \det \left(
\begin{bmatrix}
\mathbf{C_1}\mathbf{A_1}^{-k_1'} \\
\mathbf{C_1}\mathbf{A_1}^{-k_2'} \\
\vdots \\
\mathbf{C_1}\mathbf{A_1}^{-k_{m_i}'} \\
\end{bmatrix}\right)\right| \geq \epsilon \prod_{1 \leq j \leq m_1}  (\lambda_j^{(1)})^{-k_j'}$ \\
(c') $\lim_{\epsilon \downarrow 0} \exp \limsup_{s \rightarrow \infty} \sup_{k \in \mathbb{Z}^+} \frac{1}{s} \log \mathbb{P}\left\{ S_1(\epsilon,k)-k=s\right\} \leq p_e$.
\label{claim:dis:2}
\end{claim}

$\bullet$ Proof of Claim~\ref{claim:dis:2}: The proof of Claim~\ref{claim:dis:2} is an induction on $m$.

(i) First consider the case $m_1=1$.

In this case, $\mathbf{A_1}$ and $\mathbf{C_1}$ is scalar, so denote $\mathbf{A_1}:=\lambda_1^{(1)}e^{j 2 \pi \omega_1^{(1)}}$ and $\mathbf{C_1}:=c_1$.
Since we only care about small enough $\epsilon$, let $\epsilon \leq |c_1|$.
Denote $S_1(\epsilon,k) := \inf \{ n \geq k : \beta[n]=1 \}$ and $k_1'=S_1(\epsilon,k)$.
Then, $\beta[k_1']=1$ and $\left| \det\left( \begin{bmatrix}
c_1 (\lambda_1^{(1)} e^{j 2 \pi \omega_1^{(1)}} )^{-k_1'}
\end{bmatrix} \right) \right| = |c_1| (\lambda_1^{(1)})^{-k_1'} \geq \epsilon (\lambda_1^{(1)})^{-k_1'}$.
Moreover, since $S_1(\epsilon,k)-k$ is a geometric random variable with probability $1-p_e$,
\begin{align}
\exp \limsup_{s \rightarrow \infty} \sup_{k \in \mathbb{Z}^+} \log \mathbb{P}\left\{ S_1(\epsilon,k)-k=s \right\} = p_e.  \nonumber
\end{align}
Therefore, $S_1(\epsilon,k)$ satisfies all the conditions of the claim.

(ii) As an induction hypothesis, we assume the claim is true for $m_1-1$ and prove the claim hold for $m_1$.

Denote $\mathbf{A_1'}$ be a $(m_1-1) \times (m_1-1)$ matrix obtained by removing $m_1$th row and column of $\mathbf{A_1}$. Likewise, $\mathbf{C_1'}$ is a $1 \times (m_1-1)$ vector obtained by removing $m_1$th element of $\mathbf{C_1}$. Then, we can observe that
\begin{align}
\det\left( \begin{bmatrix}
\mathbf{C_1'} \mathbf{A_1'}^{-k_1'} \\
\vdots \\
\mathbf{C_1'} \mathbf{A_1'}^{-k_{m_1-1}'}
\end{bmatrix} \right)
= cof_{m_1,m_1}\left(
\begin{bmatrix}
\mathbf{C_1} \mathbf{A_1}^{-k_1'} \\
\vdots \\
\mathbf{C_1} \mathbf{A_1}^{-k_{m_1}'} \\
\end{bmatrix}
\right) \nonumber
\end{align}
where $cof_{i,j}(\mathbf{A})$ implies the cofactor matrix of $\mathbf{A}$ with respect to $(i,j)$ element.

By the induction hypothesis, we can find a stopping time $S_1'(\epsilon,k)$ such that there exist $k \leq k_1' < k_2' < \cdots < k_{m_1-1}' \leq S_1'(\epsilon,k)$ satisfying:\\
(a'') $\beta[k_j']=1$ for $1 \leq j \leq m_1-1$ \\
(b'') $\left| \det \left(
\begin{bmatrix}
\mathbf{C_1'}\mathbf{A_1'}^{-k_1'} \\
\vdots \\
\mathbf{C_1'}\mathbf{A_1'}^{-k_{m_1-1}'}
\end{bmatrix}
\right) \right| \geq \epsilon \prod_{1 \leq j \leq m_1-1} (\lambda_{j}^{(1)})^{-k_j'}$\\
(c'') $\lim_{\epsilon \downarrow 0} \exp \limsup_{s \rightarrow \infty} \sup_{k \in \mathbb{Z}^+} \frac{1}{s} \log \mathbb{P} \left\{ S_1'(\epsilon,k)-k=s \right\} \leq p_e$.

Let $\mathcal{F}_{i}$ be a $\sigma$-field generated by $\beta[0],\cdots,\beta[i]$ and $g_{\epsilon}: \mathbb{R}^+ \rightarrow \mathbb{R}^+$ be the function of Lemma~\ref{lem:dis:single}.
Denote a random variable $d(\epsilon,N)$ as following:
\begin{align}
d(\epsilon,N):=\sup_{k \in \mathbb{Z}, k-S_1'(\epsilon,k) \geq g_{\epsilon}(S_1'(\epsilon,k))}
\frac{1}{N} \sum_{n=k+1}^{k+N}
\mathbf{1}\left\{
\left|
\det \left(
\begin{bmatrix}
\mathbf{C_1}\mathbf{A_1}^{-k_1'} \\
\vdots  \\
\mathbf{C_1}\mathbf{A_1}^{-k_{m_1-1}'} \\
\mathbf{C_1}\mathbf{A_1}^{-n}
\end{bmatrix}
\right)
\right|
< \epsilon^2 (\lambda_{m_1}^{(1)})^{-n} \prod_{1 \leq j \leq m_1 -1} ( \lambda_j^{(1)} )^{-k_j}
| \mathcal{F}_{S_1'(\epsilon,k)}
\right\}. \nonumber
\end{align}
Since (b'') implies $cof_{m_1,m_1}\left(
\begin{bmatrix}
\mathbf{C_1} \mathbf{A_1}^{-k_1'} \\
\vdots \\
\mathbf{C_1} \mathbf{A_1}^{-k_{m_1-1}'} \\
\mathbf{C_1} \mathbf{A_1}^{-n} \\
\end{bmatrix}
\right) \geq \epsilon \prod_{1 \leq j \leq m_1-1} (\lambda_i^{(1)})^{-k_j'}$, by Lemma~\ref{lem:dis:single} we have \begin{align}
\lim_{\epsilon \downarrow 0} \lim_{N \rightarrow \infty} \esssup d(\epsilon,N) = 0. \nonumber
\end{align}
Denote $S_1''(\epsilon,k) := S_1'(\epsilon,k)+g_{\epsilon}(S_1'(\epsilon,k))$.
From (ii) of Lemma~\ref{lem:dis:single}  we know $g_{\epsilon}(k) \lesssim 1 + \log(k+1)$ for all $\epsilon>0$.
Therefore, by Lemma~\ref{lem:conti:tailpoly} we have
\begin{align}
\lim_{\epsilon \downarrow 0} \exp \limsup_{s \rightarrow \infty} \sup_{k \in \mathbb{Z}^+} \frac{1}{s} \log \mathbb{P}\{ S_1''(\epsilon,k)-k=s \} \leq p_e. \label{eqn:dis:single:5}
\end{align}
Denote a stopping time
\begin{align}
S_1'''(\epsilon,k):=\inf \left\{n > S_1''(\epsilon,k): \beta[n]=1 \mbox{ and }
\left|
\det \left(
\begin{bmatrix}
\mathbf{C_1}\mathbf{A_1}^{-k_1'} \\
\vdots \\
\mathbf{C_1}\mathbf{A_1}^{-k_{m_1-1}'} \\
\mathbf{C_1}\mathbf{A_1}^{-n} \\
\end{bmatrix}
\right)
\right|
\geq \epsilon^2 (\lambda_{m_1}^{(1)})^{-n} \prod_{1 \leq j \leq m_1-1} (\lambda_j^{(1)})^{-k_j'}
\right\}. \nonumber
\end{align}
Since $\beta[n]$ is a Bernoulli process,
\begin{align}
\mathbb{P}\{ S_1'''(\epsilon,k)-S_1''(\epsilon,k) \geq N | \mathcal{F}_{S_1''(\epsilon,k)} \} \leq p_e^{N(1-d(\epsilon,N))}. \nonumber
\end{align}
Therefore,
\begin{align}
\lim_{\epsilon \downarrow 0} \exp \limsup_{N \rightarrow 0} \esssup \frac{1}{N} \log \mathbb{P}\{ S_1'''(\epsilon,k)-S_1''(\epsilon,k) \geq N | \mathcal{F}_{S_1''(\epsilon,k)} \} \leq \lim_{\epsilon \downarrow 0} \lim_{N \rightarrow \infty}\esssup p_e^{1-d(\epsilon,N)} \leq p_e \nonumber
\end{align}
i.e.
\begin{align}
\lim_{\epsilon \downarrow 0} \exp \limsup_{s \rightarrow 0} \esssup \frac{1}{s} \log \mathbb{P}\{ S_1'''(\epsilon,k)-S_1''(\epsilon,k) = s | \mathcal{F}_{S_1''(\epsilon,k)} \} \leq p_e. \label{eqn:dis:single:6}
\end{align}
By applying Lemma~\ref{lem:app:geo} to \eqref{eqn:dis:single:5} and \eqref{eqn:dis:single:6}, we can conclude that
\begin{align}
\lim_{\epsilon \downarrow 0} \exp \limsup_{s \rightarrow \infty} \sup_{k \in \mathbb{Z}^+} \frac{1}{s} \log \mathbb{P}\{S_1'''(\epsilon,k)-k=s \} \leq p_e. \nonumber
\end{align}
Therefore, if we denote $S_1(\epsilon,k):=S_1'''(\epsilon^{\frac{1}{2}},k)$, $S_1(\epsilon,k)$ satisfies all the conditions of Claim~\ref{claim:dis:2}.

$\bullet$ Proof of Claim~\ref{claim:dis:1}: By recursive use of Claim~\ref{claim:dis:2}, we can find stopping times $S_2(\epsilon,k),\cdots,S_r(\epsilon,k)$ such that for all $\epsilon>0$ and $2 \leq i \leq r$
there exist $S_{i-1}(\epsilon,k) < k_{i,1} < k_{i,2} < \cdots < k_{i,m_i}\leq S_{i}(\epsilon,k)$ satisfying the following condition:\\
(a) $\beta[k_{i,j}]=1$ for $1 \leq j \leq m_{i}$ \\
(b) $\left| \det\left(
\begin{bmatrix}
\mathbf{C_i} \mathbf{A_i}^{-k_{i,1}} \\
\mathbf{C_i} \mathbf{A_i}^{-k_{i,2}} \\
\vdots \\
\mathbf{C_i} \mathbf{A_i}^{-k_{i,m_i}} \\
\end{bmatrix}
\right) \right| \geq \epsilon \prod_{1 \leq j \leq m_i} (\lambda_j^{(i)})^{-k_{i,j}}$\\
(c) $\lim_{\epsilon \downarrow 0} \exp \limsup_{s \rightarrow \infty} \esssup \frac{1}{s} \log \mathbb{P}\{ S_i(\epsilon,k)-S_{i-1}(\epsilon,k)=s | \mathcal{F}_{S_{i-1}(\epsilon,k)} \} \leq p_e$.

Then, by Lemma~\ref{lem:app:geo}
\begin{align}
\lim_{\epsilon \downarrow 0} \exp \limsup_{s \rightarrow \infty} \sup_{k \in \mathbb{Z}^+} \frac{1}{s} \log \mathbb{P}\{ S_r(\epsilon,k)-k=s \} \leq p_e. \nonumber
\end{align}
Therefore, if we denote $S(\epsilon,k):=S_r(\epsilon,k)$, $S(\epsilon,k)$ satisfies all the conditions of Claim~\ref{claim:dis:1}. Thus, Claim~\ref{claim:dis:1} is true and the lemma is also true.
\end{proof}

We prove some properties about matrices which will be helpful in the proof of Lemma~\ref{lem:dis:achv}.

\begin{lemma}
Let $\mathbf{A}$ and $\mathbf{A'}$ be Jordan block matrices with eigenvalues $\lambda, \alpha \lambda (\alpha \neq 0)$ respectively and the same size $m \in \mathbb{N}$, i.e. $\mathbf{A} = \begin{bmatrix}
\lambda & 1 & \cdots & 0 \\
0 & \lambda & \cdots & 0 \\
\vdots & \vdots & \ddots & \vdots \\
0 & 0 & \cdots & \lambda
\end{bmatrix}$ and
$\mathbf{A'}= \begin{bmatrix}
\alpha\lambda & 1 & \cdots & 0 \\
0 & \alpha\lambda & \cdots & 0 \\
\vdots & \vdots & \ddots & \vdots \\
0 & 0 & \cdots & \alpha\lambda
\end{bmatrix}$. Then, for all $n \in \mathbb{Z}$
\begin{align}
\mathbf{A'}^{n}
=
\begin{bmatrix}
\alpha^{-(m-1)} & 0 & \cdots & 0 \\
0 & \alpha^{-(m-2)} & \cdots & 0 \\
\vdots & \vdots & \ddots & \vdots \\
0 & 0 & \cdots & 1
\end{bmatrix}
\mathbf{A}^n
\begin{bmatrix}
\alpha^{n+(m-1)} & 0 & \cdots & 0 \\
0 & \alpha^{n+(m-2)} & \cdots & 0 \\
\vdots & \vdots & \ddots & \vdots \\
0 & 0 & \cdots & \alpha^{n}
\end{bmatrix}.
 \nonumber
\end{align}
\label{lem:dis:jordan1}
\end{lemma}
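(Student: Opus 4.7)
The plan is to verify the identity by a direct entrywise computation, leveraging the well-known closed form for powers of a Jordan block. Writing $\mathbf{A} = \lambda \mathbf{I} + \mathbf{N}$ and $\mathbf{A'} = \alpha\lambda \mathbf{I} + \mathbf{N}$, where $\mathbf{N}$ is the nilpotent upper-shift matrix satisfying $\mathbf{N}^m = \mathbf{0}$, I would first recall that for $n \geq 0$ the binomial theorem (applied to commuting summands) yields
\begin{align*}
(\mathbf{A}^n)_{ij} = \binom{n}{j-i}\lambda^{n-(j-i)} \quad (j \geq i),
\end{align*}
with zero for $j < i$, and similarly $(\mathbf{A'}^n)_{ij} = \binom{n}{j-i}(\alpha\lambda)^{n-(j-i)}$.

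Next I would compute the $(i,j)$ entry of the right-hand side. Since both outer matrices are diagonal, conjugation acts multiplicatively on entries: denoting the diagonal entries of the left and right diagonal factors by $d_i^L = \alpha^{-(m-i)}$ and $d_j^R = \alpha^{n+(m-j)}$, we get
\begin{align*}
(D^L \mathbf{A}^n D^R)_{ij} = \alpha^{-(m-i)} \binom{n}{j-i} \lambda^{n-(j-i)} \alpha^{n+(m-j)} = \binom{n}{j-i} \lambda^{n-(j-i)} \alpha^{n-(j-i)},
\end{align*}
which exactly equals $(\mathbf{A'}^n)_{ij}$. The proof for $n \geq 0$ is therefore a one-line computation once the explicit formulas are in hand.

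The remaining step is the case $n < 0$. The simplest approach is to observe that $\mathbf{A}$ and $\mathbf{A'}$ are invertible (since $\lambda, \alpha\lambda \neq 0$ by hypothesis $\alpha \neq 0$, assuming also $\lambda \neq 0$, which is implicit because otherwise the statement is vacuous or trivial), so one can either (i) verify by direct inspection that the above closed form remains valid for negative $n$ using the generalized binomial coefficients $\binom{n}{k} = \frac{n(n-1)\cdots(n-k+1)}{k!}$ together with $\mathbf{A}^{-1} = \lambda^{-1}\sum_{k=0}^{m-1}(-\lambda^{-1})^k \mathbf{N}^k$, or (ii) deduce the negative case from the positive case by noting that if $\mathbf{A'}^{n} = D^L_n \mathbf{A}^n D^R_n$ holds for $n \geq 0$, then multiplying both sides of $\mathbf{A'}^n \mathbf{A'}^{-n} = \mathbf{I}$ appropriately and using the diagonal commutativity of $D^L_n, D^R_n$ with scalar powers yields the required diagonal factors for $-n$.

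The main obstacle, if any, is purely bookkeeping on the exponents of $\alpha$ in the diagonal factors: one must carefully check that the offset $(m-i)$ on the left and $n+(m-j)$ on the right combine to cancel the shift $\binom{n}{j-i}\alpha^{\cdot}$ in exactly the right way, and that the argument for negative $n$ uses the generalized binomial identity consistently. Beyond this, the lemma is essentially a change-of-basis observation: the diagonal matrix $\mathrm{diag}(\alpha^{m-1},\alpha^{m-2},\ldots,1)$ conjugates $\mathbf{N}$ into $\alpha \mathbf{N}$, so rescaling the eigenvalue by $\alpha$ is the same (up to this diagonal similarity and an overall power of $\alpha$) as taking powers of the unscaled Jordan block.
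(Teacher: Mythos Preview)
Your proposal is correct and follows essentially the same approach as the paper: both proofs compute the entries of $\mathbf{A'}^n$ via the formula $\binom{n}{j-i}(\alpha\lambda)^{n-(j-i)}$ and verify that conjugation by the stated diagonal matrices produces exactly the factor $\alpha^{n-(j-i)}$ needed. The paper simply writes out the matrices explicitly and factors them step by step, while you index entrywise, but the computation is identical; the paper also implicitly uses the generalized binomial coefficient $\binom{n}{k}$ to handle all $n\in\mathbb{Z}$ at once rather than treating $n<0$ separately.
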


\begin{proof}
\begin{align}
\mathbf{A'}^n &=
\begin{bmatrix}
(\alpha \lambda )^{n} & {n \choose 1} (\alpha \lambda)^{n-1} & {n \choose 2}(\alpha \lambda)^{n-2} & \cdots & {n \choose m} (\alpha \lambda)^{n-(m-1)} \\
0 & (\alpha \lambda)^{n} & {n \choose 1 }(\alpha \lambda)^{n-1} & \cdots & {n \choose m-1}(\alpha \lambda)^{n-(m-2)} \\
0 & 0 & (\alpha \lambda)^{n}  & \cdots & {n \choose m-2}(\alpha \lambda)^{n-(m-3)} \\
\vdots & \vdots & \vdots & \ddots & \vdots \\
0 & 0 & 0 & \cdots & (\alpha \lambda)^n
\end{bmatrix}\nonumber \\
&=
\begin{bmatrix}
\alpha^{-(m-1)} & 0 & 0 & \cdots & 0 \\
0 & \alpha^{-(m-2)} & 0 & \cdots & 0 \\
0 & 0 & \alpha^{-(m-3)} &  \cdots & 0 \\
\vdots & \vdots & \vdots & \ddots & \vdots \\
0 & 0 & 0 & \cdots & 1
\end{bmatrix} \nonumber \\
& \cdot
\begin{bmatrix}
\alpha^{n+m-1} \lambda^{n} & {n \choose 1} \alpha^{n-1+m-1} \lambda^{n-1} & {n \choose 2}\alpha^{n-2+m-1} \lambda^{n-2} & \cdots & {n \choose m} \alpha^{n-(m-1)+m-1} \lambda^{n-(m-1)} \\
0 & \alpha^{n+m-2} \lambda^{n} & {n \choose 1 }\alpha^{n-1+m-2} \lambda^{n-1} & \cdots & {n \choose m-1}\alpha^{n-(m-2)+m-2} \lambda^{n-(m-2)} \\
0 & 0 & \alpha^{n+m-3} \lambda^{n}  & \cdots & {n \choose m-2}\alpha^{n-(m-3)+m-3} \lambda^{n-(m-3)} \\
\vdots & \vdots & \vdots & \ddots & \vdots \\
0 & 0 & 0 & \cdots & \alpha^n \lambda^n
\end{bmatrix}\nonumber \\
&=
\begin{bmatrix}
\alpha^{-(m-1)} & 0 & 0 & \cdots & 0 \\
0 & \alpha^{-(m-2)} & 0 & \cdots & 0 \\
0 & 0 & \alpha^{-(m-3)} &  \cdots & 0 \\
\vdots & \vdots & \vdots & \ddots & \vdots \\
0 & 0 & 0 & \cdots & 1
\end{bmatrix} \nonumber \\
& \cdot
\begin{bmatrix}
\alpha^{n+m-1} \lambda^{n} & {n \choose 1} \alpha^{n+m-2} \lambda^{n-1} & {n \choose 2}\alpha^{n+m-3} \lambda^{n-2} & \cdots & {n \choose m} \alpha^{n} \lambda^{n-m} \\
0 & \alpha^{n+m-2} \lambda^{n} & {n \choose 1 }\alpha^{n+m-3} \lambda^{n-1} & \cdots & {n \choose m-1}\alpha^{n} \lambda^{n-(m-1)} \\
0 & 0 & \alpha^{n+m-3} \lambda^{n}  & \cdots & {n \choose m-2}\alpha^{n} \lambda^{n-(m-2)} \\
\vdots & \vdots & \vdots & \ddots & \vdots \\
0 & 0 & 0 & \cdots & \alpha^n \lambda^n
\end{bmatrix}\nonumber \\
&=
\begin{bmatrix}
\alpha^{-(m-1)} & 0 & 0 & \cdots & 0 \\
0 & \alpha^{-(m-2)} & 0 & \cdots & 0 \\
0 & 0 & \alpha^{-(m-3)} &  \cdots & 0 \\
\vdots & \vdots & \vdots & \ddots & \vdots \\
0 & 0 & 0 & \cdots & 1
\end{bmatrix} \nonumber \\
& \cdot
\begin{bmatrix}
\lambda^{n} & {n \choose 1} \lambda^{n-1} & {n \choose 2} \lambda^{n-2} & \cdots & {n \choose m}  \lambda^{n-m} \\
0 &  \lambda^{n} & {n \choose 1 } \lambda^{n-1} & \cdots & {n \choose m-1} \lambda^{n-(m-1)} \\
0 & 0 &  \lambda^{n}  & \cdots & {n \choose m-2} \lambda^{n-(m-2)} \\
\vdots & \vdots & \vdots & \ddots & \vdots \\
0 & 0 & 0 & \cdots &  \lambda^n
\end{bmatrix}
\cdot
\begin{bmatrix}
\alpha^{n+(m-1)} & 0 & 0 & \cdots & 0 \\
0 & \alpha^{n+(m-2)} & 0 & \cdots & 0 \\
0 & 0 & \alpha^{n+(m-3)} & \cdots & 0 \\
\vdots & \vdots & \vdots & \ddots & \vdots \\
0 & 0 & 0 & \cdots & \alpha^n
\end{bmatrix}
\nonumber \\
&=
\begin{bmatrix}
\alpha^{-(m-1)} & 0 & \cdots & 0 \\
0 & \alpha^{-(m-2)} & \cdots & 0 \\
\vdots & \vdots & \ddots & \vdots \\
0 & 0 & \cdots & 1
\end{bmatrix}
\mathbf{A}^n
\begin{bmatrix}
\alpha^{n+(m-1)} & 0 & \cdots & 0 \\
0 & \alpha^{n+(m-2)} & \cdots & 0 \\
\vdots & \vdots & \ddots & \vdots \\
0 & 0 & \cdots & \alpha^{n}
\end{bmatrix}\nonumber
\end{align}
This finishes the proof.
\end{proof}

\begin{lemma}
Let $\mathbf{A}$ be a Jordan block with eigenvalue $\lambda$ and dimension $m \times m$.
Then, the Jordan decomposition of the matrix $\mathbf{A}^k$ for $k \in \mathbb{N}$ is $\mathbf{U}\mathbf{\Lambda}\mathbf{U}^{-1}$ where $\mathbf{U}$ is an invertible upper triangular matrix ---so the diagonal elements of $\mathbf{U}$ are non-zero---
and $\mathbf{\Lambda}$ is a Jordan block with eigenvalue $\lambda^k$ and dimension $m \times m$.
\label{lem:dis:jordan3}
\end{lemma}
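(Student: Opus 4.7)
The plan is to reduce everything to two explicit structural facts about the matrix power $\mathbf{A}^k$: that it is upper triangular with a predictable superdiagonal, and that its only eigenvalue is $\lambda^k$ with geometric multiplicity exactly one. Once these are in hand, the existence of an upper triangular $\mathbf{U}$ will follow from an inductive construction of a Jordan chain that respects the standard flag $V_i := \operatorname{span}(e_1,\ldots,e_i)$.

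First, I would compute $\mathbf{A}^k$ directly. Writing $\mathbf{A}=\lambda\mathbf{I}+\mathbf{N}_0$ with $\mathbf{N}_0$ the standard nilpotent shift, the binomial expansion gives $\mathbf{A}^k = \sum_{j=0}^{m-1}\binom{k}{j}\lambda^{k-j}\mathbf{N}_0^{j}$, so $\mathbf{A}^k$ is upper triangular, with $\lambda^k$ on the diagonal and the first superdiagonal equal to $k\lambda^{k-1}$ (nonzero, since $\lambda\neq 0$ by the standing unstable/non-zero assumption implicit throughout the paper, and $k\geq 1$). Setting $\mathbf{N}:=\mathbf{A}^k-\lambda^k\mathbf{I}$, this matrix is strictly upper triangular, hence $\mathbf{N}^m=0$, and its $(1,m)$-entry of $\mathbf{N}^{m-1}$ equals $(k\lambda^{k-1})^{m-1}\neq 0$, so $\mathbf{N}^{m-1}\neq 0$. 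Therefore $\mathbf{A}^k$ has a single eigenvalue $\lambda^k$ whose minimal polynomial equals its characteristic polynomial $(t-\lambda^k)^m$, which forces the Jordan form $\mathbf{\Lambda}$ to be a single $m\times m$ Jordan block with eigenvalue $\lambda^k$.

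Next, I would construct the Jordan chain inductively in a way that keeps each basis vector in the appropriate piece of the flag. The observation driving the induction is that $\mathbf{N}$ preserves the flag ($\mathbf{N}V_i\subseteq V_{i-1}$ by strict upper triangularity) and that on the graded pieces $V_i/V_{i-1}$ it acts as multiplication by the nonzero scalar $k\lambda^{k-1}$. Concretely, I set $v_1:=e_1$ (which is in $\ker \mathbf{N}$), and, given $v_{i-1}\in V_{i-1}$ with nonzero $e_{i-1}$-component, seek $v_i$ of the form $\alpha_i e_i + w_i$ with $w_i\in V_{i-1}$ such that $\mathbf{N}v_i = v_{i-1}$. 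Expanding, $\mathbf{N}v_i = \alpha_i k\lambda^{k-1}e_{i-1} + (\text{terms in } V_{i-2}) + \mathbf{N}w_i$. Matching the $e_{i-1}$-coefficient determines $\alpha_i\neq 0$, and then the remaining equation constrains $\mathbf{N}w_i$ to equal a prescribed element of $V_{i-2}$, which can be solved for $w_i\in V_{i-1}$ by applying the same induction hypothesis to the block of $\mathbf{N}$ acting on $V_{i-1}$ (whose superdiagonal is again nonzero). Assembling the $v_i$ as columns of $\mathbf{U}$ yields an upper triangular matrix with nonzero diagonal $\alpha_1,\ldots,\alpha_m$, hence invertible, and by construction $\mathbf{A}^k\mathbf{U}=\mathbf{U}\mathbf{\Lambda}$.

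The main obstacle will be the inductive step in the third paragraph: showing that the ``correction'' $w_i\in V_{i-1}$ solving $\mathbf{N}w_i=\text{(prescribed)}$ can always be found. The cleanest way to handle this is to recognize that $\mathbf{N}$ restricted to the invariant subspace $V_{i-1}$ is itself a strictly upper triangular matrix with nonzero superdiagonal $k\lambda^{k-1}$, i.e.\ structurally identical to $\mathbf{N}$ itself in smaller dimension, so the same construction applies recursively. Everything else --- upper triangularity of $\mathbf{U}$, invertibility via nonzero $\alpha_i$, and $\mathbf{\Lambda}$ being a single Jordan block --- then falls out of the inductive construction combined with the rank computation from step one.
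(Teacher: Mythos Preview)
Your argument is correct. Both you and the paper begin the same way: expand $\mathbf{A}^k$ binomially to see it is upper triangular with diagonal $\lambda^k$ and nonzero first superdiagonal $k\lambda^{k-1}$, then deduce from this that $\mathbf{N}=\mathbf{A}^k-\lambda^k\mathbf{I}$ has a single Jordan block (you via $\mathbf{N}^{m-1}\neq 0$, the paper via $\operatorname{rank}\mathbf{N}=m-1$). Where you diverge is in obtaining the upper-triangular $\mathbf{U}$. The paper does not construct anything: it invokes the abstract existence of a Jordan basis $u_1,\ldots,u_m$, then observes that the nonzero superdiagonal forces $\ker(\mathbf{N}^p)=\operatorname{span}\{e_1,\ldots,e_p\}$, so any Jordan chain automatically satisfies $u_i\in\ker(\mathbf{N}^i)=V_i$, whence $\mathbf{U}$ is upper triangular (and invertibility gives the nonzero diagonal). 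You instead build the chain explicitly by induction along the flag, solving $\mathbf{N}v_i=v_{i-1}$ inside $V_i$ at each step. Your route is more hands-on and avoids appealing to the general Jordan theorem; the paper's route is shorter and sidesteps the recursive solvability argument entirely. For the inductive step you flagged as the obstacle, note that surjectivity of $\mathbf{N}|_{V_{i-1}}:V_{i-1}\to V_{i-2}$ is immediate once you observe that $\mathbf{N}e_2,\ldots,\mathbf{N}e_{i-1}$ have leading terms $k\lambda^{k-1}e_1,\ldots,k\lambda^{k-1}e_{i-2}$, so they span $V_{i-2}$ by back-substitution; you need not recurse.
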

\begin{proof}
We can see that $\mathbf{A}^k$ is a upper triangular toeplitz matrix whose diagonal elements are $\lambda^{k}$.
Thus, $\det(s \mathbf{I}- \mathbf{A}^k)= (s-\lambda^k)^m$ and all eigenvalues of $\mathbf{A}^k$ are $\lambda^k$. Moreover, the rank of $\mathbf{A}^k-\lambda^k \mathbf{I}$ is $m-1$. Thus, $\mathbf{\Lambda}$ has to be a Jordan block matrix with eigenvalue $\lambda^k$ and dimension $m \times m$.

Moreover, $Ker\left(\left(\mathbf{A}-\lambda^k \mathbf{I}\right)^p \right) \supseteq span\{ \mathbf{e_1}, \mathbf{e_2} , \cdots , \mathbf{e_p}\}$.
Therefore, $i$th column of $\mathbf{U}^{-1}$ has to belong to the vector space $\{ \mathbf{e_1}, \cdots, \mathbf{e_i} \}$ and $\mathbf{U}^{-1}$ is upper diagonal matrix. Here, the existence of Jordan form of arbitrary matrix guarantee the invertibility of $\mathbf{U}$. Therefore, $\mathbf{U}$ is also upper triangular matrix and the invertibility condition of an upper triangular matrix is its diagonal elements are non-zero.
\end{proof}

\begin{lemma}
Let $\mathbf{A}$ be a Jordan block matrix with eigenvalue $\lambda \in \mathbb{C}$ and size $m \in \mathbb{N}$, i.e.
$\mathbf{A}=\begin{bmatrix}
\lambda & 1 & \cdots & 0 \\
0 & \lambda & \cdots & 0 \\
\vdots & \vdots & \ddots & \vdots \\
0 & 0 & \cdots & \lambda \\
\end{bmatrix}$. $\mathbf{C}$ and $\mathbf{C'}$ are $1 \times m$ matrices such that
\begin{align}
&\mathbf{C}=\begin{bmatrix} c_1 & c_2 & \cdots & c_m\end{bmatrix} \nonumber\\
&\mathbf{C'}=\begin{bmatrix} c'_1 & c'_2 & \cdots & c'_m\end{bmatrix}
\end{align}
where $c_i, c'_i \in \mathbb{C}$ and $c_1 \neq 0$.\\
For all $k \in \mathbb{R}$ and $m \times 1$ matrices $\mathbf{X}=\begin{bmatrix} x_1 \\ x_2 \\ \vdots \\ x_m \end{bmatrix}$ and $\mathbf{X'}=\begin{bmatrix} x'_1 \\ x'_2 \\ \vdots \\ x'_m \end{bmatrix}$, there exists $\mathbf{T}$ such that\\
\begin{align}
&(i) \mathbf{T} \mbox{ is an upper triangular matrix.} \nonumber \\
&(ii) \mathbf{C} \mathbf{A}^k \mathbf{X}+ \mathbf{C'}\mathbf{A}^k \mathbf{X'}=\mathbf{C} \mathbf{A}^k \left(\mathbf{X}+\mathbf{T}\mathbf{X'}\right) \nonumber
\end{align}
Moreover, the diagonal elements of $\mathbf{T}$ are $\frac{c_1'}{c_1}$.
\label{lem:dis:jordan2}
\end{lemma}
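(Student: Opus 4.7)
The plan is to mirror the proof of Lemma~\ref{lem:conti:jordan} with the matrix exponential $e^{k\mathbf{A}}$ replaced by the integer (or real) power $\mathbf{A}^k$. The crucial structural property that drove the continuous proof was that $e^{k\mathbf{A}}$ is an upper-triangular Toeplitz matrix, and exactly the same is true in the discrete case: $(\mathbf{A}^k)_{i,j} = \binom{k}{j-i}\lambda^{k-(j-i)}$ for $j \geq i$ and zero otherwise. Since both matrices sit in the algebra of upper-triangular Toeplitz matrices generated by shifts of the Jordan block, the algebraic manipulations transfer verbatim.

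I would induct on the dimension $m$ of $\mathbf{A}$. The base case $m=1$ is trivial with $\mathbf{T}=\left[\frac{c'_1}{c_1}\right]$. For the inductive step, write $\mathbf{C'} = \frac{c'_1}{c_1}\mathbf{C} + \bar{\mathbf{C}'}$ where $\bar{\mathbf{C}'} := \left[\,0,\ c'_2 - \frac{c'_1}{c_1}c_2,\ \ldots,\ c'_{m+1} - \frac{c'_1}{c_1}c_{m+1}\,\right]$ has vanishing first entry. Then
\begin{align}
\mathbf{C}\mathbf{A}^k \mathbf{X} + \mathbf{C'}\mathbf{A}^k \mathbf{X'} = \mathbf{C}\mathbf{A}^k\!\left(\mathbf{X} + \tfrac{c'_1}{c_1}\mathbf{X'}\right) + \bar{\mathbf{C}'}\mathbf{A}^k \mathbf{X'}. \nonumber
\end{align}
The key computational step I would verify directly from the Toeplitz formula is that, because $\bar c'_1 = 0$ and $\mathbf{A}^k$ is upper triangular, $(\bar{\mathbf{C}'}\mathbf{A}^k)_{j+1} = (\tilde{\mathbf{C}'}\tilde{\mathbf{A}}^k)_{j}$ for $j = 1,\ldots,m$, where $\tilde{\mathbf{A}}$ is the $m\times m$ Jordan block with the same eigenvalue $\lambda$ and $\tilde{\mathbf{C}'} := \left[c'_2 - \frac{c'_1}{c_1}c_2,\ \ldots,\ c'_{m+1} - \frac{c'_1}{c_1}c_{m+1}\right]$. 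Hence $\bar{\mathbf{C}'}\mathbf{A}^k \mathbf{X'} = \tilde{\mathbf{C}'}\tilde{\mathbf{A}}^k \tilde{\mathbf{X}'}$ with $\tilde{\mathbf{X}'} := [x'_2,\ldots,x'_{m+1}]^T$. Analogously, for $j \leq m$ we have $(\mathbf{C}\mathbf{A}^k)_j = (\mathbf{C}^{(m)}\tilde{\mathbf{A}}^k)_j$, where $\mathbf{C}^{(m)} := [c_1,\ldots,c_m]$, so the ``first $m$ coordinates'' part of the $(m+1)$-dimensional problem lives naturally in the $m$-dimensional Toeplitz algebra.

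Applying the induction hypothesis to the pair $(\mathbf{C}^{(m)}, \tilde{\mathbf{C}'})$ --- which is allowed because $c^{(m)}_1 = c_1 \neq 0$ --- yields an upper triangular $m \times m$ matrix $\mathbf{T}'$ whose diagonal entries are $\frac{c'_2 - (c'_1/c_1)c_2}{c_1}$, such that a combined expression of the two smaller terms can be absorbed into $\mathbf{C}^{(m)}\tilde{\mathbf{A}}^k(\cdot + \mathbf{T}'(\cdot))$. Reassembling the $(m+1)$-dimensional identity produces $\mathbf{T}$ in block form with $\frac{c'_1}{c_1}$ on the diagonal and $\mathbf{T}'$ populating the shifted strict-upper-triangular part, exactly as in the continuous-case display at the end of the proof of Lemma~\ref{lem:conti:jordan}. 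The main obstacle is bookkeeping: verifying that the shift-and-splice that glues $\mathbf{T}'$ into the $(m+1)\times(m+1)$ layout both preserves upper-triangularity and places $\frac{c'_1}{c_1}$ (and not the ratio coming out of $\mathbf{T}'$'s diagonal) on the diagonal of $\mathbf{T}$. Since this bookkeeping is identical to the continuous case and relies only on the shared upper-triangular Toeplitz structure, no genuinely new ingredient beyond the explicit Toeplitz formula for $\mathbf{A}^k$ is required.
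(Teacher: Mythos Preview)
Your proposal is correct and matches the paper's approach exactly: the paper's proof of this lemma is simply ``Similar to Lemma~\ref{lem:conti:jordan},'' and you have carried out precisely that transplant, relying only on the shared upper-triangular Toeplitz structure of $e^{k\mathbf{A}}$ and $\mathbf{A}^k$. Your observation about the shift-and-splice placing $\frac{c'_1}{c_1}$ (rather than the diagonal of $\mathbf{T}'$) on the diagonal of $\mathbf{T}$ is the right bookkeeping check and mirrors the final display in the proof of Lemma~\ref{lem:conti:jordan}.
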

\begin{proof}
Similar to Lemma \ref{lem:conti:jordan}.
\end{proof}
%

Now, we can prove Lemma~\ref{lem:dis:achv}.
\begin{proof}[Proof of Lemma~\ref{lem:dis:achv}]
We will prove the lemma by an induction on $m$, the dimension of the system. Remind that here we are using the definitions of \eqref{eqn:ac:jordan}, \eqref{eqn:ac2:jordan} for the system matrices $\mathbf{A}$, $\mathbf{C}$, $\mathbf{A_i}$, $\mathbf{C_i}$, $\cdots$.

(i) When $m=1$,

In this case, the lemma reduces to the scalar problem and is trivially true.
Precisely, if we choose $S_1(\epsilon,k)$ as $\inf\{s \geq k : \beta[s]=1 \}$, we can check all the conditions of the lemma are satisfied.

(ii) Now, we will assume the lemma is true when the system dimension is $m-1$ as an induction hypothesis, and prove the lemma holds for the system with dimension $m$.

Let $\mathbf{x_{i,j}}$ be a $m_{i,j} \times 1$ column vector, and $\mathbf{x}$ be $\begin{bmatrix} \mathbf{x_{1,1}} \\ \mathbf{x_{1,2}} \\ \vdots \\ \mathbf{x_{\mu,\nu_\mu}} \end{bmatrix}$. Here, $\mathbf{x}$ can be thought as the state of the system, and $\mathbf{x_{i,j}}$ corresponds to the states associated with the Jordan block $\mathbf{A_{i,j}}$. Remind that $\mathbf{A_{1,1}}$ is the Jordan block with the largest eigenvalue and size.

The purpose of this proof is following: By Lemma~\ref{lem:dis:geodet}, we already know that the lemma holds for systems with scalar observations and without eigenvalue cycles. Therefore, we first reduce the system to one with scalar observations and without eigenvalue cycles. To reduce the system to the one without eigenvalue cycles, we will use down-sampling ideas (polyphase decomposition) form signal processing~\cite{Oppenheim}. To reduce the system to the one with scalar observations, we will multiple a proper post-processing matrix which combines vector observations to scalar observations.
Then, we estimate the $m_{1,1}$th element of $\mathbf{x_{1,1}}$, which associated with the largest eigenvalue. Then, we subtract the estimation from the system. The resulting system becomes a $(m-1)$-dimensional system, and by the induction hypothesis, we can estimate the remaining states. As we mentioned before, this idea is called successive decoding in information theory~\cite{Cover}.

Let's start from the down-sampling and reduction to scalar observation systems. 

$\bullet$ Down-sampling the System by $p$ and Reduction to Scalar Observation Systems: The main difficulty in estimating the $m_{1,1}$th element of $\mathbf{x_{1,1}}$ is the periodicity of the system. To handle this difficulty, we down sample the system. Let $p=\prod_{1 \leq i \leq \mu} p_i$. Remind that in \eqref{eqn:ac2:jordan}, $p_i$ was the period of each eigenvalue cycles. We can see when the system is down sampled by $p$, the resulting system becomes aperiodic. Thus, we can reduce the original periodic system to $p$ number of aperiodic systems.

We can further reduce vector observation systems to scalar observation systems. Thus, the system reduces to aperiodic system with scalar observations, and by Lemma~\ref{lem:dis:geodet} we can estimate the $m_{1,1}$th element of $\mathbf{x_{1,1}}$.

Since we are using induction for the proof, we can focus on the first eigenvalue cycle of the system. 

Let $T_1, \cdots, T_R$ be all the sets $T$ such that $T :=\{t_1, \cdots, t_{|T|} \} \subseteq \{0,1,\cdots, p_1-1 \}$ and
\begin{align}
\begin{bmatrix}
\mathbf{C_1} \mathbf{A_1}^{-t_1} \\
\mathbf{C_1} \mathbf{A_1}^{-t_2} \\
\vdots \\
\mathbf{C_1} \mathbf{A_1}^{-t_{|T|}}
\end{bmatrix} \text{ is full rank.}
\label{eqn:dis:geofinal:0}
\end{align}
Here, the definition of $\mathbf{A_1}$ and $\mathbf{C_1}$ is given in \eqref{eqn:ac2:jordan} and
$
\begin{bmatrix}
\mathbf{C_1} diag\{\alpha_{1,1},\cdots, \alpha_{1,\nu_1} \}^{-t_1} \\
\mathbf{C_1} diag\{\alpha_{1,1},\cdots, \alpha_{1,\nu_1} \}^{-t_2} \\
\vdots \\
\mathbf{C_1} diag\{\alpha_{1,1},\cdots, \alpha_{1,\nu_1} \}^{-t_{|T|}}
\end{bmatrix}
$ is also full rank.
The number of such sets, $R$, is finite since $p_1$ is finite.

Therefore, for each $T_r := \{ t_{r,1},\cdots,t_{r,|T_r|} \}$ $(1 \leq r \leq R)$, we can find a matrix $\mathbf{L_r}$ such that
\begin{align}
\mathbf{L_r}
\begin{bmatrix}
\mathbf{C_1} diag\{\alpha_{1,1},\cdots, \alpha_{1,\nu_1} \}^{-t_{r,1}} \\
\mathbf{C_1} diag\{\alpha_{1,1},\cdots, \alpha_{1,\nu_1} \}^{-t_{r,2}} \\
\vdots \\
\mathbf{C_1} diag\{\alpha_{1,1},\cdots, \alpha_{1,\nu_1} \}^{-t_{r,|T_i|}} \\
\end{bmatrix}=\mathbf{I}. \nonumber
\end{align}
Denote $
\begin{bmatrix}
\mathbf{L_{t_{r,1},r}} & \mathbf{L_{t_{r,2},r}} & \cdots & \mathbf{L_{t_{r,|T_r|},r}}
\end{bmatrix}
$ be the first row of $\mathbf{L_r}$ where $\mathbf{L_{t,r}}$ are $1 \times l$ matrices. Then,
\begin{align}
\begin{bmatrix}
\mathbf{L_{t_{r,1},r}} & \mathbf{L_{t_{r,2},r}} & \cdots & \mathbf{L_{t_{r,|T_r|},r}}
\end{bmatrix}
\begin{bmatrix}
\mathbf{C_1} diag\{\alpha_{1,1},\cdots, \alpha_{1,\nu_1} \}^{-t_{r,1}} \\
\mathbf{C_1} diag\{\alpha_{1,1},\cdots, \alpha_{1,\nu_1} \}^{-t_{r,2}} \\
\vdots \\
\mathbf{C_1} diag\{\alpha_{1,1},\cdots, \alpha_{1,\nu_1} \}^{-t_{r,|T_r|}} \\
\end{bmatrix}=
\begin{bmatrix}
1 & 0 & \cdots & 0
\end{bmatrix}
. \label{eqn:dis:thm6}
\end{align}

We also extend this definition of $\mathbf{L_{q,r}}$ to all $q \in \{0,\cdots, p-1 \}, r \in \{ 1,\cdots,R \} $ by putting $\mathbf{L_{q,r}}:=\mathbf{L_{q (\mod p_1),r}}$ for $q \geq p_1$. Then, we can easily check that \eqref{eqn:dis:thm6} still holds as long as $t_{r,i}$ remains the same in $\mod p_1$.

\begin{claim}
For given $q \in \{0,\cdots, p-1 \}$ and $r \in \{ 1,\cdots, R\}$, let $\mathbf{L_{q,r}} \mathbf{C_1}$ be not $\mathbf{0}$. 
Then, there exists $\mathbf{\bar{C}_{q,r}}$, $\mathbf{\bar{A}_{q,r}}$, $\mathbf{\bar{U}_{q,r}}$, $\mathbf{\bar{x}_{q,r}}$ that satisfies the following conditions:\\
(i) $\mathbf{\bar{A}_{q,r}}$ is a $\bar{\bar{m}}_{q,r} \times \bar{\bar{m}}_{q,r}$ square matrix given in a Jordan form. The eigenvalues of $\mathbf{\bar{A}_{q,r}}$ belong to $\{ \lambda_{1,1}^p, \lambda_{2,1}^p, \cdots, \lambda_{\mu,1}^p \}$, and no two different Jordan blocks have the same eigenvalue. Therefore, $\mathbf{\bar{A}_{q,r}}$ has no eigenvalue cycles. Furthermore, the first Jordan block(left-top) of $\mathbf{\bar{A}_{q,r}}$ is a $m_{1,1} \times m_{1,1}$ Jordan block associated with eigenvalue $\lambda_{1,1}^p$.\\
(ii) $\mathbf{\bar{C}_{q,r}}$ is a $1 \times \bar{\bar{m}}_{q,r}$ row vector and $(\mathbf{\bar{A}_{q,r}}, \mathbf{\bar{C}_{q,r}})$ is observable.\\
(iii) $\mathbf{\bar{U}_{q,r}}$ is a $\bar{\bar{m}}_{q,r} \times \bar{\bar{m}}_{q,r}$ invertible upper triangular matrix.\\
(iv) $\mathbf{\bar{x}_{q,r}}$ is a $\bar{\bar{m}}_{q,r} \times 1$ column vector. There exists a nonzero constant $g_{q,r}$ such that 
\begin{align}
(\mathbf{\bar{x}_{q,r}})_{m_{1,1}}=g_{q,r}
\left(
\mathbf{L_{q,r}}\mathbf{C_1}
diag\{
\alpha_{1,1},\cdots, \alpha_{1,\nu_1}
\}^{-(q+(m_{1,1}-1))}
\right)
\begin{bmatrix}
(\mathbf{x_{1,1}})_{m_{1,1}}\\
(\mathbf{x'_{1,2}})_{m_{1,1}}\\
\vdots \\
(\mathbf{x'_{1,\nu_1}})_{m_{1,1}}
\end{bmatrix}.\nonumber\end{align}
where $(\mathbf{x'_{1,i}})_{m_{1,1}}=(\mathbf{x_{1,i}})_{m_{1,1}}$ when the size of $\mathbf{x_{1,i}}$ is greater or equal to $m_{1,1}$, and $(\mathbf{x'_{1,i}})_{m_{1,1}}=0$ otherwise.\\
(v) For all $k \in \mathbb{Z}^+$, $\mathbf{L_{q,r}}\mathbf{C}\mathbf{A}^{-(pk+q)}\mathbf{x}=\mathbf{\bar{C}_{q,r}}\mathbf{\bar{A}_{q,r}}^{-k}\mathbf{\bar{U}_{q,r}}\mathbf{\bar{x}_{q,r}}$.
\label{claim:donknow2}
\end{claim}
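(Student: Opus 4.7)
The plan is to unpack $\mathbf{L_{q,r}}\mathbf{C}\mathbf{A}^{-(pk+q)}\mathbf{x}$ block by block, exploit that $p=\prod_i p_i$ collapses every eigenvalue cycle via $\lambda_{i,j}^p = \lambda_{i,1}^p$, and merge within each cycle. Because $\mathbf{A}=\mathrm{diag}\{\mathbf{A_{i,j}}\}$, the observation splits as
\begin{align}
\mathbf{L_{q,r}}\mathbf{C}\mathbf{A}^{-(pk+q)}\mathbf{x} = \sum_{i=1}^{\mu}\sum_{j=1}^{\nu_i} \mathbf{L_{q,r}}\mathbf{C_{i,j}}\mathbf{A_{i,j}}^{-q}\bigl(\mathbf{A_{i,j}}^{p}\bigr)^{-k}\mathbf{x_{i,j}}. \nonumber
\end{align}
Lemma~\ref{lem:dis:jordan3} gives $\mathbf{A_{i,j}}^p = \mathbf{U_{i,j}}\mathbf{\Lambda_{i,j}}\mathbf{U_{i,j}}^{-1}$ with $\mathbf{U_{i,j}}$ invertible upper triangular and $\mathbf{\Lambda_{i,j}}$ a Jordan block of eigenvalue $\lambda_{i,1}^p$, turning each summand into $\mathbf{d_{i,j,q}}\mathbf{\Lambda_{i,j}}^{-k}\mathbf{y_{i,j}}$ with $\mathbf{d_{i,j,q}} := \mathbf{L_{q,r}}\mathbf{C_{i,j}}\mathbf{A_{i,j}}^{-q}\mathbf{U_{i,j}}$ and $\mathbf{y_{i,j}} := \mathbf{U_{i,j}}^{-1}\mathbf{x_{i,j}}$.

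To bring all terms within cycle $i$ into a common state dimension, I will pad by appending trailing zeros to $\mathbf{d_{i,j,q}}$ and $\mathbf{y_{i,j}}$ reaching width/height $m_{i,1}$, and enlarge $\mathbf{\Lambda_{i,j}}$ to the full $m_{i,1}\times m_{i,1}$ Jordan block $\mathbf{\Lambda_i}$ of eigenvalue $\lambda_{i,1}^p$. A short induction confirms that the scalar product is preserved for every $k \in \mathbb{Z}$ since trailing zeros remain trailing zeros under the enlarged Jordan block. With all $\nu_i$ terms now sharing dynamics $\mathbf{\Lambda_i}$, I iteratively invoke Lemma~\ref{lem:dis:jordan2} to collapse them into a single scalar observation of the form $\mathbf{\bar{d}_{i,q}}\mathbf{\Lambda_i}^{-k}\mathbf{\bar{z}_{i,q}}$, each merger contributing an upper-triangular matrix whose diagonal is fixed by ratios of leading entries of the $\mathbf{d_{i,j,q}}$. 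The hypothesis $\mathbf{L_{q,r}}\mathbf{C_1}\neq\mathbf{0}$ supplies a valid merger anchor for cycle $i=1$.

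Stacking across cycles and reducing to the observable subspace yields $\mathbf{\bar{A}_{q,r}} := \mathrm{diag}\{\mathbf{\Lambda_1},\mathbf{\Lambda_2},\ldots\}$ with first block of size $m_{1,1}$ and eigenvalue $\lambda_{1,1}^p$, row vector $\mathbf{\bar{C}_{q,r}}$ assembled from the surviving $\mathbf{\bar{d}_{i,q}}$, and a block upper-triangular $\mathbf{\bar{U}_{q,r}}$ absorbing the $\mathbf{U_{i,j}}$'s and merge matrices into $\mathbf{\bar{x}_{q,r}}$. Condition~(i) holds because the $|\lambda_{i,1}|$ are pairwise distinct across cycles, so their $p$-th powers remain distinct; observability~(ii) follows because each retained $\mathbf{\Lambda_i}$ is a single Jordan block and the leading entry of its column in $\mathbf{\bar{C}_{q,r}}$ is nonzero by construction; (iii) follows from $\mathbf{\bar{U}_{q,r}}$ being a product of upper-triangular factors; and (v) holds by construction of the equality for every $k$.

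The main obstacle is condition~(iv). Only $(\mathbf{x_{1,j}})_{m_{1,1}}$ with $m_{1,j} = m_{1,1}$ survive into position $m_{1,1}$ of $\mathbf{\bar{U}_{q,r}}\mathbf{\bar{x}_{q,r}}$, since trailing-zero padding zeros out the contribution from smaller Jordan blocks. The factor $\alpha_{1,j}^{-(q+m_{1,1}-1)}$ decomposes as $\alpha_{1,j}^{-q}$ from the $(m_{1,1},m_{1,1})$-entry $\lambda_{1,j}^{-q}=\alpha_{1,j}^{-q}\lambda_{1,1}^{-q}$ of $\mathbf{A_{1,j}}^{-q}$, and $\alpha_{1,j}^{-(m_{1,1}-1)}$ from the leading diagonal of $\mathbf{U_{1,j}}$: because $\alpha_{1,j}^p=1$, Lemma~\ref{lem:dis:jordan1} at $n=p$ yields the similarity $\mathbf{A_{1,j}}^p = D_{1,j}\mathbf{A_{1,1}}^p D_{1,j}^{-1}$ with $D_{1,j}=\mathrm{diag}\{\alpha_{1,j}^{-(m_{1,1}-1)},\ldots,1\}$, forcing $\mathbf{U_{1,j}} = D_{1,j}\mathbf{U_{1,1}}$ so that its $(1,1)$-diagonal entry carries the extra $\alpha_{1,j}^{-(m_{1,1}-1)}$ while its $(m_{1,1},m_{1,1})$-diagonal entry is unchanged. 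Gathering these factors carefully through the merger, the coefficient of each surviving $(\mathbf{x_{1,j}})_{m_{1,1}}$ emerges as $g_{q,r}\alpha_{1,j}^{-(q+m_{1,1}-1)}\mathbf{L_{q,r}}(\mathbf{C_{1,j}})_1$, with a single nonzero scalar $g_{q,r}$ absorbing the common constants, matching the claimed form.
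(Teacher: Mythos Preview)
Your approach is essentially the paper's---the same three Jordan-block lemmas drive the down-sampling, padding, and merging, just applied in a different order (you Jordan-decompose each $\mathbf{A_{i,j}}^p$ via Lemma~\ref{lem:dis:jordan3} first and only invoke Lemma~\ref{lem:dis:jordan1} at the end to extract the $\alpha$-factors, whereas the paper first uses Lemma~\ref{lem:dis:jordan1} to rewrite every block in cycle $i$ against the common $\mathbf{A'_{i,1}}$, truncates for observability, merges via Lemma~\ref{lem:dis:jordan2}, and only then applies Lemma~\ref{lem:dis:jordan3} once per cycle). Both orderings work; the paper's makes the $\alpha_{i,j}$ bookkeeping more explicit throughout, while yours keeps the state vectors closer to the original until the last step.

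There is one genuine slip. The similarity $\mathbf{A_{1,j}}^p = D_{1,j}\mathbf{A_{1,1}}^p D_{1,j}^{-1}$ does \emph{not} force $\mathbf{U_{1,j}} = D_{1,j}\mathbf{U_{1,1}}$: the Jordan basis for a single block is unique only up to right-multiplication by an upper-triangular Toeplitz matrix commuting with the block, so Lemma~\ref{lem:dis:jordan3} alone does not pin down the diagonal entries of $\mathbf{U_{1,j}}$. You must instead \emph{choose} $\mathbf{U_{1,j}} := D_{1,j}\mathbf{U_{1,1}}$ for those $j$ with $m_{1,j}=m_{1,1}$, which is a legitimate choice since $D_{1,j}\mathbf{U_{1,1}}$ is upper triangular invertible and conjugates $\mathbf{\Lambda_1}$ to $\mathbf{A_{1,j}}^p$; once you make that choice explicit, your tracking of the $(1,1)$ and $(m_{1,1},m_{1,1})$ diagonal entries is correct and condition~(iv) follows. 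Also, tighten the split between $\mathbf{\bar{U}_{q,r}}$ and $\mathbf{\bar{x}_{q,r}}$: condition~(iv) is about $(\mathbf{\bar{x}_{q,r}})_{m_{1,1}}$ itself, not the product $(\mathbf{\bar{U}_{q,r}}\mathbf{\bar{x}_{q,r}})_{m_{1,1}}$, so you need to say precisely which factors land in $\mathbf{\bar{U}_{q,r}}$ versus $\mathbf{\bar{x}_{q,r}}$ (the cleanest choice in your ordering is $\mathbf{\bar{U}_{q,r}}=\mathbf{I}$ with $\mathbf{\bar{x}_{q,r}}$ the stacked merged vectors $\mathbf{\bar{z}_{i,q}}$).
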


This claim tells that by sub-sampling with rate $p$, we get systems without eigenvalue cycles. Moreover, by multiplying proper row vector to observations, we can reduce the system to a scalar observation system while keeping required information to estimate $(\mathbf{x_{1,1}})_{m_{1,1}}$. When $\mathbf{L_{q,r}} \mathbf{C_1}$ is $\mathbf{0}$, the observation is not useful in estimation $(\mathbf{x_{1,1}})_{m_{1,1}}$. Thus, we can ignore it.

\begin{proof}
The proof of the claim consists of two parts, down-sampling and reduction to a scalar observation system.

(1) Down-sampling the System by $p$:

By the definition of $\mathbf{C}$, $\mathbf{A}$, $\mathbf{C_{i,j}}$, $\mathbf{A_{i,j}}$, for all $k \in \mathbb{Z}$, $q \in \{0,\cdots, p-1 \}$ we have
\begin{align}
&\mathbf{C} \mathbf{A}^{-(pk+q)} \mathbf{x} =
\mathbf{C_{1,1}}\mathbf{A_{1,1}}^{-(pk+q)} \mathbf{x_{1,1}}
+ \mathbf{C_{1,2}}\mathbf{A_{1,2}}^{-(pk+q)} \mathbf{x_{1,2}}+ \cdots
+ \mathbf{C_{\mu,\nu_\mu}}\mathbf{A_{\mu,\nu_\mu}}^{-(pk+q)} \mathbf{x_{\mu,\nu_\mu}}
\label{eqn:downsample:1}
\end{align}

Since the dimensions of $\mathbf{x_{i,1}}, \cdots , \mathbf{x_{i,\nu_i}}$ may be different, we will make them equal by extending the dimensions to the maximum, i.e. $m_{i,1}$. For the extension, we will append zeros at the end of the matrices.
Let $\mathbf{C'_{i,j}}$ be a $l \times m_{i,1}$ matrix given as $\begin{bmatrix}
\mathbf{C_{i,j}} & \mathbf{0}_{l \times (m_{i,1}-m_{i,j})}
\end{bmatrix}
$, $\mathbf{A'_{i,j}}$ be a $m_{i,1} \times m_{i,1}$ Jordan block matrix with eigenvalue $\lambda_{i,j}$, and $\mathbf{x'_{i,j}}$ be a $m_{i,1} \times 1$ column vector given as
$\begin{bmatrix}
\mathbf{x_{i,j}} \\
\mathbf{0}_{(m_{i,1}-m_{i,j}) \times 1}
\end{bmatrix}$.
Then, by the construction, we can see that $(\mathbf{x_{1,1}'})_{m_{1,1}}=(\mathbf{x_{1,1}})_{m_{1,1}}$, and if $m_{1,i}$ is greater or equal to $m_{1,1}$ $(\mathbf{x_{1,i}'})_{m_{1,1}}=(\mathbf{x_{1,i}})_{m_{1,1}}$ and otherwise $(\mathbf{x_{1,i}'})_{m_{1,1}}=0$. Therefore, $\mathbf{x_{i,j}'}$ satisfies the condition (iv) of the claim. Furthermore, the first column of $\mathbf{C_{i,j}'}$ is equal to the first column of $\mathbf{C_{i,j}}$ by construction.

We also define $\alpha_{i,j}$ to be $\frac{\lambda_{i,j}}{\lambda_{i,1}}$. Remind that $\lambda_{i,j}$ was defined as the eigenvalue corresponds to $\mathbf{A_{i,j}}$ in \eqref{eqn:ac:jordan}. Then, by the definitions $\alpha_{i,j}^{p_i}=1$.


Then, \eqref{eqn:downsample:1} can be written as follows:
\begin{align}
&\mathbf{C} \mathbf{A}^{-(pk+q)} \mathbf{x}=
 \mathbf{C'_{1,1}} \mathbf{A'_{1,1}}^{-(pk+q)} \mathbf{x'_{1,1}}
+\mathbf{C'_{1,2}} \mathbf{A'_{1,2}}^{-(pk+q)} \mathbf{x'_{1,2}}+ \cdots
+ \mathbf{C'_{\mu,\nu_\mu}} \mathbf{A'_{\mu,\nu_\mu}}^{-(pk+q)}\mathbf{x'_{\mu,\nu_\mu}} \nonumber \\
&=\mathbf{C'_{1,1}} \mathbf{A'_{1,1}}^{-(pk+q)} \mathbf{x'_{1,1}}\nonumber \\
&+ \mathbf{C'_{1,2}}
\begin{bmatrix}
\alpha_{1,2}^{-(m_{1,1}-1)} & 0 & \cdots & 0 \\
0 & \alpha_{1,2}^{-(m_{1,1}-2)} & \cdots & 0 \\
\vdots & \vdots & \ddots & \vdots \\
0 & 0 & \cdots & 1 \\
\end{bmatrix}
\mathbf{A'_{1,1}}^{-(pk+q)} \begin{bmatrix}
\alpha_{1,2}^{-(pk+q)+(m_{1,1}-1)} & 0 & \cdots & 0 \\
0 & \alpha_{1,2}^{-(pk+q)+(m_{1,1}-2)} & \cdots & 0 \\
\vdots & \vdots & \ddots & \vdots \\
0 & 0 & \cdots & \alpha_{1,2}^{-(pk+q)}
\end{bmatrix}
\mathbf{x'_{1,2}}+ \cdots \nonumber \\
&+ \mathbf{C'_{\mu,\nu_\mu}}
\begin{bmatrix}
\alpha_{\mu,\nu_\mu}^{-(m_{\mu,1}-1)} & 0 & \cdots & 0 \\
0 & \alpha_{\mu,\nu_\mu}^{-(m_{\mu,1}-2)} & \cdots & 0 \\
\vdots & \vdots & \ddots & \vdots \\
0 & 0 & \cdots & 1  \\
\end{bmatrix}
\mathbf{A'_{\mu,1}}^{-(pk+q)}  \begin{bmatrix}
\alpha_{\mu,\nu_\mu}^{-(pk+q)+(m_{\mu,1}-1)} & 0 & \cdots & 0 \\
0 & \alpha_{\mu,\nu_\mu}^{-(pk+q)+(m_{\mu,1}-2)} & \cdots & 0 \\
\vdots & \vdots & \ddots & \vdots \\
0 & 0 & \cdots &  \alpha_{\mu,\nu_\mu}^{-(pk+q)}\\
\end{bmatrix}
\mathbf{x'_{\mu,\nu_\mu}} \label{eqn:dis:thm1} \\
&=\mathbf{C'_{1,1}} \mathbf{A'_{1,1}}^{-(pk+q)} \mathbf{x'_{1,1}}\nonumber \\
&+\mathbf{C'_{1,2}}
\begin{bmatrix}
\alpha_{1,2}^{-(m_{1,1}-1)} & 0 & \cdots & 0 \\
0 & \alpha_{1,2}^{-(m_{1,1}-2)} & \cdots & 0 \\
\vdots & \vdots & \ddots & \vdots \\
0 & 0 & \cdots & 1 \\
\end{bmatrix}
\mathbf{A'_{1,1}}^{-(pk+q)}\begin{bmatrix}
\alpha_{1,2}^{-q+(m_{1,1}-1)} & 0 & \cdots & 0 \\
0 & \alpha_{1,2}^{-q+(m_{1,1}-2)} & \cdots & 0 \\
\vdots & \vdots & \ddots & \vdots \\
0 & 0 & \cdots & \alpha_{1,2}^{-q}
\end{bmatrix}
\mathbf{x'_{1,2}}
+ \cdots \nonumber \\
&+\mathbf{C'_{\mu,\nu_\mu}}
\begin{bmatrix}
\alpha_{\mu,\nu_\mu}^{-(m_{\mu,1}-1)} & 0 & \cdots & 0 \\
0 & \alpha_{\mu,\nu_\mu}^{-(m_{\mu,1}-2)} & \cdots & 0 \\
\vdots & \vdots & \ddots & \vdots \\
0 & 0 & \cdots & 1  \\
\end{bmatrix}
\mathbf{A'_{\mu,1}}^{-(pk+q)}
\begin{bmatrix}
\alpha_{\mu,\nu_\mu}^{-q+(m_{\mu,1}-1)} & 0 & \cdots & 0 \\
0 & \alpha_{\mu,\nu_\mu}^{-q+(m_{\mu,1}-2)} & \cdots & 0 \\
\vdots & \vdots & \ddots & \vdots \\
0 & 0 & \cdots &  \alpha_{\mu,\nu_\mu}^{-q}\\
\end{bmatrix}
\mathbf{x'_{\mu,\nu_\mu}}.\label{eqn:dis:thm2}
\end{align}
Here, \eqref{eqn:dis:thm1} follows from Lemma~\ref{lem:dis:jordan1} and \eqref{eqn:dis:thm2} follows from $\alpha_{i,j}^p=\left( \alpha_{i,j}^{p_i} \right)^{\prod_{j \neq i}p_j }=1$. Remind that $m_{i,j}$ was defined as the size of $\mathbf{A_{i,j}}$ in \eqref{eqn:ac:jordan}.

Define 
\begin{align}
&\mathbf{C''_{i,j}} := \mathbf{C'_{i,j}}
\begin{bmatrix}
\alpha_{i,j}^{-(m_{i,1}-1)} & 0 & \cdots & 0 \\
0 & \alpha_{i,j}^{-(m_{i,1}-2)} & \cdots & 0 \\
\vdots & \vdots & \ddots & \vdots \\
0 & 0 & \cdots & 1
\end{bmatrix}, \label{eqn:evidence3} \\
&\mathbf{x''_{i,j}} :=
\begin{bmatrix}
\alpha_{i,j}^{-q+(m_{i,1}-1)} & 0 & \cdots & 0 \\
0 & \alpha_{i,j}^{-q+(m_{i,1}-2)} & \cdots & 0 \\
\vdots & \vdots & \ddots & \vdots \\
0 & 0 & \cdots & \alpha_{i,j}^{-q}
\end{bmatrix}
\mathbf{x'_{i,j}}. \label{eqn:evidence2}
\end{align}

Here, we can notice that the first column of $\mathbf{C''_{i,j}}$ is $\alpha_{i,j}^{-(m_{i,1}-1)}$ times the first column of $\mathbf{C'_{i,j}}$. Here, we know the first column of $\mathbf{C'_{i,j}}$ is equal to the first column of $\mathbf{C_{i,j}}$. The last element of $\mathbf{x''_{i,j}}$ is $\alpha_{i,j}^{-q}$ times the last element of $\mathbf{x'_{i,j}}$. \eqref{eqn:dis:thm2} can be written as
\begin{align}
&\mathbf{C}\mathbf{A}^{-(pk+q)}\mathbf{x}=\mathbf{C''_{1,1}}\mathbf{A'_{1,1}}^{-(pk+q)}\mathbf{x''_{1,1}}
+\mathbf{C''_{1,2}}\mathbf{A'_{1,1}}^{-(pk+q)}\mathbf{x''_{1,2}}+ \cdots +
\mathbf{C''_{\mu,\nu_\mu}}\mathbf{A'_{\mu,1}}^{-(pk+q)}\mathbf{x''_{\mu,\nu_\mu}}. \label{eqn:dis:thm3}
\end{align}
We can see all $\mathbf{x''_{i,1}}, \cdots, \mathbf{x''_{i,\nu_i}}$ are multiplied by the same matrix $\mathbf{A_{1,1}'}$. Eventually, we will merge $\mathbf{x''_{i,1}}, \cdots, \mathbf{x''_{i,\nu_i}}$ by taking linear combinations.

(2) Reduction to the scalar observation: Now, we reduce $\mathbf{C''_{i,j}}$ to row vectors by multiply $\mathbf{L_{q,r}}$ to \eqref{eqn:dis:thm3}.
\begin{align}
&\mathbf{L_{q,r}}\mathbf{C}\mathbf{A}^{-(pk+q)}\mathbf{x}=
\mathbf{L_{q,r}}\mathbf{C''_{1,1}}\mathbf{A'_{1,1}}^{-(pk+q)}\mathbf{x''_{1,1}}
+\mathbf{L_{q,r}}\mathbf{C''_{1,2}}\mathbf{A'_{1,1}}^{-(pk+q)}\mathbf{x''_{1,2}}+ \cdots +
\mathbf{L_{q,r}}\mathbf{C''_{\mu,\nu_\mu}}\mathbf{A'_{\mu,1}}^{-(pk+q)}\mathbf{x''_{\mu,\nu_\mu}}. \label{eqn:dis:thm33}
\end{align}

Here, the systems $(\mathbf{A'_{i,1}},\mathbf{L_{q,r}}\mathbf{C''_{i,1}}), \cdots , (\mathbf{A'_{i,1}},\mathbf{L_{q,r}}\mathbf{C''_{i,\nu_i}})$ have the same dimension, but none of them might be observable. 
Therefore, we will make at least one of the systems to be observable by truncation. Since $\mathbf{A_{i,1}'}$ is a Jordan block matrix and $\mathbf{L_{q,r}}\mathbf{C''_{i,j}}$ is a row vector, $(\mathbf{A_{i,1}'}, \mathbf{L_{q,r}}\mathbf{C''_{i,j}})$ is observable if and only if the first element of $\mathbf{L_{q,r}}\mathbf{C''_{i,j}}$ is not zero. Let $m_i'$ be the smallest number such that at least one of  the $m_i'$th elements of $\mathbf{L_{q,r}}\mathbf{C''_{i,1}}, \cdots, \mathbf{L_{q,r}}\mathbf{C''_{i,\nu_i}}$ becomes nonzero, and let $\mathbf{L_{q,r}}\mathbf{C''_{i,\nu_i^\star}}$ be the vector that achieves the minimum.

Then, we will reduce the dimension of $(\mathbf{A'_{i,1}},\mathbf{L_{q,r}}\mathbf{C''_{i,\nu_i}})$ by truncating the first $(m_i'-1)$ vectors. Define $\mathbf{C'''_{i,j}}$ as the matrix obtained by truncating first $(m'_i-1)$ columns of $\mathbf{C''_{i,j}}$, $\mathbf{A''_{i,j}}$ as the matrix obtained by truncating first $(m'_i-1)$ rows and columns of $\mathbf{A'_{i,j}}$, and $\mathbf{x'''_{i,j}}$ as the column vector obtained by truncating first $(m'_i-1)$ elements of $\mathbf{x''_{i,j}}$.

In the claim, we assumed that $\mathbf{L_{q,r}}\mathbf{C_1}$ is not $\mathbf{0}$. Remind that the elements of $\mathbf{L_{q,r}}\mathbf{C_1}$ correspond to the first elements of $\mathbf{L_{q,r}} \mathbf{C_{1,1}}, \cdots, \mathbf{L_{q,r}} \mathbf{C_{1,\nu_1}}$, which are again equal to the first elements of $\mathbf{L_{q,r}} \mathbf{C_{1,1}'}, \cdots, \mathbf{L_{q,r}} \mathbf{C_{1,\nu_1}'}$. 
Since the first column of $\mathbf{C''_{i,j}}$ is the first column of $\mathbf{C'_{i,j}}$ times $\alpha_{i,j}^{-(m_{i,1}-1)}$, at least one of the systems $(\mathbf{A'_{1,1}},\mathbf{L_{q,r}}\mathbf{C''_{1,1}}),\cdots,(\mathbf{A'_{1,1}},\mathbf{L_{q,r}}\mathbf{C''_{1,\nu_1}})$ has to be observable. 

Therefore, we can see $m_1'=1$ and 
\begin{align}
\mathbf{C'''_{1,i}}=\mathbf{C''_{1,i}}, \mathbf{A''_{1,i}}=\mathbf{A'_{1,i}}, \mathbf{x'''_{1,i}}=\mathbf{x''_{1,i}}. \label{eqn:evidence1}
\end{align}

Now, \eqref{eqn:dis:thm33} becomes
\begin{align}
&\mathbf{L_{q,r}}\mathbf{C}\mathbf{A^{-(pk+q)}}\mathbf{x}=\mathbf{L_{q,r}}\mathbf{C'''_{1,1}}\mathbf{A''_{1,1}}^{-(pk+q)}\mathbf{x'''_{1,1}}
+\mathbf{L_{q,r}}\mathbf{C'''_{1,2}}\mathbf{A''_{1,1}}^{-(pk+q)}\mathbf{x'''_{1,2}}+\cdots
+\mathbf{L_{q,r}}\mathbf{C'''_{\mu,\nu_\mu}}\mathbf{A''_{\mu,1}}^{-(pk+q)}\mathbf{x'''_{\mu,\nu_\mu}}. \nonumber
\end{align}
Let $c_{i,j,1}'''$ be the first element of $\mathbf{L_{q,r}}\mathbf{C'''_{i,j}}$. By Lemma~\ref{lem:dis:jordan2}, we can find upper triangular matrices $\mathbf{T_{i,j}}$ such that their diagonal elements are $\frac{c_{i,j,1}'''}{c_{i,\nu_i^\star,1}'''}$ and
\begin{align}
\mathbf{L_{q,r}}\mathbf{C}\mathbf{A^{-(pk+q)}}\mathbf{x}&=\mathbf{L_{q,r}}\mathbf{C'''_{1,\nu_1^\star}}\mathbf{A''_{1,1}}^{-(pk+q)}\left(
\mathbf{T_{1,1}}\mathbf{x'''_{1,1}}+\mathbf{T_{1,2}}\mathbf{x'''_{1,2}}+\cdots+
\mathbf{T_{1,\nu_1}}\mathbf{x'''_{1,\nu_1}} \right)+
\cdots \nonumber\\
&+\mathbf{L_{q,r}}\mathbf{C'''_{\mu,\nu_\mu^\star}}\mathbf{A''_{\mu,1}}^{-(pk+q)}\left(
\mathbf{T_{\mu,1}}\mathbf{x'''_{\mu,1}}+\mathbf{T_{\mu,2}}\mathbf{x'''_{\mu,2}}+\cdots+\mathbf{T_{\mu,\nu_\mu}}\mathbf{x'''_{\mu,\nu_\mu}}
\right)\label{eqn:dis:thm4}
\end{align}
where $c_{i,\nu_i^\star,1}'''$ is guaranteed to be nonzero by the construction.

Define $\mathbf{x''''_i}$ as
\begin{align}
\left(\mathbf{T_{i,1}}\mathbf{x'''_{i,1}}+\mathbf{T_{i,2}}\mathbf{x'''_{i,2}}+\cdots+\mathbf{T_{i,\nu_i}}\mathbf{x'''_{i,\nu_i}}\right).  \label{eqn:decoding:4}
\end{align}

Here, $\mathbf{A''_{i,1}}^{-(pk+q)}$ is not in a Jordan block. However, since $\mathbf{A''_{i,1}}$ is a Jordan block, by Lemma~\ref{lem:dis:jordan3} the Jordan decomposition of $\mathbf{A''_{i,1}}^{p}$ is $\mathbf{U_i}\mathbf{\Lambda_i}\mathbf{U_i}^{-1}$ where $\mathbf{\Lambda_i}$ is a Jordan block whose eigenvalue is $p$th power of the eigenvalue of $\mathbf{A''_{i,1}}$ and $\mathbf{U_i}$ is an upper triangular matrix whose diagonal entries are non-zero. Thus, \eqref{eqn:dis:thm4} can be written as
\begin{align}
&\mathbf{L_{q,r}}\mathbf{C}\mathbf{A^{-(pk+q)}}\mathbf{x}=\mathbf{L_{q,r}} \mathbf{C'''_{1,\nu_1^\star}} \mathbf{U_{1}} \mathbf{\Lambda_{1}}^{-k} \mathbf{U_{1}}^{-1} \mathbf{A''_{1,1}}^{-q} \mathbf{x''''_{1}} + \cdots
+\mathbf{L_{q,r}} \mathbf{C'''_{\mu,\nu_\mu^\star}} \mathbf{U_{\mu}} \mathbf{\Lambda_{\mu}}^{-k} \mathbf{U_{\mu}}^{-1} \mathbf{A''_{\mu,1}}^{-q} \mathbf{x''''_{\mu}} \nonumber \\
&=
\begin{bmatrix}
\mathbf{L_{q,r}}\mathbf{C'''_{1,\nu_1^\star}}\mathbf{U_{1}}
& \mathbf{L_{q,r}}\mathbf{C'''_{2,\nu_2^\star}}\mathbf{U_{2}} & \cdots
& \mathbf{L_{q,r}}\mathbf{C'''_{\mu,\nu_\mu^\star}}\mathbf{U_{\mu}}
\end{bmatrix}
\begin{bmatrix}
\mathbf{\Lambda_{1}} & 0 & \cdots & 0 \\
0 & \mathbf{\Lambda_{2}} & \cdots & 0 \\
\vdots & \vdots & \ddots & \vdots \\
0 & 0 & \cdots & \mathbf{\Lambda_{\mu}}
\end{bmatrix}^{-k}\nonumber \\
&\cdot\begin{bmatrix}
\mathbf{U_{1}}^{-1} \mathbf{A''_{1,1}}^{-q} & 0 & \cdots & 0 \\
0 & \mathbf{U_{2}}^{-1} \mathbf{A''_{2,1}}^{-q} & \cdots & 0 \\
\vdots & \vdots & \ddots & \vdots \\
0 & 0 & \cdots & \mathbf{U_{\mu}}^{-1} \mathbf{A''_{\mu,1}}^{-q} \\
\end{bmatrix}
\begin{bmatrix}
\mathbf{x_{1}''''} \\
\mathbf{x_{2}''''} \\
\vdots \\
\mathbf{x_{\mu}''''}
\end{bmatrix}
.\nonumber
\end{align}
Let's define $\mathbf{\bar{C}_{q,r}}$  as $
\begin{bmatrix}
\mathbf{L_{q,r}}\mathbf{C'''_{1,\nu_1^\star}}\mathbf{U_{1}}
& \mathbf{L_{q,r}}\mathbf{C'''_{2,\nu_2^\star}}\mathbf{U_{2}} & \cdots
& \mathbf{L_{q,r}}\mathbf{C'''_{\mu,\nu_\mu^\star}}\mathbf{U_{\mu}}
\end{bmatrix}
$, $\mathbf{\bar{A}_{q,r}}$ as $
\begin{bmatrix}
\mathbf{\Lambda_{1}} & 0 & \cdots & 0 \\
0 & \mathbf{\Lambda_{2}} & \cdots & 0 \\
\vdots & \vdots & \ddots & \vdots \\
0 & 0 & \cdots & \mathbf{\Lambda_{\mu}}
\end{bmatrix}
$, $\mathbf{\bar{U}_{q,r}}$ as \\$
\begin{bmatrix}
\mathbf{U_{1}}^{-1} \mathbf{A''_{1,1}}^{-q} & 0 & \cdots & 0 \\
0 & \mathbf{U_{2}}^{-1} \mathbf{A''_{2,1}}^{-q} & \cdots & 0 \\
\vdots & \vdots & \ddots & \vdots \\
0 & 0 & \cdots & \mathbf{U_{\mu}}^{-1} \mathbf{A''_{\mu,1}}^{-q} \\
\end{bmatrix}$, $\mathbf{\bar{x}_{q,r}}$ as $\begin{bmatrix} \mathbf{x_1''''} \\ \mathbf{x_2''''} \\ \vdots \\ \mathbf{x_\mu''''} \end{bmatrix}$ and $\bar{m}_{q,r}$ as the dimension of $\mathbf{\bar{A}_{q,r}}$.

Here, we can see that $\mathbf{\bar{A}_{q,r}}$ has no eigenvalue cycles and satisfies the condition (i) of the claim. Furthermore, since $\mathbf{U_i}$ is an upper triangular matrix whose diagonal elements are non-zero, the first elements of $\mathbf{L_{q,r}}\mathbf{C'''_{i,\nu_i^\star}}\mathbf{U_{i}}$ are still non-zeros. Thus, the system $(\mathbf{\Lambda_{i}}, \mathbf{L_{q,r}}\mathbf{C'''_{i,\nu_i^\star}}\mathbf{U_{i}})$ is observable and $(\mathbf{\bar{A}_{q,r}},\mathbf{\bar{C}_{q,r}})$ is also observable, which satisfies the condition (ii) of the claim. We also have
\begin{align}
&\mathbf{L_{q,r}}\mathbf{C}\mathbf{A}^{-(pk+q)}\mathbf{x}=\mathbf{\bar{C}_{q,r}}\mathbf{\bar{A}_{q,r}}^{-k}\mathbf{\bar{U}_{q,r}}\mathbf{\bar{x}_{q,r}}
\end{align}
which is the condition (v) of the claim. 

Let $c_{1,j,1}$ be the first element of $\mathbf{L_{q,r}}\mathbf{C_{1,j}}$. Then, we have
\begin{align}
(\mathbf{\bar{x}_{q,r}})_{m_{1,1}}&=(\mathbf{x''''_1})_{m_{1,1}}
=\left(\frac{c_{1,1,1}'''}{c_{1,\nu_1^\star,1}'''} (\mathbf{x'''_{1,1}})_{m_{1,1}}+
\cdots+
\frac{c_{1,\nu_1,1}'''}{c_{1,\nu_1^\star,1}'''} (\mathbf{x'''_{1,\nu_1}})_{m_{1,1}}
\right)\label{eqn:decoding:1} \\
&=\left(\frac{c_{1,1,1}'''}{c_{1,\nu_1^\star,1}'''}\alpha_{1,1}^{-q} (\mathbf{x'_{1,1}})_{m_{1,1}}+
\cdots+
\frac{c_{1,\nu_1,1}'''}{c_{1,\nu_1^\star,1}'''}\alpha_{1,\nu_1}^{-q} (\mathbf{x'_{1,\nu_1}})_{m_{1,1}}
\right)\label{eqn:decoding:2} \\
&=\frac{1}{c_{1,\nu_1^\star,1}'''}
\left(
c_{1,1,1} \alpha_{1,1}^{-q-(m_{1,1}-1)} (\mathbf{x'_{1,1}})_{m_{1,1}}+ \cdots +
c_{1,\nu_1,1} \alpha_{1,\nu_1}^{-q-(m_{1,1}-1)} (\mathbf{x'_{1,\nu_1}})_{m_{1,1}}
\right)\label{eqn:decoding:3} \\
&=\frac{1}{c_{1,\nu_1^\star,1}'''}
\left(
\mathbf{L_{q,r}}\mathbf{C_1}
diag\{
\alpha_{1,1},\cdots, \alpha_{1,\nu_1}
\}^{-(q+(m_{1,1}-1))}
\right)
\begin{bmatrix}
(\mathbf{x'_{1,1}})_{m_{1,1}}\\
\vdots \\
(\mathbf{x'_{1,\nu_1}})_{m_{1,1}}
\end{bmatrix} \label{eqn:dis:thm5}
\end{align}
\eqref{eqn:decoding:1} follows from \eqref{eqn:decoding:4}. \eqref{eqn:decoding:2} follows from \eqref{eqn:evidence2}, \eqref{eqn:evidence1}. \eqref{eqn:decoding:3} follows from \eqref{eqn:evidence3}, \eqref{eqn:evidence1} and that the first column of $\mathbf{C'_{i,j}}$ is the same as the first column of $\mathbf{C_{i,j}}$ as we mentioned above. Furthermore, as we mentioned above, $(\mathbf{x'_{1,1}})_{m_{1,1}}=(\mathbf{x_{1,1}})_{m_{1,1}}$. Therefore, the condition (iv) of the claim is also satisfied, and this finishes the proof.
\end{proof}

$\bullet$ Estimating $(\mathbf{x})_{m_{1,1}}$: Now, we have systems without eigenvalue cycles and with scalar observations. Thus, by applying Lemma~\ref{lem:dis:geodet}, we will estimate the state $(\mathbf{x})_{m_{1,1}}$.

\begin{claim}
We can find a polynomial $\bar{p}(k)$, $\bar{m} \in \mathbb{N}$ and a family of stopping time $\{ \bar{S}(\epsilon, k) : k \in \mathbb{Z}^+, \epsilon > 0\}$ such that for all $\epsilon > 0$, $k \in \mathbb{Z}^+$ there exist $k \leq \bar{k}_1 < \bar{k}_2 < \cdots <\bar{k}_{\bar{m}} \leq \bar{S}(\epsilon, k)$ and $\mathbf{\bar{M}}$ satisfying:\\
(i) $\beta[\bar{k}_i]=1$ for $1 \leq i \leq \bar{m}$\\
(ii) $\mathbf{\bar{M}}
\begin{bmatrix}
\mathbf{C}\mathbf{A}^{-\bar{k}_1} \\
\vdots \\
\mathbf{C}\mathbf{A}^{-\bar{k}_{\bar{m}}} \\
\end{bmatrix}
\mathbf{x}=(\mathbf{x})_{m_{1,1}}$\\ 
(iii) $\left| \mathbf{\bar{M}} \right|_{max} \leq \frac{\bar{p}(\bar{S}(\epsilon,k))}{\epsilon}|\lambda_{1,1}|^{\bar{S}(\epsilon,k)}$\\
(iv) $\lim_{\epsilon \downarrow 0} \exp \limsup_{s \rightarrow \infty} \sup_{k \in \mathbb{Z}^+} \frac{1}{s} \log \mathbb{P} \{
\bar{S}(\epsilon,k) - k = s
\} \leq  p_e^{\frac{l_1}{p_1}}$
\label{claim:donknow00}
\end{claim}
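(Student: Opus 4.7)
The plan is to combine the polyphase reduction from Claim~\ref{claim:donknow2} with the eigenvalue-cycle-free scalar-observation estimator from Lemma~\ref{lem:dis:geodet}. Since the top eigenvalue-cycle of $\mathbf{A}$ has period $p_1$ and the full system has least common period $p=\prod_i p_i$, we subsample the observation stream at stride $p$. By Claim~\ref{claim:donknow2}, each residue $q\in\{0,\dots,p-1\}$ together with any subset $T_r\subseteq\{0,\dots,p_1-1\}$ for which $[\mathbf{C_1}\mathbf{A_1}^{-t}]_{t\in T_r}$ is full rank yields, after multiplying by $\mathbf{L_{q,r}}$, a scalar-observation system $(\mathbf{\bar{A}_{q,r}},\mathbf{\bar{C}_{q,r}})$ that is cycle-free with top eigenvalue $\lambda_{1,1}^p$, and whose $m_{1,1}$-th reduced-state coordinate is a fixed linear functional of $\bigl[(\mathbf{x_{1,1}})_{m_{1,1}},(\mathbf{x'_{1,2}})_{m_{1,1}},\dots,(\mathbf{x'_{1,\nu_1}})_{m_{1,1}}\bigr]$ with coefficient vector $g_{q,r}\,\mathbf{L_{q,r}}\mathbf{C_1}\operatorname{diag}\{\alpha_{1,\cdot}\}^{-(q+m_{1,1}-1)}$.

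First I would enumerate the sets $T_1,\dots,T_R$; by the definition of $l_1$, these are exactly the subsets of $\{0,\dots,p_1-1\}$ whose complements have cardinality strictly less than $l_1$, so $|T_r|\geq p_1-l_1+1$. The stopping time $\bar{S}(\epsilon,k)$ is then built recursively: starting from $k$, for $\nu_1$ distinct residues $q_1,\dots,q_{\nu_1}\pmod{p_1}$, wait for a length-$p$ block in which the unerased positions within the $p_1$-sub-block realize some $T_r$, then continue running Lemma~\ref{lem:dis:geodet} on the corresponding reduced system $(\mathbf{\bar{A}_{q_j,r}},\mathbf{\bar{C}_{q_j,r}})$ until its scalar-observation stopping time fires (giving a row-recovery matrix that extracts $(\mathbf{\bar{x}_{q_j,r}})_{m_{1,1}}$ with $|\cdot|_{\max}$ bounded by $p(\cdot)/\epsilon$ times $|\lambda_{1,1}^p|^{(\cdot)}$). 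Chaining these $\nu_1$ sub-stopping-times by Lemma~\ref{lem:app:geo} gives $\bar{S}(\epsilon,k)$.

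The $\nu_1$ extracted scalars form a linear system in the unknown vector $[(\mathbf{x_{1,1}})_{m_{1,1}},\dots,(\mathbf{x'_{1,\nu_1}})_{m_{1,1}}]^\top$ whose coefficient matrix, up to an invertible diagonal scaling by $g_{q_j,r}$ and the fixed row $\mathbf{L_{q_j,r}}\mathbf{C_1}$, is essentially the Vandermonde matrix built from distinct $p_1$-th roots of unity $\alpha_{1,1},\dots,\alpha_{1,\nu_1}$ evaluated at exponents $q_j+m_{1,1}-1$. Since we have freedom to pick the $q_j$ pairwise incongruent mod $p_1$, this matrix is invertible (by the standard Vandermonde determinant for distinct roots of unity); its first row, composed with the row-recovery matrices from Lemma~\ref{lem:dis:geodet}, gives $\mathbf{\bar{M}}$ satisfying (ii). The $|\mathbf{\bar{M}}|_{\max}$ bound in (iii) then follows by multiplying the Lemma~\ref{lem:dis:geodet} bounds $\bigl(|\lambda_{1,1}^p|^{\bar{S}/p}=|\lambda_{1,1}|^{\bar{S}}\bigr)$ against the $k$-independent Vandermonde inverse norm and the bounded upper-triangular matrices $\mathbf{\bar{U}_{q_j,r}}$ from Claim~\ref{claim:donknow2}.

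The main obstacle, and the essence of the power-property claim, is verifying the p.m.f. tail (iv). I would express the per-period waiting time as a stopping time of the form studied in Lemma~\ref{lem:dis:geo0}, with ``test channels'' being precisely the cardinality-$l_1$ complementary sets $S_r'=\{0,\dots,p_1-1\}\setminus T_r$ from the definition of $l_1$: a single period of length $p_1$ fails to realize any usable $T_r$ exactly when all $l_1$ positions in some $S_r'$ are erased, an event of probability at most $p_e^{l_1}$ per period. Lemma~\ref{lem:dis:geo0} then yields a tail of $p_e^{l_1}$ per $p_1$-block, i.e.\ $p_e^{l_1/p_1}$ per unit time. The remaining bookkeeping---that the extra polynomial-in-$k$ slack from Lemma~\ref{lem:dis:geodet} and from chaining $\nu_1$ sub-stopping-times does not degrade the exponential rate---is handled by Lemma~\ref{lem:conti:tailpoly} and Lemma~\ref{lem:app:geo} in the now-familiar way, letting $\epsilon\downarrow 0$ at the end to absorb the multiplicative constants.
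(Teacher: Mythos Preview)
Your construction has a genuine gap in the tail bound (iv). You run Lemma~\ref{lem:dis:geodet} on each of $\nu_1$ pre-selected residue classes and then chain the resulting stopping times \emph{sequentially} via Lemma~\ref{lem:app:geo}. But Claim~\ref{claim:donknow2} produces a cycle-free system only after subsampling by $p=\prod_i p_i$, so each invocation of Lemma~\ref{lem:dis:geodet} lives on a single residue class modulo $p$ and has tail $p_e$ in the $p$-downsampled clock, i.e.\ $p_e^{1/p}$ in the original clock. Lemma~\ref{lem:app:geo} only returns the \emph{maximum} of the chained tails, so your $\bar{S}$ inherits tail $p_e^{1/p}$, which exceeds the required $p_e^{l_1/p_1}$ whenever $l_1>1$ or $\mu>1$. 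Your last-paragraph appeal to Lemma~\ref{lem:dis:geo0} is the right instinct but it cannot repair a serial chain: that lemma sharpens the tail only for a \emph{minimum over maxima of independent} waiting times. (Separately, the ``wait for one $p_1$-block whose unerased positions realize some $T_r$'' event you describe has per-block failure probability $\mathbb{P}\{\exists r: S_r'\text{ all erased}\}$, which is generally strictly larger than $p_e^{l_1}$ by a combinatorial constant, so even that step does not give tail $p_e^{l_1/p_1}$ as written.)

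The paper's construction runs Lemma~\ref{lem:dis:geodet} \emph{in parallel} on all $p$ residue classes modulo $p$ at once (the resulting $\bar{S}_q$ are independent because they sit on disjoint sub-streams of $\beta$), and for each $q$ it does so simultaneously for every index $r$; it then sets $\bar{S}(\epsilon,k)=\min_{T'_i}\max_{q\in T'_i}(p\bar{S}_q+q)$, where the $T'_i$ range over full-rank subsets of $\{0,\dots,p-1\}$. Now Lemma~\ref{lem:dis:geo0} applies exactly: the minimum hitting set has size $l_1\prod_{j\ne 1}p_j=l_1 p/p_1$, and the tail is $(p_e^{1/p})^{l_1 p/p_1}=p_e^{l_1/p_1}$. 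A secondary issue: your Vandermonde-inversion step for recovering $(\mathbf{x})_{m_{1,1}}$ is not clearly well-posed, since its rows carry the factors $\mathbf{L_{q_j,r_j}}\mathbf{C_1}$ with the index $r_j$ random and varying across $j$. The paper sidesteps this by choosing the row vectors $\mathbf{L_{t,r}}$ as in \eqref{eqn:dis:thm6} so that, once a single full-rank set $T_r$ is realized, the weighted sum $\sum_q g_{q,r}^{-1}(\mathbf{\bar{x}_{q,r}})_{m_{1,1}}$ over $q$ running through a lift of $T_r$ collapses directly to $(\mathbf{x})_{m_{1,1}}$ (equation \eqref{eqn:dis:thm7}), with no matrix inversion required.
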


This claim tells that there exists an estimator $\mathbf{\bar{M}}$ for $(\mathbf{x})_{m_{1,1}}$ which use observations at time $\bar{k}_1, \cdots, \bar{k}_{\bar{m}}$.

\begin{proof}
For each $q \in \{0, \cdots, p-1 \}$, we have the down-sampled systems $(\mathbf{\bar{A}_{q,1}},\mathbf{\bar{C}_{q,1}}),\cdots,(\mathbf{\bar{A}_{q,R}},\mathbf{\bar{C}_{q,R}})$
such that all systems are observable, $\mathbf{\bar{A}_{q,i}}$ have no eigenvalue cycles, and $\mathbf{\bar{C}_{q,i}}$ are row vectors. By Lemma~\ref{lem:dis:geodet}, we can find a polynomial $p_q(k)$ and a family of random variable $\{\bar{S}_{q}(\epsilon,k): k \in \mathbb{Z^+}, \epsilon > 0 \}$ such that for all $\epsilon > 0$, $k \in \mathbb{Z}^+$ and $1 \leq i \leq R$ there exist $\bar{m}_{q,i}$ and $ \lceil \frac{k-q}{p}  \rceil \leq k_{i,1} < k_{i,2} < \cdots < k_{i,\bar{m}_{q,i}} \leq \bar{S}_q(\epsilon,k)$ and $\mathbf{M_i}$ satisfying:\\
(i) $\beta[pk_{i,j}+q]=1$ for $1 \leq j \leq \bar{m}_{q,i}$\\
(ii) $\mathbf{M_i}
\begin{bmatrix}
\mathbf{\bar{C}_{q,i}} \mathbf{\bar{A}_{q,i}}^{-k_{i,1}} \\
\mathbf{\bar{C}_{q,i}} \mathbf{\bar{A}_{q,i}}^{-k_{i,2}} \\
\vdots \\
\mathbf{\bar{C}_{q,i}} \mathbf{\bar{A}_{q,i}}^{-k_{i,\bar{m}_{q,i}}}
\end{bmatrix}=\mathbf{I_{\bar{m}_{q,i}\times \bar{m}_{q,i}}}
$\\
(iii) $
\left| \mathbf{M_i} \right|_{max} \leq \frac{ p_q\left( \bar{S}_q(\epsilon,k) \right) }{\epsilon} (|\lambda_{1,1}|^p)^{\bar{S}_q(\epsilon,k)}
$\\
(iv) $
\lim_{\epsilon \downarrow 0} \exp \limsup_{s \rightarrow \infty} \sup_{k \in \mathbb{Z}^+} \frac{1}{s} \log \mathbb{P}
\{ \bar{S}_q(\epsilon,k)- \lceil \frac{k-q}{p} \rceil = s \} = p_e.
$

By the property (iv) of $\bar{S}_q(\epsilon,k)$, we get
\begin{align}
\lim_{\epsilon \downarrow 0} \exp \limsup_{s \rightarrow \infty} \sup_{k \in \mathbb{Z}^+} \frac{1}{s} \log \mathbb{P} \{
p \bar{S}_q(\epsilon,k) - p \lceil \frac{k-q}{p} \rceil = s \} = p_e^{\frac{1}{p}}
\nonumber
\end{align}
which implies
\begin{align}
\lim_{\epsilon \downarrow 0} \exp \limsup_{s \rightarrow \infty} \sup_{k \in \mathbb{Z}^+} \frac{1}{s} \log \mathbb{P} \{
(p \bar{S}_q(\epsilon,k)+q)-k=s \} = p_e^{\frac{1}{p}}.\nonumber
\end{align}
Moreover, $\bar{S}_q(\epsilon,k)$ depends on only $\beta[q],\beta[p+q],\beta[2p+q],\cdots$. Thus, $\bar{S}_0(\epsilon,k),\cdots,\bar{S}_{p-1}(\epsilon,k)$ are independent.

Now, we can estimate the state of each sub-sampled system. We will leverage these estimations to the estimation of the state $(\mathbf{x})_{m_{1,1}}$.

First, notice that the down-sampling rate $p$ is much larger than $p_1$. Therefore, we make the corresponding definition to \eqref{eqn:dis:geofinal:0} for the longer period $p$. Let $T'_1,\cdots,T'_{R'}$ be all the sets $T'$ such that $T' :=\{ t'_1, \cdots, t'_{|T'|} \} \subseteq \{0,1,\cdots,p-1 \}$ and
\begin{align}
\begin{bmatrix}
&\mathbf{C_1} \mathbf{A_1}^{-t'_1} \\
&\mathbf{C_1} \mathbf{A_1}^{-t'_2} \\
&\vdots \\
&\mathbf{C_1} \mathbf{A_1}^{-t'_{|T'|}}
\end{bmatrix} \mbox{ is full rank.}
\end{align}

Here, we can ask how many observations have to be erased to make the observability Gramian of $(\mathbf{A_1}, \mathbf{C_1})$ rank deficient during the period $p$. Obviously, the answer is $l_1 \prod_{ 2 \leq j \leq \mu}p_j$ where the definition of $l_1$ is shown in \eqref{eqn:def:lprime}. The reason for this is that we have to erase at least $l_1$ observations for each period $p_1$ to make the observability Gramian rank deficient. Formally, it can be written as follows:
\begin{align}
min\{|T|: T=\{t_1,\cdots,t_{|T|} \} \subseteq \{ 0,1,\cdots,p-1 \}, T'_i \not\subseteq T \mbox{ for all }1 \leq  i \leq R'  \}=l_1 \prod_{2 \leq j \leq \mu} p_j. \nonumber
\end{align}

Denote a stopping time $\bar{S}(\epsilon,k)$ as the minimum time until we have enough observations to make the observability Gramian of $(\mathbf{A_1}, \mathbf{C_1})$ full rank. Formally,
\begin{align}
\bar{S}(\epsilon,k)-k&:=
\inf \{
s:\exists i  \in \{1, \cdots, R' \}\mbox{ s.t. } T'_i=\{t'_1,t'_2,\cdots t'_{|T'_i|} \} \mbox{ and }\nonumber\\
 &(p \bar{S}_{t'_1}(\epsilon,k) + t'_1)-k  \leq s,
(p \bar{S}_{t'_2}(\epsilon,k) + t'_2)-k  \leq s, \cdots, (p \bar{S}_{t'_{|T'_i|}}(\epsilon,k) + t'_{|T'_i|})-k  \leq s
\}. \nonumber
\end{align}
Then, by Lemma~\ref{lem:dis:geo0} we have
\begin{align}
\lim_{\epsilon \downarrow 0} \exp \limsup_{s \rightarrow \infty} \sup_{k \in \mathbb{Z}^+} \frac{1}{s} \log \mathbb{P} \{
\bar{S}(\epsilon,k) - k = s
\} \leq p_e^{\frac{l_1 \prod_{j \neq 1 } p_j }{p}} = p_e^{\frac{l_1}{p_1}}.
\end{align}

Without loss of generality, let $T_1'$ be the set that satisfies the definition of $\bar{S}(\epsilon,k)$.
Then, by the definition of $T_1'$ and $T_i$, there must exist $T_i$ such that $T_1'$ contains $T_i$ in mod $p_1$. Let $T_1$ be such a set without loss of generality. Then, we can find $\{t'_1, \cdots, t'_{|T_1|}\}$ which is included in $T_1'$ and includes $T_1$ in mod $p_1$. Formally, $\{t'_1, \cdots, t'_{|T_1|} \} \subseteq T_1'$ and $\{t'_1 (mod~p_1), \cdots, t'_{|T_1|}(mod~p_1) \}=T_1$.

Then, from the definition of $\bar{S}(\epsilon,k)$ and $\bar{S}_q(\epsilon,k)$, for each $q\in \{ t'_1,\cdots,t'_{|T_1|} \}$ we can find $\lceil \frac{k-q}{p} \rceil \leq k_{q,1} < k_{q,2} < \cdots < k_{q,\bar{m}_{q,1}} \leq \bar{S}_q(\epsilon,k)$ and $\mathbf{M_q}$ satisfying the following conditions:\\
(i') $\beta[ p k_{q,j} + q ]=1$ for $1 \leq j \leq \bar{m}_{q,1}$\\
(ii') $\mathbf{M_q}
\begin{bmatrix}
\mathbf{\bar{C}_{q,1}} \mathbf{\bar{A}_{q,1}}^{-k_{q,1}} \\
\mathbf{\bar{C}_{q,1}} \mathbf{\bar{A}_{q,1}}^{-k_{q,2}} \\
\vdots \\
\mathbf{\bar{C}_{q,1}} \mathbf{\bar{A}_{q,1}}^{-k_{q,\bar{m}_{q,1}}}
\end{bmatrix}= \mathbf{I_{\bar{m}_{q,1} \times \bar{m}_{q,1}}} $ \\
(iii') $\left| \mathbf{M_q} \right|_{max}  \leq \frac{p_q(\bar{S}_q(\epsilon,k))}{\epsilon}(|\lambda_{1,1}|^p)^{\bar{S}_q(\epsilon,k)}$.\\
(iv') $ p \bar{S}_q(\epsilon,k) + q \leq \bar{S}(\epsilon, k)$

Then, we have
\begin{align}
&diag\{ \mathbf{\bar{U}_{t'_1,1}}^{-1} \mathbf{M_{t'_1}}, \mathbf{\bar{U}_{t'_2,1}}^{-1} \mathbf{M_{t'_2}}, \cdots, \mathbf{\bar{U}_{t'_{|T_1|},1}}^{-1} \mathbf{M_{t'_{|T_1|} }} \}
diag\{ \mathbf{L_{t'_1,1}},\mathbf{L_{t'_1,1}}, \cdots, \mathbf{L_{t'_{|T_1|},1}} \} \nonumber \\
&\cdot
\begin{bmatrix}
\mathbf{C}\mathbf{A}^{-(pk_{t'_1,1}+t'_1)} \\
\mathbf{C}\mathbf{A}^{-(pk_{t'_1,2}+t'_1)} \\
\vdots \\
\mathbf{C}\mathbf{A}^{-(pk_{t'_1,\bar{m}_{t'_1,1}}+t'_1)} \\
\mathbf{C}\mathbf{A}^{-(pk_{t'_2,1}+t'_2)} \\
\vdots \\
\mathbf{C}\mathbf{A}^{-(pk_{t'_{|T_1|},\bar{m}_{t'_{|T_1|},1}}+t'_{|T_1|})}
\end{bmatrix}
\mathbf{x} \nonumber \\
&=
diag\{ \mathbf{\bar{U}_{t'_1,1}}^{-1} \mathbf{M_{t'_1}}, \mathbf{\bar{U}_{t'_2,1}}^{-1} \mathbf{M_{t'_2}}, \cdots, \mathbf{\bar{U}_{t'_{|T_1|},1}}^{-1} \mathbf{M_{t'_{|T_1|}}} \}
\begin{bmatrix}
\mathbf{L_{t'_1,1}} \mathbf{C}\mathbf{A}^{-(pk_{t'_1,1}+t'_1)} \mathbf{x} \\
\mathbf{L_{t'_1,1}} \mathbf{C}\mathbf{A}^{-(pk_{t'_1,2}+t'_1)} \mathbf{x} \\
\vdots \\
\mathbf{L_{t'_1,1}} \mathbf{C}\mathbf{A}^{-(pk_{t'_1,\bar{m}_{t'_1,1}}+t'_1)} \mathbf{x} \\
\mathbf{L_{t'_2,1}} \mathbf{C}\mathbf{A}^{-(pk_{t'_2,1}+t'_2)} \mathbf{x} \\
\vdots \\
\mathbf{L_{t'_{|T_1|},1}} \mathbf{C}\mathbf{A}^{-(pk_{t'_{|T_1|},\bar{m}_{t'_{|T_1|},1}}+t'_{|T_1|})} \mathbf{x}
\end{bmatrix}
\nonumber \\
&=
diag\{ \mathbf{\bar{U}_{t'_1,1}}^{-1} \mathbf{M_{t'_1}}, \mathbf{\bar{U}_{t'_2,1}}^{-1} \mathbf{M_{t'_2}}, \cdots, \mathbf{\bar{U}_{t'_{|T_1|},1}}^{-1} \mathbf{M_{t'_{|T_1|}}} \}
\begin{bmatrix}
\mathbf{\bar{C}_{t'_1,1}} \mathbf{\bar{A}_{t'_1,1}}^{-k_{t'_1,1}} \mathbf{\bar{U}_{t'_1,1}}\mathbf{\bar{x}_{t'_1,1}} \\
\mathbf{\bar{C}_{t'_1,1}} \mathbf{\bar{A}_{t'_1,1}}^{-k_{t'_1,2}} \mathbf{\bar{U}_{t'_1,1}}\mathbf{\bar{x}_{t'_1,1}} \\
\vdots \\
\mathbf{\bar{C}_{t'_1,1}}\mathbf{\bar{A}_{t'_1,1}}^{-k_{t'_1,\bar{m}_{t'_1,1}}}\mathbf{\bar{U}_{t'_1,1}}\mathbf{\bar{x}_{t'_1,1}} \\
\mathbf{\bar{C}_{t'_2,1}}\mathbf{\bar{A}_{t'_2,1}}^{-k_{t'_2,1}} \mathbf{\bar{U}_{t'_2,1}}\mathbf{\bar{x}_{t'_2,1}} \\
\vdots \\
\mathbf{\bar{C}_{t'_{|T_1|},1}}\mathbf{\bar{A}_{t'_{|T_1|},1}}^{-k_{t'_{|T_1|},\bar{m}_{t'_{|T_1|},1}}}\mathbf{\bar{U}_{t'_{|T_1|},1}}\mathbf{\bar{x}_{t'_{|T_1|},1}}
\end{bmatrix} \label{eqn:dis:thm:21} \\
&=
\begin{bmatrix}
\mathbf{\bar{U}_{t'_1,1}}^{-1} \mathbf{M_{t'_1}}
\begin{bmatrix}
\mathbf{\bar{C}_{t'_1,1}}\mathbf{\bar{A}_{t'_1,1}}^{-k_{t'_1,1}} \\
\vdots\\
\mathbf{\bar{C}_{t'_1,1}}\mathbf{\bar{A}_{t'_1,1}}^{-k_{t'_1,\bar{m}_{t'_1,1}}} \\
\end{bmatrix}
\mathbf{\bar{U}_{t'_1,1}} \mathbf{\bar{x}_{t'_1,1}}\\
\vdots \\
\mathbf{\bar{U}_{t'_{|T_1|},1}}^{-1} \mathbf{M_{t'_{|T_1|}}}
\begin{bmatrix}
\mathbf{\bar{C}_{t'_{|T_1|},1}}\mathbf{\bar{A}_{t'_1,1}}^{-k_{t'_{|T_1|},1}} \\
\vdots\\
\mathbf{\bar{C}_{t'_{|T_1|},1}}\mathbf{\bar{A}_{t'_1,1}}^{-k_{t'_{|T_1|},\bar{m}_{t'_{|T_1|},1}}}
\end{bmatrix} \mathbf{\bar{U}_{t'_{|T_1|},1}} \mathbf{\bar{x}_{t'_{|T_1|},1}}
\end{bmatrix} \label{eqn:dis:thm:22} \\
&=\begin{bmatrix}
\mathbf{\bar{x}_{t'_1,1}} \\
\vdots\\
\mathbf{\bar{x}_{t'_{|T_1|},1}}\\
\end{bmatrix}.\label{eqn:dis:thm9}
\end{align}
Here, \eqref{eqn:dis:thm:21} comes from the condition (v) of Claim~\ref{claim:donknow2}. \eqref{eqn:dis:thm9} comes from the definition of $\mathbf{M_q}$.

Now, we will estimate $(\mathbf{x})_{m_{1,1}}$ based on $\mathbf{\bar{x}_{t'_1,1}}, \cdots, \mathbf{\bar{x}_{t'_{|T_1|},1}}$. Let $\mathbf{e_{m_{1,1}}^{\bar{\bar{m}}_{q,r}}}$ be a $1 \times \bar{\bar{m}}_{q,r}$ row vector whose elements are all zeros except $m_{1,1}$th element which is $1$. Then, we have the following equation:
\begin{align}
&\begin{bmatrix}
\frac{1}{g_{t_1',1}} \mathbf{e_{m_{1,1}}^{\bar{\bar{m}}_{t_1',1}}} &  \cdots & \frac{1}{g_{t_{|T_1|}',1}} \mathbf{e_{m_{1,1}}^{\bar{\bar{m}}_{|t'_{|T_1|}|,1}}}
\end{bmatrix}
\begin{bmatrix}
\mathbf{\bar{x}_{t'_1,1}} \\
\vdots\\
\mathbf{\bar{x}_{t'_{|T_1|},1}}\\
\end{bmatrix} \nonumber \\
&=\frac{1}{g_{t_1',1}} (\mathbf{\bar{x}_{t'_1,1}})_{m_{1,1}}+ \cdots + \frac{1}{g_{t_{|T_1|}',1}} (\mathbf{\bar{x}_{t'_{|T_1|},1}})_{m_{1,1}} \nonumber \\
&=
\left(
\mathbf{L_{t'_1,1}}\mathbf{C_1}
diag\{
\alpha_{1,1},\cdots, \alpha_{1,\nu_1}
\}^{-(t'_1+(m_{1,1}-1))}
\right)
\begin{bmatrix}
(\mathbf{x_{1,1}})_{m_{1,1}}\\
\vdots \\
(\mathbf{x'_{1,\nu_1}})_{m_{1,1}}
\end{bmatrix}
+
\cdots \nonumber \\
&+
\left(
\mathbf{L_{t'_{|T_1|},1}}\mathbf{C_1}
diag\{
\alpha_{1,1},\cdots, \alpha_{1,\nu_1}
\}^{-(t'_{|T_1|}+(m_{1,1}-1))}
\right)
\begin{bmatrix}
(\mathbf{x_{1,1}})_{m_{1,1}}\\
\vdots \\
(\mathbf{x'_{1,\nu_1}})_{m_{1,1}}
\end{bmatrix}
\label{eqn:dis:thm:24} \\
&=
\begin{bmatrix}
\mathbf{L_{t'_1,1}} & \cdots & \mathbf{L_{t'_{|T_1|},1}}
\end{bmatrix}
\begin{bmatrix}
\mathbf{C_1}diag\{\alpha_{1,1},\cdots, \alpha_{1,\nu_1}\}^{-t'_1} \\
\vdots \\
\mathbf{C_1}diag\{\alpha_{1,1},\cdots, \alpha_{1,\nu_1}\}^{-t'_{|T_1|}}
\end{bmatrix}
\begin{bmatrix}
\alpha_{1,1}^{-m_{1,1}+1}(\mathbf{x_{1,1}})_{m_{1,1}}\\
\vdots \\
\alpha_{1,\nu_1}^{-m_{1,1}+1}(\mathbf{x'_{1,\nu_1}})_{m_{1,1}}
\end{bmatrix} \nonumber \\
&= \alpha_{1,1}^{-m_{1,1}+1}(\mathbf{x_{1,1}})_{m_{1,1}}= \alpha_{1,1}^{-m_{1,1}+1}(\mathbf{x})_{m_{1,1}}. \label{eqn:dis:thm7}
\end{align}
Here, \eqref{eqn:dis:thm:24} follows from the condition (iv) of Claimi~\ref{claim:donknow2}. 
\eqref{eqn:dis:thm7} follows from \eqref{eqn:dis:thm6} and $\{ t'_1 (mod~p_1), \cdots,  t'_{|T_1|} (mod~ p_1) \}=T_1$.

Now, we merge the results from \eqref{eqn:dis:thm9} and \eqref{eqn:dis:thm7} to make an estimator for $(\mathbf{x})_{m_{1,1}}$. Define
\begin{align}
\mathbf{\bar{M}}:=&
\alpha_{1,1}^{m_{1,1}-1}
\begin{bmatrix}
\frac{1}{g_{t_1',1}} \mathbf{e_{m_{1,1}}^{\bar{\bar{m}}_{|t'_1|,1}}} &  \cdots & \frac{1}{g_{t_{|T_1|}',1}} \mathbf{e_{m_{1,1}}^{\bar{\bar{m}}_{|t'_{|T_1|}|,1}}}
\end{bmatrix} \nonumber \\
&\cdot diag\{ \mathbf{\bar{U}_{t'_1,1}}^{-1} \mathbf{M_{t'_1}}, \mathbf{\bar{U}_{t'_2,1}}^{-1} \mathbf{M_{t'_2}}, \cdots, \mathbf{\bar{U}_{t'_{|T_1|},1}}^{-1} \mathbf{M_{t'_{|T_1|} }} \}
diag\{ \mathbf{L_{t'_1,1}},\mathbf{L_{t'_1,1}}, \cdots, \mathbf{L_{t'_{|T_1|},1}} \} \nonumber
\end{align}
and
\begin{align}
\begin{bmatrix}
\mathbf{C}\mathbf{A}^{-\bar{k}_1} \\
\vdots \\
\mathbf{C}\mathbf{A}^{-\bar{k}_{\bar{m}}} \\
\end{bmatrix}
:=
\begin{bmatrix}
\mathbf{C}\mathbf{A}^{-(pk_{t'_1,1}+t'_1)} \\
\mathbf{C}\mathbf{A}^{-(pk_{t'_1,2}+t'_1)} \\
\vdots \\
\mathbf{C}\mathbf{A}^{-(pk_{t'_1,\bar{m}_{t'_1,1}}+t'_1)} \\
\mathbf{C}\mathbf{A}^{-(pk_{t'_2,1}+t'_2)} \\
\vdots \\
\mathbf{C}\mathbf{A}^{-(pk_{t'_{|T_1|},\bar{m}_{t'_{|T_1|},1}}+t'_{|T_1|})}
\end{bmatrix}. \nonumber
\end{align}
Then, by (iii') and (iv') we can find a positive polynomial $\bar{p}(k)$ such that
\begin{align}
\left| \mathbf{\bar{M}} \right|_{max} \lesssim \max_{1 \leq  i \leq |T_1|} \{ |\mathbf{M_{t'_i}}|_{max} \} \leq \frac{\bar{p}(\bar{S}(\epsilon,k))}{\epsilon}|\lambda_{1,1}|^{\bar{S}(\epsilon,k)}. \label{eqn:dis:thm11}
\end{align}
Moreover, by \eqref{eqn:dis:thm9} and \eqref{eqn:dis:thm7} we have
\begin{align}
\mathbf{\bar{M}}
\begin{bmatrix}
\mathbf{C}\mathbf{A}^{-\bar{k}_1} \\
\vdots \\
\mathbf{C}\mathbf{A}^{-\bar{k}_{\bar{m}}} \\
\end{bmatrix}
\mathbf{x}=(\mathbf{x})_{m_{1,1}}. \label{eqn:dis:thm10}
\end{align}
This finishes the proof of the claim
\end{proof}

$\bullet$ Subtracting $(\mathbf{x})_{m_{1,1}}$ from the observations:  Now, we have an estimation for $(\mathbf{x})_{m_{1,1}}$. We will remove it from the system.

$\mathbf{\widetilde{A}}$, $\mathbf{\widetilde{C}}$ and $\mathbf{\widetilde{x}}$ are the system matrices after the removal. Formally, $\mathbf{\widetilde{A}}$ is obtained by removing $m_{1,1}$th row and column from $\mathbf{A}$, $\mathbf{\widetilde{C}}$ is obtained by removing $m_{1,1}$th row from $\mathbf{C}$, and $\mathbf{\widetilde{x}}$ is obtained by removing $m_{1,1}$th component from $\mathbf{x}$ respectively.

Denote $m_{1,1}$th column of $\mathbf{C}{\mathbf{A}}^{-k}$ as $\mathbf{R}(k)$. Then, we have the following relation between the original system $(\mathbf{A},\mathbf{C})$ and the new system $(\mathbf{\widetilde{A}}$, $\mathbf{\widetilde{C}})$:
\begin{align}
\mathbf{C}\mathbf{A}^{-k}\mathbf{x}-\mathbf{R}(k)(\mathbf{x})_{m_{1,1}}=\mathbf{\widetilde{C}}\mathbf{\widetilde{A}}^{-k}\mathbf{\widetilde{x}} \label{eqn:dis:thm8}
\end{align}
which can be easily proved from the block diagonal structure of $\mathbf{A}$.
From the definition of $\mathbf{R}(k)$, we can further see that there exists a polynomial $\widetilde{p}(k)$ such that $|\mathbf{R}(k)|_{max} \leq \widetilde{p}(k) |\lambda_{1,1}|^{-k}$. Thus, when $|\lambda_{1,1}| > 1$ we can find a threshold $k_{th} \geq 0$ such that all $k \geq k_{th}$, $\widetilde{p}(k) |\lambda_{1,1}|^{-k}$ is a decreasing function. When $|\lambda_{1,1}|=1$, we simply put $k_{th}=0$.

$\bullet$ Decoding the remaining element of $\mathbf{x}$: We decoded and subtracted the state $(\mathbf{x})_{m_{1,1}}$ from the system. After subtracting, the remaining system matrices $\mathbf{\widetilde{A}} \in \mathbb{C}^{(m-1) \times (m-1)}$ and $\mathbf{\widetilde{C}} \in \mathbb{C}^{l \times  (m-1)}$ become one-dimension smaller. Therefore, we can apply the induction hypothesis to estimate $\mathbf{\widetilde{x}}$.

We can also write $\mathbf{\widetilde{A}}$ and $\mathbf{\widetilde{C}}$ in the same way that we write $\mathbf{A}$ and $\mathbf{C}$ as \eqref{eqn:ac:jordan}, \eqref{eqn:ac2:jordan} and \eqref{eqn:def:lprime}, and define the corresponding parameters shown in \eqref{eqn:ac:jordan}, \eqref{eqn:ac2:jordan} and \eqref{eqn:def:lprime}.
To distinguish the parameters for $\mathbf{\widetilde{A}}$ and $\mathbf{\widetilde{C}}$ from the parameters for $\mathbf{A}$ and $\mathbf{C}$, we use tilde. For example, the dimension of $\mathbf{A}$ was $m \times m$, and we define the dimension of $\mathbf{\widetilde{A}}$ as $\widetilde{m} \times \widetilde{m}$. Likewise, the parameters $\widetilde{\mu}$, $\widetilde{\nu}_i$, $\widetilde{\lambda}_{i,j}$, $\widetilde{m}_{i,j}$, $\widetilde{p}_i$, $\widetilde{l}_i$ are defined for the system matrices $\mathbf{\widetilde{A}}$ and $\mathbf{\widetilde{C}}$ in the same ways as \eqref{eqn:ac:jordan}, \eqref{eqn:ac2:jordan} and \eqref{eqn:def:lprime}.

By the induction hypothesis, for $1 \leq i \leq \widetilde{\mu}$ we can find $\widetilde{m}_1',\cdots,\widetilde{m}_{\widetilde{\mu}}' \in \mathbb{N}$, positive polynomials $\widetilde{p}_1(k),\cdots,\widetilde{p}_{\widetilde{\mu}}(k)$ and families of stopping times
$\{ \widetilde{S}_1(\epsilon,k): k \in \mathbb{Z}^+ , 0 < \epsilon < 1 \},\cdots,\{ \widetilde{S}_{\widetilde{\mu}}(\epsilon,k): k \in \mathbb{Z}^+, 0 < \epsilon < 1 \}$
such that for all $0 < \epsilon < 1$ there exist
$\max \{ \bar{S}(\epsilon,k),k_{th}\} \leq \widetilde{k}_1 < \cdots < \widetilde{k}_{\widetilde{m}_1'} \leq \widetilde{S}_1(\epsilon,k) < \widetilde{k}_{\widetilde{m}_1'+1} < \cdots <
\widetilde{k}_{\sum_{1 \leq i \leq \widetilde{\mu}} \widetilde{m}_i'}
\leq  \widetilde{S}_{\widetilde{\mu}}(\epsilon,k)$ and a $\widetilde{m} \times (\sum_{1 \leq i \leq \widetilde{\mu} } \widetilde{m}_i')l$ matrix $\mathbf{\widetilde{M}}$ satisfying the following conditions:\\
(i'') $\beta[\widetilde{k}_i]=1$ for $1 \leq i \leq \sum_{1 \leq i \leq \widetilde{\mu}} \widetilde{m}_i$ \\
(ii'') $\mathbf{\widetilde{M}}
\begin{bmatrix}
\mathbf{\widetilde{C}} \mathbf{\widetilde{A}}^{-\widetilde{k}_1} \\
\mathbf{\widetilde{C}} \mathbf{\widetilde{A}}^{-\widetilde{k}_2} \\
\vdots \\
\mathbf{\widetilde{C}} \mathbf{\widetilde{A}}^{-\widetilde{k}_{\sum_{1 \leq i \leq \widetilde{\mu}} \widetilde{m}_i' }}
\end{bmatrix}=\mathbf{I}
$ \\
(iii'') $
|\mathbf{\widetilde{M}}|_{max} \leq \max_{1 \leq i \leq \widetilde{\mu}} \left\{
\frac{\widetilde{p}_i( \tilde{S}_i(\epsilon,k) )}{\epsilon} |\widetilde{\lambda}_{i,1}|^{\widetilde{S}_i(\epsilon,k)}
\right\}
$\\
(iv'') $\lim_{\epsilon \downarrow 0} \exp \limsup_{s \rightarrow \infty} \esssup
\frac{1}{s} \log \mathbb{P}\{ \widetilde{S}_i(\epsilon,k) - \max\{\bar{S}(\epsilon,k),k_{th} \} = s | \mathcal{F}_{\bar{S}(\epsilon,k)}  \} = \max_{1 \leq j \leq i} \left\{ p_e^{\frac{\widetilde{l}_j}{\widetilde{p}_j}} \right\}
 $ for $1\leq i \leq \widetilde{\mu}$\\
(v'') $\lim_{\epsilon \downarrow 0} \exp \limsup_{s \rightarrow \infty} \esssup \frac{1}{s} \log \mathbb{P} \{
\widetilde{S}_a(\epsilon,k)-\widetilde{S}_b(\epsilon,k)=s | \mathcal{F}_{\widetilde{S}_b(\epsilon,k)}
\} \leq \max_{b < i \leq a} \left\{ p_e^{\frac{\widetilde{l}_i}{\widetilde{p}_i}} \right\} $ for $1 \leq b < a \leq \widetilde{\mu}$.
Compared to Lemma~\ref{lem:dis:achv}, we can notice that the condition (iv'') is slightly different from the condition (iv) of Lemma~\ref{lem:dis:achv}. The $\sup$ over $k$ of (iv) in Lemma~\ref{lem:dis:achv} is replaced by the $\esssup$. However, if we remind that $\max\{\bar{S}(\epsilon,k),k_{th} \}$ is a constant conditioned on\footnote{More proper notations for $\widetilde{S}_1(\epsilon,k)$, $\cdots$, $\widetilde{S}_{\mu}(\epsilon,k)$ are $\widetilde{S}_1(\epsilon,\max\{  \bar{S}(\epsilon,k),k_{th}\})$, $\cdots$, $\widetilde{S}_{\mu}(\epsilon,\max\{  \bar{S}(\epsilon,k),k_{th}\})$ since $\max\{  \bar{S}(\epsilon,k),k_{th}\}$ plays the role of $k$ of Lemma~\ref{lem:dis:achv} after conditioning. However, we use the notations of the paper for simplicity.} $\mathcal{F}_{\bar{S}(\epsilon,k)}$, we just replaced $k$ of Lemma~\ref{lem:dis:achv} with $\max\{\bar{S}(\epsilon,k),k_{th} \}$.

Here, we have
\begin{align}
\mathbf{\widetilde{x}}&=\mathbf{\widetilde{M}}
\begin{bmatrix}
\mathbf{\widetilde{C}} \mathbf{\widetilde{A}}^{-\widetilde{k}_1} \\
\mathbf{\widetilde{C}} \mathbf{\widetilde{A}}^{-\widetilde{k}_2} \\
\vdots \\
\mathbf{\widetilde{C}} \mathbf{\widetilde{A}}^{-\widetilde{k}_{\sum_{1 \leq i \leq \widetilde{\mu}} \widetilde{m}_i' }}
\end{bmatrix}\mathbf{\widetilde{x}} \nonumber \\
&=
\mathbf{\widetilde{M}}
\begin{bmatrix}
\mathbf{\widetilde{C}} \mathbf{\widetilde{A}}^{-\widetilde{k}_1}\mathbf{\widetilde{x}} \\
\mathbf{\widetilde{C}} \mathbf{\widetilde{A}}^{-\widetilde{k}_2}\mathbf{\widetilde{x}} \\
\vdots \\
\mathbf{\widetilde{C}} \mathbf{\widetilde{A}}^{-\widetilde{k}_{\sum_{1 \leq i \leq \widetilde{\mu}} \widetilde{m}_i' }}\mathbf{\widetilde{x}}
\end{bmatrix} \nonumber \\
&=
\mathbf{\widetilde{M}}
\begin{bmatrix}
\mathbf{C} \mathbf{A}^{-\widetilde{k}_1} \mathbf{x} - \mathbf{R}(\widetilde{k}_1) (\mathbf{x})_{m_{1,1}} \\
\mathbf{C} \mathbf{A}^{-\widetilde{k}_2} \mathbf{x} - \mathbf{R}(\widetilde{k}_2) (\mathbf{x})_{m_{1,1}} \\
\vdots \\
\mathbf{C} \mathbf{A}^{-\widetilde{k}_{\sum_{1 \leq i \leq \widetilde{\mu}}\widetilde{m}_i'}} \mathbf{x} - \mathbf{R}(\widetilde{k}_{\sum_{1 \leq i \leq \widetilde{\mu}}\widetilde{m}_i' }) (\mathbf{x})_{m_{1,1}}
\end{bmatrix} (\because \eqref{eqn:dis:thm8})\nonumber \\
&=
\mathbf{\widetilde{M}}
\left(
\begin{bmatrix}
\mathbf{C} \mathbf{A}^{-\widetilde{k}_1} \\
\mathbf{C} \mathbf{A}^{-\widetilde{k}_2} \\
\vdots \\
\mathbf{C} \mathbf{A}^{-\widetilde{k}_{\sum_{1 \leq i \leq \widetilde{\mu}}\widetilde{m}_i'}}
\end{bmatrix}
\mathbf{x}
-
\begin{bmatrix}
\mathbf{R}(\widetilde{k}_1) \\
\mathbf{R}(\widetilde{k}_2) \\
\vdots \\
\mathbf{R}(\widetilde{k}_{\sum_{1 \leq i \leq \widetilde{\mu}}\widetilde{m}_i'})
\end{bmatrix}
(\mathbf{x})_{m_{1,1}}
\right) \nonumber \\
&=
\mathbf{\widetilde{M}}
\left(
\begin{bmatrix}
\mathbf{C} \mathbf{A}^{-\widetilde{k}_1} \\
\mathbf{C} \mathbf{A}^{-\widetilde{k}_2} \\
\vdots \\
\mathbf{C} \mathbf{A}^{-\widetilde{k}_{\sum_{1 \leq i \leq \widetilde{\mu}}\widetilde{m}_i' }}
\end{bmatrix}
\mathbf{x}
-
\begin{bmatrix}
\mathbf{R}(\widetilde{k}_1) \\
\mathbf{R}(\widetilde{k}_2) \\
\vdots \\
\mathbf{R}(\widetilde{k}_{\sum_{1 \leq i \leq \widetilde{\mu}}\widetilde{m}_i' })
\end{bmatrix}
\mathbf{{\bar{M}}}
\begin{bmatrix}
\mathbf{C}\mathbf{A}^{-\bar{k}_1} \\
\mathbf{C}\mathbf{A}^{-\bar{k}_2} \\
\vdots \\
\mathbf{C}\mathbf{A}^{-\bar{k}_{\bar{m}}}
\end{bmatrix}
\mathbf{x}
\right) (\because \mbox{the condition (ii) of Claim~\ref{claim:donknow00}})
\nonumber \\
&=
\mathbf{\widetilde{M}}
\begin{bmatrix}
-
\begin{bmatrix}
\mathbf{R}(\widetilde{k}_1) \\
\mathbf{R}(\widetilde{k}_2) \\
\vdots \\
\mathbf{R}(\widetilde{k}_{\sum_{1 \leq i \leq \widetilde{\mu}}\widetilde{m}_i' })
\end{bmatrix}
\mathbf{{\bar{M}}} & \mathbf{I}
\end{bmatrix}
\begin{bmatrix}
\mathbf{C}\mathbf{A}^{-\bar{k}_1} \\
\vdots \\
\mathbf{C}\mathbf{A}^{-\bar{k}_{\bar{m}}}\\
\mathbf{C} \mathbf{A}^{-\widetilde{k}_1} \\
\vdots \\
\mathbf{C} \mathbf{A}^{-\widetilde{k}_{\sum_{1 \leq i \leq \widetilde{\mu}}\widetilde{m}_i'} }
\end{bmatrix}
\mathbf{x}. \label{eqn:successive:102}
\end{align}

When $|\lambda_{1,1}|>1$, we have
\begin{align}
&
\left|
\mathbf{\widetilde{M}}
\begin{bmatrix}
-
\begin{bmatrix}
\mathbf{R}(\widetilde{k}_1) \\
\mathbf{R}(\widetilde{k}_2) \\
\vdots \\
\mathbf{R}(\widetilde{k}_{\sum_{1 \leq i \leq \widetilde{\mu}}\widetilde{m}_i' })
\end{bmatrix}
\mathbf{{\bar{M}}} & \mathbf{I}
\end{bmatrix}
\right|_{max}
\nonumber \\
&\lesssim
| \mathbf{\widetilde{M}} |_{max} \cdot
\max
\left\{ \left|
\begin{bmatrix}
\mathbf{R}(\widetilde{k}_1) \\
\mathbf{R}(\widetilde{k}_2) \\
\vdots \\
\mathbf{R}(\widetilde{k}_{\sum_{1 \leq i \leq \widetilde{\mu}}\widetilde{m}_i' })
\end{bmatrix}
\right|_{max}  \left| \mathbf{\bar{M}} \right|_{max} , 1 \right\} \nonumber \\
& \lesssim
\max_{1 \leq i \leq \widetilde{\mu}} \left\{
\frac{\widetilde{p}_i( \widetilde{S}_i(\epsilon,k) )}{\epsilon} |\widetilde{\lambda}_{i,1}|^{\widetilde{S}_i(\epsilon,k)}
\right\} \cdot \max \left\{ \widetilde{p}(\widetilde{k}_1)|\lambda_{1,1}|^{-\widetilde{k}_1} \frac{\bar{p}\left(\bar{S}(\epsilon,k)\right)}{\epsilon} |\lambda_{1,1}|^{\bar{S}(\epsilon,k)} , 1 \right\} \label{eqn:successive:101}
\end{align}
where the last inequality follows from (iii''), $|\mathbf{R}(k)| \leq \widetilde{p}(k) |\lambda_{1,1}|^{-k}$, $k_{th} \leq  \widetilde{k}_i$, and the condition (iii) of Claim~\ref{claim:donknow00}.
 Moreover, since $\bar{S}(\epsilon,k) \leq \widetilde{k}_1 \leq \widetilde{S}_i(\epsilon,k)$, there exists some positive polynomials $p'_i(k)$ such that
\begin{align}
&\eqref{eqn:successive:101}\lesssim
\max_{1 \leq i \leq \widetilde{\mu}}
\left\{
\frac{
p'_i(\widetilde{S}_i(\epsilon,k))
}{\epsilon^2}
|\widetilde{\lambda}_{i,1}|^{\widetilde{S}_i(\epsilon,k)}
\right\}\label{eqn:successive:200}
\end{align}

When $|\lambda_{1,1}|=1$, $|\widetilde{\lambda}_{1,1}|$ is also $1$. Thus, we have
\begin{align}
&
\left|
\mathbf{\widetilde{M}}
\begin{bmatrix}
-
\begin{bmatrix}
\mathbf{R}(\widetilde{k}_1) \\
\mathbf{R}(\widetilde{k}_2) \\
\vdots \\
\mathbf{R}(\widetilde{k}_{\sum_{1 \leq i \leq \widetilde{\mu}}\widetilde{m}_i' })
\end{bmatrix}
\mathbf{{\bar{M}}} & \mathbf{I}
\end{bmatrix}
\right|_{max}
\nonumber \\
&\lesssim
| \mathbf{\widetilde{M}} |_{max} \cdot
\max
\left\{ \left|
\begin{bmatrix}
\mathbf{R}(\widetilde{k}_1) \\
\mathbf{R}(\widetilde{k}_2) \\
\vdots \\
\mathbf{R}(\widetilde{k}_{\sum_{1 \leq i \leq \widetilde{\mu}}\widetilde{m}_i' })
\end{bmatrix}
\right|_{max}  \left| \mathbf{\bar{M}} \right|_{max} , 1 \right\} \nonumber \\
& \lesssim
\max_{1 \leq i \leq \widetilde{\mu}} \left\{ \frac{\widetilde{p}_1(\widetilde{S}_1(\epsilon,k))}{\epsilon} \right\}
\cdot \max\left\{ \widetilde{p}(\widetilde{k}_{\sum_{1 \leq i \leq \widetilde{\mu}}\widetilde{m}_i' } \frac{\bar{p}\left(\bar{S}(\epsilon,k)\right)}{\epsilon} , 1\right\} \nonumber \\
& \lesssim \frac{p'(\widetilde{S}_{\widetilde{\mu}}(\epsilon,k))}{\epsilon^2} \label{eqn:successive:201}
\end{align}
for some polynomial $p'_{\widetilde{\mu}}(k)$.

Since we can reconstruct $\mathbf{x}$ from $\mathbf{\widetilde{x}}$ and $(\mathbf{x})_{m_{1,1}}$ , we can say there exists $\mathbf{M}$ such that
\begin{align}
\mathbf{M}\begin{bmatrix}
\mathbf{C}\mathbf{A}^{-\bar{k}_1} \\
\vdots \\
\mathbf{C}\mathbf{A}^{-\bar{k}_{\bar{m}}} \\
\mathbf{C}\mathbf{A}^{-\widetilde{k}_1} \\
\vdots \\
\mathbf{C}\mathbf{A}^{-\widetilde{k}_{\sum_{1 \leq i \leq \widetilde{\mu}} \widetilde{m}_i}} \\
\end{bmatrix}= \mathbf{I}. \nonumber
\end{align}
By the condition (ii) of Claim~\ref{claim:donknow00} and \eqref{eqn:successive:102}, such $\mathbf{M}$ satisfies the following:
\begin{align}
| \mathbf{M} |_{max} &\leq \max\left\{\left|\mathbf{\bar{M}}\right|_{max},\left|
\mathbf{\widetilde{M}}
\begin{bmatrix}
-
\begin{bmatrix}
\mathbf{R}(\widetilde{k}_1) \\
\mathbf{R}(\widetilde{k}_2) \\
\vdots \\
\mathbf{R}(\widetilde{k}_{\sum_{1 \leq i \leq \widetilde{\mu}}\widetilde{m}_i' })
\end{bmatrix}
\mathbf{{\bar{M}}} & \mathbf{I}
\end{bmatrix}
\right|_{max}\right\} \nonumber \\
&\lesssim
\max\left\{
\frac{{\bar{p}(\bar{S}(\epsilon,k))}}{\epsilon} |\lambda_{1,1}|^{\bar{S}(\epsilon,k)},
\max_{1 \leq i \leq \widetilde{\mu}}
\left\{
\frac{ p'_i(\widetilde{S}_i(\epsilon,k) )  }{\epsilon^2}
|\widetilde{\lambda}_{i,1}|^{\widetilde{S}_i(\epsilon,k)}
\right\}
\right\}\label{eqn:successive:202} \\
& \leq
\frac{1}{\epsilon^2}
\max \left\{
{\bar{p}(\bar{S}(\epsilon,k))} |\lambda_{1,1}|^{\bar{S}(\epsilon,k)},
\max_{1 \leq i \leq \widetilde{\mu}}
\left\{
p'_i(\widetilde{S}_i(\epsilon,k) )
|\widetilde{\lambda}_{i,1}|^{\widetilde{S}_i(\epsilon,k)}
\right\}
\right\}. \label{eqn:dis:geofinal:5}
\end{align}
Here, \eqref{eqn:successive:202} follows from the condition (iii) of Claim~\ref{claim:donknow00}, \eqref{eqn:successive:200}, \eqref{eqn:successive:201}.

Moreover, since $k_{th}$ is a constant, the condition (iv) of Claim~\ref{claim:donknow00} implies 
\begin{align}
\lim_{\epsilon \downarrow 0} \exp \limsup_{s \rightarrow \infty} \sup_{k \in \mathbb{Z}^+} \frac{1}{s} \log \mathbb{P} \left\{ \max\left\{\bar{S}(\epsilon,k),k_{th}\right\} -k = s \right\} = p_e^{\frac{l_1}{p_1}}. \label{eqn:successive:204}
\end{align}

Therefore, by applying Lemma~\ref{lem:app:geo} together with \eqref{eqn:successive:204} and (iv'')
we get
\begin{align}
\lim_{\epsilon \downarrow 0} \exp \limsup_{s \rightarrow \infty} \sup_{k \in \mathbb{Z}^+} \frac{1}{s} \log \mathbb{P} \{ \widetilde{S}_i(\epsilon,k) - k =s \}= \max\left\{ p_e^{\frac{l_1}{p_1}}, \max_{1 \leq j \leq i}\left\{ p_e^{\frac{\widetilde{l}_j}{\widetilde{p}_j}} \right\} \right\} \label{eqn:dis:geofinal:6}.
\end{align}

We finish the proof by dividing into two cases depending on $\widetilde{\mu}$. Since $\mathbf{\widetilde{A}}$ is obtained by erasing just one row and column of $\mathbf{A}$, the relation between $\widetilde{\mu}$ and $\mu$ is either $\widetilde{\mu}=\mu$ or $\widetilde{\mu}=\mu-1$.

(1) When $\widetilde{\mu}=\mu$.

In this case, the number of the eigenvalue cycles remains the same. We can see that $|\widetilde{\lambda}_{i,1}|=|\lambda_{i,1}|$. $\mathbf{A_1}$ and $\mathbf{\widetilde{A}_1}$ may be the same or $\mathbf{\widetilde{A}_1}$ has smaller dimension than $\mathbf{A_1}$. Thus, the new system $\mathbf{\widetilde{A}_1}$ becomes easier to estimate, and $\frac{\widetilde{l}_1}{\widetilde{p}_1} \geq \frac{l_1}{p_1}$, i.e. $p_e^{\frac{\widetilde{l}_1}{\widetilde{p}_1}} \leq p_e^{\frac{l_1}{p_1}}$. $\mathbf{A_i}$ and $\mathbf{\widetilde{A}_i}$ are the same for all $2 \leq i \leq \mu$, so $\frac{\widetilde{l}_i}{\widetilde{p}_i} = \frac{l_i}{p_i}$ for $2 \leq j \leq \mu$.
Define $S_i(\epsilon^2,k):=\widetilde{S}_i(\epsilon,k)$,
$p_1(k):=\bar{p}(k)+p_1'(k)$, and
$p_i(k):=p_i'(k)$ for $2 \leq i \leq \mu$. Then, \eqref{eqn:dis:geofinal:5}, \eqref{eqn:dis:geofinal:6} and (v'') reduces as follows:
\begin{align}
| \mathbf{M} |_{max} \leq \max_{1 \leq i \leq \mu} \left\{ \frac{p_i(S_i(\epsilon,k))}{\epsilon} |\lambda_{i,1}|^{S_i(\epsilon,k)}  \right\}, \nonumber
\end{align}
\begin{align}
\lim_{\epsilon \downarrow 0} \exp \limsup_{s \rightarrow \infty} \sup_{k \in \mathbb{Z}^+} \frac{1}{s} \log \mathbb{P} \{ S_i(\epsilon,k) - k =s \} \leq \max_{1 \leq j \leq i}\left\{ p_e^{\frac{l_j}{p_j}} \right\}, \nonumber
\end{align}
\begin{align}
\lim_{\epsilon \downarrow 0} \exp \limsup_{s \rightarrow \infty} \esssup \frac{1}{s} \log \mathbb{P} \{
{S}_a(\epsilon,k)-{S}_b(\epsilon,k)=s | \mathcal{F}_{{S}_b(\epsilon,k)}
\} \leq \max_{b < i \leq a} \left\{ p_e^{\frac{{l}_i}{{p}_i}} \right\}. \nonumber
\end{align}
Here, we reparametrized $\epsilon^2$ to $\epsilon$. Therefore, the lemma is true for this case.

(2) When $\widetilde{\mu}=\mu-1$.

Since one eigenvalue cycle is disappeared, we can see that $|\widetilde{\lambda}_{1,1}|=|\lambda_{2,1}|,|\widetilde{\lambda}_{2,1}|=|\lambda_{3,1}|,\cdots, |\widetilde{\lambda}_{\widetilde{\mu},1}|=|\lambda_{\mu,1}|$.
Moreover, $\mathbf{\widetilde{A}_i}=\mathbf{A_{i+1}}$ for $1 \leq i \leq \widetilde{\mu}$ and $\frac{\widetilde{l}_{i}}{\widetilde{p}_{i}}=\frac{l_{i+1}}{p_{i+1}}$ for $1 \leq i \leq \widetilde{\mu}$.
Define $S_1(\epsilon^2,k):=\bar{S}(\epsilon,k)$, $p_1(k):=\bar{p}(k)$, $S_i(\epsilon^2,k):=\widetilde{S}_{i-1}(\epsilon,k)$ and $p_i(k):=p_{i-1}'(k )$ for $2 \leq i \leq \mu$.
We will also reparametrize $\epsilon^2$ to $\epsilon$. Then, \eqref{eqn:dis:geofinal:5} reduces to
\begin{align}
| \mathbf{M} |_{max} \leq \max_{1 \leq i \leq \mu} \left\{ \frac{p_i(S_i(\epsilon,k))}{\epsilon} |\lambda_{i,1}|^{S_i(\epsilon,k)}  \right\}. \nonumber
\end{align}
By the definition of $S_1(\epsilon,k)$, the condition (iv) of Claim~\ref{claim:donknow00} reduces to 
\begin{align}
\lim_{\epsilon \downarrow 0} \exp \limsup_{s \rightarrow \infty} \sup_{k \in \mathbb{Z}^+} \frac{1}{s} \log \mathbb{P} \{ S_1(\epsilon,k) - k=s \} \leq p_e^{\frac{l_1}{p_1}}. \nonumber
\end{align}
By \eqref{eqn:dis:geofinal:6} and the definition of $S_i(\epsilon,k)$, we have for all $2 \leq i \leq \mu$,
\begin{align}
\lim_{\epsilon \downarrow 0} \exp \limsup_{s \rightarrow \infty} \sup_{k \in \mathbb{Z}^+} \frac{1}{s} \log \mathbb{P} \{ S_i(\epsilon,k) - k=s \}
\leq \max\left\{p_e^{\frac{l_1}{p_1}} , \max_{1 \leq j \leq i-1}\left\{ p_e^{\frac{\widetilde{l}_j}{\widetilde{p}_j}} \right\} \right\}=
\max_{1 \leq j \leq i}\left\{ p_e^{\frac{l_i}{p_i}} \right\}. \nonumber
\end{align}
By (iv''), (v'') and the definition of $S_i(\epsilon,k)$, we have for all $1 \leq b < a \leq \mu$,
\begin{align}
\lim_{\epsilon \downarrow 0} \exp \limsup_{s \rightarrow \infty} \esssup \frac{1}{s} \log \mathbb{P} \{
{S}_a(\epsilon,k)-{S}_b(\epsilon,k)=s | \mathcal{F}_{{S}_b(\epsilon,k)}
\} \leq \max_{b < i \leq a} \left\{ p_e^{\frac{{l}_i}{{p}_i}} \right\}. \nonumber
\end{align}
Therefore, the lemma is also true for this case.

Thus, the proof is finished.
\end{proof}

\bibliographystyle{plain}
\bibliography{seyongbib}
\end{document}